\def\append@label@year@{%
    \safe@set\@tempcnta\bib@year
    \edef\bib@citeyear{\the\@tempcnta}%
    \ifnum\bib@citeyear>9
      \append@to@stem{%
          \ifx\bib@year\@empty
          \else
            \@xp\year@short \bib@citeyear \@nil
          \fi
      }%
    \fi
}
\let\oldtocsection=\tocsection
\renewcommand{\tocsection}[2]{\hspace{0em}\oldtocsection{#1}{#2}}
\def\upddots{\mathinner{\mkern 1mu\raise 1pt \hbox{.}\mkern 2mu
\mkern 2mu \raise 4pt\hbox{.}\mkern 1mu \raise 7pt\vbox {\kern 7
pt\hbox{.}}} }
\numberwithin{equation}{section}
\begin{document}
\setlength{\unitlength}{2.5cm}

\newtheorem{thm}{Theorem}[section]
\newtheorem{lm}[thm]{Lemma}
\newtheorem{prop}[thm]{Proposition}
\newtheorem{cor}[thm]{Corollary}
\newtheorem{conj}[thm]{Conjecture}
\newtheorem{specu}[thm]{Speculation}

\theoremstyle{definition}
\newtheorem{dfn}[thm]{Definition}
\newtheorem{eg}[thm]{Example}
\newtheorem{rmk}[thm]{Remark}

\newcommand{\F}{\mathbf{F}}
\newcommand{\N}{\mathbf{N}}
\newcommand{\R}{\mathbf{R}}
\newcommand{\C}{\mathbf{C}}
\newcommand{\Z}{\mathbf{Z}}
\newcommand{\Q}{\mathbf{Q}}

\newcommand{\Mp}{{\rm Mp}}
\newcommand{\Sp}{{\rm Sp}}
\newcommand{\GSp}{{\rm GSp}}
\newcommand{\GL}{{\rm GL}}
\newcommand{\PGL}{{\rm PGL}}
\newcommand{\SL}{{\rm SL}}
\newcommand{\SO}{{\rm SO}}
\newcommand{\Spin}{{\rm Spin}}
\newcommand{\GSpin}{{\rm GSpin}}
\newcommand{\Ind}{{\rm Ind}}
\newcommand{\Res}{{\rm Res}}
\newcommand{\Hom}{{\rm Hom}}
\newcommand{\End}{{\rm End}}
\newcommand{\msc}[1]{\mathscr{#1}}
\newcommand{\mfr}[1]{\mathfrak{#1}}
\newcommand{\mca}[1]{\mathcal{#1}}
\newcommand{\mbf}[1]{{\bf #1}}

\newcommand{\mbm}[1]{\mathbbm{#1}}

\newcommand{\into}{\hookrightarrow}
\newcommand{\onto}{\twoheadrightarrow}

\newcommand{\s}{\mathbf{s}}
\newcommand{\cc}{\mathbf{c}}
\newcommand{\bfa}{\mathbf{a}}
\newcommand{\id}{{\rm id}}
\newcommand{\g}{\mathbf{g}_{\psi^{-1}}}
\newcommand{\w}{\mathbbm{w}}
\newcommand{\Ftn}{{\sf Ftn}}
\newcommand{\p}{\mathbf{p}}
\newcommand{\bq}{\mathbf{q}}
\newcommand{\WD}{\text{WD}}
\newcommand{\W}{\text{W}}
\newcommand{\Wh}{{\rm Wh}}
\newcommand{\ggma}{\omega}
\newcommand{\sct}{\text{\rm sc}}
\newcommand{\Of}{\mca{O}^\digamma}
\newcommand{\gk}{c_{\sf gk}}
\newcommand{\Irr}{ {\rm Irr} }
\newcommand{\Irrg}{ {\rm Irr}_{\rm gen} }
\newcommand{\diag}{{\rm diag}}
\newcommand{\uchi}{ \underline{\chi} }
\newcommand{\Tr}{ {\rm Tr} }
\newcommand{\der}\de
\newcommand{\Stab}{{\rm Stab}}
\newcommand{\Ker}{{\rm Ker}}
\newcommand{\bfp}{\mathbf{p}}
\newcommand{\bfq}{\mathbf{q}}
\newcommand{\KP}{{\rm KP}}
\newcommand{\Sav}{{\rm Sav}}
\newcommand{\de}{{\rm der}}
\newcommand{\lest}{\leqslant}
\newcommand{\gest}{\geqslant}

\newcommand{\tchi}{\tilde{\chi}}
\newcommand{\tomega}{\tilde{\omega}}

\newcommand{\cu}[1]{\textsc{\underline{#1}}}
\newcommand{\set}[1]{\left\{#1\right\}}
\newcommand{\ul}[1]{\underline{#1}}
\newcommand{\wt}[1]{\overline{#1}}
\newcommand{\wtsf}[1]{\wt{\sf #1}}
\newcommand{\anga}[1]{{\left\langle #1 \right\rangle}}
\newcommand{\angb}[2]{{\left\langle #1, #2 \right\rangle}}
\newcommand{\wm}[1]{\wt{\mbf{#1}}}
\newcommand{\elt}[1]{\pmb{\big[} #1\pmb{\big]} }
\newcommand{\ceil}[1]{\left\lceil #1 \right\rceil}
\newcommand{\val}[1]{\left| #1 \right|}

\newcommand{\exc}{ {\rm exc} }

\newcommand{\motimes}{\text{\raisebox{0.25ex}{\scalebox{0.8}{$\bigotimes$}}}}

\newcommand{\nequiv}{\not \equiv}
\newcommand{\half}{\frac{1}{2}}
\newcommand{\psii}{\widetilde{\psi}}
\newcommand{\ab} {|\!|}
\newcommand{\mb}{{\widetilde{B(\F)}}}

\title[Restrictions, L-parameters, and local coefficients]{Restrictions, L-parameters, and local coefficients for genuine representations}

\author{Fan Gao, Freydoon Shahidi, and Dani Szpruch}
\address{Fan Gao: School of Mathematical Sciences, Yuquan Campus, Zhejiang University, 38 Zheda Road, Hangzhou, China 310027}
\email{gaofan@zju.edu.cn}
\address{Freydoon Shahidi: Department of Mathematics, Purdue University, 150 N. University Street, West Lafayette, IN 47907}
\email{shahidi@math.purdue.edu}
\address{Dani Szpruch: Department of Mathematics and Computer Science, Open University of Israel, Raanana 43107, Israel}
\email{dszpruch@openu.ac.il}

\subjclass[2010]{Primary 11F70; Secondary 22E50}
\keywords{covering groups, L-group, parameters, Whittaker functionals, local coefficients matrix, R-group, unramified representation, L-packet, metaplectic tensor product}
\maketitle

\begin{abstract} 
We consider the restriction and induction of representations between a covering group and its derived subgroup, both on the representation-theoretic side and the L-parameter side. In particular, restriction of a genuine principal series is analyzed in detail. We also discuss a metaplectic tensor product construction for covers of the symplectic similitudes groups, and remark on the generality of such a construction for other groups. Furthermore, working with an arbitrary irreducible constituent of a unitary unramified principal series, we prove a multiplicity formula for its restriction to the derived subgroup in terms of three associated R-groups. Later in the paper, we study an unramified L-packet on how the parametrization of elements inside such a packet varies along with different choices of hyperspecial maximal compact subgroups and their splittings. We also investigate the genericity of elements inside such an L-packet with respect to varying Whittaker datum. Pertaining to the above two problems, covers of the symplectic similitudes groups are discussed in detail in the last part of the paper.
\end{abstract}
\tableofcontents

\section{Introduction}
Let $F$ be a $p$-adic field. Let $G=\mbf{G}(F)$ be the group of $F$-rational points of a connected split reductive group $\mbf{G}$ over $F$. 
Let $H \subset G$ be a closed subgroup, which might be the $F$-rational points of a subgroup  $\mbf{H} \subset \mbf{G}$ over $F$ but not always so. Let $\pi \in \Irr(G)$ be an irreducible admissible representation. It is an important question to determine the restriction $\pi|_{H}$. As the literature on this problem is vast, we only mention three representative examples. The first one originated from the theory of Gelfand pair \cite{GK, Gro1}, and it includes the spherical theory when $H \subset G$ is a maximal compact subgroup, and the Whittaker theory when $H$ is the unipotent radical of a Borel subgroup.  The second example concerns the Gan--Gross--Prasad conjecture, see \cite{GP1, GGP1} and \cite{Gan14-2} including references therein. The third example arises from the special case where $H$ is closely related to the derived subgroup $G_\der$ of $G$.

If $\pi|_H$ is of finite length, then we have the semisimplification
$$(\pi|_{H})^{\rm ss} = \sum_i m_i \pi_i  \in \msc{R}(H),$$
where $\pi_i \in \Irr(H)$ and $\msc{R}(H)$ denotes the Grothendieck group of $\Irr(H)$. Part of the problem is then on determining the multiplicity $m_i$, which usually manifests in terms of some representation-theoretic and arithmetic invariants associated with the pair $(\pi, \pi_i) \in \Irr(G) \times \Irr(H)$.  A special case is the last example mentioned above. Now we elaborate on this, as its covering analogue is the main focus of our paper. 

It was shown by Silberger \cite{Sil79} that for every $\pi \in \Irr(G)$, the restriction $\pi|_{G_\der}$ is semisimple of finite length and thus
$$\pi|_{G_\der} = \sum_{i=1}^k m_i \cdot \pi_i$$
with $\pi_i \in \Irr(G_\der)$ and $m_i \gest 1$. This is a weaker form of a conjecture by Borel \cite{Bor} that all the constituents $\pi_i$ belong to the same L-packet associated with the parameter 
$$f\circ \phi_\pi: \WD_F \to {}^L G \to {}^L G_\der,$$
where $f$ is the natural quotient map. In fact, the conjecture of Borel concerns slightly more general group homomorphism $H \to G$ where $H$ may not be $G_\der$. A special case of this is when $G_\der \subset H \subset G$, and for such $H$ Adler and Prasad proposed a unified formula for the multiplicity $m_i$, see \cite{AP1}. The original conjecture by Borel was also generalized in another direction using the enchanced L-parameters. By an approach via the Hecke algebras, such generalized version was proved for principal series and unipotent representations of $G$, see the work of Aubert--Baum--Plymen--Solleveld \cite{ABPS17} and Solleveld \cite{Sol20}. We note that there are other works on various families of groups and representations for the restriction problem from $G$ to $G_\der$, see \cite{Key3, Nev15, BCG18, Cho19}. 

The starting point of this paper is motivated from the above (third) example in the setting of covering groups. More precisely, assume that $F$ contains the full group $\mu_n$ of $n$-th roots of unity, and consider a central cover
$$\begin{tikzcd}
\mu_n \ar[r, hook] & \wt{G} \ar[r, two heads] & G
\end{tikzcd}
$$
of $G$ by $\mu_n$ arising from the Brylisnki--Deligne framework \cite{BD}. Denote by 
$$\Irrg(\wt{G})$$ the set of isomorphism classes of irreducible genuine representations of $\wt{G}$, where $\mu_n$ always acts by a fixed embedding $\mu_n \into \C^\times$. The derived subgroup of $\wt{G}$ is equal to the cover $\wt{G}_\der$ of $G_\der$ obtained from the pull-back of $\wt{G}$ via the canonical inclusion $G_\der \subset G$. While the restriction problem for representations from $G$ is $G_\der$ is already a nontrivial one involving data from the L-parameter side, this is even more the case for central covers. 

Indeed, it is already instructive to consider the restriction of the principal series representations of $G$ and $\wt{G}$ to $G_\der$ and $\wt{G}_\der$ respectively. Let $I(\chi)$ be a principal series of $G$, where $\chi$ is a character of the torus $T$. It is well-known that
$$I(\chi)|_{G_\der} = I(\chi_0),$$
where $\chi_0 = \chi|_{\wt{T}_0}$ with $T_0 \subset G_\der$ being the split torus. However, if $I(\chi)$ is a genuine principal series of $\wt{G}$, where $\chi$ is a genuine character of the center $Z(\wt{T}) \subset \wt{T}$, then the restriction $I(\chi)|_{G_\der}$, which is still semisimple, may not be an isotypic sum of genuine principal series of $G_\der$. In fact, this phenomenon arises from its counterpart for the genuine representations of covering tori. That is, the decomposition of an irreducible genuine representation of $\wt{T}$, when restricted to $\wt{T}_0$, is more delicate than the linear case, which is deceptively simple. Such delicacy is a consequence of the fact that 
$$Z(\wt{T}_0) \nsubseteq Z(\wt{T})$$
in general.

Thus, we expect multiple layers of subtleties arising from the multiplicity formula for the restriction $\pi|_{\wt{G}_\der}$
for $\pi\in \Irrg(\wt{G})$. This also renders many new phenomena regarding the local coefficients associated with $\pi$ and certain intertwining operators. To illustrate, we consider  first the linear case. Let $\pi$ be a $\psi$-generic representation of $G$, where 
$$\psi: U \to \C^\times$$
 is a nondegenerate character of the unipotent radical $U$ of a Borel subgroup $B=TU$.  We have the canonical identification
$$\Wh_\psi(\pi) = \Wh_\psi(\pi|_{G_\der}).$$
It follows from the multiplicity-one property of Whittaker functionals that one has  
$$\Wh_\psi(\pi) = \Wh_\psi(\pi_0)$$ for a unique constituent $\pi_0 \subset \pi|_{G_\der}$. Thus, the arithmetic information encoded in the local coefficients of $\pi$ for $G$ is completely elucidated from this unique $\pi_0$. 
The simplest such example is when $\pi=I(\chi)$ is a principal series of $G$ as above. This is in contrast with the situation for covers. Indeed, although for $\pi \in \Irrg(\wt{G})$ we still have
$$\Wh_\psi(\pi) = \Wh_\psi(\pi|_{\wt{G}_\der}),$$
the two sides might be of high dimensions. In fact, it is possible that $\dim \Wh_\psi(\pi_i) \gest 1$ for every constituent $\pi_i \subset \pi|_{\wt{G}_\der}$. Hence, it is necessary to study every space $\Wh_\psi(\pi_i)$, as there may not be any distinguished one. Already as a preliminary step, it is essential to study the multiplicity formula in the decomposition $\pi|_{\wt{G}_\der}$. 

In fact, the relation between the Whittaker spaces and restriction problems can be deepened if one considers the  more ``restrictive'' Whittaker space. More precisely, the $\psi$-Whittaker space ${\rm Wh}_\psi(\pi)$ of $\pi$ is naturally a genuine $\wt{Z(G)}$-module, where $\wt{Z(G)}$ is a Heisenberg type group and thus its genuine irreducible representations are finite dimensional with the same dimension. If the uniqueness of $\psi$-Whittaker functionals for $\pi$ holds, then $\wt{Z(G)}$ is necessarily  abelian. The converse may not hold. However, assume that $\wt{Z(G)}$ is abelian and $\mu$ a genuine character of it, and let $\psi \times \mu$ be the corresponding representation of $U\times \wt{Z(G)}$, then one can ask for the dimension of the $\mu$-isotypic subspace  
$$\Wh_{\psi \times \mu}(\pi) \subset \Wh_\psi(\pi).$$
For the two-fold Kazhdan--Patterson covering $\wt{\GL}_2^{(2)}$ with twisting parameter $c=0$ (in the notation of \cite{KP}), it was shown by Gelbart, Howe, and Piatetski-Shapiro  \cite{GHPS} that $\dim \Wh_{\psi \times \mu}(\pi) \lest 1$ for every $\pi \in \Irrg(\wt{\GL}_2^{(2)})$. This fact was used in \cite{GPS} to study distinguished theta representations. Such uniqueness results for $\Wh_{\psi \times \mu}(\pi)$ do not hold for higher degree cover of $\GL_2$, nor for covers of $\GL_r$ in general. On the other hand, similar  results were obtained for certain covers of $\GSp_{2r}$ (see \cite[Theorem 5.1]{Szp4-1}), which follow from analyzing the restriction of $\pi \in \Irrg(\wt{\GSp}_{2r}^{(2)})$ to $\wt{\Sp}_{2r}^{(2)}$ and the uniqueness of Whittaker models for the metaplectic group $\wt{\Sp}_{2r}$ as shown in \cite{Szp1}. 

We also mention that some restriction and induction results between the above double cover $\wt{\GL}_2^{(2)}$ and the derived subgroup $\wt{\SL}_2^{(2)}$ were explicated in the work \cite{GPS83} by Gelbart and Piatetski-Shapiro, for both local and global groups. 
For untwisted (i.e., $c=0$ as above) Kazhdan--Patterson covers of $\GL_r$, similar results were obtained by Adams \cite{Ada03}. In a more recent work \cite{PatPr1}, Patel and Prasad refined the multiplicity formula for the restriction problem for $\wt{\GL}_2^{(2)}$, by utilizing the Shimura--Waldspurger correspondence.

It is also worthwhile to point out that in the tame setting when $p\nmid n$,  even if $I(\chi)$ is a $(K, s_K)$-unramified representation of $\wt{G}$, where $K\subset G$ is a hyperspecial maximal compact subgroup of $G$ and
$$s_K: K \into \wt{G}$$
a fixed splitting (assuming its existence), there might exist constituents in $I(\chi)|_{\wt{G}_\der}$ which are not unramified with respect to 
$$K_0:= K \cap G_\der$$
 and its inherited splitting into $\wt{G}_\der$ from $s_K$. This already occurs for even-fold cover of $G=\GL_2$ and was discussed in some depth in \cite{Szp6, GSS2}.

Our paper is thus partly motivated from all the above and we investigate several aspects of the representation theory of covering groups centered around such restriction problem, mainly on the following themes:
\begin{enumerate}
\item[(A1)] the restriction and induction functor between $\wt{G}$ and $\wt{G}_\der$ on the representation side,
\item[(A2)] natural speculations and results on the L-parameter side corresponding to the restriction and induction in (A1),
\item[(A3)] the behaviour of the Whittaker spaces and the local coefficients matrices for representations of $G$ and $G_\der$ with respect to restriction.
\end{enumerate}
Since at this stage we do not have a full understanding of the $\psi$-Whittaker space (or even its dimension) of an arbitrary genuine representation of a covering group, in order to obtain precise results, at various places we focus only on genuine principal series, or even on the unramified ones. Even with such confinement on the class of representations under investigation, exhibited are many new and interesting results which do not seem to exist for linear algebraic groups.

\subsection{Main results} 
We give a brief summary on the content and main results proved in this paper. While some notations are standard, we refer the reader to the actual theorems where full elaboration is given not only on the notations and also on their content. The three themes in (A1)--(A3) are not studied in a strictly linear order and are interwoven with the results proved in various sections.

\subsubsection{} \label{SSS:0-RI}
In \S  \ref{S:RI}, we study the restriction and induction on the representation side between $\wt{G}$ and a certain normal subgroup $\wt{H}$ of $\wt{G}$. For this purpose, we first recall the general Clifford--Mackey theory. There are essentially two families of such $\wt{H}$ considered in this paper, which are of different nature. 
\begin{enumerate}
\item[(i)] In the first case, $\wt{H} \subset \wt{G}$ is such that every $\pi \in \Irrg(\wt{G})$ is $\wt{H}$-concentrated (see Definition \ref{D:conc}). This implies that the support of the Harish-Chandra character distribution $\Theta_\pi$ lies in $\wt{H}$. Such normal subgroup $\wt{H}$ will be the focus of discussion in \S \ref{SS:mtp-GSp}, where we study the metapletic tensor product construction of $\wt{\GSp}_{2r}$, as an analogue to that of $\wt{\GL}_r$ studied in \cite{Kab1, Mez04, Tak3, Tak4, Cai1}.
\item[(ii)] For the second type, we take  
$$\wt{H} = \wt{Z(G)} \cdot \wt{G}_\der,$$
where $Z(G) \subset G$ denotes the center of $G$. Here $\wt{Z(G)}$ and $\wt{G}_\der$ are of dual-pair alike inside $\wt{G}$. Every irreducible representation of $\wt{Z(G)} \cdot \wt{G}_\der$ is of the form $\tau \boxtimes \rho$, where $\tau$ and $\rho$ agree on the intersection of $\wt{Z(G)}$ and $\wt{G}_\der$. Since $\wt{Z(G)}$ is a Heisenberg-type group, $\tau \in \Irrg(\wt{Z(G)})$ is of finite dimension and determined by its central character. The study of restriction and induction between representations of $\wt{G}_\der$ and $\wt{G}$ essentially amounts to those between $\wt{H}$ and $\wt{G}$.
\end{enumerate}

In fact, the consideration in (ii) above applies to the Levi subgroups $\wt{M}$ of $\wt{G}$, and concerns the remaining discussion in \S \ref{S:RI}. For convenience, we write  henceforth
$$M_0:= M_\der$$
for every Levi subgroup $M \subset G$, and denote 
$$\wt{M}^\dag:=\wt{Z(G)} \cdot \wt{M}_0 \subset \wt{Z(M)} \cdot \wt{M}_0.$$
Thus, $G_0$ denotes $G_\der$, and $\wt{G}^\dag =\wt{Z(G)} \cdot \wt{G}_0$.
Note that we use $\wt{Z(G)}$ (instead of $\wt{Z(M)}$) in the definition of $\wt{M}^\dag$ for all Levi subgroup $M\subset G$.
 In \S \ref{SSS:2pic}, we give a sufficient criterion for the induced representation 
$$\Ind_{\wt{M}^\dag}^{\wt{M}} (\tau \boxtimes \rho)$$
to be irreducible. The idea is simple as we explain now. The group
$$\mfr{Q}^\dag:=\wt{M}/\wt{M}^\dag$$
acts naturally on 
$$\mfr{Q}_M:=Z(\wt{M}^\dag)/Z(\wt{M})$$
 by conjugation  and thus on its Pontryagin dual $\widehat{\mfr{Q}_M}$. If the action of $\mfr{Q}^\dag$ on $\widehat{\mfr{Q}_M}$ is free, then the Mackey's theory gives the irreducibility of $\Ind_{\wt{M}^\dag}^{\wt{M}} (\tau \boxtimes \rho)$.  Similarly, we can define $\mfr{Q}_Z$ and $\mfr{Q}_{M_0}$ associated with $\wt{Z(G)}$ and $\wt{M}_0$ which gives a $\mfr{Q}^\dag$-equivariant embedding
$$\begin{tikzcd}
\widehat{\mfr{Q}_M} \ar[r, hook] & \widehat{\mfr{Q}_Z} \times \widehat{\mfr{Q}_{M_0}}.
\end{tikzcd}$$
We have

\begin{thm}[{Theorem \ref{T:in-re}}] \label{T:0-irr}
Let $\wt{M} \subset \wt{G}$ be a covering Levi subgroup.
\begin{enumerate}
\item[(i)] Suppose the action of $\mfr{Q}^\dag$ on either $\widehat{\mfr{Q}_Z}$ or $\widehat{\mfr{Q}_{M_0}}$ is free. Then $\Ind_{\wt{M}^\dag}^{\wt{M}} (\tau \boxtimes \rho)$ is irreducible for every $\tau\boxtimes \rho \in \Irrg(\wt{M}^\dag)$.
\item[(ii)] Specializing to the case $M=T$, if the action of $\mfr{Q}^\dag$ on both $\widehat{\mfr{Q}_Z}$ and $\widehat{\mfr{Q}_{T_0}}$ are trivial, then for every $\pi \in \Irrg(\wt{T})$ we have 
$$\pi|_{\wt{T}^\dag} = (\tau\boxtimes \rho)^{\oplus e}$$ for some
$e\in \N$. On the other hand, if $Z(\wt{T}) \subset Z(\wt{Z(G)}) \cdot Z(\wt{T}_0)$, then $\Ind_{\wt{T}^\dag}^{\wt{T}}(\tau \boxtimes \rho)$ is an isotypic sum of an irreducible genuine representation of $\wt{T}$.
\end{enumerate}
\end{thm}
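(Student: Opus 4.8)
\medskip
\noindent\textbf{Proof proposal.}
The plan is to run the Clifford--Mackey machinery (recalled in \S\ref{S:RI}) for the normal subgroup $\wt{M}^\dag\triangleleft\wt{M}$, whose quotient $\mfr{Q}^\dag=\wt{M}/\wt{M}^\dag$ is a finite abelian group (see \S\ref{S:RI}), exploiting that every irreducible genuine representation of $\wt{M}^\dag$ has the form $\tau\boxtimes\rho$, together with the $\mfr{Q}^\dag$-equivariant embedding $\widehat{\mfr{Q}_M}\hookrightarrow\widehat{\mfr{Q}_Z}\times\widehat{\mfr{Q}_{M_0}}$ recorded above. The two outputs of the theory that I would quote are: (a) for $\sigma\in\Irrg(\wt{M}^\dag)$ with conjugation-stabilizer $\mfr{Q}^\dag_\sigma\subset\mfr{Q}^\dag$, the induced representation $\Ind_{\wt{M}^\dag}^{\wt{M}}\sigma$ is irreducible if and only if $\mfr{Q}^\dag_\sigma=\{1\}$; and (b) for $\pi\in\Irrg(\wt{M})$ and any irreducible constituent $\sigma$ of $\pi|_{\wt{M}^\dag}$, one has $\pi|_{\wt{M}^\dag}=e\cdot\bigoplus_{q\in\mfr{Q}^\dag/\mfr{Q}^\dag_\sigma}{}^{q}\sigma$ for some $e\in\N$. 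So the whole statement reduces to understanding $\mfr{Q}^\dag_{\tau\boxtimes\rho}$.

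For (i), first note the containment $\mfr{Q}^\dag_{\tau\boxtimes\rho}\subset\mfr{Q}^\dag_\omega$, where $\omega$ is the genuine central character of $Z(\wt{M}^\dag)$ afforded by $\tau\boxtimes\rho$; this is automatic, since conjugate representations have conjugate central characters, and I would stress that only this inclusion is used --- one does \emph{not} need $\tau\boxtimes\rho$ to be recoverable from $\omega$ (for a general Levi, $\rho\in\Irrg(\wt{M}_0)$ need not be determined by its central character). Next, if $\mfr{Q}^\dag$ acts freely on $\widehat{\mfr{Q}_Z}$ or on $\widehat{\mfr{Q}_{M_0}}$, then through the $\mfr{Q}^\dag$-equivariant embedding $\widehat{\mfr{Q}_M}\hookrightarrow\widehat{\mfr{Q}_Z}\times\widehat{\mfr{Q}_{M_0}}$ it acts freely on $\widehat{\mfr{Q}_M}$; and, by the way the genuine central characters of $Z(\wt{M}^\dag)$ are organised over $\widehat{\mfr{Q}_M}$ in \S\ref{S:RI}, freeness of the $\mfr{Q}^\dag$-action on $\widehat{\mfr{Q}_M}$ forces $\mfr{Q}^\dag_\omega=\{1\}$. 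Hence $\mfr{Q}^\dag_{\tau\boxtimes\rho}=\{1\}$, and (a) yields (i).

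For (ii) we specialise to $M=T$. If $\mfr{Q}^\dag$ acts trivially on $\widehat{\mfr{Q}_Z}$ and on $\widehat{\mfr{Q}_{T_0}}$, it acts trivially on $\widehat{\mfr{Q}_T}$ through the embedding, hence fixes the genuine central character of $Z(\wt{Z(G)})$ afforded by $\tau$ and the genuine central character of $Z(\wt{T}_0)$ afforded by $\rho$; since $\tau$ is determined by the former ($\wt{Z(G)}$ being of Heisenberg type) and $\rho$ by the latter ($\wt{T}_0$ being a covering torus), every $q\in\mfr{Q}^\dag$ satisfies ${}^{q}(\tau\boxtimes\rho)\cong{}^{q}\tau\boxtimes{}^{q}\rho\cong\tau\boxtimes\rho$, i.e. $\mfr{Q}^\dag_{\tau\boxtimes\rho}=\mfr{Q}^\dag$. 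By (b) the orbit occurring in $\pi|_{\wt{T}^\dag}$ is then a single point, so $\pi|_{\wt{T}^\dag}=(\tau\boxtimes\rho)^{\oplus e}$ for some $e\in\N$. For the last assertion, $Z(\wt{Z(G)})$ and $Z(\wt{T}_0)$ lie inside $\wt{T}^\dag$, so the hypothesis $Z(\wt{T})\subset Z(\wt{Z(G)})\cdot Z(\wt{T}_0)$ places $Z(\wt{T})$ inside $\wt{T}^\dag$; as $Z(\wt{T})$ is then central in $\wt{T}^\dag$, the representation $\tau\boxtimes\rho$ has a central character on $Z(\wt{T})$, and Frobenius reciprocity shows that every irreducible constituent of $\Ind_{\wt{T}^\dag}^{\wt{T}}(\tau\boxtimes\rho)$ has that same central character on $Z(\wt{T})$. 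Since genuine irreducible representations of the covering torus $\wt{T}$ are parametrised by genuine characters of $Z(\wt{T})$, these constituents are all isomorphic, i.e. $\Ind_{\wt{T}^\dag}^{\wt{T}}(\tau\boxtimes\rho)$ is an isotypic sum of one irreducible genuine representation of $\wt{T}$.

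The main obstacle I anticipate is not any of the representation-theoretic deductions above but the bookkeeping packaged into \S\ref{S:RI}: precisely relating the $\mfr{Q}^\dag$-action on the genuine central characters of $Z(\wt{M}^\dag)$ to the actions on $\widehat{\mfr{Q}_Z}$ and $\widehat{\mfr{Q}_{M_0}}$ through the equivariant embedding, so that ``free on one factor'' really does kill the stabilizer $\mfr{Q}^\dag_\omega$, and keeping careful track of which center lies in which. Once these structural facts --- together with the standard classifications of genuine representations of a Heisenberg-type group and of a covering torus --- are in place, the arguments are short.
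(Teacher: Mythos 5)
Your proposal is correct and follows essentially the same route as the paper: transfer the freeness (resp. triviality) of the $\mfr{Q}^\dag$-action through the equivariant embedding $\widehat{\mfr{Q}_M}\hookrightarrow\widehat{\mfr{Q}_Z}\times\widehat{\mfr{Q}_{M_0}}$, identify the stabilizer of $\tau\boxtimes\rho$ via the central character of $Z(\wt{M}^\dag)$, and then invoke Mackey's irreducibility criterion for (i) and Clifford theory plus the fact that genuine irreducibles of Heisenberg-type groups ($\wt{Z(G)}$, $\wt{T}_0$, $\wt{T}$) are determined by their central characters for both halves of (ii). The only difference is expository: you spell out the commutator-character bookkeeping that the paper compresses into "a direct consequence of Mackey's irreducibility criterion," and in the last part of (ii) you phrase the paper's computation $\omega_\pi=(\omega_\tau\cdot\omega_\rho)|_{Z(\wt{T})}$ via Schur's lemma and Frobenius reciprocity, which is equivalent.
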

In Corollary \ref{C:Zinc}, we give several explicit and equivalent conditions for the restriction $\pi|_{\wt{T}_0}$ to be an isotypic sum. Following an idea from \cite{Szp4-1}, we study the irreducibility of parabolic induction for $\wt{G}$ and $\wt{G}_0$, see Theorem \ref{T:irre-c}.

Several families of covers are worked out in detail in \S \ref{SS:RI-eg}. In particular, we analyze the structure of  the restriction and induction of representations for covers of $\GL_r, \GSp_{2r}$ and $\GSpin_{2r+1}$, and apply Theorem \ref{T:in-re} and Corollary \ref{C:Zinc}  to obtain concrete results, see Propositions \ref{P:GLsum}, \ref{P:RI-GSp} and \ref{P:RI-GSpin}. We note that the example of covers of $\GSpin_{2r+1}$ was already discussed by Kaplan in \cite{Kap004}. The scheme of analysis in \S \ref{SS:RI-eg} essentially follows \cite{Szp4-1, Szp5}. Indeed, the results in \cite{GPS83} for $\wt{\GL}_2^{(2)}$ were generalized in \cite{Szp4-1} for $\wt{\GSp}_{2r}^{(2)}$, where the latter also includes a discussion on the analogous problem of restriction and induction on covering Levi subgroups of $\wt{\GSp}_{2r}^{(2)}$. It should be noted that a common feature of all the covers studied in \cite{Ada03, GPS83, Szp4-1} is that $\wt{Z(G)}$ is abelian. However, in this paper (and also the previous study \cite{GSS2}), we do not impose such constraints and treat covers in generality.

In the last part of \S \ref{S:RI}, we work out explicitly in the tame case (i.e., 
when $p\nmid n$) the decomposition of a genuine principal series $I(\chi)$ of $\wt{G}$ when restricted to $\wt{G}_0$. Recall that in the tame case, if $\wt{G}$ splits over the hyperspecial maximal compact subgroup $K \subset G$ generated by $\mbf{T}(O)$ and $e_{\alpha}(O)$, then we could fix such a splitting $s_K: K \into \wt{G}$ and consider a $(K, s_K)$-unramified genuine principal series $I(\chi)$. Such a $(K, s_K)$-unramified genuine principal series is parabolically induced from $i(\chi) \in \Irrg(\wt{T})$. We set $K_0 = K \cap G_\der$.

\begin{thm}[Theorem \ref{T:decT}] \label{T:0-dec}
Let $i(\chi):=\Ind_{\wt{A}}^{\wt{T}}(\tchi) \in \Irrg(\wt{T})$ be with central character $\chi$. Let $I(\chi)$ be the associated genuine principal series of $\wt{G}$. 
\begin{enumerate}
\item[(i)] One has
$$i(\chi)|_{\wt{T}_0}= \val{\msc{X}_{Q,n}^\mfr{c}} \cdot \bigoplus_{\gamma \in \msc{X}_{Q,n}^\Gamma/\msc{X}_{Q,n}^\mfr{c}} \bigoplus_{\omega_{\gamma, j} \in \msc{E}(\chi, {}^\gamma \tchi_O; Z(\wt{T}_0))} i(\omega_{\gamma, j}),$$
where every $i(\omega_{\gamma, j}) \in \Irrg(\wt{T}_0)$ appears with multiplicity one in the double sum of the right hand side. 
\item[(ii)] Consequently,
$$I(\chi)|_{\wt{G}_0}= \val{\msc{X}_{Q,n}^\mfr{c}} \cdot \bigoplus_{\gamma \in \msc{X}_{Q,n}^\Gamma/\msc{X}_{Q,n}^\mfr{c}} \bigoplus_{\omega_{\gamma, j} \in \msc{E}(\chi, {}^\gamma \tchi_O; Z(\wt{T}_0))} I(\omega_{\gamma, j})$$
for the restriction of the genuine principal series $I(\chi)$. Here generically, the $I(\omega_{\gamma, j})$'s appear multiplicity-free in the double sum of the right hand side. 
\item[(iii)] Assume that $I(\chi)$ is an $(K, s_K)$-unramified genuine principal series. Then $I(\omega_{\gamma, j})$ is $(K_0, s_K)$-unramified if and only if $\omega_{\gamma, j}$ belongs to the set $\msc{E}(\chi, \tchi_O; Z(\wt{T}_0))$, i.e., with $\gamma =0 \in \msc{X}_{Q,n}^\Gamma /\msc{X}_{Q,n}^\mfr{c}$ being the trivial class.
\end{enumerate}
\end{thm}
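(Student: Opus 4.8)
The plan is to build the restriction of $I(\chi)$ from the restriction of the inducing data $i(\chi)$ on the torus level, then propagate through parabolic induction, and finally use an unramified-vectors argument for (iii). The key structural input is Theorem \ref{T:0-irr} together with the general Clifford--Mackey machinery recalled in \S\ref{S:RI}: since $\wt{T}^\dag = \wt{Z(G)}\cdot\wt{T}_0 \subset \wt{T}$ is normal with abelian quotient, and $\wt{T}_0 \subset \wt{T}^\dag$, every $i(\chi) \in \Irrg(\wt{T})$ restricts semisimply to $\wt{T}^\dag$ and then to $\wt{T}_0$, with constituents governed by the orbits and stabilizers of the conjugation action of $\mfr{Q}^\dag = \wt{T}/\wt{T}^\dag$ on the relevant dual groups. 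The combinatorial groups $\msc{X}_{Q,n}^\Gamma$ and $\msc{X}_{Q,n}^\mfr{c}$ are precisely the bookkeeping devices for $Z(\wt{T}_0)$ versus $Z(\wt{T})$ (reflecting $Z(\wt{T}_0)\nsubseteq Z(\wt{T})$), and the sets $\msc{E}(\chi, {}^\gamma\tchi_O; Z(\wt{T}_0))$ enumerate the genuine characters of $Z(\wt{T}_0)$ lying above the data twisted by $\gamma$.

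For part (i), I would first describe $i(\chi) = \Ind_{\wt{A}}^{\wt{T}}(\tchi)$ explicitly: $\wt{A}$ is a maximal abelian subgroup of $\wt{T}$ containing $Z(\wt{T})$, and $\tchi$ an extension of $\chi$. Restricting to $\wt{T}_0$, Mackey's formula gives a double coset decomposition; the cosets are indexed by $\wt{T}_0\backslash \wt{T}/\wt{A}$, which I would identify with a quotient of the cocharacter lattice modulo $n$-divisibility, matching $\msc{X}_{Q,n}^\Gamma/\msc{X}_{Q,n}^\mfr{c}$. On each coset the contribution is an induction of a twisted character ${}^\gamma\tchi$ restricted back to $\wt{A}\cap\wt{T}_0$ and re-induced to $\wt{T}_0$; decomposing that via the central character of $Z(\wt{T}_0)$ yields exactly the inner sum over $\omega_{\gamma,j} \in \msc{E}(\chi, {}^\gamma\tchi_O; Z(\wt{T}_0))$, each appearing once. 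The multiplicity $\val{\msc{X}_{Q,n}^\mfr{c}}$ out front records the failure of $Z(\wt{T}_0)$ to separate the constituents on the nose --- it is the size of the kernel of the relevant pairing, i.e. the isotypic multiplicity already appearing in Theorem \ref{T:0-irr}(ii). I expect the main obstacle here to be the careful matching of the abstract Mackey indices and stabilizers with the concrete lattice-theoretic invariants $\msc{X}_{Q,n}^\Gamma, \msc{X}_{Q,n}^\mfr{c}$ and the set $\msc{E}(\cdots)$; once the dictionary is fixed the decomposition is bookkeeping.

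For part (ii), I would invoke exactness of parabolic (here, full unipotent) induction and the fact that $\wt{G}_0 = \wt{G}_\der$ with its Borel $B_0 = B \cap G_0$ has $I(\omega) = \Ind_{\wt{B}_0}^{\wt{G}_0}(\omega)$ for $\omega \in \Irrg(\wt{T}_0)$. Since $U \subset G_0$, restriction commutes with unipotent induction in the relevant sense: $\big(\Ind_{\wt{B}}^{\wt{G}} i(\chi)\big)\big|_{\wt{G}_0} = \Ind_{\wt{B}_0}^{\wt{G}_0}\big(i(\chi)\big|_{\wt{T}_0}\big)$, and applying (i) together with additivity of $\Ind$ over direct sums gives the displayed formula. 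The ``generically multiplicity-free'' clause follows because distinct $\omega_{\gamma,j}$ are in distinct Weyl orbits for generic $\chi$, so the corresponding $I(\omega_{\gamma,j})$ share no constituents; I would state this as holding away from the (measure-zero) reducibility locus and point to the precise genericity condition used elsewhere in the paper. For part (iii), I would use that $I(\chi)$ being $(K,s_K)$-unramified means it has a nonzero $s_K(K)$-fixed vector, which by Iwasawa/Frobenius reciprocity corresponds to $i(\chi)$ having the appropriate $s_K(T(O))$-fixed data $\tchi_O$; restricting to $\wt{G}_0$ and $K_0 = K\cap G_0$, the $(K_0,s_K)$-fixed vectors in $I(\omega_{\gamma,j})$ are nonzero precisely when $\omega_{\gamma,j}$ is compatible with $\tchi_O$ \emph{without} a nontrivial twist $\gamma$, i.e. $\gamma = 0$ and $\omega_{\gamma,j} \in \msc{E}(\chi,\tchi_O; Z(\wt{T}_0))$ --- a nontrivial $\gamma$ moves the data out of the $T_0(O)$-unramified locus by construction of $\msc{X}_{Q,n}^\Gamma/\msc{X}_{Q,n}^\mfr{c}$. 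The only delicate point is checking that the inherited splitting of $K_0$ into $\wt{G}_0$ is exactly the restriction of $s_K$ and that the unramified vector computation is insensitive to the induction-from-$\wt{A}$ step; both should follow from the tame theory of Brylinski--Deligne covers already set up in the paper.
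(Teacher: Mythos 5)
Your overall route is the paper's own: a Mackey decomposition of $i(\chi)=\Ind_{\wt{A}}^{\wt{T}}(\tchi)$ along $\wt{T}_0$ in the tame case with $\wt{A}=Z(\wt{T})\cdot\mbf{T}(O)$, compatibility of restriction with parabolic induction for (ii), and an unramified-vector/central-character argument for (iii); parts (ii) and (iii) as you sketch them are in line with what the paper does. The gap is in (i), at precisely the point where the theorem has content beyond raw Mackey theory. The double coset space is $\wt{T}_0\backslash\wt{T}/\wt{A}=\wt{T}/(\wt{T}_0\wt{A})\simeq Y/(Y_0+Y_{Q,n})=\msc{X}_{Q,n}^\Gamma$, \emph{not} $\msc{X}_{Q,n}^\Gamma/\msc{X}_{Q,n}^\mfr{c}$ as you assert. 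Since you correctly note that each coset contributes a multiplicity-free sum $\bigoplus_j i(\omega_{\gamma,j})$ (the extensions of ${}^\gamma\tchi$ from $\wt{A}^\natural$ to $\wt{A}_0$ are separated by their central characters on $Z(\wt{T}_0)$), your indexing leaves the factor $\val{\msc{X}_{Q,n}^\mfr{c}}$ with no source: a dimension count comparing $\dim i(\chi)=\val{Y/Y_{Q,n}}$ with $\val{\msc{X}_{Q,n}^\Gamma/\msc{X}_{Q,n}^\mfr{c}}\cdot\val{Y_{0,Q,n}/(Y_0\cap Y_{Q,n})}\cdot\val{Y_0/Y_{0,Q,n}}$ shows you are short by exactly that factor. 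Attributing the multiplicity to ``the isotypic multiplicity already appearing in Theorem \ref{T:0-irr}(ii)'' does not help: that $e$ concerns $\pi|_{\wt{T}^\dag}$ under hypotheses (trivial $\mfr{Q}^\dag$-actions) that are not assumed here, and it is not the quantity entering the theorem.

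What actually produces the factor is the collapsing of \emph{distinct} Mackey terms, not extra multiplicity inside one term. The sum runs over $\gamma\in\msc{X}_{Q,n}^\Gamma$, and the package $\set{i(\omega_{\gamma,j})}_j$ attached to $\gamma$ depends on $\gamma$ only through the character $\mfr{c}(\gamma)=[\gamma(\varpi),-]$ of $\mbf{T}_0(O)\cap Z(\wt{T}_0)$ (together with $\chi$ and $\tchi_O$), because each $i(\omega_{\gamma,j})$ is determined by its central character and $\omega_{\gamma,j}$ restricts on $\mbf{T}_0(O)\cap Z(\wt{T}_0)$ to $[\gamma,\cdot]^{-1}\tchi_O$. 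The kernel of $\mfr{c}$ is $\msc{X}_{Q,n}^\mfr{c}$, so grouping the $\msc{X}_{Q,n}^\Gamma$-indexed terms into $\msc{X}_{Q,n}^\mfr{c}$-cosets yields both the multiplicity $\val{\msc{X}_{Q,n}^\mfr{c}}$ and the multiplicity-one statement for the residual double sum; the same computation of $\omega_{\gamma,j}|_{\mbf{T}_0(O)\cap Z(\wt{T}_0)}$ is what makes your part (iii) argument precise. The repair is straightforward (keep the index set $\msc{X}_{Q,n}^\Gamma$ and collapse via $\Ker(\mfr{c})$), but as written your step (i) does not yield the stated formula.
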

In the above theorem, the finite abelian group $\msc{X}_{Q,n}^\Gamma$ is a natural quotient of $\msc{X}_{Q,n}$, the latter of which is the ``moduli space" of the Whittaker space $\Wh_\psi(I(\chi))$ when $I(\chi)$ is unramified, as discussed in detail in \cite{GSS2}. We highlight that from Theorem \ref{T:0-dec} (iii) above, the genuine principal series $I(\omega_{\gamma, j})$ of $\wt{G}_0$ with $\gamma \ne 0$  is not $(K_0, s_K)$-unramified.
In fact, such $I(\omega_{\gamma, j}), \gamma \ne 0$ is unramified with respect to $(K_0', s_{K'})$ inherited from a different pair $(K', s_{K'})$ for $\wt{G}$. 

It is easy to work out from  Theorem \ref{T:0-dec} when the restriction $I(\chi)|_{\wt{G}_0}$ is an isotypic sum of a genuine principal series: this is exactly when $(\wt{G}, \wt{G}_0)$ is an isotypic pair in the sense of Definition \ref{D:isop}, i.e., when
$$Z(\wt{T}_0) \subset Z(\wt{T}),$$
or equivalently by Corollary \ref{C:Zinc}, $Y_0 \cap Y_{Q,n} = Y_{0,Q,n}$.

\subsubsection{}  In \S \ref{S:func}, we study the dual side of the restriction and induction discussed on the representation side in \S \ref{S:RI}. Some anomalies in the covering setting were already discussed in \cite{GG} and part of the work here may be considered as a generalization and further elaboration of some speculations there. To motivate the general setup, it is instructive to consider the Kazhdan--Patterson double cover $\wt{G}=\wt{\GL}_2^{(2)}$, which by restriction gives the classical metaplectic double cover $\wt{G}_0= \wt{\SL}_2^{(2)}$. Regarding the dual groups, one has
$$\wt{G}^\vee=\GL_2 \text{ and } \wt{G}_0^\vee = \SL_2.$$
Clearly, there is no natural nontrivial homomorphism of algebraic groups from $\wt{G}^\vee$ to $\wt{G}_0^\vee$, which supposedly encodes from the dual side the restriction functor from $\wt{G}$ to $\wt{G}_0$. On the other hand, both $\wt{G}^\vee$ and $\wt{G}_0^\vee$ map naturally to $\PGL_2$, which thus serves as the bridge between the two dual groups. 

For general $\wt{G}$, there is a natural linear algebraic group $H$ and a cover $\wt{H}$ whose dual group $\wt{H}^\vee$ is endowed with two natural maps:
$$f_{G, H}: \wt{G}^\vee \longrightarrow \wt{H}^\vee \text{ and } f_{G_0, H}: \wt{G}^\vee_0 \longrightarrow \wt{H}^\vee.$$
These two maps can be extended to homormophisms of the corresponding L-groups. Thus, we expect a natural commutative diagram 
$$
\begin{tikzcd}
 & & {}^L \wt{Z(G)} \ar[r, "{f_{c,z}}"] & {}^L Z(\wt{G}) \\
 \WD_F \ar[rr, "{\phi_\pi}"] \ar[rru, "{\phi_\tau}"] \ar[rrd, "{\phi_\rho}"'] & & {}^L \wt{G}  \ar[ru, "{f_{G,z}}"']  \ar[rd, "{f_{G,H}}"] \\
&  & {}^L \wt{G}_0 \ar[r, "{f_{G_o, H}}"'] & {}^L \wt{H}
\end{tikzcd}
$$
indicating some natural speculations on the pertinent parameters for representations involved in the restriction and induction between $\wt{G}$ and $\wt{G}_0$. This we explain below.

Denote by 
$$\phi: \WD_F \longrightarrow {}^L\wt{G}$$
an L-parameter of $\wt{G}$. Let $\mca{L}(\phi)$ be the associated hypothetical L-packet. We assume the strong form of the local Langlands correspondence that every $\phi \in \mca{L}(\phi)$ is associated with an enhanced parameter
$$(\phi, \theta) \in \Phi^{\rm en}({}^L\wt{G}),$$
where $\theta$ is an irreducible representation of the component group $\mca{S}_\phi:=\mca{S}(\phi)$ of $\phi$. Denote 
$$\phi^\diamondsuit = f_{G,H} \circ \phi,$$
and consider the naturally induced map
$$\mca{S}_\phi \into \mca{S}_{\phi^\diamondsuit},$$
which is an embedding (see \cite[Proposition 5.4]{Sol20}). Let $\Phi({}^L\wt{G}_0; \phi)$ be the set of parameters $\phi_0: \WD_F \to {}^L\wt{G}_0$ such that $f_{G_0, H}\circ \phi_0 = \phi^\diamondsuit$. For every $\phi_0 \in \Phi({}^L\wt{G}_0; \phi)$, we have an embedding 
$$\mca{S}_{\phi_0} \into \mca{S}_{\phi^\diamondsuit}$$
 as well.

\begin{conj}[Conjecture \ref{C:res}] \label{C:0-res}
Let $\pi \in \Irrg(\wt{G})$ be  with central character $\omega_\pi$ and associated enhanced parameter $(\phi_\pi, \theta_\pi) \in \Phi^{\rm en}({}^L\wt{G})$. Let $\tau \boxtimes \rho$ be an irreducible representation of $\wt{G}^\dag=\wt{Z(G)} \cdot \wt{G}_0$ occurring in the restriction of $\pi$. 
 \begin{enumerate}
 \item[(i)] The L-parameters  $\phi_{\omega_\pi}, \phi_\tau$ and $\phi_\rho$ are such that the following hold:
\begin{enumerate}
\item[$\bullet$] $f_{G,z} \circ \phi_\pi = f_{c, z} \circ \phi_\tau$ and is equal to the parameter $\phi_{\omega_\pi}$ associated with the central character $\omega_\pi$ of $\pi$;
\item[$\bullet$] $\phi_\pi^\diamondsuit = f_{G_0, H} \circ \phi_\rho$.
\end{enumerate}
\item[(ii)] 
There exists $e(\pi)\in \N$ such that for every $\tau \in \Irrg(\wt{G}_0)$ with enhanced parameter $(\phi_\tau, \theta_\tau) \in \Phi^{\rm en}({}^L\wt{G}_0)$, one has
$$\dim \Hom_{\wt{G}_0}(\pi, \tau) =
\begin{cases}
0 & \text{ if } \phi_\tau \notin \Phi({}^L\wt{G}_0; \phi_\pi), \\
e(\pi) \cdot \angb{ \Ind_{\mca{S}_{\phi_\pi}}^{\mca{S}_{\phi^\diamondsuit}} \theta_\pi }{ \Ind_{\mca{S}_{\phi_\tau}}^{\mca{S}_{\phi^\diamondsuit}} \theta_\tau }_{\mca{S}_{\phi^\diamondsuit}} & \text{ if }  \phi_\tau \in \Phi({}^L\wt{G}_0; \phi_\pi),
\end{cases}
$$
where $\angb{-}{-}$ denotes the pairing of two representations of $\mca{S}_{\phi^\diamondsuit}$; in particular, if $(\wt{G}, \wt{G}_0)$ is an isotypic-pair, then
$$\pi|_{\wt{G}_0} =e(\pi) \cdot \bigoplus_{\tau \in \mca{L}(\phi_\pi^\diamondsuit)} \angb{ \theta_\tau|_{\mca{S}_{\phi_\pi}} }{ \theta_\pi}_{\mca{S}_{\phi_\pi}} \cdot \tau,$$
where $(\phi_\pi^\diamondsuit, \theta_\tau) \in \Phi^{\rm en}({}^L\wt{G}_0)$ is the enhanced parameter associated with $\tau \in \mca{L}(\phi_\pi^\diamondsuit)$.
\item[(iii)] Let $\tau \in \Irrg(\wt{Z(G)})$ and $\rho \in \Irrg(\wt{G}_0)$ be compatible representations. Then every irreducible constituent $\pi \in \Irrg(G)$ of $\Ind_{\wt{G}^\dag}^{\wt{G}} (\tau\boxtimes \rho)$ has a parameter $\phi_\pi$ which fits into a commutative diagram \eqref{L-comp}.
\end{enumerate}
\end{conj}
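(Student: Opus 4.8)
The plan is to exploit the fact that the restriction from $\wt{G}$ to $\wt{G}_0$ factors through the intermediate group $\wt{G}^\dag = \wt{Z(G)} \cdot \wt{G}_0$, so that the Clifford--Mackey analysis of \S\ref{S:RI} (Theorems \ref{T:0-irr} and \ref{T:in-re}, together with Corollary \ref{C:Zinc}) reduces the representation-theoretic side to bookkeeping over an orbit of characters of $\mfr{Q}^\dag$, while the dual side is controlled by the linear bridge group $\wt{H}$ and the two maps $f_{G,H}, f_{G_0,H}$. I would establish the conjecture in three layers: (1) covering tori, where every object is explicit; (2) genuine principal series, using the decomposition in Theorem \ref{T:0-dec}; and (3) general genuine representations, by propagating along parabolic induction (Theorem \ref{T:irre-c}) and a Hecke-algebra comparison in the spirit of \cite{ABPS17, Sol20}. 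Throughout one must argue unconditionally, since no general local Langlands correspondence for Brylinski--Deligne covers is available; the honest target is therefore to \emph{verify} the conjecture in these layers and for the explicit families $\wt{\GL}_r^{(n)}$, $\wt{\GSp}_{2r}$, $\wt{\GSpin}_{2r+1}$ of \S\ref{SS:RI-eg}, where both sides of the formula can be computed.

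\textbf{Part (i).} The equalities $f_{G,z}\circ\phi_\pi = f_{c,z}\circ\phi_\tau = \phi_{\omega_\pi}$ should be read as the requirement that the (conjectural) parametrization is compatible with passage to central characters: since $\tau\boxtimes\rho$ occurs in $\pi|_{\wt{G}^\dag}$, the representations $\tau$ and $\pi$ carry the same central character on $Z(\wt{G})$, and this character is by construction the common source of $f_{G,z}\circ\phi_\pi$ and $f_{c,z}\circ\phi_\tau$ inside ${}^L\wt{Z(G)} \to {}^L Z(\wt{G})$. I would verify this first for covering tori, where the parameter of a genuine character is built by class field theory, and then transport it along parabolic induction. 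The second assertion $\phi_\pi^\diamondsuit = f_{G_0, H}\circ\phi_\rho$ is the covering analogue of Borel's conjecture for the pair $(\wt{G},\wt{G}_0)$: both $\wt{G}$ and $\wt{G}_0$ map to $\wt{H}$, and the constituents of the restriction should all lie over the pushed-forward parameter $\phi^\diamondsuit$. For principal series this is made precise by Theorem \ref{T:0-dec}: the constituents of $I(\chi)|_{\wt{G}_0}$ are the $I(\omega_{\gamma,j})$, and one checks that each $\omega_{\gamma,j}$, seen through $f_{G_0,H}$, recovers the principal series parameter attached to $\chi^\diamondsuit$; the general case follows by induction on parabolic rank.

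\textbf{Parts (ii) and (iii).} For (ii) the model is Solleveld's proof of the enhanced Borel conjecture for principal series: the embeddings $\mca{S}_{\phi_\pi}\into\mca{S}_{\phi^\diamondsuit}$ and $\mca{S}_{\phi_\tau}\into\mca{S}_{\phi^\diamondsuit}$ are in place, and the multiplicity is the Frobenius pairing $\angb{\Ind_{\mca{S}_{\phi_\pi}}^{\mca{S}_{\phi^\diamondsuit}}\theta_\pi}{\Ind_{\mca{S}_{\phi_\tau}}^{\mca{S}_{\phi^\diamondsuit}}\theta_\tau}_{\mca{S}_{\phi^\diamondsuit}}$, up to the overall factor $e(\pi)$. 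The new phenomenon for covers is exactly $e(\pi)$: it records the ``hidden'' multiplicity coming from $Z(\wt{T}_0)\nsubseteq Z(\wt{T})$ --- multiplicity genuinely present on the representation side (visible as $\val{\msc{X}_{Q,n}^\mfr{c}}$ and the extra summation over $\msc{X}_{Q,n}^\Gamma/\msc{X}_{Q,n}^\mfr{c}$ in Theorem \ref{T:0-dec}) but invisible from the dual side, where $\wt{G}^\vee$ and $\wt{G}_0^\vee$ both see only $\wt{H}^\vee$. Concretely I would (a) establish the formula for unramified principal series by a direct computation, matching the double sum of Theorem \ref{T:0-dec}(ii) with a pairing of characters of the (then abelian) group $\mca{S}_{\phi^\diamondsuit}$ and reading off $e(\pi)=\val{\msc{X}_{Q,n}^\mfr{c}}$; (b) reduce tempered and then general representations to this case via the Langlands classification and a Morita equivalence between the genuine Hecke algebra of the cover and a twisted affine Hecke algebra attached to the dual-side root datum, so that the \cite{ABPS17, Sol20} argument transports with the $2$-cocycle twist producing both the component-group pairing and the factor $e(\pi)$. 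Part (iii) is then dual to (i): a constituent $\pi$ of $\Ind_{\wt{G}^\dag}^{\wt{G}}(\tau\boxtimes\rho)$ is parametrized through Clifford theory by a character of a subgroup of $\mfr{Q}^\dag$, and one constructs $\phi_\pi$ by lifting $\phi_\rho$ along $f_{G,H}$ so that $f_{G,z}\circ\phi_\pi=f_{c,z}\circ\phi_\tau$; existence of such a lift follows from the surjectivity of $f_{G,H}$ onto $\wt{H}^\vee$ together with the vanishing (or absorption into $\mca{S}_{\phi^\diamondsuit}$) of the relevant $\WD_F$-cohomological obstruction.

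\textbf{Main obstacle.} The principal obstruction is that there is presently no general local Langlands correspondence --- not even a general construction of the packets $\mca{L}(\phi)$ and of the enhanced parametrization --- for genuine representations of Brylinski--Deligne covers, so the conjecture cannot be proved unconditionally in full generality; the realistic plan is to verify it in the layers above and for the explicit families of \S\ref{SS:RI-eg}. The second, more technical difficulty, present even in those cases, is \emph{matching the finite groups}: one must identify the Clifford-theoretic data --- the action of $\mfr{Q}^\dag$ on $\widehat{\mfr{Q}_M}$, $\widehat{\mfr{Q}_Z}$, $\widehat{\mfr{Q}_{M_0}}$ and the $2$-cocycle it carries --- with the dual-side component groups and the pairing $\angb{-}{-}_{\mca{S}_{\phi^\diamondsuit}}$, and in doing so isolate $e(\pi)$ cleanly as the part of the multiplicity coming from the genuine torus theory rather than from $\mca{S}_{\phi^\diamondsuit}$. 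Carrying out this identification for non-abelian $\mca{S}_{\phi^\diamondsuit}$, e.g. for covers of $\GSp_{2r}$ where $R$-groups genuinely intervene, is where the substantive work lies.
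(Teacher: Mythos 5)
You are reviewing a statement that the paper itself leaves as a conjecture: the authors state explicitly that it cannot be tackled in general without an established LLC for covers, and they only verify it in special cases. So the realistic target you set for yourself — verification for covering tori, for principal series, and for the explicit families — is precisely the paper's own programme, and your layers (1)–(2) coincide with what is actually carried out there: parts (i) and (iii) are checked for genuine principal series in Proposition \ref{P:func-tor}, using the LLC for covering tori together with Lemma \ref{L:incs} and Frobenius reciprocity (the paper never needs your alternative route for (iii) of lifting $\phi_\rho$ along $f_{G,H}$ and killing a $\WD_F$-cohomological obstruction), and part (ii) is verified for unitary unramified principal series in Theorem \ref{T:uni-func} and Corollary \ref{C:uni-ac}, with $e(I(\chi)) = \val{\msc{X}_{Q,n}^{\mfr{c}}}$ exactly as you predict.

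Where your plan is genuinely thinner than the paper is in the mechanism for (ii). ``Matching the double sum of Theorem \ref{T:decT} with a pairing of characters of $\mca{S}_{\phi^\diamondsuit}$'' only recovers the decomposition of $I(\chi)|_{\wt{G}_0}$ into the principal series $I(\omega_{\gamma,j})$; it does not by itself give the multiplicities $\dim \Hom_{\wt{G}_0}(\pi_\rho,\pi_\tau)$ between individual irreducible constituents, which is what the conjectural formula asserts. The paper needs three further inputs for this: the identification $R_\chi \simeq \mca{S}_\chi$ (and its analogue for $\omega^\flat$) from \cite{Ga7}; Keys' module-theoretic identity \eqref{E:G-S}, which converts $\Hom_{\wt{G}_0}(\pi_\rho,\pi_\tau)$ into a $\C[\mca{S}_\chi]$-pairing inside $\Hom_{\wt{G}_0}(\pi_\rho, I(\chi)|_{\wt{G}_0})$; and the fact that the set $\Omega^{\rm un}_\chi(\omega^\flat)$ of compatible unramified characters is a torsor over $\mca{S}^\diamondsuit/\mca{S}_{\omega^\flat}$, whose $\mca{S}_\chi$-orbit structure yields the induced-representation pairing via Mackey theory for these finite groups. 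Your closing remark that matching the Clifford data with the dual component groups is where the substantive work lies is accurate, but the plan as written does not supply these steps. Finally, your layer (3) — passing from principal series to tempered and general representations via a Morita equivalence of the genuine Hecke algebra with a twisted affine Hecke algebra in the style of \cite{ABPS17, Sol20} — goes beyond anything attempted in the paper and rests on an equivalence that is not currently available for general Brylinski--Deligne covers; it should be flagged as conjectural scaffolding rather than as part of the verification.
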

Part (ii) above is clearly motivated from the case of linear algebraic groups, see \cite{AP1} and references therein. Conjecture \ref{C:0-res} is not a statement that can be tackled at the moment for general $\pi\in \Irrg(\wt{G})$, as it relies on an established local Langlands correspondence (LLC). For example, it would be interesting to see if the results as in \cite{PatPr1} for $\wt{\GL}_2^{(2)}$ fit into Conjecture \ref{C:0-res}, if one has an established LLC for $\wt{\GL}_2^{(2)}$. For general covering torus for which LLC has been proved (see \cite{KP, Mc1, We1, We5, We6, GG}), we verify this in \S \ref{SS:func-ps}, see especially Proposition \ref{P:func-tor}.

In the remaining of \S \ref{S:func}, we study the metaplectic tensor product and a metaplectic restriction (the latter construction as a counterpart of the former) for covers $\wt{\GSp}_{2r}$. For Kazhdan--Patterson covers $\GL_r$, this problem was studied by P. Mezo \cite{Mez04} and also in several works following it, as mentioned in \S \ref{SSS:0-RI}. For a general Levi subgroup $$M_{\mbf{r}} = \GL_{r_1} \times ... \times \GL_{r_k},$$
the covering subgroups $\wt{\GL}_{r_i}$ and $\wt{\GL}_{r_j}$ of $\wt{\GL}_r$ do not commute in general, and thus the representation theory of $\wt{M}_\mbf{r}$ is not obtained from the naive tensor product of a genuine representation of each $\wt{\GL}_{r_i}$. However, there is still a surjective map (see \cite{Mez04})
$$\begin{tikzcd}
\tilde{\otimes}: ( \Irrg(\wt{\GL}_{r_1}) \times ... \times \Irrg(\wt{\GL}_{r_k}) \times \Irrg(Z(\wt{\GL}_{r})))^\heartsuit \ar[r, two heads] & \Irrg(\wt{M}_\mbf{r}),
\end{tikzcd} 
$$ 
where the superscript $(-)^\heartsuit$ indicates the subset of 
$$(\pi_1, ..., \pi_r, \omega)$$
satisfying certain compatible relation. The map $\tilde{\otimes}$ is called the metaplectic tensor product construction. The functorial interpretation of $\tilde{\otimes}$ on the dual side was given by Gan \cite{Gan17}.

As an analogue of $\wt{\GL}_r$ above, in \S \ref{SS:mtp-GSp} we investigate the case of $\wt{\GSp}_{2r}$. Let
$\mbf{r}=(r_1, r_2, ..., r_k; r_0)$ be a partition of $r$. Let
$$M_\mbf{r} = \GL_{r_1} \times \GL_{r_2} \times ... \times \GL_{r_k} \times \GSp_{2r_0}$$
be the associated Levi subgroup. We show the following.

\begin{thm}[Theorem \ref{T:GSp}]
Let $\wt{\GSp}_{2r}$ be a similitudes-splitting $n$-fold cover of $\GSp_{2r}$ with $n$ odd. There is a well-defined bijective map
$$\begin{tikzcd}
\tilde{\otimes}_i: ( \prod_{i=1}^k\Irrg(\wt{\GL}_{r_i}) \times \Irrg(\wt{\GSp}_{2r_0}) \times \Irrg(Z(\wt{\GSp}_{2r})))^\heartsuit \ar[r] & \Irrg(\wt{M}_\mbf{r}),
\end{tikzcd} $$
where the superscript $(-)^\heartsuit$ indicates the subset of  $(\pi_1, ..., \pi_k, \pi_0, \omega)$  satisfying a certain equality \eqref{E:comp01}. The inverse of $\tilde{\otimes}_i$ is given by a natural and explicit ``metaplectic restriction" map $\tilde{\rm R}$.
\end{thm}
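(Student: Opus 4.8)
The plan is to construct the map $\tilde{\otimes}_i$ and its inverse $\tilde{\rm R}$ directly via Clifford--Mackey theory, using the fact that $\wt{M}_\mbf{r}$ contains the normal subgroup
$$\wt{M}_\mbf{r}^\dag := \wt{Z(\GSp_{2r})} \cdot (\wt{\GL}_{r_1} \cdots \wt{\GL}_{r_k} \cdot \wt{\GSp}_{2r_0})_\der$$
and more usefully the subgroup generated by $Z(\wt{\GSp}_{2r})$ together with the individual covering blocks $\wt{\GL}_{r_i}$ and $\wt{\GSp}_{2r_0}$. First I would pin down the commutator structure: since $n$ is odd and the cover is similitudes-splitting, I expect the blocks $\wt{\GL}_{r_i}$ and $\wt{\GL}_{r_j}$ (and $\wt{\GSp}_{2r_0}$) to commute with each other modulo $\mu_n$ up to a tame bilinear pairing determined by the bisector/Brylinski--Deligne data, and the oddness of $n$ should force the relevant obstruction class (a quadratic-residue-type symbol) to vanish, so that the product subgroup $\prod_i \wt{\GL}_{r_i} \cdot \wt{\GSp}_{2r_0}$ is genuinely a quotient of the direct product of covers over a common $\mu_n$. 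This is the analogue of Mezo's analysis for $\wt{\GL}_r$ and is where the hypothesis ``$n$ odd'' is essential.

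Granting that, the representation theory of $\wt{M}_\mbf{r}$ is governed by the center. The key step is to identify $Z(\wt{M}_\mbf{r})$, $Z(\wt{M}_\mbf{r}^\dag)$, and the quotient $\mfr{Q}^\dag = \wt{M}_\mbf{r}/\wt{M}_\mbf{r}^\dag$, and to show, using the computation in Theorem \ref{T:in-re} applied blockwise, that the action of $\mfr{Q}^\dag$ on the relevant Pontryagin dual is free (or at least that the induced representation $\Ind_{\wt{M}_\mbf{r}^\dag}^{\wt{M}_\mbf{r}}(\tau_1 \boxtimes \cdots \boxtimes \tau_k \boxtimes \tau_0 \boxtimes \omega)$ is irreducible whenever the central-character compatibility \eqref{E:comp01} holds). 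The compatibility condition $(-)^\heartsuit$ is exactly the requirement that the fixed central character $\omega$ of $Z(\wt{\GSp}_{2r})$ agree, on the overlaps $Z(\wt{\GSp}_{2r}) \cap \wt{\GL}_{r_i}$ and $Z(\wt{\GSp}_{2r}) \cap \wt{\GSp}_{2r_0}$, with the central characters of the $\pi_i$ and $\pi_0$; once this is imposed, $\tau_1 \boxtimes \cdots \boxtimes \tau_k \boxtimes \tau_0$ glued with $\omega$ descends to a well-defined genuine representation of $\wt{Z(\GSp_{2r})} \cdot \prod_i \wt{\GL}_{r_i} \cdot \wt{\GSp}_{2r_0}$, and then $\tilde{\otimes}_i$ is defined as the (irreducible) induction of this to $\wt{M}_\mbf{r}$.

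The third step is surjectivity of $\tilde{\otimes}_i$: given $\sigma \in \Irrg(\wt{M}_\mbf{r})$, restrict it to the normal subgroup $\wt{Z(\GSp_{2r})} \cdot \prod_i \wt{\GL}_{r_i} \cdot \wt{\GSp}_{2r_0}$, apply Clifford theory to extract an irreducible constituent $\tau_1 \boxtimes \cdots \boxtimes \tau_k \boxtimes \tau_0 \boxtimes \omega$, and check that $\sigma \cong \tilde{\otimes}_i(\pi_1,\dots,\pi_k,\pi_0,\omega)$ for the representations $\pi_i \in \Irrg(\wt{\GL}_{r_i})$, $\pi_0 \in \Irrg(\wt{\GSp}_{2r_0})$ obtained by inducing each $\tau_i$ (resp. $\tau_0$) up to its full block cover --- here one must verify that inducing back up recovers $\tau_i$ faithfully, which again reduces to the freeness/triviality analysis of Theorem \ref{T:in-re} applied to each factor. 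Injectivity follows from uniqueness in Clifford theory together with the observation that the constituent $\tau_1 \boxtimes \cdots \boxtimes \tau_0$ and the restriction data determine the $\pi_i$ up to isomorphism. The ``metaplectic restriction'' map $\tilde{\rm R}$ is then simply this constituent-extraction-and-re-induction procedure packaged as a function, and that it inverts $\tilde{\otimes}_i$ is immediate from the two preceding steps.

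The main obstacle I anticipate is the commutator/obstruction-class computation in the first step: one has to control precisely how the Brylinski--Deligne cocycle restricted to $\GL_{r_i} \times \GL_{r_j}$ and to $\GL_{r_i} \times \GSp_{2r_0}$ behaves, show that the similitudes-splitting hypothesis trivializes the part coming from the similitude factor, and that $n$ odd kills the residual $2$-torsion discrepancy, so that the ``glued'' representation $\tau_1 \boxtimes \cdots \boxtimes \tau_0 \boxtimes \omega$ is actually well-defined on the product subgroup rather than merely on a further cover of it. A secondary technical point is verifying that the $\mfr{Q}^\dag$-action is free under \eqref{E:comp01}; if it is merely finite rather than free, one gets an isotypic sum instead of an irreducible representation, and the statement as phrased (a \emph{bijective} map) would require the compatibility condition to be chosen exactly so as to rule this out --- so part of the work is calibrating \eqref{E:comp01} correctly, which I would do by tracking the center of $\wt{\GSp}_{2r}$ versus the product of centers of the blocks, analogously to the $Y_0 \cap Y_{Q,n} = Y_{0,Q,n}$ criterion in Corollary \ref{C:Zinc}.
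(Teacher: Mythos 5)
There is a genuine gap at the very first step, and it propagates. You assume that for odd $n$ the "obstruction class" vanishes so that the full blocks $\wt{\GL}_{r_i}$ and $\wt{\GSp}_{2r_0}$ commute inside $\wt{M}_\mbf{r}$ and the product of blocks is a quotient of the direct product of covers over a common $\mu_n$. This is false: by the commutator computation for the type I cover (Lemma \ref{L:suppGSp}(i)), an element $e_0(a)$ of the similitude torus of $\wt{\GSp}_{2r_0}$ satisfies $[e_0(a), e_i(x)] = (a,x)_n^{B_Q(e_0,e_i)} = (a,x)_n$, a full $n$-th order Hilbert symbol, which does not vanish for odd $n$; the two blocks commute only after one shrinks $\wt{\GSp}_{2r_0}$ to $\wt{\GSp}_{2r_0}^{\anga{n}}$, the subgroup with similitude in $F^{\times n}$ (the $\wt{\GL}$-blocks among themselves do commute, for any $n$, since $B_Q(e_i,e_j)=0$ for $i\ne j$). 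Consequently your "glued" representation $\pi_1\boxtimes\cdots\boxtimes\pi_k\boxtimes\pi_0\boxtimes\omega$ on the full product of blocks is simply not defined, and no parity hypothesis repairs this. The actual role of "$n$ odd" in the paper is different: by the character-support result (Proposition \ref{P:Gns}) each $\pi_i\in\Irrg(\wt{\GL}_{r_i})$ has $\Theta_{\pi_i}$ supported on the determinant-in-$F^{\times n}$ subgroup when $n$ is odd, so conjugation by $e_0(a)$ fixes each $\pi_i$ (this is what makes the construction independent of the choice of constituent $\sigma_0\subset\pi_0|_{\wt{\GSp}_{2r_0}^{\anga{n}}}$ and makes the recovered data unique, cf. Lemma \ref{L:Swd} and Proposition \ref{P:kGSp}); and, via \eqref{E:ZM}, $Z(\wt{\GSp}_{2r})\cdot\wt{M}_\mbf{r}^{\anga{n}} = \wt{M}_\mbf{r}^{\anga{n_{(r)}}}$ contains the support of $\Theta_\Pi$ for every $\Pi\in\Irrg(\wt{M}_\mbf{r})$ (Lemma \ref{L:suppGSp}(ii)), so induction from it is an isotypic sum of a single irreducible (Proposition \ref{P:con-con}). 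The paper also needs $\gcd(n,r_0)=1$ so that the same support argument applies to $\wt{\GSp}_{2r_0}$ itself in the reverse ("metaplectic restriction") direction.

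A second, related problem is your reliance on Theorem \ref{T:in-re} and on freeness of the $\mfr{Q}^\dag$-action to get an \emph{irreducible} induced representation from $\wt{M}_\mbf{r}^\dag=\wt{Z(G)}\cdot\wt{M}_{\mbf{r},0}$. That is the wrong subgroup and the wrong mechanism here: the subgroup relevant to the metaplectic tensor product is $Z(\wt{\GSp}_{2r})\cdot\wt{M}_\mbf{r}^{\anga{n}}$, cut out by the similitude character of the $\GSp$-block (the "first picture" of \S\ref{SS:2pic}, i.e.\ concentration/character support), not by the derived subgroup; and the induction from it is in general an isotypic sum $m\cdot\pi$ with $m>1$, not irreducible, which is exactly enough for well-definedness. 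Your fallback requirement of irreducibility would generally fail, and calibrating \eqref{E:comp01} cannot rescue it, since the compatibility condition is only a central-character matching on $Z(\wt{\GSp}_{2r})\cap\wt{M}_\mbf{r}^{\anga{n}}$. Finally, note that only the $\GSp$-factor gets restricted and re-induced in this construction (unlike the Kazhdan--Patterson $\wt{\GL}_r$ case you are modeling on); the $\pi_i$ enter as full representations of $\wt{\GL}_{r_i}$, which is again a consequence of the support statement for odd $n$.
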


One has a conjectural dual side description of the two maps $\tilde{\otimes}_i$ and $\tilde{\rm R}$ given in the theorem above, on the relations among the parameters of the representations involved. This is Conjecture \ref{C:F-odd}, and is clearly motivated from that in \cite{Gan17} for Kazhdan--Patterson covers; in fact, it is  a simpler situation dealing with $\wt{\GSp}_{2r}$ here. For genuine principal series, we verify Conjecture \ref{C:F-odd} in Proposition \ref{P:F-odd-ps}. We also explain the obstacles to realizing a similar metaplectic tensor product construction for even-fold cover of $\GSp_{2r}$.

In fact, such metaplectic tensor product construction is ubiquitous for covering Levi subgroups. Indeed,  it essentially amounts to the existence of a certain subgroup $\wt{H} \subset \wt{M}$ satisfying two conditions which counteract against each other. On the one hand, we would like that every $\pi\in \Irrg(\wt{M})$ is $\wt{H}$-concentrated, and thus $\wt{H}$ should be taken as large as possible. On the other hand, suppose $\wt{M}_i \subset \wt{M}$ are the covering blocks, then it is a desired property that $\wt{H} \cap \wt{M}_i$ commutes with each other: this is satisfied only when $\wt{H}$ is as small as possible. Whenever a subgroup $\wt{H} \subset \wt{M}$ satisfying these two conditions exist, one could have a metaplectic tensor product. For $\wt{M}_\mbf{r}$ of the Kazhdan--Patterson covers, we take $\wt{H}$ to be essentially the subgroup of elements with  determinants lying in $F^{\times n}$; while for $\wt{\GSp}_{2r}^{(n)}$ with $n$ odd, we take the subgroup with similitudes in $F^{\times n}$. For more details, see \S \ref{SS:gen-mtp}. 

\subsubsection{} 
In \S \ref{S:LCM}, we study how the local coefficients matrices associated with certain intertwining operators behave with respect to the restriction. More precisely, suppose
$$\pi|_{\wt{G}_\der} = \bigoplus_i m_i \cdot \pi_i,$$
with $\pi_i\in \Irrg(\wt{G}_\der)$. Then one has
$$\Wh_\psi(\pi) = \bigoplus_i m_i  \cdot\Wh_\psi(\pi_i).$$
Let $\wt{M} \subset \wt{P} \subset \wt{G}$ be a Levi subgroup and $\sigma \in \Irrg(\wt{M})$. Let 
$$T(w, \sigma): I_{\wt{P}}^{\wt{G}}(\sigma) \longrightarrow I_{\wt{P}'}^{\wt{G}}({}^w \sigma)$$
be the standard intertwining operator, where $\wt{P}'$ is a parabolic subgroup associated with $\wt{P}$ by $w$. It is shown in \cite[\S 3.2]{GSS2} that there is a naturally induced homomorphism
$$\mca{T}(w, \sigma)^*: \Wh_\psi( I_{\wt{P}}^{\wt{G}}(\sigma) ) \longrightarrow \Wh_\psi( I_{\wt{P}}^{\wt{G}}(\sigma) ).$$
With the choice of a basis $\mfr{B}$ for $\Wh_\psi( I_{\wt{P}}^{\wt{G}}(\sigma) )$, one obtains a local coefficients matrix
$$\mca{M}_\mfr{B}(w, \sigma)$$
representing $\mca{T}(w, \sigma)^*$.

For every $\sigma_0 \in \Irrg(\wt{M}_0)$, one has an analogous map
$$\mca{T}(w, \sigma_0)^*: \Wh_\psi( I_{\wt{P}_0}^{\wt{G}_0}(\sigma) ) \longrightarrow \Wh_\psi( I_{\wt{P}_0}^{\wt{G}_0}(\sigma) ).$$
We show that 
$$\mca{T}(w, \sigma)^* = \bigoplus_{\sigma_i \subset \sigma|_{\wt{M}_0}} \mca{T}(w, \sigma_i)^*.$$
This implies that there is a local coefficients matrix $\mca{M}_\mfr{B}(w, \sigma)$ taking the form
$$\bigoplus_{\sigma_i \subset \sigma|_{\wt{M}_0}} \mca{M}_{\mfr{B}_i}(w, \sigma_i),$$
where $\mca{M}_{\mfr{B}_i}(w, \sigma_i)$ is a local coefficients matrix associated with the triple $(\wt{P}_0, w, \sigma_i)$ and a basis $\mfr{B}_i$ of $\Wh_\psi( I_{\wt{P}_0}^{\wt{G}_0} (\sigma_i))$, see Proposition \ref{P:decWh}. In particular, the arithmetic invariants (trace and determinant, for example) of $\mca{M}_\mfr{B}(w, \sigma)$ are completely determined by those of $\mca{M}_{\mfr{B}_i}(w, \sigma_i)$.

Since the Whittaker space $\Wh_\psi(\pi)$ is best understood for a genuine principal series, we specialize to this case and investigate the relation between $\Wh_\psi(\pi)$ and $\Wh_\psi(\pi_i), \pi_i \subset \pi|_{\wt{G}_\der}$, where $\pi \in {\rm JH}(I(\chi))$ is an irreducible constituent of a $(K, s_K)$-unramified genuine principal series. In fact, we consider the two special cases:
\begin{enumerate}
\item[--] when $I(\chi)$ is a regular unramified principal series in \S \ref{SS:reg-ps}, and
\item[--] when $I(\chi)$ is a unitary unramified principal series in \S \ref{SS:uni-ps}.
\end{enumerate}
The analysis relies on certain results and conjectures from \cite{Ga6, Ga7}. We elaborate below these two cases.

First, suppose $I(\chi)$ is a regular unramified principal series. Let $\Phi(\chi)$
be the set of rank-one reducibility points. Then every irreducible constituent of $I(\chi)$ is uniquely parametrized by a subset $S \subset \Phi(\chi)$, which we denote by $\pi_{\chi, S}$ or $\pi(\chi)_S$.  If $(\wt{G}, \wt{G}_0)$ is an isotypic pair, then one has $\Phi(\chi) = \Phi(\omega)$ where $\omega = \chi|_{Z(\wt{T}_0)}$; in this case, 
$$(\pi_{\chi, S})|_{\wt{G}_0} = \val{\msc{X}_{Q,n}^\Gamma} \cdot \pi_{\omega, S},$$
which immediately yields the equality
\begin{equation} \label{E:0-dimWh}
\dim \Wh_\psi(\pi_{\chi, S})= \val{\msc{X}_{Q,n}^\Gamma} \cdot \dim \Wh_\psi(\pi_{\omega, S}).
\end{equation}

Assume 
$$\Phi(\chi) \subset \Delta,$$
i.e., it is a subset of simple roots. Then we have a formula for $\dim \Wh_\psi(\pi_{\chi, S})$ and $\dim \Wh_\psi(\pi_{\omega, S})$  in terms of certain permutation representations $\sigma_\msc{X}$ and $\sigma_{\msc{X}_0}$ of $W$ respectively. It is expected that the equality \eqref{E:0-dimWh} of Whittaker dimensions actually originates from a relation between $\sigma_\msc{X}$ and $\sigma_{\msc{X}_0}$, see Conjecture \ref{C:dec}.
If $(\wt{G}, \wt{G}_0)$ is not an isotypic pair, then the relation between $\dim \Wh_\psi(\pi_{\chi, S})$ and $\dim \Wh_\psi(\pi_{\omega, S})$ for an arbitrary $I(\omega) \subset I(\chi)|_{\wt{G}_0}$ is quite complicated, since $I(\omega)$ may not be $(K_0, s_K)$-unramified. This already occurs for even-fold cover of $\GL_2$ (odd-fold cover of $\GL_2$ gives an isotypic pair). However, for $\wt{\GL}_2$, by using the work in \cite{GSS2} we can show the following:

\begin{thm}[Theorem \ref{T:GLSL2}] \label{T:0-GL2}
Consider the Kazhdan-Patterson $n$-fold cover $\wt{\GL}_2$. Let $\chi: Z(\wt{T}) \to \C^\times$ be an unramified genuine character such that $\Phi(\chi) = \Delta$. Assume $n=2m$ is even and $\mfr{f}(\psi)=O_F$. We always have 
$$\dim \Wh_\psi(\pi(\omega_{\gamma_i, j})_\emptyset) + \dim \Wh_\psi(\pi(\omega_{\gamma_i, j})_\Delta) = \dim \Wh_\psi(I(\omega_{\gamma_i, j})) = m.$$
Moreover, we have an explicit formula for every $\dim \Wh_\psi(\pi(\omega_{\gamma_i, j})_\Delta)$, and thus also $\dim \Wh_\psi(\pi(\omega_{\gamma_i, j})_\emptyset$.
\end{thm}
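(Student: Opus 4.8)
The plan is to combine the explicit restriction formula of Theorem \ref{T:0-dec} for $\wt{\GL}_2$ with the dimension count for Whittaker spaces of constituents of a regular unramified principal series. First I would record the structure of $I(\chi)$ in the case $\Phi(\chi) = \Delta$: since $\GL_2$ has a single simple root, $I(\chi)$ has exactly two irreducible constituents, $\pi(\chi)_\emptyset$ (the ``Langlands quotient'' type piece) and $\pi(\chi)_\Delta$ (the other piece), and in the Grothendieck group $I(\chi)^{\rm ss} = \pi(\chi)_\emptyset + \pi(\chi)_\Delta$. Applying $\Wh_\psi(-)$, which is exact on the relevant category, gives $\dim \Wh_\psi(I(\chi)) = \dim \Wh_\psi(\pi(\chi)_\emptyset) + \dim \Wh_\psi(\pi(\chi)_\Delta)$; the same additivity holds verbatim on $\wt{\SL}_2$ for each $I(\omega_{\gamma_i,j})$, which yields the additive identity in the statement once I know $\dim \Wh_\psi(I(\omega_{\gamma_i,j})) = m$.

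Next I would compute $\dim \Wh_\psi(I(\omega_{\gamma_i,j}))$. For an unramified genuine principal series of a cover, this dimension is the cardinality of the relevant ``moduli'' set, which for $\wt{\SL}_2^{(n)}$ with $n = 2m$ works out to $m$: the genuine central character $\omega$ of $Z(\wt{T}_0)$ together with the Weyl-orbit data contributes a space of dimension $[Y_0 : Y_{0,Q,n}]$-related quantity, and a direct computation with the bilinear form $Q$ restricted to the coroot lattice of $\SL_2$ gives $n/\gcd(n, \text{something}) = m$ when $n = 2m$. (This is exactly the kind of computation governed by $\msc{X}_{Q,n}^\Gamma$ in Theorem \ref{T:0-dec}, and I would cite the unramified Whittaker dimension formula from \cite{GSS2} rather than redo it.) Combined with the first paragraph this establishes the displayed chain of equalities.

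The substantive part is the ``explicit formula for every $\dim \Wh_\psi(\pi(\omega_{\gamma_i,j})_\Delta)$''. Here I would argue as follows. For the \emph{linear} group $\GL_2$ the analogous constituents have Whittaker dimension $1$ and $0$ respectively (the generic constituent is the quotient, which for $\chi$ with $\Phi(\chi) = \Delta$ is the Steinberg-type piece), so the covering phenomenon is entirely about how the Whittaker functionals distribute. The key input is the description in \cite{GSS2} of $\Wh_\psi(I(\chi))$ as a module over the relevant ``metaplectic'' Hecke-type algebra together with the action of the intertwining operator $\mca{T}(w,\chi)^*$: the two constituents $\pi(\chi)_\emptyset, \pi(\chi)_\Delta$ are distinguished inside $\Wh_\psi(I(\chi))$ as the images of the two spectral projectors built from $\mca{T}(w,\chi)^*$, so $\dim \Wh_\psi(\pi(\chi)_S)$ is the rank of the corresponding projector, i.e. the multiplicity of the relevant eigenvalue of $\mca{T}(w,\chi)^*$ acting on the $m$-dimensional space $\Wh_\psi(I(\chi))$. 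I would then invoke Proposition \ref{P:decWh} to pass from $\wt{\GL}_2$ down to $\wt{\SL}_2$: since $\mca{T}(w,\chi)^* = \bigoplus_i \mca{T}(w,\chi_i)^*$ over the constituents $I(\omega_{\gamma_i,j}) \subset I(\chi)|_{\wt{\SL}_2}$, the eigenvalue multiplicities on each block are read off from the corresponding block of the scattering/local-coefficients matrix, whose entries are given explicitly by the (metaplectic) Gindikin--Karpelevich and Gauss-sum computations available for $\wt{\GL}_2$. This produces the desired closed formula for $\dim \Wh_\psi(\pi(\omega_{\gamma_i,j})_\Delta)$ in terms of $n = 2m$, the conductor hypothesis $\mfr{f}(\psi) = O_F$, and the parameters $\gamma_i, j$; subtracting from $m$ gives $\dim \Wh_\psi(\pi(\omega_{\gamma_i,j})_\emptyset)$.

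The main obstacle I anticipate is bookkeeping rather than conceptual: controlling precisely which block of the local coefficients matrix corresponds to which $(\gamma_i, j)$, and verifying that the non-$(K_0, s_K)$-unramified constituents (those with $\gamma_i \neq 0$, which by Theorem \ref{T:0-dec}(iii) are unramified only with respect to a different pair $(K_0', s_{K'})$) still have their Whittaker dimension computed by the same eigenvalue-multiplicity recipe — one must check that changing the hyperspecial compact permutes the blocks without changing the ranks of the spectral projectors. Once that compatibility is in place, the explicit formula follows from the rank-one metaplectic scattering matrix for $\wt{\GL}_2$, which is a finite and completely explicit computation.
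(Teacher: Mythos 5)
Your skeleton --- additivity of Whittaker dimensions over the two constituents $\pi(\omega)_\emptyset,\pi(\omega)_\Delta$, the count $\dim \Wh_\psi(I(\omega_{\gamma_i,j}))=\val{\msc{X}_{0,Q,n}}=m$, and then computing $\dim \Wh_\psi(\pi(\omega_{\gamma_i,j})_\Delta)$ as the rank of $\mca{T}(w, i(\omega_{\gamma_i,j}))^*$ read off from an explicit local coefficients matrix --- is indeed the route the paper takes. One correction to the setup: at a point with $\Phi(\chi)=\Delta$ the principal series is indecomposable, so there are no spectral projectors onto the two constituents and no meaningful eigenvalue multiplicities; the correct mechanism is that $\pi_\Delta$ is the image and $\pi_\emptyset$ the kernel of the standard intertwining operator, so exactness of the Whittaker functor gives $\dim \Wh_\psi(\pi_\Delta)={\rm rank}\,\mca{T}(w,\cdot)^*$ and $\dim \Wh_\psi(\pi_\emptyset)=\dim\ker$, which is the form in which the paper uses it.

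The genuine gap is in how you propose to resolve your acknowledged obstacle: the expectation that ``changing the hyperspecial compact permutes the blocks without changing the ranks'' is false, and following it would yield the wrong answer. The theta-parts of the four constituents sum to $n-1$, not $2m$, so the distribution of dimensions is necessarily asymmetric, and deciding \emph{which} constituent drops is exactly the content of the explicit formula. Concretely, the blocks attached to the non-$(K_0,s_K)$-unramified characters $\omega_{\gamma_1,j}$ are not extracted from the unramified Gindikin--Karpelevich/Gauss-sum matrix of $\wt{\GL}_2$ by bookkeeping: the paper must invoke the local coefficients matrices for \emph{ramified} characters in the tame case from \cite{GSS2}, whose distinguished block is governed by the factor $\pmb{\beta}(\omega_{\gamma_1,j},s,\psi)$ (a Weil-index sign condition) when $4\mid n$, and by the metaplectic gamma factor $\tilde{\gamma}(1,\xi^{-m},\psi)$ when $m$ is odd; the vanishing or non-vanishing of these entries is what separates $j=0$ from $j=1$ in the first case and forces both ramified constituents down to $(m-1)/2$ in the second. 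Moreover, for $m$ odd even the unramified pair cannot be settled by orbit counting, since $\wt{\SL}_2^{(2m)}$ is then not persistent: the paper needs the result of \cite{Ga2} that the non-free Weyl orbit in $\msc{X}_{0,Q,n}$ supports a Whittaker functional for exactly one of $\omega_{\gamma_0,0},\omega_{\gamma_0,1}$ (while for $4\mid n$ it quotes the persistent-case formula $\angb{\sigma_{\msc{X}_0}}{\varepsilon_W}_W=m/2$ from \cite{Ga6}). So you have named the right machinery, but the explicit evaluation of these exceptional blocks --- which your sketch defers to a ``finite computation'' and your heuristic would get wrong --- is where the proof actually lives.
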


Second, assume $I(\chi)$ is a unitary $(K, s_K)$-unramified principal series. Then we have a decomposition 
$$I(\chi) = \bigoplus_{\tau \in \Irr(\mca{S}_\chi)} \pi_\tau$$
of $I(\chi)$ into irreducible constituents, where 
$$\mca{S}_\chi:=\mca{S}_{\phi_\chi}$$
 is the component group of the centralizer of ${\rm Im}(\phi_\chi) \subset {}^L\wt{G}$. If $I(\omega^\flat) \subset I(\chi)|_{\wt{G}_0}$ is a $(K_0, s_K)$-unramified constituent as in Theorem \ref{T:0-dec}, then one has analogously
$$I(\omega^\flat) = \bigoplus_{\rho \in \Irr(\mca{S}_{\omega^\flat})} \pi_\rho.$$
Recall the map
$$\phi^\diamondsuit:= f_{G, H} \circ \phi_\chi = f_{G_0, H} \circ \phi_{\omega^\flat}$$
and the associated component group $\mca{S}^\diamondsuit=\mca{S}_{\phi^\diamond}$. We have the inclusions
$$\begin{tikzcd}
\mca{S}_\chi \ar[rd, hook] \\
\mca{S}_{\omega^\flat} \ar[r, hook] &  \mca{S}^\diamondsuit.
\end{tikzcd}
$$
The main result of \S \ref{SS:uni-ps} is regarding the multiplicity $\dim \Hom_{\wt{G}_0}(\pi_\rho, \pi_\tau)$. 

\begin{thm}[Theorem \ref{T:uni-func}] \label{T:0-mul-uni-ps}
Let $I(\chi)$ be a unitary $(K, s_K)$-unramified genuine principal series of $\wt{G}$ and $I(\omega^\flat)$ a $(K_0, s_K)$-unramified constituent of $I(\chi)|_{\wt{G}_0}$. Then
\begin{equation} \label{E:uni-mul}
\dim \Hom_{\wt{G}_0}(\pi_\rho, \pi_\tau) =\val{ \msc{X}_{Q,n}^\mfr{c} } \cdot \angb{ \Ind_{\mca{S}_{\omega^\flat}}^{ \mca{S}^\diamondsuit } \rho }{  \Ind_{\mca{S}_\chi}^{ \mca{S}^\diamondsuit } \tau }_{\mca{S}^\diamondsuit}
\end{equation}
for every $\tau \in \Irr(\mca{S}_\chi)$ and $\rho \in \Irr(\mca{S}_{\omega^\flat})$. 
\end{thm}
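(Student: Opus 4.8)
The plan is to match $\dim\Hom_{\wt{G}_0}(\pi_\rho,\pi_\tau)$ with a Clifford-theoretic pairing over the finite group $\mca{S}^\diamondsuit$, using the explicit decomposition of Theorem~\ref{T:0-dec} to make all the component groups in sight computable; the statement confirms, for constituents of unitary unramified principal series, the prediction of Conjecture~\ref{C:0-res}(ii) with $e(\pi)=\val{\msc{X}_{Q,n}^\mfr{c}}$. First I would record the two isotypic decompositions. Granting the pertinent R-group results/conjectures of \cite{Ga6,Ga7} (that for a unitary $(K,s_K)$-unramified genuine principal series the R-group coincides with the relevant component group and is untwisted), one has $\End_{\wt{G}}(I(\chi))=\C[\mca{S}_\chi]$, the decomposition $I(\chi)=\bigoplus_\tau\pi_\tau$ is the $\mca{S}_\chi$-isotypic one, and $I(\chi)\cong\bigoplus_{\tau\in\Irr(\mca{S}_\chi)}\tau\boxtimes\pi_\tau$ as an $\mca{S}_\chi\times\wt{G}$-module; likewise $\End_{\wt{G}_0}(I(\omega^\flat))=\C[\mca{S}_{\omega^\flat}]$ and $I(\omega^\flat)\cong\bigoplus_{\rho\in\Irr(\mca{S}_{\omega^\flat})}\rho\boxtimes\pi_\rho$. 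The $\mca{S}_\chi$-action is realized by the standard intertwining operators of $I(\chi)$, equivalently on the Whittaker side by the local coefficients matrices of \S\ref{S:LCM}; these commute with $\wt{G}_0$, so the space
$$ W:=\Hom_{\wt{G}_0}\bigl(I(\omega^\flat),\,I(\chi)|_{\wt{G}_0}\bigr) $$
carries commuting actions of $\mca{S}_{\omega^\flat}$ (through $\End_{\wt{G}_0}(I(\omega^\flat))$) and of $\mca{S}_\chi$, making it an $\mca{S}_{\omega^\flat}\times\mca{S}_\chi$-module, and feeding the two isotypic decompositions into $W$ shows that $\dim\Hom_{\wt{G}_0}(\pi_\rho,\pi_\tau)$ equals the multiplicity of $\rho^{*}\boxtimes\tau$ in $W$. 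So it suffices to identify $W$.

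By Theorem~\ref{T:0-dec}, $i(\chi)|_{\wt{T}_0}=\val{\msc{X}_{Q,n}^\mfr{c}}\cdot\bigoplus_{\gamma,j}i(\omega_{\gamma,j})$ and hence, normalized parabolic induction being exact, $I(\chi)|_{\wt{G}_0}=\val{\msc{X}_{Q,n}^\mfr{c}}\cdot\bigoplus_{\gamma,j}I(\omega_{\gamma,j})$. The multiplicity $\val{\msc{X}_{Q,n}^\mfr{c}}$ is inert for the commuting-algebra structure, since the $\mca{S}_\chi$-operators are defined on $I(\chi)$ before restriction and act diagonally on the $\val{\msc{X}_{Q,n}^\mfr{c}}$ copies it produces; thus $W\cong\val{\msc{X}_{Q,n}^\mfr{c}}\cdot W'$ with $W':=\Hom_{\wt{G}_0}(I(\omega^\flat),\bigoplus_{\gamma,j}I(\omega_{\gamma,j}))$ still an $\mca{S}_{\omega^\flat}\times\mca{S}_\chi$-module. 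I claim $W'$ is the regular $\mca{S}^\diamondsuit$-bimodule $\C[\mca{S}^\diamondsuit]$ restricted to $\mca{S}_{\omega^\flat}\times\mca{S}_\chi$ along the two embeddings of \eqref{L-comp}; equivalently $W'\cong\bigoplus_{\sigma\in\Irr(\mca{S}^\diamondsuit)}(\sigma^{*}|_{\mca{S}_{\omega^\flat}})\boxtimes(\sigma|_{\mca{S}_\chi})$. Granting this, Frobenius reciprocity in each factor gives, for the multiplicity of $\rho^{*}\boxtimes\tau$ in $W'$, the value $\sum_\sigma\langle\rho,\sigma|_{\mca{S}_{\omega^\flat}}\rangle\langle\tau,\sigma|_{\mca{S}_\chi}\rangle=\langle\Ind_{\mca{S}_{\omega^\flat}}^{\mca{S}^\diamondsuit}\rho,\Ind_{\mca{S}_\chi}^{\mca{S}^\diamondsuit}\tau\rangle_{\mca{S}^\diamondsuit}$, and restoring the factor $\val{\msc{X}_{Q,n}^\mfr{c}}$ yields \eqref{E:uni-mul}.

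The heart of the proof — and the step I expect to be the main obstacle — is the identification of $W'$, that is, the determination of the ``relative'' commuting algebra attached to the $\wt{G}_0$-constituents $\bigoplus_{\gamma,j}I(\omega_{\gamma,j})$ together with its two distinguished subalgebras. It has two halves. First, one must show that the R-group governing $\bigoplus_{\gamma,j}I(\omega_{\gamma,j})$ as a $\wt{G}_0$-module — assembled from the standard intertwining operators for $\wt{G}$, for $\wt{G}^\dag=\wt{Z(G)}\cdot\wt{G}_0$, and for $\wt{G}_0$, i.e. from the interplay of $\mfr{Q}^\dag$, the quotient $\msc{X}_{Q,n}^\Gamma/\msc{X}_{Q,n}^\mfr{c}$, and the ordinary R-groups — is exactly $\mca{S}^\diamondsuit$. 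This is an ``R-group equals component group'' statement for the relative situation, extending the unramified computations of \cite{Ga6,Ga7}; I would establish it by reading off the stabilizer Weyl group and its R-quotient from the root-theoretic data of the covering torus (the lattices $Y_{Q,n}$, $\msc{X}_{Q,n}$, $\msc{X}_{Q,n}^\Gamma$, and the $Z(\wt{T})\supseteq Z(\wt{T}_0)$ data) and matching it with the centralizer component group on the dual side, using that the parameters $\phi_{\omega_{\gamma,j}}$ all project to the single parameter $\phi^\diamondsuit=f_{G_0,H}\circ\phi_{\omega^\flat}$ in ${}^L\wt{H}$.

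Second, one must check that the resulting nesting $\mca{S}_\chi,\mca{S}_{\omega^\flat}\subset\mca{S}^\diamondsuit$ of commuting subalgebras — $\C[\mca{S}_\chi]$ arising from the $\wt{G}$-intertwiners of $I(\chi)$, and $\C[\mca{S}_{\omega^\flat}]$ from the $\wt{G}_0$-intertwiners of $I(\omega^\flat)$ — agrees with the group-theoretic nesting coming from $f_{G,H}$ and $f_{G_0,H}$ in \eqref{L-comp}; this compatibility of two constructions of the same embedding is the genuine-cover analogue of the matching used in the linear case by Adler--Prasad \cite{AP1} and Solleveld \cite{Sol20}, and for principal series it comes down to explicit bookkeeping of how Whittaker functionals and intertwining operators of $\wt{G}$ restrict to $\wt{G}_0$ — precisely what Theorem~\ref{T:0-dec} and Proposition~\ref{P:decWh} are designed to control. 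As a consistency check, when $(\wt{G},\wt{G}_0)$ is an isotypic pair (so $\msc{X}_{Q,n}^\Gamma=\msc{X}_{Q,n}^\mfr{c}$ and $\mca{S}_\chi=\mca{S}_{\omega^\flat}=\mca{S}^\diamondsuit$) the formula collapses to $\dim\Hom_{\wt{G}_0}(\pi_\rho,\pi_\tau)=\val{\msc{X}_{Q,n}^\mfr{c}}\cdot\langle\rho,\tau\rangle_{\mca{S}_\chi}$, i.e. $\pi_\tau|_{\wt{G}_0}=\val{\msc{X}_{Q,n}^\mfr{c}}\cdot\pi_\tau$, which recovers \eqref{E:0-dimWh} upon taking dimensions of Whittaker spaces.
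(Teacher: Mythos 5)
Your overall skeleton is sound and consistent with the target formula: viewing $\Hom_{\wt{G}_0}\bigl(I(\omega^\flat), I(\chi)|_{\wt{G}_0}\bigr)$ as an $\mca{S}_{\omega^\flat}\times\mca{S}_\chi$-bimodule, extracting the multiplicity of $\rho^{*}\boxtimes\tau$, and finishing with Frobenius reciprocity is essentially the same reduction to finite-group Clifford theory that the paper performs (the paper packages the first step through Keys's module-theoretic isomorphism \eqref{E:G-S}, but in the unitary, semisimple situation your direct double-isotypic bookkeeping is equivalent). The problem is that your proof stops exactly where the theorem begins: the identification of $W'$ with $\Res_{\mca{S}_{\omega^\flat}\times\mca{S}_\chi}\C[\mca{S}^\diamondsuit]$ is the entire content of \eqref{E:uni-mul}, and you leave it as a claim, "granting" it and describing its proof only as a plan to establish a new ``relative R-group equals $\mca{S}^\diamondsuit$'' theorem for $\End_{\wt{G}_0}\bigl(\bigoplus_{\gamma,j}I(\omega_{\gamma,j})\bigr)$ together with a compatibility of two constructions of the embeddings. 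No such relative R-group statement exists in the cited sources, and proving it (plus the compatibility with $f_{G,H}$, $f_{G_0,H}$) is at least as hard as the theorem itself; as written the argument is therefore circular at its key step.

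The gap can be closed much more cheaply, and this is what the paper does: one needs only (a) the already available $R_\chi\simeq\mca{S}_\chi$ and $R_{\omega^\flat}\simeq\mca{S}_{\omega^\flat}$ results of \cite{Ga7}, (b) the purely combinatorial isomorphism $\mca{S}^\diamondsuit/\mca{S}_{\omega^\flat}\simeq W(\chi_{\rm re})/W(\omega^\flat)$ obtained by applying the same R-group framework to the restricted character $\chi_{\rm re}=\chi|_{\wt{T}_{Q,n}^\sharp}=\omega^\flat|_{\wt{T}_{Q,n}^\sharp}$ (here all three component groups are computed from the same torus data, so the nesting $\mca{S}_\chi,\mca{S}_{\omega^\flat}\subset\mca{S}^\diamondsuit$ is automatically the Weyl-stabilizer nesting $W(\chi), W(\omega^\flat)\subset W(\chi_{\rm re})$, and no separate matching with the dual side is needed), and (c) the elementary facts that $\Hom_{\wt{G}_0}(\pi_\rho, I(\omega_{i,j}))\ne 0$ only for $\omega_{i,j}$ Weyl-conjugate to $\omega^\flat$, in which case $I(\omega_{i,j})\simeq I(\omega^\flat)$. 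From (b) one gets Lemma \ref{L:tors}: the set of such constituents is a torsor over $\mca{S}^\diamondsuit/\mca{S}_{\omega^\flat}$; decomposing it into $\mca{S}_\chi$-orbits, each a torsor over $\mca{S}_\chi/(\mca{S}_\chi\cap\mca{S}_{\omega^\flat})$, yields
$$\Hom_{\wt{G}_0}\bigl(\pi_\rho, I(\chi)|_{\wt{G}_0}\bigr)\;\simeq\;\val{\msc{X}_{Q,n}^\mfr{c}}\cdot\val{\mca{S}^\diamondsuit/(\mca{S}_\chi\cdot\mca{S}_{\omega^\flat})}\cdot\Ind_{\mca{S}_\chi\cap\mca{S}_{\omega^\flat}}^{\mca{S}_\chi}\rho$$
as $\mca{S}_\chi$-modules, and Mackey's restriction--induction formula for the abelian group $\mca{S}^\diamondsuit$ converts the pairing with $\tau$ into $\angb{ \Ind_{\mca{S}_{\omega^\flat}}^{\mca{S}^\diamondsuit}\rho }{ \Ind_{\mca{S}_\chi}^{\mca{S}^\diamondsuit}\tau }_{\mca{S}^\diamondsuit}$. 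Note also that your phrase about the $\mca{S}_\chi$-operators acting ``diagonally'' on the $\val{\msc{X}_{Q,n}^\mfr{c}}$ copies needs the decomposition $\mca{A}(w,\chi)=\val{\msc{X}_{Q,n}^\mfr{c}}\cdot\bigoplus_{i,j}\mca{A}(w,\omega_{i,j})$: the operators genuinely permute the blocks $I(\omega_{i,j})$ (only the isotypic multiplicity is inert), and it is precisely this permutation action, made explicit by the orbit analysis above, that your regular-bimodule claim was meant to encode.
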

In fact, even if $I(\omega^\flat) \subset I(\chi)|_{\wt{G}_0}$ is not $(K_0, s_K)$-unramified, the equality holds as well, see Corollary \ref{C:uni-ac}. 
In particular, this verifies Conjecture \ref{C:0-res} (ii) for unitary unramified principal series of $\wt{G}$ with 
$$e(I(\chi)) = \val{ \msc{X}_{Q,n}^\mfr{c} }.$$
If $n=1$ and $\wt{G}=G$ is a split linear group, then the above formula recovers that in \cite{Key3}, the proof of which in fact motivates that for Theorem \ref{T:0-mul-uni-ps} above.

\subsubsection{} In \S \ref{S:unLP}, we concentrate on unramified L-packets and consider the following two problems:
\begin{enumerate}
\item[(i)] the variation of the parametrization of elements inside an unramified L-packet  with respect to the change of hyperspecial maximal compact subgroup $K$ of $G$ and different choices of splittings,
\item[(ii)] the variation of the Whittaker dimensions of elements inside an unramified L-packet with respect to changing orbits of the Whittaker datum. 
\end{enumerate}
Let $\chi: Z(\wt{T}) \to \C^\times$ be a genuine central character. Fix a splitting $s_{T}: \mbf{T}(O) \into \wt{T}$, and assume that $i(\chi)$ is $(\mbf{T}(O), s_T)$-unramified. By the local Langlands correspondence for covering tori, one has a parameter
$$\phi_\chi: \WD_F \longrightarrow {}^L\wt{T} \into {}^L\wt{G}.$$
It is postulated that the L-packet $\mca{L}(\phi_\chi)$ associated with $\phi_\chi$ consists of exactly the subquotients of the principal series $I(\chi)$, which are $(K, s_K)$-unramified with respect to a hyperspecial maximal compact subgroup $K \subset G$ and a splitting $s_K: K \into \wt{G}$ such that $s_K$ restricts to $s_T$ on $\mbf{T}(O)$. It is also expected that there is a bijection between
$$\mca{L}(\phi_\chi) \longleftrightarrow \Irr(\mca{S}_{\phi_\chi})$$
which we denote by
$$\pi(\phi_\chi, \rho) \leftrightarrow \rho.$$
Thus, we want to investigate how the $(K, s_K)$-unramifiedness of an element $\pi(\phi_\chi, \rho) \in \mca{L}(\phi_\chi)$ is reflected from its parameter $\rho$.

For this purpose, we briefly recall the linear algebraic case. Let
$$\mca{K}=\set{G_x: \ x \text{ is a hyperspecial point in } \msc{B}(G) }/_\sim$$
 be the set of conjugacy classes of hyperspecial maximal compact subgroups of $G$, where $\msc{B}(G)$ is the Bruhat--Tits building associated with $G$. It is known that $\mca{K}$ is a torsor over 
 $$\widehat{\Gamma_G^{\rm tor}}:=\Hom((X/X^{sc})_{\rm tor}, \Q/\Z),$$
and there is a natural surjective map
$$f_\Gamma: \widehat{\Gamma_G^{\rm tor}} \onto \Irr(\mca{S}_{\phi_\chi}),$$
which gives a natural action of $\widehat{\Gamma_G^{\rm tor}}$ on $\Irr(\mca{S}_{\phi_\chi})$ given by $f_\Gamma(y) \otimes \rho$.
It follows from \cite[Theorem 1]{Mis2} that for every $K \in \mca{K}$ and $y\in \widehat{\Gamma_G^{\rm tor}}$, the representation $\pi(\phi_\chi, \rho) \in \mca{L}(\phi_\chi)$ is $K$-unramified if and only if $\pi(\phi_\chi, f_\Gamma(y) \otimes \rho)$ is $y\cdot K$-unramified.

To generalize the above results to covering groups, we note the following obstacles:
\begin{enumerate}
\item[$\bullet$] first, not every splitting of $K$ into $\wt{G}$ is compatible with $s_T$, and thus it is necessary to filter only those compatible ones,
\item[$\bullet$] second, although $\mca{K}$ stays as a torsor over $\widehat{\Gamma_G^{\rm tor}}$, there is no longer any obvious natural map from $\widehat{\Gamma_G^{\rm tor}}$ to $\Irr(\mca{S}_{\phi_\chi})$. 
\end{enumerate}
For the second obstacle above, since $\mca{S}_{\phi_\chi}$ is the component group of the image of $\phi_\chi$ in ${}^L\wt{G}$, there is a natural finite abelian group $\widehat{\Gamma_{G_{Q,n}}^{\rm tor}}$ arising from the principal endoscopic group $G_{Q,n}$ of $\wt{G}$ and a surjection 
$$\tilde{f}_\Gamma: \widehat{\Gamma_{G_{Q,n}}^{\rm tor}} \onto \Irr(\mca{S}_{\phi_\chi}).$$
The map $\tilde{f}_\Gamma$ is defined analogous to $f_\Gamma$ in the linear case. With this in view, it is  important to bridge the two groups $\widehat{\Gamma_G^{\rm tor}}$ and $\widehat{\Gamma_{G_{Q,n}}^{\rm tor}}$. A substantial part of \S \ref{SS:varK} is to analyze such a relation, which gives Conjecture \ref{C:varK}.

Regarding the Whittaker space, we fix a Borel subgroup $B=TU$ and vary the non-degenerate character  $\psi: U \to \C^\times$. Let $T_{ad} \subset G_{ad}$ be the split torus of $G_{ad}$. The set of $T$-orbits of non-degenerate characters of $U$ is a torsor over $T_{ad}/T$ under the conjugation action. There is a natural quotient
$$T_{ad}/T \onto \Irr(\mca{S}_{\phi_\chi}),$$
and thus an action of $ t\in T_{ad}/T$ on $\rho  \in \Irr(\mca{S}_{\phi_\chi})$, which we denote by $t\cdot \rho$. In this case, it was shown in \cite{Mis1} that $\pi(\phi_\chi, \rho)$ is $\psi$-generic if and only if $\pi(\phi_\chi, t\cdot \rho)$ is ${}^t \psi$-generic.
Now for a covering group $\wt{G}$, there is no obvious map from $T_{ad}/T$ to $\Irr(\mca{S}_{\phi_\chi})$. Similar to the above discussion on the unramified-ness, there is a natural surjection
$$T_{Q,n, ad}/T_{Q,n} \onto \Irr(\mca{S}_{\phi_\chi}),$$
where $T_{Q,n}$ is the torus of $G_{Q,n}$ and $T_{Q,n, ad}$ is the torus for its adjoint group $G_{Q,n, ad}$. Again, it is important to inspect any natural relation between $T_{Q,n, ad}/T_{Q,n}$ and $T_{ad}/T$, or at the level of character lattices, to relate $X_{Q,n}/X_{Q,n}^{sc}$ and $X/X^{sc}$. We discuss this in \S \ref{SS:psi-var} and postulate Conjecture \ref{C:varW} on the  $\psi$-Whittaker dimension of  $\pi(\phi_\chi, \rho)$ with respect to the action of $t$ on the character $\psi$.

The following is the main result of \S \ref{S:unLP}, amalgamated from the discussions in \S \ref{SS:varKW-eg}, especially the main Propositions therein.
\begin{thm}
Conjecture \ref{C:varK} and Conjecture \ref{C:varW} hold for unitary unramified principal series of covers of $\Sp_{2r}, \SO_3$, and also for Kazhdan--Patterson covers of $\GL_r$.
\end{thm}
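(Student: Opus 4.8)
The plan is to verify Conjectures \ref{C:varK} and \ref{C:varW} directly for each of the three families of covers in turn. Both conjectures are phrased relative to a common package of data: the torsion cocenter groups $\Gamma_G^{\rm tor}$ and $\Gamma_{G_{Q,n}}^{\rm tor}$ (and the torus-torsors $T_{ad}/T$, $T_{Q,n,ad}/T_{Q,n}$) attached to $G$ and to the principal endoscopic group $G_{Q,n}$, a bridge map between their Pontryagin duals predicted in \S\ref{SS:varK} and \S\ref{SS:psi-var}, and the surjection $\tilde{f}_\Gamma$ onto $\Irr(\mca{S}_{\phi_\chi})$. So the proof is essentially a chain of explicit computations: for a fixed $\mbf{G}$ in the list I would (i) write down its root datum and the Brylinski--Deligne invariants, hence the sublattice $Y_{Q,n}$ and the split group $G_{Q,n}$; (ii) compute $X/X^{sc}$ and $X_{Q,n}/X_{Q,n}^{sc}$, determine $\Gamma_G^{\rm tor}$, $\Gamma_{G_{Q,n}}^{\rm tor}$ and the bridge map $\widehat{\Gamma_G^{\rm tor}}\to\widehat{\Gamma_{G_{Q,n}}^{\rm tor}}$ together with its $\psi$-side analogue; (iii) identify $\mca{S}_{\phi_\chi}$ as the component group of the centralizer of ${\rm Im}(\phi_\chi)$ in ${}^L\wt{G}$ and the map $\tilde{f}_\Gamma$; and (iv) match this against the explicit structure of the unitary unramified principal series, namely the decomposition $I(\chi)=\bigoplus_{\tau\in\Irr(\mca{S}_\chi)}\pi_\tau$ of \S\ref{SS:uni-ps}, the multiplicity formula of Theorem \ref{T:uni-func}, and the description in \cite{GSS2} of the moduli space $\msc{X}_{Q,n}$ of $\Wh_\psi(I(\chi))$ that locates the $(K,s_K)$-unramified, resp.\ $\psi$-generic, constituent.

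\textbf{Covers of $\Sp_{2r}$ and $\SO_3$.} For $\Sp_{2r}$ one has $\Gamma_G^{\rm tor}\cong\Z/2$ and $T_{ad}/T\cong\Z/2$, so on the $G$-side everything is governed by a single cyclic $2$-group; the endoscopic group $G_{Q,n}$ is of type $C_r$ or $B_r$ according to the parity of $n$ and the normalization of $Q$, and in either case $\Gamma_{G_{Q,n}}^{\rm tor}$ and $T_{Q,n,ad}/T_{Q,n}$ are read off directly from the rescaled coroot lattice, so the bridge map is the evident morphism of cyclic $2$-groups. It then remains to check that the $\mca{K}$-torsor structure, together with the torsor of splittings $s_K$ compatible with $s_T$, is intertwined with the $\widehat{\Gamma_{G_{Q,n}}^{\rm tor}}$-action on $\Irr(\mca{S}_{\phi_\chi})$ through this map, which I would do by direct comparison of Brylinski--Deligne cocycles, and then to pin down which constituent $\pi_\tau\subset I(\chi)$ is $(K,s_K)$-unramified, resp.\ ${}^t\psi$-generic. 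The latter is recorded on $\msc{X}_{Q,n}$ as in \cite{GSS2}: changing $K$, resp.\ $\psi$, translates the distinguished functional by the action of an element of $\msc{X}_{Q,n}^\Gamma$, and Conjectures \ref{C:varK} and \ref{C:varW} then amount to the assertion that this matches the twist $\rho\mapsto\tilde{f}_\Gamma(\cdot)\otimes\rho$, which follows from the $\Sp_{2r}$ lattice computation combined with \S\ref{SS:uni-ps}. The group $\SO_3\cong\PGL_2$ is the $r=1$ specialization, where $\Gamma_G^{\rm tor}$ is trivial and $G_{Q,n}$ is $\SL_2$ or $\PGL_2$; there the packet structure is the Shimura--Waldspurger picture for $\wt{\SL}_2^{(n)}$ and the verification is essentially the one carried out in \cite{GSS2}.

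\textbf{Kazhdan--Patterson covers of $\GL_r$.} Here $\mbf{G}=\GL_r$ has trivial torsion cocenter $\Gamma_G^{\rm tor}$ --- there is a single conjugacy class of hyperspecial maximal compact subgroups --- so the content of Conjecture \ref{C:varK} lies entirely in the choice of splitting, a torsor over $\Hom(\GL_r(O),\mu_n)$, while $\Gamma_{G_{Q,n}}^{\rm tor}$ and the $\psi$-side torsor $T_{Q,n,ad}/T_{Q,n}$ can be nontrivial and carry the weight of Conjecture \ref{C:varW}. I would compute $X_{Q,n}/X_{Q,n}^{sc}$ and identify $G_{Q,n}$ as a $\GL$-type group (a $\GL_{r'}$ times a central torus with $r'\mid r$, the Kazhdan--Patterson parameter $c$ and the degree $n$ entering through $Y_{Q,n}$), so that the bridge map is again the obvious projection of cyclic groups. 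The surjection $\tilde{f}_\Gamma$ follows the recipe of the linear $\GL_r$ case, and the explicit decompositions of $I(\chi)$ and of $I(\chi)|_{\wt{G}_0}$ I would import from Theorem \ref{T:0-dec} and Proposition \ref{P:GLsum}; the multiplicity bookkeeping comes from Theorem \ref{T:uni-func} and, crucially, Corollary \ref{C:uni-ac}, which keeps the formula valid even when the relevant constituent of the restriction fails to be $(K_0,s_K)$-unramified --- a failure that already occurs for even $n$, cf.\ Theorem \ref{T:0-GL2}.

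\textbf{Main obstacle.} The delicate part will be the splitting bookkeeping. Only those splittings $s_K:K\into\wt{G}$ restricting to the fixed $s_T$ on $\mbf{T}(O)$ are admissible, and one must show that the set of such compatible splittings is a torsor over $\widehat{\Gamma_G^{\rm tor}}$ in a manner compatible with the $\widehat{\Gamma_{G_{Q,n}}^{\rm tor}}$-action on $\Irr(\mca{S}_{\phi_\chi})$ through the bridge map --- a compatibility that has to be extracted from the Brylinski--Deligne second cohomology, and is least transparent precisely when $G_{Q,n}$ changes Cartan type relative to $G$ (type $C_r$ passing to type $B_r$ for $\Sp_{2r}$), since then the two torus-torsors live on groups of different type and the bridge map must be matched by hand. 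A secondary dependency is that the decomposition of $I(\chi)$ in \S\ref{SS:uni-ps} rests, in the non-regular case, on the conjectures of \cite{Ga6, Ga7}; for covers of $\Sp_{2r}$, $\SO_3$ and $\GL_r$ these inputs are available (or reduce to R-group computations already in the literature), so the resulting statement is unconditional for the three families in question.
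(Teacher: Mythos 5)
There is a genuine gap. Your proposal turns on the claim that, once the lattices $Y_{Q,n}$, $\Gamma_G^{\rm tor}$, $\Gamma_{G_{Q,n}}^{\rm tor}$ and the bridge map are computed, the matching of ``which constituent is $(K,s_K)$-unramified / ${}^t\psi$-generic'' with the twist $\rho\mapsto \tilde f_\Gamma(\cdot)\otimes\rho$ ``follows from the lattice computation combined with \S\ref{SS:uni-ps}.'' That is precisely the non-formal content of Conjectures \ref{C:varK} and \ref{C:varW} in the reducible cases, and it does not follow from \S\ref{SS:uni-ps} (Theorem \ref{T:uni-func} concerns restriction multiplicities to $\wt{G}_0$, not unramifiedness with respect to a second pair $(K',s_{K'})$, nor the $\psi$-dependence of Whittaker dimensions), nor from the moduli description of $\msc{X}_{Q,n}$ in \cite{GSS2}. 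What actually carries the proof in the paper is missing from your plan: for Conjecture \ref{C:varK} one computes the eigenvalue of the normalized operator $\mca{A}(w,\chi)$ on the normalized unramified vector attached to the \emph{other} pair — via the model-change isomorphism $\Phi_{\wt{t}}$ and an explicit Gindikin--Karpelevich-type integral — showing it equals $-1$ (Propositions \ref{P:SL2-K} and \ref{P:SO3}); for Conjecture \ref{C:varW} one needs the scattering matrix for $\psi$ of conductor $\mfr{p}_F$ (Proposition \ref{P:tau-p}), since the entries (Gauss sums, the size-one blocks at the non-free orbits) change with the conductor, and one recomputes the $R_\chi$-representation $\sigma^{\Wh_{{}^t\psi}}$ block by block (Proposition \ref{P:SL2-W} and \S\ref{SSS:SO3}). ``Direct comparison of Brylinski--Deligne cocycles'' and ``translating the distinguished functional by an element of $\msc{X}_{Q,n}^\Gamma$'' do not substitute for these computations — in particular for $\dim\Wh$ there is no single distinguished functional; the dimensions are the multiplicities $\angb{\rho}{\sigma^{\Wh_\psi}}$ and the whole point is that $\sigma^{\Wh_\psi}$ and $\sigma^{\Wh_{{}^t\psi}}$ are different $W$-representations.

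Two further structural errors. For Kazhdan--Patterson covers of $\GL_r$ you claim $\Gamma_{G_{Q,n}}^{\rm tor}$ ``can be nontrivial and carry the weight of Conjecture \ref{C:varW}''; in fact for these covers $n_\alpha\cdot Y\subset Y_{Q,n}$, and Proposition \ref{P:GL-triv} shows $\Gamma_{G_{Q,n}}^{\rm tor}=\set{1}$, so every unitary unramified $I(\chi)$ is irreducible, $\mca{S}(\phi_\chi)=\set{1}$, and both conjectures hold vacuously — the restriction results you invoke (Theorem \ref{T:0-dec}, Theorem \ref{T:0-GL2}, Corollary \ref{C:uni-ac}) are beside the point here. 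And $\SO_3$ is not the $r=1$ specialization of the $\Sp_{2r}$ family (that is $\SL_2$): for $\SO_3$ there is a single conjugacy class $K$ carrying two associated splittings $s_K$ and $s_K\otimes f_\xi$ when $4\mid n$, whereas for $\wt{\SL}_2$ with $n$ odd there are two classes $K,K'$ each with a unique splitting; the two situations require separate intertwining-operator computations, and deferring the $\SO_3$ case to \cite{GSS2} does not close it.
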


\subsubsection{}  In \S \ref{S:GSp}, we consider exclusively covers of $\GSp_{2r}$ and its unitary unramified principal series $I(\chi)$.
The discussion is oriented towards Conjecture \ref{C:varK} and Conjecture \ref{C:varW}. In particular, we 
\begin{enumerate}
\item[$\bullet$] determine the component group $\mca{S}_{\phi_\chi}$ and the pair $(K, s_K)$ with respect to which $\pi(\phi_\chi, \rho)$ is unramified;
\item[$\bullet$] determine the $\psi$-Whittaker dimension of each constituent $\pi(\phi_\chi, \rho)$, when $\mfr{f}(\psi) =\mfr{p}_F$ or $O_F$;
\item[$\bullet$] investigate the restriction of each $\pi(\phi_\chi, \rho)$ to the derived subgroup $\wt{G}_0=\wt{\Sp}_{2r}$ and the pair $(K_0, s_{K_0})$ with respect to which an irreducible constituent of $\pi(\phi_\chi, \rho)|_{\wt{\Sp}_{2r}}$ is unramified, and also study the $\psi$-Whittaker dimension of such a constituent.
\end{enumerate}
We summarize (in a compressed form) the main results for \S \ref{S:GSp} obtained from Theorems \ref{T:GSp-odd}, \ref{T:nev-rod} and \ref{T:GSp-ee}.
Below $K$ denotes the standard hyperspecial maximal compact subgroup and $K_0:=K \cap \Sp_{2r}$. Also, $\set{K_0, K_0'}$ denotes the set of conjugacy classes of hyperspecial maximal compact subgroup of $\Sp_{2r}$, and $s_0, s_0'$ denote the unique splittings of $K_0$ and $K_0'$ into $\wt{\Sp}_{2r}$ respectively. Let $\psi$ and $t\in T_{ad}/T$ be such that
$$\mfr{f}(\psi)=O_F \text{ and } \mfr{f}({}^t \psi) = \mfr{p}_F.$$
We also write 
$$n_{(r)}:=n/\gcd(n, r).$$

\begin{thm}[Theorems \ref{T:GSp-odd},  \ref{T:nev-rod} and \ref{T:GSp-ee}]
Consider the $n$-fold cover $\wt{\GSp}_{2r}$ of similitudes-splitting type. Let $I(\chi)$ be a unitary $(K, s_K)$-unramified genuine principal series of $\wt{\GSp}_{2r}$.
\begin{enumerate}
\item[(i)] If $n$ is odd, then $I(\chi)|_{\wt{G}_0} = \val{\msc{X}_{Q,n}^\Gamma} \cdot I(\omega)$, where $\omega = \chi|_{Z(\wt{T}_0)}$. If $I(\omega) = \pi(\phi_\omega, \mbm{1}) \oplus \pi(\phi_\omega, \varepsilon)$ is reducible, and we assume that $\pi(\phi_\omega, \mbm{1})$ is $(K_0, s_0)$-unramified, then $\pi(\phi_\omega, \varepsilon)$ is $(K_0', s_0')$-unramified.
\item[(ii)] If $n$ is even and $r$ is odd, then $I(\chi)$ is always irreducible and 
$$I(\chi)|_{\wt{G}_0}= \frac{n_{(r)}}{2}\cdot (I(\omega_{0, 0}) \oplus I(\omega_{0, 1}))\bigoplus \frac{n_{(r)}}{2}\cdot (I(\omega_{e_0, 0}) \oplus I(\omega_{e_0, 1})),$$
where every $I(\omega_{\gamma, j})$ is irreducible. Moreover, $I(\omega_{0, j}), j=0, 1$ is $(K_0, s_0)$-unramified and $I(\omega_{e_0, j}), j=0, 1$ is $(K_0', s_0')$-unramified.
\item[(iii)] If $n$ is even and $r$ is even, then 
$$I(\chi)|_{\wt{G}_0}= n_{(r)}\cdot (I(\omega_{0, 0}) \oplus I(\omega_{0, 1}))\bigoplus n_{(r)}\cdot (I(\omega_{e_0, 0}) \oplus I(\omega_{e_0, 1})),$$
where $I(\omega_{0, j})$ is $(K_0, s_0)$-unramified and $I(\omega_{e_0, j})$ is $(K_0', s_0')$-unramified. Here every $I(\omega_{\gamma, j})$ is irreducible. If $I(\chi) =\pi(\phi_\chi, \mbm{1}) \oplus \pi(\phi_\chi, \varepsilon)$ is reducible, then 
$$\begin{aligned}
& \pi(\phi_\chi, \mbm{1})|_{\wt{G}_0} \simeq \pi(\phi_\chi, \varepsilon)|_{\wt{G}_0} \\
\simeq & \  n_{(r)} \cdot I(\omega_{0, 0}) \bigoplus n_{(r)} \cdot I(\omega_{e_0, 0}) \simeq n_{(r)} \cdot I(\omega_{0, 1}) \bigoplus n_{(r)} \cdot I(\omega_{e_0, 1}).
\end{aligned} $$
\end{enumerate}
In all cases above, $\dim \Wh_\psi(\pi(\phi_{\omega_{\gamma, j}}, \rho))$ and $\dim \Wh_{^t\psi}(\pi(\phi_{\omega_{\gamma, j}}, \rho))$ are determined explicitly for every $\rho \in \mca{S}(\phi_{\omega_{\gamma, j}})$.
\end{thm}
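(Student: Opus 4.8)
The whole argument is to reduce everything to explicit lattice bookkeeping for $\GSp_{2r}$ and then feed it into the general machinery already assembled, chiefly Theorem~\ref{T:0-dec} for the restriction to $\wt{\Sp}_{2r}$ and Theorem~\ref{T:0-mul-uni-ps} for the finer multiplicities. First I would make the combinatorial data completely explicit. Write $Y$ for the cocharacter lattice of $\GSp_{2r}$ and $Y_0\subset Y$ for that of $\Sp_{2r}$ (the $C_r$ coroot lattice, since $\Sp_{2r}$ is simply connected), so that $Y/Y_0\cong\Z$, and let $Q$ be the Brylinski--Deligne quadratic form of the similitudes-splitting $n$-fold cover with $B_Q$ its associated symmetric bilinear form, whose restriction to $Y_0$ is the standard Weyl-invariant form of type $C_r$. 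From this I would compute $Y_{Q,n}$, $Y_{0,Q,n}$, the group $\msc{X}_{Q,n}$ with its quotient $\msc{X}_{Q,n}^\Gamma$, and the subgroup $\msc{X}_{Q,n}^\mfr{c}$. The trichotomy falls out here: the parity of $n$ governs whether the short coroot lies in $Y_{0,Q,n}$, while the parity of $r$ controls the index $n_{(r)}=n/\gcd(n,r)$ in the similitude direction, hence the orders of $\msc{X}_{Q,n}^\mfr{c}$ and of the quotient $\msc{X}_{Q,n}^\Gamma/\msc{X}_{Q,n}^\mfr{c}$ indexing the outer sum of Theorem~\ref{T:0-dec}; in particular Corollary~\ref{C:Zinc} shows $(\wt{\GSp}_{2r},\wt{\Sp}_{2r})$ is an isotypic pair precisely when $n$ is odd. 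Feeding these lattices into Theorem~\ref{T:0-dec}(ii) then writes $I(\chi)|_{\wt{G}_0}$ as $\val{\msc{X}_{Q,n}^\mfr{c}}$ times a sum over $\gamma\in\msc{X}_{Q,n}^\Gamma/\msc{X}_{Q,n}^\mfr{c}$ of the $I(\omega_{\gamma,j})$, $\omega_{\gamma,j}$ running over $\msc{E}(\chi,{}^\gamma\tchi_O;Z(\wt{T}_0))$; for $n$ odd this collapses to $\val{\msc{X}_{Q,n}^\Gamma}\cdot I(\omega)$ with $\omega=\chi|_{Z(\wt{T}_0)}$, giving case (i), and for $n$ even the outer index has exactly two elements $\{0,e_0\}$, so counting $\msc{E}(\chi,{}^\gamma\tchi_O;Z(\wt{T}_0))$ --- whose size is controlled by the index of $Z(\wt{T}_0)$ in its normalizer inside $\wt{T}$, again $C_r$-combinatorics weighted by $n_{(r)}$ --- yields the shapes in (ii) and (iii), the factor $n_{(r)}/2$ versus $n_{(r)}$ being exactly the $r$-parity contribution.

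\textbf{Reducibility and the isotypic restriction.} Next I would pin down which of $I(\chi)$ and of the $I(\omega_{\gamma,j})$ are reducible, by computing the component groups $\mca{S}_{\phi_\chi}$, $\mca{S}_{\phi_{\omega_{\gamma,j}}}$, and $\mca{S}^\diamondsuit$ as centralizer component groups inside the dual groups of the covers --- for $\GSp_{2r}$ a $\GSpin_{2r+1}$-type group rescaled by $n_{(r)}$, and for $\Sp_{2r}$ an orthogonal- or symplectic-type group according to parity. The known metaplectic R-group structure for $\wt{\Sp}_{2r}$ (cf.\ \cite{Szp1}), re-derived in this rescaled root datum, together with the extra $\Z/2$ coming from the similitude and appearing only when $r$ is even, gives that $I(\chi)$ is always irreducible when $r$ is odd, while for $r$ even $\mca{S}_{\phi_\chi}$ may be $\Z/2$ with $I(\chi)=\pi(\phi_\chi,\mbm{1})\oplus\pi(\phi_\chi,\varepsilon)$. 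The identity $\pi(\phi_\chi,\mbm{1})|_{\wt{G}_0}\simeq\pi(\phi_\chi,\varepsilon)|_{\wt{G}_0}$ in (iii) is because $\pi(\phi_\chi,\varepsilon)$ is $\pi(\phi_\chi,\mbm{1})$ twisted by a character of $\wt{\GSp}_{2r}$ trivial on $\wt{\Sp}_{2r}$ (equivalently, the two constituents are swapped by conjugation by an element whose similitude is a non-$n$-th power, which is trivial modulo $\wt{\Sp}_{2r}$); the explicit isotypic decomposition of this common restriction then drops out of Theorem~\ref{T:0-mul-uni-ps}, which computes $\dim\Hom_{\wt{\Sp}_{2r}}(\pi_\rho,\pi_\tau)$ through the induction--restriction pairing on $\mca{S}^\diamondsuit$ and simultaneously identifies the proportionality constant as $\val{\msc{X}_{Q,n}^\mfr{c}}$, confirming $e(I(\chi))=\val{\msc{X}_{Q,n}^\mfr{c}}$.

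\textbf{Compact subgroups and Whittaker dimensions.} Then, Theorem~\ref{T:0-dec}(iii) says $I(\omega_{\gamma,j})$ is $(K_0,s_K)$-unramified iff $\gamma=0$; since $\Sp_{2r}$ has exactly two conjugacy classes $\{K_0,K_0'\}$ of hyperspecial maximal compact subgroups, the pieces with $\gamma=e_0$ are forced to be unramified for the other class, and identifying that class as $(K_0',s_0')$ uses the torsor structure of $\mca{K}$ over $\widehat{\Gamma_{G_0}^{\rm tor}}$ and its comparison with $\widehat{\Gamma_{G_{0,Q,n}}^{\rm tor}}$, i.e.\ Conjecture~\ref{C:varK} for $\wt{\Sp}_{2r}$ established in \S\ref{SS:varKW-eg}; the parallel statement for the two constituents of a reducible $I(\omega)$ in case (i) is the image under $\tilde{f}_\Gamma$ of the linear result \cite{Mis2}. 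Finally the Whittaker dimensions come from the permutation-representation formulas of \S\ref{SS:uni-ps} (resting on \cite{Ga6,Ga7}): one computes $\dim\Wh_\psi(I(\omega_{\gamma,j}))$ from the combinatorial data governing its unramified Whittaker space (the $\wt{\Sp}_{2r}$-counterpart of the $\msc{X}_{Q,n}$ of \cite{GSS2}), distributes it over the constituents $\pi(\phi_{\omega_{\gamma,j}},\rho)$ via the $\mca{S}$-action, and passes between $\psi$ with $\mfr{f}(\psi)=O_F$ and ${}^t\psi$ with $\mfr{f}({}^t\psi)=\mfr{p}_F$ using Conjecture~\ref{C:varW}, likewise verified here; in the isotypic ($n$ odd) case \eqref{E:0-dimWh} reduces the count on $\wt{\GSp}_{2r}$ to the one on $\wt{\Sp}_{2r}$.

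\textbf{Main obstacle.} The crux, and the place where essentially all the case-by-case labour lives, is the simultaneous explicit determination --- across all four parities of $(n,r)$ --- of the quadruple $(\msc{X}_{Q,n}^\Gamma,\msc{X}_{Q,n}^\mfr{c},\mca{S}_{\phi_\chi},\mca{S}^\diamondsuit)$ together with a \emph{compatible} matching of the class $\gamma\in\msc{X}_{Q,n}^\Gamma/\msc{X}_{Q,n}^\mfr{c}$ with the geometric conjugacy class of hyperspecial maximal compact. The component-group side is delicate because the dual group of the cover is not literally $\GSpin_{2r+1}$ but its $n_{(r)}$-rescaled variant, so the off-the-shelf metaplectic R-group computations for $\wt{\Sp}_{2r}$ have to be redone in that rescaled root datum; and the geometric matching requires tracking precisely how the fixed splitting $s_K$ of $K$ restricts to $K_0$ and how it moves under the $\widehat{\Gamma_{G_0}^{\rm tor}}$-action on $\mca{K}$, which is exactly the content Conjecture~\ref{C:varK} isolates. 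Once those two computations are pinned down, Theorems~\ref{T:0-dec} and~\ref{T:0-mul-uni-ps} carry the rest essentially formally.
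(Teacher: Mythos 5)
Your plan follows the paper's own route: explicit lattice bookkeeping for $\GSp_{2r}$ in each parity case fed into Theorem \ref{T:decT}, unramifiedness with respect to $(K_0,s_0)$ versus $(K_0',s_0')$ (and the splitting of a reducible $I(\omega)$ or $I(\chi)$ across pairs) settled by the intertwining-operator eigenvalue computations of \S\ref{SS:varKW-eg}--\S\ref{S:GSp}, the common restriction $\pi(\phi_\chi,\mbm{1})|_{\wt{G}_0}\simeq\pi(\phi_\chi,\varepsilon)|_{\wt{G}_0}$ and its decomposition via Theorem \ref{T:uni-func}, and the Whittaker dimensions via \cite{Ga6,Ga7} and the scattering-matrix arguments. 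One caveat: your side justification that $\pi(\phi_\chi,\varepsilon)$ is a character twist of $\pi(\phi_\chi,\mbm{1})$ trivial on $\wt{\Sp}_{2r}$, or that the two are swapped by conjugation by an element of $\wt{\GSp}_{2r}$, is not sound (inner conjugation fixes isomorphism classes, and the constituents need not be twists of each other), but it is also unnecessary since Theorem \ref{T:uni-func}, which you invoke, already yields the identity of the restrictions exactly as in the paper.
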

The above theorem generalizes part of the results in \cite{Szp5}.

\subsection{Several remarks} Some comments are in order pertaining to the results proved above.

\begin{enumerate}
\item[(i)] In this paper, we mostly concentrate on genuine principal series of $\wt{G}$ or even restrict to the unramified ones. It is desirable to work out for other classes of representations for $\wt{G}$ in view of the conjectural formulas studied in \S \ref{S:func} regarding the multiplicity formula for the restriction. For example, the depth-zero supercuspidal representations for covers studied in \cite{GW} form another test ground for such formulas. For restriction of certain supercuspidal representations from a linear group to its derived subgroup, see \cite{Nev15}; for general discussion and results, see \cite{AP1, Sol20} as well.
\item[(ii)] Even for genuine principal series, we do not yet have a complete understanding of the local coefficients matrices or scattering matrices, which also play important roles in some other recent works such as \cite{BBBF, Kap01}. Indeed, for general character (not necessarily unramified character) in the  tame case, such a local coefficients matrix was already used in the proof of Theorem \ref{T:0-GL2}, and subtleties were observed when comparing it with the unramified local coefficients matrix. Also, for the proof of Conjecture \ref{C:varW} on the $\psi$-Whittaker space of $\pi(\phi_\chi, \rho)$ for varying $\psi$, it seems to be helpful, or perhaps even essential, to have an explicit description of the local coefficients matrix in the tame case. We leave the investigation of tame local coefficients matrix to a future work.
\item[(iii)] To the best of our knowledge, in the literature there has not been any computation of the archimedean local coefficients matrix or scattering matrix for double covers $\wt{G}_\R$ of $\mbf{G}(\R)$ in the real case, except for the metaplectic double cover ${\rm Mp}_{2r}$ of $\Sp_{2r}(\R)$ considered in \cite{Szp2, Szp4}. In general, the $\psi$-Whittaker space of a genuine principal series of $\wt{G}_\R$ may also be of high dimension. Besides ${\rm Mp}_{2r}$, the first examples to consider are the double covers $\wt{\SL}_{3,\R}$ and $\wt{\Spin}_{7,\R}$. It is expected that analogous results regarding the invariants of such a local coefficients matrix hold in this archimedean setting, by employing the methods of partial zeta integrals as developed in \cite{Szp6} and already used in \cite{GSS2}.
\item[(iv)] When the representation is not generic, an analogue of the local coefficients for certain classical groups was studied by Friedberg and Goldberg \cite{FrGo} by considering the generalized Bessel models. The simplest family of genuine representations for covering groups concerns the theta representations $\Theta(\chi)$. If the degree of covering is ``small" relative to the rank of the group, then $\Theta(\chi)$ is not generic. It is interesting to investigate the Bessel models for these theta representations. On the other hand, the leading nilpotent orbit in the Harish-Chandra character expansion governs the generalized or degenerate Whittaker functionals for $\Theta(\chi)$, see \cite{MW1, Var1, Pate}. It is interesting to see further  how such generalized Bessel models and generalized Whittaker functionals are related to each other, in pursuit of the representation-theoretic invariants.
\item[(v)] Especially lacking from the dual side is an interpretation of the high dimension of Whittaker models. Recall that for linear algebraic groups, it was a conjecture of the second-named author \cite{Sha3} that there is a bijection between L-packets and tempered parameters, which now has been proved in many cases, see \cite{Sha5} and references therein. Moreover, it follows from Gross--Reeder \cite{GrRe2} that the adjoint $L$-function of a generic discrete series is necessarily holomorphic at $s=0$. For covering groups, it is expected that every L-packet associated with a tempered parameter also contains a generic representation; however, the converse already fails for theta representations mentioned in (iv) above. The L-parameter for the theta representation $\Theta(\chi)$ is never tempered, although we could have $\dim \Wh_\psi(\Theta(\chi))>0$ if $n\gg r$. We do not yet know an interpretation from the dual side of  the quantity $\dim \Wh_\psi(\Theta(\chi))$, which was shown to be equal to the number of certain Weyl orbits. In fact, we are still very lacking any evidence for a general formula for $\dim \Wh_\psi(\pi), \pi \in \Irrg(\wt{G})$.
\end{enumerate}

It is clear that several speculations and results proved in this paper are motivated from its linear algebraic counterpart, while some others are new only in the covering setting, for example the metaplectic tensor product. We hope that the paper could provide a motivation to a further study on various aspects of the restriction problem for covers and the theory of local coefficients beyond the spectrum of genuine principal series. Again, as in \cite{GSS2}, we strive to work with the most general setup if possible, and assume only the minimal requirement. Hopefully, this could provide some convenience for any further study on the topic.

\subsection{Acknowledgement} We would like to thank Wee Teck Gan, Eyal Kaplan, and Dipendra Prasad for some very helpful communications on certain topics discussed in this paper. The second author was partially supported by NSF grant DMS-1801273.

\section{Restriction and induction of genuine representations} \label{S:RI}

\subsection{Covering groups and L-groups} \label{SS:cov-l}
We follow the exposition in \cite[\S 2]{GSS2} to summarize some important results on the structure of covering groups and the construction of their dual groups and L-groups. For more details, the reader is referred to \cite{BD, Mc1, We2, We3, We6, We7, GG}.

\subsubsection{Covering groups}
Let $\mbf{G}$ be a split connected linear reductive group over a $p$-adic field $F$ with root datum
$$(X, \Phi, \Delta; Y, \Phi^\vee, \Delta^\vee).$$
Here we fix a maximal split torus $\mbf{T} \subset \mbf{G}$, and $X$ (resp. $Y$) is the character (resp. cocharacter) lattice of $\mbf{T}$. Choose a set of simple roots $\Delta$ from the set $\Phi$ of roots. One has the corresponding simple coroots $\Delta^\vee$. Let $\mbf{B}=\mbf{T}\mbf{U}$ be the Borel subgroup associated with $\Delta$. Let 
$$W=N(\mbf{T})/\mbf{T}$$
be the Weyl group of $(\mbf{G}, \mbf{T})$, which we identify with the Weyl group of the coroot system generated by the simply reflections $w_\alpha, \alpha\in \Delta$. We also fix a Chevalley--Steinberg system of pinnings, i.e., a compatible system
$$\set{e_\alpha: \mbf{G}_a \to \mbf{U}_\alpha}_{\alpha \in \Phi},$$
where $\mbf{U}_\alpha \subset \mbf{G}$ is the root subgroup associated with $\alpha$. Denote by $G, B, T$ the $F$-rational points of $\mbf{G}, \mbf{B}, \mbf{T}$ respectively. 

Let 
$$D: Y\times Y \longrightarrow \Z$$
be a bilinear form such that
\begin{equation} \label{E:D-Q}
Q(y)= D(y, y)
\end{equation}
is a Weyl-invariant bilinear form of $Y$. Here $D$ may not be symmetric. If $Q$ is an integral Weyl-invariant bilinear form, then it is known (see \cite{We3}) that there exists $D$ such that  \eqref{E:D-Q} holds. Let 
$$B_Q: Y \times Y \longrightarrow \Z$$ 
be the Weyl-invariant bilinear form given by 
$$B_Q(y, z)= D(y, z) + D(z, y),$$
which actually only depends on $Q$ and thus justifies the notation used here. Furthermore, let
$$\eta: Y^{sc} \longrightarrow F^\times$$
be a homomorphism of the coroot lattice $Y^{sc} \subset Y$ into $F^\times$. Denote by 
$$\eta_n: Y^{sc} \longrightarrow F^\times \longrightarrow F^\times/F^{\times n}$$
 the composite of $\eta$ with the obvious quotient.

Every couple $(D, \eta)$ gives rise to a $\mbf{K}_2$-extension $\wm{G}$ of $\mbf{G}$. Assuming $F^\times$ contains the the full group of $n$-th roots of unity, denoted by $\mu_n$, one has an $n$-fold cover 
$$\begin{tikzcd}
\mu_n \ar[r, hook] & \wt{G} \ar[r, two heads, "p"] & G
\end{tikzcd}$$
obtained from the $n$-th Hilbert symbol 
$$(-,-)_n: \mbf{K}_2(F) \longrightarrow \mu_n.$$
A representation $(\pi, V_\pi)$ of $\wt{G}$ is called genuine if $\mu_n$ acts on $V_\pi$ by a fixed embedding $\mu_n \into \C^\times$. In this paper, we denote by
$$\Irrg(\wt{G})$$
the set of isomorphism classes of irreducible genuine representations of $\wt{G}$.

The covering group $\wt{G}$ splits over unipotent subgroups canonically and $G$-equivariantly, where the action of $G$ on $\wt{G}$ is by conjugation. Namely, if $U \subset G$ is a unipotent subgroup, then there is a unique splitting 
$$s_U: U \longrightarrow \wt{G}$$ satisfying
$$s_U(g u g^{-1}) = \wt{g} s_U(u) \wt{g}^{-1},$$
where the right hand side is independent of the choice of lifting $\wt{g} \in \wt{G}$ of $g \in G$.

 Denote by $\wt{e}_\alpha(F)$ the splitting of $e_\alpha(F)$ in $\wt{G}$. For any $\alpha \in \Phi$ and $x\in F^\times$, define
\begin{equation} \label{E:w}
\wt{w}_\alpha(x):=\wt{e}_\alpha(x) \wt{e}_{-\alpha}(-x^{-1}) \wt{e}_\alpha(x)
\end{equation}
and
\begin{equation} \label{E:h}
\wt{h}_\alpha(x):=\wt{w}_\alpha (x) \wt{w}_{\alpha}(-1).
\end{equation}
For any $\alpha\in \Phi$, for simplicity we write
$$\wt{w}_\alpha:=\wt{w}_\alpha(1).$$
When we consider the case $n=1$ (i.e., $\wt{G}=G$), we will use the notations $e_\alpha, w_\alpha, h_\alpha$ for $\wt{e}_\alpha, \wt{w}_\alpha, \wt{h}_\alpha$ respectively.

The group $\wt{G}$ is generated by the union of the sets $\mu_n$, $\set{\wt{e}_\alpha(F)}_{\alpha\in \Phi}$ and $\set{y(F^\times)}_{y\in Y}$ (see \cite[Theorem 3]{BLS}). Relations among the generators include the following:
\begin{enumerate}
\item[(A)] $\wt{e}_\alpha(x) $ is additive in $x$.
\item[(B)] If $\alpha$ and $\beta$ are roots with $\alpha + \beta \ne 0$, then the commutator
$$[\wt{e}_\alpha(x), \wt{e}_\beta(y)]=\prod \wt{e}_{i\alpha + j\beta}(c_{i,j} x^i y^j),$$
where $i$ and $j$ are positive integers and $c_{i,j}$'s are certain integers.
\item[(B)'] For any $\alpha\in \Phi$ and $x\in F^\times$:
$$\wt{w}_\alpha(x) \wt{e}_\alpha(u) \wt{w}_\alpha(x)^{-1} = \wt{e}_{-\alpha}(-x^2 u).$$
\item[(C)] There exists a section $\s$ of $\wt{T}$ over $T$ such that
$$\s(y_1(a)) \cdot \s(y_2(b)) = \s(y_1(a) \cdot y_2(b)) \cdot (a, b)_n^{D(y_1, y_2)}$$
for any $y_1, y_2\in Y$ and $a, b\in F^\times$. For any $\alpha \in \Delta$ and $x\in F^\times$, one has
$$\wt{h}_\alpha(x)= \s(\alpha^\vee(x)) \cdot (\eta(\alpha^\vee), x)_n^{Q(\alpha^\vee)}.$$
\item[(D)] For $\wt{t} \in \wt{T}$ whose image in $T$ is denoted by $t$, one has
\begin{equation} \label{W-act}
\wt{w}_\alpha \cdot \wt{t}  \cdot \wt{w}_\alpha^{-1} = \wt{t} \cdot \wt{h}_\alpha(\alpha(t)^{-1}).
\end{equation}
\end{enumerate}
By (C) above, the commutator 
$$[-,-]: T \times T \longrightarrow \mu_n$$
 is given by
$$[y_1(a), y_2(b)]=(a, b)_n^{B_Q(y_1, y_2)},$$
where $y_i \in Y$ and $a, b\in F^\times$.   Fix a uniformizer $\varpi \in F$. For any $y\in Y$, we write
$$\varepsilon:=(-1, \varpi)_n \in \mu_n, \quad \s_y:=\s(y(\varpi)) \in \wt{T}.$$
For every $n, r\in \N$, we also denote in this paper
$$n_{(r)} = n/\gcd(n, r).$$
It is clear that $n_{(r)}$ and $r_{(n)}$ are coprime.
\subsubsection{Dual group}
For a cover $(\wm{G}, n)$ associated to $(D, \eta)$, with $Q$ and $B_Q$ arising from $D$, we define
$$
Y_{Q,n}:= Y\cap nY^*,
$$
where $Y^* \subset Y\otimes \Q$ is the dual lattice of $Y$ with respect to $B_Q$; more explicitly,
\begin{equation} \label{YQn}
Y_{Q,n}= \set{y\in Y: B_Q(y, y')\in n\Z \text{ for all } y'\in Y} \subset Y.
\end{equation}
For every $\alpha^\vee\in \Phi^\vee$, denote
$$n_\alpha:= n_{(Q(\alpha^\vee))}= \frac{n}{\text{gcd}(n, Q(\alpha^\vee))}$$
and
$$ \alpha_{Q,n}^\vee=n_\alpha \alpha^\vee, \quad \alpha_{Q,n}=\frac{\alpha}{n_\alpha} .$$
Let
$$Y_{Q,n}^{sc} \subset Y_{Q,n}$$
be the sublattice generated by $\Phi_{Q,n}^\vee=\{\alpha_{Q,n}^\vee: \alpha^\vee \in \Phi^\vee \}$.  Denote $X_{Q,n}=\text{Hom}_\Z(Y_{Q,n}, \Z)$ and $\Phi_{Q,n}=\set{\alpha_{Q,n}: \alpha \in \Phi }$. We also write
$$\Delta_{Q,n}^\vee=\{ \alpha_{Q,n}^\vee: \alpha^\vee \in \Delta^\vee \} \text{ and } \Delta_{Q,n}=\set{\alpha_{Q,n}: \alpha\in \Delta}.$$
Then
$$\big( Y_{Q,n}, \ \Phi_{Q,n}^\vee, \ \Delta_{Q,n}^\vee;\  X_{Q,n},\  \Phi_{Q,n}^\vee, \Delta_{Q,n} \big)$$
forms a root datum with a choice of simple roots $\Delta_{Q,n}$. It gives a unique (up to unique isomorphism) pinned reductive group $\wm{G}^\vee$ over $\Z$, called the dual group of $(\wm{G}, n)$. In particular, $Y_{Q,n}$ is the character lattice for $\wt{G}^\vee$ and $\Delta_{Q,n}^\vee$ the set of simple roots. Let
$$\wt{G}^\vee:=\wm{G}^\vee(\C)$$
be the associated complex dual group. The center of $\wt{G}^\vee$ is
$$Z(\wt{G}^\vee) = \Hom(Y_{Q,n}/Y_{Q,n}^{sc}, \C^\times).$$
Let $\mbf{G}_{Q,n}$ be the pinned split reductive group over $F$ such that
$$\mbf{G}_{Q,n}^\vee \simeq \wm{G}^\vee,$$
where $\mbf{G}_{Q,n}^\vee$ is the Langlands dual group of $\mbf{G}_{Q,n}$. Then $\mbf{G}_{Q,n}$ is the principal endoscopic group of $(\wm{G},n)$, and clearly $\mbf{G}_{Q,n} = \mbf{G}$ if $n=1$.

\subsubsection{L-group} \label{SS:L-g}
In \cite{We3, We6}, Weissman constructed the global L-group as well as the  local L-group extension
 $$\begin{tikzcd}
\wt{G}^\vee_{Q,n} \ar[r, hook] & {}^L\wt{G} \ar[r, two heads] & \WD_{F},
\end{tikzcd}$$
which is compatible with the global L-group. Here $\WD_F$ is the Weil--Deligne group of $F$. His construction of L-group is functorial, and in particular it behaves well with respect to the restriction of $\wm{G}$ to parabolic subgroups. More precisely, let $\mbf{M} \subset \mbf{G}$ be a Levi subgroup. By restriction, one has the $n$-fold cover $\wt{M}$ of $M$. Then the L-groups ${}^L\wt{M}$ and ${}^L\wt{G}$ are compatible, i.e., there are natural homomorphisms of extensions:
$$\begin{tikzcd}
\wt{G}^\vee_{Q,n} \ar[r, hook] & {}^L\wt{G} \ar[r, two heads] & \WD_{F} \\
\wt{M}^\vee_{Q,n} \ar[r, hook] \ar[u, hook] & {}^L\wt{M} \ar[u, hook] \ar[r, two heads] & \WD_{F}  \ar[u, equal] .
\end{tikzcd}$$
For details on the construction and some properties regarding the L-group, we refer the reader to \cite{We3, We6, GG}.

If $\eta_n=\mbm{1}$, then there exists a so-called distinguished genuine character 
$$\chi_\psi: Z(\wt{T}) \longrightarrow \C^\times,$$
depending on a nontrivial additive character $\psi$ of $F$, such that the following properties hold:
\begin{enumerate}
\item[$\bullet$] the character $\chi_\psi$ takes values in $\mu_4\subseteq \C^\times$ and is Weyl-invariant, i.e., $\chi_\psi(w^{-1} \cdot \wt{t} \cdot w) = \chi_\psi(\wt{t})$ for all $\wt{t} \in Z(\wt{T})$ and $w$;
\item[$\bullet$] $\chi_\psi$ gives rise to a splitting of ${}^L\wt{G}$ over $W_F$, with respect to which one has an isomorphism 
$${}^L\wt{G} \simeq_{\chi_\psi} \wt{G}^\vee \times \WD_F.$$
\end{enumerate}

\subsection{The Clifford--Mackey theory}
We recall some results by Gelbart and Knapp \cite{GeKn1, GeKn2} (see also \cite{Tad92}) on the Clifford--Mackey theory of restriction and induction of representations in the setting of $l$-adic groups (see \cite{BZ1}). In particular, it applies to the $\mu_n$-cover of a linear algebraic group $G$ discussed in \S \ref{SS:cov-l}. These results could be considered as a proper generalization of the finite group situation, as discussed in \cite[\S 11]{CuRe1}.

Let $M$ be a totally disconnected group, i.e., a separable locally compact topological group whose open compact subgroups form a neighborhood base at the identity.  For an admissible representation $\pi$ and $e \in \N$, we will use
$$e\cdot \pi = \pi^{\oplus e}$$
interchangeably to denote the isotypic sum of $\pi$ with multiplicity $e$. Let 
$$H \subset M$$ 
be an open normal subgroup such that 
$$M/H$$ is a finite abelian group. One of two operations we consider in this paper is the restriction of representations from $M$ to $H$, while the other one is the induction from $H$ to $M$. For every admissible representation $(\rho, V_\rho)$ of $H$ and $g \in M$, there is the representation 
$${}^g \rho(h):= \rho(g^{-1} h g)$$ 
of $H$ afforded by the same vector space $V_\rho$ but  with the action twisted.  We denote
$$S_\rho:=\Stab(\rho, M)=\set{g\in M:  {}^g\rho \simeq \rho},$$
whenever the ambient group $M$ is clear from the context. Denote by 
$$\omega_\rho:  Z(H) \longrightarrow \C^\times$$
the central character of $\rho$. Also, for every subgroup $K \subset M$ and $g\in K$, we define a map
$$\varphi_{K,g}: K \longrightarrow K$$
by 
$$\varphi_g(x):=[x, g]_K,$$
where  $[g, g']_K:=g g' g^{-1} g'^{-1}$  denotes the commutator of $K$.

\begin{dfn} \label{D:conc}
For $H \subset K \subset M$, a representation $\sigma \in \Irr(K)$ is called $H$-concentrated if
$$\set{ g\in K:  \varphi_{K,g}^{-1}\big(Z(K) - \Ker(\omega_\sigma|_{Z(K)}) \big)= \emptyset } \subseteq H.$$
\end{dfn}

For every $\sigma \in \Irr(K)$, denote by $\Theta_\sigma$ the Harish-Chandra character distribution of $\sigma$, which is conjugation invariant and can be represented by a locally-integrable function. The above definition is motivated from the following:
\begin{lm} \label{L:supp}
Assume that $\sigma \in \Irr(K)$ is $H$-concentrated. Then
$${\rm supp}(\Theta_\sigma) \subset H.$$
\end{lm}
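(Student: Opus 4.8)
The plan is to show that the Harish--Chandra character $\Theta_\sigma$ of an $H$-concentrated $\sigma \in \Irr(K)$ vanishes on a neighborhood of every regular element $g \in K \setminus H$, which forces ${\rm supp}(\Theta_\sigma) \subset H$ since $\Theta_\sigma$ is a locally integrable function (hence determined by its restriction to the regular set) and invariant under conjugation. First I would unpack the hypothesis: by Definition \ref{D:conc}, for $g \in K \setminus H$ there must exist some $x \in K$ with $\varphi_{K,g}(x) = [x,g]_K \in Z(K) \setminus \Ker(\omega_\sigma|_{Z(K)})$, i.e.\ a commutator $[x,g]$ which is central in $K$ but on which the central character $\omega_\sigma$ acts nontrivially. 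The idea is that such an $x$ ``detects'' the support condition: conjugation by $x$ fixes the conjugacy class of $g$ up to multiplication by the central element $z := [x,g]_K$, and hence relates $\Theta_\sigma(g)$ to $\Theta_\sigma(zg) = \omega_\sigma(z)^{-1}\Theta_\sigma(g)$ (using centrality of $z$), so $\Theta_\sigma(g) = 0$.

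Carrying this out in the $l$-adic (totally disconnected) setting rather than for finite groups is the only subtlety, so the key steps I would organize as follows. (1) Fix $g \in K \setminus H$ regular semisimple and choose $x \in K$ as above with $z = x g x^{-1} g^{-1} = [x,g]_K \in Z(K)$, $\omega_\sigma(z) \neq 1$. Then $x g x^{-1} = z g$, and since $z$ is central, $zg$ and $g$ generate the same cyclic behavior under $\sigma$: more precisely $\sigma(zg) = \omega_\sigma(z)\,\sigma(g)$ as operators. (2) Invoke conjugation invariance of the character distribution: $\Theta_\sigma$ is invariant under $K$-conjugation, so as distributions (or as locally integrable functions, using that conjugation by $x$ is a measure-preserving homeomorphism of $K$) one has $\Theta_\sigma(xgx^{-1}) = \Theta_\sigma(g)$ in the sense of germs at $g$, i.e.\ $\Theta_\sigma(zg) = \Theta_\sigma(g)$ near the regular point. (3) Combine with the transformation $\Theta_\sigma(zg) = \omega_\sigma(z)\,\Theta_\sigma(g)$ coming from centrality of $z$ and the definition of the character (trace of $\sigma(zg) = \omega_\sigma(z)\sigma(g)$). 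Since $\omega_\sigma(z) \neq 1$, this gives $\Theta_\sigma(g) = 0$. (4) Conclude that $\Theta_\sigma$ vanishes on the dense open set of regular elements of $K \setminus H$, hence — as a locally integrable, conjugation-invariant function, and since $H$ is open so $K \setminus H$ is closed — ${\rm supp}(\Theta_\sigma) \subset \overline{H} = H$.

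The main obstacle I anticipate is making step (2)--(3) rigorous at the level of distributions rather than pointwise values: a priori $\Theta_\sigma(g)$ only makes sense as a locally integrable function, so one must argue that the identity $\Theta_\sigma(zg) = \omega_\sigma(z)\Theta_\sigma(g)$ holds almost everywhere near $g$ and then use Weyl integration / the fact that regular semisimple elements are dense together with local constancy of $\Theta_\sigma$ on the regular set (a standard fact for admissible representations of $l$-adic groups, cf.\ \cite{BZ1}) to upgrade to an open neighborhood. Equivalently, one can phrase the whole argument in terms of the distribution $\Theta_\sigma$ tested against functions supported in a small $K$-invariant neighborhood of the $K$-conjugacy class of $g$: conjugation invariance kills such test integrals unless the integral is also invariant under the translation $f \mapsto f(z^{-1} \cdot)$, and the eigenvalue mismatch $\omega_\sigma(z)\neq 1$ forces the integral to be zero. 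Once this local vanishing is established on the regular locus, the passage to ${\rm supp}(\Theta_\sigma) \subset H$ is immediate from openness of $H$ and the density of regular elements. I do not expect any genuinely new input beyond the standard structure theory of characters of admissible representations recalled before the lemma.
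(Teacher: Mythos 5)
Your proposal is correct and follows essentially the same argument as the paper: for $g \in K \setminus H$ one picks $x$ with $z=[x,g]_K \in Z(K)$, $\omega_\sigma(z)\neq 1$, and combines conjugation invariance ($xgx^{-1}=zg$) with the central character transformation to get $\Theta_\sigma(g)=\omega_\sigma(z)\Theta_\sigma(g)=0$. The extra care you take about interpreting $\Theta_\sigma$ as a locally integrable function on the regular set is a reasonable refinement of the same computation, not a different route.
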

\begin{proof}
Let $g \in K - H$. Since $\sigma$ is $H$-concentrated by assumption, there exists $x\in K$ such that 
$$[x, g] \in Z(K) - \Ker(\omega_\sigma|_{Z(K)}).$$
One has
$$\Theta_\sigma(g) = \Theta_\sigma([x, g]\cdot g)= \omega_\sigma([x, g]) \cdot \Theta_\sigma(g),$$
which shows that $\Theta_\sigma(g)=0$ since $\omega_\sigma([x, g]) \ne 1$. Thus, ${\rm supp}(\Theta_\sigma) \subset H$.
\end{proof}

\subsubsection{Restriction} \label{SSS:res}
Let $\pi \in \Irr(M)$ be an irreducible admissible representation of $M$. One has (see \cite[Lemma 2.1]{GeKn1})
$$\pi|_{H} = \bigoplus_{g_i \in M/S_\rho} ({}^{g_i} \rho)^{\oplus e},$$
where
\begin{enumerate}
\item[--] $\rho$ is an irreducible representation of $H$ that occurs in $\pi|_{H}$;
\item[--] ${}^{g} \rho$ is the twist of $\rho$ by $g\in M$, and elements in $\set{{}^{g_i}\rho: g_i \in M/S_\rho}$ are mutually non-isomorphic representations of $H$;
\item[--] $e \in \N$ is a certain ramification index (i.e., multiplicity) of ${}^g \rho$, and in particular $\rho^{\oplus e}$ is an isotypic component of $\rho$ in $\pi|_{H}$.
\end{enumerate}
One has
$$S_\rho = S_{\rho^{\oplus e}},$$
and there is a natural irreducible representation of $S_\rho$ on $\rho^{\oplus e}$. Moreover,
$$\pi = \Ind_{S_\rho}^{M} (\rho^{\oplus e}).$$
Note that the irreducible representation of $S_\rho$ on $\rho^{\oplus e}$ depends on the $\pi$ one started with. Regarding the restriction, there are two extreme cases:
\begin{enumerate}
\item[--] if $S_\rho =M$, then $\pi|_{H}$ is an isotypic sum of $\rho$, i.e., $\pi|_{H} = \rho^{\oplus e}$;
\item[--] if $S_\rho = H$, then $e=1$ and in this case $\pi = \Ind_{H}^{M} (\rho)$.
\end{enumerate}

\subsubsection{Induction} Let $(\rho, V_\rho)$ be an irreducible admissible representation of $H$. The induced representation of $\rho$ to $M$ might not be irreducible. To obtain an irreducible representation of $M$ from $\rho$, one approach is as follows.

\begin{lm} \label{L:001}
One can extend $\rho$ to a representation $\rho^\flat$ of a subgroup $H^\flat \subset M$ (still acting on the same vector space $V_\rho$) such that $S_{\rho^\flat}= H^\flat$. In this case, $\Ind_{H^\flat}^{M} (\rho^\flat)$ is irreducible.
\end{lm}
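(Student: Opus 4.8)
The plan is to build $H^\flat$ by choosing, for each element in a suitable quotient of $S_\rho$, an intertwining operator realizing the self-equivalence ${}^g\rho \simeq \rho$, and then to correct the resulting projective cocycle to an honest one. Concretely, recall from \S\ref{SSS:res} that $S_\rho = \Stab(\rho, M)$ is the stabilizer of the isomorphism class of $\rho$, and that $M/H$ is finite abelian; hence $S_\rho/H$ is a finite abelian group. For each $g \in S_\rho$ Schur's lemma (using admissibility and irreducibility of $\rho$) gives an isomorphism $A_g \colon V_\rho \to V_\rho$, unique up to scalar, with $A_g \circ \rho(g^{-1}hg) = \rho(h) \circ A_g$ for all $h \in H$; we may and do take $A_h = \rho(h)$ for $h \in H$. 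The assignment $g \mapsto A_g$ then satisfies $A_g A_{g'} = c(g,g')\, A_{gg'}$ for a $2$-cocycle $c$ on $S_\rho/H$ valued in $\C^\times$. The first task is to show this cohomology class is trivial, so that after rescaling the $A_g$ by a $1$-cochain we obtain a genuine homomorphism $g \mapsto \rho^\flat(g)$ extending $\rho$ to all of $S_\rho$; we then set $H^\flat := S_\rho$ and $\rho^\flat$ this extension.

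First I would address the vanishing of $[c] \in H^2(S_\rho/H, \C^\times)$. The cleanest route is to use that $S_\rho/H$ is finite, so $H^2(S_\rho/H, \C^\times)$ is finite; combined with the divisibility of $\C^\times$, one reduces to checking that $c$ takes values in roots of unity and invoking that the Schur multiplier obstruction vanishes precisely when a projective representation of a finite group lifts — which here is automatic because $\C^\times$ is divisible and $H^2(A,\C^\times) = H^3(A,\Z)$-torsion arguments, or more directly, because any central extension of $S_\rho/H$ by $\C^\times$ splits as $\C^\times$ is an injective $\Z$-module. So the projective representation $g \mapsto A_g$ of $S_\rho/H$ lifts to an ordinary one, giving the desired rescaling. (Alternatively, since in our applications $S_\rho/H$ is typically cyclic, $H^2$ vanishes trivially; but the divisibility argument handles the general finite abelian case.) Once $\rho^\flat$ is constructed, it manifestly acts on the same space $V_\rho$ and restricts to $\rho$ on $H$.

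Next I would verify $S_{\rho^\flat} = H^\flat$. By construction $H^\flat = S_\rho$ fixes $\rho^\flat$'s restriction to $H$, namely $\rho$, so $S_{\rho^\flat} \subseteq S_\rho = H^\flat$; conversely every $g \in H^\flat$ acts by the operator $\rho^\flat(g)$, which visibly intertwines $\rho^\flat$ with its $g$-conjugate, so $H^\flat \subseteq S_{\rho^\flat}$. Hence equality. Finally, irreducibility of $\Ind_{H^\flat}^M(\rho^\flat)$ follows from the Clifford--Mackey machinery recalled in \S\ref{SSS:res}: applying the decomposition there with the pair $H^\flat \subset M$ in place of $H \subset M$, any $\pi \in \Irr(M)$ containing $\rho^\flat$ in its restriction to $H^\flat$ satisfies $\pi = \Ind_{S_{\rho^\flat}}^M((\rho^\flat)^{\oplus e})$ for some $e$; since $S_{\rho^\flat} = H^\flat$, Mackey's irreducibility criterion (the $S_\rho = H$ extreme case listed at the end of \S\ref{SSS:res}) forces $e = 1$ and $\pi = \Ind_{H^\flat}^M(\rho^\flat)$, so this induced representation is irreducible and equals some $\pi \in \Irr(M)$.

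The main obstacle is the cocycle-triviality step: one must be careful that the $A_g$ can be chosen measurably/continuously in a way compatible with the totally disconnected topology (so that $\rho^\flat$ is a smooth representation of $H^\flat$), not merely at the level of abstract groups. Since $H$ is open in $M$ with finite quotient, $S_\rho$ is open too and $\rho^\flat$ is smooth as soon as its restriction $\rho$ is and the finite index set of coset representatives is handled discretely, so this is a mild point; nonetheless it is the place where the argument needs genuine care rather than formal manipulation. Everything else is bookkeeping with the Clifford theory already set up in the excerpt.
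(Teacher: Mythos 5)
There is a genuine gap, and it sits at the heart of your argument: the claim that the obstruction class of $(A_g)$ in $H^2(S_\rho/H,\C^\times)$ vanishes is false. Injectivity (divisibility) of $\C^\times$ as a $\Z$-module only gives $\mathrm{Ext}^1_\Z(A,\C^\times)=0$, i.e.\ that \emph{abelian} extensions of $A=S_\rho/H$ by $\C^\times$ split; it says nothing about central extensions, and $H^2(A,\C^\times)$ with trivial action is the Schur multiplier, which is nonzero for any non-cyclic finite abelian group (e.g.\ $H^2((\Z/n)^2,\C^\times)\cong\Z/n$). Your parenthetical fallback "$S_\rho/H$ is typically cyclic" also does not hold in the situations this lemma is used for. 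In fact the paper's own covering-torus example refutes the conclusion you are aiming at: there $H=Z(\wt{T})$, $M=\wt{T}$, $\rho=\chi$ is a genuine character, and $S_\chi=\wt{T}$, yet $\chi$ cannot be extended to a character of all of $S_\chi=\wt{T}$ because the obstruction cocycle is exactly the (nondegenerate) commutator pairing on $\wt{T}/Z(\wt{T})$; the extension only exists on a maximal abelian subgroup $\wt{A}\subsetneq\wt{T}$, and that is precisely the $H^\flat$ of the lemma. So the strategy "take $H^\flat=S_\rho$" cannot work, and the whole first step of your proposal collapses.

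The argument the paper has in mind (it defers to Mezo, and the point is only that $M/H$ is finite abelian) goes differently: take $H^\flat$ to be a subgroup of $M$ containing $H$ that is \emph{maximal} among those to which $\rho$ extends on the space $V_\rho$. To see $S_{\rho^\flat}=H^\flat$, suppose $g\in S_{\rho^\flat}\setminus H^\flat$; since $M/H$ is finite abelian, $H^\flat$ is normal in $M$ and $\langle H^\flat,g\rangle/H^\flat$ is finite cyclic, say of order $k$. Choose by Schur's lemma an operator $A$ with $A\rho^\flat(x)A^{-1}=\rho^\flat(gxg^{-1})$; then $A^k$ and $\rho^\flat(g^k)$ differ by a scalar, and rescaling $A$ by a $k$-th root of that scalar one checks that $g^jh\mapsto A^j\rho^\flat(h)$ is a representation of $\langle H^\flat,g\rangle$ extending $\rho^\flat$ — contradicting maximality. (This one-step cyclic extension is where the vanishing of the obstruction is actually available; it fails for general abelian quotients, which is why $H^\flat$ is in general strictly smaller than $S_\rho$.) Your final step is fine and is the same as the paper's: once $S_{\rho^\flat}=H^\flat$, the conjugates ${}^g\rho^\flat$, $g\notin H^\flat$, are pairwise non-isomorphic and $H^\flat$ is normal and open of finite index, so Mackey's criterion gives irreducibility of $\Ind_{H^\flat}^M(\rho^\flat)$, and smoothness is automatic as you note.
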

\begin{proof}
The proof is the same as in \cite[Page 89-90]{Mez04} almost word for word, by noting that $M/H$ is an abelian group of finite order by assumption.
\end{proof}
Using notations from the above lemma, one has
\begin{equation} \label{E:ext}
\Ind_{H}^{H^\flat} (\rho) = \bigoplus_{\chi \in \Hom(H^\flat/H, \C^\times)} \chi\otimes \rho^\flat,
\end{equation}
where $$\set{\chi\otimes \rho^\flat: \chi \in \Hom(H^\flat/H, \C^\times)}$$ 
gives all the possible extensions of $\rho$ to $H^\flat$. For a general $\chi \in \Hom(H^\flat/H, \C^\times)$, one can extend it to a character 
$$\chi': M/H \longrightarrow \C^\times.$$
Then one has
$$\Ind_{H^\flat}^M(\chi\otimes \rho^\flat) \simeq \chi' \cdot \Ind_{H^\flat}^M(\rho^\flat),$$
which is an irreducible representation of $\wt{M}$. Therefore, we have a decomposition
$$\Ind_H^M(\rho) = \bigoplus_{\chi \in \Hom(H^\flat/H, \C^\times)} \Ind_{H^\flat}^M (\chi\otimes \rho^\flat)$$
of $\Ind_H^M(\rho)$ into irreducible representations of $M$.

\begin{prop} \label{P:con-con}
Assume that every irreducible representation of $M$ is $H$-concentrated.  Let $\rho \in \Irr(H)$. Then the representation 
$$\Ind_H^M(\rho)= \pi^{\oplus \val{H^\flat/H}}$$
is an isotypic sum with $\pi:=\Ind_{H^\flat}^H(\rho^\flat) \in \Irr(M)$ given as in Lemma \ref{L:001}.
\end{prop}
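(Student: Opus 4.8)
The plan is to combine the decomposition of an induced representation into irreducibles coming from the extension $\rho^\flat$ with the support constraint on Harish--Chandra characters provided by Lemma~\ref{L:supp}. Concretely, from the discussion following Lemma~\ref{L:001} we already have the decomposition
$$\Ind_H^M(\rho) = \bigoplus_{\chi \in \Hom(H^\flat/H, \C^\times)} \Ind_{H^\flat}^M (\chi\otimes \rho^\flat),$$
and each summand is an irreducible representation of $M$, isomorphic to $\chi' \cdot \Ind_{H^\flat}^M(\rho^\flat) = \chi' \cdot \pi$ for a character $\chi'$ of $M/H$ extending $\chi$. Since there are exactly $\val{H^\flat/H}$ choices of $\chi$ (and the restriction $\chi'|_{H^\flat}$ recovers $\chi$), it suffices to show that all these twists $\chi' \cdot \pi$ are isomorphic to $\pi$ itself; then the right-hand side is $\pi^{\oplus \val{H^\flat/H}}$, which is exactly the claim.

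The key step is therefore to show $\chi' \cdot \pi \simeq \pi$ for every $\chi' \in \Hom(M/H, \C^\times)$. I would do this at the level of Harish--Chandra characters: $\Theta_{\chi' \cdot \pi}(g) = \chi'(g) \cdot \Theta_\pi(g)$ for all $g \in M$. Because $\pi \in \Irr(M)$ and $M$ is assumed to have the property that every irreducible representation is $H$-concentrated, Lemma~\ref{L:supp} gives ${\rm supp}(\Theta_\pi) \subseteq H$. But $\chi'$ is trivial on $H$ by construction, so $\chi'(g) = 1$ on the support of $\Theta_\pi$, whence $\Theta_{\chi' \cdot \pi} = \Theta_\pi$ as distributions. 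By linear independence of characters of irreducible admissible representations, $\chi' \cdot \pi \simeq \pi$. Feeding this back into the displayed decomposition yields $\Ind_H^M(\rho) = \pi^{\oplus \val{H^\flat/H}}$.

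The only genuine point requiring care — and the part I expect to be the main obstacle — is the legitimacy of the character-theoretic manipulation $\Theta_{\chi' \cdot \pi}(g) = \chi'(g)\Theta_\pi(g)$ combined with the vanishing of $\Theta_\pi$ off $H$; one must know that $\Theta_\pi$ is represented by a locally integrable function (so that the pointwise statement on the complement of $H$ makes sense) and that two irreducible admissible representations with equal character distributions are isomorphic. Both facts are standard for $\ell$-adic groups and are precisely the framework in which Lemma~\ref{L:supp} is stated, so no new input is needed; it remains only to remark that $\pi$ being irreclucible admissible (as produced by Lemma~\ref{L:001}) places us in that setting. Everything else — counting the summands, extending $\chi$ to $\chi'$, identifying $\pi = \Ind_{H^\flat}^H(\rho^\flat)$ — is already in place from the preceding lemmas and the displayed decomposition.
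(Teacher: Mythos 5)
Your proposal is correct and follows essentially the same route as the paper: both arguments reduce to a character identity, using Lemma \ref{L:supp} (via the $H$-concentration hypothesis) to confine the relevant character distributions to $H$, where the twisting character is trivial, and then invoking the fact that irreducible admissible representations with equal Harish--Chandra characters are isomorphic (\cite[Corollary 2.20]{BZ1}). The only cosmetic difference is that you compare $\Theta_{\chi'\cdot\pi}$ with $\Theta_\pi$ directly using the twist identity $\Ind_{H^\flat}^M(\chi\otimes\rho^\flat)\simeq\chi'\cdot\pi$ from the preceding discussion, whereas the paper evaluates $\Theta_{\Ind_{H^\flat}^M(\chi\otimes\rho^\flat)}$ on $H$ via the induced character formula; the substance is identical.
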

\begin{proof} Again, the argument is essentially that of \cite[Lemma 4.2]{Mez04} which deals with covers of $\GL_r$; the desired result in that context is given explicitly in \cite[Proposition 4.6]{Tak3}. We sketch the argument for completeness.

Denote $\sigma:=\Ind_{H^\flat}^M (\chi \otimes \rho^\flat)$ for an arbitrary $\chi\in \Hom(H^\flat/H, \C^\times)$, we want to show $\Theta_\sigma = \Theta_\pi$. First, applying Lemma \ref{L:supp} to $K=M$ shows that it suffices to prove $\Theta_\sigma(x) = \Theta_\pi(x)$ for $x\in H$. Since 
$$\Theta_\sigma(x) = \sum_{\gamma \in M/H^\flat} (\chi \cdot \Theta_{\rho^\flat})(\gamma^{-1} x \gamma),$$
it gives that for $x\in H$,
$$\Theta_{\sigma}(x) = \sum_{\gamma \in M/H^\flat} \Theta_{\rho^\flat}(\gamma^{-1} x \gamma) = \Theta_\pi(x).$$
This shows $\Theta_\sigma = \Theta_\pi$ and thus
$$\sigma \simeq \pi,$$ 
see \cite[Corollary 2.20]{BZ1}. Since $\chi$ is arbitrary, the proof is completed.
\end{proof}

If every irreducible representation of $M$ is $H$-concentrated, then for given $\rho \in \Irr(H)$ we denote by 
$$\pi[\rho] \in \Irr(M)$$
the unique irreducible representation appearing in $\Ind_H^M(\rho)$, as in Proposition \ref{P:con-con}. Since $\pi[\rho] \in \Irr(M)$ is irreducible, by the discussion in \S \ref{SSS:res} and the Frobenius reciprocity, there exists $\tilde{\rho} \in \Irr(S_\rho)$
such that 
\begin{equation} \label{E:2con}
\pi[\rho] = \Ind_{S_\rho}^M (\tilde{\rho}),
\end{equation}
where $\tilde{\rho}|_{H} = \rho^{\oplus e}$ for some $e\in \N$. The relations among the two constructions are depicted in the following diagram
\begin{equation} \label{D:2ind}
\begin{tikzcd}
& (M, \pi[\rho]) \\
(S_\rho, \tilde{\rho}) \ar[ru] & & (H^\flat, \rho^\flat) \ar[lu] \\
& (H, \rho). \ar[lu] \ar[ru]
\end{tikzcd}
\end{equation}
Note that here $\tilde{\rho}$ arises from the restriction of the irreducible representation $\pi[\rho]$. The left two arrows follow from the Clifford--Mackey theory. On the other hand, the right two arrows in \eqref{D:2ind} together give an analogue of the Stone--von Neumann construction of irreducible representations of Heisenberg groups, as illustrated by Proposition \ref{P:con-con}.

\begin{eg}
Consider the covering torus 
$$\begin{tikzcd}
\mu_n \ar[r, hook] & \wt{T} \ar[r, two heads] & T,
\end{tikzcd}$$
which is a Heisenberg type group. The center $Z(\wt{T})$ is of finite index in $T$. Consider the pair $(M, H)=(\wt{T}, Z(\wt{T}))$. Every irreducible representation of $\wt{T}$ is $Z(\wt{T})$-concentrated. One has the diagram \eqref{D:2ind}. To describe the right arrows of \eqref{D:2ind}, let $\chi: Z(\wt{T}) \longrightarrow \C^\times$ be a genuine central character. We choose a maximal abelian subgroup $\wt{A} \subset \wt{T}$ and an extension $\tilde{\chi}: \wt{A} \longrightarrow \C^\times$ of $\chi$. Then $\pi[\chi] = \Ind_{\wt{A}}^{\wt{T}} (\tchi)$ is irreducible and independent of the choice of $\wt{A}$ and $\tchi$; also, one has
$$\dim \pi[\chi] = \sqrt{[\wt{T}: Z(\wt{T})]},$$
see \cite{We1, We5}. For the left arrows in \eqref{D:2ind}, we have
$$S_\chi = \wt{T}$$
and thus $\pi[\chi]|_{Z(\wt{T})} = \chi^{\oplus e}$, where $e=\sqrt{[\wt{T}: Z(\wt{T})]}$. We also use the notation
$$i(\chi):= \pi[\chi]$$
throughout the paper.
\end{eg}

\subsection{Two special pictures} \label{SS:2pic}
Henceforth, we consider covers arising from the Brylinski--Deligne framework exclusively. Recall that $T=\Hom(X, F^\times) = Y\otimes F^\times$. 

\begin{dfn} \label{D:assL}
A subgroup $\wt{H} \subset \wt{T}$ is said to be associated with a sublattice $\mbm{L} \subset Y$ if 
$$\wt{H} = p^{-1}({\rm Im}(\iota)),$$
where $\iota: \mbm{L} \otimes F^\times \to T$ is the map induced from the inclusion $\mbm{L} \subset Y$. 
\end{dfn}

Write $\mbm{L}(\wt{H})$ for the unique sublattice of $Y$ associated with $\wt{H}$, whenever it exists. Assume that $Z(\mbf{G})$ is connected i.e., $X/X^{sc}$ is a free $\Z$-module, or equivalently, the exact sequence
$$\begin{tikzcd}
X^{sc} \ar[r, hook] & X \ar[r, two heads] & X/X^{sc}
\end{tikzcd}$$
of $\Z$-modules split. We thus have a $W$-equivariant embedding
$$\begin{tikzcd}
\Hom(X/X^{sc}, \Z)  \ar[r, hook] & \Hom(X, \Z) \simeq Y,
\end{tikzcd} $$
the image of which we denote by $Y_c \subset Y$. In particular, $\wt{Z(G)}$ is associated with the sublattice  $Y_c \subset Y$, see Definition \ref{D:assL}. Let
$$i_{Q,n}:  Y_{Q,n}\otimes F^\times \longrightarrow T= Y\otimes F^\times$$
be the isogeny induced from the inclusion $Y_{Q,n} \into Y$. It is known that  (see \cite{We1})
$$Z(\wt{T}) = p^{-1}({\rm Im}(i_{Q,n}));$$
that is, $Y_{Q,n}= \mbm{L}(Z(\wt{T}))$.
Now we define
$$Y_z := Y_{Q,n} \cap Y_c$$
and let
$$i_{Q,n}^z: Y_z \otimes F^\times \longrightarrow T$$
be the map induced from the inclusion $Y_z \subset Y$.

The following has been observed and used in many places in the literature (for example see \cite{KP, CO, Gan17})
\begin{lm}  \label{L:Gz}
Keeping notations as above, we have $Z(\wt{G}) = \wt{Z(G)} \cap Z(\wt{T})$ and 
$$Z(\wt{G}) =p^{-1}({\rm Im}(i_{Q,n}^z)),$$
that is, $Y_z = \mbm{L}(Z(\wt{G}))$.
\end{lm}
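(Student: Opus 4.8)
The plan is to prove Lemma~\ref{L:Gz} by unwinding the definitions of the groups involved in terms of their associated sublattices, and then using the characterization of the center of a covering torus via the bilinear form $B_Q$. First I would establish the identity $Z(\wt{G}) = \wt{Z(G)} \cap Z(\wt{T})$. The inclusion $\wt{Z(G)} \cap Z(\wt{T}) \subseteq Z(\wt{G})$ should be essentially formal: an element lying in $\wt{Z(G)}$ commutes (via the conjugation action of $G$, which lifts to $\wt{G}$ since $\wt{G}$ is a central extension) with all of $\wt{G}$ modulo the fact that it must also commute with $\wt{T}$ on the nose, which is exactly the condition of lying in $Z(\wt{T})$; using the generators of $\wt{G}$ listed in relations (A)--(D), namely $\mu_n$, the $\wt{e}_\alpha(F)$, and the $y(F^\times)$, one checks commutation block by block. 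For the reverse inclusion $Z(\wt{G}) \subseteq \wt{Z(G)} \cap Z(\wt{T})$: an element of $Z(\wt{G})$ certainly lies in $Z(\wt{T})$, and it must project to an element of $G$ centralizing all of $G$, hence to an element of $Z(G)$, so it lies in $p^{-1}(Z(G)) = \wt{Z(G)}$.

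Next I would identify the lattice. We already know $Y_{Q,n} = \mbm{L}(Z(\wt{T}))$ and $Y_c = \mbm{L}(\wt{Z(G)})$ from the discussion preceding the lemma. The key sublemma is that for subgroups of $\wt{T}$ associated with sublattices, intersection corresponds to intersection of lattices, i.e.\ if $\wt{H}_1 = \mbm{L}(\wt{H}_1)$ and $\wt{H}_2 = \mbm{L}(\wt{H}_2)$ then $\wt{H}_1 \cap \wt{H}_2$ is associated with $\mbm{L}(\wt{H}_1) \cap \mbm{L}(\wt{H}_2)$; one direction ($\supseteq$ at the level of groups from $\subseteq$ at the level of lattices) is immediate, and the other uses that $Y$ acts freely enough — more precisely, that the map $\mbm{L} \otimes F^\times \to Y \otimes F^\times = T$ is injective when $\mbm{L}$ is a direct summand, and that $Y_c$ is a direct summand of $Y$ by the connectedness hypothesis on $Z(\mbf{G})$, while $Y_{Q,n}$ is saturated (a full-rank sublattice whose quotient is finite plus torsion-free pieces — actually one only needs that $Y/Y_{Q,n}$-related subtleties don't obstruct, which is handled by taking preimages under $p$). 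Combining: $Z(\wt{G}) = \wt{Z(G)} \cap Z(\wt{T})$ is associated with $Y_c \cap Y_{Q,n} = Y_z$, which by definition of $i_{Q,n}^z$ means $Z(\wt{G}) = p^{-1}({\rm Im}(i_{Q,n}^z))$, i.e.\ $Y_z = \mbm{L}(Z(\wt{G}))$.

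The step I expect to be the main obstacle is the careful verification that the passage from lattice inclusions to group equalities is an equivalence — i.e.\ that $\wt{Z(G)} \cap Z(\wt{T})$ is genuinely \emph{associated with} a sublattice (a priori it is merely a subgroup of $\wt{T}$) and that this sublattice is exactly $Y_c \cap Y_{Q,n}$ rather than something larger. The subtlety is that two elements $y_1 \otimes a$ and $y_2 \otimes b$ of $Y \otimes F^\times$ with $y_1 \neq y_2$ can coincide in $T = Y \otimes F^\times$ only if $y_1 - y_2$ is a torsion element of $Y$, which is zero since $Y$ is free; combined with the connectedness of $Z(\mbf{G})$ guaranteeing $Y_c$ splits off, one gets that ${\rm Im}(\iota_{\mbm{L}_1}) \cap {\rm Im}(\iota_{\mbm{L}_2}) = {\rm Im}(\iota_{\mbm{L}_1 \cap \mbm{L}_2})$ in $T$, and then taking $p^{-1}$ of everything preserves intersections. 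Once this is in place the rest is bookkeeping with the already-recorded facts $Y_{Q,n} = \mbm{L}(Z(\wt{T}))$ and $Y_c = \mbm{L}(\wt{Z(G)})$, plus the commutator formula $[y_1(a), y_2(b)] = (a,b)_n^{B_Q(y_1,y_2)}$ from relation (C) which is what pins down $Z(\wt{T})$ in the first place and hence is implicitly what makes the whole identification consistent.
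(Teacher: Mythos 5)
Your treatment of the first identity $Z(\wt{G}) = \wt{Z(G)} \cap Z(\wt{T})$ is correct and is essentially the paper's own argument: the inclusion $Z(\wt{G}) \subseteq \wt{Z(G)} \cap Z(\wt{T})$ is the easy one, and for the converse the paper uses $\wt{G} = \wt{T}\cdot \wt{G}_\de$ together with the fact that a lift of a central element of $G$ centralizes $\wt{G}_\de$ — your generator-by-generator check on the $\wt{e}_\alpha(x)$ via the $G$-equivariant splitting over unipotent subgroups is exactly this point.

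The gap is in the passage to the lattice statement. Your ``key sublemma'' — that for subgroups of $\wt{T}$ associated with sublattices, intersection of groups corresponds to intersection of lattices — is false as a general statement, and the justification you give for it is not valid: identities in $Y\otimes F^\times$ are not controlled by torsion in $Y$ (for instance $2y\otimes a = y\otimes a^2$, and a general element is a sum of simple tensors), so freeness of $Y$ and the splitting off of $Y_c$ do not give ${\rm Im}(\iota_{L_1})\cap {\rm Im}(\iota_{L_2}) = {\rm Im}(\iota_{L_1\cap L_2})$. Concretely, take $Y=\Z^2$, $L_1=\Z(1,0)$ (a direct summand) and $L_2=\Z(1,n)$: the images are $\{(a,1)\}$ and $\{(b,b^n)\}$, whose intersection $\{(\zeta,1):\zeta\in\mu_n\}$ is nontrivial, while $L_1\cap L_2=0$. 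What makes the needed instance true is arithmetic, not lattice bookkeeping: after the first equality one has $Z(\wt{G}) = p^{-1}\big(\{t\in Z(G):[\tilde t,\wt{T}]=1\}\big)$, and by relation (C) and nondegeneracy of the $n$-th Hilbert symbol this is the set of $t=\prod_j c_j(a_j)$ (with $c_j$ a basis of $Y_c$) such that $\prod_j a_j^{B_Q(c_j,y')}\in F^{\times n}$ for every $y'\in Y$; identifying this set with ${\rm Im}(i_{Q,n}^z)$ requires the same kind of argument as Weissman's computation of $Z(\wt{T})$ (the result quoted from \cite{We1}), now applied to the pairing $B_Q$ on $Y_c\times Y$ — e.g.\ a Smith normal form reduction to the fact that $\{a\in F^\times: a^d\in F^{\times n}\}=F^{\times\, n/\gcd(n,d)}$ when $\mu_n\subset F^\times$. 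This is where $\mu_n\subset F^\times$ and the special shape of $Y_{Q,n}$ (e.g.\ $nY\subseteq Y_{Q,n}$) enter; none of it follows from your torsion argument. To be fair, the paper also compresses this step (its proof says only that it suffices to prove the first equality, the lattice identification being regarded as known, cf.\ \cite{KP, CO, Gan17}), but if you want a self-contained proof you must replace the false general sublemma by the Hilbert-symbol computation just sketched.
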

\begin{proof}
It suffices to prove the first equality. The inclusion 
$$Z(\wt{G}) \subset \wt{Z(G)} \cap Z(\wt{T})$$ is clear. On the other hand, since $\wt{G} = \wt{T} \cdot \wt{G}_\de$, we see that if $g\in \wt{Z(G)}  \cap Z(\wt{T})$, then it centralizes $\wt{G}_\de$ as well as $\wt{T}$. Therefore $\wt{Z(G)} \cap Z(\wt{T}) \subset Z(\wt{G})$.
\end{proof}

We want to give a (hopefully proper) subgroup $\wt{H} \subset \wt{G}$ such that every $\pi\in \Irrg(\wt{G})$ is $\wt{H}$-concentrated, and therefore
$${\rm supp}(\Theta_\pi) \subset \wt{H}.$$
For this purpose, we assume that
\begin{enumerate}
\item[--] there exists $e_s \in Y$ such that $\wt{G} = \wt{G}_\de \rtimes e_s(F^\times)$;
\item[--] one has $Y_c = \Z e_c$ for some $e_c \in Y$.
\end{enumerate}
We denote 
$$n_s = n/\gcd(n, B_Q(e_c, e_s)).$$

\begin{prop} \label{P:Gns}
Keep notations as above and denote $\wt{G}_{n_s}:= \wt{G}_\de \rtimes \wt{e_s(F^{\times n_s})}$. Then every  irreducible genuine representation $\pi$ of $\wt{G}$ is $\wt{G}_{n_s}$-concentrated, and thus ${\rm supp}(\Theta_\pi) \subset \wt{G}_{n_s}$. Consequently,
$$\Ind_{\wt{G}_{n_s}}^{\wt{G}}(\sigma) = m\cdot \pi$$
is an isotypic sum for every $\sigma \in \Irrg(\wt{G}_{n_s})$.
\end{prop}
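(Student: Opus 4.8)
The plan is to verify the $\wt{G}_{n_s}$-concentration condition of Definition~\ref{D:conc} (with $M=K=\wt{G}$ and $H=\wt{G}_{n_s}$) directly, after which Lemma~\ref{L:supp} gives the support statement and Proposition~\ref{P:con-con} (applied to genuine representations, whose constituents under induction remain genuine) gives the isotypic decomposition of $\Ind_{\wt{G}_{n_s}}^{\wt{G}}(\sigma)$. So fix $\pi \in \Irrg(\wt{G})$ with central character $\omega_\pi$; we must show that for each $g\in \wt{G}\setminus \wt{G}_{n_s}$ there is an $x\in \wt{G}$ with $[x,g]\in Z(\wt{G})$ and $\omega_\pi([x,g])\ne 1$. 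The choice of $x$ will be a lift of $e_c(b)\in Z(G)$ for a suitable $b\in F^\times$; such a lift lies in $\wt{Z(G)}$, since by hypothesis $\wt{Z(G)}$ is associated with $Y_c=\Z e_c$.

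First I would observe that for any such $x$ the assignment $y\mapsto [x,y]$ is a homomorphism $\wt{G}\to \mu_n$: it takes values in $\mu_n$ because $x$ projects into $Z(G)$, and it is multiplicative in $y$ because $\mu_n$ is central in $\wt{G}$. Hence $[x,g]$ depends only on the image of $g$ in $G^{\rm ab}$. The decomposition $\wt{G}=\wt{G}_\de\rtimes e_s(F^\times)$ identifies $G^{\rm ab}$ with $F^\times$ through $e_s$, and unwinding the definition of $\wt{G}_{n_s}$ (the preimage of $G_\der\cdot e_s(F^{\times n_s})$) we see that membership $g\in \wt{G}_{n_s}$ is detected by whether this image lies in $F^{\times n_s}$. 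Writing $a\in F^\times$ for the image of $g$, our assumption $g\notin \wt{G}_{n_s}$ becomes $a\notin F^{\times n_s}$, and we may compute $[x,g]=[\tilde e_c(b),\tilde e_s(a)]$ inside the covering torus $\wt{T}$. The commutator formula for $\wt{T}$ coming from relation (C) then yields
\[
[x,g]=(b,a)_n^{B_Q(e_c,e_s)}\in \mu_n\subseteq Z(\wt{G}).
\]

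It remains to choose $b$ so that this root of unity is nontrivial, and this is where the definition of $n_s$ enters. Put $d=\gcd(n,B_Q(e_c,e_s))$, so that $n=d\,n_s$ and $B_Q(e_c,e_s)=d\,m$ with $\gcd(m,n_s)=1$. Using the compatibility of Hilbert symbols $(b,a)_n^{\,d}=(b,a)_{n_s}$ (valid since $\mu_{n_s}\subseteq \mu_n\subseteq F$), one gets $(b,a)_n^{B_Q(e_c,e_s)}=(b,a)_{n_s}^{\,m}$. Since $a\notin F^{\times n_s}$, non-degeneracy of the $n_s$-th Hilbert symbol on $F^\times/F^{\times n_s}$ produces $b$ with $(b,a)_{n_s}\ne 1$; because $(b,a)_{n_s}$ has order dividing $n_s$ and $\gcd(m,n_s)=1$, also $(b,a)_{n_s}^{\,m}\ne 1$. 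Thus $[x,g]$ is a nontrivial element of $\mu_n\subseteq Z(\wt{G})$, and as $\pi$ is genuine $\omega_\pi$ restricts to the fixed embedding $\mu_n\hookrightarrow \C^\times$, so $\omega_\pi([x,g])\ne 1$. This proves $\pi$ is $\wt{G}_{n_s}$-concentrated, whence ${\rm supp}(\Theta_\pi)\subset \wt{G}_{n_s}$ by Lemma~\ref{L:supp}, and the final assertion follows from Proposition~\ref{P:con-con}.

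I expect the main obstacle to be the elementary but fiddly parts: checking that $\wt{G}_{n_s}$ is exactly the full preimage of $G_\der\cdot e_s(F^{\times n_s})$ (so that concentration can be tested on $G^{\rm ab}$), and the Hilbert-symbol arithmetic showing that $a\notin F^{\times n_s}$ forces $(b,a)_n^{B_Q(e_c,e_s)}\ne1$ for some $b$ --- essentially the statement that $n_s$ is optimal for killing this commutator. Everything else is a short formal computation, as the commutator $[x,g]$ automatically lands in $\mu_n$ once $x$ is chosen to be central modulo the cover.
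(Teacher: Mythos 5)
Your argument is correct and is essentially the paper's own proof: both verify $\wt{G}_{n_s}$-concentration by commutating $g$ against a lift of a central element $e_c(\cdot)$, reduce via relation (C) to $(b,a)_n^{B_Q(e_c,e_s)}$, and use non-degeneracy of the Hilbert symbol together with the definition of $n_s$ before invoking Lemma \ref{L:supp} and Proposition \ref{P:con-con}. Your explicit factoring of $y\mapsto [x,y]$ through $G^{\rm ab}$ and the gcd/compatibility computation merely spell out steps the paper leaves implicit (it instead writes $\wt{g}=\wt{g}_0\cdot \wt{e_s(b)}$ and cites non-degeneracy directly).
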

\begin{proof} 
Given with $e_s(b) \in e_s(F^\times ) \subset G$, we claim that if $[e_c(a), e_s(b)] = 1$ for all $a\in F^\times$, then $b \in F^{\times n_s}$. Noting that
$$[e_c(a), e_s(b)] = (a, b)_n^{B(e_c, e_s)},$$
the claim follows from the non-degeneracy of the Hilbert symbol. Therefore, if $b\notin F^{\times n_s}$, then there exists $e_c(a)$ such that 
$$1\ne [e_c(a), e_s(b)] \in \mu_n.$$
Thus, if $\wt{g} \notin \wt{G}_\de \rtimes \wt{e_s(F^{\times n_s})}$, then we have $\wt{g} = \wt{g}_0 \cdot \wt{e_s(b)}$ with $\wt{g}_0\in \wt{G}_\der$ and $b \notin F^{\times n_s}$. It then gives
$$[\wt{g}, \wt{e_c(a)}] = [\wt{g}_0 \cdot \wt{e_s(b)}, \wt{e_c(a)}] = [e_c(a), e_s(b)] \in \varphi_{\wt{G}, \wt{g}}^{-1}\left( Z(\wt{G}) - \Ker(\omega_\pi|_{Z(\wt{G})})\right),$$
and this shows that $\pi$ is $\wt{G}_{n_s}$-concentrated. The rest is clear in view of Proposition \ref{P:con-con}.
\end{proof}

\begin{rmk} \label{R:suppGL}
Proposition \ref{P:Gns} certainly does not give the sharpest bound for the support of all $\Theta_\pi$. For example, for Kazhdan--Patterson covers of $G= \GL_r$, it is shown in \cite[Proposition 0.1.4]{KP} that 
$${\rm supp}(\Theta_\pi) \subset Z(\wt{G}) \cdot (\wt{G})^n$$
 for every $\pi \in \Irrg(\wt{G})$. This fact plays a crucial role in the metaplectic tensor product construction by Mezo \cite{Mez04}, Takeda \cite{Tak3, Tak4} and the functorial interpretation by Gan \cite{Gan17}, when blocks in a covering Levi subgroup do not commute. In \S  \ref{SS:mtp-GSp}, we will discuss an analogue of the metaplectic tensor product (and in fact also a metaplectic restriction) for odd fold cover of $\GSp_{2r}$, relying on Proposition \ref{P:Gns}.
\end{rmk}

In this paper, we will consider restrictions and inductions from two types of normal subgroups of $\wt{G}$. Each of these two types has some advantage and we place focus on one of them in different contexts regarding different problems. To elaborate, in the first case we let 
$$\wt{H} \subset \wt{G}$$ be a normal subgroup such that $\wt{G}/\wt{H}$ is a finite abelian group, and that every genuine representation of $\wt{G}$ is $\wt{H}$-concentrated. In the second case, we consider the subgroup 
$$\wt{Z}_0 \cdot \wt{G}_0 \subset \wt{G},$$
which is normal of finite index, satisfying $\wt{Z}_0 \subset \wt{Z(G)}, \wt{G}_\de \subset \wt{G}_0$ and that $\wt{Z}_0$ commutes with $\wt{G}_0$, as depicted from the following diagram
$$\begin{tikzcd}
\wt{Z(G)}   \ar[rd, no head, dashed] & \wt{G}_0 \\
\wt{Z}_0 \ar[u, hook] \ar[ru, no head, dashed]  & \wt{G}_\de \ar[u, hook].
\end{tikzcd}$$

We have
$$\begin{tikzcd}
\wt{H} \subset \wt{G} \supset \wt{Z}_0\cdot \wt{G}_0,
\end{tikzcd}$$
and in general none of the two groups $\wt{H}$ and $\wt{Z}_0\cdot \wt{G}_0$ is contained in the other. These two types of groups play different roles in our paper as follows.
\begin{enumerate}
\item[--] (First picture) Most often, $\wt{H}$ is not equal to the $F$-rational points of an algebraic subgroup of $\mbf{G}$, and thus the construction of dual group and L-group of $\wt{H}$ does not apply directly. However, on the representation side, since $\wt{H}$ contains the support of an arbitrary irreducible genuine representation of $\wt{G}$, it follows from Proposition \ref{P:con-con} that the representation $\Ind_{\wt{H}}^{\wt{G}}(\rho)$ is an isotypic sum of a certain $\pi[\rho] \in \Irrg(\wt{G})$. In this case, the induction functor from $\wt{H}$ to $\wt{G}$ is simple. As mentioned in Remark \ref{R:suppGL}, some application of this includes the ``metaplectic tensor product" construction of representations of the covering Levi subgroups of $\wt{\GL}_r$ as in \cite{Mez04, Tak3, Tak4, Cai1, Gan17}.
\item[--] (Second picture) The group $\wt{Z}_0 \cdot \wt{G}_0$ is almost a direct product of the commuting pair $\wt{Z}_0$ and $\wt{G}_0$, as the intersection is a small finite group. For example, the two groups $\wt{Z(G)}$ and $\wt{G}_\de$ form a commuting pair inside $\wt{G}$. We consider both the restriction and induction of representations between $\Irrg(\wt{G})$ and $\Irrg(\wt{Z}_0 \cdot \wt{G}_0)$. One important aspect of the restriction of representations from $\wt{G}$ is  how a local coefficients matrix for $\wt{G}$ behaves with respect to the restriction to the subgroup $\wt{G}_0$. For the groups 
$$\wt{G}=\wt{\GL}_2,\ \wt{G}_0=\wt{G}_\de \text{ and }\wt{Z}_0 = \wt{Z(G)},$$
we carried out an extensive study in \cite{GSS2}. For the induction, the representation $\Ind_{\wt{Z(G)}\cdot \wt{G}_\de}^{\wt{G}}(\rho)$ is not an isotypic sum in general; in fact, it is useful already to determine when it is irreducible. For double covers of $\GSp_{2r}$, this was investigated in \cite{Szp4-1, Szp5}.
\end{enumerate}

\subsubsection{An irreducibility criterion} \label{SSS:2pic}
Let $\wt{G}$ be an $n$-fold cover of $G$. We continue to assume that $\wt{Z(G)}$ is associated with a sublattice $Y_{G,c} = \Z e_c \subset Y$. Henceforth, we will denote 
$$\wt{G}_0:=\wt{G}_\de,$$
whenever there is no possibility of confusion. Let $\wt{M} \subset \wt{G}$ be a covering Levi subgroup. Define
$$\wt{M}_0:= \wt{M} \cap \wt{G}_\de,$$
which is then a Levi subgroup of $\wt{G}_\de$. We have
$$\wt{M}_\de \subset \wt{M}_0 \subset \wt{M},$$
where the first inclusion however may not be an equality in general. We consider
$$\wt{M}^\dag:= \wt{Z(G)} \cdot \wt{M}_0.$$
There are inclusions of normal subgroups
$$\wt{M}^\dag \subset \wt{Z(M)} \cdot \wt{M}_0 \subset \wt{M}$$
with $\wt{M}/\wt{M}^\dag$ a finite abelian group. The following result is straightforward.

\begin{lm}
For every Levi subgroup $M\subset G$, one has $\wt{M}/\wt{M}^\dag \simeq \wt{T}/\wt{T}^\dag$; in particular, the quotient is independent of the Levi subgroup.
\end{lm}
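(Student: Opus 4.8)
The claim is that $\wt{M}/\wt{M}^\dag$ does not depend on $M$, in the sense that it is canonically isomorphic to $\wt{T}/\wt{T}^\dag$. The natural approach is to exhibit the isomorphism directly via the inclusion $\wt{T} \subset \wt{M}$ and to reduce everything to the torus. The first step is to observe that both $\wt{M}^\dag = \wt{Z(G)} \cdot \wt{M}_0$ and $\wt{M}$ are generated by $\wt{T}$ together with the root subgroups $\wt{e}_\alpha(F)$ for $\alpha$ in the appropriate root subsystems: $\wt{M}$ is generated by $\wt{T}$ and $\{\wt{e}_\alpha(F)\}_{\alpha \in \Phi_M}$, while $\wt{M}_\de$ (hence $\wt{M}_0 \supset \wt{M}_\de$, and indeed $\wt{M}_0 = \wt{Z(G)}\cap\wt M_0 \cdot \wt M_\de$ up to the relevant torus part) is generated by $\{\wt{e}_\alpha(F)\}_{\alpha \in \Phi_M}$ and the torus part $\wt{T}_0 = \wt{T}\cap\wt G_\de$. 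Thus $\wt{M}^\dag \supset \wt{G}_\de \cap \wt{M}$ contains all the root subgroups of $\wt{M}$, i.e. $\wt{M} = \wt{T}\cdot \wt{M}^\dag$.

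The second step is then formal: the inclusion $\wt{T}\hookrightarrow\wt{M}$ composed with the quotient $\wt{M}\twoheadrightarrow\wt{M}/\wt{M}^\dag$ is surjective because $\wt{M} = \wt{T}\cdot\wt{M}^\dag$, and its kernel is $\wt{T}\cap\wt{M}^\dag$. So it remains to identify $\wt{T}\cap\wt{M}^\dag$ with $\wt{T}^\dag = \wt{Z(G)}\cdot\wt{T}_0$. The inclusion $\wt{T}^\dag \subseteq \wt{T}\cap\wt{M}^\dag$ is immediate since $\wt{Z(G)}\subseteq\wt{M}^\dag$ and $\wt{T}_0 = \wt{T}\cap\wt{G}_\de \subseteq \wt{M}_0$. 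For the reverse inclusion one writes an element of $\wt{T}\cap\wt{M}^\dag$ as $z\cdot m_0$ with $z\in\wt{Z(G)}$, $m_0\in\wt{M}_0$; since $z\in\wt{T}$, also $m_0\in\wt{T}$, hence $m_0\in\wt{T}\cap\wt{M}_0 = \wt{T}\cap\wt{G}_\de = \wt{T}_0$ (the equality $\wt{T}\cap\wt{M}_0 = \wt{T}\cap\wt{G}_\de$ holds because $\wt{M}_0 = \wt{M}\cap\wt{G}_\de$ and $\wt{T}\subseteq\wt{M}$). This gives $\wt{T}\cap\wt{M}^\dag = \wt{T}^\dag$, and therefore $\wt{M}/\wt{M}^\dag \cong \wt{T}/\wt{T}^\dag$ via the induced map, which is manifestly independent of $M$.

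The only point requiring a little care — and the step I expect to be the main (minor) obstacle — is the assertion $\wt{M} = \wt{T}\cdot\wt{M}^\dag$, i.e. that all the unipotent/root generators of $\wt{M}$ already lie in $\wt{M}^\dag$. This is where one uses the Brylinski--Deligne structure theory recalled in \S\ref{SS:cov-l}: the $\wt{e}_\alpha(F)$ for $\alpha\in\Phi_M$ lie in $\wt{M}_\de\subseteq\wt{M}_0\subseteq\wt{M}^\dag$, and $\wt{M} = \wt{T}\cdot\wt{M}_\de$ by the analogue of the identity $\wt{G} = \wt{T}\cdot\wt{G}_\de$ used in the proof of Lemma~\ref{L:Gz}. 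Granting that, everything else is bookkeeping with the commuting structure of $\wt{Z(G)}$ and $\wt{G}_\de$ inside $\wt{G}$, and no Hilbert-symbol computation or cocycle manipulation is needed; the isomorphism is induced by an honest group inclusion, which is why the quotient is literally the same abelian group for every Levi $M$.
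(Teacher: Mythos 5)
Your argument is correct: the two key points, $\wt{M}=\wt{T}\cdot\wt{M}^\dag$ (via $M=T\cdot M_\de$ and $p^{-1}(M_\de)\subseteq \wt{M}_0$) and $\wt{T}\cap\wt{M}^\dag=\wt{T}^\dag$ (using $\wt{Z(G)}\subseteq\wt{T}$ and $\wt{T}\cap\wt{M}_0=\wt{T}_0$), together with normality of $\wt{M}^\dag$ in $\wt{M}$, give the isomorphism by the second isomorphism theorem. The paper offers no proof (it labels the lemma straightforward), and your reduction to the torus is precisely the intended routine argument.
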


From now on, we write
$$\mfr{Q}^\dag:=\wt{T}/\wt{T}^\dag.$$
We are interested in both the induction and restriction of representations between $\wt{M}^\dag$ and $\wt{M}$. It is natural to expect that any answer will depend on the specific representation chosen. Nonetheless, for special covering groups, it is possible to obtain a uniform description of the induction and restriction process, to describe the irreducbility of an induction and the decomposition from a restriction. We briefly explain the main idea following \cite{Szp4-1}.

The two groups $\wt{Z(G)}$ and $\wt{M}_0$ have a finite intersection $\wt{Z(G)} \cap \wt{M}_0$ which lies in the center of both groups. Thus, $\wt{M}^\dag$ is almost a direct product of $\wt{Z(G)}$ and $\wt{M}_0$. Every representation of $\wt{M}^\dag$ is of the form
$$\tau\boxtimes \rho,$$
where $\tau \in \Irrg(\wt{Z(G)})$ and $\rho \in \Irrg(\wt{M}_0)$ are compatible, i.e., they agree on the intersection $\wt{Z(G)} \cap \wt{M}_0$. Thus, 
$$\tau\boxtimes \rho \simeq \tau'\boxtimes \rho'$$
if and only if 
$$\tau \simeq \tau'\text{ and }\rho \simeq \rho'.$$
Denote by $\omega_\tau$ the central character of $\tau$, and similarly $\omega_\rho$ for $\rho$. Then we can use the central characters $\omega_\tau$ or $\omega_\rho$ to distinguish representations. More precisely, if $\omega_\tau \ne \omega_{\tau'}$ or $\omega_\rho \ne \omega_{\rho'}$, then $\tau\boxtimes \rho$ and $\tau'\boxtimes \rho'$ are not isomorphic. In fact, since $\wt{Z(G)}$ is a Heisenberg-type group, one has
$$\tau \simeq \tau' \text{ if and only if } \omega_\tau = \omega_{\tau'}.$$

We denote
$$\mfr{Q}_M := Z(\wt{M}^\dag)/Z(\wt{M})$$ 
and
$$\mfr{Q}_Z:=Z(\wt{Z(G)})/Z(\wt{G}), \quad \mfr{Q}_{M_0}:=Z(\wt{M}_0)/(\wt{M}_0 \cap Z(\wt{M})).$$
Note that here $\wt{M}_0 \cap Z(\wt{M}) = Z(\wt{M}_0) \cap Z(\wt{M})$. The natural surjective map
\begin{equation*}
\begin{tikzcd}
\mfr{Q}_Z \times \mfr{Q}_{M_0} \ar[r, two heads] & \mfr{Q}_M
\end{tikzcd}
\end{equation*}
has a finite kernel. Let
$$\widehat{\mfr{Q}_M}:= \Hom(\mfr{Q}_M, \C^\times), \ \widehat{\mfr{Q}_Z}:=\Hom(\mfr{Q}_Z, \C^\times) \text{ and }  \widehat{\mfr{Q}_{M_0}}:=\Hom(\mfr{Q}_{M_0}, \C^\times)$$
be the Pontryagin duals. One has an embedding
\begin{equation}  \label{E:hatQ}
\begin{tikzcd}
\widehat{\mfr{Q}_M} \ar[r, hook] & \widehat{\mfr{Q}_Z} \times \widehat{\mfr{Q}_{M_0}}.
\end{tikzcd}
\end{equation}
There is a natural conjugation action of $\wt{M}$ on $\mfr{Q}_Z, \mfr{Q}_{M_0}$ and $\mfr{Q}_M$, and thus also on their Pontryagin duals $\widehat{\mfr{Q}_Z}, \widehat{\mfr{Q}_{M_0}}$ and $\widehat{\mfr{Q}_M}$ respectively. Since $\wt{M}^\dag$ acts trivially on all $\mfr{Q}_Z, \mfr{Q}_{M_0}$ and $\mfr{Q}_M$, we have a well-defined action of $\mfr{Q}^\dag$ on the three Pontryagin duals.
The embedding in \eqref{E:hatQ} is $\mfr{Q}^\dag$-equivariant with respect to this natural action.

It is clear that the action of $\mfr{Q}^\dag$ on $\widehat{\mfr{Q}_Z}$ (resp. $\widehat{\mfr{Q}_{M_0}}$) is trivial if and only if $Z(\wt{Z(G)}) \subset Z(\wt{T})$ (resp. $Z(\wt{M}_0) \subset Z(\wt{T})$). If $Z(\wt{M}_0)$ is associated with a sublattice $Y_{M_0,c} \subset Y$, then it follows from Lemma \ref{L:Gz} that these two inclusions are equivalent to 
$$Y_{G,c,Q,n} \subset Y_{Q,n}$$
and 
$$(Y_{M_0,c} \cap Y_{0,Q,n}) \subset Y_{Q,n}$$
respectively. Here $Y_{G,c,Q,n} \subset Y_{G,c}$ is the sublattice given by \eqref{YQn} with respect to the restriction of $B_Q$ on $Y_{G,c}$, and $Y_0=Y^{sc}$ is the cocharacter lattice of $G_0=G_\de$.

\begin{thm} \label{T:in-re}
Keep notations as above. 
\begin{enumerate}
\item[(i)] Suppose the action of $\mfr{Q}^\dag$ on either $\widehat{\mfr{Q}_Z}$ or $\widehat{\mfr{Q}_{M_0}}$ is free. Then $\Ind_{\wt{M}^\dag}^{\wt{M}} (\tau \boxtimes \rho)$ is irreducible for every $\tau\boxtimes \rho \in \Irrg(\wt{M}^\dag)$.
\item[(ii)] Specializing to the case $M=T$, if the action of $\mfr{Q}^\dag$ on both $\widehat{\mfr{Q}_Z}$ and $\widehat{\mfr{Q}_T}$ are trivial, then for every $\pi \in \Irrg(\wt{T})$ we have 
$$\pi|_{\wt{T}^\dag} = (\tau\boxtimes \rho)^{\oplus e}$$ for some
$e\in \N$. On the other hand, if $Z(\wt{T}) \subset Z(\wt{Z(G)}) \cdot Z(\wt{T}_0)$, then $\Ind_{\wt{T}^\dag}^{\wt{T}}(\tau \boxtimes \rho)$ is an isotypic sum of an irreducible representation of $\wt{T}$.
\end{enumerate}
\end{thm}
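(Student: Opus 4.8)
The plan is to deduce everything from the Clifford--Mackey machinery recalled in \S\ref{SS:2pic}, applied to the pair $(\wt{M}, \wt{M}^\dag)$, together with the structural description of $\Irrg(\wt{M}^\dag)$ as consisting of representations $\tau\boxtimes\rho$ with $\tau\in\Irrg(\wt{Z(G)})$, $\rho\in\Irrg(\wt{M}_0)$ compatible. The central bookkeeping device is the action of $\mfr{Q}^\dag = \wt{M}/\wt{M}^\dag$ on $\widehat{\mfr{Q}_M}$ and its relation, via the $\mfr{Q}^\dag$-equivariant embedding $\widehat{\mfr{Q}_M}\into\widehat{\mfr{Q}_Z}\times\widehat{\mfr{Q}_{M_0}}$, to the actions on the two factors. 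Concretely, for $\eta\in\mfr{Q}^\dag$ the twist ${}^\eta(\tau\boxtimes\rho)$ is again of the form $\tau'\boxtimes\rho'$, and since $\tau$ (resp.\ $\rho$) is determined up to isomorphism by its central character on $Z(\wt{Z(G)})$ (resp.\ detected by $\omega_\rho$ restricted to $Z(\wt{M}_0)$), the class ${}^\eta(\tau\boxtimes\rho)$ differs from $\tau\boxtimes\rho$ exactly by a character of $\mfr{Q}_M$, i.e.\ one gets an orbit map $\mfr{Q}^\dag \to \widehat{\mfr{Q}_M}$ whose fibre over the trivial character is the stabilizer $S_{\tau\boxtimes\rho}$ (relative to $\wt{M}$).

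For part (i): if $\mfr{Q}^\dag$ acts freely on $\widehat{\mfr{Q}_Z}$ (the $\widehat{\mfr{Q}_{M_0}}$ case is identical), then I would show the induced orbit map $\mfr{Q}^\dag\to\widehat{\mfr{Q}_Z}$, $\eta\mapsto$ (the character by which $\eta$ twists $\tau$), is injective; indeed an element fixing $\tau$ fixes $\omega_\tau\in\widehat{\mfr{Q}_Z}$, and freeness forces it to be trivial. Hence $S_{\tau\boxtimes\rho}=\wt{M}^\dag$. By the last bullet of \S\ref{SSS:res} (the extreme case $S_\rho = H$), this gives $e=1$ and $\Ind_{\wt{M}^\dag}^{\wt{M}}(\tau\boxtimes\rho)$ irreducible, for every choice of $\tau\boxtimes\rho$. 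Composing with the $\mfr{Q}^\dag$-equivariance of \eqref{E:hatQ}, the freeness on one factor pulls back to the statement we need on $\widehat{\mfr{Q}_M}$, which is what the Clifford theory sees directly.

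For part (ii), take $M=T$, so $\wt{M}_0=\wt{T}_0$ and $\wt{M}^\dag=\wt{T}^\dag$. If $\mfr{Q}^\dag$ acts trivially on both $\widehat{\mfr{Q}_Z}$ and $\widehat{\mfr{Q}_{T_0}}$, then by \eqref{E:hatQ} it acts trivially on $\widehat{\mfr{Q}_M}$, so every twist ${}^\eta(\tau\boxtimes\rho)$ is isomorphic to $\tau\boxtimes\rho$; thus $S_{\tau\boxtimes\rho}=\wt{T}$, and the first extreme case of \S\ref{SSS:res} gives $\pi|_{\wt{T}^\dag}=(\tau\boxtimes\rho)^{\oplus e}$ for the unique $\tau\boxtimes\rho$ occurring, with $e=[\wt{T}:S_{\tau\boxtimes\rho}]\cdot(\text{mult.})$ — a purely numerical index. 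For the second assertion, the hypothesis $Z(\wt{T})\subset Z(\wt{Z(G)})\cdot Z(\wt{T}_0)$ says precisely that $\wt{T}^\dag$ already contains, modulo $Z(\wt{T})$, a generating set detecting all of $\Irrg(\wt{T})$ through central characters; I would argue that every irreducible genuine representation of $\wt{T}$ is then $\wt{T}^\dag$-concentrated in the sense of Definition \ref{D:conc} — any $\wt{g}\in\wt{T}$ not conjugation-trivial on $\tau\boxtimes\rho$ produces a commutator in $Z(\wt{T})\setminus\Ker(\omega|_{Z(\wt{T})})$ thanks to the inclusion, essentially as in the proof of Proposition \ref{P:Gns} — and then Proposition \ref{P:con-con} applied to $(\wt{T},\wt{T}^\dag)$ yields that $\Ind_{\wt{T}^\dag}^{\wt{T}}(\tau\boxtimes\rho)$ is an isotypic sum of a single irreducible genuine representation of $\wt{T}$.

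The main obstacle I anticipate is bookkeeping the precise relation between ${}^\eta(\tau\boxtimes\rho)$ and character-twists of $\mfr{Q}_M$ — i.e.\ verifying that the abstract Clifford-theoretic stabilizer $S_{\tau\boxtimes\rho}$ really coincides with the kernel of the orbit map into $\widehat{\mfr{Q}_M}$, rather than merely being sandwiched between $\wt{M}^\dag$ and that kernel. This requires using that $\wt{Z(G)}$ is Heisenberg-type (so $\tau\simeq\tau'$ iff $\omega_\tau=\omega_{\tau'}$) to upgrade ``same central character'' to ``isomorphic'', and handling the finite kernel of $\mfr{Q}_Z\times\mfr{Q}_{M_0}\onto\mfr{Q}_M$ carefully so that triviality (resp.\ freeness) on the factors transfers cleanly across \eqref{E:hatQ}. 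Everything else is a formal consequence of the two extreme cases in \S\ref{SSS:res} and Proposition \ref{P:con-con}.
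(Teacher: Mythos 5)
Your treatment of (i) and of the first assertion of (ii) is essentially the paper's own proof: you transport freeness (resp.\ triviality) of the $\mfr{Q}^\dag$-action through the equivariant embedding \eqref{E:hatQ} into a statement about how conjugation twists the central characters of $\tau\boxtimes\rho$, conclude $S_{\tau\boxtimes\rho}=\wt{M}^\dag$ (resp.\ $S_{\tau\boxtimes\rho}=\wt{T}$), and finish with Mackey's irreducibility criterion (resp.\ the isotypic extreme case of \S\ref{SSS:res}). Your worry about the stabilizer versus the kernel of the orbit map is the right one, and it is resolved exactly as you indicate: for (i) only the containment ``stabilizer $\subseteq$ kernel'' is needed (isomorphic twists have equal central characters, and freeness kills the translating character), while for (ii) the converse direction is supplied by the Stone--von Neumann property, either for $\wt{T}^\dag$ directly or componentwise for the Heisenberg-type groups $\wt{Z(G)}$ and $\wt{T}_0$; note also that $\omega_\tau$ lives in a torsor under $\widehat{\mfr{Q}_Z}$ rather than in $\widehat{\mfr{Q}_Z}$ itself, a harmless slip.

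Where you genuinely diverge is the last assertion of (ii). The paper argues directly: since $Z(\wt{T})\subset Z(\wt{Z(G)})\cdot Z(\wt{T}_0)$, every irreducible constituent $\pi$ of $\Ind_{\wt{T}^\dag}^{\wt{T}}(\tau\boxtimes\rho)$ must have central character $(\omega_\tau\cdot\omega_\rho)|_{Z(\wt{T})}$, and genuine irreducibles of $\wt{T}$ are determined by their central characters, so all constituents coincide. You instead show every $\pi\in\Irrg(\wt{T})$ is $\wt{T}^\dag$-concentrated and invoke Proposition \ref{P:con-con}. This works: since commutators in $\wt{T}$ land in $\mu_n$, on which genuine central characters are faithful, concentration amounts precisely to $Z(\wt{T})\subseteq\wt{T}^\dag$, which follows from (and is weaker than) the stated hypothesis; your route thus buys a slightly more general statement and also records the multiplicity, at the cost of the heavier Harish-Chandra character machinery. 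Two small points of hygiene: Proposition \ref{P:con-con} must be read in its genuine variant (only the genuine constituents need be concentrated), exactly as the paper itself does in proving Proposition \ref{P:Gns}; and the concentration condition should be verified for every $g\notin\wt{T}^\dag$, not merely for $g$ acting nontrivially on $\tau\boxtimes\rho$ --- your parenthetical argument is in fact the correct check for all such $g$.
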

\begin{proof}
For (i), in view of \eqref{E:hatQ}, the assumption implies that the action of $\mfr{Q}^\dag$ on $\widehat{\mfr{Q}_M}$ is free. Thus, the assertion is a direct consequence of Mackey's irreducibility criterion for induced representations. 

For the first assertion in (ii), let $(\tau \boxtimes \rho) \in \Irrg(\wt{T}^\dag)$ be a constituent of $\pi|_{\wt{T}^\dag}$. The assumption implies that 
$$S_{\tau\boxtimes \rho} = \wt{T}$$
 and thus by the discussion in \S \ref{SSS:res} the restriction $\pi|_{\wt{T}^\dag}$ is an isotypic sum. We give an alternate argument as follows. The assumption implies that 
\begin{equation} \label{Inc-1}
Z(\wt{Z(G)}) \cdot Z(\wt{T}_0) \subset Z(\wt{T}).
\end{equation}
The representation $\pi$ is determined by its central character $\omega_\pi$. Every constituent $\tau\boxtimes \rho \subset \pi|_{\wt{T}^\dag}$ is also determined by $\omega_\tau$ and $\omega_\rho$. However, the inclusion  \eqref{Inc-1} implies that $\omega_\tau$ and $\omega_\rho$ are uniquely determined by $\omega_\pi$. That is, $\pi|_{\wt{T}^\dag}$ is an isotypic sum in this case.

For the second assertion in (ii), it is exactly the same argument as the above with the inclusion in \eqref{Inc-1} reversed. Indeed, every constituent $\pi$ of $\Ind_{\wt{T}^\dag}^{\wt{T}}(\tau \boxtimes \rho)$ is determined by its central character $\omega_\pi$. Since $Z(\wt{T}) \subset Z(\wt{Z(G)}) \cdot Z(\wt{T}_0)$, we have
$$\omega_\pi = (\omega_\tau \cdot \omega_\rho)|_{Z(\wt{T})},$$
which is uniquely determined by $\tau\boxtimes \rho$. Thus, in this case $\Ind_{\wt{T}^\dag}^{\wt{T}}(\tau \boxtimes \rho) = \pi^{\oplus e}$ for some $e\in \N$.
\end{proof}


\begin{cor} \label{C:Zinc}
The following three statements are equivalent:
\begin{enumerate}
\item[--] $\mfr{Q}^\dag$ acts trivially on $\widehat{\mfr{Q}_{T_0}}$;
\item[--] the inclusion $Z(\wt{T}_0) \subset Z(\wt{T})$ holds;
\item[--] the equality $Y_0 \cap Y_{Q,n} = Y_{0, Q,n}$ holds.
\end{enumerate}
If one (and thus every) of the above holds, then for every $\pi\in \Irrg(\wt{T})$ the restriction $\pi|_{\wt{T}_0} = \rho^{\oplus e}$ is an isotypic sum of a certain $\rho \in \Irrg(\wt{T}_0)$.
\end{cor}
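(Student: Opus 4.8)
The plan is to prove the chain of equivalences $(1)\Leftrightarrow(2)\Leftrightarrow(3)$ and then deduce the restriction statement from $(2)$. The equivalence $(1)\Leftrightarrow(2)$ is simply the specialization to $M=T$ (so $M_0=T_0$) of the observation recorded just before Theorem \ref{T:in-re}: unwinding $\mfr{Q}_{T_0}=Z(\wt{T}_0)/(\wt{T}_0\cap Z(\wt{T}))$ and using that for a finite abelian group with an action by automorphisms the action is trivial precisely when the dual action on the Pontryagin dual is, the action of $\mfr{Q}^\dag$ on $\widehat{\mfr{Q}_{T_0}}$ is trivial if and only if $Z(\wt{T}_0)\subset Z(\wt{T})$.

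For $(2)\Leftrightarrow(3)$ I would pass to the lattice side. Recall $Y_{Q,n}=\mbm{L}(Z(\wt{T}))$, i.e.\ $Z(\wt{T})=p^{-1}(\mathrm{Im}\,i_{Q,n})$, from the discussion before Lemma \ref{L:Gz}. Since $Z(\mbf{G})$ is connected, the torus $T_0=T\cap G_\der$ has cocharacter lattice $Y_0=Y^{sc}$, so $\wt{T}_0\subset\wt{T}$ is the covering torus of $T_0$ governed by the restriction of $B_Q$ to $Y_0$; applying formula \eqref{YQn} to this restriction gives $\mbm{L}(Z(\wt{T}_0))=Y_{0,Q,n}$, with $Z(\wt{T}_0)$ viewed as a subgroup of $\wt{T}$. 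The correspondence $\wt H\mapsto\mbm{L}(\wt H)$ is an inclusion-preserving bijection on the subgroups of $\wt{T}$ associated with sublattices of $Y$ (it reflects inclusions because $F^{\times}/F^{\times d}\ne 1$ for every integer $d>1$), so $Z(\wt{T}_0)\subset Z(\wt{T})$ is equivalent to $Y_{0,Q,n}\subset Y_{Q,n}$. Finally, the congruence cutting out $Y_0\cap Y_{Q,n}$, namely $B_Q(y,Y)\subset n\Z$, is stronger than the one cutting out $Y_{0,Q,n}$, namely $B_Q(y,Y_0)\subset n\Z$; hence $Y_0\cap Y_{Q,n}\subset Y_{0,Q,n}$ always, and so $Y_{0,Q,n}\subset Y_{Q,n}$ — equivalently $Y_{0,Q,n}\subset Y_0\cap Y_{Q,n}$ since $Y_{0,Q,n}\subset Y_0$ — is exactly the equality $Y_0\cap Y_{Q,n}=Y_{0,Q,n}$.

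For the final assertion, assume $(2)$. As a covering torus, $\wt{T}_0$ is of Heisenberg type, so its genuine smooth representations form a semisimple category in which a genuine irreducible is determined up to isomorphism by its central character (cf.\ the Example after Proposition \ref{P:con-con}). Given $\pi\in\Irrg(\wt{T})$ with central character $\omega_\pi$, the subgroup $Z(\wt{T}_0)\subset Z(\wt{T})$ acts on the space of $\pi$ by the scalars $\omega_\pi|_{Z(\wt{T}_0)}$, a genuine character of $Z(\wt{T}_0)$; hence every genuine irreducible summand of the semisimple restriction $\pi|_{\wt{T}_0}$ has this same central character, so all of them are isomorphic to $\rho:=i(\omega_\pi|_{Z(\wt{T}_0)})\in\Irrg(\wt{T}_0)$ and $\pi|_{\wt{T}_0}=\rho^{\oplus e}$ for some $e\in\N$.

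I expect the only nonformal part to be $(2)\Leftrightarrow(3)$: one must correctly recognize the indirectly defined group $Z(\wt{T}_0)$ — the center of $\wt{T}\cap\wt{G}_\der$ — as a subgroup of $\wt{T}$ of the form $\mbm{L}(-)$ (this is where $Z(\mbf{G})$ connected, hence $Y_0=Y^{sc}$, enters), and then match lattice inclusions with subgroup inclusions via the non-exactness of $-\otimes F^{\times}$. The equivalence $(1)\Leftrightarrow(2)$ and the concluding isotypy statement are then immediate given the Heisenberg structure of covering tori.
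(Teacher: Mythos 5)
Your treatment of (2)$\Leftrightarrow$(3) and of the isotypy statement is essentially the paper's own argument: identify $Y_{Q,n}=\mbm{L}(Z(\wt{T}))$ and $Y_{0,Q,n}=\mbm{L}(Z(\wt{T}_0))$, use that the correspondence $\wt{H}\mapsto\mbm{L}(\wt{H})$ preserves and reflects inclusions together with the a priori inclusion $Y_0\cap Y_{Q,n}\subset Y_{0,Q,n}$, and then deduce isotypy of $\pi|_{\wt{T}_0}$ from the fact that irreducible genuine representations of $\wt{T}_0$ are determined by their central characters, exactly as in the proof of Theorem \ref{T:in-re}(ii). (A small cosmetic point: inclusion-reflection of $\mbm{L}$ is most cleanly seen by applying the valuation $Y\otimes F^\times\to Y$, i.e.\ looking at $y(\varpi)$, rather than via $F^{\times}/F^{\times d}\neq 1$.)

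The one place where your reasoning, taken on its own terms, would fail is the gloss on (1)$\Leftrightarrow$(2). The action of $\mfr{Q}^\dag$ on $\widehat{\mfr{Q}_{T_0}}$ that the paper uses is \emph{not} the Pontryagin dual of an action by automorphisms of $\mfr{Q}_{T_0}$: for $g\in\wt{T}$ and $h\in Z(\wt{T}_0)$ one has $ghg^{-1}=h\cdot[g,h]$ with $[g,h]\in\mu_n\subset\wt{T}_0\cap Z(\wt{T})$, so the conjugation action on the quotient $\mfr{Q}_{T_0}=Z(\wt{T}_0)/(\wt{T}_0\cap Z(\wt{T}))$ is always trivial; your duality principle would then force statement (1) to hold unconditionally, contradicting the equivalence you are proving. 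The action that matters is the twisting of genuine characters: $g$ sends $\chi_0$ to $\chi_0\cdot\chi_g$, where $\chi_g(h)=[g^{-1},h]$ is read in $\C^\times$ via the fixed embedding of $\mu_n$. This is a translation action on the dual, and it is trivial precisely when all the characters $\chi_g$ are trivial, i.e.\ when every element of $Z(\wt{T}_0)$ commutes with all of $\wt{T}$, i.e.\ when $Z(\wt{T}_0)\subset Z(\wt{T})$. Since you also cite the observation recorded just before Theorem \ref{T:in-re}, your conclusion is correct, but the appeal to duality of trivial actions should be replaced by this commutator computation, which is what that observation (and the paper's ``follows from the definition'') amounts to.
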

\begin{proof}
The equivalence between the first two assertions follows from the definition of $\mfr{Q}^\dag$ and $\widehat{\mfr{Q}_{T_0}}$.
Note $Y_{Q,n} = \mbm{L}(Z(\wt{T}))$ and $Y_{0, Q,n} =\mbm{L}(Z(\wt{T}_0))$. One always has $Y_0 \cap Y_{Q,n} \subset Y_{0, Q,n}$, which corresponds to the inclusion
$$\wt{T}_0 \cap Z(\wt{T}) \subset Z(\wt{T}_0).$$
This gives the equivalence between the second and the third assertions. The rest follows from the same argument as in Theorem \ref{T:in-re} by noting that every irreducible genuine representation of $\wt{T}$ or $\wt{T}_0$ is determined by its central character.
\end{proof}

Now we consider two Levi subgroups $M \subset M' \subset G$. It is easy to check that there are natural maps
$$\begin{tikzcd}
Z(\wt{M}_0')/Z(\wt{M}_0') \cap Z(\wt{M}) \ar[r, hook] & Z(\wt{M}_0)/Z(\wt{M}_0) \cap Z(\wt{M}) = \mfr{Q}_{M_0} \\
\mfr{Q}_{M_0'}=Z(\wt{M}_0')/Z(\wt{M}_0') \cap Z(\wt{M}') \ar[u, two heads].
\end{tikzcd}$$
Thus, we have a $\mfr{Q}^\dag$-equivaraint map
$$\widehat{\mfr{Q}_{M_0}} \longrightarrow \widehat{\mfr{Q}_{M_0'}}.$$
This immediately gives

\begin{cor} \label{L:her}
For a Levi subgroup $\wt{M}' \subset \wt{G}$, if $\mfr{Q}^\dag$ acts freely on $\widehat{\mfr{Q}_{M_0'}}$, then:
\begin{enumerate}
\item[--] $\mfr{Q}^\dag$ acts freely on $\widehat{\mfr{Q}_{M_0}}$ for every Levi subgroup $\wt{M} \subset \wt{M}' \subset \wt{G}$; 
\item[--] hence, $\Ind_{\wt{M}^\dag}^{\wt{M}} (\tau \boxtimes \rho)$ is always irreducible.
\end{enumerate}
\end{cor}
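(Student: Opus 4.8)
The plan is to deduce both assertions formally from the $\mfr{Q}^\dag$-equivariant homomorphism $\widehat{\mfr{Q}_{M_0}} \to \widehat{\mfr{Q}_{M_0'}}$ exhibited just above, together with Theorem \ref{T:in-re}(i). The underlying principle is that freeness of a group action is inherited backwards along an equivariant map: no injectivity, surjectivity, or any other property of the map is needed beyond equivariance.

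First I would record this elementary fact. Suppose a group $\Gamma$ acts on sets $X$ and $X'$, let $f\colon X \to X'$ be $\Gamma$-equivariant, and assume the action of $\Gamma$ on $X'$ is free. Then the action of $\Gamma$ on $X$ is free: if $x \in X$ and $\gamma \in \Stab_\Gamma(x)$, then $\gamma \cdot f(x) = f(\gamma \cdot x) = f(x)$, so $\gamma$ lies in $\Stab_\Gamma(f(x)) = \{1\}$, whence $\Stab_\Gamma(x) = \{1\}$. Applying this with $\Gamma = \mfr{Q}^\dag$, $X = \widehat{\mfr{Q}_{M_0}}$, $X' = \widehat{\mfr{Q}_{M_0'}}$ and $f$ the equivariant map constructed above, the hypothesis that $\mfr{Q}^\dag$ acts freely on $\widehat{\mfr{Q}_{M_0'}}$ gives immediately that $\mfr{Q}^\dag$ acts freely on $\widehat{\mfr{Q}_{M_0}}$ for every Levi subgroup $\wt{M} \subset \wt{M}' \subset \wt{G}$. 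This is the first bullet.

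For the second bullet I would then invoke Theorem \ref{T:in-re}(i) directly: since $\mfr{Q}^\dag$ acts freely on $\widehat{\mfr{Q}_{M_0}}$, that theorem yields the irreducibility of $\Ind_{\wt{M}^\dag}^{\wt{M}}(\tau \boxtimes \rho)$ for every $\tau \boxtimes \rho \in \Irrg(\wt{M}^\dag)$, and this applies uniformly to each intermediate $\wt{M}$. The only step that is not purely formal --- and hence the main, though modest, obstacle --- is to confirm that the chain of natural maps among the center quotients is well defined and compatible with the conjugation actions: concretely, that the inclusion $\wt{M} \subset \wt{M}'$ yields $Z(\wt{M}') \subset Z(\wt{M})$ and $Z(\wt{M}_0') \subset Z(\wt{M}_0)$, which produce the injection $Z(\wt{M}_0')/(Z(\wt{M}_0') \cap Z(\wt{M})) \hookrightarrow \mfr{Q}_{M_0}$ and the surjection $\mfr{Q}_{M_0'} \twoheadrightarrow Z(\wt{M}_0')/(Z(\wt{M}_0') \cap Z(\wt{M}))$ displayed above, and that, since $\wt{M}^\dag$ acts trivially on all the quotients involved, the dual map $\widehat{\mfr{Q}_{M_0}} \to \widehat{\mfr{Q}_{M_0'}}$ is genuinely $\mfr{Q}^\dag$-equivariant. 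All of this is already contained in the diagram preceding the statement, so beyond it there is essentially nothing further to carry out.
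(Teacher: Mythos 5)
Your argument is correct and is essentially the paper's own: the paper deduces the first bullet from exactly the same $\mfr{Q}^\dag$-equivariant map $\widehat{\mfr{Q}_{M_0}} \to \widehat{\mfr{Q}_{M_0'}}$ (freeness on the target pulling back along an equivariant map), and the second bullet by invoking Theorem \ref{T:in-re}(i). The only points you leave as checks — that the zigzag of center quotients is well defined and that the dual map is $\mfr{Q}^\dag$-equivariant — are precisely the ones the paper also records as immediate.
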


\subsubsection{Reducibility of parabolic inductions}
Let $(\sigma, V_\sigma) \in \Irrg(\wt{M})$, where $V_\sigma$ is a space of realization of $\sigma$.

Let $\mbf{P}=\mbf{M}\mbf{N} \subset \mbf{G}$ be a parabolic subgroup associated with $\theta \subset \Delta$. One has the parabolic subgroup $\wt{P} \subset \wt{G}$ by restriction from $G$ to $P$. Consider the  normalized induced representation
$$I_{\wt{P}}^{\wt{G}} (\sigma):=\Ind_{\wt{P}}^{\wt{G}}(\sigma)$$
of $\wt{G}$. Let $\mbf{P}'= \mbf{M}' \mbf{N}'$ be another parabolic subgroup corresponding to $\theta'\subset \Delta$.
Define
$$W^{\theta, \theta'}=\set{ w \in W:  w(\theta) =\theta' }.$$
Call $\mbf{P}$ and $\mbf{P}'$ associated if $W^{\theta, \theta'} \ne \emptyset$. Let $w \in W^{\theta, \theta'}$. We always take the representatives 
$$\wt{w} = \wt{w}_{\alpha_1} ... \wt{w}_{\alpha_l} \in \wt{G}$$
 of $w=w_{\alpha_1} ... w_{\alpha_l}$ with $\wt{w}_\alpha$ given in \S \ref{SS:cov-l}. One has $w \mbf{M} w^{-1} = \mbf{M}'$ and thus a representation ${}^w\sigma$ of $\wt{M}'$  given by
$${}^w\sigma(m')(v):= \sigma(w^{-1} m w) (v)$$
for any $m'\in \wt{M}'$ and $v\in V_\sigma$. In particular, for the  underlying vector spaces, we have $V_{{}^w\sigma}=V_\sigma$.


Consider the intertwining operator
$$ T(w,\sigma): \ \Ind_{\wt{P}}^{\wt{G}}(\sigma) \longrightarrow  \Ind_{\wt{P}'}^{\wt{G}}( {}^w \sigma) $$
given by the meromorphic continuation of the integral
\begin{equation} \label{T(w)}
T(w, \sigma)(f)(g)=\int_{N_w} f(\wt{w}^{-1} n g) dn
\end{equation}
where $N_w=U\cap (w N^- w^{-1})$ with $N^-$ the unipotent opposite $N$. Recall that we write $\wt{G}_0 = \wt{G}_\de$ whenever there is no confusion. There is a similar intertwining operator
$$T(w,\sigma_0): \ \Ind_{\wt{P}_0}^{\wt{G}_0}(\sigma_0) \longrightarrow  \Ind_{\wt{P}'_0}^{\wt{G}_0}( {}^w \sigma_0)$$
defined for every $\sigma_0 \in \Irrg(\wt{M}_0)$. The following is well-known (see \cite[Page 409]{BJ04} for example).

\begin{lm} \label{L:kc}
Let $\sigma \in \Irrg(\wt{M})$. We have
$$(\Ind_{\wt{P}}^{\wt{G}} (\sigma))|_{\wt{G}_0} = \bigoplus_{\sigma_0 \subset \sigma|_{ \wt{M}_0}} \Ind_{\wt{P}_0}^{\wt{G}_0} (\sigma_0).$$
On the other hand,
$$\Ind_{\wt{P}}^{\wt{G}}\  \Ind_{\wt{M}^\dag}^{\wt{M}} (\tau \boxtimes \sigma_0) = \Ind_{\wt{G}^\dag}^{\wt{G}} \big( \tau \boxtimes \Ind_{\wt{P}_0}^{\wt{G}_0}(\sigma_0)\big),$$
where $\tau \in \Irrg(\wt{Z(G)})$ and $\sigma_0 \in \Irrg(\wt{M}_0)$. Moreover, the above two equalities are compatible with the standard intertwining operators $T(w, \sigma)$ and $T(w, \sigma_0)$ for every $\sigma_0 \subset \sigma|_{\wt{M}_0}$.
\end{lm}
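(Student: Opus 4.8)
The plan is to deduce all three assertions from transitivity of induction and a one-cell Mackey decomposition, using repeatedly the single fact that the unipotent radicals and the Weyl representatives $\wt{w}$ already lie inside $\wt{G}_0$.

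First I would prove the restriction formula. Since $G/G_\de$ is a torus onto which $T$ (hence $P\supset T$) surjects, we have $G = P\cdot G_\de$, so $\wt{G} = \wt{P}\cdot\wt{G}_0$, while $\wt{P}\cap\wt{G}_0 = p^{-1}(P\cap G_\de) = \wt{P}_0$. The Mackey restriction--induction formula for $\Ind_{\wt{P}}^{\wt{G}}\sigma$ restricted to $\wt{G}_0$ then involves only the single double coset $\wt{P}\backslash\wt{G}/\wt{G}_0$ and gives $(\Ind_{\wt{P}}^{\wt{G}}\sigma)|_{\wt{G}_0} = \Ind_{\wt{P}_0}^{\wt{G}_0}(\sigma|_{\wt{P}_0})$. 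Because $\wt{N}\subset\wt{G}_0$, one has $\wt{P}_0 = \wt{M}_0\wt{N}$ and $\sigma|_{\wt{P}_0}$ is the inflation to $\wt{P}_0$ of $\sigma|_{\wt{M}_0}$; since $\mathrm{Ad}(P_0)$ preserves $\mathrm{Lie}(N)$ one also gets $\delta_P|_{P_0}=\delta_{P_0}$, so the normalizations match, and additivity of normalized induction in the inducing representation yields the claimed sum over the constituents of $\sigma|_{\wt{M}_0}$ (counted with multiplicity, as in the Clifford--Mackey discussion of \S\ref{SSS:res}).

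Next I would prove the second equality by introducing the common intermediate group $\wt{P}^\dag := \wt{Z(G)}\cdot\wt{P}_0 = \wt{M}^\dag\cdot\wt{N}$, which contains $\wt{N}$ normally with $\wt{P}^\dag/\wt{N} = \wt{M}^\dag$. On the left, inflation commutes with induction along $\wt{N}$, the projection formula for the finite-index induction $\Ind_{\wt{M}^\dag}^{\wt{M}}$ absorbs the normalizing twist $\delta_P^{1/2}$, and transitivity of induction gives that the left-hand side equals the (un-normalized) $\Ind_{\wt{P}^\dag}^{\wt{G}}$ of $\delta_P^{1/2}\otimes(\tau\boxtimes\sigma_0)$. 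On the right, since $\wt{Z(G)}$ centralizes $\wt{G}_0$ and $\wt{Z(G)}\wt{P}_0 = \wt{P}^\dag$, $\wt{Z(G)}\wt{G}_0 = \wt{G}^\dag$, one rewrites $\tau\boxtimes\Ind_{\wt{P}_0}^{\wt{G}_0}\sigma_0 = \Ind_{\wt{P}^\dag}^{\wt{G}^\dag}(\tau\boxtimes\sigma_0)$ (carrying the factor $\delta_{P_0}^{1/2}$ from the $\wt{G}_0$-side), and transitivity again gives $\Ind_{\wt{P}^\dag}^{\wt{G}}(\delta_{P_0}^{1/2}\otimes(\tau\boxtimes\sigma_0))$. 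The two expressions then agree because $\delta_P$ is trivial on $\wt{Z(G)}$ and restricts to $\delta_{P_0}$ on $\wt{P}_0$, so $\delta_P^{1/2}$ and $\delta_{P_0}^{1/2}$ define the same character of $\wt{P}^\dag$. I expect this modulus-character bookkeeping to be the one genuinely delicate — though ultimately routine — point of the proof.

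Finally, for compatibility with the intertwining operators I would note that both identifications above are realized by restricting functions (to $\wt{G}_0$, respectively $\wt{G}^\dag$), and that the integral \eqref{T(w)} defining $T(w,\sigma)$ runs over $N_w\subset\wt{G}_0$ with $\wt{w}=\wt{w}_{\alpha_1}\cdots\wt{w}_{\alpha_l}\in\wt{G}_0$; hence for $g\in\wt{G}_0$ the value $T(w,\sigma)(f)(g)$ depends only on $f|_{\wt{G}_0}$ and is the same integral computed inside $\wt{G}_0$. Since $\wt{w}$ normalizes $\wt{G}_0$, the twist ${}^w(-)$ commutes with restriction to $\wt{M}_0$, so the target space decomposes compatibly, $(\Ind_{\wt{P}'}^{\wt{G}}({}^w\sigma))|_{\wt{G}_0} = \bigoplus_{\sigma_0\subset\sigma|_{\wt{M}_0}}\Ind_{\wt{P}'_0}^{\wt{G}_0}({}^w\sigma_0)$, and matching the operators blockwise gives $T(w,\sigma)|_{\wt{G}_0} = \bigoplus_{\sigma_0\subset\sigma|_{\wt{M}_0}}T(w,\sigma_0)$; as $N_w$ and $\wt{w}$ also lie in $\wt{G}_0\subset\wt{G}^\dag$, the same reasoning handles the second equality. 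Beyond the normalization bookkeeping noted above, none of the steps poses an essential obstacle — the argument is entirely a matter of carefully composing inductions and tracking modulus characters (compare \cite{BJ04}).
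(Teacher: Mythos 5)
Your proof is correct, and since the paper offers no argument of its own for this lemma (it simply cites \cite{BJ04} as well-known), your write-up supplies exactly the standard reasoning that citation stands for: the function-restriction identification coming from $G=PG_0$ with $P\cap G_0=P_0$, transitivity of induction through $\wt{P}^\dag=\wt{Z(G)}\cdot\wt{P}_0$ with the check that $\delta_P$ restricts to $\delta_{P_0}$ and is trivial on $Z(G)$, and the observation that $N_w$ and $\wt{w}$ lie in $\wt{G}_0$ so that $T(w,\sigma)$ restricts blockwise to the $T(w,\sigma_0)$. The only phrasing to be slightly careful with is calling the first step a ``Mackey'' decomposition, since $\wt{G}_0$ is neither open nor of finite index in $\wt{G}$; but your later remark that the identification is realized by restricting functions (together with $P\cap G_0=P_0$ and Silberger-type semisimplicity of $\sigma|_{\wt{M}_0}$) is precisely the correct substitute.
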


\begin{thm} \label{T:irre-c}
Let $\wt{P} = \wt{M} N \subset \wt{G}$ be a parabolic subgroup. Assume $\mfr{Q}^\dag$ acts freely on either $\widehat{\mfr{Q}_Z}$ or $\widehat{\mfr{Q}_{M_0}}$. Let $\sigma \in \Irrg(\wt{M})$, and let $\sigma_0 \subset \sigma|_{\wt{M}_0}$ be an irreducible constituent.
\begin{enumerate}
\item[(i)] If $\mfr{Q}^\dag$ acts freely on either $\widehat{\mfr{Q}_Z}$ or $\widehat{\mfr{Q}_{G_0}}$, then $I_{\wt{P}}^{\wt{G}}(\sigma)$ is irreducible if and only if $I_{\wt{P}_0}^{\wt{G}_0}(\sigma_0)$ is irreducible.
\item[(ii)] Assume $\sigma$ is supercuspidal. If $\mfr{Q}^\dag$ acts trivially on $\widehat{\mfr{Q}_Z}$ and does not act freely on $\widehat{\mfr{Q}_{G_0}}$, then $I_{\wt{P}}^{\wt{G}}(\sigma)$ is irreducible if and only if $I_{\wt{P}_0}^{\wt{G}_0}(\sigma_0)$ is irreducible and
$$(W \cdot \sigma_0) \cap \set{{}^{t}(\sigma_0): t\in \mfr{Q}^\dag} = \set{\sigma_0}$$
holds, i.e., $\sigma_0$ is not Weyl-conjugate to any representation of the form ${}^t\sigma_0$ with $1\ne t \in \mfr{Q}^\dag$.
\end{enumerate}
\end{thm}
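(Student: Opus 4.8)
The plan is to reduce the statement for $\wt{G}$ directly to the analogous statement for $\wt{G}_0 = \wt{G}_\de$ by exploiting the three transitivity/compatibility facts already assembled: the decomposition of restricted parabolic induction in Lemma~\ref{L:kc}, the induction-transitivity identity $\Ind_{\wt{P}}^{\wt{G}}\Ind_{\wt{M}^\dag}^{\wt{M}}(\tau\boxtimes\sigma_0) = \Ind_{\wt{G}^\dag}^{\wt{G}}(\tau\boxtimes\Ind_{\wt{P}_0}^{\wt{G}_0}(\sigma_0))$ in the same lemma, and the irreducibility criterion of Theorem~\ref{T:in-re}(i) applied at the level of $\wt{G}$ (not $\wt{M}$). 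First I would fix $\sigma_0\subset\sigma|_{\wt{M}_0}$ and write $\sigma|_{\wt{M}^\dag}$ in terms of the Clifford-Mackey data of $\sigma_0$: up to the $\mfr{Q}^\dag$-action, a constituent is $\tau\boxtimes\sigma_0$ for a compatible $\tau\in\Irrg(\wt{Z(G)})$. The hypothesis that $\mfr{Q}^\dag$ acts freely on $\widehat{\mfr{Q}_Z}$ or $\widehat{\mfr{Q}_{M_0}}$ guarantees, via Theorem~\ref{T:in-re}(i), that $\Ind_{\wt{M}^\dag}^{\wt{M}}(\tau\boxtimes\sigma_0)$ is irreducible, hence equals $\sigma$ (after matching central characters), so $I_{\wt{P}}^{\wt{G}}(\sigma) = \Ind_{\wt{P}}^{\wt{G}}\Ind_{\wt{M}^\dag}^{\wt{M}}(\tau\boxtimes\sigma_0) = \Ind_{\wt{G}^\dag}^{\wt{G}}(\tau\boxtimes I_{\wt{P}_0}^{\wt{G}_0}(\sigma_0))$.

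For part (i): the additional hypothesis that $\mfr{Q}^\dag$ acts freely on $\widehat{\mfr{Q}_Z}$ or $\widehat{\mfr{Q}_{G_0}}$ is exactly what Theorem~\ref{T:in-re}(i) (with $M=G$) needs to conclude that $\Ind_{\wt{G}^\dag}^{\wt{G}}$ of \emph{any} irreducible $\tau\boxtimes\rho_0$ is irreducible. So if $I_{\wt{P}_0}^{\wt{G}_0}(\sigma_0)$ is irreducible, then $\tau\boxtimes I_{\wt{P}_0}^{\wt{G}_0}(\sigma_0)\in\Irrg(\wt{G}^\dag)$ and its induction $I_{\wt{P}}^{\wt{G}}(\sigma)$ is irreducible. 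Conversely, since $I_{\wt{G}^\dag}^{\wt{G}}$ is exact and $\Ind_{\wt{G}^\dag}^{\wt{G}}$ sends a proper submodule $\tau\boxtimes V' \subsetneq \tau\boxtimes I_{\wt{P}_0}^{\wt{G}_0}(\sigma_0)$ to a proper submodule (one can detect this on $\wt{G}^\dag$-isotypic components, using that $\Ind_{\wt{G}^\dag}^{\wt{G}}$ of the quotient is nonzero), reducibility of $I_{\wt{P}_0}^{\wt{G}_0}(\sigma_0)$ forces reducibility of $I_{\wt{P}}^{\wt{G}}(\sigma)$. This establishes the equivalence in (i).

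For part (ii): now $\mfr{Q}^\dag$ acts trivially on $\widehat{\mfr{Q}_Z}$ (so $\sigma|_{\wt{M}^\dag}$ is $\tau\boxtimes\sigma_0$-isotypic and we still get $I_{\wt{P}}^{\wt{G}}(\sigma) = \Ind_{\wt{G}^\dag}^{\wt{G}}(\tau\boxtimes I_{\wt{P}_0}^{\wt{G}_0}(\sigma_0))$) but the action on $\widehat{\mfr{Q}_{G_0}}$ is nontrivial and not free, so $\Ind_{\wt{G}^\dag}^{\wt{G}}$ of an irreducible need no longer be irreducible. Here I would invoke the Mackey machine at the level of $\wt{G}^\dag\subset\wt{G}$: $\Ind_{\wt{G}^\dag}^{\wt{G}}(\tau\boxtimes\rho_0)$ decomposes according to the stabilizer of $\tau\boxtimes\rho_0$ in $\mfr{Q}^\dag$, and it is irreducible iff that stabilizer is trivial iff $\mfr{Q}^\dag$ moves $\rho_0$ off its own isomorphism class (the $\tau$-part is stable since the action on $\widehat{\mfr{Q}_Z}$ is trivial). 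When $\sigma_0$ is supercuspidal, the parabolically induced $\rho_0 = I_{\wt{P}_0}^{\wt{G}_0}(\sigma_0)$ has a well-defined cuspidal support $(\wt{M}_0,\sigma_0)$ up to $\wt{G}_0$-conjugacy, so ${}^t\rho_0\simeq\rho_0$ for $t\in\mfr{Q}^\dag$ iff ${}^t\sigma_0$ is $W$-conjugate to $\sigma_0$; combining this with the reducibility/irreducibility of $I_{\wt{P}_0}^{\wt{G}_0}(\sigma_0)$ itself gives the stated criterion. \textbf{The main obstacle} I anticipate is the ``converse'' bookkeeping in (ii): one must be careful that the two potential sources of reducibility of $I_{\wt{P}}^{\wt{G}}(\sigma)$ — internal reducibility of $I_{\wt{P}_0}^{\wt{G}_0}(\sigma_0)$, versus the $\mfr{Q}^\dag$-splitting of the induction from $\wt{G}^\dag$ — are correctly separated, i.e. that $I_{\wt{P}}^{\wt{G}}(\sigma)$ is irreducible precisely when \emph{both} obstructions vanish, with no accidental cancellation; this requires tracking $\wt{G}^\dag$-isotypic multiplicities and using the supercuspidal-support rigidity (Bernstein-Zelevinsky) to rule out coincidences. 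The other routine-but-necessary check is verifying that central characters match up so that the irreducible $\Ind_{\wt{M}^\dag}^{\wt{M}}(\tau\boxtimes\sigma_0)$ really is $\sigma$ and not some twist, which follows from the Clifford-theory description in \S\ref{SSS:res}.
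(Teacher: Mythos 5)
Your proposal is correct and follows essentially the same route as the paper: identify $\sigma$ with $\Ind_{\wt{M}^\dag}^{\wt{M}}(\tau\boxtimes\sigma_0)$ via Theorem \ref{T:in-re}(i), rewrite $I_{\wt{P}}^{\wt{G}}(\sigma)$ as $\Ind_{\wt{G}^\dag}^{\wt{G}}(\tau\boxtimes I_{\wt{P}_0}^{\wt{G}_0}(\sigma_0))$ by Lemma \ref{L:kc}, apply Theorem \ref{T:in-re}(i) at the level of $\wt{G}$ for part (i), and for part (ii) use the Mackey criterion together with supercuspidal-support rigidity (\cite[Theorem 2.9]{BZ2}) to translate ${}^t(I_{\wt{P}_0}^{\wt{G}_0}(\sigma_0))\simeq I_{\wt{P}_0}^{\wt{G}_0}(\sigma_0)$ into Weyl-conjugacy of $\sigma_0$ and ${}^t\sigma_0$. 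This matches the paper's argument in both structure and the key lemmas invoked.
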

\begin{proof}
Since $\mfr{Q}^\dag$ acts freely on either $\widehat{\mfr{Q}_Z}$ or  $\widehat{\mfr{Q}_{M_0}}$, we see that
$$\sigma = I_{\wt{M}^\dag}^{\wt{M}} (\tau \boxtimes \sigma_0)$$
for some $\tau \in \Irrg(\wt{Z(G)})$. Lemma \ref{L:kc} gives 
\begin{equation} \label{E:iind-2}
I_{\wt{P}}^{\wt{G}}(\sigma) = I_{\wt{G}^\dag}^{\wt{G}} (\tau \boxtimes I_{\wt{P}_0}^{\wt{G}_0}(\sigma)).
\end{equation}
Thus it is clear that if $I_{\wt{P}}^{\wt{G}}(\sigma)$ is irreducible, then $I_{\wt{P}_0}^{\wt{G}_0}(\sigma)$ is irreducible.

For (i), it suffices to prove the if part. If $I_{\wt{P}_0}^{\wt{G}_0}(\sigma)$ is irreducible and $\mfr{Q}^\dag$ acts freely on $\widehat{\mfr{Q}_Z}$ or $\widehat{\mfr{Q}_{G_0}}$, then $I_{\wt{P}}^{\wt{G}}(\sigma)$ is  irreducible by Theorem \ref{T:in-re}. 

For (ii), in view of Proposition \ref{E:iind-2}, $I_{\wt{P}}^{\wt{G}}(\sigma)$ is irreducible if and only if $\tau \boxtimes I_{\wt{P}_0}^{\wt{G}_0}(\sigma)$ is not isomorphic to $({}^t\tau) \boxtimes {}^t(I_{\wt{P}_0}^{\wt{G}_0}(\sigma_0))$ for any $1\ne t \in \mfr{Q}^\dag$; since 
$${}^t(I_{\wt{P}_0}^{\wt{G}_0}(\sigma_0)) \simeq I_{\wt{P}_0}^{\wt{G}_0}({}^t\sigma_0))$$ and the assumption implies that ${}^t\tau \simeq \tau$, this condition is equivalent to that $I_{\wt{P}_0}^{\wt{G}_0}(\sigma_0)$ is not equivalent to $I_{\wt{P}_0}^{\wt{G}_0}({}^t\sigma_0)$ for any $1\ne t \in \mfr{Q}^\dag$. Again, as $\sigma$ is assumed to be supercuspidal, it follows from \cite[Theorem 2.9]{BZ2} that this amounts to that $\sigma_0$ is not in the $W$-orbit of ${}^t \sigma_0$ for any $1\ne t\in  \mfr{Q}^\dag$. The proof is thus completed.
\end{proof}

\subsection{Covers of $\GL_r, \GSp_{2r}$ and $\GSpin_{2r+1}$} \label{SS:RI-eg}
In this subsection, we work out explicitly covers of $\GL_r, \GSp_{2r}$ and $\GSpin_{2r+1}$ in view of the above discussion . Some results thus generalize earlier work as in \cite{Szp4-1, Szp5}, and they will also be used in later sections of the paper.

\subsubsection{Covers of $\GL_r$} \label{SSS:GL}
Every Brylinski--Deligne cover $\wt{\GL}_r$ arises from two parameters $\bfp, \bfq \in \Z$ such that
$$
B_Q(e_i, e_j) =
\begin{cases}
2\bfp    & \text{ if } i=j, \\
 \bfq   & \text{ otherwise.}
\end{cases}
$$
Here $\set{e_i: 1\lest i \lest r}$ is a standard basis for the cocharacter lattice $Y$ of $\GL_r$ such that 
$$\alpha_i^\vee:= e_i - e_{i+1}, 1\lest i \lest r-1$$
 are the simple coroots. The Dyndin diagram for the simple coroots is given as follows:

$$\qquad
\begin{picture}(4.7,0.2)(0,0)
\put(1,0){\circle{0.08}}
\put(1.5,0){\circle{0.08}}
\put(2,0){\circle{0.08}}
\put(2.5,0){\circle{0.08}}
\put(3,0){\circle{0.08}}
\put(1.04,0){\line(1,0){0.42}}
\multiput(1.55,0)(0.05,0){9}{\circle{0.02}}
\put(2.04,0){\line(1,0){0.42}}
\put(2.54,0){\line(1,0){0.42}}
\put(1,0.1){\footnotesize $\alpha_{1}^\vee$}
\put(1.5,0.1){\footnotesize $\alpha_{2}^\vee$}
\put(2,0.1){\footnotesize $\alpha_{r-2}^\vee$}
\put(2.5,0.1){\footnotesize $\alpha_{r-1}^\vee$}
\put(3,0.1){\footnotesize $\alpha_r^\vee$}
\end{picture}
$$
\vskip 10pt

 One has 
$$Q(\alpha^\vee)= 2\bfp - \bfq$$
 for every coroot $\alpha^\vee$.  There are two special families of Brylinski--Deligne covers we highlight below (see \cite{Ga3, GW}):
 \begin{enumerate}
 \item[$\bullet$] (Kazhdan--Patterson covers \cite{KP}) A Kazhdan--Patterson covering $\wt{\GL}_{r,\KP}$ is a Brylinski--Deligne cover $\wt{\GL}_r$ such that 
 $$2\bfp - \bfq = -1.$$
 Here $\bfp$ equals the twisting parameter $c$ in \cite{KP}. We have $n_\alpha = n$ for every root $\alpha$, and also the inclusion 
 $$nY \subset Y_{Q,n},$$
 which however may not be an equality in general.
 \item[$\bullet$] (Savin covers \cite{Sav2}) Consider the Brylinski--Deligne covers $\wt{\GL}_r^{(n)}$, which are associated with the pairs $(\bfp, \bfq)$ such that 
$$Q(\alpha^\vee)= 2\bfp - \bfq = -2.$$
In particular, the cover parametrized by $(\bfp, \bfq)= (-1, 0)$ was first studied by Savin \cite{Sav2}, and we denote it by $\wt{\GL}_{r,\Sav}$. In fact, $\wt{\GL}_{r,\Sav}$ arises from the restriction of the $n$-fold cover of $\Sp_{2r}$ to its Siegel Levi subgroup $\GL_r$.
The fact that $\bfq=0$ accounts for the block commutativity of the covering Levi subgroups of $\wt{\GL}_{r,\Sav}$.
In this case, we have
$$n_\alpha = \frac{n}{\text{gcd}(2, n)}$$
and $Y_{Q,n}= n_\alpha \cdot Y$.
 \end{enumerate}

Let $\mbf{r}=(r_1, r_2, ..., r_k)$ be a partition of $r$. Let 
$$M:=M_\mbf{r} =\GL_{r_1} \times ... \times \GL_{r_k} \subset \GL_r$$
be the associated Levi subgroup of $\GL_r$. One has
$$M_{0}:=M_{\mbf{r},0} =\set{(g_i): g_i \in \GL_{r_i} \text{ and } \prod \det(g_i)=1 } \subset M_\mbf{r}.$$
In order to determine $\mfr{Q}^\dag, \mfr{Q}_Z$ and $\mfr{Q}_{M_0}$, we compute the sublattice $\mbm{L} \subset Y$ associated with the following subgroups of $\wt{T}$:
$$\wt{T}^\dag, \ Z(\wt{Z(G)}), \ Z(\wt{G}), \ Z(\wt{M}_0) \text{ and } \wt{M}_0 \cap Z(\wt{M}),$$
whenever they exist. 

First, we have
$$Y_{c}=\Z e_c \text{ with } e_c = \sum_{i=1}^r e_i.$$
It is clear that
$$\mbm{L}(\wt{T}^\dag) = Y_c + Y_0 \text{ and thus } \mfr{Q}^\dag \simeq e_1(F^\times)/e_1(F^{\times r}).$$
An easy computation gives that
$$Q(e_c)= r\bfp + \frac{r(r-1)}{2}\bfq, \quad 2Q(e_c) = r\cdot B(e_c, e_i) \text{ for every i},$$
where
$$B(e_c, e_i) = 2\bfp + r(r-1)\bfq = (2\bfp + 1)r -1.$$
It follows that
$$\mbm{L}(Z(\wt{Z(G)})) =Y_{c,Q,n}= \frac{\Z\cdot ne_c}{\gcd(n, 2Q(e_c))}=: \Z(n_1 e_c),$$
and also
$$\mbm{L}(Z(\wt{G})) = Y_c \cap Y_{Q,n} = \frac{\Z\cdot ne_c}{\gcd(n, B(e_c, e_1))} =: \Z(n_2 e_c).$$ 
We have
$$n_2 = n_1 \cdot \gcd(n, r),$$
since $r$ and $B(e_c, e_i)$ are coprime. Hence,
$$\mfr{Q}_Z \simeq e_c(F^{\times n_1})/ e_c(F^{\times n_2}).$$
Recall that the action of $g\in \mfr{Q}^\dag$ on $\chi_0 \in \widehat{\mfr{Q}_Z}$ is given by the multiplication $\chi_0 \cdot \chi_g$, where
$$\chi_g: \mfr{Q}_Z \longrightarrow \C^\times$$
with 
$$\chi_g(h) = [g^{-1}, h].$$
In the case of $\GL_r$, we take $g= e_1(a), h= e_c(b^{n_1}), a, b \in F^\times$ to obtain
\begin{equation} \label{GL-comm}
\chi_{e_1(a)}(e_c(b^{n_1})) = (b, a)_n^{n_1 \cdot B_Q(e_c, e_1)}.
\end{equation}
Two special cases to note:
\begin{enumerate}
\item[--] if $n$ and $r$ are coprime, then $Z(\wt{Z(G)}) = Z(\wt{G})$. In particular, $\mfr{Q}^\dag$ acts trivially on $\widehat{\mfr{Q}_Z}$ in this case.
\item[--] if $r|n$, then it is easy to see from \eqref{GL-comm} that $\chi_{e_1(a)} $ is trivial on $\mfr{Q}_Z$ if and only if $a\in F^{\times r}$, i.e., $e_1(a) =1 \in \mfr{Q}^\dag$. Thus, $\mfr{Q}^\dag$ acts freely on $\widehat{\mfr{Q}_Z}$. However, since in this case $\val{\mfr{Q}^\dag}= \val{\mfr{Q}_Z}$, the group $\widehat{\mfr{O}_Z}$ is a torsor over $\mfr{O}^\dag$.
\end{enumerate}

On the other hand, to compute $\widehat{\mfr{Q}_{M_0}}$, we note that
$$Y_{M_0} = Y_0 = Y^{sc}.$$
Recalling that $n_\alpha = n/\gcd(n, Q(\alpha^\vee))$, if $\wt{Z(M_0)}$ is associated with a sublattice $Y_{M_0, c}$ of $Y$, then we have 
$$Y_{M_0, c} = Y_0 \cap Y_{M, c},$$
 where $Y_{M, c}$ is the lattice associated with $\wt{Z(M)}$. In this case, it is easy to obtain
$$
\mbm{L}(Z(\wt{M}_0)) = Y_{M_0, c} \cap Y_{0,Q,n} =
\left\{
\begin{array}{cc}
 (y_1, ..., y_1;  y_2, ..., y_2; ...; y_k, ..., y_k) \in \oplus_{i=1}^{r} \Z e_i: \\
 \bullet \quad  n_\alpha |(y_i - y_{i+1}) \text{ for every } i, \\
 \bullet \quad  \sum_{i=1}^k r_i y_i=0.
\end{array}
\right\},
$$
where $y_i$ appears with multiplicity $r_i$ for every $i$. On the other hand, we have
$$
\mbm{L}(\wt{M}_0 \cap Z(\wt{M})) = Y_{M_0, c} \cap Y_{Q,n} =
\left\{
\begin{array}{cc}
(y_1, ..., y_1;  y_2, ..., y_2; ...; y_k, ..., y_k) \in \oplus_{i=1}^{r} \Z e_i: \\
\bullet \quad  n_\alpha |y_i \text{ for every } i, \\
\bullet \quad  \sum_{i=1}^k r_i y_i=0.
\end{array}
\right\},
$$
For simplicity, we specialize to the case $M=T$ and obtain
$$\mbm{L}(Z(\wt{T}_0)) = Y_{0,Q,n} =
\left\{
\begin{array}{cc}
 (y_1, y_2, ..., y_r) \in \oplus_{i=1}^{r} \Z e_i: \\
 \bullet \quad  n_\alpha |(y_i - y_j) \text{ for every } i, j, \\
 \bullet \quad  \sum_{i=1}^r y_i=0.
\end{array}
\right\}
$$
and
$$ \mbm{L}(\wt{T}_0 \cap Z(\wt{T})) = Y_0 \cap Y_{Q,n} = nY_0.$$
Setting
$$v_i= e_i - e_r \text{ for } 1\lest i \lest r-2, \text{ and } v_{r-1} = (e_1 + ... + e_{r-1} + e_r) - r \cdot e_r,$$
it is clear that $Y_{0,Q,n}$ has a basis
$$\set{n_\alpha v_i: 1\lest i \lest r_2} \cup \set{n_{\alpha, (r)} v_{r-1}}$$
where $n_{\alpha, (r)}= n_\alpha/\gcd(n_\alpha, r)$, and $Y_{Q,n}$ has a basis
$$\set{nv_i: 1\lest i \lest r-1}.$$
It thus follows that
$$\mfr{Q}_{T_0} = v_{r-1}(F^{\times n_{\alpha, (r)}})/v_{r-1}(F^{\times n_\alpha}).$$
The action of $g\in \mfr{Q}^\dag$ on $\chi_0 \in \widehat{\mfr{Q}_{T_0}}$ is given by $\chi_0 \cdot \chi_g$, where
$$\chi_g: \mfr{Q}_{T_0} \longrightarrow \C^\times$$
takes the form
$$\chi_g(h) = [g^{-1}, h].$$
We take $g= e_1(a), h= v_{r-1}(b^{n_{\alpha, (r)}}), a, b \in F^\times$ to obtain
\begin{equation} \label{GL-com2}
\chi_{e_1(a)}(v_{r-1}(b^{n_{\alpha, (r)}})) = (b, a)_n^{Q(\alpha^\vee) \cdot n_{\alpha, (r)}}.
\end{equation}
Again, there are two special cases:
\begin{enumerate}
\item[--] if $\gcd(n_\alpha, r)=1$, then $\mfr{Q}^\dag$ acts trivially on $\widehat{\mfr{Q}_{T_0}}$.
\item[--] if $\gcd(n_\alpha, r)=r$ (i.e., $r|n_\alpha$), then $\widehat{\mfr{Q}_{T_0}}$ is a torsor over $\mfr{Q}^\dag$.
\end{enumerate}
We give a partial summary of some results for covers of $\GL_r$, concentrating on the special cases when the induced representation $\Ind_{\wt{M}^\dag}^{\wt{M}}(\tau \boxtimes \rho)$ is irreducible, and when the restriction $\pi|_{\wt{T}_0}$ is an isotypic sum for $\pi \in \Irrg(\wt{T})$.

\begin{prop} \label{P:GLsum}
Let $\wt{\GL}_r$ be a Brylinski--Deligne cover associated with $\bfp, \bfq \in \Z$.
\begin{enumerate}
\item[(i)] If $r|n$, then every $\Ind_{\wt{M}^\dag}^{\wt{M}}(\tau \boxtimes \rho)$ is irreducible for $\tau \boxtimes \rho \in \Irrg(\wt{M}^\dag)$.
\item[(ii)] If $\gcd(n_\alpha, r)=1$, then for every $\pi \in \Irrg(\wt{T})$, the restriction $\pi|_{\wt{T}_0}$ is an isotypic sum. If we assume the stronger equality $\gcd(n, r)=1$, then $\pi|_{\wt{T}^\dag}$ is already an isotypic sum.
\end{enumerate}
\end{prop}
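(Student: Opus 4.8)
The plan is to deduce both statements directly from the explicit lattice and Hilbert-symbol computations carried out in \S\ref{SSS:GL}, fed into the general criteria of Theorem~\ref{T:in-re} and Corollary~\ref{C:Zinc}.

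For (i): observe that $\mfr{Q}_Z = Z(\wt{Z(G)})/Z(\wt{G})$ together with the $\mfr{Q}^\dag$-action on $\widehat{\mfr{Q}_Z}$ depend only on $\wt{G}$, not on the chosen Levi $\wt{M}$. Hence it suffices to verify that this action is \emph{free} when $r \mid n$, which is precisely the assertion recorded following \eqref{GL-comm}: taking $g = e_1(a)$ and $h = e_c(b^{n_1})$, and using $n_2 = n_1 \gcd(n,r) = n_1 r$ together with non-degeneracy of the $n$-th Hilbert symbol, the cocycle character $\chi_g = (b,a)_n^{n_1 B_Q(e_c,e_1)}$ is trivial on $\mfr{Q}_Z$ precisely when $a \in F^{\times r}$, i.e.\ when $g = 1$ in $\mfr{Q}^\dag$; so every character of $\mfr{Q}_Z$ has trivial stabilizer. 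Theorem~\ref{T:in-re}(i) then gives that $\Ind_{\wt{M}^\dag}^{\wt{M}}(\tau \boxtimes \rho)$ is irreducible for every $\tau \boxtimes \rho \in \Irrg(\wt{M}^\dag)$ and every Levi $\wt{M} \subset \wt{G}$; alternatively one can first treat $\wt{M} = \wt{G}$ and propagate downward via Corollary~\ref{L:her}.

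For (ii): when $\gcd(n_\alpha, r) = 1$ we have $n_{\alpha,(r)} = n_\alpha$, so in \eqref{GL-com2}, with $g = e_1(a)$ and $h = v_{r-1}(b^{n_\alpha})$, the exponent $Q(\alpha^\vee) \cdot n_\alpha$ is a multiple of $n$ and the cocycle character is trivial; hence $\mfr{Q}^\dag$ acts trivially on $\widehat{\mfr{Q}_{T_0}}$. By Corollary~\ref{C:Zinc} this is equivalent to $Z(\wt{T}_0) \subset Z(\wt{T})$, and it yields that $\pi|_{\wt{T}_0}$ is an isotypic sum for every $\pi \in \Irrg(\wt{T})$. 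For the stronger claim, suppose $\gcd(n,r) = 1$; since $n_\alpha \mid n$ we still have $\gcd(n_\alpha, r) = 1$, so $\mfr{Q}^\dag$ acts trivially on $\widehat{\mfr{Q}_{T_0}}$ as above, and moreover the first special case following \eqref{GL-comm} gives $Z(\wt{Z(G)}) = Z(\wt{G})$, so $\mfr{Q}_Z$ is trivial and a fortiori $\mfr{Q}^\dag$ acts trivially on $\widehat{\mfr{Q}_Z}$. Theorem~\ref{T:in-re}(ii) then gives $\pi|_{\wt{T}^\dag} = (\tau \boxtimes \rho)^{\oplus e}$ for some $e \in \N$, completing the proof.

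There is no genuine obstacle here: the whole argument amounts to inserting the already-computed sublattices $Y_c$, $Y_{Q,n}$, $Y_{0,Q,n}$ and the non-degeneracy of the Hilbert symbol into the general machinery. The one point requiring attention is the arithmetic bookkeeping of the indices — that $r \mid n$ forces $n_2 = n_1 r$, and that $\gcd(n_\alpha, r) = 1$ forces $n_{\alpha,(r)} = n_\alpha$ — so that \eqref{GL-comm} and \eqref{GL-com2} indeed collapse to the freeness and triviality statements invoked above.
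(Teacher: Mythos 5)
Your argument is correct and is essentially the paper's own proof: the paper simply cites Theorem \ref{T:in-re} and Corollary \ref{C:Zinc}, and the hypothesis checks are exactly the computations around \eqref{GL-comm} and \eqref{GL-com2} that you reproduce (freeness of the $\mfr{Q}^\dag$-action on $\widehat{\mfr{Q}_Z}$ when $r\mid n$; triviality of the actions on $\widehat{\mfr{Q}_{T_0}}$, resp.\ also on $\widehat{\mfr{Q}_Z}$, when $\gcd(n_\alpha,r)=1$, resp.\ $\gcd(n,r)=1$). One minor caveat: your parenthetical alternative of ``treating $\wt{M}=\wt{G}$ and propagating downward via Corollary \ref{L:her}'' would require freeness of the action on $\widehat{\mfr{Q}_{G_0}}$, which neither you nor the paper establish, and it is in any case unnecessary, since $\mfr{Q}_Z$ and the $\mfr{Q}^\dag$-action on it are independent of the Levi, so Theorem \ref{T:in-re}(i) applies to every $\wt{M}$ directly.
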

\begin{proof}
The assertion (i) follows from Theorem \ref{T:in-re}. The first part of (ii) follows from Corollary \ref{C:Zinc}, while the second part of (ii) is an immediate consequence of second statement (ii) of Theorem \ref{T:in-re}.
\end{proof}

The case of $n$-fold Kazhdan--Patterson covers of $\GL_2$ was already discussed extensively in \cite[\S 8]{GSS2}. For example, (ii) in Proposition  \ref{P:GLsum} occurs exactly when one considers restriction of a genuine principal series of odd-degree covers  $\wt{\GL}_2$ to $\wt{\SL}_2$.

\subsubsection{Covers of $\GSp_{2r}$} \label{SSS:GSp}
Let $\GSp_{2r}$ be the group of similitudes of symplectic type, and let $(X, \Delta, Y, \Delta^\vee)$ be its root data given as follows. The character lattice $X\simeq \Z^{r+1}$ has a standard basis 
$$\{e_i^*: 1\lest i\lest r \} \cup \{e_0^* \},$$
where the simple roots are 
$$\Delta=\{e_i^*-e_{i+1}^*: 1\lest i \lest r-1 \} \cup \{ 2e_r^*-e_0^* \}.$$
The cocharacter lattice $Y\simeq \Z^{r+1}$ is given with a basis 
$$\{e_i: 1\lest i\lest r \} \cup \{e_0 \},$$
such that the paring between $X$ and $Y$ is $\angb{e_i}{e_j^*} = \delta_{ij}$. The simple coroots are 
$$\Delta^\vee=\{ e_i-e_{i+1}: 1\lest i \lest r-1 \} \cup \{ e_r \}.$$
Write 
$$\alpha_i=e_i^* - e_{i+1}^*,\  \alpha^\vee_i=e_i-e_{i+1}$$
for $1\lest i\lest r-1$, and also 
$$\alpha_r=2e_r^*-e_0^*, \ \alpha_r^\vee=e_r.$$
The Dynkin diagram for the simple coroots is as follows:

$$ \qquad
\begin{picture}(4.7,0.2)(0,0)
\put(1,0){\circle{0.08}}
\put(1.5,0){\circle{0.08}}
\put(2,0){\circle{0.08}}
\put(2.5,0){\circle{0.08}}
\put(3,0){\circle{0.08}}
\put(1.04,0){\line(1,0){0.42}}
\multiput(1.55,0)(0.05,0){9}{\circle*{0.02}}
\put(2.04,0){\line(1,0){0.42}}
\put(2.54,0.015){\line(1,0){0.42}}
\put(2.54,-0.015){\line(1,0){0.42}}
\put(2.74,-0.04){$>$}
\put(1,0.1){\footnotesize $\alpha_1^\vee$}
\put(1.5,0.1){\footnotesize $\alpha_2^\vee$}
\put(2,0.1){\footnotesize $\alpha_{r-2}^\vee$}
\put(2.5,0.1){\footnotesize $\alpha_{r-1}^\vee$}
\put(3,0.1){\footnotesize $\alpha_r^\vee$}
\end{picture}
$$
\vskip 10pt

We have 
$$Y_c = \Z e_c, \text{ where } e_c:=2e_0 + \sum_{1\lest i \lest r} e_i.$$
Consider the covering $\overline{\GSp}_{2r}$ incarnated by $(D, \mbm{1})$. The following relations are easy to check:
$$\begin{aligned}
 Q(e_i) & = -B(e_i, e_0) \text{ for every } i;\\
 Q(e_c) & = B_Q(e_c, e_0) =  4Q(e_0) - rQ(e_i) \text{ for any } i.
\end{aligned}$$
Here $B(e_i, e_0)$ dictates the (non-)commutativity of blocks in a covering Levi subgroup of $\GSp_{2r}$. More precisely, if a Levi subgroup $M \subset \GSp_{2r}$ contains blocks $\GL_k$'s and $\GSp_{2m}$, then $\wt{\GL}_k$ and $\wt{\GL}_{k'}$ always commute; however, $\wt{\GL}_k$ and $\wt{\GSp}_{2m}$ commute if and only if 
$$B(e_i, e_0) = 0 \in \Z/n\Z.$$

We are interested in those $\overline{\GSp}_{2r}$ whose restriction to $\Sp_{2r}$ is the one with 
$$Q(\alpha_r^\vee)=Q(e_i)=-1$$
for every $i$. Thus, we assume
$$Q(\alpha_i^\vee)=-2 \text{ for } 1\lest i \lest r-1, \text{ and } Q(\alpha_r^\vee)=-1.$$
In this case,
$$Q(e_c)= 4Q(e_0) + r.$$
Since $\Delta^\vee \cup \{e_0\}$ gives a basis for $Y$, to determine $Q$ it suffices to specify $Q(e_0)$. There are two special families of $\wt{\GSp}_{2r}$ (depending on the choice of $Q(e_0)$) we want to highlight below. 
\begin{enumerate}
\item[$\bullet$] (Type I: similitudes-splitting type) If $Q(e_0) =0$, then the similitude factor $F^\times$ corresponding to the cocharacter $e_0$ splits into $\overline{\GSp}_{2r}$, and we have 
$$\wt{\GSp}_{2r} \simeq \wt{\Sp}_{2r} \rtimes F^\times.$$
For $n=2$, this recovers the classical double cover $\wt{\GSp}_{2r}^{(2)} \simeq \wt{\Sp}_{2r}^{(2)} \rtimes F^\times$ as discussed in \cite{Szp5}. We note that it suffices to take $Q(e_0) = 0 \in \Z/n\Z$ to ensure the splitting of $\wt{\GSp}_{2r}$ over $e_0(F^\times)$. In any case, if $Q(e_0)=0$, then
$$Q(e_c) = r.$$
\item[$\bullet$] (Type II: Kazhdan--Patterson type) The second family of covers of $\GSp_{2r}$ arises from restricting the Kazhdan--Patterson covers of $\wt{\GL}_r$. To avoid confusion, we let 
$$\set{v_i: 1\lest i \lest 2r} \subset Y$$ be the standard basis of $\GL_{2r}$. The one has a natural embedding
$$\GSp_{2r} \subset \GL_{2r}$$
such that
$$\alpha_r^\vee= v_r - v_{r+1}, \ \alpha_i^\vee = v_i - v_{i+1} - v_{2r+1 - i} + v_{2r-i} \text{ and } e_0=\sum_{r+1 \lest i \lest 2r} v_i.$$
Let $\wt{\GL}_r$ be a Brylinski--Deligne cover associated with $\bfp, \bfq \in \Z $ such that $Q(v_i)=\bfp, B_Q(v_i, v_j)=\bfq, i\ne j$, as in \S \ref{SSS:GL}. It is then easy to see that
$$Q(\alpha_r^\vee) = 2\bfp - \bfq, \ Q(\alpha_i^\vee)= 2Q(\alpha_r^\vee) \text{ for } 1\lest i< r$$
and 
$$Q(e_0)=r \cdot \frac{(2\bfp + 1)r -1}{2}, \quad Q(e_c)=r\cdot (2(2\bfp+1)r - 1).$$
In particular, the $n$-fold cover $\wt{\GSp}_{2r}$ obtained from such restriction may not split over the similitude factor $F^\times \subset \GSp_{2r}$.
\end{enumerate}
We will carry out the computation mainly for type (I) and (II) covers of $\GSp_{2r}$ as above.

Let $\mbf{r}=(r_1, r_2, ..., r_k, m)$ be an ordered partition of $r$. Let 
$$M_\mbf{r} = \GL_{r_1} \times ... \times \GL_{r_k} \times \GSp_{2m} \subset \GSp_{2r}$$
be the associated Levi subgroup. We have $G_0 = \Sp_{2r}$, and 
$$M_0 = \GL_{r_1} \times ... \times \GL_{r_k} \times \Sp_{2m} \subset \Sp_{2r}$$
is the Levi subgroup of $\Sp_{2r}$. Every element in $T \subset \GSp_{2r}$ is of the form
$$(a; \lambda):=(a_1, ..., a_r; \lambda):= \Big(\prod_{i=1}^r e_i(a_i)\Big)\cdot e_0(\lambda) \text{ with } a_i, \lambda \in F^\times.$$
Here $(a; \lambda) \in T_0$ if and only if $\lambda=1$.

\begin{lm}
For $(a; \lambda), (b; \delta) \in T$, one has the commutator
$$[(a; \lambda), (b; \delta)] =(\lambda, \det(b))_n^{-Q(\alpha_r^\vee)} \cdot  (\det(a), \delta)_n^{-Q(\alpha^\vee_r)} \cdot  (\lambda, \delta)_n^{2Q(e_0)} \cdot   \prod_i (a_i, b_i)_n^{2Q(\alpha_r^\vee)}.$$
\end{lm}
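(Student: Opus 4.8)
The plan is to compute the commutator $[(a;\lambda),(b;\delta)]$ directly from relation (C) in \S\ref{SS:cov-l}, which gives the commutator pairing
$$[y_1(x), y_2(y)] = (x,y)_n^{B_Q(y_1,y_2)}$$
for cocharacters $y_1, y_2 \in Y$ and scalars $x, y \in F^\times$, together with the bilinearity of the Hilbert symbol in each argument and the fact that the commutator pairing on $T$ is bimultiplicative (being valued in the central $\mu_n$). First I would write $(a;\lambda) = \big(\prod_i e_i(a_i)\big)e_0(\lambda)$ and $(b;\delta) = \big(\prod_j e_j(b_j)\big)e_0(\delta)$, then expand $[(a;\lambda),(b;\delta)]$ as a product of the elementary commutators $[e_i(a_i), e_j(b_j)]$, $[e_i(a_i), e_0(\delta)]$, $[e_0(\lambda), e_j(b_j)]$, and $[e_0(\lambda), e_0(\delta)]$, each of which equals $(a_i,b_j)_n^{B_Q(e_i,e_j)}$ and its analogues.

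The second step is to substitute the specific values of $B_Q$ for covers of $\GSp_{2r}$. From \S\ref{SSS:GSp} we have $Q(e_i) = -B(e_i,e_0)$ for every $i$, so $B_Q(e_i,e_0) = -B_Q(e_0,e_i) \cdot$ — more precisely $B_Q(e_i,e_0) = B_Q(e_0,e_i)$ since $B_Q$ is symmetric, and $B(e_i,e_0)$ in that section denotes exactly this symmetric value; thus $B_Q(e_i,e_0) = -Q(\alpha_r^\vee) \cdot$ — here one uses $Q(e_i) = Q(\alpha_r^\vee) = -1$ times the normalization, i.e. $B_Q(e_i,e_0) = -Q(\alpha_r^\vee)$ once we note $Q(e_i) = -Q(\alpha_r^\vee)$ is consistent with the chosen $Q(\alpha_r^\vee) = Q(e_i) = -1$. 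For the diagonal within the $\GL$-part, $B_Q(e_i,e_i) = 2Q(e_i)$, and for distinct $i,j$ among the symplectic coordinates one checks $B_Q(e_i,e_j) = 0$ for this cover (since the $\alpha_i^\vee = e_i - e_{i+1}$ have $Q(\alpha_i^\vee) = -2 = Q(e_i)+Q(e_{i+1}) - B_Q(e_i,e_{i+1})$, forcing $B_Q(e_i,e_j)=0$); hence the cross terms $[e_i(a_i),e_j(b_j)]$ for $i\neq j$ drop out and only the diagonal $\prod_i (a_i,b_i)_n^{B_Q(e_i,e_i)} = \prod_i (a_i,b_i)_n^{2Q(e_i)} = \prod_i(a_i,b_i)_n^{2Q(\alpha_r^\vee)}$ survives (using $Q(e_i) = Q(\alpha_r^\vee)$). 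Finally $[e_0(\lambda),e_0(\delta)] = (\lambda,\delta)_n^{B_Q(e_0,e_0)} = (\lambda,\delta)_n^{2Q(e_0)}$.

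Collecting terms: the $\GL$–$\GL$ interaction contributes $\prod_i (a_i,b_i)_n^{2Q(\alpha_r^\vee)}$; the $e_0(\lambda)$ against $\prod_j e_j(b_j)$ contributes $\prod_j(\lambda,b_j)_n^{B_Q(e_0,e_j)} = (\lambda,\det(b))_n^{-Q(\alpha_r^\vee)}$ after using $B_Q(e_0,e_j) = -Q(\alpha_r^\vee)$ and bilinearity $\prod_j(\lambda,b_j)_n = (\lambda, \prod_j b_j)_n = (\lambda,\det b)_n$; symmetrically $\prod_i e_i(a_i)$ against $e_0(\delta)$ gives $(\det(a),\delta)_n^{-Q(\alpha_r^\vee)}$; and $e_0(\lambda)$ against $e_0(\delta)$ gives $(\lambda,\delta)_n^{2Q(e_0)}$. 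Assembling these four factors yields exactly the claimed formula. The only genuinely delicate point — and where I would be most careful — is bookkeeping the ordering conventions and sign of $B_Q(e_i, e_0)$ versus the quantity $B(e_i,e_0)$ introduced in \S\ref{SSS:GSp}, and making sure that the asymmetry of $D$ (as opposed to $B_Q$) plays no role here: since the commutator pairing on $T$ depends only on the symmetric form $B_Q$, not on $D$, the computation is insensitive to the choice of $D$, but one must verify that the elementary identities $[e_i(x), e_0(y)]_n = (x,y)_n^{B_Q(e_i,e_0)}$ are applied with the correct argument order so that the exponents $-Q(\alpha_r^\vee)$ appear rather than $+Q(\alpha_r^\vee)$. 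There is no conceptual obstacle; the proof is a direct unwinding of relation (C) and bilinearity of the Hilbert symbol, and I would present it as such, perhaps relegating the exponent bookkeeping to a one-line remark.
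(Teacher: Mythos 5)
Your proposal is correct and follows essentially the same route as the paper: split $[(a;\lambda),(b;\delta)]$ into the four elementary commutators, apply $[y_1(x),y_2(y)]=(x,y)_n^{B_Q(y_1,y_2)}$ with bimultiplicativity, and substitute $B_Q(e_i,e_0)=-Q(e_i)=-Q(\alpha_r^\vee)$, $B_Q(e_i,e_i)=2Q(\alpha_r^\vee)$, $B_Q(e_i,e_j)=0$ for $i\neq j$, and $B_Q(e_0,e_0)=2Q(e_0)$. Only note the slip in your intermediate phrase ``$Q(e_i)=-Q(\alpha_r^\vee)$'' (it should read $Q(e_i)=Q(\alpha_r^\vee)$, combined with $B(e_i,e_0)=-Q(e_i)$), which does not affect the exponents you actually use.
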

\begin{proof}
By abuse of notation, we will write $[a, b]$ for $[(a; 1), (b; 1)]$, and $[\lambda, b]$ for $[(1; \lambda), (b; 1)]$. One has
$$[(a; \lambda), (b; \delta)]= [\lambda, b] \cdot [a, b] \cdot [\lambda, \delta] \cdot [a, \delta] \in \mu_n.$$
Now, it is easy to obtain
$$
\begin{aligned}[t]
 [\lambda, b]  & = (\lambda, \det(b))_n^{B(e_0, e_1)}= (\lambda, \det(b))_n^{-Q(\alpha_r^\vee)}, \\
 [a, b]  & = \prod_i (a_i, b_i)_n^{2Q(e_i)} = \prod_i (a_i, b_i)_n^{2Q(\alpha_r^\vee)},\\
 [\lambda, \delta] &  = (\lambda, \delta)_n^{2Q(e_0)}, \\
 [a, \delta]  & = (\det(a), \delta)_n^{B(e_1, e_0)} =  (\det(a), \delta)_n^{-Q(\alpha^\vee_r)}. 
\end{aligned}
$$
The result immediately follows from combining these equalities.
\end{proof}
For $\wt{\GSp}_{2r}$ of type (I) or (II), we have $Q(\alpha_r^\vee)=-1$ and thus
$$[(a; \lambda), (b; \delta)] =(\lambda, \det(b))_n \cdot  (\det(a), \delta)_n \cdot  (\lambda, \delta)_n^{2Q(e_0)} \cdot  \prod_i (a_i, b_i)_n^{-2}.$$
Now, we want to explicate the three groups $\mfr{Q}^\dag, \mfr{Q}_G$ and $\mfr{Q}_{M_0}$. Equivalently, we want to determine the sublattices of $Y$ associated with the following groups
$$\wt{T}^\dag, \ Z(\wt{Z(G)}), \ Z(\wt{G}), \ Z(\wt{M}_0) \text{ and } \wt{M}_0 \cap Z(\wt{M}).$$

As $\mbm{L}(\wt{T}^\dag) = Y_c + Y_0$, we see that
$$\mfr{Q}^\dag \simeq e_0(F^\times)/e_0(F^{\times 2}),$$
where representatives are taken from $e_0(F^\times)$. We have
$$\mbm{L}(Z(\wt{Z(G)})) = Y_{c,Q,n} = \frac{\Z\cdot ne_c}{\gcd(n, 2Q(e_c))}=: \Z(n_1 e_c)$$
and 
$$\mbm{L}(Z(\wt{G}))= Y_c \cap Y_{Q,n} = \frac{\Z\cdot ne_c}{\gcd(n, Q(e_c))}=: \Z(n_2 e_c).$$
Thus,
$$\mfr{Q}_Z = e_c(F^{\times n_1})/e_c(F^{\times n_2}).$$
For every natural number $n$, we let 
$$\wp(n) \in \N_{\gest 0}$$
 be the 2-exponent in $n$ such that
$$n= 2^{\wp(n)} \cdot n'$$
with $n'$ odd.

\begin{lm} \label{L:GSp-Z}
For $\wt{\GSp}_{2r}$ of type (I) or (II), we have:
\begin{enumerate}
\item[(i)] if $\wp(n) \lest \wp(r)$, then $\mfr{Q}_Z =\set{1}$;
\item[(ii)] if $\wp(n) > \wp(r)$, then $\widehat{\mfr{Q}_Z}$ is a torsor over $\mfr{Q}^\dag$.
\end{enumerate}
\end{lm}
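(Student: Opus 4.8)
The plan is to reduce the whole statement to bookkeeping of $2$-adic valuations of $Q(e_c)$. Recall that $\mfr{Q}_Z = e_c(F^{\times n_1})/e_c(F^{\times n_2})$ with $n_1 = n/\gcd(n, 2Q(e_c))$ and $n_2 = n/\gcd(n, Q(e_c))$, and that $n_1 \mid n_2$ since $\gcd(n, Q(e_c)) \mid \gcd(n, 2Q(e_c))$. The key arithmetic input, which I would record first, is that $\wp(Q(e_c)) = \wp(r)$ for both families: for type (I) this is immediate from $Q(e_c) = r$, and for type (II) one uses $Q(e_c) = r\cdot(2(2\bfp+1)r - 1)$ together with the observation that the second factor is odd.

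For part (i), assume $\wp(n) \lest \wp(r) = \wp(Q(e_c))$. I would compare $\gcd(n, 2Q(e_c))$ and $\gcd(n, Q(e_c))$ one prime at a time: at every odd $p$ the factor $2$ is invisible, while at $p = 2$ one has $\min(\wp(n), \wp(Q(e_c)) + 1) = \wp(n) = \min(\wp(n), \wp(Q(e_c)))$ precisely because $\wp(n) \lest \wp(Q(e_c))$. Hence $n_1 = n_2$, so $\mfr{Q}_Z = \set{1}$.

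For part (ii), assume $\wp(n) > \wp(r)$. The same prime-by-prime comparison gives $\wp(n_2/n_1) = \min(\wp(n), \wp(Q(e_c)) + 1) - \min(\wp(n), \wp(Q(e_c))) = 1$ with no odd contribution, so $n_2 = 2 n_1$. I would then identify $\mfr{Q}_Z$ with $F^\times/F^{\times 2}$: since $e_c$ is a primitive cocharacter, $e_c \colon F^\times \into T$ is injective, and the surjection $F^\times/F^{\times 2} \onto e_c(F^{\times n_1})/e_c(F^{\times n_2})$ sending $a$ to $e_c(a^{n_1})$ has kernel represented by $F^{\times 2}\mu_{n_1}/F^{\times 2}$; this kernel is trivial because $\mu_{n_1} \subset F^{\times 2}$. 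The latter holds since $\wp(n_1) = \wp(n) - \wp(Q(e_c)) - 1$ forces $\wp(n_1)+1 \lest \wp(n)$, so the $2$-part $\mu_{2^{\wp(n_1)}}$ of $\mu_{n_1}$ consists of squares as $\mu_{2^{\wp(n_1)+1}} \subseteq \mu_n \subset F$, while the odd part of $\mu_{n_1}$ consists of squares automatically. Thus $\mfr{Q}_Z \simeq F^\times/F^{\times 2}$, and, as $\mfr{Q}^\dag \simeq e_0(F^\times)/e_0(F^{\times 2}) \simeq F^\times/F^{\times 2}$ too, the finite groups $\mfr{Q}^\dag$ and $\widehat{\mfr{Q}_Z}$ have the same order; it then suffices to show the $\mfr{Q}^\dag$-action on $\widehat{\mfr{Q}_Z}$ is free, since a free action on a finite set of the same cardinality as the acting group is simply transitive.

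To prove freeness, for $g = e_0(a) \in \mfr{Q}^\dag$ the character $\chi_g \in \widehat{\mfr{Q}_Z}$ sends $e_c(b^{n_1})$ to $[e_0(a)^{-1}, e_c(b^{n_1})]$; writing $e_0(a) = (1, \dots, 1; a)$ and $e_c(b^{n_1}) = (b^{n_1}, \dots, b^{n_1}; b^{2n_1})$ and feeding these into the displayed commutator formula for covers of type (I) or (II), together with $Q(e_c) = 4 Q(e_0) + r$, everything collapses to $(a, b)_n^{-n_1 Q(e_c)}$. Hence $\chi_g$ is trivial on $\mfr{Q}_Z$ iff $(a, \cdot)_n^{n_1 Q(e_c)}$ is trivial on $F^\times$, i.e. $a \in F^{\times m}$ with $m = n/\gcd(n, n_1 Q(e_c))$; a final valuation count — at odd $p$, $v_p(n_1 Q(e_c)) = \max(v_p(n), v_p(Q(e_c))) \gest v_p(n)$, and at $p = 2$, $\wp(n_1 Q(e_c)) = \wp(n_1) + \wp(Q(e_c)) = \wp(n) - 1$ — gives $m = 2$, so $\chi_g$ is trivial exactly when $a \in F^{\times 2}$, that is when $g = 1$ in $\mfr{Q}^\dag$. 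I expect the valuation arithmetic to be routine; the two points that require genuine care, and where the hypothesis $\wp(n) > \wp(r)$ (with $\mu_n \subset F$) enters essentially, are the identification $\mfr{Q}_Z \simeq F^\times/F^{\times 2}$ as the \emph{whole} group rather than a proper quotient (needing $\mu_{n_1}\subset F^{\times 2}$), and the non-degeneracy $m = 2$ of the commutator pairing — were either to fail, $\widehat{\mfr{Q}_Z}$ would be strictly smaller than $\mfr{Q}^\dag$ and could not be a torsor over it.
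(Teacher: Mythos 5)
Your proof is correct and follows essentially the same route as the paper's: compute the commutator character $(b,a)_n^{n_1 Q(e_c)}$ on $\mfr{Q}_Z$, use non-degeneracy of the Hilbert symbol to get freeness of the $\mfr{Q}^\dag$-action, and conclude the torsor property from $n_2 = 2n_1$ together with the equality of cardinalities. The additional care you take — the valuation bookkeeping giving $m=2$, and the verification $\mu_{n_1}\subset F^{\times 2}$ so that $\mfr{Q}_Z \simeq F^\times/F^{\times 2}$ exactly — merely fills in steps the paper leaves implicit.
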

\begin{proof}
One has
$$Q(e_c) =
\begin{cases}
r & \text{ if $\wt{\GSp}_{2r}$ is of type (I) } ,\\
r\cdot (2(2\bfp+1)r - 1) & \text{ if $\wt{\GSp}_{2r}$ is of type (II)}.
\end{cases}
$$
If $\wp(n) \lest \wp(r)$, the assertion is clear. Now we assume $\wp(n) > \wp(r)$. The action of $e_0(a) \in \mfr{Q}^\dag$ on $\widehat{\mfr{Q}_Z}$ is given by the multiplication of the character
$$\chi_{e_0(a)}: \mfr{Q}_Z \longrightarrow \C^\times$$
given by
$$\chi_{e_0(a)}(e_c(b^{n_1})) = (b, a)_n^{n_1\cdot B(e_0, e_c)} = (b, a)_n^{n_1\cdot Q(e_c)}.$$
If $\chi_{e_0(a)} \in \widehat{\mfr{Q}_Z}$ is the trivial character, then $a\in F^{\times 2}$, i.e., $e_0(a) =1 \in \mfr{Q}^\dag$. That is, $\mfr{Q}^\dag$ acts freely on $\widehat{\mfr{Q}_Z}$. Since in this case $n_2= 2 n_1$, the two groups $\mfr{Q}^\dag \simeq F^\times/F^{\times 2}$ and $\mfr{Q}_Z \simeq F^{\times n_1} /F^{\times n_2}$ have the same size. Hence, $\widehat{\mfr{Q}_Z}$ is a torsor over $\mfr{Q}^\dag$.
\end{proof}

To determine $Z(\wt{M}_0)$, we first note that for every $I \subset (\N \cap [1, r])$, one has 
$$Q\Big(\sum_{i\in I} e_i\Big) = \val{I}.$$
Let $e_{c,i} \in Y$ be such that $Y_{\GL_{r_i}, c} = \Z e_{c,i}, 1\lest i \lest k$.  We have
$$\begin{aligned}[t]
\mbm{L}(Z(\wt{M}_0)) & = Y_{M_0, c} \cap Y_{0,Q,n}= \bigoplus_{i=1}^k \frac{\Z \cdot ne_{c,i}}{\gcd(n, 2Q(e_i))}= \bigoplus_{i=1}^k \frac{\Z \cdot ne_{c,i}}{\gcd(n, Q(\alpha_1^\vee))} \\
& =\set{\sum_{i=1}^k y_i e_{c,i}: n_{\alpha_1} |(y_i) \text{ for every } i}.
\end{aligned}
$$
It is not hard to see that
$$
\mbm{L}(\wt{M}_0 \cap Z(\wt{M})) = Y_{M_0, c} \cap Y_{Q,n} =
\left\{
\begin{array}{cc}
\sum_{i=1}^k y_i e_{c,i} \in Y_{M_0, c}: \\
\bullet \quad  n_{\alpha_1} |y_i \text{ for every } i, \\
\bullet \quad  n_{\alpha_r}| (\sum_{i=1}^k y_i r_i).
\end{array}
\right\}.
$$

\begin{lm} \label{L:GSp-M}
Let $M \subset \GSp_{2r}$ be the Levi subgroup associated with the partition $\mbf{r}=(r_1, r_2, ..., r_k; m)$ of $r$. There are three cases:
\begin{enumerate}
\item[(i)] if $n_{\alpha_1} = n_{\alpha_r}$, then $\mfr{Q}_{M_0}=\set{1}$;
\item[(ii)] if $n_{\alpha_r}=2\cdot n_{\alpha_1}$ and $r_i$ is even for every $1\lest i\lest k$, then $\mfr{Q}_{M_0}=\set{1}$;
\item[(iii)] if $n_{\alpha_r}=2\cdot n_{\alpha_1}$ and $r_i$ is odd for some $i$, then $\widehat{\mfr{Q}_{M_0}}$ is a torsor over $\mfr{Q}^\dag$.
\end{enumerate}
\end{lm}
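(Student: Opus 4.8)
The plan is to work entirely with the two lattices $\mbm{L}_1 := \mbm{L}(Z(\wt M_0)) = Y_{M_0,c}\cap Y_{0,Q,n}$ and $\mbm{L}_2 := \mbm{L}(\wt M_0\cap Z(\wt M)) = Y_{M_0,c}\cap Y_{Q,n}$ already made explicit above, together with the commutator formula for $T$. Since $Q(\alpha_i^\vee)=-2$ for $i<r$ and $Q(\alpha_r^\vee)=-1$, one has $n_{\alpha_1}=n/\gcd(n,2)$ and $n_{\alpha_r}=n$, so the three cases are exhaustive: case (i) is $n$ odd (then $n_{\alpha_1}=n_{\alpha_r}=n$), while cases (ii) and (iii) together form the case $n$ even (then $n_{\alpha_r}=2n_{\alpha_1}$), separated by the parities of the $r_i$. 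First I would dispose of (i) and (ii): the extra condition $n_{\alpha_r}\mid\sum_i y_i r_i$ cutting $\mbm{L}_2$ out of $\mbm{L}_1$ is then vacuous — in (i) because $n_{\alpha_r}=n_{\alpha_1}$ and $n_{\alpha_1}\mid y_i$ for every $i$, and in (ii) because, writing $y_i=n_{\alpha_1}z_i$, the condition becomes $2\mid\sum_i z_i r_i$, which holds as every $r_i$ is even. Hence $\mbm{L}_1=\mbm{L}_2$, so $Z(\wt M_0)=\wt M_0\cap Z(\wt M)$ and $\mfr{Q}_{M_0}=\set{1}$.

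For (iii), fix an index $j$ with $r_j$ odd. The homomorphism $\mbm{L}_1\to\Z/2\Z$ sending $\sum_i z_i(n_{\alpha_1}e_{c,i})\mapsto\sum_i z_i r_i\bmod 2$ is surjective with kernel $\mbm{L}_2$, so $[\mbm{L}_1:\mbm{L}_2]=2$ with $\mbm{L}_1/\mbm{L}_2$ generated by $n_{\alpha_1}e_{c,j}$ (and $2n_{\alpha_1}e_{c,j}\in\mbm{L}_2$ since $\sum_i y_i r_i=2n_{\alpha_1}r_j=n_{\alpha_r}r_j$). Translating this into $\wt T$ as in the proof of Lemma \ref{L:GSp-Z}: modulo the image of $\mbm{L}_2\otimes F^\times$ in $T$, every element of $Z(\wt M_0)$ reduces to $e_{c,j}(z^{n_{\alpha_1}})$ for some $z\in F^\times$ (using $e_{c,i}(z^{n_{\alpha_1}})\equiv 1$ when $r_i$ is even and $e_{c,i}(z^{n_{\alpha_1}})\equiv e_{c,j}(z^{n_{\alpha_1}})$ when $r_i$ is odd), and such an element is trivial precisely when $z\in F^{\times 2}\mu_{n_{\alpha_1}}=F^{\times 2}$ — the last equality because $n$ is even, so $\mu_{n_{\alpha_1}}=\mu_{n/2}=(\mu_n)^2\subseteq F^{\times 2}$. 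Thus $\mfr{Q}_{M_0}\cong F^\times/F^{\times 2}$; and $\mfr{Q}^\dag=\wt T/\wt T^\dag\cong e_0(F^\times)/e_0(F^{\times 2})\cong F^\times/F^{\times 2}$ as already recorded.

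It then remains to evaluate the conjugation pairing $\mfr{Q}^\dag\times\mfr{Q}_{M_0}\to\C^\times$, $(g,h)\mapsto\chi_g(h)=[g^{-1},h]$, on the generators $g=e_0(a)$ and $h=e_{c,j}(b^{n_{\alpha_1}})$. The commutator formula gives (up to an immaterial sign in the exponent)
$$\chi_g(h)=(a,b)_n^{-n_{\alpha_1}B_Q(e_0,e_{c,j})}=(a,b)_n^{-n_{\alpha_1}r_j},$$
since $B_Q(e_0,e_i)=-Q(e_i)=1$ for every $i$, hence $B_Q(e_0,e_{c,j})=r_j$. As $r_j$ is odd and $n_{\alpha_1}=n/2$, we have $-n_{\alpha_1}r_j\equiv n/2\pmod n$, so $\chi_g(h)=(a,b)_n^{n/2}=(a,b)_2$, the quadratic Hilbert symbol. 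Since $(-,-)_2$ is a perfect pairing on $F^\times/F^{\times 2}$, the induced pairing $\mfr{Q}^\dag\times\mfr{Q}_{M_0}\to\C^\times$ is perfect, so $g\mapsto\chi_g$ is an isomorphism $\mfr{Q}^\dag\xrightarrow{\sim}\widehat{\mfr{Q}_{M_0}}$; under the natural action $\chi_0\mapsto\chi_0\cdot\chi_g$ this exhibits $\widehat{\mfr{Q}_{M_0}}$ as a torsor over $\mfr{Q}^\dag$, as claimed.

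I expect the delicate step to be the identification $\mfr{Q}_{M_0}\cong F^\times/F^{\times 2}$ in case (iii): because $\mbm{L}_1=\bigoplus_i n_{\alpha_1}\Z e_{c,i}$ is not a primitive sublattice of $Y$, the map $\mbm{L}_1\otimes F^\times\to T$ has a nontrivial kernel (a product of copies of $\mu_{n_{\alpha_1}}$), and one must verify that this kernel is already contained in the image of $\mbm{L}_2\otimes F^\times$; this is exactly where the arithmetic input $\mu_{n/2}\subseteq F^{\times 2}$ enters, mirroring the analogous subtlety in the proof of Lemma \ref{L:GSp-Z}. The remainder — the case split, cases (i)–(ii), and the Hilbert-symbol computation — is routine bookkeeping with the commutator formula and the explicit lattices.
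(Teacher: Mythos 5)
Your proof is correct and follows essentially the same route as the paper's: cases (i)--(ii) because the extra divisibility condition is automatic, and case (iii) by exhibiting the index-two quotient with representatives $e_{c,j}(b^{n_{\alpha_1}})$, computing $\chi_{e_0(a)}$ via the commutator to get the quadratic Hilbert symbol, and concluding the torsor property from freeness (perfectness) together with the equality of cardinalities. Your extra verification that $\mfr{Q}_{M_0}\simeq F^\times/F^{\times 2}$ despite $\mbm{L}_1$ not being primitive in $Y$ (using $\mu_{n/2}\subseteq F^{\times 2}$) is a detail the paper asserts only implicitly via ``same size,'' and it checks out.
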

\begin{proof}
First, (i) and (ii) are clear since in this case 
$$n_{\alpha_r}| (\sum_{i=1}^k y_i r_i)$$
 holds whenever $n_{\alpha_1}|y_i$ for all $1\lest i\lest k$. 
 Now for (iii), the assumption implies that $\mbm{L}(\wt{M}_0 \cap Z(\wt{M}))$ is of index two inside $\mbm{L}(Z(\wt{M}_0))$, and representatives of $\mfr{Q}_{M_0}$ can be chosen from $e_{c,i}(b^{n_{\alpha_1}}), b\in F^\times$. The action of $ e_0(a) \in \mfr{Q}^\dag$ on $\widehat{\mfr{Q}_{M_0}}$ is given by the multiplication with the character
 $$\chi_{e_0(a)}: \mfr{Q}_{M_0} \longrightarrow \C^\times$$
which takes the form
$$\chi_{e_0(a)( e_{c,i}(b^{n_{\alpha_1}}) )} = (b, a)_n^{r_i n_{\alpha_1} \cdot B(e_0, e_i)} = (b, a)_n^{-r_i n_{\alpha_1} Q(\alpha_r^\vee)}.$$
If $\chi_{e_0(a)} \in \widehat{\mfr{Q}_{M_0}}$ is trivial, then $a\in F^{\times 2}$, i.e., $e_0(a)=1 \in \mfr{Q}^\dag$. On the other hand, $\mfr{Q}^\dag$ and $\mfr{Q}_{M_0}$ have the same size. Hence, $\widehat{\mfr{Q}_{M_0}}$ is a torsor over $\mfr{Q}^\dag$.
\end{proof}

For $n=2$, this agrees with the results in \cite{Szp4-1}, as expected. Note that if $(a_1, a_2, ..., a_k)$ is a partition of an odd $a \in \N$, then one of the $a_j$'s is odd. It follows that if the assertion (iii) in Lemma \ref{L:GSp-M} holds for $M$, then it holds for every Levi subgroup $M' \subset M \subset G$. This is consistent with Corollary \ref{L:her}.

Similar as Proposition \ref{P:GLsum}, we give a partial summary for the above discussion on $\wt{\GSp}_{2r}$. Combining Theorem \ref{T:in-re},  Lemma \ref{L:GSp-Z} and Lemma \ref{L:GSp-M}, we have the following.
\begin{prop}  \label{P:RI-GSp}
Let $\wt{\GSp}_{2r}$ be a cover of $\GSp_{2r}$ either of type (I) or (II).
\begin{enumerate}
\item[(i)] For a Levi subgroup $M$ associated with $(r_1, ..., r_k, m)$, if 
\begin{enumerate}
\item[--] either $\wp(n) > \wp(r)$, or
\item[--] $n$ is even and $r_i$ is odd for some $i$,
\end{enumerate}
then $\Ind_{\wt{M}^\dag}^{\wt{M}}(\tau \boxtimes \rho)$ is irreducible for every $\tau\boxtimes \rho \in \Irrg(\wt{M}^\dag)$.
\item[(ii)] If $n$ is odd, then for every $\pi\in \Irrg(\wt{T})$, its restriction to $\wt{T}^\dag$ is an isotypic sum; hence the restriction $\pi|_{\wt{T}_0}$ is also an isotypic sum.
\end{enumerate} 
\end{prop}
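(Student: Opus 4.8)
The plan is to deduce the Proposition directly by feeding the explicit lattice computations of Lemma~\ref{L:GSp-Z} and Lemma~\ref{L:GSp-M} into Theorem~\ref{T:in-re}; almost all of the genuine content is already contained in those three results, and what remains is a matching of hypotheses. First I would record the relevant Kneser numbers: for a cover $\wt{\GSp}_{2r}$ of type (I) or (II) one has $Q(\alpha_i^\vee)=-2$ for $1\lest i\lest r-1$ and $Q(\alpha_r^\vee)=-1$, so that
$$n_{\alpha_1}=\frac{n}{\gcd(n,2)}=n_{(2)},\qquad n_{\alpha_r}=n.$$
In particular $n_{\alpha_1}=n_{\alpha_r}$ when $n$ is odd, and $n_{\alpha_r}=2\,n_{\alpha_1}$ when $n$ is even; these are exactly the two alternatives distinguishing the cases of Lemma~\ref{L:GSp-M}. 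Note also that $\mfr{Q}_Z$ is built from $Z(\wt{Z(G)})$ and $Z(\wt{G})$ and so is independent of the Levi subgroup $M$ (recall $\wt{M}^\dag$ is defined using $\wt{Z(G)}$, not $\wt{Z(M)}$), which is why a free action on $\widehat{\mfr{Q}_Z}$ will settle the first case of (i) uniformly in $M$.

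For part (i), the strategy is to exhibit a free action of $\mfr{Q}^\dag$ and invoke Theorem~\ref{T:in-re}(i), which gives irreducibility of $\Ind_{\wt{M}^\dag}^{\wt{M}}(\tau\boxtimes\rho)$ as soon as $\mfr{Q}^\dag$ acts freely on $\widehat{\mfr{Q}_Z}$ or on $\widehat{\mfr{Q}_{M_0}}$. If $\wp(n)>\wp(r)$, then by Lemma~\ref{L:GSp-Z}(ii) the set $\widehat{\mfr{Q}_Z}$ is a torsor over $\mfr{Q}^\dag$; a torsor action is in particular free, so Theorem~\ref{T:in-re}(i) applies. If instead $n$ is even and some $r_i$ is odd, then by the computation above $n_{\alpha_r}=2\,n_{\alpha_1}$, so case (iii) of Lemma~\ref{L:GSp-M} applies and $\widehat{\mfr{Q}_{M_0}}$ is a torsor over $\mfr{Q}^\dag$, hence again a free action, and Theorem~\ref{T:in-re}(i) gives the irreducibility.

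For part (ii), assume $n$ is odd, so $\wp(n)=0\lest\wp(r)$. Then Lemma~\ref{L:GSp-Z}(i) gives $\mfr{Q}_Z=\set{1}$, so $\mfr{Q}^\dag$ acts trivially on $\widehat{\mfr{Q}_Z}$; and since $n_{\alpha_1}=n_{\alpha_r}$, applying Lemma~\ref{L:GSp-M}(i) with $M=T$ (all blocks $r_i=1$ and $m=0$) gives $\mfr{Q}_{T_0}=\set{1}$, so $\mfr{Q}^\dag$ also acts trivially on $\widehat{\mfr{Q}_{T_0}}$. The first assertion of Theorem~\ref{T:in-re}(ii) then yields $\pi|_{\wt{T}^\dag}=(\tau\boxtimes\rho)^{\oplus e}$ for every $\pi\in\Irrg(\wt{T})$. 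Restricting once more from $\wt{T}^\dag=\wt{Z(G)}\cdot\wt{T}_0$ down to $\wt{T}_0$, the representation $\tau\boxtimes\rho$ restricts to the isotypic sum $\rho^{\oplus\dim\tau}$, whence $\pi|_{\wt{T}_0}=\rho^{\oplus e\dim\tau}$ is isotypic, as claimed.

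I do not expect any real obstacle here: the proof is essentially bookkeeping once Theorem~\ref{T:in-re}, Lemma~\ref{L:GSp-Z} and Lemma~\ref{L:GSp-M} are in hand. The two points deserving a line of justification are (a) that the $2$-adic conditions ``$\wp(n)>\wp(r)$'' and ``$n$ even'' translate exactly into the torsor/equality hypotheses of Lemmas~\ref{L:GSp-Z} and \ref{L:GSp-M} for covers of type (I) or (II) — immediate from $Q(e_c)\in\set{r,\;r(2(2\bfp+1)r-1)}$ and $n_{\alpha_r}=2n_{\alpha_1}$ — and (b) that being an isotypic sum over $\wt{T}^\dag$ descends to $\wt{T}_0$, which is clear from the shape of $\tau\boxtimes\rho$; alternatively the last clause of (ii) may be read off directly from Corollary~\ref{C:Zinc} after checking $Y_0\cap Y_{Q,n}=Y_{0,Q,n}$ for odd $n$ using the bases computed in \S\ref{SSS:GSp}.
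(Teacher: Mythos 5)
Your proposal is correct and is exactly the argument the paper intends: the Proposition is stated there as a direct combination of Theorem \ref{T:in-re} with Lemmas \ref{L:GSp-Z} and \ref{L:GSp-M}, which is what you carry out (free/torsor action of $\mfr{Q}^\dag$ for (i), trivial actions plus Theorem \ref{T:in-re}(ii) for (ii)). The only details you add — the computation $n_{\alpha_1}=n_{(2)}$, $n_{\alpha_r}=n$ and the observation that $(\tau\boxtimes\rho)|_{\wt{T}_0}=\rho^{\oplus\dim\tau}$ — are the same routine bookkeeping the paper leaves implicit.
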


\subsubsection{Covers of $\GSpin_{2r+1}$}
We consider the simply-connected algebraic group $\Spin_{2r+1}$ which sits in the exact sequence
$$\begin{tikzcd}
Z \ar[r, hook] & \Spin_{2r+1} \ar[r, two heads] & \SO_{2r+1},
\end{tikzcd}$$
where $Z =\set{1, z}$ is the center of $\Spin_{2r+1}$. 
The Dynkin diagram for the simple coroots of the group $\Spin_{2r+1}$ is as follows:

$$ \qquad
\begin{picture}(4.7,0.2)(0,0)
\put(1,0){\circle{0.08}}
\put(1.5,0){\circle{0.08}}
\put(2,0){\circle{0.08}}
\put(2.5,0){\circle{0.08}}
\put(3,0){\circle{0.08}}
\put(1.04,0){\line(1,0){0.42}}
\multiput(1.55,0)(0.05,0){9}{\circle*{0.02}}
\put(2.04,0){\line(1,0){0.42}}
\put(2.54,0.015){\line(1,0){0.42}}
\put(2.54,-0.015){\line(1,0){0.42}}
\put(2.74,-0.04){$<$}
\put(1,0.1){\footnotesize $\alpha_1^\vee$}
\put(1.5,0.1){\footnotesize $\alpha_2^\vee$}
\put(2,0.1){\footnotesize $\alpha_{r-2}^\vee$}
\put(2.5,0.1){\footnotesize $\alpha_{r-1}^\vee$}
\put(3,0.1){\footnotesize $\alpha_r^\vee$}
\end{picture}
$$
\vskip 10pt

Pushing out the above exact sequence by the embedding $Z \hookrightarrow \GL_1$ with $z$ sent to $-1$, one obtains an exact sequence (see \cite{Asg02})
$$\begin{tikzcd}
\GL_1 \ar[r, hook] & \GSpin_{2r+1} \ar[r, two heads] & \SO_{2r+1},
\end{tikzcd}$$
where by definition
$$\GSpin_{2r+1} = \frac{ \GL_1 \times \Spin_{2r+1}}{\set{(1, 1), (-1, z)}}.$$
In \cite{ASh1}, the group $\GSpin_{2r}$ is also discussed. One of the advantegeous properties of $\GSpin_{2r+1}$ is that its Levi subgroups are associated with partitions $(r_1, ..., r_k, m)$ of $r$ and take the form of $\GL_{r_1} \times ... \times \GL_{r_k} \times \GSpin_{2m+1}$; similarly for $\GSpin_{2r}$. However, the Levi subgroups of $\Spin_{2r+1}$ are complicated to describe explicitly, as explained in \cite{Asg02} and discussed with details in \cite{Mat09}.

The Langlands dual group of $\GSpin_{2r+1}$ is $\GSp_{2r}$, and thus the root datum 
$$(X, \Delta, Y, \Delta^\vee)$$ of the former is obtained from inverting that of the latter, which is given in \S \ref{SSS:GSp}. The character group $X\simeq \Z^{r+1}$ is given with a basis 
$$\{e_i^*: 1\lest i\lest r \} \cup \{e_0^* \}.$$
The simple roots are 
$$\Delta=\{ e_i^*-e_{i+1}^*: 1\lest i \lest r-1 \} \cup \{ e_r^* \}.$$
The cocharacter group $Y\simeq \Z^{r+1}$ has a standard basis 
$$\{e_i: 1\lest i\lest r \} \cup \{e_0 \},$$
where the simple coroots are 
$$\Delta^\vee=\{e_i-e_{i+1}: 1\lest i \lest r-1 \} \cup \{ 2e_r-e_0 \}.$$

Write 
$$\alpha_i=e_i^* - e_{i+1}^*, \quad \alpha^\vee_i=e_i-e_{i+1}$$
for $1\lest i\lest r-1$, and also 
$$\alpha_r=e_r^*,\quad  \alpha_r^\vee=2e_r-e_0.$$
Consider the covering $\overline{\GSpin}_{2r+1}$ incarnated by $(D, \mbm{1})$. We are interested in those $\overline{\GSpin}_{2r}$ whose restriction to $\Spin_{2r+1}$ is the one with $Q(\alpha_1^\vee)=1$. That is, we assume
$$Q(\alpha_i^\vee)=1 \text{ for } 1\lest i \lest r-1, \text{ and } Q(\alpha_r^\vee)=2.$$

We have 
$$Y_c = \Z e_0$$
and thus
$$0=B_Q(\alpha_r^\vee, e_0)= 2B_Q(e_r, e_0) - 2Q(e_0).$$
Hence,
\begin{equation}
B_Q(e_i, e_0) = Q(e_0)
\end{equation}
for every $i$. It also follows from the two equalities
$$Q(e_i) = Q(e_j) \text{ for every } i, j \text{ and } Q(2e_r - e_0) = 2 Q(e_i - e_{i+1})$$
that
 \begin{equation}
 Q(e_0) = 2 B(e_i, e_j) \text{ for every } i \ne j.
 \end{equation}
 On the other hand, we have 
 $$Q(\alpha_i^\vee) = 2 Q(e_i) - B(e_i, e_{i+1})$$
 for $1\lest i \lest r-1$.  Thus, the cover $\wt{\GSp}_{2r+1}$ we are interested in is determined by the two numbers 
 $$\bfp:=Q(e_i) \text{ and } \bfq:=B(e_i, e_j) \text{ for } i \ne j$$
constrained by
 $$2\bfp - \bfq = 1.$$
 
Again, we want to determine the three groups $\mfr{Q}^\dag, \mfr{Q}_Z$ and $\mfr{Q}_{M_0}$ and the structure of the Pontryagin duals of latter two under the action of the first. For $\mfr{Q}^\dag$ and $\mfr{Q}_Z$, we want to determine the sublattices of $Y$ associated with the following groups
$$\wt{T}^\dag, \ Z(\wt{Z(G)}), \ Z(\wt{G}).$$
To avoid technical difficulties while dealing with general $M_0$ (see \cite{Mat09}), for $\mfr{Q}_{M_0}$ we only illustrate the minimal parabolic case when $\wt{M} = \wt{T}$, and thus consider
$$\ Z(\wt{T}_0) \text{ and } \wt{T}_0 \cap Z(\wt{T}).$$
First, $\mbm{L}(\wt{T}^\dag) = Y_c + Y_0$ with 
$$Y/(Y_c + Y_0) = \set{0, e_r}.$$
Hence,
$$\mfr{Q}^\dag \simeq e_r(F^\times)/e_r(F^{\times 2}),$$
where representatives are taken from $e_r(F^\times)$. We have
$$\mbm{L}(Z(\wt{Z(G)})) = Y_{c,Q,n} = \frac{\Z\cdot ne_0}{\gcd(n, 2Q(e_0))}=: \Z(n_1 e_0)$$
and 
$$\mbm{L}(Z(\wt{G}))= Y_c \cap Y_{Q,n} = \frac{\Z\cdot ne_0}{\gcd(n, Q(e_0))}=: \Z(n_2 e_0).$$
This gives that
$$\mfr{Q}_Z = e_0(F^{\times n_1})/e_0(F^{\times n_2}).$$

\begin{lm} \label{L:GSpinZ}
We have two cases for the action of $\mfr{Q}^\dag$ on $\widehat{\mfr{Q}_Z}$:
\begin{enumerate}
\item[(i)] if $\wp(n_1) = 0$, or equivalently $n_2 = n_1$, then $\mfr{Q}_Z=\set{1}$; 
\item[(ii)] if $\wp(n_1)\gest 1$, or equivalently $n_2 = 2 n_1$, then $\widehat{\mfr{Q}_Z}$ is a torsor over $\mfr{Q}^\dag$.
\end{enumerate}
\end{lm}
\begin{proof}
The assertion (i) is clear. For (ii), we have $\mfr{Q}_Z \simeq F^{\times n_1}/F^{\times 2n_1}$, which has the same size as $\mfr{Q}^\dag \simeq F^{\times} /F^{\times 2}$. It suffices to show that the action of $\mfr{Q}^\dag$ on $\widehat{\mfr{Q}_Z}$ is free. Consider the character $\chi_{e_r(a)} \in \widehat{\mfr{Q}_Z}$ given by
$$\chi_{e_r(a)}( e_0(b^{n_1})) = (b, a)_n^{n_1 \cdot B(e_r, e_0)}= (b, a)_n^{n_1 \cdot Q(e_0)}.$$
If $\chi_{e_r(a)}$ is the trivial character of $\mfr{Q}_Z$, then $a\in F^{\times 2}$, i.e., $e_r(a) =1 \in \mfr{Q}^\dag$. This shows that the action of $\mfr{Q}^\dag$ on $\widehat{\mfr{Q}_Z}$ is free and completes the proof.
\end{proof}

To compute $\mfr{Q}_{T_0}$, we have
$$\mbm{L}(Z(\wt{T}_0)) = Y_{0,Q,n}$$
and 
$$\mbm{L}(\wt{T}_0 \cap Z(\wt{T})) = Y_0 \cap Y_{Q,n}.$$
A simple computation gives that 
$$Y_0 \cap Y_{Q,n} = Y_{Q,n}^{sc} = \bigoplus_{i=1}^r \Z(n_{\alpha_i} \alpha_i^\vee),$$
where $n_{\alpha_i} = n/\gcd(n, Q(\alpha_i^\vee))$. Note that this shows that the cover $\wt{\GSpin}_{2r+1}$ is always saturated in the sense of \cite[Definition 2.1]{Ga6}. Regarding the lattice $Y_{0,Q,n}$, we have two cases:
\begin{enumerate}
\item[--] if $n$ is odd or $4|(2r-n)$, then $Y_{0,Q,n} = Y_{Q,n}^{sc}$, and we have the dual group
$$\wt{\Spin}_{2r+1}^\vee =
\begin{cases}
{\rm PGSp}_{2r} & \text{ if $n$ is odd}, \\
\SO_{2r+1} & \text{ if $4|(2r-n)$}.
\end{cases}
$$
\item[--]  if $4\nmid (2r-n)$ with $n=2m$, for each $1\lest i \lest r$ defining
$$v_i:= m(2e_i - e_0) \text{ and } v_0:=\frac{v_1 + v_2 + ... + v_r}{2},$$
then one can check easily that
$$\set{v_i: 1\lest i \lest r-1} \cup \set{ v_0 }$$
constitutes a basis of $Y_{0,Q,n}$. In this case,
$$\wt{\Spin}_{2r+1}^\vee = \Spin_{2r+1}.$$
\end{enumerate}
It is clear that in the first case above ($n$ is odd or $4|(2r-n)$), one has
$$\mfr{Q}_{T_0} = \set{1}.$$
On the other hand, if $4\nmid(2r-n)$, then 
$$\mfr{Q}_{T_0} = v_0(F^\times)/v_0(F^{\times 2}) \simeq F^\times/F^{\times 2},$$
where more explicitly  $v_0= m(e_1 + e_2 + ... + e_r) -(mr\cdot e_0)/2$. 
The action of $ e_r(a) \in \mfr{Q}^\dag$ on $\widehat{\mfr{Q}_{T_0}}$ is given multiplication by  the character
 $$\chi_{e_r(a)}: \mfr{Q}_{T_0} \longrightarrow \C^\times,$$
which takes the form
$$\chi_{e_r(a)}(v_0(b)) = (b, a)_n^{B_Q(e_r, v_0)} = (b, a)_n^{mQ(\alpha_1^\vee)} = (b, a)_2.$$
From this it follows that $\widehat{\mfr{Q}_{T_0}}$ is a $\mfr{Q}^\dag$-torsor in this case.

We give a summary of the results for $\GSpin_{2r+1}$ below.
\begin{prop} \label{P:RI-GSpin}
Let $\wt{\GSpin}_{2r+1}$ be a Brylinski--Deligne cover associated with a quadratic form $Q$ such that $Q(\alpha_r^\vee)=2$.
\begin{enumerate}
\item[(i)] If $\wp(n)\gest 3$, then $\Ind_{\wt{M}^\dag}^{\wt{M}} \tau \boxtimes \rho$ is irreducible for every $\tau \boxtimes \rho \in \Irrg(\wt{M}^\dag)$. Moreover, if $4\nmid (2r-n)$ with $n$ even, then $\Ind_{\wt{T}^\dag}^{\wt{T}} \tau\boxtimes \rho$ is irreducible for $\tau\boxtimes \rho \in \Irrg(\wt{T}^\dag)$.
\item[(ii)] If $n$ is odd or $4|(2r-n)$, then the restriction of every $\pi \in \Irrg(\wt{T})$ to $\wt{T}_0$ is an isotypic sum. Moreover, if $\wp(n)=2$ and $2|r$, then the restriction $\pi|_{\wt{T}^\dag}$ is already an isotypic sum.
\end{enumerate}
\end{prop}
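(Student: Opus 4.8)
The strategy is to apply the general machinery of Theorem \ref{T:in-re} and Corollary \ref{C:Zinc} to the explicit computations of $\mfr{Q}^\dag$, $\mfr{Q}_Z$, $\mfr{Q}_{T_0}$ carried out above, together with Lemma \ref{L:GSpinZ} and the case analysis of the lattice $Y_{0,Q,n}$. For part (i), recall from Theorem \ref{T:in-re}(i) that $\Ind_{\wt{M}^\dag}^{\wt{M}}(\tau \boxtimes \rho)$ is irreducible whenever $\mfr{Q}^\dag$ acts freely on either $\widehat{\mfr{Q}_Z}$ or $\widehat{\mfr{Q}_{M_0}}$. So the first step is to show that $\wp(n) \gest 3$ forces $\mfr{Q}^\dag$ to act freely on $\widehat{\mfr{Q}_Z}$. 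Here one unwinds $n_1 = n/\gcd(n, 2Q(e_0))$ and observes that $Q(e_0) = 2B_Q(e_i, e_j) = 2\bfq$, so the $2$-exponent of $Q(e_0)$ is at least $1$; then $\wp(n) \gest 3$ ensures $\wp(n_1) \gest 1$, which by Lemma \ref{L:GSpinZ}(ii) gives that $\widehat{\mfr{Q}_Z}$ is a $\mfr{Q}^\dag$-torsor and in particular the action is free. The second assertion of (i), for the torus case, follows because when $4 \nmid (2r-n)$ with $n$ even we showed $\widehat{\mfr{Q}_{T_0}}$ is a $\mfr{Q}^\dag$-torsor, so $\mfr{Q}^\dag$ acts freely on $\widehat{\mfr{Q}_{T_0}}$ and Theorem \ref{T:in-re}(i) applies directly with $M = T$.

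For part (ii), the first statement is immediate from Corollary \ref{C:Zinc}: when $n$ is odd or $4 \mid (2r-n)$ we computed $Y_{0,Q,n} = Y_{Q,n}^{sc} = Y_0 \cap Y_{Q,n}$, which is exactly the condition $Y_0 \cap Y_{Q,n} = Y_{0,Q,n}$ appearing in Corollary \ref{C:Zinc}, hence $Z(\wt{T}_0) \subset Z(\wt{T})$ and every $\pi|_{\wt{T}_0}$ is an isotypic sum. For the ``moreover'' clause, the relevant criterion is the second statement of Theorem \ref{T:in-re}(ii): one needs both $\mfr{Q}^\dag$ to act trivially on $\widehat{\mfr{Q}_{T_0}}$ (which holds since $\wp(n)=2$ and $2\mid r$ means $4 \mid (2r - n)$, placing us in the first case where $\mfr{Q}_{T_0} = \set{1}$) and $\mfr{Q}^\dag$ to act trivially on $\widehat{\mfr{Q}_Z}$. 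For the latter, with $\wp(n)=2$ one checks $\wp(n_1) = 0$: since $Q(e_0) = 2\bfq$ with $\bfq = 2\bfp - 1$ odd, $\wp(Q(e_0)) = 1$, so $\wp(2Q(e_0)) = 2 = \wp(n)$, giving $\gcd(n, 2Q(e_0))$ divisible by $4$ and $n_1$ odd; Lemma \ref{L:GSpinZ}(i) then yields $\mfr{Q}_Z = \set{1}$. With both actions trivial, Theorem \ref{T:in-re}(ii) gives that $\pi|_{\wt{T}^\dag}$ is already an isotypic sum.

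The only genuinely delicate point is the bookkeeping of $2$-adic valuations in part (i): one must confirm that the constraint $2\bfp - \bfq = 1$ (so $\bfq$ is always odd and $Q(e_0) = 2\bfq$ has $2$-exponent exactly $1$) interacts correctly with the hypothesis $\wp(n) \gest 3$ to push $\wp(n_1)$ above zero, and similarly verify the threshold is sharp in the sense that the bound cannot obviously be lowered without further hypotheses. I expect this arithmetic of exponents, plus correctly matching the two sub-cases of $Y_{0,Q,n}$ to the hypotheses $4 \mid (2r-n)$ versus $4 \nmid (2r-n)$, to be the main (though still routine) obstacle; everything else is a direct citation of Theorem \ref{T:in-re}, Corollary \ref{C:Zinc}, and Lemma \ref{L:GSpinZ}.
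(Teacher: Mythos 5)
Your overall route is indeed the paper's own: Proposition \ref{P:RI-GSpin} is stated there as a summary, so its intended proof is exactly what you describe --- combine Theorem \ref{T:in-re} and Corollary \ref{C:Zinc} with Lemma \ref{L:GSpinZ} and the two-case computation of $Y_{0,Q,n}$ and $\mfr{Q}_{T_0}$. Part (i) and the first claim of (ii) are handled correctly: for (i) you rightly use that $\bfq=2\bfp-1$ is odd, hence $\wp(Q(e_0))=1$ and $\wp(n)\gest 3$ forces $\wp(n_1)\gest 1$, and for the torus statement the $\mfr{Q}^\dag$-torsor structure on $\widehat{\mfr{Q}_{T_0}}$ when $4\nmid(2r-n)$; for the first claim of (ii), the identity $Y_0\cap Y_{Q,n}=Y_{Q,n}^{sc}=Y_{0,Q,n}$ feeds into Corollary \ref{C:Zinc} exactly as you say.

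However, your verification of the ``moreover'' clause of (ii) does not go through as written, and this is precisely the $2$-adic bookkeeping you yourself flagged. Triviality of the $\mfr{Q}^\dag$-action on $\widehat{\mfr{Q}_Z}$ is equivalent to $Z(\wt{Z(G)})\subset Z(\wt{T})$, i.e.\ to $n_1e_0\in Y_{Q,n}$, i.e.\ to $n_1=n_2$; it is \emph{not} equivalent to $\wp(n_1)=0$. Since $\wp(Q(e_0))=1$, one has $n_1=n_2$ exactly when $\wp(n)\lest 1$, whereas $\wp(n_1)=0$ holds for all $\wp(n)\lest 2$; the two conditions diverge precisely at $\wp(n)=2$, which is the hypothesis of the clause you are proving. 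There $n_2=2n_1$, so $\mfr{Q}_Z\simeq F^{\times n_1}/F^{\times 2n_1}\neq\{1\}$, and the same Hilbert-symbol computation as in case (ii) of Lemma \ref{L:GSpinZ} (one checks $n/\gcd(n,\,n_1Q(e_0))=2$ whenever $\wp(n)\gest 2$) shows the $\mfr{Q}^\dag$-action on $\widehat{\mfr{Q}_Z}$ is in fact free, not trivial. So the hypothesis of the first statement of Theorem \ref{T:in-re}(ii) is not met, and the step ``Lemma \ref{L:GSpinZ}(i) then yields $\mfr{Q}_Z=\{1\}$'' fails under direct inspection. Concretely, take $r=2$, $n=4$, $\bfp=1$ (so $Q(e_0)=2$): then $n_1=1$, $n_2=2$, and $[e_0(b),e_r(a)]=(b,a)_2$, so conjugation by $e_r(a)$ with $a\notin F^{\times 2}$ changes the $\wt{Z(G)}$-central character of any constituent of $\pi|_{\wt{T}^\dag}$, which therefore cannot be an isotypic sum. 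To be fair, you are reading Lemma \ref{L:GSpinZ}(i) literally, and its ``or equivalently $n_2=n_1$'' is where the discrepancy originates; but as a verification the gap is real: the criterion that must be checked is $n_1=n_2$ (equivalently $Y_{c,Q,n}\subset Y_{Q,n}$), and under the stated hypothesis $\wp(n)=2$ it does not hold, so the ``moreover'' claim cannot be obtained by this argument.
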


\subsection{Restriction of genuine principal series} \label{SS:res-ps}
In this subsection, we consider exclusively the restriction of genuine principal series (especially in the tame case) by using a direct analysis from the perspective of Mackey's theory.

The notations are the same as before, $\wt{G}$ is a cover of $G$ and $\wt{G}_0 =\wt{G}_\de$ the derived subgroup of $\wt{G}$. Let $\wt{T}_0 \subset \wt{G}_0$ be the covering torus. 

Every genuine principal series is parabolically induced from an irreducible representation of $\wt{T}$ of the form 
$$\Ind_{\wt{A}}^{\wt{T}}(\tchi)$$ 
where $\wt{A} \subset \wt{T}$ is a maximal abelian subgroup, necessarily containing $Z(\wt{T})$, and 
$$\tchi: \ \wt{A} \longrightarrow \C^\times$$
 is an extension of the central character
$$\chi: Z(\wt{T}) \longrightarrow \C^\times.$$
For every $\gamma \in \wt{T}_0\backslash \wt{T} / \wt{A}$, denote
$$\wt{A}_\gamma = (\gamma \wt{A} \gamma^{-1}) \cap \wt{T}_0.$$
Define the character
$${}^\gamma\tchi: \wt{A}_\gamma \longrightarrow \C^\times$$
by
$${}^\gamma\tchi(g) = \tchi(\gamma^{-1} g \gamma) \text{ for } g\in \wt{A}_\gamma.$$
The Mackey theory gives a decomposition
$$\left(\Ind_{\wt{A}}^{\wt{T}}(\tchi) \right)|_{\wt{T}_0} = \bigoplus_{\gamma \in \wt{T}_0 \backslash \wt{T} /\wt{A}} \Ind_{\wt{A}_\gamma}^{\wt{T}_0} ({}^\gamma \tchi).$$
We have the following inclusions between various groups
$$\begin{tikzcd}
\wt{A} \ar[r, hook] & \wt{T}  & \wt{T}_0 \ar[l, hook'] & \wt{A}_\gamma \ar[l, hook'] \\
 Z(\wt{T}) \ar[u, hook] & Z(\wt{T}) \cap \wt{T}_0 \ar[u, hook] \ar[l, hook'] \ar[r, hook] &   Z(\wt{T}_0) \ar[u, hook] \ar[ru, hook, dashed].
\end{tikzcd}$$
Here the dashed arrow indicates that there may not be an inclusion $Z(\wt{T}_0) \subset \wt{A}_\gamma$; in particular, $\wt{A}_\gamma$ may not be a maximal abelian subgroup of $\wt{T}_0$.

\begin{dfn} \label{D:isop}
A pair $(\wt{G}, \wt{G}_0)$ is called an isotypic-pair if any of the following equivalent conditions (see Corollary \ref{C:Zinc}) is satisfied:
\begin{enumerate}
\item[(i)] the equality $Y_0 \cap Y_{Q,n} = Y_{0,Q,n}$ holds;
\item[(ii)] the inclusion $Z(\wt{T}_0) \subset Z(\wt{T})$ holds.
\end{enumerate}
\end{dfn}

To continue the analysis, we specialize to the tame case when $p\nmid n$. In this case, we fix a splitting of $\wt{G}$ over $K:=\mbf{G}(O)$ denoted by
$$s_K: \ K \longrightarrow \wt{G}.$$
This gives inherited splittings of $\mbf{G}_0(O), \mbf{T}(O)$ and $\mbf{T}_0(O)$ in the respective covering groups. If no confusion arises, we will omit $s_K$ and simply write $K \subset \wt{G}$. The group
$$\wt{A}= Z(\wt{T}) \cdot \mbf{T}(O)$$
is a maximal abelian subgroup of $\wt{T}$. One has
$$\wt{A}_\gamma = (Z(\wt{T}) \cap \wt{T}_0) \cdot ({}^\gamma \mbf{T}(O) \cap \wt{T}_0)= (Z(\wt{T}) \cap \wt{T}_0) \cdot {}^\gamma \mbf{T}_0(O) \subset Z(\wt{T}_0) \cdot \mbf{T}_0(O),$$
where ${}^\gamma \mbf{T}_0(O) = \gamma \mbf{T}_0(O) \gamma^{-1}$ for every $\gamma$. Note that 
$$\wt{A}_0 :=Z(\wt{T}_0) \cdot \mbf{T}_0(O)$$ 
is a maximal abelian subgroup of $\wt{T}_0$. Defining 
$$\wt{A}^\natural= (Z(\wt{T}) \cap \wt{T}_0) \cdot \mbf{T}_0(O),$$
We write every 
$$\tchi: \wt{A} \longrightarrow \C^\times$$
 as
$$\tchi = \chi \boxtimes \tchi_O,$$
where $\chi$ is the central character mentioned above, and 
$$\tchi_O: \mbf{T}(O) \longrightarrow \C^\times$$
is a character such that $\chi$ and $\tchi_O$ agree on the intersection $Z(\wt{T}) \cap \mbf{T}(O)$. 
Thus the character ${}^\gamma \tchi$ of $\wt{A}_\gamma$ is just 
 $${}^\gamma \tchi = ({}^\gamma \chi) \boxtimes ({}^\gamma \tchi_O)= \chi \boxtimes ({}^\gamma \tchi_O),$$
 where 
 $${}^\gamma \tchi_O: {}^\gamma \mbf{T}(O) \cap \wt{T}_0 = {}^\gamma \mbf{T}_0(O) \longrightarrow \C^\times $$
  is given by 
  $${}^\gamma\tchi_O ({}^\gamma k_0) = \tchi_O(k_0)$$ 
  for every $k_0 \in \mbf{T}_0(O)$.   In view of the identification
  $$\wt{A}_\gamma = \wt{A}^\natural \text{ given by } z \cdot {}^\gamma k_0 \mapsto (z \cdot [\gamma, k_0]) \cdot k_0,$$
 where $z\in Z(\wt{T}) \cap \wt{T}_0$ and $k_0 \in \mbf{T}_0(O)$, we can identify 
 $${}^\gamma \tchi = \chi \boxtimes {}^\gamma \tchi_O: \wt{A}_\gamma \longrightarrow \C^\times$$
 with a character
 $${}^\gamma \tchi = \chi \boxtimes ([\gamma, -]^{-1} \tchi_O): \wt{A}^\natural \longrightarrow \C^\times.$$
Thus, we have 
$$\left(\Ind_{\wt{A}}^{\wt{T}}(\tchi) \right)|_{\wt{T}_0} = \bigoplus_{\gamma \in \wt{T} /(\wt{T}_0\wt{A})} \Ind_{\wt{A}^\natural}^{\wt{T}_0} ({}^\gamma \tchi),$$
where
\begin{equation} \label{E:XGa}
\wt{T}/(\wt{T}_0 \wt{A}) \simeq Y/(Y_0 + Y_{Q,n})=: \msc{X}_{Q,n}^\Gamma.
\end{equation}

Now we analyze the decomposition of $\Ind_{\wt{A}^\natural}^{\wt{T}_0} ({}^\gamma \tchi)$. By induction in stages, one has
$$\Ind_{\wt{A}^\natural}^{\wt{T}_0} ({}^\gamma \tchi) = \Ind_{\wt{A}_0}^{\wt{T}_0} \ \Ind_{\wt{A}^\natural}^{\wt{A}_0} ({}^\gamma \tchi).$$
Denote by 
$$\msc{E}({}^\gamma \tchi; \wt{A}^\natural, \wt{A}_0)$$
the set of all the possible extensions of ${}^\gamma \tchi$ to $\wt{A}_0$; then it is exactly the set of irreducible constituents of $\Ind_{\wt{A}^\natural}^{\wt{A}_0} ({}^\gamma \tchi)$. We thus have
$$\Ind_{\wt{A}^\natural}^{\wt{T}_0} ({}^\gamma \tchi) = \bigoplus_{({}^\gamma\tchi)' \in \msc{E}({}^\gamma \tchi; \wt{A}^\natural, \wt{A}_0)} \Ind_{\wt{A}_0}^{\wt{T}_0} ( ({}^\gamma \tchi) '  ).$$

Recall that $\wt{T}_0$ is a Heisenberg-type group and thus irreducible genuine representation is determined by the central character. In particular, the isomorphism class of $\Ind_{\wt{A}_0}^{\wt{T}_0} ( ({}^\gamma \tchi) '  )$ is in fact determined by the restriction of $({}^\gamma \tchi)'$ to $Z(\wt{T}_0)$. We thus consider
$$\msc{E}(\chi, {}^\gamma \tchi_O; Z(\wt{T}_0)) = 
\left\{
\begin{array}{cc}
\omega_{\gamma, j}: Z(\wt{T}_0) \longrightarrow \C^\times: \\
\bullet \quad  \omega_{\gamma, j} \text{ extends } \chi|_{Z(\wt{T})\cap \wt{T}_0}, \\
\bullet \quad  \omega_{\gamma, j} \text{ extends } ([\gamma, \cdot]^{-1} \tchi_O)|_{\mbf{T}_0(O)\cap Z(\wt{T}_0)}.
\end{array}
\right\}.$$
We note that $\chi$ and $[\gamma, \cdot]^{-1}\tchi_O$ are compatible on the intersection of $Z(\wt{T})\cap \wt{T}_0$ and $\mbf{T}_0(O) \cap Z(\wt{T}_0)$. 
It is easy to see that there is a natural bijection
$$\msc{E}(\chi, {}^\gamma \tchi_O; Z(\wt{T}_0)) \longrightarrow \msc{E}({}^\gamma \chi; \wt{A}^\natural, \wt{A}_0)$$
with size
$$\val{ \msc{E}(\chi, {}^\gamma \tchi_O; Z(\wt{T}_0)) } = \val{Y_{0,Q,n}/(Y_0 \cap Y_{Q,n})}.$$
This gives the bounds for the index $j$ in $\omega_{\gamma, j}$ as
$$1\lest j \lest \val{Y_{0,Q,n}/(Y_0 \cap Y_{Q,n})}.$$
We also have
\begin{equation} \label{E:inds2}
\Ind_{\wt{A}^\natural}^{\wt{T}_0} ({}^\gamma \tchi) = \bigoplus_{\omega_{\gamma, j} \in \msc{E}(\chi, {}^\gamma \tchi_O; Z(\wt{T}_0))} \Ind_{\wt{A}_0}^{\wt{T}_0} (\tomega_{\gamma, j}) = \bigoplus_{\omega_{\gamma, j} \in \msc{E}(\chi, {}^\gamma \tchi_O; Z(\wt{T}_0))} i(\omega_{\gamma, j}),
\end{equation}
where in the middle term 
$$\tomega_{\gamma, j}: \wt{A}_0 \longrightarrow \C^\times$$
is any extension of $\omega_{\gamma, j}$ to $\wt{A}_0$. Here $i(\omega_{\gamma, j})$ denotes the isomorphism class of $\Ind_{\wt{A}_0}^{\wt{T}_0} (\tomega_{\gamma, j})$, which only depends on the central character $\omega_{\gamma, j}$. Note that the decomposition in \eqref{E:inds2} is multiplicity-free.
We thus get
\begin{equation} \label{E:decT}
\left(\Ind_{\wt{A}}^{\wt{T}}(\tchi) \right)|_{\wt{T}_0}= \bigoplus_{\gamma \in \msc{X}_{Q,n}^\Gamma} \bigoplus_{\omega_{\gamma, j} \in \msc{E}(\chi, {}^\gamma \tchi_O; Z(\wt{T}_0))} i(\omega_{\gamma, j}),
\end{equation}
where we recall that 
$$\msc{X}_{Q,n}^\Gamma= Y/(Y_0 + Y_{Q,n})$$
by definition in \eqref{E:XGa}.

To simplify the formula \eqref{E:decT}, we consider the group homomorphism
\begin{equation} \label{D:mfr-c}
\begin{tikzcd}
\mfr{c}: \ \msc{X}_{Q,n}^\Gamma \ar[r] & \Hom(\mbf{T}_0(O) \cap Z(\wt{T}_0), \mu_n)
\end{tikzcd}
\end{equation}
given by
$$\mfr{c}(y)(k_0):=[y(\varpi), k_0] \in \mu_n.$$
Define
$$Y^\mfr{c} = \set{y\in Y: B_Q(y, y') \in n\Z \text{ for all } y'\in Y_{0,Q,n}} \subset Y.$$
Note that $\mbf{T}_0(O) \cap Z(\wt{T}_0) = Y_{0,Q,n}(O)$, and thus
$$\mfr{c}(y)(y'(u))=(\varpi, u)_n^{B_Q(y, y')}$$
for every $y' \in Y_{0,Q,n}$. Hence,
$$\mfr{c}(y) = \mbm{1}$$ if and only if $y\in Y^\mfr{c}$.
It is easy to see that 
$$Y_0 + Y_{Q,n} \subset Y^\mfr{c}$$ 
and thus we have the well-defined group
\begin{equation} \label{D:Xc}
\msc{X}_{Q,n}^\mfr{c} := Y^\mfr{c}/(Y_{Q,n} + Y_0) \subset \msc{X}_{Q,n}^\Gamma.
\end{equation}

We summarize the above discussion to give the following:
\begin{thm} \label{T:decT}
Assume $p\nmid n$. Let $i(\chi)=\Ind_{\wt{A}}^{\wt{T}}(\tchi) \in \Irrg(\wt{T})$ be with central character $\chi$. Let $I(\chi)$ be the associated genuine principal series of $\wt{G}$. 
\begin{enumerate}
\item[(i)] One has $\Ker(\mfr{c}) = \msc{X}_{Q,n}^\mfr{c}$ and thus
$$i(\chi)|_{\wt{T}_0}= \val{\msc{X}_{Q,n}^\mfr{c}} \cdot \bigoplus_{\gamma \in \msc{X}_{Q,n}^\Gamma/\msc{X}_{Q,n}^\mfr{c}} \bigoplus_{\omega_{\gamma, j} \in \msc{E}(\chi, {}^\gamma \tchi_O; Z(\wt{T}_0))} i(\omega_{\gamma, j}),$$
where $i(\omega_{\gamma, j})$ appears with multiplicity one in the double sum of the right hand side. 
\item[(ii)] Consequently,
$$I(\chi)|_{\wt{G}_0}= \val{\msc{X}_{Q,n}^\mfr{c}} \cdot \bigoplus_{\gamma \in \msc{X}_{Q,n}^\Gamma/\msc{X}_{Q,n}^\mfr{c}} \bigoplus_{\omega_{\gamma, j} \in \msc{E}(\chi, {}^\gamma \tchi_O; Z(\wt{T}_0))} I(\omega_{\gamma, j})$$
for the restriction of genuine principal series $I(\chi)$. Here generically, the $I(\omega_{\gamma, j})$'s appear multiplicity-free in the above decompostion. 
\item[(iii)] Assume that $I(\chi)$ is a $(K, s_K)$-unramified genuine principal series. Then $I(\omega_{\gamma, j})$ is $(K_0, s_K)$-unramified if and only if $\omega_{\gamma, j}$ belongs to the set $\msc{E}(\chi, \tchi_O; Z(\wt{T}_0))$, i.e., with $\gamma =0 \in \msc{X}_{Q,n}^\Gamma /\msc{X}_{Q,n}^\mfr{c}$ being the trivial class.
\end{enumerate}
\end{thm}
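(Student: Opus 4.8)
The plan is to derive all three parts from the Mackey-type decomposition \eqref{E:decT} already established for $i(\chi)|_{\wt T_0}$, combined with three ingredients: that $\wt T_0$ is of Heisenberg type, so an $i(\omega)\in\Irrg(\wt T_0)$ is determined up to isomorphism by its central character $\omega$; Lemma \ref{L:kc}, to pass from the torus to the group; and the standard characterization of $(K_0,s_K)$-unramifiedness of a genuine principal series of $\wt G_0$ in terms of its inducing central character. For part (i), the equality $\Ker(\mfr c)=\msc X_{Q,n}^\mfr c$ is immediate from the formula $\mfr c(y)(y'(u))=(\varpi,u)_n^{B_Q(y,y')}$ and the non-degeneracy of the Hilbert symbol, which give $\mfr c(y)=\mbm 1$ exactly when $B_Q(y,y')\in n\Z$ for all $y'\in Y_{0,Q,n}$, i.e. $y\in Y^\mfr c$; since $Y_0+Y_{Q,n}\subseteq Y^\mfr c$, the induced map $\mfr c$ on $\msc X_{Q,n}^\Gamma=Y/(Y_0+Y_{Q,n})$ is well defined with kernel $\msc X_{Q,n}^\mfr c$. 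The key structural observation is then that $\msc E(\chi,{}^\gamma\tchi_O;Z(\wt T_0))$ depends on the class $\gamma$ only through $\mfr c(\gamma)$: taking a representative of $\gamma\in\wt T/(\wt T_0\wt A)$ to be a lift of $\gamma(\varpi)$, one has $[\gamma,k_0]=\mfr c(\gamma)(k_0)$ for $k_0\in\mbf T_0(O)\cap Z(\wt T_0)=Y_{0,Q,n}(O)$, so every $\omega_{\gamma,j}$ restricts on $Y_{0,Q,n}(O)$ to $\mfr c(\gamma)^{-1}\cdot(\tchi_O|_{Y_{0,Q,n}(O)})$.

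Consequently, if $\gamma\equiv\gamma'\pmod{\msc X_{Q,n}^\mfr c}$ then $\msc E(\chi,{}^\gamma\tchi_O;Z(\wt T_0))=\msc E(\chi,{}^{\gamma'}\tchi_O;Z(\wt T_0))$ as sets of characters, and grouping the double sum in \eqref{E:decT} along the cosets of $\msc X_{Q,n}^\mfr c=\Ker(\mfr c)$ in $\msc X_{Q,n}^\Gamma$ produces exactly $\val{\msc X_{Q,n}^\mfr c}$ identical copies of $\bigoplus_{\gamma}\bigoplus_j i(\omega_{\gamma,j})$ with $\gamma$ running over $\msc X_{Q,n}^\Gamma/\msc X_{Q,n}^\mfr c$; this is (i). Multiplicity one inside the grouped sum follows because distinct indices $j$ give distinct extensions, distinct cosets of $\gamma$ give $\omega_{\gamma,j}$ with distinct restrictions to $Y_{0,Q,n}(O)$ (by the displayed formula), and a Heisenberg-type group carries at most one irreducible genuine representation with a prescribed central character. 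Part (ii) is then immediate: applying Lemma \ref{L:kc} with $\wt P=\wt B$, $\wt M=\wt T$ and $\sigma=i(\chi)$ gives $I(\chi)|_{\wt G_0}=\bigoplus_{\sigma_0\subset i(\chi)|_{\wt T_0}}\Ind_{\wt B_0}^{\wt G_0}(\sigma_0)$, and substituting the decomposition of (i) together with $\Ind_{\wt B_0}^{\wt G_0}(i(\omega_{\gamma,j}))=I(\omega_{\gamma,j})$ yields the stated formula; generic multiplicity-freeness follows from the pairwise non-isomorphy of the $i(\omega_{\gamma,j})$ and the generic irreducibility and inequivalence of the induced $I(\omega_{\gamma,j})$.

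For part (iii), I would first reduce $(K_0,s_K)$-unramifiedness of $I(\omega_{\gamma,j})$ to the statement that $\omega_{\gamma,j}$ is an unramified character of $Z(\wt T_0)$, i.e. trivial on $Z(\wt T_0)\cap\mbf T_0(O)=Y_{0,Q,n}(O)$. This is the standard equivalence between $(K_0,s_K)$-unramifiedness of a genuine principal series of $\wt G_0$ and $(\mbf T_0(O),s_K)$-unramifiedness of the inducing representation $i(\omega_{\gamma,j})=\Ind_{\wt A_0}^{\wt T_0}(\tomega)$, the latter holding iff $\omega_{\gamma,j}|_{Y_{0,Q,n}(O)}=\mbm 1$ (so that $\tomega$ can be chosen trivial on $\mbf T_0(O)$; conversely, a $\mbf T_0(O)$-fixed vector forces, via Mackey, triviality of $\omega_{\gamma,j}$ on $Y_{0,Q,n}(O)$ since conjugation in $\wt T_0$ fixes $Z(\wt T_0)$). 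Since $I(\chi)$ is $(K,s_K)$-unramified, $\tchi_O$ may be taken to be the trivial character of $\mbf T(O)$, whence $\omega_{\gamma,j}|_{Y_{0,Q,n}(O)}=\mfr c(\gamma)^{-1}$ by the computation of (i); this is trivial exactly when $\mfr c(\gamma)=\mbm 1$, i.e. $\gamma\in\msc X_{Q,n}^\mfr c=\Ker(\mfr c)$, which is precisely the condition that $\gamma$ be the trivial class in $\msc X_{Q,n}^\Gamma/\msc X_{Q,n}^\mfr c$ and, equivalently (by the displayed restriction formula), that $\omega_{\gamma,j}\in\msc E(\chi,\tchi_O;Z(\wt T_0))$.

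The routine-but-delicate point is the bookkeeping in part (i): correctly matching the abstract class $\gamma\in Y/(Y_0+Y_{Q,n})$ with the conjugation datum $[\gamma,-]$ occurring in ${}^\gamma\tchi_O$, through the isomorphism $\wt T/(\wt T_0\wt A)\simeq\msc X_{Q,n}^\Gamma$ of \eqref{E:XGa}, so that the regrouping of \eqref{E:decT} along cosets of $\Ker(\mfr c)$ is legitimate; once this identification is in place, everything else is a direct consequence of Heisenberg-type representation theory and Lemma \ref{L:kc}. The only external input used is the (standard) reduction of $(K_0,s_K)$-unramifiedness to the level of the covering torus, which rests on $\wt G_0$ splitting over $K_0$ with the splitting inherited from $s_K$, together with the Iwasawa decomposition.
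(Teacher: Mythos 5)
Your proposal is correct and takes essentially the same route as the paper: the Mackey decomposition \eqref{E:decT}, the identification $\Ker(\mfr{c})=\msc{X}_{Q,n}^{\mfr{c}}$ via the formula $\mfr{c}(y)(y'(u))=(\varpi,u)_n^{B_Q(y,y')}$ and tameness of the Hilbert symbol, regrouping the double sum along cosets of $\Ker(\mfr{c})$ using that $\msc{E}(\chi,{}^{\gamma}\tchi_O;Z(\wt{T}_0))$ depends on $\gamma$ only through $\mfr{c}(\gamma)$ together with the fact that irreducible genuine representations of the Heisenberg-type group $\wt{T}_0$ are determined by their central characters, Lemma \ref{L:kc} for (ii), and the torus-level unramifiedness criterion for (iii). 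The only point you spell out that the paper leaves implicit is the standard reduction of $(K_0,s_K)$-unramifiedness of $I(\omega_{\gamma,j})$ to triviality of $\omega_{\gamma,j}$ on $Y_{0,Q,n}(O)$ (with $\tchi_O$ normalized to be trivial), which is consistent with the paper's conventions.
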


\begin{cor} \label{C:decTi}
Assume $p\nmid n$. If $(\wt{G}, \wt{G}_0)$ is an isotypic-pair, then 
$$\msc{X}_{Q,n}^\Gamma = \msc{X}_{Q,n}^\mfr{c};$$
in this case 
$$i(\chi)|_{\wt{T}_0} =i(\omega)^{\oplus \val{\msc{X}_{Q,n}^\Gamma} } \text{ and } I(\chi)|_{\wt{G}_0} =I(\omega)^{\oplus \val{\msc{X}_{Q,n}^\Gamma} },$$
are both isotypic sums with multiplicity $\val{\msc{X}_{Q,n}^\Gamma}$, where $\omega =\chi|_{Z(\wt{T}_0)}$ is the unique character in $\msc{E}(\chi, \tchi_O; Z(\wt{T}_0))$.
\end{cor}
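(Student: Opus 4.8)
The plan is to derive everything from Theorem~\ref{T:decT} by showing that the isotypic-pair hypothesis forces the auxiliary lattice $Y^\mfr{c}$ to be all of $Y$, so that both index sets appearing in the double sums of Theorem~\ref{T:decT}(i)--(ii) degenerate to a single point.

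First I would carry out the lattice computation $Y^\mfr{c} = Y$. By \eqref{YQn} the lattice $Y_{Q,n}$ consists of those $z \in Y$ with $B_Q(z, y') \in n\Z$ for every $y' \in Y$. Under the isotypic-pair hypothesis one has $Y_{0,Q,n} = Y_0 \cap Y_{Q,n}$, so $Y_{0,Q,n} \subset Y_{Q,n}$; hence for $y' \in Y_{0,Q,n}$ the form $B_Q(y', -)$ is divisible by $n$ on all of $Y$. Using the symmetry of $B_Q$ this says precisely that $B_Q(y, y') \in n\Z$ for all $y \in Y$ and all $y' \in Y_{0,Q,n}$, i.e. $Y^\mfr{c} = Y$. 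Consequently
\[
\msc{X}_{Q,n}^\mfr{c} = Y^\mfr{c}/(Y_{Q,n} + Y_0) = Y/(Y_{Q,n} + Y_0) = \msc{X}_{Q,n}^\Gamma,
\]
which is the first assertion.

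Next I would feed this back into Theorem~\ref{T:decT}. Since $\msc{X}_{Q,n}^\mfr{c} = \msc{X}_{Q,n}^\Gamma$, the outer index set $\msc{X}_{Q,n}^\Gamma/\msc{X}_{Q,n}^\mfr{c}$ is a single point, represented by $\gamma = 0$; moreover $\val{\msc{E}(\chi, \tchi_O; Z(\wt{T}_0))} = \val{Y_{0,Q,n}/(Y_0 \cap Y_{Q,n})} = 1$ by the isotypic-pair hypothesis, so the inner sum has a unique term, say $\omega$. To identify $\omega$, I would note that the inclusion $Z(\wt{T}_0) \subset Z(\wt{T})$ together with the automatic inclusion $Z(\wt{T}) \cap \wt{T}_0 \subset Z(\wt{T}_0)$ gives $Z(\wt{T}_0) = Z(\wt{T}) \cap \wt{T}_0$; the first condition defining $\msc{E}(\chi, \tchi_O; Z(\wt{T}_0))$ then forces $\omega = \chi|_{Z(\wt{T}_0)}$, while the second condition, agreement with $\tchi_O$ on $\mbf{T}_0(O) \cap Z(\wt{T}_0)$, is automatic because on that subgroup $\tchi_O$ and $\chi$ already coincide. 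Substituting $\gamma = 0$ and this single $\omega$ into Theorem~\ref{T:decT}(i) and (ii) yields
\[
i(\chi)|_{\wt{T}_0} = i(\omega)^{\oplus \val{\msc{X}_{Q,n}^\Gamma}} \quad\text{and}\quad I(\chi)|_{\wt{G}_0} = I(\omega)^{\oplus \val{\msc{X}_{Q,n}^\Gamma}};
\]
alternatively, the statement for $I(\chi)$ follows from the one for $i(\chi)$ via the compatibility of parabolic induction with restriction in Lemma~\ref{L:kc}.

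There is no real obstacle here: the entire content is the identity $Y^\mfr{c} = Y$, which is immediate from the symmetry of $B_Q$ and the isotypic-pair hypothesis. The only point that wants a little care is the bookkeeping identifying the surviving character as exactly $\chi|_{Z(\wt{T}_0)}$, which rests on the equality $Z(\wt{T}_0) = Z(\wt{T}) \cap \wt{T}_0$ and on the consistency of the two extension conditions defining $\msc{E}(\chi, \tchi_O; Z(\wt{T}_0))$.
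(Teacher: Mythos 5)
Your proposal is correct and is essentially the paper's own argument: the paper's proof simply asserts that the isotypic-pair hypothesis gives $Y^\mfr{c}=Y$, hence $\msc{X}_{Q,n}^\mfr{c}=\msc{X}_{Q,n}^\Gamma$, and reads off the rest from Theorem \ref{T:decT}. Your write-up just fills in the details (the symmetry of $B_Q$ for $Y^\mfr{c}=Y$, and the identification $Z(\wt{T}_0)=Z(\wt{T})\cap\wt{T}_0$ forcing $\omega=\chi|_{Z(\wt{T}_0)}$), all of which is accurate.
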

\begin{proof}
Under the assumption, we have $Y^\mfr{c}= Y$ and thus $\msc{X}_{Q,n}^\mfr{c} = \msc{X}_{Q,n}^\Gamma$.
The rest is clear in view of Theorem \ref{T:decT}.
\end{proof}

\begin{rmk} \label{R:all-unram}
If $I(\chi)$ is $(K, s_K)$-unramified, then it is $(\gamma\cdot K, \gamma\cdot s_K)$-unramified as well. Here $\gamma\cdot K = \gamma K \gamma^{-1}$ and $\gamma \cdot s_K$ is a splitting of $\wt{G}$ over $\gamma \cdot K$ given by 
$$(\gamma \cdot s_K)(\gamma \cdot k) := \gamma \cdot s_K(k) \cdot \gamma^{-1}.$$
Then every $I(\omega_{\gamma, j})$ is in fact $(\gamma \cdot K_0, \gamma\cdot s_K)$-unramified. Note that $\gamma \cdot K_0$ and $K_0$ may not be in the same $G_0$-conjugacy classes of maximal compact subgroup of $G_0$. See \S \ref{SS:varK} for a general discussion of this, and the reader may also refer to \S \ref{SS:varKW-eg} and \S \ref{S:GSp} for examples.
\end{rmk}

\section{L-parameters and functoriality} \label{S:func}
We continue to assume in this section that $Z(\mbf{G})$ is connected as in \S \ref{SS:2pic}.

\subsection{Relations among several L-groups}
Recall that from Lemma \ref{L:Gz} the group $Z(\wt{G})$ is associated with the sublattice 
$$Y_z:=Y_{Q,n} \cap Y_c.$$
We have the L-group extension
$$\begin{tikzcd}
Z(\wt{G})^\vee \ar[r, hook] & {}^L Z(\wt{G}) \ar[r, two heads] & \WD_F
\end{tikzcd}$$
where $Z(\wt{G})^\vee:=\Hom(Y_z, \C^\times)$ denotes the dual group of $Z(\wt{G})$.
Similarly, we have an L-group for $\wt{Z(G)}$ as an extension
$$ \begin{tikzcd}
\wt{Z(G)}^\vee \ar[r, hook] & {}^L \wt{Z(G)} \ar[r, two heads] & \WD_F.
\end{tikzcd}$$
Here $\wt{Z(G)}^\vee:= \Hom(Y_{c, Q,n}, \C^\times)$. It is easy to see that
\begin{equation} \label{Y-zc}
Y_z \subset Y_{c, Q,n}
\end{equation}
and thus there is a natural homormorphism
\begin{equation} \label{E:f-cz}
f_{c, z}: {}^L \wt{Z(G)} \longrightarrow {}^L Z(\wt{G})
\end{equation}
such that
$$f_{c,z}|_{\wt{Z(G)}^\vee}:  \wt{Z(G)}^\vee \longrightarrow Z(\wt{G})^\vee$$
is just the restriction map 
$$\Hom(Y_{c,Q,n}, \C^\times) \longrightarrow \Hom(Y_z, \C^\times)$$
 induced from  \eqref{Y-zc}.

On the other hand, the dual group for $\wt{G}$ has root datum $(Y_{Q,n}, \Phi_{Q,n}^\vee; X_{Q,n}, \Phi_{Q,n})$, where $Y_{Q,n}$ is the character lattice for $\wt{G}^\vee$. We see that there is a natural homomorphism
$$f_{G, z}: \wt{G}^\vee \longrightarrow Z(\wt{G})^\vee$$
which extends to a homomorphism of L-groups
\begin{equation} \label{E:f-Gz}
f_{G,z}:  {}^L \wt{G} \longrightarrow {}^L Z(\wt{G}),
\end{equation}
depending on the choice of a distinguished character $\chi_\psi$. We give some details of the map $f_{G,z}$ as follows. Let $\mbf{G}_{Q,n}$ be the split linear algebraic group over  $F$ such that the Langlands dual group of $G_{Q,n}=\mbf{G}_{Q,n}(F)$ is equal to $\wt{G}^\vee$, i.e., 
$$G_{Q,n}^\vee =\wt{G}^\vee.$$
One can restrict the Brylisnki--Deligne data $(D, \eta)$ to the cocharacter lattice $Y_{Q,n}$ of $G_{Q,n}$ and to the coroot lattice $Y_{Q,n}^{sc}$ respectively to obtain an $n$-fold cover $\wt{G}_{Q,n}$. As the restriction of $B_Q$ to $Y_{Q,n}$ is trivial modulo $n$, the cover $\wt{G}_{Q,n}$ is almost a trivial cover over $G_{Q,n}$. In particular, the covering torus $\wt{T}_{Q,n} \subset \wt{G}_{Q,n}$ is abelian. In fact, we have 
$$\wt{G}_{Q,n}^\vee = G_{Q,n}^\vee = \wt{G}^\vee,$$
where $\wt{G}_{Q,n}^\vee$ is the dual group for the $n$-fold cover $\wt{G}_{Q,n}$. One also has
$$Z(\wt{G}_{Q,n}) = \wt{Z(G_{Q,n})} \cap Z(\wt{T}_{Q,n}) = \wt{Z(G_{Q,n})}.$$
Recall the isogeny
$$i_{Q,n}: T_{Q,n} \longrightarrow T$$
induced from the inclusion $Y_{Q,n} \subset Y$. By pull-back, one naturally has the map (by abuse of notation) denoted by
$$i_{Q,n}: \wt{T}_{Q,n} \longrightarrow \wt{T}.$$

\begin{lm}
Keeping notations as above, we have
\begin{equation} \label{E:zEG}
i_{Q,n}(Z(\wt{G}_{Q,n})) = Z(\wt{G}).
\end{equation}
Hence, $Z(\wt{G})^\vee = Z(G_{Q,n})^\vee$ and the map $f_{G,z}: \wt{G}^\vee \longrightarrow Z(\wt{G})^\vee$ is just
$$f_{G,z}: \wt{G}^\vee \longrightarrow \wt{G}^\vee/[\wt{G}^\vee, \wt{G}^\vee],$$
where $[\wt{G}^\vee, \wt{G}^\vee] \subset \wt{G}^\vee$ is the derived subgroup.
\end{lm}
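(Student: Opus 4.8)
The plan is to prove the displayed identity \eqref{E:zEG} directly, and then read off the two ``Hence'' assertions by bookkeeping the relevant lattices. First I would reduce \eqref{E:zEG} to a statement about tori. The map $i_{Q,n}\colon\wt{T}_{Q,n}\to\wt{T}$ is built by pull-back, so it is the identity on $\mu_n$ and sits in a commutative square with the covering projections $p_{Q,n}\colon\wt{T}_{Q,n}\to T_{Q,n}$ and $p\colon\wt{T}\to T$. An elementary diagram chase then gives $i_{Q,n}\big(p_{Q,n}^{-1}(S)\big)=p^{-1}\big(i_{Q,n}(S)\big)$ for every subgroup $S\subseteq T_{Q,n}$ (for ``$\supseteq$'' one lifts $s\in S$ to $\wt{T}_{Q,n}$, compares its image with the given lift in $\wt{T}$, and corrects by the resulting root of unity, which lies in $\mu_n\subset\wt{Z(G_{Q,n})}$). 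Since $Z(\wt{G}_{Q,n})=\wt{Z(G_{Q,n})}=p_{Q,n}^{-1}\big(Z(G_{Q,n})\big)$ as recalled above (this uses only that $\wt{G}_{Q,n}$ is almost trivial, so $Z(\wt{T}_{Q,n})=\wt{T}_{Q,n}$, together with Lemma \ref{L:Gz} applied to $\wt{G}_{Q,n}$), we obtain $i_{Q,n}\big(Z(\wt{G}_{Q,n})\big)=p^{-1}\big(i_{Q,n}(Z(G_{Q,n}))\big)$; and since $Z(\wt{G})=p^{-1}\big({\rm Im}\,i_{Q,n}^z\big)$ by Lemma \ref{L:Gz}, \eqref{E:zEG} is equivalent to the equality of subgroups $i_{Q,n}\big(Z(G_{Q,n})\big)={\rm Im}\,i_{Q,n}^z$ inside $T$.

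The next step is this torus equality, and the key computation will be that for $\alpha\in\Phi$ the character $\alpha\circ i_{Q,n}$ of $T_{Q,n}$ equals $(\alpha_{Q,n})^{n_\alpha}$. Indeed, as elements of $X_{Q,n}=\Hom(Y_{Q,n},\Z)$ one has $\langle\alpha,y\rangle=n_\alpha\langle\alpha_{Q,n},y\rangle$ for all $y\in Y_{Q,n}$, which is just the relation $\alpha_{Q,n}=\alpha/n_\alpha$ (integrality of $\alpha/n_\alpha$ on $Y_{Q,n}$ following from the Brylinski--Deligne identity $B_Q(\alpha^\vee,y)=Q(\alpha^\vee)\langle\alpha,y\rangle$ and $y\in Y_{Q,n}$). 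For ``$\subseteq$'': if $z_0\in Z(G_{Q,n})$ then $\alpha_{Q,n}(z_0)=1$ for every $\alpha$, hence $\alpha\big(i_{Q,n}(z_0)\big)=\alpha_{Q,n}(z_0)^{n_\alpha}=1$, so $i_{Q,n}(z_0)\in Z(G)$; as it also lies in ${\rm Im}\,i_{Q,n}={\rm Im}\big(Y_{Q,n}\otimes F^\times\to T\big)$, and Lemma \ref{L:Gz} unwinds to $Z(G)\cap{\rm Im}\,i_{Q,n}={\rm Im}\,i_{Q,n}^z$ inside $T$, we get $i_{Q,n}(z_0)\in{\rm Im}\,i_{Q,n}^z$. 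For ``$\supseteq$'': $i_{Q,n}^z$ factors through $T_{Q,n}$, and any $y\in Y_z=Y_{Q,n}\cap Y_c$ pairs trivially with every root of $\mbf{G}$, hence with every $\alpha_{Q,n}$, so the image of $Y_z\otimes F^\times$ in $T_{Q,n}$ lands in $Z(G_{Q,n})$; applying $i_{Q,n}$ gives ${\rm Im}\,i_{Q,n}^z\subseteq i_{Q,n}\big(Z(G_{Q,n})\big)$. This establishes \eqref{E:zEG}.

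Finally I would deduce the ``Hence'' statements. The cocharacter lattice of $Z(\mbf{G}_{Q,n})^\circ$ inside $Y_{Q,n}$ is $\{y\in Y_{Q,n}:\langle\alpha_{Q,n},y\rangle=0\text{ for all }\alpha\}=Y_{Q,n}\cap Y_c=Y_z$ (recall $Y_c$ consists of the cocharacters pairing trivially with every root), whence $Z(G_{Q,n})^\vee=\Hom(Y_z,\C^\times)=Z(\wt{G})^\vee$ by the definition of these dual tori. For the last assertion, $f_{G,z}\colon\wt{G}^\vee\to Z(\wt{G})^\vee=\Hom(Y_z,\C^\times)$ is by construction the map of reductive groups induced by the inclusion of character lattices $Y_z\hookrightarrow Y_{Q,n}$, while the abelianization $\wt{G}^\vee\to\wt{G}^\vee/[\wt{G}^\vee,\wt{G}^\vee]$ is induced by the inclusion into $Y_{Q,n}$ of the lattice of characters of the maximal torus of $\wt{G}^\vee$ trivial on all its coroots $\Phi_{Q,n}$, namely $\{\lambda\in Y_{Q,n}:\langle\lambda,\alpha_{Q,n}\rangle=0\text{ for all }\alpha\}=Y_{Q,n}\cap Y_c=Y_z$ again (using $\langle\lambda,\alpha_{Q,n}\rangle=\langle\lambda,\alpha\rangle/n_\alpha$); since both lattices and both maps coincide, $f_{G,z}$ is the abelianization. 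The main obstacle, such as it is, will be the passage from an intersection of lattices to an intersection of images of tori — precisely the content of Lemma \ref{L:Gz} — so the argument is essentially a reorganization of that lemma once the character identity $\alpha\circ i_{Q,n}=(\alpha_{Q,n})^{n_\alpha}$ is in hand; the only other point requiring care is keeping the pairings $X\times Y$ and $X_{Q,n}\times Y_{Q,n}$ and the scaling factors $n_\alpha$ straight.
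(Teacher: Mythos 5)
Your argument is correct and follows essentially the same route as the paper: both reduce \eqref{E:zEG} to an equality of subgroups of the linear torus $T$ via Lemma \ref{L:Gz} and the pullback description of the covers, and both inclusions come down to the vanishing of the pairing between $X_{Q,n}^{sc}$ and $Y_z=Y_{Q,n}\cap Y_c$ (your identity $\alpha\circ i_{Q,n}=\alpha_{Q,n}^{\,n_\alpha}$ is the same computation the paper packages in its commutative diagram and the map $r$). Your extra bookkeeping for the ``Hence'' statements, identifying $Y_z$ as the character lattice of $\wt{G}^\vee/[\wt{G}^\vee,\wt{G}^\vee]$, merely spells out what the paper declares to be clear, so nothing needs to change.
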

\begin{proof}
For simplicity, writing $i$ for $i_{Q,n}$. One has a commutative diagram
$$\begin{tikzcd}
X/X^{sc}  \ar[d]     & X \ar[l, two heads] \ar[d, hook] \\
X_{Q,n}/X_{Q,n}^{sc}  & X_{Q,n}, \ar[l, two heads]
\end{tikzcd}$$
which gives the commutative diagram
$$\begin{tikzcd}
Z(G)  \ar[r, hook]     & T \\
Z(G_{Q,n}) \ar[u, "i"]  \ar[r, hook] & T_{Q,n} \ar[u, "i"] .
\end{tikzcd}$$
It follows that  $i(Z(\wt{G}_{Q,n})) \subset Z(\wt{G})$. To show the other inclusion, we consider the exact sequence
$$\begin{tikzcd}
Z(G_{Q,n}) \ar[r, hook] & T_{Q,n}  \ar[r, "r"] & \Hom(X_{Q,n}^{sc}, F^\times) \ar[r] & {\rm Ext}(X_{Q,n}/X_{Q,n}^{sc}, F^\times)\\
 & Y_z\otimes F^\times \ar[u, "{i_z}"],
\end{tikzcd}$$
where $i_z$ is the map induced from the inclusion $Y_z \subset Y_{Q,n}$. It suffices to show that $r\circ i_z = 0$. For every $y\otimes a \in Y_z\otimes F^\times$ and $x\in X_{Q,n}^{sc}$, one has
$$r\circ i_z(y\otimes a)(x) =a^{\angb{x}{y}},$$
where $\angb{-}{-}: X_{Q,n} \times Y_{Q,n} \longrightarrow \Z$ denotes the canonical pairing. However, since $x\in X_{Q,n}^{sc}$ and $y\in Y_c\cap Y_{Q,n}$, we have $\angb{x}{y}=0$. This shows that $r\circ i_z=0$ and completes the proof of \eqref{E:zEG}. It follows that $Z(\wt{G})^\vee = Z(G_{Q,n})^\vee$; as $\wt{G}^\vee = G_{Q,n}^\vee$, the rest is clear.
\end{proof}
To extend $f_{G,z}$ to be an L-homomorphism. We choose a distinguished character 
$$\chi_\psi: Z(\wt{T}) \longrightarrow \C^\times$$
which gives an isomorphism ${}^L\wt{G} \simeq \wt{G}^\vee \times \WD_F$, see \S \ref{SS:L-g}. Note that by pull-back to $\wt{T}_{Q,n} \subset \wt{G}_{Q,n}$ and further restriction to $Z(\wt{G}_{Q,n})$, one has a distinguished genuine character of $Z(\wt{G}_{Q,n})$, with respect to which we also have 
$${}^L Z(\wt{G}) = {}^L Z(\wt{G}_{Q,n}) = Z(\wt{G})^\vee \times \WD_F.$$
From this, we have an L-map as in \eqref{E:f-Gz}.

Next,  we want to understand the relation between ${}^L \wt{G}$ and ${}^L \wt{G}_0$. Our exposition here follows closely that of \cite[\S 12.3]{GG}. Let $Y_0$ be the cocharacter lattice of $G_0$. One has
$$Y^{sc} \subset Y_0 \subset Y.$$
The root datum of $G_0$ is 
$$(X_0, \Phi; Y_0, \Phi^\vee),$$
where $X_0 \supset X^{sc}$ is the character lattice. By pull-back, one has a cover $\wt{G}_0$ of $G_0$. Then dual group $\wt{G}_0^\vee$ of $\wt{G}_0$ has root datum
$$(Y_{0,Q,n}, \Phi_{Q,n}^\vee; X_{0,Q,n}, \Phi_{Q,n}).$$
Unlike the linear algebraic case, there might be no group homomorphism from $\wt{G}^\vee$ to $\wt{G}_0^\vee$ in general. However, these two groups are related as follows.

First, we have
$$Y_0 \cap Y_{Q,n} \subset Y_{0,Q,n}.$$
Let $\mbf{H}$ be the split algebraic group over $F$ whose Langlands dual group $H^\vee$ has root datum
$$(Y_0\cap Y_{Q,n},\  \Phi_{Q,n}^\vee; \ \text{dual lattice of } Y_0 \cap Y_{Q,n}, \ \Phi_{Q,n}).$$
We obtain two homomorphisms of algebraic groups valued in $H^\vee$
\begin{equation} \label{E:H-vee}
\wt{G}^\vee \longrightarrow H^\vee \longleftarrow \wt{G}_0^\vee.
\end{equation}
By restricting the Brylinski--Deligne data $D$ and $\eta$ to $Y_0\cap Y_{Q,n}$ and $Y_{Q,n}^{sc}$ respectively, one obtains an $n$-fold cover $\wt{H}$ of $H$ such that 
$$\wt{H}^\vee \simeq H^\vee \simeq \wt{G}^\vee.$$
Suppose there exist distinguished genuine characters for $\wt{G}$ and $\wt{G}_0$, then we can extend the homomorphism in \eqref{E:H-vee} to obtain two homomorphisms of L-groups,
$$f_{G,H}: {}^L \wt{G} \longrightarrow {}^L \wt{H}$$
and
$$f_{G_0, H}: {}^L \wt{G}_0 \longrightarrow {}^L\wt{H}.$$
Note that if $(\wt{G}, \wt{G}_0)$ is an isotypic-pair, then 
$$\wt{G}_0^\vee = \wt{H}^\vee.$$
In particular, this is true when $\wt{G}_0$ is a saturated cover, i.e., when $\wt{G}_0^\vee$ is of adjoint type; in this case, $f_{G_0, H}$ is an isomorphism.

\subsection{L-parameters and some speculations} \label{SS:Lspec}
To every parameter 
$$\phi: \WD_F \longrightarrow {}^L \wt{G}$$
we associate the group of connected components of the centralizer of ${\rm Im}(\phi)$:
$$\mca{S}_\phi = \mca{S}(\phi):= \pi_0\left({\rm Cent}_{\wt{G}^\vee}({\rm Im}(\phi)/Z(\wt{G}^\vee)) \right).$$
Let $\Phi({}^L\wt{G})$ be the set of $\wt{G}^\vee$-conjugacy classes of L-parameters of $\WD_F$ valued in ${}^L\wt{G}$. Let $\Phi^{\rm en}({}^L \wt{G})$ be the set of enhanced parameters of the form
$$(\phi, \theta) \text{ with } \phi \in \Phi({}^L\wt{G}) \text{ and }  \theta \in \Irr(\mca{S}_\phi).$$
We assume that there is a local Langlands correspondence
$$\begin{tikzcd}
\Irrg(\wt{G}) \ar[r] &  \Phi^{\rm en}({}^L\wt{G}),
\end{tikzcd}$$
assigning to every $\pi \in \Irrg(\wt{G})$ a pair $(\phi_\pi, \theta_\pi) \in \Phi^{\rm en}({}^L\wt{G})$. For every $\phi\in \Phi({}^L\wt{G})$, denote by
$$\mca{L}(\phi):=\set{\pi \in \Irrg(\wt{G}): \ \phi_\pi = \phi} \subset \Irrg(\wt{G})$$
the hypothetical L-packet associated with $\phi$. Besides the one for linear algebraic groups given in \cite{Bor}, there are several additional desiderata for such a correspondence as given in \cite[\S 12]{GG}, some of which we will also explain below. These desiderata encode natural properties expected on both the representation and parameter side.

We expect on the parameter side relations among various L-groups, and also the compatibility of certain L-parameters, as depicted in the following diagram.
\begin{equation} \label{L-comp}
\begin{tikzcd}
 & & {}^L \wt{Z(G)} \ar[r, "{f_{c,z}}"] & {}^L Z(\wt{G}) \\
 \WD_F \ar[rr, "{\phi_\pi}"] \ar[rru, "{\phi_\tau}"] \ar[rrd, "{\phi_\rho}"'] & & {}^L \wt{G}  \ar[ru, "{f_{G,z}}"']  \ar[rd, "{f_{G,H}}"] \\
&  & {}^L \wt{G}_0 \ar[r, "{f_{G_o, H}}"'] & {}^L H.
\end{tikzcd}
\end{equation}
To explain this, for every $\phi \in \Phi({}^L\wt{G})$, we denote
$$\phi^\diamondsuit = f_{G,H}\circ \phi$$
and set
$$\Phi({}^L\wt{G}_0; \phi) = \set{\phi_0 \in \Phi({}^L\wt{G}_0): \  f_{G_0, H} \circ \phi_0 = \phi^\diamondsuit}.$$
Consider the induced homomorphism 
$$\mca{S}_\phi \into \mca{S}_{\phi^\diamondsuit},$$
which is actually injective (see \cite[Proposition 5.4]{Sol20}). For every $\phi_0 \in \Phi({}^L\wt{G}_0; \phi)$, there is also an embedding 
$$\mca{S}_{\phi_0} \into \mca{S}_{\phi^\diamondsuit}.$$

The following are some (refined) speculations from \cite[\S 12.1--\S 12.3]{GG} on central characters and restriction of representations to the derived subgroup.

\begin{conj} \label{C:res}
Let $\pi \in \Irrg(\wt{G})$ be  with central character $\omega_\pi$ and associated enhanced parameter $(\phi_\pi, \theta_\pi) \in \Phi^{\rm en}({}^L\wt{G})$. Let $\tau \boxtimes \rho$ be an irreducible representation of $\wt{G}^\dag=\wt{Z(G)} \cdot \wt{G}_0$ that occurs in the restriction of $\pi$. 
 \begin{enumerate}
 \item[(i)] The L-parameters  $\phi_{\omega_\pi}, \phi_\tau$ and $\phi_\rho$ satisfy the following:
\begin{enumerate}
\item[$\bullet$] $f_{G,z} \circ \phi_\pi = f_{c, z} \circ \phi_\tau$ and is equal to the parameter $\phi_{\omega_\pi}$ associated with the central character $\omega_\pi$ of $\pi$;
\item[$\bullet$] $\phi_\pi^\diamondsuit = f_{G_0, H} \circ \phi_\rho$.
\end{enumerate}
\item[(ii)] 
There exists $e(\pi)\in \N$ such that for every $\tau \in \Irrg(\wt{G}_0)$, one has
$$\dim \Hom_{\wt{G}_0}(\pi, \tau) =
\begin{cases}
0 & \text{ if } \phi_\tau \notin \Phi({}^L\wt{G}_0; \phi_\pi), \\
e(\pi) \cdot \angb{ \Ind_{\mca{S}_{\phi_\pi}}^{\mca{S}_{\phi^\diamondsuit}} \theta_\pi }{ \Ind_{\mca{S}_{\phi_\tau}}^{\mca{S}_{\phi^\diamondsuit}} \theta_\tau }_{\mca{S}_{\phi^\diamondsuit}} & \text{ if }  \phi_\tau \in \Phi({}^L\wt{G}_0; \phi_\pi),
\end{cases}
$$
where $\angb{-}{-}$ denotes the pairing of two representations of $\mca{S}_{\phi^\diamond}$. In particular, if $(\wt{G}, \wt{G}_0)$ is an isotypic-pair, then
$$\pi|_{\wt{G}_0} =e(\pi) \cdot \bigoplus_{\tau \in \mca{L}(\phi_\pi^\diamondsuit)} \angb{ \theta_\tau|_{\mca{S}_{\phi_\pi}} }{ \theta_\pi}_{\mca{S}_{\phi_\pi}} \cdot \tau,$$
where $(\phi_\pi^\diamondsuit, \theta_\tau) \in \Phi^{\rm en}({}^L\wt{G}_0)$ is the enhanced parameter associated with $\tau \in \mca{L}(\phi_\pi^\diamondsuit)$.
\item[(iii)] Let $\tau \in \Irrg(\wt{Z(G)})$ and $\rho \in \Irrg(\wt{G}_0)$ be compatible representations. Then every irreducible constituent $\pi \in \Irrg(G)$ of $\Ind_{\wt{G}^\dag}^{\wt{G}} (\tau\boxtimes \rho)$ has a parameter $\phi_\pi$ which fits into a commutative diagram \eqref{L-comp}.
\end{enumerate}
\end{conj}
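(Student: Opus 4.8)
Since Conjecture~\ref{C:res} is conditional on an established local Langlands correspondence (LLC), the realistic plan is twofold: first, reduce parts~(i)--(iii) to compatibility statements that are themselves desiderata of the LLC (compatibility with central characters, and with restriction along the inclusions $Z(\wt G)\into\wt G$, $\wt Z(G)\into\wt G$, $\wt G_0\into\wt G$ together with the passage to $\wt H$); second, verify those compatibilities unconditionally in the cases where the LLC is available, namely covering tori and, via parabolic induction, genuine principal series. The general assertion then stands as a statement internal to the expected formalism.

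For part~(i), the plan is to exploit the functoriality of Weissman's L-group construction recalled in \S\ref{SS:cov-l}. The dual maps $f_{c,z}$, $f_{G,z}$, $f_{G,H}$, $f_{G_0,H}$ were defined precisely to dualize the inclusions above; granting the desideratum that the LLC intertwines restriction of representations along an inclusion with composition of parameters along the dual map, the two equalities in~(i) follow. Indeed $f_{G,z}\circ\phi_\pi$ computes the parameter of $\omega_\pi=\pi|_{Z(\wt G)}$, while $f_{c,z}\circ\phi_\tau$ computes the parameter of $\tau$ restricted to $\wt Z(G)\cap Z(\wt G)$ and pushed to $Z(\wt G)$; these agree because $\tau$ occurs in $\pi|_{\wt Z(G)}$. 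Likewise $f_{G_0,H}\circ\phi_\rho$ and $f_{G,H}\circ\phi_\pi$ both compute the parameter of the restriction to $\wt H$ of a common constituent. For covering tori this desideratum is known, so~(i) holds unconditionally there (this is Proposition~\ref{P:func-tor}); one then bootstraps to genuine principal series using compatibility of the LLC with parabolic induction (cf.\ Lemma~\ref{L:kc}) together with the explicit decomposition of Theorem~\ref{T:decT}.

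For part~(ii), I would follow the template of Adler--Prasad and Keys in the linear case. The Clifford--Mackey analysis of \S\ref{SSS:res} already gives $\pi|_{\wt G_0}=\bigoplus_{g\in\wt G/S_\rho}({}^g\rho)^{\oplus e}$, so $\dim\Hom_{\wt G_0}(\pi,\tau)$ equals $e$ when $\tau$ lies in the orbit $\{{}^g\rho\}$ and $0$ otherwise. The content is to match, on the dual side: the orbit $\{{}^g\rho\}$ with $\Phi({}^L\wt G_0;\phi_\pi)$; the stabilizer data governing $S_\rho$ with the cokernel of $\mca S_{\phi_\tau}\into\mca S_{\phi^\diamondsuit}$ modulo the action of $\mca S_{\phi_\pi}$; and the scalar $e$ together with the internal structure of the $S_\rho$-module $\rho^{\oplus e}$ with the pairing $\angb{\Ind_{\mca S_{\phi_\pi}}^{\mca S_{\phi^\diamondsuit}}\theta_\pi}{\Ind_{\mca S_{\phi_\tau}}^{\mca S_{\phi^\diamondsuit}}\theta_\tau}_{\mca S_{\phi^\diamondsuit}}$. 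Here $e(\pi)$ should absorb the ``extra'' multiplicity produced by $Z(\wt T_0)\not\subset Z(\wt T)$, which in the unramified principal series case is exactly $|\msc X_{Q,n}^{\mfr c}|$ by Theorem~\ref{T:decT}. Carrying this out requires an LLC in which the bijection $\mca L(\phi)\leftrightarrow\Irr(\mca S_\phi)$ is compatible with the twisting action of $\mfr Q^\dag$ (it should correspond to tensoring by characters of a suitable finite quotient, mirroring the surjections $f_\Gamma$ and $\tilde f_\Gamma$ of \S\ref{S:unLP}) and with Frobenius reciprocity along $f_{G_0,H}$. For genuine principal series I would verify all of this unconditionally by combining Theorem~\ref{T:decT}, Theorem~\ref{T:in-re} and the reducibility criteria of Theorem~\ref{T:irre-c} with the Knapp--Stein $R$-group description of the component groups; this is the route behind Theorem~\ref{T:uni-func}, which gives $e(I(\chi))=|\msc X_{Q,n}^{\mfr c}|$ and specializes to the displayed isotypic-pair formula when $Z(\wt T_0)\subset Z(\wt T)$.

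Part~(iii) is the induction dual of~(ii): given compatible $\tau\in\Irrg(\wt Z(G))$ and $\rho\in\Irrg(\wt G_0)$, one forms $\Ind_{\wt G^\dag}^{\wt G}(\tau\boxtimes\rho)$, decomposes it into irreducibles by the Clifford--Mackey theory of \S\ref{SSS:res} (choosing $H^\flat,\rho^\flat$ as in Lemma~\ref{L:001}), and reads off each constituent's parameter from~(i) applied in reverse: each $\pi$ occurring contains $\tau\boxtimes\rho$ in its restriction, so the diagram~\eqref{L-comp} commutes for $\phi_\pi$ by the equalities of~(i). The genuine obstacle, common to all three parts, is that the LLC is unavailable in general, so every reduction above lands on a desideratum rather than a theorem; the proposal therefore yields a \emph{complete} proof only for covering tori (via the explicit correspondence of \cite{Mc1, We1, We5, We6, GG}) and, by parabolic induction and the structural results of \S\ref{S:RI}, for genuine principal series, while the general statement remains conditional. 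I expect the matching of the scalar $e(\pi)$ and of the $S_\rho$-module structure in~(ii) with the component-group pairing to be the hardest step, precisely because it is the point at which one must know that the parametrization $\mca L(\phi)\leftrightarrow\Irr(\mca S_\phi)$ interacts correctly with the Clifford-theoretic twisting.
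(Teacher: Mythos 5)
This statement is a conjecture, and the paper offers no general proof of it — it only records verifications in special cases, namely Proposition~\ref{P:func-tor} for the parameter compatibilities (i) and (iii) in the case of genuine principal series (resting on the LLC for covering tori), and Theorem~\ref{T:uni-func} with Corollary~\ref{C:uni-ac} for the multiplicity formula (ii) with $e(I(\chi))=\val{\msc{X}_{Q,n}^{\mfr{c}}}$. Your plan coincides with this: the reduction of (i) and (iii) to functoriality of the L-group construction plus the torus LLC, and of (ii) to the Keys-style Hom-space isomorphism combined with the orbit/torsor analysis of the unramified constituents, is exactly the route the paper takes. The one caveat is scope: you claim an unconditional proof of (ii) for genuine principal series, but the $R$-group/component-group machinery behind Theorem~\ref{T:uni-func} (the isomorphism $R_\chi\simeq\mca{S}_\chi$ and the decomposition $I(\chi)=\bigoplus_{\tau\in\Irr(\mca{S}_\chi)}\pi_\tau$ via normalized intertwining operators) requires $I(\chi)$ to be \emph{unitary} unramified, so both the paper's verification of (ii) and what your method can deliver are limited to that case; for non-unitary principal series the constituents are not parametrized this way and the argument does not apply.
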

Part (ii) is clearly motivated from the case of linear algebraic groups, especially the work of Silberger \cite{Sil79}, Keys \cite{Key3}, Gelbart--Knapp \cite{GeKn1, GeKn2}, Adler--Prasad \cite{AP1}; see also that of Ban, Choiy and Goldberg \cite{BCG18, Cho19}.

\subsection{Functoriality for genuine principal series} \label{SS:func-ps}
We first briefly recall the local Langlands correspondence for covering tori. The description here follows closely \cite[\S 10]{We6}. It gives the construction of the L-group ${}^L\wt{T}$ of $\wt{T}$ and establish the local Langlands correspondence for $\wt{T}$ along the same line of the construction.

Let $T = Y\otimes F^\times$ be a linear torus and $\wt{T}$ an $n$-fold cover associated with a quadratic form $Q: Y\to \Z$. Let 
$$i_{Q,n}: T_{Q,n} \to T$$
 be the isogeny induced from the inclusion $Y_{Q,n} \subset Y$. Recall that the preimage of 
 $$T^\dag:={\rm Im}(i_{Q,n}) \subset T$$
 inside $\wt{T}$ is just the center $Z(\wt{T})$.  Using the fixed embedding $\varepsilon: \mu_n \into \C^\times$ we obtain the push-out $\varepsilon_*(Z(\wt{T}))$ of $Z(\wt{T})$. At the same time, any genuine character $\chi$ of $Z(\wt{T})$ gives rise to a splitting

$$\begin{tikzcd}
s_\chi:  \varepsilon_*(Z(\wt{T})) \ar[r] & \C^\times
\end{tikzcd}$$ 
given by
$$\begin{tikzcd}
s_\chi:  [(z, \wt{t})] \ar[r, mapsto] & z\cdot \chi^{-1}(\wt{t}),
\end{tikzcd}$$
where $[(z, \wt{t})]$ denote the class of 
$$(z, \wt{t}) \in \C^\times \times Z(\wt{T})$$
 in $\varepsilon_*(Z(\wt{T}))$. We illustrate this by using the following diagram
$$\begin{tikzcd}
\mu_n \ar[d, hook, "\varepsilon"] \ar[r, hook] & Z(\wt{T}) \ar[r, two heads] \ar[ld, "{\chi}"']  \ar[d] & T^\dag \ar[d, equal] \\
\C^\times \ar[r, hook] & \varepsilon_*(Z(\wt{T})) \ar[l, bend left=50, "{s_\chi}"] \ar[r, two heads] & T^\dag.
\end{tikzcd}$$ 
 Clearly $s_\chi$ entails a splitting $\phi_\chi$ as in
 $$\begin{tikzcd}
 \C^\times \ar[r, hook] & \varepsilon_*(Z(\wt{T})) \ar[r, two heads] & T^\dag \ar[l, bend left=50, "\phi_\chi"]
 \end{tikzcd}$$
 given by
 $$\begin{tikzcd}
\phi_\chi:\  t \ar[r] &  {[(\chi(\wt{t}), \wt{t})]},
\end{tikzcd}$$
where $\wt{t}\in Z(\wt{T})$ is any lifting of $t \in \wt{T}^\dag$.

By definition 
$$\wt{T}^\vee =X_{Q,n} \otimes \C^\times,$$ 
where $X_{Q,n}=\Hom(Y_{Q,n}, \Z)$ is the lattice dual to $Y_{Q,n}$. By abuse of notation, we still use $i_{Q,n}$ to denote the naturally induced map 
$$i_{Q,n}: X_{Q,n} \otimes T_{Q,n} \longrightarrow X_{Q,n} \otimes T^\dag.$$
Consider the composite
$$\begin{tikzcd}
\mfr{m}: \WD_F \ar[r, "{\rm rec}"] & F^\times \ar[r, "f"] & X_{Q,n}\otimes T_{Q,n}  \simeq \Hom(Y_{Q,n}, T_{Q,n}),
\end{tikzcd}$$
where the first map is the reciprocity map of class field theory sending a geometric Frobenius to the uniformizer $\varpi \in F^\times$ and trivial on $\SL_2(\C) \subset \WD_F$, and the second map is given by 
$$f(a)(y)=y\otimes {\rm rec}(a), \ y\in Y_{Q,n}.$$

The L-group ${}^L \wt{T}$ is defined to be the pull-back of $X_{Q,n}\otimes \varepsilon_*(Z(\wt{T}))$ via $i_{Q,n} \circ \mfr{m}$:
\begin{equation}\label{LT}
\begin{tikzcd}
\wt{T}^\vee \ar[r, hook]  & X_{Q,n} \otimes \varepsilon_*(Z(\wt{T})) \ar[r, two heads] & X_{Q,n}\otimes T^\dag \ar[l, bend right=30, "\phi_\chi"']  \\
\wt{T}^\vee \ar[u, equal] \ar[r, hook]  & {}^L\wt{T} \ar[u] \ar[r, two heads] & \WD_F \ar[u, "{i_{Q,n}\circ \mfr{m}}"'] \ar[l, bend left=30, "\phi_\chi"].
\end{tikzcd}
\end{equation}
Here, the bottom splitting $s_\chi$ of ${}^L\wt{T}$ over $\WD_F$ is the one inherited from the splitting of the top extension.

The above construction
$$\begin{tikzcd}
\Irrg(\wt{T}) \ar[r, hook] & \Phi({}^L\wt{T})
\end{tikzcd}$$
given by 
$$i(\chi) \mapsto \phi_\chi$$
is the local Langlands correspondence (LLC) for covering torus, which is an injective map.

We want to obtain functoriality of LLC with respect to the restriction to a subgroup of $\wt{T}$, which is associated with a $W$-stable lattice $J \subset Y$. Note that $J_{Q,n}$ may not be a sublattice of $Y_{Q,n}$ in general, and we have
$$J_{Q,n} \supset (J \cap Y_{Q,n}) \subset Y_{Q,n}.$$
Let $T_J$ be the torus associated with $J$ and $T_{J,\natural}$ the torus associated with $J\cap Y_{Q,n}$. We have a relation
$$\begin{tikzcd}
Z(\wt{T}_J) & Z(\wt{T}_{J,\natural}) = \wt{T}_{J,\natural} \ar[l, hook'] \ar[r, hook] & Z(\wt{T}).
\end{tikzcd}$$

\begin{lm} \label{L:incs}
Let $i(\chi_J) \in \Irrg(\wt{T}_J)$ and $i(\chi) \in \Irrg(\wt{T})$. Then $i(\chi_J) \subset i(\chi)|_{\wt{T}_J}$ if and only if $\chi_J$ and $\chi$ agree on $\wt{T}_{J,\natural}$, or equivalently, the following diagram
$$\begin{tikzcd}
Z(\wt{T}_J) \ar[rd, "\chi_J"'] & \wt{T}_{J,\natural} \ar[d, dashed] \ar[l, hook'] \ar[r, hook] & Z(\wt{T})  \ar[ld, "\chi"] \\
& \C^\times
\end{tikzcd}$$
commutes.
\end{lm}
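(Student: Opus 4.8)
The plan is to reduce the statement to the Clifford--Mackey description of the restriction of $i(\chi)$ to $\wt{T}_J$ recorded in \S\ref{SSS:res}, combined with the fact that an irreducible genuine representation of a Heisenberg-type group is determined by its central character. First I would recall that $i(\chi) = \Ind_{\wt{A}}^{\wt{T}}(\tchi)$ for any maximal abelian $\wt{A} \supset Z(\wt{T})$ and any extension $\tchi$ of $\chi$, and that the isomorphism class depends only on $\chi$; likewise $i(\chi_J) = \Ind_{\wt{A}_J}^{\wt{T}_J}(\tchi_J)$ depends only on $\chi_J$. By Mackey's theory (as in the decomposition preceding Definition \ref{D:isop}, applied to the pair $(\wt{T}, \wt{T}_J)$), the restriction $i(\chi)|_{\wt{T}_J}$ is a direct sum of representations $\Ind_{\wt{A}_\gamma}^{\wt{T}_J}({}^\gamma\tchi)$ indexed by $\gamma \in \wt{T}_J\backslash \wt{T}/\wt{A}$. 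Since $\wt{T}_J$ is of Heisenberg type, the isomorphism class of each summand is governed by the restriction of ${}^\gamma\tchi$ to $Z(\wt{T}_J)$; hence $i(\chi_J) \subset i(\chi)|_{\wt{T}_J}$ if and only if $\chi_J = {}^\gamma\tchi|_{Z(\wt{T}_J)}$ for some $\gamma$.

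Next I would analyze when such a $\gamma$ exists. The group $\wt{T}_{J,\natural}$, associated with $J\cap Y_{Q,n}$, sits inside both $Z(\wt{T}_J)$ and $Z(\wt{T})$; moreover it lies in the center of $\wt{T}$, so conjugation by any $\gamma\in\wt{T}$ acts trivially on it. Therefore the restriction ${}^\gamma\tchi|_{\wt{T}_{J,\natural}}$ equals $\tchi|_{\wt{T}_{J,\natural}} = \chi|_{\wt{T}_{J,\natural}}$, independently of $\gamma$. Consequently, if $i(\chi_J)\subset i(\chi)|_{\wt{T}_J}$, then comparing central characters on $\wt{T}_{J,\natural}\subset Z(\wt{T}_J)$ forces $\chi_J|_{\wt{T}_{J,\natural}} = \chi|_{\wt{T}_{J,\natural}}$, which is exactly the commutativity of the stated diagram. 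This gives the ``only if'' direction.

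For the converse, suppose $\chi_J$ and $\chi$ agree on $\wt{T}_{J,\natural}$. The character $\chi_J$ is a genuine character of $Z(\wt{T}_J)$ restricting to the genuine character $\chi|_{\wt{T}_{J,\natural}}$ of $\wt{T}_{J,\natural}$; I would extend $\chi_J$ (pulled back appropriately, or conjugated) to build a character ${}^\gamma\tchi$ of some $\wt{A}_\gamma$ whose restriction to $Z(\wt{T}_J)$ equals $\chi_J$. Concretely, since $\tchi$ is an extension of $\chi$ to a maximal abelian $\wt{A}$, and $\wt{A}_\gamma = (\gamma\wt{A}\gamma^{-1})\cap\wt{T}_J$ contains $\wt{T}_{J,\natural}$, the compatibility of $\chi_J$ and $\chi$ on $\wt{T}_{J,\natural}$ together with the maximal-abelian-extension lemma (Lemma \ref{L:001} and the discussion around \eqref{E:ext}) lets one choose $\gamma$ and the extension so that ${}^\gamma\tchi|_{Z(\wt{T}_J)} = \chi_J$; by Heisenberg uniqueness this yields $i(\chi_J)$ as a summand. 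The main obstacle is precisely this converse step: one must verify that the constraint imposed on $\wt{T}_{J,\natural}$ does not over- or under-determine the extension to a maximal abelian subgroup of $\wt{T}_J$, i.e. that the space of compatible extensions is nonempty — this is a bookkeeping argument with the lattices $J$, $J\cap Y_{Q,n}$, $J_{Q,n}$, and $Y_{Q,n}$ and the Hilbert-symbol commutator pairing, and it is where the hypothesis that $J$ is $W$-stable and the structure of $\wt{T}_{J,\natural}$ as the full preimage of its image (so $\wt{T}_{J,\natural} = Z(\wt{T}_{J,\natural})$) are used.
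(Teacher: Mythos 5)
Your ``only if'' direction is fine and is essentially what the paper means by calling that direction clear: $\wt{T}_{J,\natural}$ lies in $Z(\wt{T})$, so it is fixed by conjugation and every irreducible constituent of $i(\chi)|_{\wt{T}_J}$ has central character restricting to $\chi$ on $\wt{T}_{J,\natural}$.

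The converse, however, contains a genuine gap, and it is exactly the step you yourself flag as ``the main obstacle.'' In your Mackey setup the maximal abelian subgroup $\wt{A}$ and the extension $\tchi$ are fixed at the outset, and $i(\chi_J)$ occurs in the $\gamma$-summand $\Ind_{\wt{A}_\gamma}^{\wt{T}_J}({}^\gamma\tchi)$ precisely when $\chi_J$ agrees with ${}^\gamma\tchi$ on $\wt{A}_\gamma \cap Z(\wt{T}_J)$; note that the condition ``${}^\gamma\tchi|_{Z(\wt{T}_J)}=\chi_J$'' as you write it is not even well posed, since $Z(\wt{T}_J)$ need not be contained in $\wt{A}_\gamma$ (this is the dashed arrow in \S\ref{SS:res-ps}). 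The hypothesis only gives agreement on $\wt{T}_{J,\natural}$, which can be strictly smaller than $\wt{A}_\gamma\cap Z(\wt{T}_J)$, so you must actually produce a double coset $\gamma$ for which the stronger compatibility holds; Lemma \ref{L:001} and \eqref{E:ext} do not supply this, and the commutator-pairing ``bookkeeping'' that would supply it is left undone. The paper avoids the issue entirely by exploiting the freedom in the inducing data rather than a Mackey decomposition relative to a fixed $(\wt{A},\tchi)$: since $i(\chi)$ is independent of these choices, one first picks a maximal abelian $\wt{A}_J\subset\wt{T}_J$ and an extension $\chi_J'$ of $\chi_J$ to $\wt{A}_J$, notes that the hypothesis makes $\chi_J'\otimes\chi$ a well-defined character of the abelian group $\wt{A}_J\cdot Z(\wt{T})$, extends it to a character $\chi'$ of a maximal abelian $\wt{A}\subset\wt{T}$ containing $\wt{A}_J\cdot Z(\wt{T})$, and then a single application of Frobenius reciprocity gives $\Hom_{\wt{T}_J}(i(\chi_J), i(\chi)|_{\wt{T}_J}) = \Hom_{\wt{A}_J}(\chi_J', \Ind_{\wt{A}}^{\wt{T}}(\chi'))\neq 0$. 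If you insist on your route, you would still have to prove the existence of the required $\gamma$, which in effect amounts to redoing this choice-of-extension argument in less convenient coordinates.
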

\begin{proof}
The only if part is clear, and it suffices to prove the if part. Assume $\chi_J$ and $\chi$ agree on $\wt{T}_{J,\natural}$, then we get a genuine character
$$\chi_J \otimes \chi: \ Z(\wt{T}_0) \cdot Z(\wt{T}) \longrightarrow \C^\times.$$
Let $\wt{A}_0 \subset \wt{T}_0$ be a maximal abelian subgroup of $\wt{T}_0$. We extend $\chi_J \otimes \chi$ to a character
$$\chi_0'\otimes \chi:\  \wt{A}_0 \cdot Z(\wt{T}) \longrightarrow \C^\times,$$
where 
$$\chi_0': \wt{A}_0 \longrightarrow \C^\times$$
 is an extension of $\chi_J$. We can find a maximal abelian subgroup $\wt{A} \subset \wt{T}$ containing $\wt{A}_0 \cdot Z(\wt{T})$, and let 
$$\chi': \ \wt{A} \longrightarrow \C^\times$$
be extending $\chi_0' \otimes \chi$. Note that up to isomorphism class, we have
$$i(\chi_J) \simeq \Ind_{\wt{A}_0}^{\wt{T}_0} (\chi_0') \text{ and } i(\chi) \simeq \Ind_{\wt{A}}^{\wt{T}} (\chi').$$
Now Frobenius reciprocity gives that
$$ \Hom_{\wt{T}_0}(i(\chi_J), i(\chi)) = \Hom_{\wt{A}_0}( \chi_0', \Ind_{\wt{A}}^{\wt{T}}(\chi')) \ne 0,$$
as $\chi'$ extends $\chi_0'\otimes \chi$. This shows that $i(\chi_J) \subset i(\chi)|_{\wt{T}_0}$.
\end{proof}

If $\chi_J$ and $\chi$ agree on $\wt{T}_{J,\natural}$, then we denote $\chi_{J,\natural} = \chi|_{\wt{T}_{J,\natural}}$. By the construction of L-groups, we have
$$\begin{tikzcd}
{}^L \wt{T}_J \ar[r] &  {}^L \wt{T}_{J,\natural}  & {}^L\wt{T} \ar[l].
\end{tikzcd}$$
If $\chi$ and $\chi_J$ are compatible, then by the LLC for covering torus, the two parameters $\phi_{\chi_J}$ and $\phi_{\chi}$ are compatible, i.e., the following diagram commutes:
\begin{equation} \label{D:funT}
\begin{tikzcd}
{}^L \wt{T}_J \ar[r] &  {}^L \wt{T}_{J,\natural}  & {}^L\wt{T} \ar[l] \\
& \WD_F \ar[lu, "\phi_{\chi_J}"]  \ar[u]  \ar[ru, "\phi_\chi"'],
\end{tikzcd}
\end{equation}
where the middle vertical arrow is the parameter associated with $\chi_{J,\natural}$.

\begin{prop}  \label{P:func-tor}
Let $\pi:=I(\chi)$ and $\rho:=I(\chi_0)$ be irreducible genuine principal series of $\wt{G}$ and $\wt{G}_0$ respectively. Let $\tau \in \Irrg(\wt{Z(G)})$.
\begin{enumerate}
\item[(i)] If $\tau \boxtimes \rho \in \Irr(\wt{G}^\dag)$ occurs in $I(\chi)|_{\wt{G}^\dag}$, then the L-parameters $\phi_\tau$ and $\phi_\rho$ are such that the diagram in \eqref{L-comp} commutes. Moreover, if ${}^L\wt{G}_0= {}^L \wt{H}$, then $I(\chi)|_{\wt{G}_0}$ is an isotypic sum of a genuine principal series of $\wt{G}_0$.
\item[(ii)] The parameter $\phi_\pi$ of every irreducible constituent $\pi$ of $\Ind_{\wt{G}^\dag}^{\wt{G}} (\tau\boxtimes \rho)$ satisfies a commutative diagram \eqref{L-comp} involving $\phi_\tau$ and $\phi_\rho$.
 \end{enumerate}
\end{prop}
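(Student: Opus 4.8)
The plan is to reduce the statement to the functoriality of the local Langlands correspondence for covering tori, encoded in the commutative diagram \eqref{D:funT}, applied to the $W$-stable sublattices of $Y$ that cut out $Z(\wt{G})$, $\wt{Z(G)}$, $\wt{G}_0$ and $\wt{H}$. Since $Z(\mbf{G})$ is connected, $\wt{Z(G)}$ is itself a cover of the torus $Z(G)$, so $\tau = i(\omega_\tau)$ for a genuine character $\omega_\tau$ of $Z(\wt{Z(G)})$ and $\phi_\tau = \phi_{\omega_\tau}$; likewise $\phi_\pi$ (resp.\ $\phi_\rho$) is the composite of the torus parameter $\phi_\chi$ (resp.\ $\phi_{\chi_0}$) with the $L$-embedding ${}^L\wt{T} \hookrightarrow {}^L\wt{G}$ (resp.\ ${}^L\wt{T}_0 \hookrightarrow {}^L\wt{G}_0$), because $I(\chi)$ and $I(\chi_0)$ are genuine principal series. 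The one structural input I would isolate first is that each of $f_{c,z}$, $f_{G,z}$, $f_{G,H}$, $f_{G_0,H}$, precomposed with the appropriate $L$-embedding of a covering torus, coincides with the $L$-group homomorphism induced by the corresponding inclusion of lattices in the sense of \eqref{D:funT}; this comes from the functoriality of Weissman's $L$-group and the pull-back description \eqref{LT} of ${}^L\wt{T}$.

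For (i), I would first pin down the occurring constituents. By Lemma \ref{L:kc}, $I(\chi)|_{\wt{G}_0}$ is the direct sum of the $\Ind_{\wt{B}_0}^{\wt{G}_0}(\sigma_0) = I(\chi_0')$ over the constituents $\sigma_0 = i(\chi_0') \subset i(\chi)|_{\wt{T}_0}$; hence $\rho = I(\chi_0)$ forces $i(\chi_0) \subset i(\chi)|_{\wt{T}_0}$, and Lemma \ref{L:incs} then forces $\chi_0$ and $\chi$ to agree on $\wt{T}_0 \cap Z(\wt{T})$, the torus associated with $Y_0 \cap Y_{Q,n}$. On the other hand $Z(\wt{G}) = \wt{Z(G)} \cap Z(\wt{T}) \subset Z(\wt{Z(G)})$ by Lemma \ref{L:Gz} and \eqref{Y-zc}, and matching central characters gives $\omega_\tau|_{Z(\wt{G})} = \chi|_{Z(\wt{G})} = \omega_\pi$. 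Since $\wt{H}$ has maximal torus associated with $Y_0 \cap Y_{Q,n}$ and $Z(\wt{G})$ is associated with $Y_z = Y_c \cap Y_{Q,n}$, feeding these two character-agreements into \eqref{D:funT} shows that $f_{G,z}\circ\phi_\pi$ and $f_{c,z}\circ\phi_\tau$ are both the torus parameter of $\chi|_{Z(\wt{G})}$, and that $\phi_\pi^\diamondsuit = f_{G,H}\circ\phi_\pi$ and $f_{G_0,H}\circ\phi_\rho$ are both the torus parameter of $\chi|_{\wt{T}_0 \cap Z(\wt{T})}$. Thus \eqref{L-comp} commutes. For the ``moreover'': ${}^L\wt{G}_0 = {}^L\wt{H}$ forces $\wt{G}_0^\vee = \wt{H}^\vee$, i.e.\ $Y_{0,Q,n} = Y_0 \cap Y_{Q,n}$, which by Corollary \ref{C:Zinc} means $(\wt{G}, \wt{G}_0)$ is an isotypic-pair, so Corollary \ref{C:decTi} gives $I(\chi)|_{\wt{G}_0} = I(\omega)^{\oplus |\msc{X}_{Q,n}^\Gamma|}$ with $\omega = \chi|_{Z(\wt{T}_0)}$, an isotypic sum of a genuine principal series.

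For (ii), writing $\tau = i(\omega_\tau)$ and $\rho = I(\chi_0)$, the second identity of Lemma \ref{L:kc} (with $\wt{P} = \wt{B}$, $\wt{M} = \wt{T}$) gives $\Ind_{\wt{G}^\dag}^{\wt{G}}(\tau \boxtimes \rho) = \Ind_{\wt{B}}^{\wt{G}}\,\Ind_{\wt{T}^\dag}^{\wt{T}}(\tau \boxtimes i(\chi_0))$, and $\Ind_{\wt{T}^\dag}^{\wt{T}}(\tau \boxtimes i(\chi_0))$ is a direct sum of irreducible genuine representations $i(\chi')$ of $\wt{T}$ since $\wt{T}$ is of Heisenberg type. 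Hence any irreducible constituent $\pi$ of $\Ind_{\wt{G}^\dag}^{\wt{G}}(\tau \boxtimes \rho)$ is a genuine principal series $I(\chi')$, and by Frobenius reciprocity together with the semisimplicity of restriction to the normal finite-index subgroup $\wt{G}^\dag$, the representation $\tau \boxtimes \rho$ occurs in $I(\chi')|_{\wt{G}^\dag}$. Applying part (i) to $I(\chi')$ then yields the commutative diagram \eqref{L-comp} for $\phi_\pi$ involving $\phi_\tau$ and $\phi_\rho$.

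The main obstacle is precisely the structural input flagged at the outset: verifying that $f_{c,z}$, $f_{G,z}$, $f_{G,H}$ and $f_{G_0,H}$ really do reduce, on the level of covering tori, to the character-restriction maps of \eqref{D:funT}. This requires unwinding the pull-back construction \eqref{LT} of ${}^L\wt{T}$ and the functoriality of Weissman's $L$-group, and, importantly, choosing the distinguished characters $\chi_\psi$ used to split the $L$-groups of $\wt{T} \subset \wt{G}$, of $\wt{Z(G)}$, of $\wt{G}_0$ and of $\wt{H}$ compatibly. Once this bookkeeping is settled, everything else is an assembly of Lemma \ref{L:kc}, Lemma \ref{L:incs}, Lemma \ref{L:Gz}, Corollary \ref{C:Zinc} and Corollary \ref{C:decTi} as above.
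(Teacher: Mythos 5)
Your proposal is correct and takes essentially the same route as the paper: both arguments reduce everything to the functoriality diagram \eqref{D:funT} for covering tori, applied with $J=Y_c$ (relating $\phi_\tau$, $\phi_{\omega_\pi}$ and $\phi_\chi$) and $J=Y_0$ (relating $\phi_{\chi_0}$ and $\phi_\chi$), combined with Lemma \ref{L:kc}, Lemma \ref{L:incs} and Frobenius reciprocity for part (ii). The only caveat is that for the ``moreover'' clause you should not invoke Corollary \ref{C:decTi}, which is a tame-case ($p\nmid n$) statement, whereas the proposition carries no such hypothesis; the paper instead notes that ${}^L\wt{G}_0={}^L\wt{H}$ forces $\phi_{\chi_0}$ to be uniquely determined by $\phi_\chi$, and your own chain through Corollary \ref{C:Zinc} (isotypic-pair, hence $i(\chi)|_{\wt{T}_0}$ isotypic) together with Lemma \ref{L:kc} already yields the general statement, so the repair is immediate.
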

\begin{proof}
For (i), we first consider $\phi_\pi$ and $\phi_\tau$. Note that we have a natural map ${}^L\wt{T} \longrightarrow {}^L\wt{G}$, see \S \ref{SS:L-g}. Taking $J=Y_c$, we thus have a commutative diagram from \eqref{D:funT}:
\begin{equation}\label{D:c1}
\begin{tikzcd}
& {}^L\wt{Z(G)} \ar[rd] \\
\WD_F \ar[ru, "{\phi_\tau}"] \ar[rd, "{\phi_\chi}"'] \ar[rr, "{\phi_{\omega_\pi}}"] & & {}^LZ(\wt{G}) \\
& {}^L \wt{T} \ar[r, hook] \ar[ru] & {}^L\wt{G} \ar[u, "{f_{G,z}}"'].
\end{tikzcd}
\end{equation}
Similarly, taking $J=Y_0$ coupled with \eqref{D:funT} give another diagram:
\begin{equation} \label{D:c2}
\begin{tikzcd}
& {}^L\wt{T} \ar[r, hook]  \ar[d] & {}^L\wt{G} \ar[d, "{f_{G,H}}"] \\
\WD_F \ar[ru, "{\phi_\chi}"] \ar[r, "{\phi_{\chi_J}}"]  \ar[rd, "{\phi_{\chi_0}}"'] & {}^L\wt{T}_{Y_0,\natural} \ar[r, hook] & {}^L\wt{H} \\
& {}^L\wt{T}_0 \ar[u] \ar[r, hook] & {}^L\wt{G}_0 \ar[u, "{f_{G_0,H}}"'],
\end{tikzcd}
\end{equation}
where we have $\phi_{\chi_0}:=\phi_{\chi_J}$. The above two diagrams show the commutativity of \eqref{L-comp}.  If ${}^L\wt{G}_0 ={}^L\wt{H}$, then $\phi_{\chi_0}$ is uniquely determined by $\phi_\chi$, and thus 
$$i(\chi)|_{\wt{T}_0}= e \cdot i(\chi_0)$$
 is an isotypic sum of $i(\chi_0) \in \wt{T}_0$. This shows that $I(\chi)|_{\wt{G}_0} =e \cdot I(\chi_0)$ is an isotypic sum. The proof of (i) is completed.

For (ii), assume $\pi=I(\chi) \subset \Ind_{\wt{G}^\dag}^{\wt{G}}(\tau\boxtimes I(\chi_0))$, it is easy to see that \eqref{D:c1} commutes. On the other hand, using Frobenius reciprocity, one has $I(\chi_0) \subset I(\chi)|_{\wt{G}_0}$ and thus $i(\chi_0) \subset i(\chi)$. The commutativity of \eqref{D:c2} then follows from Lemma \ref{L:incs}. This concludes the proof.
\end{proof}

\subsection{Metaplectic tensor product  for $\wt{\GSp}_{2r}$} \label{SS:mtp-GSp}
In this subsection, we investigate for $\wt{\GSp}_{2r}$ an analogue of the metaplectic tensor product construction for Kazhdan--Patterson covers studied in \cite{Mez04, Tak3, Tak4, Cai1}. The parameter side interpretation for such a construction is given in \cite{Gan17} using the formalism of L-groups as in \cite{We3, We6}. More precisely, consider a Levi subgroup
$$M= \GL_{r_1} \times \GL_{r_2} \times ... \times \GL_{r_k} \subset \GL_r.$$
It is well-known (by checking on the covering tori of $\GL_{r_i}$'s for example) that the blocks $\wt{\GL}_{r_i} \subset \wt{M}$ do not commute. Thus, the representation of $\wt{M}$ can not be simply reduced to that of each $\wt{\GL}_{r_i}$. However, given with $\pi_i \in \Irrg(\wt{\GL}_{r_i})$ satisfying certain condition constrained by the central character of $Z(\wt{G})$, Mezo \cite{Mez04} gave a natural construction of a representation of $\wt{M}$. Coarsely, the construction goes through several steps as follows:
\begin{enumerate}
\item[(1)] For every $i$ and $n\in \N$, we denote 
$$\GL_{r_i}^\anga{n}=\set{g\in \GL_{r_i}: \det(g) \in F^{\times n}}.$$
One can check that for $i\ne j$, the two covering groups $\wt{\GL}_{r_i}^\anga{n}$ and $\wt{\GL}_{r_j}^\anga{n}$ commute with each other. Hence one can define
$$\wt{M}^\anga{n}:=\wt{\GL}_{r_1}^\anga{n} \times_{\mu_n} \times ... \times_{\mu_n} \wt{\GL}_{r_k}^\anga{n}.$$
For each $\pi_i \in \wt{\GL}_{r_i}$, let $\sigma_i \in \Irrg(\wt{\GL}_{r_i}^\anga{n})$ be an irreducible summand in the restriction of $\pi_i$. We have a representation
$$\sigma:= \sigma_1 \boxtimes ... \boxtimes \sigma_k$$
of $\wt{M}^\anga{n} \subset \wt{M}$.
\item[(2)] Pick an irreducible genuine character $\omega: Z(\wt{\GL}_r) \longrightarrow \C^\times$ such that
\begin{equation} \label{E:f-rel}
\omega = \sigma \text{ on } Z(\wt{\GL}_r) \cap \wt{M}^\anga{n}.
\end{equation}
This gives an irreducible representation $\omega \boxtimes \sigma $ of $Z(\wt{\GL}_r) \cdot \wt{M}^\anga{n}$.
\item[(3)] One extends $\omega \boxtimes \sigma$ as much as possible to a representation $(\omega\boxtimes \sigma)'$ of a subgroup $\wt{M}'$ such that the Mackey's irreducibility criteria are satisfied. We thus obtain
$$\tilde{\otimes}_i \pi_i:= \Ind_{\wt{M}'}^{\wt{M}} ((\omega\boxtimes \sigma)'),$$
which is an irreducible representation of $\wt{M}$. 
\end{enumerate}
It is shown in \cite{Mez04} that the representation $\tilde{\otimes}_i \pi_i$ depends only on $\pi_i$'s and $\omega$, and is independent of the intermediate choices $\sigma_i$, $\wt{M}'$ and $(\omega \boxtimes \sigma)'$. We highlight that such independence relies on the crucial property that for every $\pi \in \Irrg(\wt{M})$, one has
$${\rm supp}(\Theta_\pi) \subset Z(\wt{\GL}_r) \cdot \wt{M}^\anga{n};$$
thus, it follows from (the proof of) Proposition \ref{P:con-con} that $\Ind_{\wt{M}^\anga{n}}^{ \wt{M} }( \omega \boxtimes \sigma )$ is an isotypic sum of an irreducible representation of $\wt{M}$, which is exactly $\tilde{\otimes}_i \pi_i$ in (3) above. This observation was explicated in \cite[Proposition 4.6]{Tak3}. In view of this, we could replace (3) above by the following:
\begin{enumerate}
\item[(3)'] One has
$$\Ind_{\wt{M}^\anga{n}}^{ \wt{M} }( \omega \boxtimes \sigma ) = m \cdot \pi \text{ for a certain } \pi \in \Irrg(\wt{M}).$$
Now the representation $\tilde{\otimes}_i \pi_i:=\pi \in \Irrg(\wt{M})$ is the one we seek.
\end{enumerate}
The above construction gives a well-defined surjective map (for the surjectivity, see \cite[Lemma 4.4]{Tak3})
$$\begin{tikzcd}
\tilde{\otimes}: ( \Irrg(\wt{\GL}_{r_1}) \times ... \times \Irrg(\wt{\GL}_{r_k}) \times \Irrg(Z(\wt{\GL}_{r})))^\heartsuit \ar[r, two heads] & \Irrg(\wt{M}).
\end{tikzcd} 
$$ 
The superscript $(-)^\heartsuit$ indicates the subset of 
$$(\pi_1, ..., \pi_r, \omega)$$
satisfying the relation \eqref{E:f-rel}.  For every character $\chi_i$ of $F^\times$ which is trivial on $F^{\times n}$, replacing $\pi$ by $\pi\otimes (\chi_i \circ \det)$ gives the same representation on $\Irrg(\wt{M})$, and this accounts for the non-injectivity of the map $\tilde{\otimes}$, see \cite[Lemma 5.1]{Mez04}. This metaplectic tensor product construction was further analyzed and refined in \cite{Tak3, Tak4} especially  in the global context, see also \cite{Cai1}.

\vskip 10pt

To proceed, in this subsection, we assume $\wt{\GSp}_{2r}$ is the type I (similitudes-splitting) $n$-fold cover associated with 
$$Q(\alpha_r^\vee) = -1 \text{ and } Q(e_0) =0,$$
see \S \ref{SSS:GSp}. One has $B(e_0, e_i) = 1$ for every $i$, and $B_Q(e_c, e_0) = Q(e_c) = r$. Also,
\begin{equation} \label{GSp-L}
Y_{Q,n} =
\left\{
\begin{array}{cc}
\sum_{i=0}^r y_i e_i \in Y: \\
\bullet \quad  n|(-2y_i + y_0) \text{ for every } i,  \\
\bullet \quad  n|(y_1 + y_2 + ... + y_r).
\end{array}
\right\}.
\end{equation}
On the other hand, $Y_{Q,n}^{sc}$ is spanned by 
$$\set{\alpha_{i,Q,n}^\vee=n_2 \cdot \alpha_i^\vee: 1\lest i \lest r-1} \cup \set{\alpha_{r,Q,n}^\vee =n\alpha_r^\vee}.$$

We consider a partition 
$$\mbf{r}=(r_1, r_2, ..., r_k; r_0)$$
 of $r$ with associated Levi subgroup 
$$M_\mbf{r}=\GL_{r_1} \times \GL_{r_2} \times ... \times \GL_{r_k} \times \GSp_{2r_0}.$$
For every $j\in \N$, let
$$\GSp_{2r}^\anga{j} = \set{g\in \GSp_{2r}: e_0^*(g) \in F^{\times j}}$$
be the subgroup of $\GSp_{2r}$ with similitudes lying in $F^{\times j}$, where 
$$e_0^*: \GSp_{2r} \longrightarrow F^\times$$
is the similitude map. 
Also define
$$M_\mbf{r}^\anga{j} = \GL_{r_1} \times \GL_{r_2} \times ... \times \GL_{r_k} \times \GSp_{2r_0}^\anga{j}.$$
Accordingly, we have the covering subgroups $\wt{\GSp}_{2r}^\anga{j} \subset \wt{\GSp}_{2r}$ and $\wt{M}_\mbf{r}^\anga{j} \subset \wt{M}_\mbf{r}$. The covering blocks $\wt{\GL}_{r_i}$ in $\wt{M}_\mbf{r}$ commute with each other; however, they may not commute with $\wt{\GSp}_{2r_0}^\anga{j}$ for general $j$.

\begin{lm} \label{L:suppGSp}
\begin{enumerate}
\item[(i)] The covering group $\wt{\GSp}_{2r_0}^\anga{j}$ commutes with every $\wt{\GL}_{r_i}, 1\lest i \lest k$ if and only if $n|j$.
\item[(ii)] For every genuine representation $\pi$ of $\wt{M}_\mbf{r}$, one has
$${\rm supp}(\Theta_\pi) \subset \wt{M}_\mbf{r}^\anga{n_{(r)}}$$
for the character $\Theta_\pi$ of $\pi$.
\end{enumerate}
\end{lm}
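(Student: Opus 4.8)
\textbf{Proof proposal for Lemma \ref{L:suppGSp}.}

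\emph{Part (i).} The plan is to reduce to a direct computation with the torus $\wt{T}$ and Hilbert symbols. The blocks $\wt{\GL}_{r_i}$ and $\wt{\GSp}_{2r_0}^{\anga{j}}$ commute if and only if their maximal tori commute inside $\wt{T}$, which in turn is governed by the bilinear form $B_Q$ restricted to the relevant sublattices of $Y$. Concretely, an element of the $\GL_{r_i}$-block is a product of $e_p(a)$ with $p$ ranging over the coordinates of that block, and an element of $\GSp_{2r_0}^{\anga{j}}$ has similitude in $F^{\times j}$, i.e.\ it involves $e_0(\lambda)$ with $\lambda \in F^{\times j}$ together with the $\GL_{r_0}$-type coordinates inside the symplectic block. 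The $\GL$-type coordinates of the two blocks commute automatically (they lie in $Y^{sc}$-directions with $B_Q$-pairing a multiple of $n$, or more simply one invokes block-commutativity for covers of $\GL_r$ already recorded in \S\ref{SSS:GSp}). The only potential obstruction to commutativity comes from the pairing of an $e_p(a)$ (with $p$ in a $\GL_{r_i}$-block) against $e_0(\lambda)$, which is $(a,\lambda)_n^{B_Q(e_p, e_0)} = (a,\lambda)_n^{B(e_i,e_0)} = (a,\lambda)_n$ since $B(e_i,e_0)=1$ for the type I cover. If $\lambda \in F^{\times j}$ with $n \mid j$, this symbol is trivial for all $a$. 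Conversely, if $n \nmid j$, choosing $\lambda$ a generator of $F^{\times j}/F^{\times n}$ and $a$ appropriately makes the symbol nontrivial by non-degeneracy of the Hilbert symbol; so commutativity forces $n \mid j$.

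\emph{Part (ii).} The plan is to apply the machinery of $\wt{H}$-concentration from \S\ref{SS:2pic}, specifically Proposition \ref{P:Gns}, with $\wt{G}$ there replaced by $\wt{M}_\mbf{r}$ and the normal subgroup $\wt{H}$ taken to be $\wt{M}_\mbf{r}^{\anga{n_{(r)}}}$. First I would identify the relevant data: one has $\wt{M}_\mbf{r} = (\wt{M}_\mbf{r})_\der \rtimes e_s(F^\times)$ for a suitable cocharacter $e_s$ playing the role of the similitude direction (e.g.\ $e_s = e_0$), and $Y_c = \Z e_c$ with $e_c = 2e_0 + \sum_{i=1}^r e_i$; here $Z(\wt{M}_\mbf{r})$ still contains $\wt{Z(\GSp_{2r})}$, and the pairing $B_Q(e_c, e_0) = Q(e_c) = r$ controls the relevant $n_s$. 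The key point is that the integer $n_s := n/\gcd(n, B_Q(e_c, e_s))$ equals $n/\gcd(n,r) = n_{(r)}$, so that Proposition \ref{P:Gns} tells us every irreducible genuine $\pi$ of $\wt{M}_\mbf{r}$ is $\wt{M}_\mbf{r}^{\anga{n_{(r)}}}$-concentrated, whence ${\rm supp}(\Theta_\pi) \subset \wt{M}_\mbf{r}^{\anga{n_{(r)}}}$ by Lemma \ref{L:supp}; the extension from irreducible $\pi$ to arbitrary genuine $\pi$ is immediate since a general genuine representation is built from irreducible ones and the character is additive over composition factors. One subtlety to address is that $\wt{M}_\mbf{r}$ is a Levi, not the full group, so I should check that the role of $Z(\wt{G})$ (center of the ambient group) versus $Z(\wt{M}_\mbf{r})$ does not affect the argument: only the similitude-type central torus $\wt{Z(\GSp_{2r})}$ is used to detect elements outside $\wt{M}_\mbf{r}^{\anga{n_{(r)}}}$, and this sits inside $Z(\wt{M}_\mbf{r})$, so the commutator argument of Proposition \ref{P:Gns} goes through verbatim.

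\emph{Main obstacle.} I expect the genuinely delicate point to be verifying that the ``right'' subgroup is $\wt{M}_\mbf{r}^{\anga{n_{(r)}}}$ and not some smaller or larger one — i.e.\ pinning down the exponent $n_{(r)} = n/\gcd(n,r)$ precisely. This amounts to computing $\gcd(n, B_Q(e_c, e_0))$ correctly; one must be careful that $B_Q(e_c, e_0) = 4Q(e_0) - rQ(e_i) = r$ (using $Q(e_0)=0$ and $Q(e_i) = Q(\alpha_r^\vee) = -1$ for the type I cover), so that $\gcd(n, B_Q(e_c,e_0)) = \gcd(n,r)$ and hence $n_s = n_{(r)}$. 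A secondary check is that an element of $\wt{M}_\mbf{r}$ lies outside $\wt{M}_\mbf{r}^{\anga{n_{(r)}}}$ precisely when its similitude is not in $F^{\times n_{(r)}}$, and that for such an element one can always find a central element of $\wt{Z(\GSp_{2r})}$ (of the form $e_c(a)$) with which it fails to commute in a way that the central character cannot absorb — this is exactly the input needed for $\wt{H}$-concentration, and it follows from the non-degeneracy of the $n$-th Hilbert symbol together with the coprimality of $n_{(r)}$ and $r_{(n)}$.
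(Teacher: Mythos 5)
Your part (i) is correct and is essentially the paper's argument: block commutativity disposes of the $\wt{\GL}_{r_i}$--$\wt{\Sp}_{2r_0}$ and $\GL$--$\GL$ parts, and everything reduces to the single torus commutator $[e_0(\lambda),e_p(a)]=(\lambda,a)_n^{B_Q(e_0,e_p)}=(\lambda,a)_n$, which is trivial for all $\lambda\in F^{\times j}$ and $a\in F^\times$ exactly when $n\mid j$ (for the converse take $\lambda=\varpi^j$; the quotient $F^{\times j}/F^{\times n}$ need not be cyclic, so ``a generator'' is loose, but this is cosmetic).

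Part (ii), as you set it up, has a genuine gap: Proposition \ref{P:Gns} cannot be applied ``with $\wt{G}$ replaced by $\wt{M}_\mbf{r}$''. Its structural hypotheses fail for the Levi as soon as some $\GL_{r_i}$-block is present: $(M_\mbf{r})_\der\cdot e_0(F^\times)=\SL_{r_1}\times\cdots\times\SL_{r_k}\times\GSp_{2r_0}$ is a proper subgroup of $M_\mbf{r}$ (the abelianization of $M_\mbf{r}$ has rank $k+1$, so no single cocharacter $e_s$ gives $\wt{M}_\mbf{r}=(\wt{M}_\mbf{r})_\der\rtimes e_s(F^\times)$), and $Z(M_\mbf{r})$ is not associated with a rank-one lattice $\Z e_c$. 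Moreover, even formally, the subgroup appearing in the conclusion of that proposition would be the one lying over $(M_\mbf{r})_\der\cdot e_0(F^{\times n_{(r)}})$, which is not $\wt{M}_\mbf{r}^{\anga{n_{(r)}}}$ (and that stronger support bound fails in general, since the $\GL_{r_i}$-determinant directions of ${\rm supp}(\Theta_\pi)$ are not so constrained); so the sentence ``Proposition \ref{P:Gns} tells us every irreducible genuine $\pi$ is $\wt{M}_\mbf{r}^{\anga{n_{(r)}}}$-concentrated'' is not something that proposition asserts. A related slip: $\wt{Z(\GSp_{2r})}$ is \emph{not} contained in $Z(\wt{M}_\mbf{r})$ --- it is a Heisenberg-type group, and its failure to be central in the cover is precisely what detects the similitude class.

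What is true, and what the paper means by ``the same proof as Proposition \ref{P:Gns}'', is that the commutator argument must be rerun inside $\wt{M}_\mbf{r}$, and the one point your sketch leaves unverified is exactly the extra input needed. Given $\wt{g}\in\wt{M}_\mbf{r}$ with $b:=e_0^*(g)\notin F^{\times n_{(r)}}$, write $\wt{g}=\wt{m}\cdot\wt{e_0(b)}$ with $m\in M_\mbf{r}\cap\Sp_{2r}=\GL_{r_1}\times\cdots\times\GL_{r_k}\times\Sp_{2r_0}$, and pair with $\wt{e_c(a)}$. One must check that $\wt{e_c(a)}$ commutes with $\wt{m}$ at the cover level; this holds because $m$ lies in $\Sp_{2r}$ and $\wt{Z(G)}$ commutes with $\wt{G}_\der$ (equivalently, $B_Q(e_c,e_p)=2B_Q(e_0,e_p)+2Q(e_p)=2-2=0$ for $1\lest p\lest r$, which is why $e_c$, and not, say, $\sum_p e_p$, is the right element), and it is the analogue of the block-commutativity fact from (i) that the paper's one-line proof of (ii) invokes. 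Granting this, $[\wt{g},\wt{e_c(a)}]=(b,a)_n^{\pm B_Q(e_c,e_0)}=(b,a)_n^{\pm r}$, and since $\mu_n\subset F^\times$ this is trivial for all $a$ if and only if $b^r\in F^{\times n}$, i.e.\ $b\in F^{\times n_{(r)}}$ (this is where your coprimality remark enters); Lemma \ref{L:supp} then gives ${\rm supp}(\Theta_\pi)\subset\wt{M}_\mbf{r}^{\anga{n_{(r)}}}$. With this commutation check made explicit, your ``subtlety''/``secondary check'' remarks become the actual proof; as written, the verbatim reduction to Proposition \ref{P:Gns} is the step that fails.
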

\begin{proof}
For (i), we note that every element in $\wt{\GSp}_{2r_0}$ can be written as $g_0 \cdot e_0(a)$ with $g_0\in \wt{\Sp}_{2r_0}$ and $a\in F^\times$. We have block commutativity among the blocks $\wt{\GL}_{r_i}$ and $\wt{\Sp}_{2r_0}$, thus $g_0$ commutes with every element in $\wt{\GL}_{r_i}$. On the other hand, $e_0(a)$ commutes with every $e_{\alpha_i}(a), 1\lest i \lest r-1$, thus $e_0(a)$ commutes with $\wt{\GL}_{r_i}$ if and only if 
$$[e_0(a), e_i(x)] = (a, x)_n^{B_Q(e_0, e_i)}$$
for every $e_i \in Y_{\GL_{r_i}}$ and $x\in F^\times$. Since $B_Q(e_0, e_i) = 1$, the  above equality amounts to $a\in F^{\times n}$. 
For (ii), the proof is the same as Proposition \ref{P:Gns}  by using the above fact that $g_0$ commutes with $\wt{\GL}_{r_i}$ for every $i$.
\end{proof}

Let $M_\mbf{r} \subset \GSp_{2r}$ be a Levi subgroup as above. We assume in the rest of this subsection that 
$$\gcd(n, r_0)=1$$
which implies ${\rm supp}(\Theta_{\pi_0}) \subset \wt{\GSp}_{2r_0}^\anga{n}$ for every $\pi_0 \in \Irrg(\wt{\GSp}_{2r_0})$, by Lemma \ref{L:suppGSp}. It is also easy to check that
\begin{equation} \label{E:LZM}
\mbm{L}(Z(\wt{\GSp}_{2r}) \cap \wt{M}_\mbf{r}^\anga{n})=
\begin{cases}
\Z(ne_c) & \text{ if } 2\nmid \gcd(n,r),\\
\Z(me_c) & \text{ if } 2|\gcd(n,r).
\end{cases}
\end{equation}
Also,
\begin{equation} \label{E:ZM}
Z(\wt{\GSp}_{2r}) \cdot \wt{M}_\mbf{r}^\anga{n}=
\begin{cases}
 \wt{M}_\mbf{r}^\anga{n_{(r)}} & \text{ if } 2\nmid \gcd(n,r),\\
\wt{M}_\mbf{r}^\anga{2n_{(r)}} & \text{ if } 2|\gcd(n,r).
\end{cases}
\end{equation}
We will discuss the two cases separately: (1) $n$ is odd, (2) $n$ is even.

\subsubsection{The case of odd $n$}  \label{SSS:GSp-odd}
For $n$ odd, it is easy to obtain from \eqref{GSp-L} that 
$$\set{ne_i: 1\lest i \lest r} \cup \set{n_{(r)} \cdot e_c}$$
constitutes a $\Z$-basis for $Y_{Q,n}$. This shows that
$$\wt{\GSp}_{2r}^\vee = \set{(g, a) \in \GSpin_{2r+1} \times \GL_1: \lambda(g) = a^{\gcd(n, r)}},$$
where 
$$\lambda: \GSpin_{2r+1} \longrightarrow \GL_1$$
 is the similitude map of $\GSpin_{2r+1}$ associated with $ne_c \in Y_{Q,n}$. Since we have assumed $\gcd(n, r_0)=1$, it gives
$$\wt{\GSp}_{2r_0}^\vee = \GSpin_{2r_0 + 1}.$$
Now we describe a construction analogous to the metaplectic tensor product for $\wt{\GL}_r$. In parallel, there are three steps as follows.
\begin{enumerate}
\item[(S1)] Let $\pi_i \in \Irrg(\wt{\GL}_{r_i})$ and $\pi_0 \in \Irrg(\wt{\GSp}_{2r_0})$. Let $\sigma_0 \subset \pi_0$ be an irreducible summand in the restriction of $\pi_0$ to $\wt{\GSp}_{2r_0}^\anga{n}$. The representation
$$\sigma:= \pi_1 \boxtimes ... \boxtimes \pi_k \boxtimes \sigma_0$$
is then a representation of $\wt{M}_\mbf{r}^\anga{n}$.
\item[(S2)] Let $\omega: Z(\wt{\GSp}_{2r}) \longrightarrow \C^\times$ be a central character such that
\begin{equation} \label{E:comp01}
\omega = \sigma \text{ on } Z(\wt{\GSp}_{2r}) \cap \wt{M}_\mbf{r}^\anga{n}.
\end{equation}
We have $\omega \boxtimes \sigma \in \Irr( Z(\wt{\GSp}_{2r}) \cdot \wt{M}_\mbf{r}^\anga{n})$.
\item[(S3)] One extends $\omega \boxtimes \sigma$ to a representation $(\omega \boxtimes \sigma)'$ of a subgroup $\wt{M}_\mbf{r}' \subset \wt{M}_\mbf{r}$ such that the representation
$$\tilde{\otimes}_i \pi_i: = \Ind_{\wt{M}'}^{\wt{M}} (\omega \boxtimes \sigma)'$$
is irreducible. This representation $\tilde{\otimes}_i \pi_i$ is the sought metaplectic-tensor product of the $\pi_i$'s.
\end{enumerate}
Similar to the case of Kazhdan--Patterson covers $\wt{\GL}_r$ discussed earlier, (S3) can be replaced by an equivalent statement as follows. 
\begin{enumerate}
\item[(S3')] It follows from (ii) of Lemma \ref{L:suppGSp}, the equality \eqref{E:ZM} and Proposition \ref{P:Gns} that
$$\Ind_{ \Irr( Z(\wt{\GSp}_{2r}) \cdot \wt{M}_\mbf{r}^\anga{n}) }^{ \wt{M}_\mbf{r} } (\omega\boxtimes \sigma) = m \cdot \pi$$
is an isotypic sum of $\pi \in \Irrg(\wt{M}_\mbf{r})$. The representation $\pi$ is just $\tilde{\otimes}_i \pi_i$ described in (S3) above. In particular, fixing $\omega \boxtimes \sigma$, the representation $\tilde{\otimes}_i \pi_i$ is independent of the choice of $\wt{M}_\mbf{r}'$ and the extension $(\omega\boxtimes \sigma)'$.
\end{enumerate}
We have to justify the above construction (S1)-(S3) (or (S1), (S2) and (S3')) as follows.

\begin{lm} \label{L:Swd}
The representation $\tilde{\otimes}_i \pi_i$ is independent of the choice of $\sigma_0 \subset \pi_0$, the choice of $\wt{M}_\mbf{r}'$ and the extension $(\omega \boxtimes \sigma)'$.
\end{lm}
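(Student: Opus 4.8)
The plan is to show that $\tilde{\otimes}_i \pi_i$ is well-defined by reducing everything to the isotypic-sum statement in (S3'), which is the conceptually cleanest formulation; the independence of the three choices $\sigma_0 \subset \pi_0$, $\wt{M}_\mbf{r}'$, and $(\omega\boxtimes\sigma)'$ will then be teased apart one at a time. First I would record that the three choices enter the construction at two distinct places: the choice of $\sigma_0$ affects the representation $\sigma$ of $\wt{M}_\mbf{r}^\anga{n}$ (hence $\omega\boxtimes\sigma$ of $Z(\wt{\GSp}_{2r})\cdot\wt{M}_\mbf{r}^\anga{n}$), while $\wt{M}_\mbf{r}'$ and $(\omega\boxtimes\sigma)'$ only intervene after $\omega\boxtimes\sigma$ has been fixed. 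For the latter two, the argument is immediate: by Lemma \ref{L:suppGSp}(ii), the equality \eqref{E:ZM}, and Proposition \ref{P:Gns}, the subgroup $Z(\wt{\GSp}_{2r})\cdot\wt{M}_\mbf{r}^\anga{n}$ (which equals $\wt{M}_\mbf{r}^\anga{n_{(r)}}$ for odd $n$) contains the support of $\Theta_\pi$ for every $\pi\in\Irrg(\wt{M}_\mbf{r})$, so Proposition \ref{P:con-con} applies with $(M,H)=(\wt{M}_\mbf{r}, Z(\wt{\GSp}_{2r})\cdot\wt{M}_\mbf{r}^\anga{n})$ and gives that $\Ind(\omega\boxtimes\sigma)$ is an isotypic sum of a single $\pi[\omega\boxtimes\sigma]\in\Irrg(\wt{M}_\mbf{r})$; by the diagram \eqref{D:2ind} this $\pi[\omega\boxtimes\sigma]$ is exactly $\Ind_{\wt{M}_\mbf{r}'}^{\wt{M}_\mbf{r}}((\omega\boxtimes\sigma)')$ for any admissible choice of $\wt{M}_\mbf{r}'$ and extension, so no dependence on those data remains.

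The substantive part is therefore the independence of $\sigma_0$. Here I would argue as follows. Fix $\pi_0\in\Irrg(\wt{\GSp}_{2r_0})$; since $\gcd(n,r_0)=1$, Lemma \ref{L:suppGSp}(i) gives that $\wt{\GSp}_{2r_0}^\anga{n}$ commutes with every $\wt{\GL}_{r_i}$, so $\wt{M}_\mbf{r}^\anga{n}=\wt{\GL}_{r_1}\times_{\mu_n}\cdots\times_{\mu_n}\wt{\GL}_{r_k}\times_{\mu_n}\wt{\GSp}_{2r_0}^\anga{n}$ and the two candidate representations $\sigma=\pi_1\boxtimes\cdots\boxtimes\pi_k\boxtimes\sigma_0$ and $\sigma'=\pi_1\boxtimes\cdots\boxtimes\pi_k\boxtimes\sigma_0'$ differ only in the last factor, where $\sigma_0,\sigma_0'\subset\pi_0|_{\wt{\GSp}_{2r_0}^\anga{n}}$. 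By the Clifford--Mackey theory of \S\ref{SSS:res} applied to $\wt{\GSp}_{2r_0}^\anga{n}\subset\wt{\GSp}_{2r_0}$, the constituents $\sigma_0'$ of $\pi_0|_{\wt{\GSp}_{2r_0}^\anga{n}}$ are all of the form ${}^{g}\sigma_0$ for $g$ ranging over $\wt{\GSp}_{2r_0}/S_{\sigma_0}$, up to the common multiplicity $e$; the relevant $g$ can be taken of the form $e_0(b)$ since $\wt{\GSp}_{2r_0}=\wt{\Sp}_{2r_0}\rtimes e_0(F^\times)$ and $\wt{\Sp}_{2r_0}$ already lies in $\wt{\GSp}_{2r_0}^\anga{n}$. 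Conjugation by such an element $\tilde g=e_0(b)$ is an inner automorphism of $\wt{M}_\mbf{r}$, so it carries the compatibility relation \eqref{E:comp01} for $(\omega,\sigma)$ to the analogous relation for $({}^{\tilde g}\omega,{}^{\tilde g}\sigma)$; but $\tilde g$ centralizes $Z(\wt{\GSp}_{2r})$, hence ${}^{\tilde g}\omega=\omega$, and on the $\wt{\GL}_{r_i}$-factors ${}^{\tilde g}$ acts only by a central ($\mu_n$-valued, via Hilbert symbols) twist which is absorbed into the isomorphism class. Therefore $\omega\boxtimes\sigma'\simeq{}^{\tilde g}(\omega\boxtimes\sigma)$ as representations of $Z(\wt{\GSp}_{2r})\cdot\wt{M}_\mbf{r}^\anga{n}$, up to multiplicity; inducing up to $\wt{M}_\mbf{r}$ and using that $\Ind\circ{}^{\tilde g}\simeq\Ind$ for an inner twist gives $\pi[\omega\boxtimes\sigma']\simeq\pi[\omega\boxtimes\sigma]$, which is what we want.

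I expect the main obstacle to be the bookkeeping in the last paragraph: precisely identifying which elements $g$ suffice to exhaust the constituents $\sigma_0\subset\pi_0|_{\wt{\GSp}_{2r_0}^\anga{n}}$, and then checking carefully that conjugation by such a $g$ — realized inside $\wt{M}_\mbf{r}$, not just inside $\wt{\GSp}_{2r_0}$ — interacts correctly with the external tensor factors $\pi_1,\dots,\pi_k$ and with the central character condition \eqref{E:comp01}. The potential subtlety is that $e_0(b)$ does \emph{not} commute with the $\wt{\GL}_{r_i}$ blocks (only with $\wt{\GL}_{r_i}^\anga{n}$, cf.\ $B_Q(e_0,e_i)=1$), so one must verify that the resulting twist of $\pi_i$ by a character of $F^\times/F^{\times n}$ precomposed with $\det$ does not change $\pi_i$'s isomorphism class in a way that matters — and indeed it does not, because we are free to absorb such twists, exactly as in \cite[Lemma 5.1]{Mez04} for $\wt{\GL}_r$. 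I would model the writeup closely on \cite[Lemma 4.2]{Mez04} and \cite[Proposition 4.6]{Tak3}, citing them for the parts that go through verbatim and spelling out only the point where the $\GSp$-structure (the semidirect decomposition $\wt{\GSp}_{2r_0}=\wt{\Sp}_{2r_0}\rtimes e_0(F^\times)$ and the hypothesis $\gcd(n,r_0)=1$) is used.
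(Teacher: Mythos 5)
Your overall strategy is the same as the paper's: reduce, via the isotypic-sum statement (S3') and Proposition \ref{P:con-con}, to independence of the constituent $\sigma_0$; observe that the other constituents are ${}^{g}\sigma_0$ with $g=e_0(b)$; check that conjugation by $g$ fixes $\omega$ and (up to isomorphism) the factors $\pi_i$; and conclude because induction from a normal subgroup is insensitive to conjugation by an element of $\wt{M}_\mbf{r}$. The first and last of these steps are handled correctly and exactly as in the paper.

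The gap is at the one substantive step, namely the claim that ${}^{e_0(b)}\pi_i\simeq\pi_i$ on each $\wt{\GL}_{r_i}$-block. You justify it by saying the resulting twist of $\pi_i$ by the Hilbert-symbol character $(b,\det(\cdot))_n$ "does not change $\pi_i$'s isomorphism class in a way that matters, because we are free to absorb such twists, exactly as in \cite[Lemma 5.1]{Mez04}". That lemma is the wrong mechanism here: for Kazhdan--Patterson covers of $\GL_r$ it expresses the \emph{non-injectivity} of the metaplectic tensor product (the twist genuinely changes $\pi_i$ but not the output), whereas in the present $\GSp_{2r}$ setting with $n$ odd the map $\tilde{\otimes}_i$ is a bijection (Proposition \ref{P:kGSp}, Theorem \ref{T:GSp}), so an argument of the form "such twists do not affect the construction" cannot be correct; if the twist changed the class of $\pi_i$, it would change $\tilde{\otimes}_i\pi_i$. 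What is actually needed — and what the paper proves — is that these particular twists act \emph{trivially} on $\Irrg(\wt{\GL}_{r_i})$: by Proposition \ref{P:Gns} (using that $n$ is odd, since $B_Q(e_c,e_1)=-2$ on the block) one has ${\rm supp}(\Theta_{\pi_i})\subset\wt{\GL}_{r_i}^{\anga{n}}$, and conjugation by $e_0(b)$ is trivial on $\wt{\GL}_{r_i}^{\anga{n}}$, so ${}^{g}\Theta_{\pi_i}=\Theta_{\pi_i}$ and hence ${}^{g}\pi_i\simeq\pi_i$. A twist by a nontrivial character of $F^\times/F^{\times n}$ composed with $\det$ is not a scalar and is not "absorbed" in general, so this character-support argument is essential; it is also precisely where the parity of $n$ enters, and it is the step that fails for even $n$ (see \S \ref{SSS:GSp-ev}). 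With that replacement your proof coincides with the paper's.
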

\begin{proof}
Again, the argument is essentially the same as in \cite{Mez04}. By the equivalence between (S3) and (S3'), it suffices to show the independence on the choice of constituent $\sigma_0$ in (S1). Every summand in the restriction of $\pi_0$ to $\wt{\GSp}_{2r_0}$ is of the form ${}^g \pi_0$, where $g_0 \in \wt{\GSp}_{2r_0}/\wt{\GSp}_{2r_0}^\anga{n} \simeq e_0(F^\times)/e_0(F^{\times n})$. Taking $g=e_0(a), a\in F^\times$, since $\Theta_{\pi_i} \subset \wt{\GL}_{r_i}^\anga{n}$ for every $\pi_i \in \Irrg(\wt{\GL}_{r_i})$ by Proposition \ref{P:Gns}, we see that
$${}^g \Theta_{\pi_i} = \Theta_{\pi_i}$$
and thus ${}^g \pi_i \simeq \pi_i$. It follows that
$$\pi_1\boxtimes ... \boxtimes \pi_k \boxtimes {}^g \sigma_0 = {}^g \sigma.$$
If $\omega$ and $\sigma$ agrees on $Z(\wt{\GSp}_{2r}) \cap \wt{M}_\mbf{r}^\anga{n}$, then $\omega = {}^g \sigma$ on this intersection subgroup as well. Now we have
$$\begin{aligned}[t]
\Ind_{Z(\wt{\GSp}_{2r}) \cdot \wt{M}_\mbf{r}^\anga{n}}^{\wt{M}_\mbf{r}} (\omega \boxtimes {}^g \sigma) & = \Ind_{Z(\wt{\GSp}_{2r}) \cdot \wt{M}_\mbf{r}^\anga{n}}^{\wt{M}_\mbf{r}} {}^g(\omega \boxtimes \sigma) \\
& = {}^g \Ind_{Z(\wt{\GSp}_{2r}) \cdot \wt{M}_\mbf{r}^\anga{n}}^{\wt{M}_\mbf{r}} (\omega \boxtimes  \sigma) \\
& =  \Ind_{Z(\wt{\GSp}_{2r}) \cdot \wt{M}_\mbf{r}^\anga{n}}^{\wt{M}_\mbf{r}} (\omega \boxtimes  \sigma) \\
& = e \cdot \tilde{\otimes}_i \pi_i,
\end{aligned}$$
where the last equality follows from Proposition \ref{P:Gns}, see also (S3) and (S3') above. This shows the independence on ${}^g\sigma_0$.
\end{proof}

We also have a reverse construction from $\Irrg(\wt{M}_\mbf{r})$ to $\prod_{i=1}^k \Irrg(\wt{\GL}_{r_i}) \times \Irrg(\wt{\GSp}_{2r_0})$
as follows.
\begin{enumerate}
\item[(RS1)] Let $\Pi \in \Irrg(\wt{M}_\mbf{r})$ be an irreducible genuine representation. We pick a summand $\Pi_0 \subset \Pi$ in the restriction of $\Pi$ to $\wt{M}_\mbf{r}^\anga{n}$, it takes the form
$$\Pi_0 = \pi_1 \boxtimes \pi_2 \boxtimes ... \boxtimes \pi_k \boxtimes \sigma_0$$
with $\pi_i \in \Irrg(\wt{\GL}_{r_i})$ and $\sigma_0 \in \Irrg(\wt{\GSp}_{2r_0}^\anga{n})$. Every summand of $\Pi|_{\wt{M}_\mbf{r}^\anga{n}}$ is of the form 
$${}^g(\Pi_0) = {}^g\pi_1 \boxtimes ... \boxtimes {}^g \pi_k \boxtimes {}^g \sigma_0$$
for some $g\in \wt{M}_\mbf{r}/\wt{M}_\mbf{r}^\anga{n} = e_0(F^\times)/e_0(F^{\times n})$. Taking $g=e_0(a), a\in F^\times$, since we have ${\rm supp}(\Theta_{\pi_i}) \subset Z(\wt{\GSp}_{2r}) \cdot \wt{\GL}_{r_i}^\anga{n}$ by Proposition \ref{P:Gns}, it follows that
$${}^g \Theta_{\pi_i} = \Theta_{\pi_i}$$
as $g$ commutes with every elements in $\wt{\GL}_{r_i}^\anga{n}$. This shows that ${}^g \pi_i \simeq \pi_i$. Thus, every constituent $\Pi_0' \subset \Pi$ is of the form
$$\Pi_0' = \pi_1 \boxtimes \pi_2 \boxtimes ... \boxtimes \pi_k \boxtimes {}^g\sigma_0$$
for some $g\in e_0(F^\times)$. In particular, we have $\pi_i \in \Irrg(\wt{\GL}_{r_i})$ uniquely determined by $\Pi$ for each $1\lest i \lest k$.
\item[(RS2)] Extend $\sigma_0$ as much as possible to a representation $\sigma_0'$ of a subgroup $\wt{\GSp}_{2r_0}' \subset \wt{\GSp}_{2r_0}$, such that the induced representation
$$\pi_0:= \Ind_{\wt{\GSp}_{2r_0}'}^{\wt{\GSp}_{2r_0}} (\sigma_0')$$
is irreducible. This gives the desired representation $\pi_0 \in \Irrg(\wt{\GSp}_{2r_0})$. Using the crucial fact that $\Theta_\pi \subset \wt{\GSp}_{2r_0}^\anga{n}$ for every $\pi\in \Irrg(\wt{\GSp}_{2r_0})$, it follows from Proposition \ref{P:Gns} that
$$\Ind_{ \wt{\GSp}_{2r_0}^\anga{n} }^{ \wt{\GSp}_{2r_0} } (\sigma_0) = m\cdot \pi_0,$$
as an isotypic sum of $\pi_0 \in \Irrg(\wt{\GSp}_{2r_0})$. Thus, $\pi_0$ is the desired representation.
\end{enumerate}

\begin{prop}  \label{P:kGSp}
Keep notations as above. 
\begin{enumerate}
\item[(i)] One has $\tilde{\otimes}_i \pi_i \simeq \tilde{\otimes}_i \pi_i'$ if and only if $\pi_i \simeq \pi_i'$ for every $0\lest i \lest k$.
\item[(ii)] Every representation $\Pi \in \Irrg(\wt{M}_\mbf{r})$ is a metaplectic tensor product of the form $\tilde{\otimes}_i \pi_i$, where the $\pi_i$'s are uniquely determined by {\rm (RS1)-(RS2)}.
\end{enumerate}
\end{prop}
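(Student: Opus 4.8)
The plan is to prove (i) and (ii) together by showing that the metaplectic tensor product of (S1)--(S3) and the metaplectic restriction of (RS1)--(RS2) are mutually inverse; this is the covering analogue of the arguments of Mezo, Takeda and Cai for $\wt{\GL}_r$, and is in fact simpler here because in $\wt{\GSp}_{2r}$ of type I only the symplectic block must be shrunk. Write $H := Z(\wt{\GSp}_{2r}) \cdot \wt{M}_\mbf{r}^\anga{n}$; since $n$ is odd, \eqref{E:ZM} gives $H = \wt{M}_\mbf{r}^\anga{n_{(r)}}$, so by Lemma \ref{L:suppGSp}(ii), \eqref{E:ZM} and Proposition \ref{P:Gns} every $\Pi \in \Irrg(\wt{M}_\mbf{r})$ has ${\rm supp}(\Theta_\Pi) \subset H$ and $\Ind_H^{\wt{M}_\mbf{r}}$ of any irreducible genuine representation of $H$ is an isotypic sum of one irreducible representation of $\wt{M}_\mbf{r}$. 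I will also use repeatedly that ${\rm supp}(\Theta_{\pi_i}) \subset Z(\wt{\GSp}_{2r}) \cdot \wt{\GL}_{r_i}^\anga{n}$ for $\pi_i \in \Irrg(\wt{\GL}_{r_i})$ (Proposition \ref{P:Gns}), so that the $\GL$-components of a genuine representation of $\wt{M}_\mbf{r}$ are unchanged under conjugation by the modification direction $e_0(F^\times)$, and that $\gcd(n,r_0)=1$ makes $\Ind_{\wt{\GSp}_{2r_0}^\anga{n}}^{\wt{\GSp}_{2r_0}}$ of an irreducible representation of $\wt{\GSp}_{2r_0}^\anga{n}$ an isotypic sum.

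First I would record the well-definedness of both constructions. For the metaplectic tensor product this is Lemma \ref{L:Swd}. For the reverse construction: (RS1) shows the $\pi_1,\dots,\pi_k \in \Irrg(\wt{\GL}_{r_i})$ attached to $\Pi$ are independent of the chosen constituent $\sigma_0 \subset \Pi|_{\wt{M}_\mbf{r}^\anga{n}}$, since any other constituent equals $\pi_1 \boxtimes \cdots \boxtimes \pi_k \boxtimes {}^{e_0(a)}\sigma_0$ by the rigidity just noted; (RS2) produces $\pi_0$ as the unique irreducible with $\Ind_{\wt{\GSp}_{2r_0}^\anga{n}}^{\wt{\GSp}_{2r_0}}(\sigma_0)$ isotypic of type $\pi_0$, legitimate because $\gcd(n,r_0)=1$, and since ${}^{e_0(a)}\sigma_0 \subset \pi_0$ as well, $\pi_0$ too is independent of the choice; the remaining datum $\omega$ is recovered as the central character $\omega_\Pi$ of $\Pi$ restricted to $Z(\wt{\GSp}_{2r}) \subset Z(\wt{M}_\mbf{r})$. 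Thus the reverse construction attaches to $\Pi$ a well-defined tuple in $(-)^\heartsuit$.

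Next I would verify the two compositions are the identity. For the composite ``reverse after $\tilde{\otimes}$'': starting from a tuple in $(-)^\heartsuit$, form $\sigma = \pi_1 \boxtimes \cdots \boxtimes \pi_k \boxtimes \sigma_0$ and let $\Pi = \tilde{\otimes}_i \pi_i$ be the unique irreducible in $\Ind_H^{\wt{M}_\mbf{r}}(\omega \boxtimes \sigma)$ (existence by the isotypic property above, as in (S3')); Frobenius reciprocity gives $\omega \boxtimes \sigma \subset \Pi|_H$, hence $\sigma \subset \Pi|_{\wt{M}_\mbf{r}^\anga{n}}$ and $\omega = \omega_\Pi$, so the reverse construction applied to $\Pi$ returns exactly the tuple we started with. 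For the composite ``$\tilde{\otimes}$ after reverse'': given $\Pi$, let $(\pi_0,\dots,\pi_k,\omega_\Pi)$ be its reverse datum; by (RS1) the subspace of $\Pi|_{\wt{M}_\mbf{r}^\anga{n}}$ isotypic for $\sigma = \pi_1 \boxtimes \cdots \boxtimes \pi_k \boxtimes \sigma_0$ (with the very $\sigma_0$ used in (RS2)) is preserved by the central $Z(\wt{\GSp}_{2r})$, which acts there by $\omega_\Pi$, so $\omega_\Pi \boxtimes \sigma \subset \Pi|_H$; hence $\Hom_{\wt{M}_\mbf{r}}\big(\Pi,\, \Ind_H^{\wt{M}_\mbf{r}}(\omega_\Pi \boxtimes \sigma)\big) \neq 0$ by Frobenius reciprocity, and since this induced representation is an isotypic multiple of $\tilde{\otimes}_i \pi_i$ we get $\Pi \cong \tilde{\otimes}_i \pi_i$. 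Statement (ii) follows at once; for (i), the ``if'' direction combines the above with Lemma \ref{L:Swd} (the construction depends only on the isomorphism classes of the inputs), while ``only if'' follows by applying the reverse construction to both sides of $\tilde{\otimes}_i \pi_i \cong \tilde{\otimes}_i \pi_i'$.

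I expect the only real difficulty to be organizational rather than conceptual: making sure the constituent $\sigma_0$ singled out of $\Pi|_{\wt{M}_\mbf{r}^\anga{n}}$ in (RS1)--(RS2) can be taken to be exactly the one used to build $\Pi$ in (S1). This rests on the facts that all constituents of $\Pi|_{\wt{M}_\mbf{r}^\anga{n}}$ lie in a single $e_0(F^\times)/e_0(F^{\times n})$-orbit and that this conjugation fixes the $\wt{\GL}_{r_i}$-components (Proposition \ref{P:Gns}) --- the place where the type I hypothesis $Q(e_0)=0$ and the coprimality $\gcd(n,r_0)=1$ enter --- together with checking that the compatibility \eqref{E:comp01} is respected throughout, so that every tuple produced or consumed genuinely lies in $(-)^\heartsuit$.
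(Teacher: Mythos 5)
Your proposal is correct and follows essentially the same route as the paper: the same ingredients (Clifford theory for $\wt{M}_\mbf{r}/\wt{M}_\mbf{r}^{\anga{n}}$, the character-support rigidity of the $\wt{\GL}_{r_i}$-blocks from Proposition \ref{P:Gns}, conjugation-invariance of $\Ind_{\wt{\GSp}_{2r_0}^{\anga{n}}}^{\wt{\GSp}_{2r_0}}$, and Frobenius reciprocity together with the isotypic property of (S3')/(RS2)) drive both arguments. The only difference is packaging: you establish $\tilde{\rm R}$ and $\tilde{\otimes}_i$ as mutual inverses and deduce (i) and (ii), whereas the paper proves (i) by comparing restrictions to $\wt{M}_\mbf{r}^{\anga{n}}$ directly and reads off (ii), recording the inverse statement afterwards as Theorem \ref{T:GSp}.
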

\begin{proof}
For (i), it suffices to prove the ``only if" part. Consider the restriction of $\tilde{\otimes}_i \pi_i \simeq \tilde{\otimes}_i \pi_i'$ to $\wt{M}_\mbf{r}^\anga{n}$, we see that
there is $g \in e_0(F^\times)$ and $\sigma_0 \subset \pi_0, \sigma_0' \subset \pi_0'$ such that
$${}^g(\pi_1 \boxtimes ... \boxtimes \pi_k \boxtimes \sigma_0) = \pi_1' \boxtimes ... \boxtimes \pi_k' \boxtimes \sigma_0',$$
where the left hand side is in fact $\pi_1 \boxtimes ... \boxtimes \pi_k \boxtimes {}^g \sigma_0$. It follows that 
$$\pi_i \simeq \pi_i'$$
for every $1\lest i \lest k$, and also $\sigma_0' = {}^g \sigma_0$. On the other hand, we have
$$\Ind_{\wt{\GSp}_{2r_0}^\anga{n}}^{ \wt{\GSp}_{2r_0} }(\sigma_0') = e' \cdot \pi_0' \text{ and } \Ind_{\wt{\GSp}_{2r_0}^\anga{n}}^{ \wt{\GSp}_{2r_0} }(\sigma_0) = e \cdot \pi_0$$
for some $e, e' \in \N$. Since
$$\Ind_{\wt{\GSp}_{2r_0}^\anga{n}}^{ \wt{\GSp}_{2r_0} }(\sigma_0')= \Ind_{\wt{\GSp}_{2r_0}^\anga{n}}^{ \wt{\GSp}_{2r_0} }({}^g\sigma_0) = {}^g \Ind_{\wt{\GSp}_{2r_0}^\anga{n}}^{ \wt{\GSp}_{2r_0} }(\sigma_0) = \Ind_{\wt{\GSp}_{2r_0}^\anga{n}}^{ \wt{\GSp}_{2r_0} }(\sigma_0),$$
it follows that $e= e'$ and $\pi_0 \simeq \pi_0'$. 

It is clear from the above argument that every $\Pi\in \Irrg(\wt{M}_\mbf{r})$ is a metaplectic tensor product with $\pi_i$ determined as above. This gives (ii) and completes the proof.
\end{proof}

The construction in (S1)-(S3) gives a well-defined map
$$\begin{tikzcd}
\tilde{\otimes}_i: ( \prod_{i=1}^k\Irrg(\wt{\GL}_{r_i}) \times \Irrg(\wt{\GSp}_{2r_0}) \times \Irrg(Z(\wt{\GSp}_{2r})))^\heartsuit \ar[r, two heads] & \Irrg(\wt{M}_\mbf{r}).
\end{tikzcd} 
$$ 
The superscript $(-)^\heartsuit$ indicates the subset of 
$$(\pi_1, ..., \pi_k, \pi_0, \omega)$$ 
satisfying the equality \eqref{E:comp01}  in (S2). 
On the other hand, when restricted to $Z(\wt{\GSp}_{2r}) \subset \wt{M}_\mbf{r}$, the representation $\Pi$ acts by a character 
$$\omega_\Pi: Z(\wt{\GSp}_{2r}) \to \C^\times.$$
Thus, the construction in (RS1)-(RS2) gives a ``metaplectic restriction"
$$\tilde{\rm R}: \Irrg(\wt{M}_\mbf{r}) \longrightarrow   \Big(\prod_{i=1}^k \Irrg(\wt{\GL}_{r_i}) \times \Irrg(\wt{\GSp}_{2r_0}) \times \Irrg(Z(\wt{\GSp}_{2r}) ) \Big)^\heartsuit$$
given by 
$$\tilde{\rm R}(\Pi) =(\pi_1, ..., \pi_k, \pi_0; \omega_\Pi).$$
The following is immediate from Proposition \ref{P:kGSp}.

\begin{thm} \label{T:GSp}
Assume $n$ is odd and $\gcd(n, r_0)=1$. Then the map
$$\tilde{\otimes}_i:  \Big(\prod_{i=1}^k \Irrg(\wt{\GL}_{r_i}) \times \Irrg(\wt{\GSp}_{2r_0}) \times \Irrg(Z(\wt{\GSp}_{2r}) ) \Big)^\heartsuit \longrightarrow \Irrg(\wt{M}_\mbf{r})$$
is a bijection with the inverse given by $\tilde{\rm R}$, i.e. $\tilde{\rm R} \circ \tilde{\otimes}_i = {\rm id} = \tilde{\otimes}_i \circ \tilde{\rm R}$.
\end{thm}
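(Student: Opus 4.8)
\textbf{Proof plan for Theorem \ref{T:GSp}.}
The plan is to assemble the theorem from the preparatory lemmas already proved in this subsection, essentially by verifying that the two constructions $\tilde\otimes_i$ and $\tilde{\rm R}$ are mutually inverse on the nose. First I would record the observation that both maps are genuinely well-defined: for $\tilde\otimes_i$ this is exactly Lemma \ref{L:Swd} together with the reformulation of step (S3) as (S3') (which rests on Lemma \ref{L:suppGSp}(ii), the equality \eqref{E:ZM}, and Proposition \ref{P:Gns}); for $\tilde{\rm R}$ the point, already argued inside (RS1), is that the $\wt{\GL}_{r_i}$-components $\pi_i$ of $\Pi|_{\wt M_\mbf{r}^\anga{n}}$ are forced to be $\mfr{f}$-independent of the intermediate choice $g\in e_0(F^\times)/e_0(F^{\times n})$ because ${\rm supp}(\Theta_{\pi_i})\subset Z(\wt{\GSp}_{2r})\cdot\wt{\GL}_{r_i}^\anga{n}$ by Proposition \ref{P:Gns}, while the $\wt{\GSp}_{2r_0}$-component $\pi_0$ is recovered from $\sigma_0$ via Proposition \ref{P:Gns} again (the isotypic-sum statement $\Ind_{\wt{\GSp}_{2r_0}^\anga{n}}^{\wt{\GSp}_{2r_0}}(\sigma_0)=m\cdot\pi_0$). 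Finally the central character $\omega_\Pi$ is read off directly. Once well-definedness is in place, the heart of the argument is the pair of identities $\tilde{\rm R}\circ\tilde\otimes_i={\rm id}$ and $\tilde\otimes_i\circ\tilde{\rm R}={\rm id}$.

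For $\tilde{\rm R}\circ\tilde\otimes_i={\rm id}$: start with $(\pi_1,\dots,\pi_k,\pi_0;\omega)$ in the domain, form $\sigma=\pi_1\boxtimes\cdots\boxtimes\pi_k\boxtimes\sigma_0$ with $\sigma_0\subset\pi_0|_{\wt{\GSp}_{2r_0}^\anga{n}}$, and let $\Pi=\tilde\otimes_i\pi_i$, so by (S3') $\Ind_{Z(\wt{\GSp}_{2r})\cdot\wt M_\mbf{r}^\anga{n}}^{\wt M_\mbf{r}}(\omega\boxtimes\sigma)=m\cdot\Pi$. By Frobenius reciprocity $\sigma\subset\Pi|_{Z(\wt{\GSp}_{2r})\cdot\wt M_\mbf{r}^\anga{n}}$, hence $\pi_1\boxtimes\cdots\boxtimes\pi_k\boxtimes\sigma_0\subset\Pi|_{\wt M_\mbf{r}^\anga{n}}$, so the procedure (RS1) applied to $\Pi$ returns exactly the $\pi_i$, $1\le i\le k$, and a constituent $\sigma_0\subset\pi_0$; then (RS2), being again an isotypic induction $\Ind_{\wt{\GSp}_{2r_0}^\anga{n}}^{\wt{\GSp}_{2r_0}}(\sigma_0)=m'\cdot\pi_0$, returns $\pi_0$. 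The central character of $\Pi$ restricted to $Z(\wt{\GSp}_{2r})$ is $\omega$ by construction in (S2). Thus $\tilde{\rm R}(\Pi)=(\pi_1,\dots,\pi_k,\pi_0;\omega)$, as desired. The converse identity $\tilde\otimes_i\circ\tilde{\rm R}={\rm id}$ is then immediate from Proposition \ref{P:kGSp}(ii): every $\Pi\in\Irrg(\wt M_\mbf{r})$ is of the form $\tilde\otimes_i\pi_i$ with the $\pi_i$ determined by (RS1)-(RS2), i.e. by $\tilde{\rm R}(\Pi)$; and Proposition \ref{P:kGSp}(i) guarantees that this expression is unique, so applying $\tilde\otimes_i$ to $\tilde{\rm R}(\Pi)$ recovers $\Pi$. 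Consequently $\tilde\otimes_i$ is a bijection with inverse $\tilde{\rm R}$.

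The main obstacle — though it has largely been discharged in the lemmas — is the \emph{consistency of the $\sigma_0$-choice across the two constructions}: one must make sure that the constituent $\sigma_0\subset\pi_0$ chosen when building $\tilde\otimes_i$ and the one extracted when applying $\tilde{\rm R}$ can be taken to lie in the same $e_0(F^\times)$-orbit, and that nothing depends on this choice. This is precisely what Lemma \ref{L:Swd} (independence of $\tilde\otimes_i$ on $\sigma_0$) and the orbit analysis in (RS1) supply, using repeatedly that $e_0(a)$ commutes with all of $\wt{\GL}_{r_i}^\anga{n}$ (since $B_Q(e_0,e_i)=1$ and $\gcd(n,r_0)=1$), so the $\wt{\GL}$-blocks are untouched by the twist and only the symplectic block moves. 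I would therefore organize the write-up so that the bijectivity statement is a short corollary, with all the genuine content cited from Lemma \ref{L:suppGSp}, Lemma \ref{L:Swd}, Proposition \ref{P:Gns}, and Proposition \ref{P:kGSp}; the proof of the theorem itself then amounts to the two displayed chains of equalities above and a sentence invoking Proposition \ref{P:kGSp}.
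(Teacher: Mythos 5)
Your proposal is correct and follows essentially the same route as the paper: the paper declares the theorem "immediate from Proposition \ref{P:kGSp}", whose proof (together with Lemma \ref{L:Swd}, Lemma \ref{L:suppGSp}, Proposition \ref{P:Gns} and the (S1)--(S3')/(RS1)--(RS2) constructions) contains exactly the restriction/induction and isotypic-sum arguments you spell out for the two composite identities. Your write-up is just a slightly more explicit unpacking of the same citations, so no further changes are needed.
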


It is expected that for the parameters associated with the representations involved in the constructions $\tilde{\otimes}$ and $\tilde{\rm R}$, one should have a corresponding identification. To proceed, we note that the dual group of $\wt{M}_\mbf{r}$ is
$$\begin{aligned}[t]
\wt{M}_\mbf{r}^\vee & = \set{(g_1, g_2, ..., g_k, g_0; z): \  \lambda(g_0) \cdot \prod_{i=1}^k  \det(g_i) = z^{g(n, r)} } \\
& \subset \prod_{i=1}^k \GL_{r_i} \times \GSpin_{2r_0 + 1} \times \GL_1(\C).
\end{aligned}$$
Assuming that we have a local Langlands correspondence for the covers, then associated with $\pi_i$'s and $\omega$ we have
$$\phi_i: \WD_F \longrightarrow \GL_{r_i} \text{ for } 1\lest i \lest k, \quad \phi_0: \WD_F \longrightarrow \GSpin_{2r_0 + 1}$$
parametrizing $\pi_i$'s for $0\lest i \lest k$, and
$$\phi_\omega: \WD_F \longrightarrow \GL_1(\C)$$
parametrizing $\omega$. This gives a parameter
$$(\phi_1 \times ... \times \phi_k \times \phi_0) \times \phi_\omega: \WD_F \longrightarrow \left(\prod_{i=1}^k \GL_{r_i} \times \GSpin_{2r_0 + 1}\right) \times \C^\times.$$

\begin{lm}
Assume the local Langlands correspondence with desiderata. Then the compatibility condition in \eqref{E:comp01} is equivalent to 
$$\prod_{i=1}^k \det(\phi_i) \cdot \lambda(\phi_0) = \phi_\omega^{\gcd(n, r)}.$$
Thus, the parameter $(\phi_1 \times ... \times \phi_k \times \phi_0) \times \phi_\omega$ factors through $\wt{M}_\mbf{r}^\vee$.
\end{lm}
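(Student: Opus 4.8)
The plan is to unwind the definitions on both sides of the claimed equivalence and reduce everything to a statement about central characters, which the local Langlands correspondence with desiderata translates into the factorization identity. First I would recall what the compatibility condition \eqref{E:comp01} actually says: it asserts that the central character $\omega$ of $Z(\wt{\GSp}_{2r})$ and the representation $\sigma = \pi_1 \boxtimes \cdots \boxtimes \pi_k \boxtimes \sigma_0$ of $\wt{M}_\mbf{r}^\anga{n}$ agree on the intersection $Z(\wt{\GSp}_{2r}) \cap \wt{M}_\mbf{r}^\anga{n}$. By \eqref{E:LZM} this intersection is the image of $ne_c$ (when $2 \nmid \gcd(n,r)$; since $n$ is odd here this is the relevant case) inside $\wt{T}$, i.e. it is a rank-one subgroup generated by $e_c(F^{\times \, ?})$ suitably normalized. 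So the condition is an equality of two genuine characters on a one-parameter subgroup, hence an equality of their associated one-dimensional L-parameters $\WD_F \to \GL_1(\C)$.

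Next I would identify these two parameters in terms of the data on the right-hand side. On the $\omega$-side, the parameter attached to $\omega|_{Z(\wt{\GSp}_{2r}) \cap \wt{M}_\mbf{r}^\anga{n}}$ is obtained from $\phi_\omega$ by composing with the map $\GL_1(\C) \to \GL_1(\C)$ dual to the inclusion of lattices; a direct lattice computation (the coroot $e_c$ pairs with the relevant character via $2Q(e_c) = r \cdot B(e_c, e_i)$, and $Q(e_c) = r$ for type I with $Q(e_0)=0$) shows this composite is $z \mapsto z^{\gcd(n,r)}$, so that side contributes $\phi_\omega^{\gcd(n,r)}$. On the $\sigma$-side, the restriction of $\sigma$ to this central subgroup is governed by the product of the central characters of the $\pi_i$ and of $\sigma_0$; under LLC with the desideratum that the parameter of a central character is obtained from the representation's parameter by composing with the map to the abelianization, the central character of $\pi_i$ corresponds to $\det(\phi_i)$ and that of $\sigma_0$ (equivalently $\pi_0$, since $\gcd(n,r_0)=1$ forces these to match on the relevant subgroup) corresponds to $\lambda(\phi_0)$, where $\lambda$ is the similitude character of $\GSpin_{2r_0+1}$. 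Combining, the $\sigma$-side contributes $\prod_{i=1}^k \det(\phi_i) \cdot \lambda(\phi_0)$. Equating the two gives precisely $\prod_{i=1}^k \det(\phi_i) \cdot \lambda(\phi_0) = \phi_\omega^{\gcd(n,r)}$.

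Finally, to see that this identity is exactly the condition for $(\phi_1 \times \cdots \times \phi_k \times \phi_0) \times \phi_\omega$ to factor through $\wt{M}_\mbf{r}^\vee$, I would appeal to the explicit description of $\wt{M}_\mbf{r}^\vee$ given just above as the subgroup of $\prod_{i=1}^k \GL_{r_i} \times \GSpin_{2r_0+1} \times \GL_1(\C)$ cut out by $\lambda(g_0) \cdot \prod_{i=1}^k \det(g_i) = z^{\gcd(n,r)}$; a parameter valued in the ambient product lands in $\wt{M}_\mbf{r}^\vee$ if and only if its components satisfy this very equation pointwise on $\WD_F$, which is the displayed identity. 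This last step is immediate once the previous identification is in place.

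The main obstacle I anticipate is the second step: pinning down the precise power $\gcd(n,r)$ and verifying that the central character of $\sigma_0$ really does correspond to $\lambda(\phi_0)$ rather than some twist of it. This requires carefully tracking how the LLC for the covers $\wt{\GL}_{r_i}$ and $\wt{\GSp}_{2r_0}$ (which is being \emph{assumed} here, with desiderata) interacts with the explicit lattice inclusions $Y_z \subset Y_{c,Q,n} \subset Y_{Q,n}$, and in particular making sure the normalization of $f_{c,z}$ and of the similitude character $\lambda$ is consistent with the identification $Z(\wt{\GSp}_{2r})^\vee = Z(\GSp_{2r,Q,n})^\vee$ from the lemma preceding \eqref{E:zEG}. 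The computation $2Q(e_c) = r B(e_c,e_i)$ together with $Q(e_c)=r$ is what produces the exponent $\gcd(n,r)$, and one has to check this matches the exponent appearing in the description of $\wt{M}_\mbf{r}^\vee$; the rest of the argument is bookkeeping with Hilbert symbols and the reciprocity map.
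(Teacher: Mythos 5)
Your proposal is correct and takes essentially the same route as the paper's proof: identify $Z(\wt{\GSp}_{2r}) \cap \wt{M}_\mbf{r}^{\anga{n}}$ via \eqref{E:LZM}, invoke the central-character desideratum of Conjecture~\ref{C:res}(i) to identify the parameters of the central characters of the $\pi_i$ and $\pi_0$ with $\det(\phi_i)$ and $\lambda(\phi_0)$, compare with the restriction of $\phi_\omega$, and read off the factorization through $\wt{M}_\mbf{r}^\vee$ from its defining equation. One small correction: the identity $2Q(e_c) = r\, B(e_c,e_i)$ you quote is the $\GL_r$ formula from the earlier section and does not hold for $\GSp_{2r}$; the exponent $\gcd(n,r)$ instead comes from the lattice index $[\Z\, n_{(r)} e_c : \Z\, n e_c] = \gcd(n,r)$ encoded in \eqref{E:LZM} (equivalently, restricting $\omega$ from $Z(\wt{\GSp}_{2r})$ to the intersection dualizes to $z \mapsto z^{\gcd(n,r)}$), which is the justification you already cite and which suffices.
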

\begin{proof}
It follows from \eqref{E:LZM} that 
$$Z(\wt{\GSp}_{2r}) \cap \wt{M}_\mbf{r} = \wt{Z(\GSp_{2r})^n},$$
where $Z(\wt{\GSp}_{2r}) = \wt{Z(\GSp_{2r})^{n_{(r)}}}$. If we denote by $e_{i, c} \in Y_{\GL_{r_i}}$ the natural generator of  $Y_{\GL_{r_i}, c}$, then
$$Z(\wt{\GL}_{r_i}) = \wt{Z(\GL_{r_i})^n} \text{ and } Z(\wt{\GSp}_{2r_0}) = \wt{Z(\GSp_{2r_0})^n}.$$
From the desiderata (i) of Conjecture \eqref{C:res}, the center character 
$$\omega_i: Z(\wt{\GL}_{r_i}) \longrightarrow \C^\times$$
 of $\pi_i$ has a parameter
$$\begin{tikzcd}
\det(\phi_i): \WD_F \ar[r, "{\phi_\pi}"]  & \wt{\GL}_{r_i}^\vee \ar[r, two heads, "\det"] & \wt{\GL}_{r_i}^\vee/[\wt{\GL}_{r_i}^\vee, \wt{\GL}_{r_i}^\vee] \simeq \C^\times,
\end{tikzcd}
$$
where the second map is the determinant map of $\wt{\GL}_{r_i}^\vee \simeq \GL_{r_i}$. The center character 
$$\omega_0: Z(\wt{\GSp}_{2r_0}) \longrightarrow \C^\times$$
of $\pi_0$ has a parameter
$$\begin{tikzcd}
\lambda(\phi_0): \WD_F \ar[r, "{\phi_\pi}"]  & \wt{\GSp}_{2r_0}^\vee \ar[r, two heads, "\lambda"] & \wt{\GSp}_{2r_0}^\vee/[\wt{\GSp}_{2r_0}^\vee, \wt{\GSp}_{2r_0}^\vee] \simeq \C^\times,
\end{tikzcd}
$$
where the second map is the similitude map of $\wt{\GSp}_{2r_0}^\vee \simeq \GSpin_{2r_0 + 1}$. The identity is thus clear.
\end{proof}

Let 
$$\begin{tikzcd}
i_{M}: \wt{M}_\mbf{r}^\vee \ar[r, hook] & \prod_{i=1}^k \wt{\GL}_{r_i}^\vee \times \wt{\GSp}_{2r_0}^\vee \times \C^\times
\end{tikzcd}$$
 be the natural inclusion. Assuming the local Langlands correspondence, we expect that the following diagram
 \begin{equation}
 \begin{tikzcd}
 \WD_F \ar[rr, "{ \phi_1 \times ... \times \phi_k \times \phi_0 \times \phi_\omega }"] \ar[rd, "{ \phi_\Pi }"]  & & \prod_{i=1}^k \wt{\GL}_{r_i}^\vee \times \wt{\GSp}_{2r_0}^\vee \times \C^\times \\
 & \wt{M}_\mbf{r}^\vee \ar[ru, hook, "{i_M}"].
 \end{tikzcd}
 \end{equation}
 commutes, when $\phi_\Pi$ is related to $\phi_i$'s and $\phi_\omega$ via the two metaplectic constructions $\tilde{\otimes}_i$ and $\tilde{\rm R}$.  The following is in analogue with \cite[\S 3.2, Conjecture]{Gan17}, and in fact is simpler.
\begin{conj} \label{C:F-odd}
Keep notations as above.
\begin{enumerate}
\item[(i)] The metaplectic tensor product $\tilde{\otimes}_i \pi_i$ defined above has an associated $L$-parameter exactly $(\phi_1 \times ... \times \phi_k \times \phi_0) \times \phi_\omega$, which is a parameter factoring through $i_M$ and thus is valued in $\wt{M}_\mbf{r}^\vee$.
\item[(ii)] The metaplectic restriction $\tilde{\rm R}(\Pi)$ above has $L$-parameter exactly $i_M \circ \phi_\Pi$. 
\end{enumerate}
\end{conj}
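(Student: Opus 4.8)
The plan is to reduce Conjecture~\ref{C:F-odd} to a central-character identity that is essentially already recorded, plus a matching of the parameter $\phi_\Pi$ of $\Pi := \tilde{\otimes}_i \pi_i$ against the product parameter $\phi := (\phi_1 \times \cdots \times \phi_k \times \phi_0) \times \phi_\omega$. First I would note that (i) and (ii) are equivalent: by Theorem~\ref{T:GSp} the maps $\tilde{\otimes}_i$ and $\tilde{\rm R}$ are mutually inverse bijections, so once the $L$-parameter of $\tilde{\otimes}_i \pi_i$ is identified with $i_M \circ \phi$ for all inputs in the $\heartsuit$-subset, the assertion for $\tilde{\rm R}(\Pi)$ follows by applying this to $\Pi$ and reading off $(\pi_1,\dots,\pi_k,\pi_0;\omega_\Pi)$. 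Second, the claim that $\phi$ factors through $i_M$ is exactly the lemma preceding the conjecture: the compatibility relation \eqref{E:comp01} among the central characters, combined with desideratum~(i) of Conjecture~\ref{C:res} applied to each $\wt{\GL}_{r_i}$ and to $\wt{\GSp}_{2r_0}$, gives $\prod_{i=1}^{k}\det(\phi_i)\cdot\lambda(\phi_0) = \phi_\omega^{\gcd(n,r)}$, which is the defining equation of $\wt{M}_\mbf{r}^\vee$. So the whole problem collapses to proving $\phi_\Pi = \phi$ as a parameter valued in $\wt{M}_\mbf{r}^\vee$.

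For this last step I would use the way $\Pi$ is characterized by the Clifford--Mackey analysis of \S\ref{SS:2pic}, Lemma~\ref{L:suppGSp} and Proposition~\ref{P:Gns}: $\Pi$ is determined by its restriction to $\wt{M}_\mbf{r}^{\anga{n}} = \wt{\GL}_{r_1}\times_{\mu_n}\cdots\times_{\mu_n}\wt{\GL}_{r_k}\times_{\mu_n}\wt{\GSp}_{2r_0}^{\anga{n}}$, which recovers each $\pi_i$ ($1\le i\le k$) exactly and a constituent $\sigma_0\subset\pi_0|_{\wt{\GSp}_{2r_0}^{\anga{n}}}$ whose induction back to $\wt{\GSp}_{2r_0}$ returns $\pi_0$, together with its central character $\omega$ on $Z(\wt{\GSp}_{2r})$. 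On the dual side the projection $\wt{M}_\mbf{r}^\vee\to\prod_i\wt{\GL}_{r_i}^\vee\times\wt{\GSp}_{2r_0}^\vee$ and the remaining $\GL_1$-coordinate encode precisely the corresponding data. I would then invoke the desiderata of the hypothetical LLC in two parts: the central-character desideratum of Conjecture~\ref{C:res}, which fixes the $\GL_1$-coordinate of $\phi_\Pi$ to be $\phi_\omega$; and a block-compatibility desideratum, expressing the $L$-parameter of a genuine representation through those of its blocks after restriction to $\wt{M}_\mbf{r}^{\anga{n}}$ (using that $\Ind_{\wt{\GSp}_{2r_0}^{\anga{n}}}^{\wt{\GSp}_{2r_0}}$ is an isotypic sum, so that $\phi_0$ is well-defined), which forces the $\wt{\GL}_{r_i}^\vee$- and $\wt{\GSp}_{2r_0}^\vee$-coordinates of $\phi_\Pi$ to be $\phi_i$ and $\phi_0$. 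Since $\wt{M}_\mbf{r}^\vee$ is precisely the fibre product of $\prod_i\wt{\GL}_{r_i}^\vee\times\wt{\GSp}_{2r_0}^\vee\times\GL_1(\C)$ cut out by that determinant/similitude relation, these constraints determine $\phi_\Pi$ uniquely and give $\phi_\Pi=\phi$. If one prefers to argue via parabolic induction, one may first reduce to the case of supercuspidal $\pi_i$, the metaplectic tensor product being compatible with parabolic induction.

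Unconditionally I would carry out this argument for genuine principal series $\Pi=I(\chi)$, where it becomes rigorous: by Proposition~\ref{P:func-tor} and the functoriality diagram \eqref{D:funT} for the LLC of covering tori, the identity $\phi_\Pi=i_M\circ\phi$ reduces to a compatibility among the torus parameters $\phi_{\chi_i}$, $\phi_{\chi_0}$, $\phi_\omega$, which unwinds to an explicit identity of $n$-th Hilbert symbols built from the bilinear form attached to $Q$ on the cocharacter lattice of $\GSp_{2r}$; this is Proposition~\ref{P:F-odd-ps}. I expect the structural gap between this case and the general conjecture to be the real obstacle: the groups $\wt{M}_\mbf{r}^{\anga{n}}$, $\wt{\GSp}_{2r_0}^{\anga{n}}$ and $\wt{\GL}_{r_i}^{\anga{n}}$ are not the $F$-points of algebraic subgroups, so they carry no dual group or $L$-group of their own, and one cannot manipulate $\Pi$ through a naive functoriality for restriction to $\wt{M}_\mbf{r}^{\anga{n}}$; every step has to be routed through the Clifford--Mackey handles (restriction constituents together with central characters), which is exactly where an established LLC-with-desiderata for all genuine representations, rather than merely principal series, would be indispensable.
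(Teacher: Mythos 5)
Your proposal tracks the paper closely on the only part that is actually established: the statement is left as a conjecture, and its verification is confined to genuine principal series (Proposition \ref{P:F-odd-ps}), whose proof is exactly your unconditional step. Concretely, the paper decomposes the center of the covering torus of $\wt{M}_\mbf{r}$ by computing $\mbm{L}(Z(\wt{T}_{r_i}))=nY_{r_i}$, $\mbm{L}(Z(\wt{T}_{r_0}))=nY_{r_0}^{sc}\oplus\Z(ne_c)$ and $\mbm{L}(Z(\wt{\GSp}_{2r}))=\Z n_{(r)}e_c$, shows their sum exhausts $\mbm{L}(Z(\wt{T}))$, so that a genuine $\chi$ on $Z(\wt{T})$ is the same datum as a compatible tuple $(\chi_{r_1},\dots,\chi_{r_k},\chi_{r_0};\omega)$, and then invokes functoriality of the LLC for covering tori under restriction from \S\ref{SS:func-ps} (the diagram \eqref{D:funT} and Proposition \ref{P:func-tor}); no further Hilbert-symbol identity is extracted beyond what is already encoded in those lattices, so your description of the last step is slightly off-target but the mechanism is the same. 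Your other two reductions are also the paper's: the factorization of $(\phi_1\times\cdots\times\phi_k\times\phi_0)\times\phi_\omega$ through $i_M$ is the lemma preceding the conjecture, resting on \eqref{E:comp01} and the central-character desideratum of Conjecture \ref{C:res}, and the equivalence of (i) and (ii) via the bijectivity in Theorem \ref{T:GSp} is fine.

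The point to be careful about is your treatment of the general case. The ``block-compatibility desideratum'' you invoke --- that the parameter of $\Pi$ is read off from the parameters of the blocks after restriction to $\wt{M}_\mbf{r}^{\anga{n}}$ --- is not among the stated desiderata (Conjecture \ref{C:res} governs central characters and restriction to $\wt{G}_0$, not restriction to the non-algebraic subgroups $\wt{M}_\mbf{r}^{\anga{n}}$, $\wt{\GSp}_{2r_0}^{\anga{n}}$, which, as you yourself note, carry no $L$-groups of their own), and it is essentially a reformulation of Conjecture \ref{C:F-odd} itself. So that portion of your argument is a restatement rather than a proof. This is not a defect relative to the paper, which likewise leaves the general statement conjectural and only verifies the principal-series case; but it should be presented as an expected property of the hypothetical LLC, not as a step in a derivation of the conjecture from Conjecture \ref{C:res}.
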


\begin{prop} \label{P:F-odd-ps}
Conjecture \ref{C:F-odd} holds when the $\pi_i$'s and $\Pi$ are genuine principal series.
\end{prop}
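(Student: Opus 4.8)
The plan is to reduce everything to the torus case, where the local Langlands correspondence is established and functoriality has already been verified in Proposition \ref{P:func-tor}, and then to use the compatibility of L-groups with parabolic induction together with Lemma \ref{L:kc} to propagate the statement from $\wt{T}$ to $\wt{M}_\mbf{r}$ and its blocks. First I would set up notation: a genuine principal series $\Pi = I_{\wt{B}_M}^{\wt{M}_\mbf{r}}(\sigma)$ of $\wt{M}_\mbf{r}$ with $\sigma = i(\chi) \in \Irrg(\wt{T})$, and genuine principal series $\pi_i = I(\chi_i)$ of $\wt{\GL}_{r_i}$ and $\pi_0 = I(\chi_0)$ of $\wt{\GSp}_{2r_0}$, with $\omega$ the relevant genuine character of $Z(\wt{\GSp}_{2r})$. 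The L-parameter $\phi_\Pi$ factors as $\WD_F \xrightarrow{\phi_\chi} {}^L\wt{T} \to {}^L\wt{M}_\mbf{r}$, and similarly each $\phi_{\pi_i}$ factors through ${}^L\wt{T}_{\GL_{r_i}}$, etc. So both sides of the conjectured identity are built from the single torus parameter $\phi_\chi$ and its restrictions to the relevant sublattices of $Y$.

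Next I would carry out the key reduction. By the construction (S1)--(S3) of $\tilde\otimes_i$, the representation $\tilde\otimes_i\pi_i$ is the unique constituent $\pi[\omega\boxtimes\sigma_{\rm res}]$ of $\Ind_{Z(\wt{\GSp}_{2r})\cdot\wt{M}_\mbf{r}^{\langle n\rangle}}^{\wt{M}_\mbf{r}}(\omega\boxtimes\sigma_{\rm res})$, where $\sigma_{\rm res}$ is a constituent of $(\pi_1\boxtimes\cdots\boxtimes\pi_k\boxtimes\sigma_0)$ restricted to the relevant subgroup. When all the $\pi_i$ are genuine principal series, each restriction and each parabolic induction in (S1)--(S3) is itself a matter of restricting and inducing genuine characters of covering tori, so the entire computation takes place at the level of covering tori. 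I would then invoke Proposition \ref{P:func-tor}: the functoriality of LLC for covering tori under restriction to a $W$-stable sublattice $J\subset Y$, together with Lemma \ref{L:incs}, tells us exactly how the torus parameters match up. Concretely, the sublattices in play are $Y_{\GL_{r_i}}$ (giving the $\det(\phi_i)$-component), the cocharacter lattice of $\GSp_{2r_0}$ (giving $\phi_0$), and $Y_c$ for $\wt{\GSp}_{2r}$ (giving $\phi_\omega$). Chasing the commutative diagrams \eqref{D:funT}, \eqref{D:c1} and \eqref{D:c2} through the inclusion $i_M: \wt{M}_\mbf{r}^\vee \hookrightarrow \prod_i \wt{\GL}_{r_i}^\vee \times \wt{\GSp}_{2r_0}^\vee \times \C^\times$ then yields that $\phi_{\tilde\otimes_i\pi_i} = (\phi_1\times\cdots\times\phi_k\times\phi_0)\times\phi_\omega$, which is part (i). Part (ii) follows by applying the same chain of identifications in reverse, using that $\tilde{\rm R}$ is the two-sided inverse of $\tilde\otimes_i$ by Theorem \ref{T:GSp}, so that $\phi_{\tilde{\rm R}(\Pi)} = i_M\circ\phi_\Pi$.

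The step I expect to be the main obstacle is the bookkeeping at the \textbf{central-character compatibility} node: one must check that the genuine character $\omega$ of $Z(\wt{\GSp}_{2r})$ appearing in (S2) has L-parameter $\phi_\omega$ such that $\prod_i\det(\phi_i)\cdot\lambda(\phi_0)=\phi_\omega^{\gcd(n,r)}$ is \emph{exactly} the relation encoded by $i_M$, i.e. that the desiderata (i) of Conjecture \ref{C:res} on central characters of $\pi_i$ and $\pi_0$ — already verified for genuine principal series via Proposition \ref{P:func-tor}(i) — glue correctly along the finite-index intersections $Z(\wt{\GSp}_{2r})\cap\wt{M}_\mbf{r}^{\langle n\rangle}$ described in \eqref{E:LZM}. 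This is not conceptually deep but requires matching the Hilbert-symbol commutator computations for $\wt{\GSp}_{2r}$ of type I (the formulas for $[(a;\lambda),(b;\delta)]$ and $B(e_0,e_i)=1$) against the lattice-theoretic description of $\wt{M}_\mbf{r}^\vee$. Once this is in place, the rest is a routine diagram chase: the proof would conclude by assembling the torus-level identifications of Proposition \ref{P:func-tor} across the blocks, noting that genuineness forces all parabolic inductions involved to be the honest (possibly reducible) ones whose parameters are determined by the inducing data, and that the bijectivity in Theorem \ref{T:GSp} guarantees the two statements (i) and (ii) are equivalent.
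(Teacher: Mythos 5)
Your proposal is correct and follows essentially the same route as the paper: both reduce the statement to genuine characters of covering tori and conclude by the functoriality of the torus LLC under restriction from \S\ref{SS:func-ps} (Proposition \ref{P:func-tor} and the diagram \eqref{D:funT}). The only difference is in how the central-character gluing you flag as the main obstacle is handled: the paper does it directly by computing $\mbm{L}(Z(\wt{T}_{r_i}))=nY_{r_i}$, $\mbm{L}(Z(\wt{T}_{r_0}))$, $\mbm{L}(Z(\wt{\GSp}_{2r}))=\Z n_{(r)}e_c$ and showing their sum equals $\mbm{L}(Z(\wt{T}))$, so that $\chi$ is literally the glued tuple $(\chi_{r_1},\ldots,\chi_{r_k},\chi_{r_0};\omega)$ obtained by restriction, rather than by tracing the steps (S1)--(S3) as you do.
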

\begin{proof}
We denote by 
$$\wt{T}_{r_i} \subset \wt{\GL}_{r_i} \text{ and } \wt{T}_{r_0} \subset \wt{\GSp}_{2r_0}$$
the covering tori of the blocks $\wt{\GL}_{r_i}$ and $\wt{\GSp}_{2r_0}$ respectively. Every genuine principal series $\pi_i = I(i(\chi_i)) \in \Irrg(\wt{\GL}_{r_i})$ is associated with
$$\chi_i: Z(\wt{T}_{r_i}) \longrightarrow \C^\times,$$
a central genuine character; similarly, a genuine principal series $\pi_i = I(i(\chi_0)) \in \Irrg(\wt{\GSp}_{2r_0})$ is associated with 
$$\chi_0: Z(\wt{\GSp}_{2r_0}) \longrightarrow \C^\times.$$
Denote by $Y_{r_i}$ the character lattice of $\GL_{r_i}$, also by $Y_{r_0}$ that of $\GSp_{2r_0}$. 

We have
$$\mbm{L}(Z(\wt{T}_{r_i}) ) = nY_{r_i}, 1\lest i \lest k$$
and
$$\mbm{L}(Z(\wt{T}_{r_0}) ) = nY_{r_0}^{sc}\oplus \Z(ne_c),$$
where $Y_{r_0}^{sc} \subset Y_{r_0}$ is the coroot lattice of $\GSp_{2r_0}$. Since
$$\mbm{L}(Z(\wt{\GSp}_{2r}) ) = \Z n_{(r)}e_c \text{ and } \mbm{L}(Z(\wt{T})) = nY^{sc} \oplus \Z \cdot n_{(r)} e_c,$$
it follows that
$$\mbm{L}(Z(\wt{T})) =\Z n_{(r)}e_c + \left(\sum_{i=1}^k \mbm{L}(Z(\wt{T}_{r_i}) )  +\mbm{L}(Z(\wt{T}_{r_0}) )  \right).$$
Thus giving a genuine character $\chi: Z(\wt{T}) \longrightarrow \C^\times$ is equivalent to giving 
$$(\chi_{r_1}, ..., \chi_{r_k}, \chi_{r_0}; \omega) \in \Irrg(Z(\wt{T}_{r_1}) ) \times ... \times \Irrg(Z(\wt{T}_{r_k}) ) \times \Irrg(Z(\wt{T}_{r_0}) ) \times \Irrg(Z(\wt{\GSp}_{2r}) )$$
such that $\chi_{r_1} \boxtimes ... \chi_{r_k} \boxtimes \chi_{r_0}$ and $\omega$ agree on 
$$\left( Z(\wt{T}_{r_1}) \times_{\mu_n} ... \times Z(\wt{T}_{r_k}) \times_{\mu_n} Z(\wt{T}_{r_0}) \right) \cap Z(\wt{\GSp}_{2r}).$$
In particular, $\chi_{r_i}, 0\lest i \lest k$ and $\omega$ are obtained from restricting $\chi$ to the corresponding subgroups of $Z(\wt{T})$. The equality between the parameter $\chi_\Pi$ of $\Pi:=\Pi(\chi)$ and $\phi_1 \times ... \times \phi_k \times \phi_0 \times \phi_\omega$ then easily follows from functoriality of the local Langlands correspondence for covering tori with respect to restriction, as discussed in \S \ref{SS:func-ps}.
\end{proof}

\subsubsection{Remarks on the even $n$ case} \label{SSS:GSp-ev}
The metaplectic tensor product $\tilde{\otimes}$ and metaplectic restriction $\tilde{\rm R}$ for odd fold cover of $\GSp_{2r}$ do not generalize naively in the even fold cover case. Now, assuming $2|n$, we highlight the obstacles to the constructions in $\tilde{\otimes}$ and $\tilde{\rm R}$. Recall that we assume $\gcd(n, r_0)=1$.

Let $n=2m$. It follows from Proposition \ref{P:Gns} that ${\rm supp}(\Theta_{\pi_i}) \subset \wt{\GL}_{r_i}^\anga{m}$. However, there may exist $\pi_i$ such that
$${\rm supp}(\Theta_{\pi_i}) \nsubseteq \wt{\GL}_{r_i}^\anga{n}.$$
For such $\pi_i$, we may have ${}^g\pi_i \neq \pi_i$. This gives an obstruction to validating the constructions both in (S1)-(S3) and (RS1)-(RS2), as the seen from proof of Lemma \ref{L:Swd}. In fact, if $2|\gcd(n, r)$, then it follows from \eqref{E:ZM} that 
$$Z(\wt{\GSp}_{2r}) \cdot \wt{M}_\mbf{r}^\anga{n} = \wt{M}_\mbf{r}^{\anga{2n_{(r)}}},$$
which however is not equal to $\wt{M}_\mbf{r}^{\anga{n_{(r)}}}$. In this case,
\begin{equation} \label{E:ind-ev}
\Ind_{Z(\wt{\GSp}_{2r}) \cdot \wt{M}_\mbf{r}^\anga{n}}^{\wt{M}_\mbf{r}} = \bigoplus_{j \in F^\times/2} \Ind_{ \wt{M}_\mbf{r}^{\anga{n_{(r)}}} }^{\wt{M}_\mbf{r}} (\omega \boxtimes \sigma)_j = \bigoplus_j e_j\cdot \Pi_j,
\end{equation}
where $(\omega \boxtimes \sigma)_j$ is an extension of $(\omega \boxtimes \sigma)$ and 
$$F^\times/2:=e_0(F^{\times n_{(r)}}) / e_0(F^{\times 2n_{(r)}}) \simeq F^\times/F^{\times 2}.$$
The last equality in \eqref{E:ind-ev} follows from Proposition \ref{P:Gns} with $\Pi_j \in \Irrg(\wt{M}_\mbf{r})$. This gives another obstruction in the case $2|\gcd(n, r)$.

We also remark that the dual side seems to be deceivable when $n$ is even, and this provides a counterexample where the naive version of functoriality might fail for general $n$. We illustrate this below according to the parity of $\gcd(n,r)$.

In the first case, we assume $n=2m$ with $2\nmid \gcd(n, r)$. Setting
$$v_0: = n_{(r)}\cdot e_c \text{ and } v_i:=me_i + v_0/2, 1\lest i \lest r,$$
It is easy to check that 
$$Y_{Q,n}= \bigoplus_{i=0}^r \Z v_i$$
with $Y_{Q,n}^{sc}$ spanned by $\set{m\alpha_i^\vee: 1\lest i \lest r-1} \cup \set{n\alpha_r^\vee}$. Thus, for $2\nmid \gcd(n, r)$ one has
$$\wt{\GSp}_{2r}^\vee = \GSp_{2r}.$$
It follows that (given with the partition $\mbf{r}=(r_1, ..., r_k, r_0)$ of $r$)
$$\wt{M}_\mbf{r}^\vee \simeq \prod_{i=1}^k \GL_{r_i} \times \GSp_{2r_0} = \prod_{i=1}^k \wt{\GL}_{r_i}^\vee \times \wt{\GSp}_{2r_0}^\vee.$$
Analyzing the discussion in (S1)-(S3) and (RS1)-(RS2), one can define a \emph{correspondence} between  $\prod_{i=1}^k \Irrg(\wt{\GL}_{r_i}) \times \Irrg(\wt{\GSp}_{2r_0})$ and $\Irrg(\wt{M}_\mbf{r})$. However, there is no  \emph{map} (instead of correspondence) from one to the other. This indicates that a naive analogue of the functoriality might fail when considering representations of $\wt{M}_\mbf{r}$ and those of its covering blocks inside.

In the second case, we assume $2|\gcd(n, r)$, then it follows that
$$Y_{Q,n} = Y_{Q,n}^{sc} \oplus \Z\cdot v_0,$$
where $v_0= n_{(r)}e_c$ as above. In this case, 
$$\wt{\GSp}_{2r}^\vee = {\rm PGSp}_{2r} \times \GL_1$$
and thus one has a map
$$\prod_{i=1}^k \wt{\GL}_{r_i}^\vee \times \wt{\GSp}_{2r_0}^\vee \longrightarrow \wt{M}_\mbf{r}^\vee.$$
However, this map does not seem to be associated from the representation-side by a ``metaplectic tensor product" construction, the same as we remarked above for the $n=2m, 2\nmid \gcd(n, r)$ case.

\subsubsection{General metaplectic tensor product} \label{SS:gen-mtp}
From the example of Kazhdan--Patterson covers of $\GL_r$ and the odd-fold cover of $\GSp_{2r}$ discussed in this subsection, we see that the formulation of the metaplectic tensor product construction could be carried out in quite general setting, provided the following constraints are satisfied. To elaborate, assume for simplicity that
$$M=M_1 \times M_2 \times ... \times M_k$$
is a Levi subgroup of $G$. We like to have normal subgroups 
$$M_j^\natural \subset M_j, 1\lest j \lest k$$
such that every $M_j/M_j^\natural$ is a finite abelian group and satisfy the following two conditions:
\begin{enumerate}
\item[(i)] the blocks $\wt{M}_j^\natural, 1\lest j \lest k$ commute with each other,
\item[(ii)] ${\rm supp}(\Theta_\pi) \subset \prod_j \wt{M}_j^\natural$ for every $\pi \in \Irrg(\wt{M})$.
\end{enumerate}
If (i) and (ii) are satisfied, then the process described in steps (S1)--(S3) in \S \ref{SSS:GSp-odd} can be applied and one has a metaplectic tensor product construction following the same procedure. Note that the two conditions (i) and (ii) counteract against each other: for (i) it is desirable that $M_j^\natural$ is a smaller subgroup of $M_j$; on the other hand, for (ii) to be satisfied the group $M_j^\natural$ should be closer to $M_j$.
Thus, a family $\set{\wt{M}_j^\natural}_j$ satisfying both (i) and (ii) is a balancing choice, if it exists. 

For $G=\GL_r$ and $\GSp_{2r}$, the natural candidate for $M_j^\natural$ is essentially the subgroup of elements with determinant or similitude lying in $F^{\times n}$, respectively. Another reason of taking such $M_j^\natural$ is to ensure we have a natural (even conjectural) interpretation of the tensor product construction on the dual side.

\section{Local coefficients matrix under restriction} \label{S:LCM}

\subsection{Local coefficients matrix}
Let $\wt{B}=\wt{T}U$ be the Borel subgroup of $\wt{G}$, where $U$ is identified as a subgroup of $\wt{G}$ via the canonical splitting given by $e_\alpha(x) \mapsto \wt{e}_\alpha(x)$. Let $\psi: F \longrightarrow \C^\times$ be a nontrivial character. By abuse of notation, denote by
$$\psi: U \longrightarrow \C^\times$$
the unique character such that
$$\psi(e_\alpha(x)) = \psi(x)$$
for every $\alpha\in \Delta $ and $x\in F$. Let $(\pi, V_\pi) \in \Irrg(\wt{G})$ be a genuine irreducible representation of $\wt{G}$. A functional 
$$\lambda: V_\pi \longrightarrow \C$$
 is called a $\psi$-Whittaker functional if
$$\lambda( \pi(u) v) = \psi(u) \cdot \lambda(v)$$
for all $u\in U$ and $v \in V_\pi$. Denote by $\text{Wh}_\psi(\pi)$ the space of $\psi$-Whittaker functionals for $\pi$.
It follows from the work of Patel \cite{Pate}, which generalizes \cite{MW1} by M\oe glin-Waldspurger, that
$$\dim \Wh_\psi(\pi) < \infty  \text{ for every } \pi \in \Irrg(\wt{G}).$$
See \cite[Theorem I.5.2 (i)]{KP} also for a proof of this finite dimensionality for covers of $\GL_r$.

We briefly recall the definition of a local coefficients matrix as in \cite{Szp6, GSS2}. Let
$$\mbf{P}=\mbf{M} \mbf{N} \subset \mbf{G}$$ be a parabolic subgroup of $\mbf{G}$ associated with $\theta \subset \Delta$. 
Let $\sigma \in \Irrg(\wt{M})$. We have the parabolic subgroups $\wt{P} =\wt{M}N \subset \wt{G}$ and $\wt{P}_0 = \wt{M}_0 N \subset \wt{G}_0$, see \S \ref{SSS:2pic}.

By dualizing, the intertwining operator $T(w,\sigma)$ in \eqref{T(w)}, whenever holomorphic at $\sigma$, gives a linear map between finite dimensional vector spaces:
$$\begin{tikzcd}
T(w,\sigma)^*: \  \Wh_\psi(I_{\wt{P}'}^{\wt{G}}({}^w\sigma)) \ar[r]  & \Wh_\psi( I_{\wt{P}}^{\wt{G}}(\sigma)).
\end{tikzcd}$$

Let $\psi_M$ be the restriction of $\psi$ to the unipotent radical $U_M$ of the Borel subgroup $TU_M$ of $M$. Here $\psi_M$ and $w$ are compatible (in the sense of \cite[Page 51]{Sha4}). We have a natural isomorphism (see \cite{CS, Rod1}, \cite[\S 3]{Sha2} or \cite[\S 3.3]{Sha4})
$$J(\sigma): \Wh_{\psi_M}(\sigma) \longrightarrow \Wh_\psi(I_{\wt{P}}^{\wt{G}}(\sigma))$$
given by
$$J(\sigma)(\lambda)(f) = \int_{N'} \lambda( f(\wt{w}_0^{-1} n')  ) \cdot \psi(n') dn',$$
where $N'=w_0 N^- w_0^{-1}$ with $w_0 = w_l \cdot w_{l,M}$. Here $w_l$ (resp. $w_{l,M}$) is the longest
Weyl element in the Weyl group $W=W(T, G)$ (resp. $W_M=W(T, M)$).  

Similarly, one has
$$J({}^w \sigma): \Wh_{\psi_{M'}}({}^w\sigma) \longrightarrow \Wh_\psi(I_{\wt{P}'}^{\wt{G}}(\sigma)).$$
At the same time, since $V_{{}^w\sigma} =V_\sigma$, we also have a canonical identification
$$\iota: \Wh_{\psi_{M}} (\sigma)=\Wh_{\psi_{M'}} ({}^w\sigma),$$
which we briefly explained as follows. First, note that $l \in \Wh_{\psi_{M}} (\sigma)$ if and only if 
$$l(\sigma(u)(v)) = \psi_M(u) \cdot v$$
for all $u\in U_M, v\in V_\sigma$. For every $u' \in U_{M'}$ one has $w^{-1} u' w \in U_M$ and
$$\psi_{M'}(u') = \psi(u') = \psi(w^{-1} u' w) = \psi_M(w^{-1} u' w).$$
It is verified easily (cf. \cite[\S 3.2]{GSS2}) that
$$l({}^w\sigma (u') (v)) = l(\sigma(w^{-1} u' w)(v)) = \psi_M(w^{-1} u' w)\cdot v = \psi_{M'}(u')\cdot v$$
for all $u'\in U_{M'}$ and $v\in V_{{}^w\sigma} = V_\sigma$. This shows that $l \in \Wh_{\psi_{M'}}( {}^w \sigma)$ and gives the canonical identification $\iota$.

This gives a natural isomorphism of vector spaces
$$\begin{tikzcd}
\mca{C}(w, \sigma)=J({}^w\sigma) \circ \iota \circ J(\sigma)^{-1} : \  \Wh_\psi( I_{\wt{P}}^{\wt{G}}(\sigma))  \ar[r]  & \Wh_\psi(I_{\wt{P}'}^{\wt{G}}({}^w\sigma)).
\end{tikzcd}$$
We therefore obtain an endomorphism
\begin{equation} \label{End-T}
\begin{tikzcd}
\mca{T}(w,\sigma)^*= T(w,\sigma)^* \circ \mca{C}(w, \sigma) : \  \Wh_{\psi}( I_{\wt{P}}^{\wt{G}} (\sigma) ) \ar[r]  & \Wh_{\psi}( I_{\wt{P}}^{\wt{G}}(\sigma) )
\end{tikzcd}
\end{equation}
of the finite dimensional vector space $\Wh_{\psi}( I_{\wt{P}}^{\wt{G}} (\sigma) )$. A local coefficients matrix associated to $(\wt{P}, w, \sigma)$ is the matrix 
$$\mca{M}_\mfr{B}(w, \sigma)$$ representing $\mca{T}(w, \sigma)^*$ with respect to an ordered basis $\mfr{B} \subset \Wh_{\psi}( I_{\wt{P}}^{\wt{G}} (\sigma) )$.

\begin{prop} \label{P:decWh}
Fix $\sigma \in \Irrg(\wt{M})$. There is a local coefficients matrix $\mca{M}_\mfr{B}(w, \sigma)$ of the form
$$\bigoplus_{\sigma_i \subset \sigma|_{\wt{M}_0}} \mca{M}_{\mfr{B}_i}(w, \sigma_i),$$
where $\mca{M}_{\mfr{B}_i}(w, \sigma_i)$ is a local coefficients matrix associated with the triple $(\wt{P}_0, w, \sigma_i)$ and a basis $\mfr{B}_i$ of $\Wh_\psi( I_{\wt{P}_0}^{\wt{G}_0} (\sigma_i))$. In particular, for the two invariants trace and determinant, one has
\begin{equation} \label{tr-det}
{\rm Tr}(\mca{M}_\mfr{B}(w, \sigma)) = \sum_{\sigma_i \subset \sigma|_{\wt{M}_0}} {\rm Tr}(\mca{M}_{\mfr{B}_i}(w, \sigma_i)), \quad {\rm det}(\mca{M}_\mfr{B}(w, \sigma)) = \prod_{\sigma_i \subset \sigma|_{\wt{M}_0}} {\rm det}(\mca{M}_{\mfr{B}_i}(w, \sigma_i)).
\end{equation}
\end{prop}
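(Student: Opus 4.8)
The plan is to reduce the statement about Whittaker spaces of the full group $\wt{G}$ to the analogous data on $\wt{G}_0$, using the decomposition of $\sigma|_{\wt{M}_0}$ and the compatibility of the intertwining operators already recorded in Lemma \ref{L:kc}. First I would recall from the introduction (the formula $\mca{T}(w, \sigma)^* = \bigoplus_{\sigma_i \subset \sigma|_{\wt{M}_0}} \mca{T}(w, \sigma_i)^*$) that the endomorphism $\mca{T}(w,\sigma)^*$ of $\Wh_\psi(I_{\wt{P}}^{\wt{G}}(\sigma))$ respects the direct sum decomposition coming from restriction, so the bulk of the proof is to set up that decomposition carefully and check that each summand is itself a local coefficients matrix of the stated type.

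The key steps, in order, are as follows. (1) Write $\sigma|_{\wt{M}_0} = \bigoplus_i m_i \sigma_i$ with $\sigma_i \in \Irrg(\wt{M}_0)$; by Lemma \ref{L:kc} the restriction of $I_{\wt{P}}^{\wt{G}}(\sigma)$ to $\wt{G}_0$ decomposes as $\bigoplus_{\sigma_i \subset \sigma|_{\wt{M}_0}} I_{\wt{P}_0}^{\wt{G}_0}(\sigma_i)$ (with the appropriate multiplicities), and since $\Wh_\psi$ only sees the restriction to $\wt{G}_0$ — indeed $\Wh_\psi(\pi) = \Wh_\psi(\pi|_{\wt{G}_0})$ as noted in the introduction — we get $\Wh_\psi(I_{\wt{P}}^{\wt{G}}(\sigma)) = \bigoplus_{\sigma_i} \Wh_\psi(I_{\wt{P}_0}^{\wt{G}_0}(\sigma_i))$. (2) Check that the isomorphisms $J(\sigma)$, $\iota$, and the intertwining operator $T(w,\sigma)$ used to build $\mca{T}(w,\sigma)^*$ are all compatible with this decomposition: this is precisely the ``moreover'' clause of Lemma \ref{L:kc} asserting compatibility with the standard intertwining operators $T(w,\sigma)$ and $T(w,\sigma_0)$, together with the fact that $J(\sigma)$ and $\iota$ are defined by integrals over unipotent subgroups that lie inside $\wt{G}_0$, hence respect the $\wt{G}_0$-isotypic decomposition. (3) Conclude that $\mca{T}(w,\sigma)^*$ is block-diagonal, with the $i$-th block equal to the endomorphism $\mca{T}(w,\sigma_i)^*$ of $\Wh_\psi(I_{\wt{P}_0}^{\wt{G}_0}(\sigma_i))$ built from the same recipe on $\wt{G}_0$. (4) Choose an ordered basis $\mfr{B}$ of $\Wh_\psi(I_{\wt{P}}^{\wt{G}}(\sigma))$ adapted to the decomposition, i.e.\ $\mfr{B} = \bigsqcup_i \mfr{B}_i$ with $\mfr{B}_i$ a basis of $\Wh_\psi(I_{\wt{P}_0}^{\wt{G}_0}(\sigma_i))$; then the matrix of $\mca{T}(w,\sigma)^*$ in this basis is literally $\bigoplus_i \mca{M}_{\mfr{B}_i}(w,\sigma_i)$, and each $\mca{M}_{\mfr{B}_i}(w,\sigma_i)$ is, by construction, a local coefficients matrix for the triple $(\wt{P}_0, w, \sigma_i)$. (5) The trace and determinant identities in \eqref{tr-det} are then immediate from the elementary fact that $\Tr$ and $\det$ of a block-diagonal matrix are the sum and product, respectively, of the traces and determinants of the blocks.

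The main obstacle I expect is step (2): one must be genuinely careful that the auxiliary isomorphism $\iota: \Wh_{\psi_M}(\sigma) \to \Wh_{\psi_{M'}}({}^w\sigma)$ and the Jacquet-type integral maps $J(\sigma)$, $J({}^w\sigma)$ — which involve the choice of representatives $\wt{w}_0$ in $\wt{G}$ and integration over $N' = w_0 N^- w_0^{-1}$ — are intertwining operators for the $\wt{G}_0$-action, not merely abstract linear isomorphisms, so that they carry the $\sigma_i$-component of $\Wh_\psi(I_{\wt{P}}^{\wt{G}}(\sigma))$ to the $\sigma_i$-component of the target. Since $N$, $N'$, and $U_M$ all lie in the derived group, and the representatives $\wt{w}_\alpha$ normalize $\wt{T}_0$, this compatibility does hold; but it is the one place where the ``metaplectic'' features (the non-abelian center of $\wt{T}$, the delicate choice of splittings) could in principle interfere, so it deserves a careful statement. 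Everything else is bookkeeping with direct sums. I would therefore present the proof as a short paragraph invoking Lemma \ref{L:kc} for the compatibility, the introduction's identity $\mca{T}(w,\sigma)^* = \bigoplus \mca{T}(w,\sigma_i)^*$ for the block structure, and a one-line computation for \eqref{tr-det}.

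\begin{proof}
By Lemma \ref{L:kc} and the identity $\Wh_\psi(\pi) = \Wh_\psi(\pi|_{\wt{G}_0})$, we have
$$\Wh_\psi\big(I_{\wt{P}}^{\wt{G}}(\sigma)\big) = \bigoplus_{\sigma_i \subset \sigma|_{\wt{M}_0}} \Wh_\psi\big(I_{\wt{P}_0}^{\wt{G}_0}(\sigma_i)\big),$$
and the same Lemma guarantees that the standard intertwining operator $T(w,\sigma)$ is carried, under this decomposition, to the direct sum of the operators $T(w,\sigma_i)$. The isomorphisms $J(\sigma)$, $J({}^w\sigma)$ and the canonical identification $\iota$ are defined by integrals over unipotent subgroups of $\wt{G}$ lying inside $\wt{G}_0$, with representatives $\wt{w}_0$ that normalize $\wt{T}_0$; hence they too respect the $\wt{G}_0$-isotypic decomposition and restrict on the $\sigma_i$-component to the corresponding maps $J(\sigma_i)$, $J({}^w\sigma_i)$, $\iota_i$ built on $\wt{G}_0$. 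Consequently the endomorphism $\mca{T}(w,\sigma)^*$ of \eqref{End-T} is block-diagonal with respect to the above decomposition, its $i$-th block being the endomorphism $\mca{T}(w,\sigma_i)^*$ of $\Wh_\psi(I_{\wt{P}_0}^{\wt{G}_0}(\sigma_i))$ defined by the analogous recipe; that is,
$$\mca{T}(w,\sigma)^* = \bigoplus_{\sigma_i \subset \sigma|_{\wt{M}_0}} \mca{T}(w,\sigma_i)^*.$$
Now pick for each $i$ an ordered basis $\mfr{B}_i$ of $\Wh_\psi(I_{\wt{P}_0}^{\wt{G}_0}(\sigma_i))$, and let $\mfr{B}$ be the concatenation of the $\mfr{B}_i$, which is an ordered basis of $\Wh_\psi(I_{\wt{P}}^{\wt{G}}(\sigma))$. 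With respect to $\mfr{B}$, the matrix of $\mca{T}(w,\sigma)^*$ is block-diagonal:
$$\mca{M}_\mfr{B}(w,\sigma) = \bigoplus_{\sigma_i \subset \sigma|_{\wt{M}_0}} \mca{M}_{\mfr{B}_i}(w,\sigma_i),$$
where $\mca{M}_{\mfr{B}_i}(w,\sigma_i)$ is, by definition, the local coefficients matrix associated with the triple $(\wt{P}_0, w, \sigma_i)$ and the basis $\mfr{B}_i$. The equalities in \eqref{tr-det} follow at once, since the trace of a block-diagonal matrix is the sum of the traces of its blocks and its determinant is the product of their determinants.
\end{proof}
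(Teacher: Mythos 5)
Your proposal is correct and follows essentially the same route as the paper: decompose the Whittaker space of $I_{\wt{P}}^{\wt{G}}(\sigma)$ via the restriction to $\wt{G}_0$, use Lemma \ref{L:kc} for the compatibility of $T(w,\sigma)$ with the $T(w,\sigma_i)$, check that $J(\sigma)$, $J({}^w\sigma)$ and $\iota$ decompose accordingly, and read off \eqref{tr-det} from the block-diagonal form. The only cosmetic difference is that the paper justifies the decomposition of $\Wh_\psi(I_{\wt{P}}^{\wt{G}}(\sigma))$ by exactness of the twisted Jacquet functor (and notes $\iota$ is simply the identity on the underlying space rather than an integral), but this does not change the argument.
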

\begin{proof} Clearly, it suffices to show that $\mca{T}(w, \sigma)^*$ decomposes as a direct sum of the operators $\mca{T}(w, \sigma_i)^*$. First, we have an isomorphism of vector spaces
\begin{equation} \label{W-dec}
\Wh_\psi(I_{\wt{P}}^{\wt{G}}(\sigma)) \simeq \bigoplus_{\sigma_i \subset \sigma|_{\wt{M}_0}} \Wh_\psi(I_{\wt{P}_0}^{\wt{G}_0}(\sigma_i)),
\end{equation}
since $\Wh_\psi(-)$ is the dual of the twisted Jacquet module, which depends only on $\psi: U \to \C^\times$ and is an exact functor. 
It follows from Lemma \ref{L:kc} that 
$$T(w, \sigma)^* = \bigoplus_{\sigma_i \subset \sigma|_{\wt{M}_0}} T(w, \sigma_i)^*.$$
Now we consider the decomposition of $\mca{C}(w, \sigma)$ over the $\mca{C}(w, \sigma_i)$'s with respect to the decomposition in \eqref{W-dec}. For this purpose, we note first for each $\sigma_i \subset \sigma|_{\wt{M}_0}$ the isomorphism
$$J(\sigma_i): \Wh_{\psi_M}(\sigma_i) \longrightarrow \Wh_\psi( I_{\wt{P}_0}^{\wt{G}_0} (\sigma_i))$$
and the decomposition
$$\Wh_{\psi_M}(\sigma) = \bigoplus_{\sigma_i \subset \sigma|_{\wt{M}_0}} \Wh_{\psi_M} (\sigma_i).$$
It is easy to see from the defining formula of $J(\sigma_i)$ that
$$J(\sigma) = \bigoplus_{\sigma_i \subset \sigma|_{\wt{M}_0}} J(\sigma_i).$$
Similar decomposition for $J({}^w \sigma)$ holds. Lastly, the canonical identification $\iota$ also decomposes:
$$\iota = \bigoplus_{\sigma_i \subset \sigma|_{\wt{M}_0}} \iota_{\sigma_i},$$
where $\iota_{\sigma_i}: \Wh_{\psi_M}(\sigma_i) \to \Wh_{\psi_{M'}}({}^w \sigma_i)$ is the canonical identification.
Thus, the isomorphism $\mca{C}(w,  \sigma)$ decomposes as a direct product of the $\mca{C}(w, \sigma_i)$'s. Hence we have
$$\mca{T}(w, \sigma)^* = \bigoplus_{\sigma_i \subset \sigma|_{\wt{M}_0}} \mca{T}(w, \sigma_i)^*,$$
which gives the desired result.
\end{proof}

The above proposition shows that the invariants associated to a local coefficients matrix $\mca{M}_\mfr{B}(w, \sigma)$ is a priori determined by all the local coefficients matrices $\mca{M}_{\mfr{B}_i}(w, \sigma_i)$ arising from the restriction of $\sigma$ to $\wt{M}_0$.  This has a striking consequence even in the case where $\sigma = i(\chi)$ is a genuine representation of $\wt{T} \subset \wt{G}$, where $\chi$ is a central character of $Z(\wt{T})$. Indeed, in this case, the two invariants 
$${\rm Tr}(\mca{M}_\mfr{B}(w_\alpha, \sigma)) \text{ and } {\rm det}(\mca{M}_\mfr{B}(w_\alpha, \sigma))$$
 associated with a simple reflection $w_\alpha$ are computed in \cite{GoSz, Szp6, GSS1, GSS2}. They are expressed in terms of the Plancherel measure and gamma or metaplectic-gamma factors associated with $i(\chi)$; in particular, these two invariants depend only on the central character $\chi$.

It is expected that ${\rm Tr}(\mca{M}_{\mfr{B}_i}(w_\alpha, \sigma_i))$ and ${\rm det}(\mca{M}_{\mfr{B}_i}(w_\alpha, \sigma_i))$ depend on $\sigma_i$ only. Note that even if $\sigma \in \wt{G}$ is an unramified representation, it is possible to have a ramified constituent $\sigma_i \subset \sigma|_{\wt{T}_0}$. In this case, the equalities in \eqref{tr-det} entail that there is a cancellation of the ramified information encoded in the ramified constituents of $\sigma|_{\wt{T}_0}$. A prototype of such examples is the pair $(\wt{\GL}_2^{(n)}, \wt{\SL}_2^{(n)})$ with even $n$, see \cite[\S 8--\S 9]{GSS2}.

A special case is the following.
\begin{cor} \label{C:isoWh}
Let $\sigma \in \Irrg(\wt{M})$. Assume that $\sigma|_{\wt{M}_0} = \sigma_0^{\oplus m}$ is an isotypic sum of $\sigma_0 \in \Irrg(\wt{M}_0)$. Then $$\dim \Wh_{\psi_M}(\sigma) = m\cdot \dim \Wh_{\psi_M}(\sigma)$$ and moreover
$\mca{M}_{\mfr{B}_0}(w, \sigma_0)^{\oplus m}$ is a local coefficients matrix for $\mca{T}(w, \sigma)^*$, where $\mca{M}_{\mfr{B}_0}(w, \sigma_0)$ is a local coefficients matrix for $\mca{T}(w, \sigma_0)^*$. In particular, 
$${\rm Tr}(\mca{M}_\mfr{B}(w, \sigma)) = m \cdot {\rm Tr}(\mca{M}_{\mfr{B}_0}(w, \sigma_0)), \quad {\rm det}(\mca{M}_\mfr{B}(w, \sigma))= {\rm det}(\mca{M}_{\mfr{B}_0}(w, \sigma_0))^m.$$
\end{cor}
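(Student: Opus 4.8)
The plan is to deduce this directly from Proposition \ref{P:decWh} by specializing to the situation in which all irreducible constituents of $\sigma|_{\wt{M}_0}$ coincide. First I would record that $\Wh_{\psi_M}(-)$ depends only on the restriction to $\wt{M}_0$: since $U_M \subset M_\de \subset M_0$, the character $\psi_M$ is already a character of the unipotent subgroup $U_M$ of $\wt{M}_0$, and $\Wh_{\psi_M}(-)$, being the linear dual of the exact twisted Jacquet functor attached to $(U_M,\psi_M)$, satisfies $\Wh_{\psi_M}(\sigma) = \Wh_{\psi_M}(\sigma|_{\wt{M}_0})$. Hence the hypothesis $\sigma|_{\wt{M}_0} = \sigma_0^{\oplus m}$ yields $\Wh_{\psi_M}(\sigma) \simeq \Wh_{\psi_M}(\sigma_0)^{\oplus m}$, and in particular $\dim \Wh_{\psi_M}(\sigma) = m \cdot \dim \Wh_{\psi_M}(\sigma_0)$.

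Next I would invoke the proof of Proposition \ref{P:decWh}, which shows that, with respect to the decomposition $\Wh_\psi(I_{\wt{P}}^{\wt{G}}(\sigma)) \simeq \bigoplus_{\sigma_i \subset \sigma|_{\wt{M}_0}} \Wh_\psi(I_{\wt{P}_0}^{\wt{G}_0}(\sigma_i))$ furnished by Lemma \ref{L:kc}, the endomorphism $\mca{T}(w,\sigma)^*$ of \eqref{End-T} decomposes as $\bigoplus_i \mca{T}(w,\sigma_i)^*$, because each of the constituent maps $T(w,\sigma)^*$, $J(\sigma)$, $J({}^w\sigma)$ and $\iota$ decomposes compatibly. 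In the present case every $\sigma_i$ is isomorphic to $\sigma_0$; fixing such isomorphisms $\sigma_i \xrightarrow{\sim} \sigma_0$, one transports a chosen ordered basis $\mfr{B}_0$ of $\Wh_\psi(I_{\wt{P}_0}^{\wt{G}_0}(\sigma_0))$ to bases $\mfr{B}_i$ of each summand, and since $\mca{T}(w,\sigma_i)^*$ is then intertwined with $\mca{T}(w,\sigma_0)^*$ via the naturality (in $\sigma_i$) of $T(w,-)$, $J(-)$ and $\iota$, it is represented by the same matrix $\mca{M}_{\mfr{B}_0}(w,\sigma_0)$. Taking $\mfr{B} = \bigsqcup_{i=1}^m \mfr{B}_i$ as an ordered basis of $\Wh_\psi(I_{\wt{P}}^{\wt{G}}(\sigma))$ then makes $\mca{T}(w,\sigma)^*$ block-diagonal with $m$ identical blocks, i.e.\ $\mca{M}_\mfr{B}(w,\sigma) = \mca{M}_{\mfr{B}_0}(w,\sigma_0)^{\oplus m}$.

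Finally the trace and determinant assertions are immediate from this block-diagonal shape: the trace of a block-diagonal matrix is the sum of the traces of its blocks and the determinant is their product, which here give $m\cdot {\rm Tr}(\mca{M}_{\mfr{B}_0}(w,\sigma_0))$ and ${\rm det}(\mca{M}_{\mfr{B}_0}(w,\sigma_0))^m$ respectively, exactly the specialization of \eqref{tr-det}. There is no genuine obstacle in this argument; the only point deserving a line of care is the claim that the local coefficients matrices attached to the isomorphic summands $\sigma_i$ may be taken \emph{literally} equal, which comes down to the functoriality of the intertwining operators and the Whittaker isomorphisms in the representation variable — a compatibility already implicit in the proof of Proposition \ref{P:decWh}.
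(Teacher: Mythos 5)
Your proposal is correct and is essentially the paper's own route: the paper states this corollary as an immediate specialization of Proposition \ref{P:decWh} (all constituents $\sigma_i$ of $\sigma|_{\wt{M}_0}$ being isomorphic to $\sigma_0$), and your only added content — using naturality of $T(w,-)$, $J(-)$ and $\iota$ in the representation variable to transport a basis $\mfr{B}_0$ so that the $m$ blocks are literally equal to $\mca{M}_{\mfr{B}_0}(w,\sigma_0)$ — is exactly the point left implicit there. You also correctly read the displayed dimension identity as $\dim \Wh_{\psi_M}(\sigma) = m\cdot \dim \Wh_{\psi_M}(\sigma_0)$ (the statement as printed has an evident typo), which follows as you say from $U_M \subset M_0$ and exactness of the twisted Jacquet functor.
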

One immediate application of Corollary \ref{C:isoWh} is the computation of the multiplicity $m$ by considering the Whittaker dimensions. Indeed, such multiplicity (or ramification index $e$ in the discussion of \S \ref{SSS:res}) is most often a subtle quantity to explicate.


\subsection{Genuine principal series}
We assume $p\nmid n$, and fix a splitting $s_K$ of  $\wt{G}$ over $K=\mbf{G}(O)$. We consider a $(K, s_K)$-unramified genuine principal series $I(\chi)= I(i(\chi))$ of $\wt{G}$, where
$$\chi: Z(\wt{T}) \longrightarrow \C^\times$$
is an unramified character, i.e., it is trivial on $Z(\wt{T}) \cap K$. In this case, as discussed in \S \ref{SS:res-ps}, the group 
$$\wt{A}=Z(\wt{T})\cdot \mbf{T}(O) \subset \wt{T}$$
is a maximal abelian subgroup. There is a unique extension also denoted by 
$$\tchi: \wt{A} \longrightarrow \C^\times,$$
which is trivial on $\mbf{T}(O)$. From now on, we will take
$$i(\chi) = \Ind_{\wt{A}}^{\wt{T}} (\tchi)$$
for this $\tchi$, which gives the unramified principal series $I(\chi) = I(i(\chi))$.

 It follows from Theorem \ref{T:decT} that one has 
$$I(\chi)|_{\wt{G}_0}= \val{\msc{X}_{Q,n}^\mfr{c}} \cdot \bigoplus_{\gamma \in \msc{X}_{Q,n}^\Gamma/\msc{X}_{Q,n}^\mfr{c}} \bigoplus_{\omega_{\gamma, j} \in \msc{E}(\chi, {}^\gamma \tchi_O; Z(\wt{T}_0))} I(\omega_{\gamma, j}),$$
where generically $I(\omega_{\gamma, j})$ is a multiplicity-free genuine principal series of $\wt{G}_0$  appearning in the double sum of the right hand side. We note that 
$$\dim \Wh_\psi(I(\omega_{\gamma, j})) = \val{ \msc{X}_{0,Q,n} }$$
for every $\omega_{\gamma, j} \in \msc{E}(\chi, {}^\gamma \tchi_O; Z(\wt{T}_0))$, where 
$$\msc{X}_{0,Q,n}:=Y_0/Y_{0,Q,n}.$$
 One the other hand, we have
$$\dim \Wh_\psi(I(\chi)) = \val{ \msc{X}_{Q,n} }.$$
The relations among various quotients of lattices are illustrated as follows.
\begin{equation} \label{Lat}
 \begin{tikzcd}
Y_{0,Q,n}/(Y_0 \cap Y_{Q,n}) \ar[d, hook] & & Y/Y_0  \ar[d, two heads] \\
Y_0/(Y_0 \cap Y_{Q,n}) \ar[r, hook] \ar[d, two heads] & \msc{X}_{Q,n} \ar[r, two heads, "f"] & \msc{X}_{Q,n}^\Gamma \\
\msc{X}_{0,Q,n}.
\end{tikzcd}
\end{equation}

If $Z(\wt{T}_0) \subset Z(\wt{T})$, or equivalently $Y_{0,Q,n} = Y_0 \cap Y_{Q,n}$ by Corollary \ref{C:Zinc}, then the above  diagram becomes
\begin{equation} \label{Lat-1}
\begin{tikzcd}
\msc{X}_{0,Q,n} \ar[r, hook] & \msc{X}_{Q,n} \ar[r, two heads, "f"] & \msc{X}_{Q,n}^\Gamma.
\end{tikzcd}
\end{equation}
Since $\msc{X}_{Q,n}$ is the ``moduli space" of $\Wh_\psi(I(\chi))$, i.e., there is a natural parametrization of the Whittaker space by $\msc{X}_{Q,n}$ (see \cite[\S 3.4]{GSS2}),  we see that there is a decomposition of $\msc{X}_{Q,n}$ into the disjoint union of the fibers of $f$, and thus a corresponding decomposition of $\Wh_\psi(I(\chi))$.
Each fibre of $f$ is a torsor over $\msc{X}_{0,Q,n}$ and in particular of the same size as 
$$\val{\msc{X}_{0,Q,n}} = \dim \Wh_\psi(I(\chi|_{Z(\wt{T}_0)})).$$
 This decomposition is compatible with the fact that $I(\chi)|_{\wt{G}_0} = I(\chi|_{Z(\wt{T}_0)})^{\oplus \val{\msc{X}_{Q,n}^\Gamma}}$ is an isotypic sum, see Corollary \ref{C:decTi} and Corollary \ref{C:isoWh}.

In the remaining of this section, we assume
$$\mfr{f}(\psi) = O_F$$
and investigate $\dim \Wh_\psi(\pi)$ and $\dim \Wh_\psi(\pi_0)$, where $\pi$ is an irreducible constituent of a regular or unitary unramified principal series and $\pi_0 \subset \pi|_{\wt{G}}$.

\subsection{Regular unramified principal series} \label{SS:reg-ps}
In this subsection, we consider the restriction to $\wt{G}_0$ of an irreducible constituent $\pi$ of a regular unramified genuine principal series $I(\chi)$ of $\wt{G}$. In particular, we will study how the Whittaker dimension $\dim \Wh_\psi(\pi)$ is distributed into the constituents of $\pi|_{\wt{G}_0}$.

Temporarily, we use $\wt{G}^\sharp$ to denote either $\wt{G}$ or $\wt{G}_0$, and $\wt{T}^\sharp \subset \wt{G}^\sharp$ is the covering torus. Let
$$\chi^\sharp: Z(\wt{T}^\sharp) \longrightarrow \C^\times$$
be a central character satisfying:
\begin{enumerate}
\item[--] if $\wt{T}^\sharp=\wt{T}$, then $\chi^\sharp$ is unramified;
\item[--] if $\wt{T}^\sharp = \wt{T}_0$, then $i(\chi^\sharp) \subset i(\chi)|_{\wt{T}_0}$ for an unramified $\chi: Z(\wt{T}) \longrightarrow \C^\times$.
\end{enumerate}
We highlight that in the second case above, $\chi^\sharp$ may not be unramified, unless $(\wt{G}, \wt{G}_0)$ is an isotypic pair; also, by Lemma \ref{L:incs} we have $i(\chi^\sharp) \subset i(\chi)|_{\wt{T}_0}$ if and only if $\chi^\sharp|_{\wt{T}_0\cap Z(\wt{T})} = \chi|_{\wt{T}_0\cap Z(\wt{T})}$. Denoting 
$$Y_{Q,n}^\sharp:=Y_0 \cap Y_{Q,n}$$
and let $\wt{T}_{Q,n}^\sharp$ be the preimage in $\wt{T}$ of ${\rm Im}(i_{Q,n}^\sharp) \subset T$, where
$$i_{Q,n}^\sharp: \ Y_{Q,n}^\sharp \otimes F^\times \longrightarrow Y_{Q,n}\otimes F^\times \longrightarrow Y\otimes F^\times$$
is induced from the inclusions $Y_{Q,n}^\sharp \subset Y_{Q,n} \subset Y$. One has 
$$\wt{T}_{Q,n}^\sharp \subset Z(\wt{T}).$$
Similarly, we have the subgroup $\wt{T}_{Q,n}^{sc} \subset Z(\wt{T})$ associated with $Y_{Q,n}^{sc} \subset Y_{Q,n}$. In any case, since
$$\wt{T}_{Q,n}^\sharp = \wt{T}_0 \cap Z(\wt{T}) = Z(\wt{T}_0) \cap Z(\wt{T}),$$
we see that every $\chi^\sharp$ is unramified when restricted to $\wt{T}_{Q,n}^\sharp$. In particular, the restriction
$$\chi^\sharp|_{\wt{T}_{Q,n}^{sc}}: \wt{T}_{Q,n}^{sc} \longrightarrow \C^\times$$
is always unramified.

In this subsection, we assume that $\chi^\sharp$ is regular, i.e., the stabilizer subgroup of $\chi^\sharp$ in the Weyl group is trivial.  Thus the irreducible constituents of $I(\chi^\sharp)$ are multiplicity-free. We denote its Jordan--Holder set by ${\rm JH}(I(\chi^\sharp))$. Consider
$$\Phi(\chi^\sharp):= \set{\alpha \in \Phi: \chi^\sharp(\wt{h}_\alpha(\varpi^{n_\alpha})) = q^{-1}} \subset \Phi.$$
Let $\msc{P}(\Phi(\chi^\sharp))$ be the power set of $\Phi(\chi^\sharp)$, and denote by
$$\msc{C}(X\otimes \R; \chi^\sharp)$$
the set of connected components of 
$$X\otimes \R - \bigcup_{\alpha \in \Phi(\chi^\sharp)} {\rm Ker}(\alpha^\vee).$$

\begin{prop} \label{P:Rod}
There are bijections between the three sets
$$\msc{P}(\Phi(\chi^\sharp)) \longleftrightarrow {\rm JH}(I(\chi^\sharp)) \longleftrightarrow \msc{C}(X\otimes \R; \chi^\sharp)$$
denoted by
$$S \leftrightarrow \pi_S \leftrightarrow \Gamma_S,$$
which is given as follows. First, we have
$$\Gamma_S = \set{x\in X\otimes \R: \angb{\alpha^\vee}{x} <0 \text{ if and only if } \alpha \in S}.$$
Second, the representation $\pi_S$ is characterized by its Jacquet module
$$(\pi_S)_U = \bigoplus_{w \in W_S} \delta_B^{1/2} \cdot i({}^{w^{-1}} \chi^\sharp),$$
where 
$$W_S = \set{w\in W: \Phi(\chi^\sharp)^\vee \cap w(\Phi_-^\vee) = S^\vee} \subset W.$$
\end{prop}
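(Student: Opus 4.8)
The plan is to deduce Proposition~\ref{P:Rod} from the classical theory of Jordan--H\"older constituents of regular unramified principal series, adapted to the covering setting via the abelian cover $\wt{T}^\sharp/\wt{T}_{Q,n}^{sc}$. First I would observe that $\chi^\sharp$ is unramified when restricted to $\wt{T}_{Q,n}^{sc}$, as noted in the text, so that the genuine principal series $I(\chi^\sharp)$ of $\wt{G}^\sharp$ is controlled by a genuine principal series of the dual group side: concretely, $I(\chi^\sharp)$ has the same Jordan--H\"older content (as a module over the Hecke algebra, or via the equivalence of categories with modules over an affine Hecke algebra with unequal parameters) as the unramified principal series $I(\chi^\sharp)$ of the linear group $\mbf{G}_{Q,n}^\sharp$ whose root system has simple coroots $\Delta_{Q,n}^\vee$ (resp.\ $\Delta_{0,Q,n}^\vee$). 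Here the key numerical point is that $\chi^\sharp(\wt{h}_\alpha(\varpi^{n_\alpha})) = q^{-1}$ is exactly the condition that $\alpha_{Q,n}^\vee$ (the corresponding coroot of the dual group) is a ``reducibility coroot'' in the sense of the Rodier/Casselman theory for the linear group $\mbf{G}_{Q,n}^\sharp$, so $\Phi(\chi^\sharp)^\vee$ matches the set of such reducibility coroots.

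Next I would invoke Rodier's theorem (see \cite{Rod1}, or the exposition in \cite{BZ1}) on the decomposition of a regular unramified principal series of a split $p$-adic group: the irreducible constituents are in bijection with the connected components of the complement in $X\otimes\R$ of the hyperplanes orthogonal to the reducibility coroots, equivalently with the power set of the set of reducibility roots, and the Jacquet module of the constituent $\pi_S$ is the sum of the $\delta_B^{1/2}$-twisted characters $i({}^{w^{-1}}\chi^\sharp)$ for $w$ ranging over the chamber $W_S$. I would carry over that statement to the genuine setting using the exactness and known behaviour of the genuine Jacquet functor along $\wt{U}$ (which is unipotent, hence canonically split), together with the fact that Jacquet modules of genuine principal series of $\wt{G}^\sharp$ are computed by the same geometric lemma as in the linear case, the Weyl group $W$ of the coroot system $\Phi_{Q,n}^\vee$ acting on genuine characters of $\wt{T}^\sharp$ exactly as in \eqref{W-act}. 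The three displayed bijections and the characterization of $\Gamma_S$ and $W_S$ then follow verbatim from Rodier's argument: the map $S\mapsto\Gamma_S$ is the standard identification of subsets of a root subsystem with chambers, and $W_S$ is by definition the set of Weyl elements sending the negative coroots to a set meeting $\Phi(\chi^\sharp)^\vee$ precisely in $S^\vee$.

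The main obstacle I anticipate is purely bookkeeping rather than conceptual: one must be careful that the ``dual group side'' root datum entering the covering Hecke algebra is $(\Phi_{Q,n}^\vee,\Delta_{Q,n}^\vee)$ and not $\Phi^\vee$, so that the correct normalization of $\delta_B^{1/2}$ and of the $q$-power in the definition of $\Phi(\chi^\sharp)$ is the one making the modified coroot $\wt{h}_\alpha(\varpi^{n_\alpha})$ the relevant element (this is why $n_\alpha$, and not $1$, appears). A secondary point requiring care is the regularity hypothesis: because the relevant Weyl group action is on $Z(\wt{T}^\sharp)$-characters and the stabilizer is computed there, one should check that ``$\chi^\sharp$ regular'' indeed forces multiplicity-freeness of ${\rm JH}(I(\chi^\sharp))$ in the genuine setting, which again reduces to the linear statement for $\mbf{G}_{Q,n}^\sharp$ once the identification of Hecke modules is in place. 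Finally, in the case $\wt{G}^\sharp=\wt{G}_0$ where $\chi^\sharp$ need not be unramified, I would note that only its restriction to $\wt{T}_{Q,n}^{sc}$ matters for the Hecke-algebra reduction, and this restriction is unramified, so the argument goes through unchanged; this is the only place where the text's remark about $\chi^\sharp|_{\wt{T}_{Q,n}^{sc}}$ being automatically unramified is genuinely used.
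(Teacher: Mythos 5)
Your route is genuinely different from the paper's, and it contains a gap at its central step. You reduce everything to Rodier's theorem for the linear group $\mbf{G}_{Q,n}^\sharp$ via a claimed equivalence of categories (or identification of Hecke modules) between the block of $I(\chi^\sharp)$ and modules over an affine Hecke algebra of the dual-side root datum. Such an Iwahori--Hecke algebra isomorphism is not available in the cited literature for general Brylinski--Deligne covers, and, more seriously, it cannot apply as stated in the very case this proposition is designed to cover: when $\wt{T}^\sharp=\wt{T}_0$, the character $\chi^\sharp$ of $Z(\wt{T}_0)$ is in general \emph{ramified} (only its restriction to $\wt{T}_{Q,n}^{sc}$ is unramified), so $I(\chi^\sharp)$ need not be $(K_0,s_K)$-unramified and does not lie in the Iwahori-spherical Bernstein block; your assertion that ``only the restriction to $\wt{T}_{Q,n}^{sc}$ matters for the Hecke-algebra reduction'' is precisely the point requiring proof, since Bernstein blocks are determined by the full cuspidal support, not just by its unramified part. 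Without a depth-zero type-theoretic argument (which you do not supply and the paper does not invoke), the reduction collapses exactly where the new content lies.

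The paper avoids this entirely: it takes \cite{Rod4} (and its covering extension in \cite{Ga6} for unramified characters) and observes that the only input Rodier's argument needs is the rank-one Plancherel measure $\mu(w_\alpha, i(\chi^\sharp))$, whose poles detect $\pm\alpha\in\Phi(\chi^\sharp)$; the Jacquet-module bookkeeping and the chamber combinatorics then go through verbatim for covers. For the possibly ramified $\chi^\sharp$ on $\wt{T}_0$, the formula \eqref{E:Plan} is obtained either directly from \cite[Theorem 5.1]{GoSz} or by the restriction trick of Lemma \ref{L:kc}, which identifies $\mu(w_\alpha, i(\chi^\sharp))$ with the Plancherel measure of an unramified constituent computed in \cite{Ga1}. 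If you want to rescue your approach, you would have to either establish the Hecke-algebra equivalence for the relevant (non-Iwahori) block or replace that step by exactly this Plancherel-measure input; as written, the argument does not go through.
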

\begin{proof}
For linear algebraic group and a regular character $\chi$ of $T\subset G$ (not necessarily unramified), the result was shown in \cite{Rod4}. For covering groups and unramified $\chi$, it was extended in \cite{Ga6}. We explain how the argument in \cite{Ga6}, which is adopted from \cite{Rod4}, actually works for $\chi^\sharp$ here.

Indeed, the key property needed in the proof in \cite{Ga6} is the computation of the Plancherel measure $\mu(w_\alpha, i(\chi^\sharp))$
associated with $T(w_\alpha, i(\chi^\sharp))$ with $w_\alpha$ being a simple reflection. For unramified $\chi^\sharp$, this was explicitly computed in \cite{Ga1} and used in \cite[Proposition 3.5]{Ga6}; it shows that 
\begin{equation} \label{E:Plan}
\mu(w_\alpha, i(\chi^\sharp))^{-1} = \frac{ 1-q^{-1} \chi^\sharp(\wt{h}_\alpha(\varpi^{n_\alpha}))}{1- \chi^\sharp(\wt{h}_\alpha(\varpi^{n_\alpha}))} \cdot \frac{ 1-q^{-1} \chi^\sharp(\wt{h}_\alpha(\varpi^{n_\alpha}))^{-1}}{1- \chi^\sharp(\wt{h}_\alpha(\varpi^{n_\alpha}))^{-1}}.
\end{equation}
One thus concludes that the poles of $\mu(w_\alpha, i(\chi^\sharp))$ are achieved when $\alpha \in \Phi(\chi^\sharp)$ or $-\alpha \in \Phi(\chi^\sharp)$. This is the key ingredient used in the proof in \cite{Rod4}, the rest of which applies in the covering setting.

Now if $\chi^\sharp$ is for $\wt{G}_0$ with $i(\chi^\sharp) \subset i(\chi)|_{\wt{T}}$, then we can proceed in two ways in order to determine $\mu(w_\alpha, i(\chi^\sharp))$. First, one can invoke \cite[Theorem 5.1]{GoSz} directly. Alternatively, it follows from Lemma \ref{L:kc} that 
$$\mu(w_\alpha, i(\chi^\sharp)) = \mu(w_\alpha, i(\chi_0))$$
for every $\omega: Z(\wt{T}_0) \longrightarrow \C^\times$ such that $i(\omega) \subset i(\chi)|_{\wt{T}_0}$. In particular, considering the unramified $\omega$ and using the formula of $\mu(w_\alpha, i(\omega))$ in \cite{Ga1}, one also obtains the same formula \eqref{E:Plan} for $\mu(w_\alpha, i(\chi^\sharp))$. Now the same argument in \cite[Proposition 3.5]{Ga6} applies to give the desired result.
\end{proof}

For every $S \subset \Phi(\chi^\sharp)$ we denote by 
$$\pi_{\chi^\sharp, S} =\pi(\chi^\sharp)_S \in {\rm JH}(I(\chi^\sharp))$$
the associated  irreducible constituent of $I(\chi^\sharp)$.

\begin{cor}
Let $\chi: Z(\wt{T}) \to \C^\times$ be a regular unramified genuine character.
\begin{enumerate}
\item[(i)] For every $\omega: Z(\wt{T}_0) \longrightarrow \C^\times$ such that $i(\omega) \subset i(\chi)|_{\wt{T}_0}$, one has
$$\Phi(\chi) = \Phi(\omega).$$
\item[(ii)] For every $S \subset \Phi(\chi)$, we have
$$(\pi_{\chi, S})|_{\wt{G}_0}= \val{\msc{X}_{Q,n}^\mfr{c}} \cdot \bigoplus_{\gamma \in \msc{X}_{Q,n}^\Gamma/\msc{X}_{Q,n}^\mfr{c}} \bigoplus_{\omega_{\gamma, j} \in \msc{E}(\chi, {}^\gamma \tchi_O; Z(\wt{T}_0))} \pi_{\omega_{\gamma, j}, S}.$$
\end{enumerate}
\end{cor}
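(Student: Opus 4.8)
The plan is to deduce both parts from the material already assembled, combining Proposition \ref{P:Rod}, Theorem \ref{T:decT}, and the basic compatibility of Jacquet modules with restriction to $\wt{G}_0$. For part (i), the point is that the set $\Phi(\chi)$ is governed entirely by the values $\chi(\wt{h}_\alpha(\varpi^{n_\alpha}))$, and $\wt{h}_\alpha(\varpi^{n_\alpha})$ lies in $\wt{T}_{Q,n}^{sc} \subset Z(\wt{T}_0) \cap Z(\wt{T})$ since $\alpha_{Q,n}^\vee = n_\alpha \alpha^\vee$ generates (part of) $Y_{Q,n}^{sc}$. Concretely, $\wt{h}_\alpha(\varpi^{n_\alpha})$ is, up to a root of unity and the section $\s$, the image of $\alpha_{Q,n}^\vee(\varpi) \in \wt{T}_{Q,n}^{sc}$, which is contained in the intersection subgroup $\wt{T}_0 \cap Z(\wt{T})$. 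If $i(\omega) \subset i(\chi)|_{\wt{T}_0}$, then by Lemma \ref{L:incs} (applied with $J = Y_0$) the characters $\omega$ and $\chi$ agree on $\wt{T}_0 \cap Z(\wt{T})$, hence on $\wt{T}_{Q,n}^{sc}$; therefore $\omega(\wt{h}_\alpha(\varpi^{n_\alpha})) = \chi(\wt{h}_\alpha(\varpi^{n_\alpha}))$ for every $\alpha$, and the defining condition $\chi^\sharp(\wt{h}_\alpha(\varpi^{n_\alpha})) = q^{-1}$ picks out the same subset of roots. Thus $\Phi(\chi) = \Phi(\omega)$. (One should also check $n_\alpha$ is the same for $\wt{G}$ and $\wt{G}_0$, which is immediate since it depends only on $Q(\alpha^\vee)$, and $Q$ restricts from $Y$ to $Y_0$.)

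For part (ii), the strategy is to identify $\pi_{\chi,S}$ and the $\pi_{\omega_{\gamma,j},S}$ via their Jacquet modules, as provided by Proposition \ref{P:Rod}, and then show that the restriction of the Jacquet module of $\pi_{\chi,S}$ to $\wt{T}_0$ decomposes exactly as the sum of the Jacquet modules of the claimed constituents. First I would note that restriction from $\wt{G}$ to $\wt{G}_0$ commutes with the (normalized) Jacquet functor along $\wt{B}$, since the unipotent radical $U$ is the same for both groups and the splitting over $U$ is canonical; so $(\pi_{\chi,S}|_{\wt{G}_0})_U = ((\pi_{\chi,S})_U)|_{\wt{T}_0}$. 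By Proposition \ref{P:Rod}, $(\pi_{\chi,S})_U = \bigoplus_{w \in W_S} \delta_B^{1/2}\cdot i({}^{w^{-1}}\chi)$, where $W_S \subset W$ is determined by $\Phi(\chi)$ and $S$ alone; by part (i), the same index set $W_S$ governs each $\pi_{\omega_{\gamma,j},S}$. Applying Theorem \ref{T:decT}(i) to each $i({}^{w^{-1}}\chi)$ restricted to $\wt{T}_0$, and using that the decomposition of $i({}^{w^{-1}}\chi)|_{\wt{T}_0}$ matches (after the obvious relabelling by $w$) the decomposition of the $i({}^{w^{-1}}\omega_{\gamma,j})$'s, one obtains that $(\pi_{\chi,S}|_{\wt{G}_0})_U$ equals $\val{\msc{X}_{Q,n}^\mfr{c}} \cdot \bigoplus_\gamma \bigoplus_j \bigoplus_{w\in W_S} \delta_B^{1/2}\cdot i({}^{w^{-1}}\omega_{\gamma,j}) = \val{\msc{X}_{Q,n}^\mfr{c}} \cdot \bigoplus_\gamma\bigoplus_j (\pi_{\omega_{\gamma,j},S})_U$. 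Since a finite-length representation of $\wt{G}_0$ all of whose irreducible subquotients are subquotients of $I(\omega_{\gamma,j})$'s is determined by its (semisimplified) Jacquet module in this regular setting — the Jacquet modules of the $\pi_{\omega_{\gamma,j},S}$ for varying $S$ are mutually disjoint by regularity — this Jacquet-module identity forces the asserted direct-sum decomposition of $\pi_{\chi,S}|_{\wt{G}_0}$ itself.

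The main obstacle I anticipate is making rigorous the last step: passing from an equality of semisimplified Jacquet modules to an equality of the actual representations. For this one needs that in the restriction $\pi_{\chi,S}|_{\wt{G}_0}$, which is already known to be semisimple (Silberger), every irreducible constituent is one of the $\pi_{\omega_{\gamma,j},S'}$ with $\omega_{\gamma,j} \in \msc{E}(\chi,{}^\gamma\tchi_O; Z(\wt{T}_0))$ and $S' \subset \Phi(\chi)$ — this follows from $\pi_{\chi,S} \hookrightarrow I(\chi)$ (or the surjection onto $\pi_{\chi,S}$) together with Theorem \ref{T:decT}(ii), exactness of restriction, and Proposition \ref{P:Rod} applied on $\wt{G}_0$ — and then that the multiplicity of each such $\pi_{\omega_{\gamma,j},S'}$ is read off correctly from the Jacquet module because the regular constituents have linearly independent (indeed disjoint-support) Jacquet modules. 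A secondary technical point to handle carefully is the bookkeeping of the twist by $w$: one must check that ${}^{w^{-1}}(\chi)|_{\wt{T}_0}$ decomposes into ${}^{w^{-1}}$ of the pieces appearing in $\chi|_{\wt{T}_0}$, i.e. that the Mackey decomposition of Theorem \ref{T:decT} is $W$-equivariant in the appropriate sense, which should be routine since $W$ preserves $Y_0$, $Y_{Q,n}$, and $Y^{\mfr c}$. Once these are in place the corollary follows formally.
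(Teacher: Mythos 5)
Your proposal is correct and follows essentially the same route as the paper: part (i) rests on $\chi$ and $\omega$ agreeing on $\wt{T}_{Q,n}^{sc}\subset \wt{T}_0\cap Z(\wt{T})$ (the paper phrases this via the Plancherel measure formula, you via the defining condition $\chi^\sharp(\wt{h}_\alpha(\varpi^{n_\alpha}))=q^{-1}$, which is the same content), and part (ii) uses exactly the paper's argument of combining Proposition \ref{P:Rod}, the commutation of the Jacquet functor with restriction to $\wt{G}_0$, the decomposition of Theorem \ref{T:decT}, and the fact that each $\pi_{\omega_{\gamma,j},S}$ is determined by its Jacquet module. Your extra care about passing from the semisimplified Jacquet module to the actual decomposition (semisimplicity of the restriction plus regularity) only makes explicit what the paper leaves implicit.
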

\begin{proof}
Part (i) follows from the proof of Proposition \ref{P:Rod}, since $\mu(w, i(\chi)) = \mu(w, i(\omega))$ for such $\omega$ and depends only on $\chi|_{\wt{T}_{Q,n}^{sc}} = \omega|_{\wt{T}_{Q,n}^{sc}}$. For (ii), we note that the representation $\pi_{\chi, S}$ is characterized by its Jacquet module, and since computing Jacquet module commutes with restriction to $\wt{G}_0$, it suffices to determine
$${\rm Res}_{\wt{T}_0}^{\wt{T}}(\pi_{\chi, S})_U = \bigoplus_{w \in W_S} \delta_B^{1/2} \cdot i({}^{w^{-1}} \chi)|_{\wt{T}_0}.$$
Since
$$i(\chi)|_{\wt{T}_0} = \val{\msc{X}_{Q,n}^\mfr{c}} \cdot \bigoplus_{\gamma \in \msc{X}_{Q,n}^\Gamma/\msc{X}_{Q,n}^\mfr{c}} \bigoplus_{\omega_{\gamma, j} \in \msc{E}(\chi, {}^\gamma \tchi_O; Z(\wt{T}_0))} i(\omega_{\gamma, j}),$$
and similar decomposition for ${}^{w^{-1}} \chi$ for each $w\in W_S$ holds, we get
$$\begin{aligned}[t]
& {\rm Res}_{\wt{T}_0}^{\wt{T}}(\pi_{\chi, S})_U \\
= & \val{\msc{X}_{Q,n}^\mfr{c}} \cdot \bigoplus_{\gamma \in \msc{X}_{Q,n}^\Gamma/\msc{X}_{Q,n}^\mfr{c}} \bigoplus_{\omega_{\gamma, j} \in \msc{E}(\chi, {}^\gamma \tchi_O; Z(\wt{T}_0))} \bigoplus_{w\in W_S} \delta_{B_0}^{1/2} \cdot i({}^{w^{-1}}\omega_{\gamma, j}) \\
= & \val{\msc{X}_{Q,n}^\mfr{c}} \cdot \bigoplus_{\gamma \in \msc{X}_{Q,n}^\Gamma/\msc{X}_{Q,n}^\mfr{c}} \bigoplus_{\omega_{\gamma, j} \in \msc{E}(\chi, {}^\gamma \chi_O; Z(\wt{T}_0))} (\pi_{\omega_{\gamma, j}, S})_U.
\end{aligned}
$$
As noted, the representation $\pi_{\omega_{\gamma, j}, S}$ is uniquely determined by its Jacquet module $(\pi_{\omega_{\gamma, j}, S})_U$, this shows (ii) and concludes the proof.
\end{proof}

For a regular $(K, s_K)$-unramified $I(\chi)$, we want to determine the Whittaker space $\Wh_\psi(\pi_{\chi, S})$ in terms of the $\Wh_\psi(\pi_{\omega_{\gamma, j}, S})$'s. As in general there is much difficulty, in the remaining part of this subsection, we will consider two special cases:
\begin{enumerate}
\item[--] when $Z(\wt{T}_0) \subset Z(\wt{T})$, and thus $\pi_{\chi, S}|_{\wt{G}_0} = \val{\msc{X}_{Q,n}^\Gamma} \cdot \pi_{\omega, S}$ is an isotypic sum, where $\omega = \chi|_{Z(\wt{T}_0)}$;
\item[--] when the pair is $(\wt{\GL}_2^{(n)}, \wt{\SL}_2^{(n)})$ with arbitrary $n$.
\end{enumerate} 
We hope to illustrate the subtleties on determining even the relation between $\dim \Wh_\psi(\pi_{\chi, S})$ and  $\dim \Wh_\psi(\pi_{\omega_{\gamma, j}, S})$.

\subsubsection{The isotypic case}
If $(\wt{G}, \wt{G}_0)$ is an isotypic pair, then we denote by 
$$\omega:=\chi|_{Z(\wt{T}_0)}$$
the unramified character of $Z(\wt{T}_0)$. We have $\pi_{\chi, S}|_{\wt{G}_0} = \val{\msc{X}_{Q,n}^\Gamma} \cdot \pi_{\omega, S}$ and
$$\dim \Wh_\psi(\pi_{\chi, S}) = \val{\msc{X}_{Q,n}^\Gamma} \cdot \dim \Wh_\psi(\pi_{\omega, S}).$$
Now we want to give an interpretation of this equality from the perspective of the short exact sequence \eqref{Lat-1}. 

\begin{lm} \label{L:pers}
Assume that $\wt{G}_0$ is a saturated cover. Then
\begin{enumerate}
\item[(i)] $\wt{G}$ is also a saturated cover, and thus both $\wt{G}_0$ and $\wt{G}$ are persistent covers;
\item[(ii)] one has $Z(\wt{T}_0) \subset Z(\wt{T})$ in this case.
\end{enumerate}
\end{lm}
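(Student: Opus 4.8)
\textbf{Proof proposal for Lemma \ref{L:pers}.}

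The plan is to unwind the definition of ``saturated'' for the relevant covers and then compare lattices. Recall from \cite[Definition 2.1]{Ga6} (referenced just above in the treatment of $\wt{\GSpin}_{2r+1}$) that a cover $\wt{G}^\sharp$ is \emph{saturated} when $Y_{Q,n}^{\sharp,sc} = Y_{Q,n}^\sharp \cap Y^\sharp_0$, i.e.\ the coroot sublattice inside $Y_{Q,n}^\sharp$ already exhausts the would-be character lattice of the adjoint quotient; equivalently the dual group $(\wt{G}^\sharp)^\vee$ is of adjoint type. For part (ii), note that by Corollary \ref{C:Zinc} the inclusion $Z(\wt{T}_0) \subset Z(\wt{T})$ is equivalent to $Y_0 \cap Y_{Q,n} = Y_{0,Q,n}$, where by definition $Y_{0,Q,n}$ is the lattice $\mbm{L}(Z(\wt{T}_0))$ and $Y_{Q,n}^{sc} = Y_{0,Q,n}^{sc} \subset Y_{0,Q,n}$. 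So the first thing I would do is observe that for $\wt{G}_0$ saturated one has $Y_{0,Q,n} = Y_{0,Q,n}^{sc} = Y_{Q,n}^{sc}$, and then verify that $Y_0 \cap Y_{Q,n} = Y_{Q,n}^{sc}$. One inclusion, $Y_{Q,n}^{sc} \subset Y_0 \cap Y_{Q,n}$, is automatic since coroots lie in $Y_0 = Y^{sc}$ and in $Y_{Q,n}$. For the reverse inclusion $Y_0 \cap Y_{Q,n} \subset Y_{Q,n}^{sc}$, I would argue that any $y \in Y_0 \cap Y_{Q,n}$ lies in $Y_{0,Q,n}$ (from the general inclusion $Y_0 \cap Y_{Q,n} \subset Y_{0,Q,n}$ noted in the proof of Corollary \ref{C:Zinc}), and $Y_{0,Q,n} = Y_{Q,n}^{sc}$ by saturation; this gives (ii) and hence the isotypic-pair conclusion.

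For part (i), I would deduce saturation of $\wt{G}$ from saturation of $\wt{G}_0$ by comparing $Y_{Q,n}^{sc}$, $Y_{Q,n}$ and $Y_c$. Since $Z(\mbf{G})$ is connected we have the $W$-equivariant splitting $Y = Y^{sc} \oplus Y_c$ (this is the standing hypothesis from \S\ref{SS:2pic}; here $Y^{sc} = Y_0$). I want to show $Y_{Q,n}^{sc} = Y_{Q,n} \cap Y_0$ for the full group. But $Y_{Q,n} \cap Y_0$ is exactly what was computed in part (ii) to equal $Y_{Q,n}^{sc}$, so the same lattice identity does double duty: saturation of $\wt{G}_0$ forces $Y_{Q,n} \cap Y_0 = Y_{Q,n}^{sc}$, which is precisely the saturation condition for $\wt{G}$. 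Then ``persistent'' follows because, as recorded in \cite{Ga6}, saturated covers are persistent — so I would simply cite that implication rather than reprove it. (If the excerpt's conventions demand it, I would also note explicitly that $Y_{Q,n}^{sc}$ for $\wt{G}$ and for $\wt{G}_0$ literally coincide, since the coroots and the numbers $n_\alpha$ are the same for $G$ and $G_0$.)

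The main obstacle I anticipate is bookkeeping around the precise definition of ``saturated'' and ``persistent'' as imported from \cite{Ga6}, and making sure the lattice $Y_{0,Q,n}$ — which is defined via $B_Q$ restricted to $Y_0$ — is correctly related to both $Y_{Q,n}^{sc}$ and $Y_{Q,n} \cap Y_0$; the subtlety is that a priori $Y_{0,Q,n}$ could be strictly larger than $Y_{Q,n}^{sc}$ (that is exactly the non-saturated case, as the $\wt{\GSpin}_{2r+1}$ example with $4 \nmid (2r-n)$ shows), so the argument must genuinely use the saturation hypothesis at the step $Y_{0,Q,n} = Y_{Q,n}^{sc}$ and nowhere pretend it for free. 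Everything else is a short diagram chase with the splitting $Y = Y_0 \oplus Y_c$ and the inclusions already assembled in Corollary \ref{C:Zinc} and its proof.
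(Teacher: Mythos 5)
Your argument is correct and is essentially the paper's own proof: both squeeze the chain $Y_{Q,n}^{sc} \subset (Y^{sc}\cap Y_{Q,n}) \subset (Y_0\cap Y_{Q,n}) \subset Y_{0,Q,n}$ against the saturation hypothesis $Y_{0,Q,n}=Y_{Q,n}^{sc}$, read off (ii) via Corollary \ref{C:Zinc}, and note that the forced equality $Y^{sc}\cap Y_{Q,n}=Y_{Q,n}^{sc}$ is exactly the saturation condition for $\wt{G}$, with persistence imported from \cite{Ga6}. Two incidental glosses in your write-up — the decomposition $Y=Y^{sc}\oplus Y_c$ and the assertion that saturation of a general cover is equivalent to its dual group being adjoint — are not true in general (the first already fails for $\GL_2$, the second fails for $\wt{G}$ with nontrivial center), but neither is used in your actual argument, so the proof stands as written.
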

\begin{proof}
Recall that we have the inclusions
$$Y_{Q,n}^{sc} \subset (Y^{sc} \cap Y_{Q,n}) \subset (Y_0 \cap Y_{Q,n}) \subset Y_{0, Q,n}.$$
The assumption implies that $Y_{Q,n}^{sc} = Y_{0, Q,n}$. This enforces the equality $(Y_0 \cap Y_{Q,n})= Y_{0, Q,n}$ and thus (ii) holds. This also enforces another equality $Y_{Q,n}^{sc} = Y^{sc} \cap Y_{Q,n}$, which exactly shows that $\wt{G}$ is saturated and gives (i).
\end{proof}

Let $s$ be any splitting, if it exists, of the short exact sequence of finite abelian groups:
\begin{equation} \label{Lat-2}
\begin{tikzcd}
\msc{X}_{0,Q,n} \ar[r, hook] & \msc{X}_{Q,n} \ar[r, two heads, "f"] & \msc{X}_{Q,n}^\Gamma \ar[l, bend left=30, "s"].
\end{tikzcd}
\end{equation}
Every such splitting gives a set  $s(\msc{X}_{Q,n}^\Gamma)$ of representatives of $\msc{X}_{Q,n}^\Gamma$ in $\msc{X}_{Q,n}$.
We have
$$\msc{X}_{Q,n} = \bigsqcup_{z' \in \msc{X}_{Q,n}^\Gamma} f^{-1}(z') = \bigsqcup_{z\in s(\msc{X}_{Q,n}^\Gamma)} (z + \msc{X}_{0,Q,n}).$$
Assume $\wt{G}_0$ is a saturated cover. Also assume that 
$$\Phi(\chi) \subset \Delta.$$
It then follows from Lemma \ref{L:pers} and \cite[Theorem 6.6]{Ga6} that
$$\dim \Wh_\psi(\pi_{\chi, S}) = \angb{\sigma_\msc{X}}{ \sigma_S }_W, \quad \dim \Wh_\psi(\pi_{\omega, S}) = \angb{ \sigma_{\msc{X}_0} }{ \sigma_S }_W.$$
Here $\sigma_S$ is a certain Kazhdan--Lusztig representation of $W$ of dimension $\val{W_S}$. On the other hand,
$$\sigma_\msc{X}: W \longrightarrow {\rm Perm}(\msc{X}_{Q,n})$$
is the permutation representation of $W$ given by $\sigma_\msc{X}(w)(y):= w[y]$, where 
$$w[y]= w(y-\rho) + \rho.$$
Similarly,
$$\sigma_{\msc{X}_0}: W \longrightarrow {\rm Perm}(\msc{X}_{0,Q,n})$$
is also given by 
$$\sigma_{\msc{X}_0}(w)(y) = w[y]$$
for every $y\in \msc{X}_{0,Q,n}$. In fact, $\sigma_{\msc{X}_0}$ is the subrepresentation of $\sigma_\msc{X}$ restricted on $\msc{X}_{0,Q,n} \subset \msc{X}_{Q,n}$.

For every coset $z + \msc{X}_{0,Q,n} \subset \msc{X}_{Q,n}$, the action $w[\cdot]$ is given by
$$w[z+ y] = w(z) + w[y] = z + (w[y] + w(z) - z)$$
for every $y\in \msc{X}_{0,Q,n}$, where $w(z)$ denotes the usual action arising from reflection.
As the image of $w(z)- z$ in $\msc{X}_{Q,n}^\Gamma$ via $f$ is trivial, we see that 
$$w(z) - z \in \msc{X}_{0,Q,n}.$$
Thus, $w[z+ y]$ lies in $z + \msc{X}_{0,Q,n}$; that is, the action $w[\cdot]$ on $z + \msc{X}_{0,Q,n}$ is well-defined. This gives a permutation representation
$$\sigma_{z + \msc{X}_0}: W \longrightarrow {\rm Perm}(z + \msc{X}_{0,Q,n}).$$
On the other hand, we consider the permutation representation
$$\sigma_{\msc{X}_0, z}: W \longrightarrow {\rm Perm}(\msc{X}_{0,Q,n})$$
given by
$$\sigma_{\msc{X}_0, z}(w)(y) = w\langle y \rangle:= w[y] + w(z) -z.$$
It is easy to check that the action $w\langle  \cdot \rangle$ is well-defined. Thus $\sigma_{\msc{X}_0, z}$ is a well-defined permutation representation of $W$ on $\msc{X}_{0,Q,n}$. In fact, we have:
\begin{enumerate}
\item[(i)] The representation $\sigma_{z + \msc{X}_0}$ is the one obtained from transporting $\sigma_{\msc{X}_0, z}$ via the translation $$\msc{X}_{0,Q,n} \to z + \msc{X}_{0,Q,n}$$ 
given by $y \mapsto z + y$. Thus, as representations of the Weyl group, $\sigma_{z+ \msc{X}_0}$ and $\sigma_{\msc{X}_0, z}$ are equivalent. In particular, we have
$$\sigma_{\msc{X}} = \bigoplus_{z \in s(\msc{X}_{Q,n}^\Gamma)} \sigma_{\msc{X}_0, z}.$$
\item[(ii)] Denote by 
$$\rho_z: W \longrightarrow {\rm Perm}(\msc{X}_{0,Q,n})$$
the representation given by the translation $\rho_z(w)(y) = y + w(z) - z$. It is clear that $\sigma_{\msc{X}_0, z} = \rho_z \circ \sigma_{\msc{X}_0}$.
\end{enumerate}

We note that the exact sequence \eqref{Lat-2} is $W$-equivariant with respect to the usual action $w(\cdot)$. Also, the action of $W$ on $\msc{X}_{Q,n}^\Gamma$ is trivial. In general, the splitting $s$ may not be $W$-equivariant. However, we believe that in our setting this is indeed the case:
\begin{conj} \label{C:dec}
Assume that $\wt{G}_0$ is a saturated cover. Then there exists a $W$-equivariant splitting $s: \msc{X}_{Q,n}^\Gamma \to \msc{X}_{Q,n}$ with respect to the Weyl action $w(\cdot)$.
Consequently, with respect to such a splitting $s$, one has  
$$\sigma_{\msc{X}_0, z} \simeq \sigma_{\msc{X}_0}$$
for every $z\in s(\msc{X}_{Q,n}^\Gamma)$ and thus $\sigma_\msc{X} =  \val{ \msc{X}_{Q,n}^\Gamma } \cdot \sigma_{\msc{X}_0 }$. In particular, $\dim \Wh_\psi(\pi_{\chi, S}) = \val{\msc{X}_{Q,n}^\Gamma} \cdot \dim \Wh_\psi(\pi_{\chi_0, S})$.
\end{conj}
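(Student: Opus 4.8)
The plan is to translate the problem into a statement about $W$-invariants and then isolate its genuinely combinatorial core. Because $W$ acts trivially on $\msc{X}_{Q,n}^\Gamma$, a $W$-equivariant group-homomorphic section $s$ of the quotient map $f\colon \msc{X}_{Q,n}\to \msc{X}_{Q,n}^\Gamma$ in \eqref{Lat-2} is nothing but a homomorphic section of the restricted map $f\colon (\msc{X}_{Q,n})^W\to \msc{X}_{Q,n}^\Gamma$. By Lemma \ref{L:pers}, saturation of $\wt{G}_0$ forces the isotypic property, so $Y_0\cap Y_{Q,n}=Y_{0,Q,n}=Y_{Q,n}^{sc}$; hence $\msc{X}_{0,Q,n}=Y^{sc}/Y_{Q,n}^{sc}$ with the linear $W$-action, and $Y_{Q,n}^{sc}$ is exactly the coroot lattice of the root system $\Phi_{Q,n}$ obtained from $\Phi$ by rescaling each $W$-orbit of coroots $\alpha^\vee$ by $n_\alpha$. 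Granting the section $s$, the ``consequently'' part is then easy: for $z\in s(\msc{X}_{Q,n}^\Gamma)$ with lift $\tilde z\in Y$ we have $w(\tilde z)-\tilde z\in Y^{sc}\cap Y_{Q,n}=Y_{Q,n}^{sc}$, so the translation cocycle $w\mapsto [w(\tilde z)-\tilde z]$ defining $\rho_z$ vanishes identically in $\msc{X}_{0,Q,n}$; thus $\sigma_{\msc{X}_0,z}\simeq\sigma_{\msc{X}_0}$, and by item (i) after the conjecture $\sigma_\msc{X}=|\msc{X}_{Q,n}^\Gamma|\cdot\sigma_{\msc{X}_0}$. The Whittaker equality then follows from $\dim\Wh_\psi(\pi_{\chi,S})=\langle\sigma_\msc{X},\sigma_S\rangle_W$ and the analogous formula for $\pi_{\chi_0,S}$ in \cite[Theorem 6.6]{Ga6}, which applies because $\wt{G}$ and $\wt{G}_0$ are persistent by Lemma \ref{L:pers}.

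So the whole statement reduces to showing that $f\colon(\msc{X}_{Q,n})^W\to\msc{X}_{Q,n}^\Gamma$ is surjective and that the resulting short exact sequence $0\to(\msc{X}_{0,Q,n})^W\to(\msc{X}_{Q,n})^W\xrightarrow{f}\msc{X}_{Q,n}^\Gamma\to 0$ splits. First I would treat surjectivity: in the long exact $W$-cohomology sequence the cokernel of $f$ on invariants embeds into $H^1(W,\msc{X}_{0,Q,n})$, so it suffices to show $H^1(W,Y^{sc}/Y_{Q,n}^{sc})=0$; via $0\to Y_{Q,n}^{sc}\to Y^{sc}\to\msc{X}_{0,Q,n}\to 0$ this is squeezed between $H^1(W,Y^{sc})$ and $H^2(W,Y_{Q,n}^{sc})$, both Tate cohomology groups of coroot lattices of root systems carrying the Weyl group $W$, which I would reduce to irreducible type and compute (or transport the known computations to the rescaled lattice $\Phi_{Q,n}$). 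For the splitting I expect in fact to prove the stronger assertion that $(\msc{X}_{0,Q,n})^W=0$, so that $f$ is an isomorphism on invariants and no splitting is needed; here the point is that saturation together with isotypy should force the rescaling integers $n_\alpha$ to be coprime to the torsion of the fundamental-group lattices of $\Phi$, which kills the $W$-fixed part of $Y^{sc}/Y_{Q,n}^{sc}$. The guiding model is $\wt{\GL}_{r,\KP}$, where isotypy is precisely $\gcd(n,r)=1$, $Y_{Q,n}=nY$, and $(\msc{X}_{Q,n})^W=\Z\,\overline{e_c}\xrightarrow{\ \sim\ }\msc{X}_{Q,n}^\Gamma$ is manifestly an isomorphism; the case $\wt{\GSpin}_{2r+1}$ with $n$ odd (where $\overline{e_c}$ alone does not suffice and one must use invariants that do not lift to central cocharacters) is a good second test.

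The main obstacle is exactly this combinatorial heart: extracting from the definition of saturation — the equalities $Y_{Q,n}^{sc}=Y^{sc}\cap Y_{Q,n}$ and $Y_0\cap Y_{Q,n}=Y_{0,Q,n}$ — the coprimality between the $n_\alpha$ and the relevant fundamental-group invariants that simultaneously forces $(\msc{X}_{0,Q,n})^W=0$ and $H^1(W,\msc{X}_{0,Q,n})=0$. Sporadic abstract counterexamples do exist (e.g. a type $A_2$ coroot lattice modulo $3$ times itself has nonzero $W$-invariants, and the type $A_1$ coroot lattice modulo $2$ likewise), so a genuine argument is needed that the isotypic hypothesis excludes these; I would either run through the irreducible types $A$--$G$ case by case, checking that isotypy pins down $n_\alpha$ enough, or look for a type-free lattice argument isolating precisely why $Y^{sc}\cap Y_{Q,n}=Y_{Q,n}^{sc}$ plus isotypy makes both vanishings automatic.
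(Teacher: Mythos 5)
The statement you are proving is a \emph{conjecture} in the paper (Conjecture \ref{C:dec}): the authors give no proof, only the remarks preceding it (the decomposition $\sigma_\msc{X}=\bigoplus_z\sigma_{\msc{X}_0,z}$, the implications (S1) $\Rightarrow$ (S2) $\Rightarrow$ (S3)) and low-rank verifications (Example \ref{E:rk1odd} for $\wt{\GL}_2$ with $n$ odd, and $\wt{\GSp}_4^{(3)}$), where the splitting is exhibited by hand via central cocharacters. Measured against that, the parts of your proposal that are complete are exactly the parts the paper already records: the observation that, since $W$ acts trivially on $\msc{X}_{Q,n}^\Gamma$, a $W$-equivariant splitting is the same as a homomorphic section of $(\msc{X}_{Q,n})^W\to\msc{X}_{Q,n}^\Gamma$; the deduction $\sigma_{\msc{X}_0,z}\simeq\sigma_{\msc{X}_0}$ from $w(\tilde z)-\tilde z\in Y^{sc}\cap Y_{Q,n}=Y_{Q,n}^{sc}$ (which is correct, using Lemma \ref{L:pers}); and the Whittaker dimension identity via \cite[Theorem 6.6]{Ga6} (note this also needs the standing hypothesis $\Phi(\chi)\subset\Delta$, and the identity itself already holds a priori as (S3)).

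The genuine content of the conjecture — the existence of the $W$-equivariant splitting — is not established by your argument. You reduce it to two vanishing statements, $H^1\bigl(W,\msc{X}_{0,Q,n}\bigr)=0$ (for surjectivity on invariants) and $(\msc{X}_{0,Q,n})^W=0$ (to avoid the splitting question), and then explicitly defer their proof to an unspecified case-by-case or type-free analysis of how saturation constrains the integers $n_\alpha$. As you yourself note, both statements are false for a bare coroot lattice modulo an arbitrary integer, so everything hinges on extracting the needed coprimality/congruence information from $Y_{Q,n}^{sc}=Y^{sc}\cap Y_{Q,n}=Y_0\cap Y_{Q,n}=Y_{0,Q,n}$; that extraction is precisely the open combinatorial problem and it is left untouched. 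Two further cautions for when you attempt it: your identification $\msc{X}_{0,Q,n}=Y^{sc}/Y_{Q,n}^{sc}$ silently assumes $Y_0=Y^{sc}$ (i.e.\ $G_\der$ simply connected), which is not among the hypotheses of the conjecture — in general $\msc{X}_{0,Q,n}=Y_0/Y_{Q,n}^{sc}$, and any verification must also cover non–simply-connected $G_\der$, reducible root systems, and all admissible $Q$ (the types without $-1$ in $W$, such as $A_{r-1}$, $D_{\mathrm{odd}}$, $E_6$, are where the $H^1$ computation genuinely requires work, since the ``central element acts by $-1$'' trick is unavailable); and if $(\msc{X}_{0,Q,n})^W$ turns out to be nonzero in some saturated case, the splitting of the invariant sequence becomes a separate issue you would still have to address, since mere surjectivity onto invariants only yields a set-theoretic $W$-equivariant section, not a splitting of the short exact sequence \eqref{Lat-2} as stated in the conjecture.
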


If $s$ is $W$-equivariant, then 
$$w(z)-z=0$$
and thus $\rho_z(w) = {\rm id}$ for every $z\in s(\msc{X}_{Q,n}^\Gamma)$; hence, it is clear that 
$$\sigma_{\msc{X}_0, z} = \sigma_{\msc{X}_0}$$
 in this case. For a fixed splitting $s$, consider the following three statements:
\begin{enumerate}
\item[(S1)] $\sigma_{\msc{X}_0, z} \simeq \sigma_{\msc{X}_0}$ for every $z\in s(\msc{X}_{Q,n}^\Gamma)$;
\item[(S2)] $\angb{ \sigma_{\msc{X}_0, z} }{ \sigma_S }_W = \angb{ \sigma_{\msc{X}_0} }{ \sigma_S }_W$ for every $z\in s(\msc{X}_{Q,n}^\Gamma)$;
\item[(S3)] $\dim \Wh_\psi(\pi_{\chi, S}) = \val{\msc{X}_{Q,n}^\Gamma} \cdot \dim \Wh_\psi(\pi_{\chi_0, S})$.
\end{enumerate}
Clearly, we have the implications 
$${\rm (S1)} \Longrightarrow {\rm (S2)} \Longrightarrow {\rm (S3)}.$$
Here Conjecture \eqref{C:dec} asserts the strongest (S1) in our context, and is certainly compatible with (S3) which holds a priori.

\begin{eg} \label{E:rk1odd}
Consider the cover $\wt{G}= \wt{\GL}_2^{(n)}$ associated with $\bfp, \bfq \in \Z$ such that $2\bfp- \bfq=-1$, as discussed from \S \ref{SSS:GL}. Then the cover $\wt{G}_0 = \wt{\SL}_2^{(n)}$ is saturated if and only if $n$ is odd. Thus, we assume that $n$ is odd. An easy computation gives that
$$Y_{Q,n} = \set{y_1 e_1 + y_2 e_2 \in \Z e_1 \oplus \Z e_2:\  n|(4\bfp + 1) y_1 \text{ and } n|(y_1 - y_2)}.$$
Setting $d= \gcd(4\bfp + 1, n)$, we have
$$\begin{tikzcd}
\msc{X}_{0,Q,n} \ar[r, hook] & \msc{X}_{Q,n} \ar[r, two heads] & \msc{X}_{Q,n}^\Gamma, 
\end{tikzcd}$$
with 
$$\msc{X}_{0,Q, n} =\Z \alpha^\vee /(\Z n \alpha^\vee), \val{\msc{X}_{Q,n}} = n^2/d \text{ and } \val{ \msc{X}_{Q,n}^\Gamma} = n/d.$$
We claim that there is a $W$-equivariant splitting $s$ given by 
$$s(\msc{X}_{Q,n}^\Gamma) = \set{ke_c: 0\lest k \lest (n/d-1)} \subset \msc{X}_{Q,n}.$$
For this purpose, it suffices to show that 
$$ke_c \notin Y^{sc} + Y_{Q,n}$$
 for every $1\lest k \lest (n/d-1)$. Suppose not, then there exists $m\alpha^\vee \in Y^{sc}$ such that $ke_c + m\alpha^\vee \in Y_{Q,n}$. This implies that
$$(n/d)|(k+ m) \text{ and } n|(k+a - k + a).$$
As $n$ is odd, this implies that $(n/d)|k$, which is a contradiction. Thus, Conjecture \ref{C:dec} holds for this pair $(\wt{\GL}_2, \wt{\SL}_2)$. We remark that when $d=n$ (for example, if $n=3, \bfp=-1$), then $\msc{X}_{0,Q,n} = \msc{X}_{Q,n}$; in this case $\pi_{\chi, S}|_{\wt{G}_0} = \pi_{\chi_0, S}$ with multiplicity one.
\end{eg}

\begin{eg}
We consider the cover $\wt{\GSp}_4^{(3)}$ from \S \ref{SSS:GSp} such that $Q(\alpha_2^\vee) = -1$. Note that in this case, $\wt{\Sp}_4$ is saturated, as its dual group  is $\SO_5$. Using notations in \S \ref{SSS:GSp}, we see that
$$Y_{Q,n}= 
\left\{
\begin{array}{cc}
 y_1 e_1 + y_2 e_2 + y_0 e_0 \in \oplus_{i=0}^2 \Z e_i: \\
 \bullet \quad  3|(-2y_1 + y_0) \\
 \bullet \quad  3|(-2y_1 + y_0) \\
 \bullet \quad  3|(y_1 + y_2 + 2y_0 \cdot Q(e_0))
\end{array}
\right\}.
$$
There are two cases as follows.
\begin{enumerate}
\item[--] If $Q(e_0) = \pm 1 \mod 3$, then one can check easily that $\msc{X}_{0,Q,n} = \msc{X}_{Q,n}$ in this case. Thus, Conjecture \ref{C:dec} holds trivially, and we have $\pi_{\chi, S}|_{\wt{G}_0} = \pi_{\chi_0, S}$ with multiplicity one.
\item[--] If $Q(e_0) = 0 \mod 3$, then $Y_{Q,n} = 3 Y$. In this case, one has 
$$\msc{X}_{0,Q,n} = Y_0/ 3 Y_0.$$
On the other hand, $\msc{X}_{Q,n} = Y/3Y$. Thus, a $W$-equivariant splitting is given by
$$s(\msc{X}_{Q,n}^\Gamma) = \set{k e_c: 0\lest k \lest 2}.$$
\end{enumerate}
In either case, we see that Conjecture \ref{C:dec} holds.
\end{eg}

\subsubsection{Covers of $\GL_2$ and $\SL_2$}
We consider the pair $(\wt{\GL}_2, \wt{\SL}_2)$ of $n$-fold covers associated with $\bfp$ and $\bfq$ such that  $Q(\alpha^\vee) = 2\bfp - \bfq = -1$. For odd $n$, this was discussed in Example \ref{E:rk1odd}; thus, we assume in this subsection that 
$$n=2m$$
 is even. Again, we set 
 $$d:=\gcd(4\bfp + 1, n).$$
 It is easy to see the diagram \eqref{Lat} becomes
\begin{equation} \label{Lat-rk2}
 \begin{tikzcd}
\Z(m\alpha^\vee)/\Z(n\alpha^\vee) \ar[d, hook] & & Y/Y_0  \ar[d, two heads] \\
\Z\alpha^\vee/\Z (n\alpha^\vee) \ar[r, hook] \ar[d, two heads] & \msc{X}_{Q,n} \ar[r, two heads] & \msc{X}_{Q,n}^\Gamma & \msc{X}_{Q,n}^\mfr{c} \ar[l, hook'] \\
\Z \alpha^\vee/\Z (m\alpha^\vee) \ar[r, equal] & \msc{X}_{0,Q,n},
\end{tikzcd}
\end{equation}
where $\val{\msc{X}_{Q,n}} = n^2/d$ and $\val{ \msc{X}_{Q,n}^\Gamma} = n/d$. Also, it is easy to obtain
$$Y^\mfr{c} = \set{y_1 e_1 + y_2 e_2 \in Y: \ 2|(y_1 - y_2)},$$
and thus we see that
$$[\msc{X}_{Q,n}^\Gamma: \msc{X}_{Q,n}^\mfr{c}] = 2.$$
It follows that 
$$\msc{X}_{Q,n}^\Gamma/\msc{X}_{Q,n}^\mfr{c} = \set{i e_1: 0\lest i \lest 1}.$$
For every $\gamma_i \in \msc{X}_{Q,n}^\Gamma/\msc{X}_{Q,n}^\mfr{c}$, we have 
$$\msc{E}(\chi, {}^\gamma \tchi_O; Z(\wt{T}_0)) = \set{ \omega_{\gamma, j}: \ 0 \lest j \lest 1 }.$$
Setting $\gamma_i:= i e_1$. It follows from Theorem \ref{T:decT} (see also \cite[Theorem 8.15]{GSS2}) that
$$I(\chi)|_{\wt{G}_0} = (m/d) \cdot  \bigoplus_{0\lest i \lest 1} \bigoplus_{ 0\lest j \lest 1} I(\omega^0_{\gamma_i, j}).$$
Also, the two representations $I(\omega_{\gamma_0, j}), j=0, 1$ are the two $(K_0, s_K)$-unramified constituents of $I(\chi)$. Again, for every $S \subset \Phi(\chi)$, we have
\begin{equation} \label{S-deco}
\pi_{\chi, S}|_{\wt{G}_0} =  (m/d) \cdot  \bigoplus_{0\lest i \lest 1} \bigoplus_{ 0\lest j \lest 1} \pi(\omega_{\gamma_i, j})_S,
\end{equation}
where we have written $\pi(\omega_{\gamma_i, j})_S$ instead of $\pi_{\omega_{\gamma_i, j}, S}$ for notational convenience.

In the rest of this subsection, we assume that $\Phi(\chi) = \Delta^\vee =\set{\alpha^\vee}$. In this case, we have
$$\begin{tikzcd}
\pi_{\chi, \emptyset} \ar[r, hook] & I(\chi) \ar[r, two heads]  & \pi_{\chi, \Delta},
\end{tikzcd}
$$
where $\pi_{\chi, \emptyset}$ is the covering analogue of the Steinberg representation, and $\pi_{\chi, \Delta}$ is the theta representation associated with $\chi$. Since $\wt{\GL}_2$ is always a saturated cover and thus persistent (see \cite[Lemma 2.7]{Ga6}), it follows from \cite[Theorem 6.6]{Ga6} that
\begin{enumerate}
\item[--] $\dim \Wh_\psi(\pi_{\chi, \emptyset}) = \angb{ \sigma_\msc{X} }{ \mbm{1} }_W = m(n+1)/d$,  which is equal to the number of $W$-orbits in $\msc{X}_{Q,n}$;
\item[--] $\dim \Wh_\psi(\pi_{\chi, \Delta}) = \angb{ \sigma_\msc{X} }{ \varepsilon_W }_W = m(n-1)/d$, which is equal to the number of free $W$-orbits in $\msc{X}_{Q,n}$.
\end{enumerate}
Here $\mbm{1}$ (resp. $\varepsilon_W$) denotes the trivial character (resp. sign character) of $W$.

\begin{thm} \label{T:GLSL2}
Let $\chi: Z(\wt{T}) \to \C^\times$ be an unramified genuine character such that $\Phi(\chi) = \Delta$. We always have 
$$\dim \Wh_\psi(\pi(\omega_{\gamma_i, j})_\emptyset) + \dim \Wh_\psi(\pi(\omega_{\gamma_i, j})_\Delta) = \dim \Wh_\psi(I(\omega_{\gamma_i, j})) = m.$$
Moreover, the following fold.
\begin{enumerate}
\item[(i)] If $4|n$ (i.e. $2|m$), then
$$ \dim \Wh_\psi(\pi(\omega_{\gamma_i, j})_\Delta)  = 
\begin{cases}
m/2 & \text{ for } i=0,  j\in \set{0, 1},\\
m/2 & \text{ for } i=1, j=0, \\
(m-2)/2 & \text{ for } i=1, j=1.
\end{cases}
$$
\item[(ii)] If $n=2m$ with $m$ odd, then
$$ \dim \Wh_\psi(\pi(\omega_{\gamma_i, j})_\Delta)  = 
\begin{cases}
(m+1)/2 & \text{ for } i=0,  j=0,\\
(m-1)/2 & \text{ for } i=0,  j=1,\\
(m-1)/2 & \text{ for } i=1, j \in \set{0, 1}.
\end{cases}
$$
\end{enumerate}
In either case, $\pi(\omega_{\gamma_0, j})_\Delta, j=0, 1$ are the two $(K_0, s_K)$-unramified theta representations associated with $I(\omega_{\gamma_0, j})$.
\end{thm}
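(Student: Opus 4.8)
\textbf{Proof strategy for Theorem \ref{T:GLSL2}.}

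The plan is to reduce everything to counting Weyl orbits in appropriate sublattices, exploiting the combinatorial description of $\dim\Wh_\psi$ for constituents of a regular rank-one unramified principal series. First I would establish the first ``balancing'' identity. Since $\Phi(\chi)=\Delta=\{\alpha^\vee\}$, for every constituent $I(\omega_{\gamma_i,j})$ of $I(\chi)|_{\wt{G}_0}$ we have $\Phi(\omega_{\gamma_i,j})=\Phi(\chi)=\Delta$ (by Proposition \ref{P:Rod} and the fact that the Plancherel measure depends only on $\chi|_{\wt{T}_{Q,n}^{sc}}=\omega_{\gamma_i,j}|_{\wt{T}_{Q,n}^{sc}}$, as in the proof of the corollary following Proposition \ref{P:Rod}). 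Hence each $I(\omega_{\gamma_i,j})$ has exactly two constituents $\pi(\omega_{\gamma_i,j})_\emptyset$ and $\pi(\omega_{\gamma_i,j})_\Delta$, fitting in a short exact sequence as above; since $\Wh_\psi(-)$ is exact (being the dual of a twisted Jacquet functor), $\dim\Wh_\psi(I(\omega_{\gamma_i,j}))$ is the sum of the two. The equality $\dim\Wh_\psi(I(\omega_{\gamma_i,j}))=m$ is just $|\msc{X}_{0,Q,n}|=|\Z\alpha^\vee/\Z(m\alpha^\vee)|=m$ (or $|Y_{0,Q,n}/(Y_0\cap Y_{Q,n})|$ computed from \eqref{Lat-rk2}), valid since $\wt{T}_0$ is Heisenberg-type and $I(\omega_{\gamma_i,j})$ is a full induced principal series of $\wt{G}_0$.

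Next I would compute $\dim\Wh_\psi(\pi(\omega_{\gamma_i,j})_\Delta)$ directly. Each $\pi(\omega_{\gamma_i,j})_\Delta$ is a theta representation of $\wt{\SL}_2^{(n)}$ attached to the genuine character $\omega_{\gamma_i,j}$ of $Z(\wt{T}_0)$; since $\wt{\SL}_2^{(n)}$ is persistent (it is saturated precisely when $n$ is odd, but persistence holds in general for $\GL_2$; for $\SL_2$ one invokes \cite[Lemma 2.7]{Ga6} via the ambient $\wt{\GL}_2$), \cite[Theorem 6.6]{Ga6} gives $\dim\Wh_\psi(\pi(\omega_{\gamma_i,j})_\Delta)=\angb{\sigma_{\msc{X}_0}}{\varepsilon_W}_W$, the number of \emph{free} $W$-orbits on $\msc{X}_{0,Q,n}$ under the affine action $w[y]=w(y-\rho)+\rho$ — but this action is twisted by the relevant character $\omega_{\gamma_i,j}$, equivalently one counts free orbits on a coset. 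Concretely, for $\wt{\SL}_2$ with $W=\{1,w_\alpha\}$, a free orbit is a pair $\{y, w[y]\}$ with $y\ne w[y]$, so the count is $(m - \#\{\text{fixed points}\})/2$; the number of fixed points of $w[\cdot]$ on $\msc{X}_{0,Q,n}=\Z\alpha^\vee/\Z(m\alpha^\vee)$ twisted appropriately is either $0$, $1$, or $2$ depending on the parity of $m$ and on which coset $\gamma_i$ determines via $\mfr{c}$ and the half-integrality of $\rho$. This is where the case split $4|n$ versus $n\equiv 2\pmod 4$ and the index $j$ enter: when $2|m$ the equation $2y\equiv(\text{const})\pmod m$ has $0$ or $2$ solutions, while when $m$ is odd it has exactly $1$, and the constant shifts by $\gamma_i$ and by the parameter distinguishing $j=0$ from $j=1$ (the two extensions differing by the order-two character, cf. \cite[Theorem 8.15]{GSS2}).

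The remaining step is bookkeeping: having computed all four (or eight) values of $\dim\Wh_\psi(\pi(\omega_{\gamma_i,j})_\Delta)$, subtract from $m$ to get $\dim\Wh_\psi(\pi(\omega_{\gamma_i,j})_\emptyset)$, and cross-check consistency with $\dim\Wh_\psi(\pi_{\chi,\Delta})=m(n-1)/d$ via the decomposition \eqref{S-deco}, i.e.
$$\dim\Wh_\psi(\pi_{\chi,\Delta})=\frac{m}{d}\sum_{0\le i\le 1}\sum_{0\le j\le 1}\dim\Wh_\psi(\pi(\omega_{\gamma_i,j})_\Delta),$$
which must reproduce $m(n-1)/d=m(2m-1)/d$, and similarly for $\emptyset$ with $m(n+1)/d$. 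Finally, the identification of $\pi(\omega_{\gamma_0,j})_\Delta$ as the two $(K_0,s_K)$-unramified theta representations follows from Theorem \ref{T:0-dec}(iii) / Theorem \ref{T:decT}(iii), since $\gamma_0=0$ is the trivial class in $\msc{X}_{Q,n}^\Gamma/\msc{X}_{Q,n}^\mfr{c}$. I expect the main obstacle to be pinning down precisely the affine twist governing the fixed-point count — that is, tracking how the commutator pairing $\mfr{c}(\gamma_i)$ and the distinguishing character of $j$ translate into the additive constant in the congruence $2y\equiv c_{\gamma_i,j}\pmod m$ — since the parity phenomena ($0$ vs. $2$ fixed points when $2\mid m$, and the $\pm1$ asymmetry between $j=0,1$ when $m$ is odd) all hinge on getting $c_{\gamma_i,j}$ exactly right rather than up to an ambiguity.
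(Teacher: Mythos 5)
You are right about the frame: the balancing identity via exactness of $\Wh_\psi(-)$, the value $\dim\Wh_\psi(I(\omega_{\gamma_i,j}))=\val{\msc{X}_{0,Q,n}}=m$, the computation of the $i=0$ constituents when $4\mid n$ through persistence and the orbit count of \cite{Ga6}, and the identification of the $(K_0,s_K)$-unramified theta representations via Theorem \ref{T:decT}(iii) all agree with the paper. The gap is in the mechanism you propose for everything else. First, your persistence claim fails: $\wt{\SL}_2^{(n)}$ is persistent only when $4\mid n$; for $n\equiv 2\pmod 4$ it is not, and persistence of the ambient $\wt{\GL}_2$ does not descend, so the formula $\dim\Wh_\psi(\Theta)=\angb{\sigma_{\msc{X}_0}}{\varepsilon_W}_W$ is simply unavailable in case (ii) even for the unramified constituents. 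The asymmetry between $j=0$ and $j=1$ at $i=0$, $m$ odd is not a fixed-point count of a shifted affine action: the free orbits each contribute one dimension to both theta representations, and the single non-free orbit $\mca{O}_{-l\alpha^\vee}$ supports a Whittaker functional for exactly one of the two unramified characters --- an arithmetic statement about Gauss sums/Weil indices taken from \cite{Ga2}, not a congruence in the lattice.

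Second, and more seriously, for $i=1$ the characters $\omega_{\gamma_1,j}$ are ramified, so the unramified orbit machinery of \cite{Ga6} does not apply at all, and no purely combinatorial twist can replace it. The paper computes these dimensions as ranks of explicit local coefficients (scattering) matrices in the tame ramified case, taken from the proofs of Theorems 9.12 and 9.13 of \cite{GSS2}: after permuting rows and columns the matrix is block diagonal indexed by $W$-orbits; the hypothesis $\Phi(\chi)=\Delta$ forces $\mu(w,i(\chi))^{-1}=0$, so every generic block has rank exactly one, and the entire case split is decided by one exceptional block. When $4\mid n$ that block involves the Tate factor $\gamma(1,\xi^{-n})$ and the quantity $\pmb{\beta}(\omega_{\gamma_1,j},s,\psi)$, which vanishes precisely when ${\rm sgn}(\xi)=\eta_{u,(2)}(\varpi)$; this Weil-index/Hilbert-symbol condition is what separates $j=0$ from $j=1$. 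When $m$ is odd the exceptional block is the metaplectic gamma factor $\tilde\gamma(1,\xi^{-m},\psi)$, shown to vanish for both $j$ via \cite{GoSz}. Because the answer depends on $\psi$ through the Weil index and on ramified Gauss sums, the constant $c_{\gamma_i,j}$ you hope to extract from $\mfr{c}(\gamma_i)$ and $j$ does not exist as lattice data, so the congruence $2y\equiv c_{\gamma_i,j}\pmod m$ cannot carry the argument. Finally, the consistency check against $m(n\pm 1)/d$ pins down the individual dimensions only for $n=2,4$ (as the paper remarks), so it cannot substitute for the matrix computation at general $n$.
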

\begin{proof}
We first consider the case $m$ is even. In this case, the cover $\wt{\SL}_2^{(n)}$ is persistent. As $I(\omega_{\gamma_0, j}^0)$ for $j\in \set{0, 1}$ is unramified, it thus follows from \cite[Proposition 6.2]{Ga6} that
\begin{equation} \label{E:4un}
\dim \Wh_\psi(\pi(\omega_{\gamma_0, j})_\Delta) = \angb{ \sigma_{\msc{X}_0} }{ \varepsilon_W }_W = \frac{m}{2},
\end{equation}
which is equal to the number of free $W$-orbits in $\msc{X}_{0,Q,n}$. To deal with $\dim \Wh_\psi(\pi(\omega_{\gamma_1, j})_\Delta)$, we consider a local coefficients matrix $\mca{M}(w, i(\omega_{\gamma_1, j}))$ associated with the map
$$\mca{T}(w, i(\omega_{\gamma_1, j}))^*:  \Wh_\psi(I(\omega_{\gamma_1, j})) \longrightarrow \Wh_\psi(I(\omega_{\gamma_1, j})),$$
which arises from the intertwining operator 
$$T(w, i(\omega_{\gamma_1, j})): I( i(\omega_{\gamma_1, j})) \longrightarrow I({}^w i(\omega_{\gamma_1, j})).$$
One has
$$\dim \Wh_\psi(\pi(\omega_{\gamma_1, j})_\Delta) = \text{ rank of } \mca{T}(w, i(\omega_{\gamma_1, j}))^*.$$
To have a description of $\mca{M}(w, i(\omega_{\gamma_1, j}))$, we first reconcile some notations used in \cite{GSS2}. Let
$$\xi: F^\times \longrightarrow \C^\times$$
be a character such that
$$\xi(x)^n = \chi(\wt{h}_\alpha(x^{n})).$$
Now we have from \cite[\S 9.5.3]{GSS2} (especially the proof of Theorem 9.13 there) the following.
\begin{enumerate}
\item[--] The matrix  $\mca{M}(w, i(\omega_{\gamma_1, j}))$, by swapping certain rows and columns, becomes a block-diagonal matrix with $m/2$-blocks and each block is of size two by two. In fact, each block is associated with a free $W$-orbit in $\msc{X}_{0,Q,n}$. We label these two by two blocks as 
$$\mca{M}_k, 1\lest k \lest (m/2).$$
Every $\mca{M}_k$ is a non-zero matrix, and thus in particular ${\rm rank}(\mca{M}_k) \gest 1.$
\item[--] There is a special two by two block labelled by $\mca{M}_1$, which takes the form
$$\mca{M}_1 = 
\left(\begin{matrix}
0 &  \gamma(1, \xi^{-n}) \\
\pmb{\beta}(\omega_{\gamma_1, j} , s, \psi) & 0
\end{matrix}\right),
$$
where
$$\pmb{\beta}(\omega_{\gamma_1, j} , s, \psi)= \frac{ (1- q^{-1/2}\cdot (\xi \eta_u)^{-m_{\gamma_\psi}}) (1- q^{-1/2} \xi^{m_{\gamma_\psi}}) }{ (1- (\xi \eta_u)^{m_{\gamma_\psi}})(1- \xi^{-m_{\gamma_\psi}}) }.$$
Here $u\in O^\times$ is a representative of the unique nontrivial coset of $O^\times/O^{\times 2}$, and $\eta_u(a) = (u, a)_n$ for $a\in F^\times$; also,
$$m_{\gamma_\psi} := m \cdot \gamma_\psi(\varpi) \in \set{\pm m},$$
where $\gamma_\psi$ is the Weil-index associated with $\psi$. 
\item[--] For every $2\lest k \lest (m/2)$, one has 
$$\det(\mca{M}_k) =_{\C^\times} \mu(w, i(\chi))^{-1},$$
where $=_{\C^\times}$ means the equality holds modulo some elements in $\C^\times$ (in fact modulo an element in $\set{\pm \xi^{-m}}$ as shown in the proof of  \cite[Theorem 9.13]{GSS2}), and $\mu(w, i(\chi))$ denotes the Plancherel measure.
\end{enumerate}
Since $\Phi(\chi)=\Delta$, we see that $\mu(w, i(\chi))^{-1}=0$ and thus
$${\rm rank}(\mca{M}_k) = 1 \text{ for } 2\lest k \lest (m/2).$$
Also, we have 
$$\gamma(1, \xi^{-n}) = (1- q^{-1} \xi^{-n})/(1- \xi^n),$$
which equals $0$ in this case. Let ${\rm sgn}(\xi) \in \set{\pm 1}$ be such that
$$\xi(\varpi)^m = {\rm sgn}(\xi) \cdot q^{-1/2}.$$
Then it is not hard to see that
$$\pmb{\beta}(\omega_{\gamma_1, j} , s, \psi)=0 \Longleftrightarrow {\rm sgn}(\xi) = \eta_{u, (2)}(\varpi),$$
where $\eta_{u, (2)}(x) = (u, x)_2$. Thus, we could label $\omega_{\gamma_1, 0}$ to be such that ${\rm sgn}(\xi) = - \eta_{u, (2)}(\varpi)$ and thus $\pmb{\beta}(\omega_{\gamma_1, 0} , s, \psi) \ne 0$; in this case, ${\rm rank}(\mca{M}_1) = 1$ and 
$$\dim \Wh_\psi(\pi(\omega_{\gamma_1, 0})_\Delta) = 1 + (m-2)/2= m/2.$$
On the other hand, $\omega_{\gamma_1, 1}$ is such that $\pmb{\beta}(\omega_{\gamma_1, 1}^0 , s, \psi)=0$, and thus 
$$\dim \Wh_\psi(\pi(\omega_{\gamma_1, 1})_\Delta) = (m-2)/2.$$
This completes the proof for the case $4|n$.

Now we assume that $m=2l+1$ is odd. Again, we first consider the unramified theta representation $\pi(\omega_{\gamma_0, j})_\Delta, j\in \set{0, 1}$. Note that the $n$-fold cover $\SL_2$ is not persistent  in this case, and thus formula \eqref{E:4un} does not hold. The $W$-orbits in $\msc{X}_{0,Q,n} = \Z(\alpha^\vee)/\Z(m\alpha^\vee)$ are
$$\set{\mca{O}_{i\alpha^\vee}: 1\lest i \lest l} \cup \set{\mca{O}_{-l\alpha^\vee}}$$
with 
$$\mca{O}_{i\alpha^\vee} = \set{i\alpha^\vee, (1-i)\alpha^\vee} \text{ for } 1\lest i \lest l \text{ and } \mca{O}_{-l\alpha^\vee} = \set{-l\alpha^\vee}.$$
Every free orbit $\mca{O}_{i\alpha^\vee}, 1\lest i \lest l$ supports a one-dimensional subspace of $\Wh_\psi(\pi(\omega_{\gamma_0, j})_\Delta)$ for both $j\in \set{0, 1}$. On the other hand, it follows from \cite[Theorem 3.14 and \S 4]{Ga2} that $\mca{O}_{-l\alpha^\vee}$ supports a one-dimensional subspace of $\Wh_\psi(\pi(\omega_{\gamma_0, j})_\Delta)$ for exactly one of the unramified characters in $\set{\omega_{\gamma_0, j}: j=0, 1}$; we assume without loss of generality that 
$$\Wh_\psi(\pi(\omega_{\gamma_0, 0})_\Delta)_{\mca{O}_{-l\alpha^\vee}} = 1, \text{ and thus } \Wh_\psi(\pi(\omega_{\gamma_0, 1})_\Delta)_{\mca{O}_{-l\alpha^\vee}} = 1.$$
This gives that
\begin{equation} \label{E:2un}
\Wh_\psi(\pi(\omega_{\gamma_0, 0})_\Delta) = l+ 1 = (m+1)/2, \text{ and } \Wh_\psi(\pi(\omega_{\gamma_0, 1})_\Delta)_{\mca{O}_{-l\alpha^\vee}} = l = (m-1)/2.
\end{equation}
To determine $\Wh_\psi(\pi(\omega_{\gamma_1, j})_\Delta)$, we consider a local coefficients matrix 
$$\mca{M}'(w, i(\omega_{\gamma_1, j}))$$ of size $m\times m$ associated with the map $\mca{T}(w, i(\omega_{\gamma_1, j}))^*$ as above. Again, let $\xi: F^\times \to \C^\times$ be the character such that $\xi(x)^n = \chi(\wt{h}_\alpha(x^{n}))$ for $x\in F^\times$. We may assume that $\xi$ is ramified and $\xi^2$ is unramified. Such a matrix $\mca{M}'(w, i(\omega_{\gamma_1, j}))$  is computed in  the proof of \cite[Theorem 9.12]{GSS2} with the following properties.
\begin{enumerate}
\item[--] The matrix $\mca{M}'(w, i(\omega_{\gamma_1, j}))$, by swapping certain rows and columns, becomes a block-diagonal matrix with every block associated to a $W$-orbits in $\msc{X}_{0,Q,n}$. Thus, we have $(m+1)/2$-many blocks $\mca{M}_i'$, each associated with $\mca{O}_{i\alpha^\vee}, 1\lest i \lest l$ or $i=-l$. Also the size of $\mca{M}_i'$ is exactly $\val{\mca{O}_{i\alpha^\vee}}$ for every such $i$.

\item[--] For every $1\lest i \lest l$, the matrix $\mca{M}_i'$ is a non zero two by two matrix with 
$$\det(\mca{M}_i') =_{\C^\times} \mu(w, i(\chi))^{-1}.$$
On the other hand,
$$\mca{M}_{-l}'= \tilde{\gamma}(1, \xi^{-m}, \psi),$$
which is the metaplectic-gamma factor defined in \cite{Szp3}.
\end{enumerate}
Now, since $\Phi(\chi) = \Delta$, we see that $\mu(w, i(\chi))^{-1}=0$ and thus
$${\rm rank}(\mca{M}_i') = 1 \text{ for every } 1\lest i \lest l.$$
On the other hand, one has from \cite[Theorem A.1]{GoSz} (see also \cite[Theorem 4.4]{GSS2}) that
$$\tilde{\gamma}(1, \xi^{-m}, \psi)=_{\C^\times} \frac{ \gamma(1/2, \xi^m, \psi) }{ \gamma(0, \xi^n, \psi_2) },$$
where $\psi_2(x) = \psi(2x)$. Since $\xi$ is ramified with $\xi^2$ unramified, we see that $\xi^m$ is ramified and therefore
$$\gamma(1/2, \xi^m, \psi) \in \C^\times.$$
Also, $\xi^n$ is unramified with $\xi^n(\varpi) = q^{-1}$, and this implies that
$$\tilde{\gamma}(1, \xi^{-m}, \psi)=_{\C^\times} \frac{ 1-q^{-1} \xi^{-n}(\varpi) }{ 1- \xi^n(\varpi)} =0.$$
This shows that $\mca{M}_{-l}' = 0$ for either $j=0, 1$, and thus
$$\dim \Wh_\psi(\pi(\omega_{\gamma_1, j})_\Delta) = \frac{m-1}{2}$$
for both $j=0, 1$. 

We remark that the above consideration also applies to the unramified $\pi(\omega_{\gamma_0, j})_\Delta, j=0, 1$. Indeed, in this case, we may assume that $\xi$ is unramified. Thus,
$$\tilde{\gamma}(1, \xi^{-m}, \psi)= \frac{ \gamma(1/2, \xi^m, \psi) }{ \gamma(0, \xi^n, \psi_2) }= \frac{1-q^{1/2} \xi^m(\varpi)}{ 1-q^{-1/2} \xi^{-m}(\varpi) } \cdot \frac{ 1-q^{-1} \xi^{-n}(\varpi) }{ 1- \xi^n(\varpi)}.$$
Recall that $\xi^m(\varpi) = {\rm sgn}(\xi) \cdot q^{-1/2}$, and therefore
$$ \tilde{\gamma}(1, \xi^{-m}, \psi)= 0 \Longleftrightarrow  {\rm sgn}(\xi)=-1.
$$
This also gives the desired equalities for $\dim \Wh_\psi(\pi(\omega_{\gamma_0, j})_\Delta), j=0, 1$. In any case, the proof of the case $m$ is odd is completed.
\end{proof}

\begin{rmk}
We always have the equality
$$\sum_{0\lest i, j\lest 1} \dim \Wh_\psi(\pi(\omega_{\gamma_i, j})_\Delta) = n-1.$$
For $n=4$ this equality and  \eqref{E:4un} together enforce $\dim \Wh_\psi(\pi(\omega_{\gamma_1, j})_\Delta)=0$ for both $j=0, 1$. For $n=2$, the above equality coupled with \eqref{E:2un} also imply that 
$$\dim \Wh_\psi(\pi(\omega_{\gamma_1, j})_\Delta)=0 \text{ for } j=0, 1.$$
However, for $n$ large, it is necessary to have an explicit description of a local coefficients matrix, as used in the proof of Theorem \ref{T:GLSL2}.
\end{rmk}

\subsection{Unitary unramified principal series} \label{SS:uni-ps}
In this subsection, we investigate the decomposition of an irreducible constituent of a unitary $(K, s_K)$-unramified principal series under the restriction from $\wt{G}$ to $\wt{G}_0$, and also how the Whittaker dimension behaves with respect to the restriction.

We first recall some notations and results following \cite{Ga7}. Let $I(\chi)$ be a unitary $(K, s_K)$-unramified genuine principal series of $\wt{G}$. Denote by 
$$R_\chi \subset W(\chi)$$
 the R-group associated with $I(\chi)$, where $W(\chi) \subset W$ is the stabilizer subgroup of $\chi$. One has an algebra isomorphism
$$\C[R_\chi] \simeq {\rm End}_{\wt{G}}(I(\chi)),$$
given by 
$$R_\chi \ni w \mapsto \mca{A}(w, i(\chi)),$$
where 
$$\mca{A}(w, i(\chi)) = \gamma(w, \chi) \cdot T(w, i(\chi))$$
is the normalized intertwining operator. Here $\gamma(w, \chi)$ is the gamma factor associated with $w\in W$. One knows that $R_\chi$ is abelian (see \cite{Luo3} or \cite[Theorem 4.6]{Ga7}) and thus there is a multiplicity-free decomposition 
$$I(\chi) = \bigoplus_{\tau \in \Irr(R_\chi)} \pi_\tau,$$
where $\Irr(R_\chi) = \Hom(R_\chi, \C^\times)$ denotes the characters of $R_\chi$. One has
$$\mca{A}(w, i(\chi))|_{\pi_\tau}  = \tau(w) \cdot {\rm id}$$
for every $w\in R_\chi$.

On the dual side, let 
$$\phi_\chi: \WD_F \longrightarrow {}^L\wt{T} \longrightarrow {}^L\wt{G}$$
be the parameter associated with $I(\chi)$. Recall the component group
$$\mca{S}_\chi:= \mca{S}_{\phi_\chi}$$
of the centralizer of ${\rm Im}(\phi_\chi)$ modulo $Z(\wt{G})^\vee$. By using the results of Keys \cite{Key3}, it is shown in 
\cite[Theorem 4.9]{Ga7} that
\begin{equation} \label{E:R=S}
R_\chi \simeq \mca{S}_\chi.
\end{equation}
Note that the same consideration applies to unitary unramified principal series of $\wt{G}_0$, and analogous results hold.

Consider the decomposition from Theorem \ref{T:decT} (using slightly different notation)
\begin{equation} \label{E:dec-ij}
I(\chi)|_{\wt{G}_0} = \val{\msc{X}_{Q,n}^\mfr{c} } \cdot \bigoplus_{\gamma_i } \bigoplus_{j} I(\omega_{\gamma_i, j}),
\end{equation}
where 
$$\msc{X}_{Q,n}^\Gamma/\msc{X}_{Q,n}^\mfr{c}=\set{\gamma_i}_i.$$
For every $\gamma_i$ we have 
$$\msc{E}(\chi, {}^{\gamma_i} \tchi_O; Z(\wt{T}_0))=\set{\omega_{\gamma_i, j}}_j,$$
which always has size $\val{Y_{0,Q,n}/(Y_0\cap Y_{Q,n})}$. For simplicity of notations, we may even write
$$\omega_{i, j} := \omega_{\gamma_i, j}.$$

For every constituent $I(\omega_{i, j}):= I(\omega_{\gamma_i, j})$, we have the L-parameter $\phi_{\omega_{i, j}}$ which fits into the following commutative diagram:
\begin{equation} \label{D:uni}
\begin{tikzcd}
W_F \ar[r, "{\phi_\chi}"] \ar[rd, "{\phi_{\omega_{i,j}}}"'] & {}^L \wt{G} \ar[rd, "{f_{G,H}}"]  \\
& {}^L \wt{G}_0 \ar[r, "f_{G_0, H}"']  & {}^L \wt{H}.
\end{tikzcd}
\end{equation}
Recall that we defined
$$\phi^\diamondsuit = f_{G,H} \circ \phi_\chi = f_{G_0, H} \circ \phi_{\omega_{i, j}}$$
and thus have
$$\mca{S}^\diamondsuit := \mca{S}_{\phi^\diamondsuit}.$$

We denote
$$\Omega_\chi=\set{\omega_{i, j} \in \Hom_{\rm gen}(Z(\wt{T}_0), \C^\times): I(\omega_{i,j}) \subset I(\chi)|_{\wt{G}_0} },$$
and have from Lemma \ref{L:incs} that
$$\Omega_\chi= \set{\omega_{i, j} \in \Hom_{\rm gen}(Z(\wt{T}_0), \C^\times): \omega_{i,j}|_{\wt{T}_{Q,n}^\sharp} = \chi|_{\wt{T}_{Q,n}^\sharp} }.$$
We also denote
$$\begin{aligned}[t]
\Omega^{\rm un}_\chi & = \msc{E}(\chi, \tchi_O; Z(\wt{T}_0)) \\
& =\set{\omega_{i, j} \in \Hom_{\rm gen}(Z(\wt{T}_0), \C^\times): I(\omega_{i,j}) \subset I(\chi)|_{\wt{G}_0}  \text{ and is $(K_0, s_K)$-unramified}}
\end{aligned}$$
For every $\omega^\flat \in \Omega^{\rm un}_\chi$, set
$$\Omega^{\rm un}_\chi(\omega^\flat) := \set{{}^w (\omega^\flat):  w\in W} \cap \Omega^{\rm un}_\chi =  \set{{}^w (\omega^\flat):  w\in W} \cap \msc{E}(\chi, \tchi_O; Z(\wt{T}_0)) .$$
For $\omega^\flat \in \Omega^{\rm un}_\chi$ there are two natural inclusions (see \S \ref{SS:Lspec} and \cite[\S 4.3]{Ga7})
$$\begin{tikzcd}
\mca{S}_\chi \ar[rd, hook] \\
\mca{S}_{\omega^\flat} \ar[r, hook] &  \mca{S}^\diamondsuit.
\end{tikzcd}
$$

Note that $\mca{S}^\diamondsuit$ depends only on $\phi_\chi$ and not on choice of $\omega^\flat$. We write 
$$\chi_{\rm re}:= \chi|_{\wt{T}_{Q,n}^\sharp} = \omega^\flat|_{\wt{T}_{Q,n}^\sharp},$$
where $\mbm{L}(\wt{T}_{Q,n}^\sharp) = Y_0 \cap Y_{Q,n}$.  Since $Y_{Q,n}^{sc} \subset (Y_0 \cap Y_{Q,n})$, it follows from \eqref{E:R=S} and \cite[\S 4.2]{Ga7} that one has an isomorphism
$$\mca{S}^\diamondsuit/\mca{S}_{\omega^\flat} \simeq W(\chi_{\rm re})/W(\omega^\flat). $$

\begin{lm} \label{L:tors}
For every $\omega^\flat \in \Omega^{\rm un}_\chi$, one has
$$\Omega^{\rm un}_\chi(\omega^\flat) = \set{{}^w(\omega^\flat): w\in W(\chi_{\rm re})},$$
with ${\rm Stab}_{W(\chi_{\rm re})}(\omega^\flat) = W(\omega^\flat)$. Thus, $\Omega^{\rm un}_\chi(\omega^\flat)$ is a torsor over $\mca{S}^\diamondsuit/\mca{S}_{\omega^\flat}$.
\end{lm}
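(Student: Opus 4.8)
The plan is to unwind the definitions and reduce everything to three assertions: (a) that $\Omega^{\rm un}_\chi(\omega^\flat)$ is exactly the orbit of $\omega^\flat$ under $W(\chi_{\rm re})$ (rather than under all of $W$); (b) that the stabilizer of $\omega^\flat$ inside $W(\chi_{\rm re})$ is precisely $W(\omega^\flat)$; and (c) that the resulting orbit is a torsor over $\mca{S}^\diamondsuit/\mca{S}_{\omega^\flat}$, which by the isomorphism $\mca{S}^\diamondsuit/\mca{S}_{\omega^\flat} \simeq W(\chi_{\rm re})/W(\omega^\flat)$ recalled just above the statement is equivalent to (a) and (b) together. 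So the substance is really (a) and (b).

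For (b), the inclusion $W(\omega^\flat) \subseteq {\rm Stab}_{W(\chi_{\rm re})}(\omega^\flat)$ is essentially a tautology once one checks $W(\omega^\flat) \subseteq W(\chi_{\rm re})$: any $w$ fixing $\omega^\flat$ on all of $Z(\wt{T}_0)$ in particular fixes its restriction $\chi_{\rm re}=\omega^\flat|_{\wt{T}_{Q,n}^\sharp}$, and the latter equals $\chi|_{\wt{T}_{Q,n}^\sharp}$ since $\omega^\flat \in \Omega^{\rm un}_\chi$; this uses that the Weyl action is compatible with the inclusion $\wt{T}_{Q,n}^\sharp \hookrightarrow Z(\wt{T}_0)$, which is $W$-stable because $Y_0\cap Y_{Q,n}$ is a $W$-stable sublattice. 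The reverse inclusion ${\rm Stab}_{W(\chi_{\rm re})}(\omega^\flat) \subseteq W(\omega^\flat)$ is immediate from the definitions. For (a), the nontrivial direction is that if ${}^w(\omega^\flat)$ is again $(K_0,s_K)$-unramified and lies in $\msc{E}(\chi,\tchi_O;Z(\wt{T}_0))$ for some $w\in W$, then one can already realize ${}^w(\omega^\flat)$ as ${}^{w'}(\omega^\flat)$ with $w'\in W(\chi_{\rm re})$. The idea is that both $\omega^\flat$ and ${}^w(\omega^\flat)$ restrict to $\chi_{\rm re}$ on $\wt{T}_{Q,n}^\sharp$ — for $\omega^\flat$ by membership in $\Omega^{\rm un}_\chi$, and for ${}^w(\omega^\flat)$ by the same membership of ${}^w(\omega^\flat)$ in $\msc{E}(\chi,\tchi_O;Z(\wt{T}_0))\subseteq \Omega^{\rm un}_\chi$ — so that $w$ fixes $\chi_{\rm re}$ up to the ambiguity of the remaining coordinates of $Z(\wt{T}_0)/\wt{T}_{Q,n}^\sharp$, and using the structure of $\msc{E}(\chi,\tchi_O;Z(\wt{T}_0))$ as a torsor over $Y_{0,Q,n}/(Y_0\cap Y_{Q,n})$ one corrects $w$ by an element acting trivially on $\chi_{\rm re}$. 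Concretely, $W(\chi_{\rm re})$ already acts transitively on the unramified extensions forming $\msc{E}(\chi,\tchi_O;Z(\wt{T}_0))$ that are $W$-conjugate to $\omega^\flat$, because passing to $\wt{T}_{Q,n}^{sc}\subset \wt{T}_{Q,n}^\sharp$ and invoking the identification of R-groups and component groups from \cite[\S 4.2]{Ga7} shows that the relevant conjugation is controlled by $W(\chi_{\rm re})$.

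Concretely I would structure the proof as follows. First, record that the Weyl action preserves all the lattices in play ($Y_0$, $Y_{Q,n}$, $Y_0\cap Y_{Q,n}$, $Y_{Q,n}^{sc}$) and hence descends compatibly to $Z(\wt{T}_0)$, $\wt{T}_{Q,n}^\sharp$, $\wt{T}_{Q,n}^{sc}$; this makes "restriction to $\wt{T}_{Q,n}^\sharp$" a $W$-equivariant map. Second, prove $W(\omega^\flat)\subseteq W(\chi_{\rm re})$ and then (b) by the argument above. Third, prove (a) by showing that the $W$-orbit of $\omega^\flat$ meeting $\msc{E}(\chi,\tchi_O;Z(\wt{T}_0))$ is already a single $W(\chi_{\rm re})$-orbit — here I would use Lemma \ref{L:incs} to translate $I(\omega_{i,j})\subset I(\chi)|_{\wt{G}_0}$ into the equality of restrictions to $\wt{T}_{Q,n}^\sharp$, and the torsor structure $\val{\msc{E}(\chi,{}^\gamma\tchi_O;Z(\wt{T}_0))}=\val{Y_{0,Q,n}/(Y_0\cap Y_{Q,n})}$ from \S\ref{SS:res-ps}. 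Fourth, combine (a) and (b) with the isomorphism $\mca{S}^\diamondsuit/\mca{S}_{\omega^\flat}\simeq W(\chi_{\rm re})/W(\omega^\flat)$ recalled before the statement to conclude the torsor claim. The main obstacle I anticipate is step three — precisely controlling that no "extra" Weyl elements outside $W(\chi_{\rm re})$ are needed to move between $(K_0,s_K)$-unramified members of $\Omega^{\rm un}_\chi$ — and I expect this to hinge on the fact that $W(\chi_{\rm re})$ depends only on the ramified part $\chi|_{\wt{T}_{Q,n}^{sc}}$ of the data, so that the splitting of $\chi$ into its ramified and unramified parts (as in \S\ref{SS:res-ps}) is exactly what decouples the orbit combinatorics.
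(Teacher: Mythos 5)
Your proposal is correct and follows essentially the same route as the paper: membership of ${}^w(\omega^\flat)$ in $\msc{E}(\chi,\tchi_O;Z(\wt{T}_0))$ is, via the $W$-equivariance of restriction to $\wt{T}_{Q,n}^\sharp=Z(\wt{T})\cap\wt{T}_0$, equivalent to ${}^w\chi_{\rm re}=\chi_{\rm re}$, i.e.\ $w\in W(\chi_{\rm re})$; the stabilizer is tautologically $W(\omega^\flat)$; and the torsor claim follows from the isomorphism $\mca{S}^\diamondsuit/\mca{S}_{\omega^\flat}\simeq W(\chi_{\rm re})/W(\omega^\flat)$ recalled just before the statement. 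One remark: the obstacle you anticipate in your third step does not actually arise, since the equivariance you record in your first step already forces $w\in W(\chi_{\rm re})$ outright (this is exactly the paper's one-line argument), so no ``correction'' of $w$, no appeal to the torsor structure of $\msc{E}$ over $Y_{0,Q,n}/(Y_0\cap Y_{Q,n})$, and no use of the R-group identification of \cite[\S 4.2]{Ga7} is needed at that point --- the latter enters only in the final translation from $W(\chi_{\rm re})/W(\omega^\flat)$ to $\mca{S}^\diamondsuit/\mca{S}_{\omega^\flat}$.
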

\begin{proof}
It follows from the definition of $\Omega^{\rm un}_\chi(\omega^\flat)$ that
$$\begin{aligned}[t]
& {}^w (\omega^\flat) \in \Omega^{\rm un}_\chi(\omega^\flat) \\
\Longleftrightarrow & \text{ ${}^w (\omega^\flat)$ is an unramified extension of $\chi_{\rm re}$ } \\
\Longleftrightarrow & \  {}^w (\omega^\flat)|_{Z(\wt{T})\cap \wt{T}_0} = \chi_{\rm re} = \omega^\flat|_{Z(\wt{T}) \cap \wt{T}_0} \\
\Longleftrightarrow & \ w\in W(\chi_{\rm re}).
\end{aligned}$$
It is clear that the stabilizer subgroup  of $\omega^\flat$ is $W(\omega^\flat) \subset W(\chi_{\rm re})$. It follows that $\Omega^{\rm un}_\chi(\omega^\flat)$ is a torsor over $\mca{S}^\diamondsuit/\mca{S}_{\omega^\flat}$.
\end{proof}
We have the short exact sequence
$$\begin{tikzcd}
\mca{S}_\chi/(\mca{S}_\chi \cap \mca{S}_{\omega^\flat}) \ar[r, hook] & \mca{S}^\diamondsuit/\mca{S}_{\omega^\flat} \ar[r, two heads] & \mca{S}^\diamondsuit/(\mca{S}_\chi \cdot \mca{S}_{\omega^\flat}).
\end{tikzcd}$$
The action of $\mca{S}_\chi$ on $\Omega^{\rm un}_\chi(\omega^\flat)$ factors through $\mca{S}_\chi/(\mca{S}_\chi \cap \mca{S}_{\omega^\flat})$, and every such orbit is in fact a torsor over $\mca{S}_\chi/(\mca{S}_\chi \cap \mca{S}_{\omega^\flat})$ by Lemma \ref{L:tors}. Thus, we have a decomposition
$$\Omega^{\rm un}_\chi(\omega^\flat) = \bigsqcup_{k=1}^{\val{ \mca{S}^\diamondsuit/(\mca{S}_\chi \cdot \mca{S}_{\omega^\flat}) }} \mca{O}(\omega^\flat)_k$$
into a disjoint union of $\mca{S}_\chi$-orbits, where every $\mca{O}(\omega^\flat)_k$ is a torsor over $\mca{S}_\chi/(\mca{S}_\chi \cap \mca{S}_{\omega^\flat})$.

To proceed, we analyze the space $\Hom_{\wt{G}_0}(\pi_\rho, \pi_\tau)$ with $\rho \in \Irr(\mca{S}_{\omega^\flat}), \tau \in \Irr(\mca{S}_\chi)$ in more detail by adapting ideas from \cite{Key3}. To set up some notations:
\begin{enumerate}
\item[--] $f: A \longrightarrow B$ is a ring homomorphism;
\item[--] $W_A$ is a $A$-module, and $W_B, V$ are both $B$-modules;
\item[--] setting $E= {\rm End}_B(V)$, then $\Hom_B(W_B, V)$ and $\Hom_A(W_A, V_f)$ are both $E$-modules, where $V_f = V$ is view as a $A$-module via $f$.
\end{enumerate}
It is proved in \cite[Page 53]{Key3} that if  $W_B$ is a direct summand of $V$, then the natural homomorphism
\begin{equation} \label{E:G-S}
\Hom_A(W_A, (W_B)_f)  \longrightarrow \Hom_E( \Hom_B(W_B, V), \Hom_A(W_A, V_f) )
\end{equation}
of vector spaces is an isomorphism.
We apply the above to the following data
$$A=C_c^\infty[\wt{G}_0],\  B = C_c^\infty [\wt{G}],\  V=I(\chi),\  W_A = \pi_\rho,\  W_B = \pi_\tau$$
to obtain
\begin{equation} \label{E:bridge}
\Hom_{\wt{G}}(\pi_\rho, \pi_\tau|_{\wt{G}_0}) = \Hom_{\C[\mca{S}_\chi]}\left(\tau, \Hom_{\wt{G}_0}(\pi_\rho, I(\chi)|_{\wt{G}_0}) \right),
\end{equation}
where we note that 
$$
\tau \simeq \Hom_{\wt{G}}(\pi_\tau, I(\chi))
$$
 as $\C[\mca{S}_\chi]$-module.
In view of the decomposition of $I(\chi)|_{\wt{G}_0}$ in \eqref{E:dec-ij}, we first observe that (see \cite[Theorem 2.9]{BZ2})
$$\Hom_{\wt{G}_0}(\pi_\rho, I(\omega_{i,j})) \ne 0 \Longleftrightarrow \omega_{i,j} \in \Omega^{\rm un}(\omega^\flat).$$

Denoting
$$\Pi(\omega^\flat):= \bigoplus_{\omega_{i, j} \in \Omega^{\rm un}(\omega^\flat)} I(\omega_{i, j})$$
and
$$\Pi(\mca{O}(\omega^\flat)_k):= \bigoplus_{\omega_{i, j} \in \mca{O}(\omega^\flat)_k} I(\omega_{i, j}),$$
one clearly has
$$\Pi(\omega^\flat) = \bigoplus_k \Pi(\mca{O}(\omega^\flat)_k).$$
Also,
$$\Hom_{\wt{G}_0}(\pi_\rho, I(\chi)|_{\wt{G}_0}) =\val{\msc{X}_{Q,n}^\mfr{c}} \cdot \bigoplus_{k} \Hom_{\wt{G}_0}(\pi_\rho, \Pi(\mca{O}(\omega^\flat)_k)).$$

The action of $w\in \mca{S}_\chi$ on  $\Hom_{\wt{G}_0}(\pi_\rho, I(\chi)|_{\wt{G}_0})$ is given by transporting the action of $\mca{A}(w, \chi)$ on $I(\chi)$. Also, we have a decomposition 
$$\mca{A}(w, \chi) = \val{\msc{X}_{Q,n}^\mfr{c} } \cdot \bigoplus_{i, j} \mca{A}(w, \omega_{i, j}),$$
where 
$$\mca{A}(w, \omega_{i,j}): I( \omega_{i, j}) \longrightarrow I({}^w \omega_{i, j})$$
is the normalized intertwining operator of each $I( \omega_{i,j} )$. We see that the restriction $\mca{A}(w, \chi)|_{\Pi(\mca{O}(\omega^\flat)_k)}$ is well-defined for every $k$. Thus, the space
$$\Hom_{\wt{G}_0}(\pi_\rho, \Pi(\mca{O}(\omega^\flat)_k))$$
affords a representation of $\C[\mca{S}_\chi]$ of dimension $\val{ \mca{O}(\omega^\flat)_k) }$, and $\mca{S}_\chi$ also acts on the $\val{\Omega^{\rm un}_\chi(\omega^\flat)}$-dimensional space
$$\Hom_{\wt{G}_0}(\pi_\rho, \Pi(\omega^\flat)) = \bigoplus_k  \Hom_{\wt{G}_0}(\pi_\rho, \Pi(\mca{O}(\omega^\flat)_k)).$$

\begin{thm} \label{T:uni-func}
Let $I(\chi)$ be a unitary $(K, s_K)$-unramified genuine principal series of $\wt{G}$ and $I(\omega^\flat)$ an $(K_0, s_K)$-unramified constituent of $I(\chi)|_{\wt{G}_0}$. Keep notations as above. Then as representations of $\mca{S}_\chi$,
\begin{equation} \label{E:uni-rep}
\Hom_{\wt{G}_0}(\pi_\rho, \Pi(\mca{O}(\omega^\flat)_k)) \simeq  \Ind_{\mca{S}_\chi \cap \mca{S}_{\omega^\flat}}^{\mca{S}_\chi} (\rho)
\end{equation}
for every $k$. Consequently, 
\begin{equation} \label{E:uni-mul}
\dim \Hom_{\wt{G}_0}(\pi_\rho, \pi_\tau) =\val{ \msc{X}_{Q,n}^\mfr{c} } \cdot \angb{ \Ind_{\mca{S}_{\omega^\flat}}^{ \mca{S}^\diamondsuit } \rho }{  \Ind_{\mca{S}_\chi}^{ \mca{S}^\diamondsuit } \tau }_{\mca{S}^\diamondsuit}
\end{equation}
for every $\tau \in \Irr(\mca{S}_\chi)$ and $\rho \in \Irr(\mca{S}_{\omega^\flat})$.
\end{thm}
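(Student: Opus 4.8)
The first equality \eqref{E:uni-rep} is the heart of the matter, and the second \eqref{E:uni-mul} will follow from it by a bookkeeping argument using Frobenius reciprocity. So I would organize the argument in three stages.

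\emph{Stage 1: the action of $\mca{S}_\chi$ on a single orbit block.} I would start from the isomorphism \eqref{E:bridge}, which reduces $\dim \Hom_{\wt{G}_0}(\pi_\rho, \pi_\tau)$ to $\dim \Hom_{\C[\mca{S}_\chi]}(\tau, \Hom_{\wt{G}_0}(\pi_\rho, I(\chi)|_{\wt{G}_0}))$. Next I would isolate the contribution of one $\mca{S}_\chi$-orbit $\mca{O}(\omega^\flat)_k \subset \Omega^{\rm un}_\chi(\omega^\flat)$. Using the decomposition \eqref{E:dec-ij} of $I(\chi)|_{\wt{G}_0}$, the vanishing criterion $\Hom_{\wt{G}_0}(\pi_\rho, I(\omega_{i,j})) \ne 0 \Leftrightarrow \omega_{i,j}\in \Omega^{\rm un}(\omega^\flat)$ (from \cite[Theorem 2.9]{BZ2} together with \eqref{E:R=S}), and the compatibility of the normalized operators $\mca{A}(w,\chi)$ with the direct sum decomposition over the $\omega_{i,j}$'s, I get that $\Hom_{\wt{G}_0}(\pi_\rho, \Pi(\mca{O}(\omega^\flat)_k))$ is a $\C[\mca{S}_\chi]$-module of dimension $\val{\mca{O}(\omega^\flat)_k} = \val{\mca{S}_\chi/(\mca{S}_\chi \cap \mca{S}_{\omega^\flat})}$, on which $\mca{S}_\chi$ acts by permuting the one-dimensional summands indexed by the torsor $\mca{O}(\omega^\flat)_k$. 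I would then identify this as the induced representation $\Ind_{\mca{S}_\chi \cap \mca{S}_{\omega^\flat}}^{\mca{S}_\chi}(\rho')$ for an appropriate character $\rho'$ of $\mca{S}_\chi \cap \mca{S}_{\omega^\flat}$; here I need to check that the character by which $\mca{S}_\chi \cap \mca{S}_{\omega^\flat}$ acts on the summand for $\omega^\flat$ itself is exactly $\rho|_{\mca{S}_\chi\cap\mca{S}_{\omega^\flat}}$, which should come from matching the scalars $\mca{A}(w,\omega^\flat)|_{\pi_\rho} = \rho(w)\cdot\id$ under the isomorphism $R_{\omega^\flat}\simeq \mca{S}_{\omega^\flat}$ and the fact (Lemma \ref{L:tors}) that the stabilizer of $\omega^\flat$ in $W(\chi_{\rm re})$ is $W(\omega^\flat)$.

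\emph{Stage 2: reassembling.} Summing over the $\val{\mca{S}^\diamondsuit/(\mca{S}_\chi\cdot\mca{S}_{\omega^\flat})}$ orbits and recalling the overall multiplicity $\val{\msc{X}_{Q,n}^\mfr{c}}$ from \eqref{E:dec-ij}, I obtain $\Hom_{\wt{G}_0}(\pi_\rho, I(\chi)|_{\wt{G}_0}) = \val{\msc{X}_{Q,n}^\mfr{c}}\cdot \bigoplus_k \Ind_{\mca{S}_\chi\cap\mca{S}_{\omega^\flat}}^{\mca{S}_\chi}(\rho)$ as $\mca{S}_\chi$-modules. Plugging this into \eqref{E:bridge} and using Frobenius reciprocity, $\dim \Hom_{\wt{G}_0}(\pi_\rho,\pi_\tau) = \val{\msc{X}_{Q,n}^\mfr{c}}\cdot \val{\mca{S}^\diamondsuit/(\mca{S}_\chi\cdot\mca{S}_{\omega^\flat})}\cdot \angb{\tau|_{\mca{S}_\chi\cap\mca{S}_{\omega^\flat}}}{\rho|_{\mca{S}_\chi\cap\mca{S}_{\omega^\flat}}}_{\mca{S}_\chi\cap\mca{S}_{\omega^\flat}}$. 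Finally I would convert this into the symmetric form \eqref{E:uni-mul} by a standard double-coset / Mackey computation for the pairing over $\mca{S}^\diamondsuit$: since $\mca{S}^\diamondsuit$ is abelian (both $R_\chi$ and $R_{\omega^\flat}$ are abelian, hence so is $\mca{S}^\diamondsuit$), $\angb{\Ind_{\mca{S}_{\omega^\flat}}^{\mca{S}^\diamondsuit}\rho}{\Ind_{\mca{S}_\chi}^{\mca{S}^\diamondsuit}\tau}_{\mca{S}^\diamondsuit}$ unwinds via Mackey to a sum over $\mca{S}^\diamondsuit/(\mca{S}_\chi\cdot\mca{S}_{\omega^\flat})$ of the inner products $\angb{\rho|_{\mca{S}_\chi\cap\mca{S}_{\omega^\flat}}}{\tau|_{\mca{S}_\chi\cap\mca{S}_{\omega^\flat}}}$, which matches the expression above (using that the subgroups are all abelian, so the conjugation in Mackey's formula is trivial).

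\textbf{Main obstacle.} The delicate point is Stage 1: pinning down precisely which character of $\mca{S}_\chi\cap\mca{S}_{\omega^\flat}$ governs the $\omega^\flat$-summand, i.e. verifying that the $\mca{S}_\chi$-action transported from $\mca{A}(w,\chi)$ agrees, on the relevant summand of $\Hom_{\wt{G}_0}(\pi_\rho, I(\chi)|_{\wt{G}_0})$, with the $\rho$-action coming from $R_{\omega^\flat}\simeq \mca{S}_{\omega^\flat}$. This requires compatibility of the normalizing gamma factors $\gamma(w,\chi)$ for $\wt{G}$ with those $\gamma(w,\omega^\flat)$ for $\wt{G}_0$ on the common subgroup — which should follow from Lemma \ref{L:kc} (compatibility of intertwining operators under restriction) together with the fact that the Plancherel measures agree, as exploited in the proof of Proposition \ref{P:Rod} — and a careful use of the isomorphism \eqref{E:G-S} of Keys to keep track of how $\C[\mca{S}_\chi]$ acts. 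Once this identification of actions is secured, the rest is the combinatorics of torsors over the various quotient groups and the Mackey formula, which is routine. I expect the structure of the argument to closely parallel \cite{Key3}, with the covering-group input entering only through \eqref{E:R=S}, Lemma \ref{L:kc}, and Theorem \ref{T:decT}.
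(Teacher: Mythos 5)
Your proposal follows essentially the same route as the paper's proof: decompose $\Hom_{\wt{G}_0}(\pi_\rho, I(\chi)|_{\wt{G}_0})$ over the $\mca{S}_\chi$-orbits of $\Omega^{\rm un}_\chi(\omega^\flat)$, identify each orbit block as $\Ind_{\mca{S}_\chi\cap\mca{S}_{\omega^\flat}}^{\mca{S}_\chi}(\rho)$ via the torsor structure of Lemma \ref{L:tors}, and then combine \eqref{E:bridge} with Frobenius reciprocity and Mackey's formula to reach \eqref{E:uni-mul}. The character-matching point you flag in Stage 1 is precisely the step the paper asserts as "easy to check," so your plan is correct and matches the paper's argument.
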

\begin{proof}
First, 
$$\Hom_{\wt{G}_0}(\pi_\rho, \Pi(\mca{O}(\omega^\flat)_k)) = \bigoplus_{\omega_{i, j} \in \mca{O}(\omega^\flat)_k} \Hom_{\wt{G}_0}(\pi_\rho, I(\omega_{i,j})).$$
Since $I(\omega_{i,j}) \simeq I(\omega^\flat)$ for every $\omega_{i,j} \in \mca{O}(\omega^\flat)_k$,
it is easy to check that as representations of $\mca{S}_\chi \cap \mca{S}_{\omega^\flat}$, one has
 $$\Hom_{\wt{G}_0}(\pi_\rho, I(\omega_{i,j}) \simeq \rho|_{ \mca{S}_\chi \cap \mca{S}_{\omega^\flat} }.$$
Moreover, as $\mca{O}(\omega^\flat)_k$ is a torsor over $\mca{S}_\chi/(\mca{S}_\chi \cap \mca{S}_{\omega^\flat})$ for every $k$, the representation of $\mca{S}_\chi$ on $\Hom_{\wt{G}_0}(\pi_\rho, \Pi(\mca{O}(\omega^\flat)_k))$ is indeed $\Ind_{\mca{S}_\chi \cap \mca{S}_{\omega^\flat}}^{\mca{S}_\chi} \rho$. This gives \eqref{E:uni-rep}.

Now to prove \eqref{E:uni-mul}, we see that as representations of $\mca{S}_\chi$,
$$\begin{aligned}[t]
\Hom_{\wt{G}_0}(\pi_\rho, I(\chi)|_{\wt{G}_0}) & =\val{\msc{X}_{Q,n}^\mfr{c}} \cdot \bigoplus_{k} \Hom_{\wt{G}_0}(\pi_\rho, \Pi(\mca{O}(\omega^\flat)_k)) \\
& =  \val{\msc{X}_{Q,n}^\mfr{c}} \cdot  \val{ \mca{S}^\diamondsuit/(\mca{S}_\chi \cdot \mca{S}_{\omega^\flat}) } \cdot  \Ind_{\mca{S}_\chi \cap \mca{S}_{\omega^\flat}}^{\mca{S}_\chi} \rho.
\end{aligned}$$
On the other hand, the Frobenius reciprocity gives
$$\angb{ \Ind_{\mca{S}_{\omega^\flat}}^{ \mca{S}^\diamondsuit } \rho }{  \Ind_{\mca{S}_\chi}^{ \mca{S}^\diamondsuit } \tau }_{\mca{S}^\diamondsuit}
= \angb{  \Res_{\mca{S}_\chi}^{ \mca{S}^\diamondsuit } \Ind_{\mca{S}_{\omega^\flat}}^{ \mca{S}^\diamondsuit } \rho }{ \tau }_{\mca{S}_\chi},$$
and by Mackey's theory one has
$$\Res_{\mca{S}_\chi}^{ \mca{S}^\diamondsuit } \Ind_{\mca{S}_{\omega^\flat}}^{ \mca{S}^\diamondsuit } \rho 
= \bigoplus_{s\in \mca{S}^\diamondsuit/(\mca{S}_\chi \cdot \mca{S}_{\omega^\flat})} \Ind_{\mca{S}_\chi \cap \mca{S}_{\omega^\flat}}^{\mca{S}_\chi} \rho
=\val{ \mca{S}^\diamondsuit/(\mca{S}_\chi \cdot \mca{S}_{\omega^\flat}) } \cdot \Ind_{\mca{S}_\chi \cap \mca{S}_{\omega^\flat}}^{\mca{S}_\chi} \rho.$$
This coupled with \eqref{E:bridge} give the desired equality \eqref{E:uni-mul}. The proof is completed.
\end{proof}

\begin{cor} \label{C:uni-ac}
Let $I(\chi)$ be a unitary $(K, s_K)$-unramified genuine principal series of $\wt{G}$ and $I(\omega) \subset I(\chi)|_{\wt{G}}, \omega \in \Omega(\chi)$ an arbitrary constituent. Then
\begin{equation*}
\dim \Hom_{\wt{G}_0}(\pi_\rho, \pi_\tau) =\val{ \msc{X}_{Q,n}^\mfr{c} } \cdot \angb{ \Ind_{\mca{S}_{\omega}}^{ \mca{S}^\diamondsuit } \rho }{  \Ind_{\mca{S}_\chi}^{ \mca{S}^\diamondsuit } \tau }_{\mca{S}^\diamondsuit}
\end{equation*}
for every $\tau \in \Irr(\mca{S}_\chi)$ and $\rho \in \Irr(\mca{S}_{\omega})$. In particular, this verifies Conjecture \ref{C:res} (ii) for unitary unramified principal series of $\wt{G}$ with $e(I(\chi)) = \val{ \msc{X}_{Q,n}^\mfr{c} }$.
\end{cor}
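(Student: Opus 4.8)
The final statement, Corollary \ref{C:uni-ac}, asks to remove the unramifiedness hypothesis from Theorem \ref{T:uni-func} and to observe that the resulting formula verifies Conjecture \ref{C:res} (ii). The plan is to reduce the general case to the $(K_0, s_K)$-unramified case by the ``twisting'' device from Remark \ref{R:all-unram}. Concretely, given an arbitrary constituent $I(\omega) \subset I(\chi)|_{\wt{G}_0}$ with $\omega = \omega_{\gamma, j} \in \Omega(\chi)$, I would first invoke Theorem \ref{T:decT} (ii)--(iii) to locate $\omega$ inside the double sum indexed by $\gamma \in \msc{X}_{Q,n}^\Gamma/\msc{X}_{Q,n}^\mfr{c}$: there is a class $\gamma$ such that $I(\omega)$ is $(\gamma\cdot K_0, \gamma\cdot s_K)$-unramified, where $\gamma\cdot K_0$ and $\gamma\cdot s_K$ are the conjugated hyperspecial subgroup and splitting. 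Since $I(\chi)$ is simultaneously $(K,s_K)$-unramified and $(\gamma\cdot K, \gamma\cdot s_K)$-unramified, I may rerun the entire analysis of \S\ref{SS:uni-ps} with $(\gamma\cdot K, \gamma\cdot s_K)$ in place of $(K, s_K)$; then $\omega$ plays the role of $\omega^\flat$, i.e. $\omega \in \Omega^{\rm un}_\chi$ relative to the new pair. Theorem \ref{T:uni-func} applies verbatim and yields the asserted formula, noting that none of the groups $\mca{S}_\chi$, $\mca{S}_\omega$, $\mca{S}^\diamondsuit$ nor the integer $\val{\msc{X}_{Q,n}^\mfr{c}}$ depends on the choice of hyperspecial pair — they are attached to the parameters $\phi_\chi$, $\phi_\omega$, $\phi^\diamondsuit$ and to the lattice data, all of which are insensitive to conjugation of $K$.

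The second, essentially formal, part is to match this with Conjecture \ref{C:res} (ii). Here I would set $e(I(\chi)) := \val{\msc{X}_{Q,n}^\mfr{c}}$ and check the two cases of the conjectural formula. For $\phi_\tau \notin \Phi({}^L\wt{G}_0; \phi_\chi)$, the representation $\tau$ is a constituent of a principal series $I(\omega')$ with $\omega'$ \emph{not} compatible with $\chi$ on $\wt{T}_{Q,n}^\sharp$; by Lemma \ref{L:incs}, $I(\omega') \not\subset I(\chi)|_{\wt{G}_0}$, hence $\Hom_{\wt{G}_0}(\pi_\tau, \text{constituent of }I(\chi)) = 0$ by \cite[Theorem 2.9]{BZ2}, which is consistent with the vanishing prescribed by the conjecture since then $\Phi({}^L\wt{G}_0; \phi_\chi)$ does not contain $\phi_\tau$. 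For $\phi_\tau \in \Phi({}^L\wt{G}_0; \phi_\chi)$, the content of Corollary \ref{C:uni-ac}, combined with the identifications $R_\chi \simeq \mca{S}_\chi$, $R_\omega \simeq \mca{S}_\omega$ from \eqref{E:R=S} and the isotypic multiplicities of Theorem \ref{T:decT}, is exactly the pairing $e(I(\chi)) \cdot \langle \Ind_{\mca{S}_\chi}^{\mca{S}^\diamondsuit}\theta_\pi, \Ind_{\mca{S}_\tau}^{\mca{S}^\diamondsuit}\theta_\tau\rangle$ with $\theta_\pi$ corresponding to $\tau \in \Irr(\mca{S}_\chi)$ and $\theta_\tau$ to $\rho \in \Irr(\mca{S}_\omega)$; so the desideratum holds with the stated value of $e$.

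The only genuine subtlety — and the step I expect to require the most care — is the independence assertion: that $\mca{S}_\chi$, $\mca{S}_{\omega}$, $\mca{S}^\diamondsuit$, the decomposition $I(\chi) = \bigoplus_\tau \pi_\tau$, and the normalized operators $\mca{A}(w, \cdot)$ behave compatibly when we switch from the pair $(K, s_K)$ to $(\gamma\cdot K, \gamma\cdot s_K)$. This is really the statement that all the objects entering the proof of Theorem \ref{T:uni-func} are functions of the \emph{data} $(\chi, \omega, \phi_\chi, \phi_\omega, \phi^\diamondsuit, \text{lattices})$ and not of the auxiliary hyperspecial choice; conjugation by $\gamma$ is an inner automorphism of $\wt{G}$ (up to the central $\mu_n$), so it carries $I(\chi)$ to an isomorphic representation, carries each $\pi_\tau$ to $\pi_\tau$ with the same label in $\Irr(\mca{S}_\chi)$ (since $R_\chi$ and the normalized intertwining operators are intrinsic), and carries the $(K_0, s_K)$-unramified constituents to the $(\gamma\cdot K_0, \gamma\cdot s_K)$-unramified ones as in Remark \ref{R:all-unram}. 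Once this compatibility is articulated cleanly, the proof of Corollary \ref{C:uni-ac} is a one-line appeal to Theorem \ref{T:uni-func} applied to the conjugated pair, followed by the bookkeeping above.
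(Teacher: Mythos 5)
Your proposal is correct and follows essentially the same route as the paper: the paper's proof likewise writes $\omega = \omega_{\gamma,j}$, observes via Remark \ref{R:all-unram} that $I(\chi)$ is $(\gamma\cdot K, \gamma\cdot s_K)$-unramified and $I(\omega)$ is $(\gamma\cdot K_0, \gamma\cdot s_K)$-unramified, and then applies Theorem \ref{T:uni-func} to the conjugated pair, treating the verification of Conjecture \ref{C:res} (ii) as immediate. Your additional remarks on the independence of $\mca{S}_\chi$, $\mca{S}_\omega$, $\mca{S}^\diamondsuit$ and $\val{\msc{X}_{Q,n}^\mfr{c}}$ from the hyperspecial choice simply spell out what the paper leaves implicit in ``the rest is clear.''
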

\begin{proof}
Assume that $\omega = \omega_{\gamma, j}$ in the notation of \eqref{E:dec-ij}. Since $I(\chi)$ is $(K, s_K)$-unramified, it is also $(\gamma\cdot K, \gamma \cdot {s_K})$-unramified. Since the genuine principal series $I(\omega)$ is $(\gamma\cdot K_0, \gamma \cdot s_K)$-unramified in this case (see Remark \ref{R:all-unram}), we can apply Theorem \ref{T:uni-func} to obtain the desired equality regarding $\dim \Hom_{\wt{G}_0}(\pi_\rho, \pi_\tau)$. The rest is clear. 
\end{proof}

A special case is the following
\begin{cor} \label{C:uni-iso}
Assume that $(\wt{G}, \wt{G}_0)$ is an isotypic-pair. Let $I(\chi)$ be a unitary unramified principal series of $\wt{G}$ and let $\omega^\flat=\chi|_{Z(\wt{T}_0)}$. Then
$$\dim \Hom_{\wt{G}_0}(\pi_\rho, \pi_\tau) = \val{ \msc{X}_{Q,n}^\Gamma } \cdot \angb{\rho|_{ \mca{S}_\chi } }{ \tau }_{\mca{S}_\chi}.$$
\end{cor}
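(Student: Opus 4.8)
\textbf{Proof proposal for Corollary \ref{C:uni-iso}.}
The plan is to obtain this as a direct specialization of Theorem \ref{T:uni-func} (equivalently Corollary \ref{C:uni-ac}) to the isotypic situation, so essentially all the work has already been done. First I would unwind what the isotypic-pair hypothesis buys us. By Definition \ref{D:isop} and Corollary \ref{C:Zinc}, it is equivalent to $Y_0 \cap Y_{Q,n} = Y_{0,Q,n}$, i.e. $Z(\wt{T}_0) \subset Z(\wt{T})$; hence $Y^\mfr{c} = Y$, so $\msc{X}_{Q,n}^\mfr{c} = \msc{X}_{Q,n}^\Gamma$, and Corollary \ref{C:decTi} shows that $I(\chi)|_{\wt{G}_0}$ is the isotypic sum $I(\omega^\flat)^{\oplus \val{\msc{X}_{Q,n}^\Gamma}}$, where $\omega^\flat = \chi|_{Z(\wt{T}_0)}$ is the unique element of $\msc{E}(\chi, \tchi_O; Z(\wt{T}_0))$. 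In particular $I(\omega^\flat)$ is a $(K_0, s_K)$-unramified constituent, so Theorem \ref{T:uni-func} applies verbatim with this choice of $\omega^\flat$.

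The second step is to identify the component groups appearing in the formula. In the isotypic case one has $\wt{G}_0^\vee = \wt{H}^\vee$, as recorded after the construction of $f_{G_0, H}$ in \S \ref{SS:Lspec}, so $f_{G_0, H}$ is an isomorphism of $L$-groups. Consequently the parameter $\phi^\diamondsuit = f_{G_0, H} \circ \phi_{\omega^\flat}$ satisfies $\mca{S}^\diamondsuit = \mca{S}_{\omega^\flat}$ with the embedding $\mca{S}_{\omega^\flat} \hookrightarrow \mca{S}^\diamondsuit$ being the identity, and the inclusion $\mca{S}_\chi \hookrightarrow \mca{S}^\diamondsuit$ becomes simply $\mca{S}_\chi \hookrightarrow \mca{S}_{\omega^\flat}$. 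Thus $\Ind_{\mca{S}_{\omega^\flat}}^{\mca{S}^\diamondsuit} \rho = \rho$, and the pairing in \eqref{E:uni-mul} collapses.

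Finally, substituting $\msc{X}_{Q,n}^\mfr{c} = \msc{X}_{Q,n}^\Gamma$ and the above identifications into the conclusion of Theorem \ref{T:uni-func} gives
$$\dim \Hom_{\wt{G}_0}(\pi_\rho, \pi_\tau) = \val{\msc{X}_{Q,n}^\Gamma} \cdot \angb{\rho}{\Ind_{\mca{S}_\chi}^{\mca{S}_{\omega^\flat}} \tau}_{\mca{S}_{\omega^\flat}},$$
and Frobenius reciprocity for the inclusion $\mca{S}_\chi \hookrightarrow \mca{S}_{\omega^\flat}$ yields $\angb{\rho}{\Ind_{\mca{S}_\chi}^{\mca{S}_{\omega^\flat}} \tau}_{\mca{S}_{\omega^\flat}} = \angb{\rho|_{\mca{S}_\chi}}{\tau}_{\mca{S}_\chi}$, which is the asserted equality.

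There is no serious obstacle here: the statement is a formal corollary. The only point worth double-checking is the claim $\mca{S}^\diamondsuit = \mca{S}_{\omega^\flat}$, i.e. that $f_{G_0,H}$ is an isomorphism for an isotypic pair (so that $\wt{G}_0^\vee = \wt{H}^\vee$); once this is granted the rest is bookkeeping with induction/restriction of finite group representations, and one should just make sure the $\mca{S}_\chi$-module structure used in the Frobenius step is the one built into the proof of Theorem \ref{T:uni-func} via the normalized intertwining operators $\mca{A}(w, \chi)$.
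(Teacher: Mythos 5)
Your proposal is correct and follows essentially the same route as the paper: specialize Theorem \ref{T:uni-func} using that the isotypic-pair hypothesis gives $\wt{G}_0^\vee = \wt{H}^\vee$, hence $\mca{S}_{\omega^\flat} = \mca{S}^\diamondsuit$ and $\mca{S}_\chi \subset \mca{S}_{\omega^\flat}$, so the induced-representation pairing collapses via Frobenius reciprocity. Your extra remarks (that $\msc{X}_{Q,n}^\mfr{c} = \msc{X}_{Q,n}^\Gamma$ and that $\omega^\flat$ is the unique $(K_0,s_K)$-unramified constituent, via Corollary \ref{C:decTi}) are points the paper leaves implicit, and they are accurate.
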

\begin{proof}
For an isotypic pair $(\wt{G}, \wt{G}_0)$, one has $\wt{G}_0^\vee = H^\vee$ and thus $\mca{S}_{\omega^\flat} = \mca{S}^\diamondsuit$. In particular,  $\mca{S}_\chi \subset \mca{S}_{\omega^\flat}$. The result follows from Theorem \ref{T:uni-func}.
\end{proof}

\begin{rmk}
Corollary \ref{C:uni-iso} confirms for unitary unramified genuine principal series part (ii) of Conjecture \ref{C:res}. In fact, Theorem \ref{T:uni-func} generalizes this to the case where $(\wt{G}, \wt{G}_0)$ may not be an isotypic-pair. 
\end{rmk}

It follows from Corollary \ref{C:uni-iso} that for an isotypic pair $(\wt{G}, \wt{G}_0)$, one has
\begin{equation} \label{E:uni-Wd}
\dim \Wh_\psi(\pi_\tau) = \val{\msc{X}_{Q,n}^\Gamma} \cdot \sum_{\rho \in \Irr(\mca{S}_{\omega^\flat})} \angb{ \rho|_{\mca{S}_\chi} }{ \tau }_{\mca{S}_\chi} \cdot \dim \Wh_\psi(\pi_\rho)
\end{equation}
for every $\tau \in \Irr(\mca{S}_\chi)$ and $\rho\in \Irr(\mca{S}_{\omega^\flat})$. We want to justify the equality \eqref{E:uni-Wd} from \cite[Theorem 5.6]{Ga7} which we briefly recall as follows:
\begin{enumerate}
\item[--] there is a representation 
$$\sigma_0^{\rm Wh}: \mca{S}_{\omega^\flat} \longrightarrow \GL(\C^{ \val{\msc{X}_{0,Q,n} } })$$
which is given by $\sigma_0^{\rm Wh}(w) = \mca{A}(w, \omega^\flat)^*$, the induced isomorphism of $\Wh_\psi(I(\omega^\flat))$ of dimension $\val{\msc{X}_{0,Q,n}}$. One has
$$\dim \Wh_\psi(\pi_\rho) = \angb{ \rho }{ \sigma_0^{\rm Wh} }_{ \mca{S}_{\omega^\flat} }$$
for every $\rho \in \Irr(\mca{S}_{\omega^\flat})$.
\end{enumerate}
Note that one also has a representation
\begin{equation} \label{E:rho}
\sigma^{\rm Wh}: \mca{S}_\chi \longrightarrow \GL(\C^{\val{\msc{X}_{Q,n}} })
\end{equation}
given by $\sigma^{\rm Wh}(w):=\mca{A}(w, \chi)^*$ for $w\in \mca{S}_\chi$, and similarly
\begin{equation} \label{E:tau}
\dim \Wh_\psi(\pi_\tau) = \angb{ \tau }{ \sigma^{\rm Wh} }_{ \mca{S}_\chi }
\end{equation}
for every $\tau \in \Irr(\mca{S}_\chi)$.
The decomposition
$$
\Wh_\psi(I(\chi)) = \bigoplus_{i=1}^{ \val{\msc{X}_{Q,n}^\Gamma} } \Wh_\psi(I(\omega^\flat)),
$$
which is compatible with the decomposition of $\mca{A}(w, \chi) = \bigoplus_i \mca{A}(w, \omega^\flat)$, shows that
\begin{equation} \label{E:dec-sW}
\sigma^{\rm \Wh} \simeq \val{ \msc{X}_{Q,n}^\Gamma } \cdot (\sigma_0^{\rm Wh})|_{\mca{S}_\chi}.
\end{equation}
It is easy to see that the three equalities \eqref{E:rho}, \eqref{E:tau} and \eqref{E:dec-sW} together imply \eqref{E:uni-Wd} as well.

Now we explain relations among \eqref{E:uni-Wd}, Conjecture \ref{C:dec} and \cite[Conjecture 5.3]{Ga7}. Thus, for the rest of this subsubsection, we assume that $G_0$ is simply-connected and $\wt{G}_0$ is a saturated cover. We have 
\begin{enumerate}
\item[--] (\cite[Conjecture 5.3]{Ga7}) the equality $\dim \Wh_\psi(\pi_\rho) = \angb{ \rho }{ \sigma_{\msc{X}_0} }_{\mca{S}_{\omega^\flat}}$ holds for every $\rho \in \Irr(\mca{S}_{\omega^\flat})$.
\end{enumerate}
Here 
$$\sigma_{\msc{X}_0}: \mca{S}_{\omega^\flat} \into W \longrightarrow {\rm Perm}(\msc{X}_{0,Q,n})$$
is the permutation representation given by $\sigma_{\msc{X}_0}(w)=w[\cdot]$. The following is straightforward.

\begin{prop} \label{P:c-rel}
Consider a pair $(\wt{G}, \wt{G}_0)$ with $G_0$ being simply-connected and $\wt{G}_0$ a saturated cover.
\begin{enumerate}
\item[(i)] Assume \cite[Conjecture 5.3]{Ga7}, then
$$\dim \Wh_\psi(\pi_\tau)= \val{\msc{X}_{Q,n}^\Gamma} \cdot \angb{\tau}{ (\sigma_{\msc{X}_0})|_{\mca{S}_\chi} }_{\mca{S}_\chi}$$
for every $\tau\in \Irr(\mca{S}_\chi)$.
\item[(ii)] Assume both \cite[Conjecture 5.3]{Ga7} and Conjecture \ref{C:dec}, then
 $$\dim \Wh_\psi(\pi_\tau) = \angb{\tau }{ \sigma_\msc{X} }_{\mca{S}_\chi}$$
 for every $\tau \in \Irr(\mca{S}_\chi)$.
\end{enumerate}
\end{prop}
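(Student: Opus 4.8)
The plan is to deduce both parts by a purely formal character computation, feeding the two conjectural inputs into the Whittaker-dimension formula \eqref{E:uni-Wd} that has already been established. The first point to record is that the running hypotheses place us in the isotypic situation: since $\wt{G}_0$ is a saturated cover, Lemma \ref{L:pers} shows $(\wt{G}, \wt{G}_0)$ is an isotypic-pair, so that Corollary \ref{C:uni-iso} and hence \eqref{E:uni-Wd} apply with $\omega^\flat = \chi|_{Z(\wt{T}_0)}$. These are moreover precisely the hypotheses under which \cite[Theorem 5.6]{Ga7} and \cite[Conjecture 5.3]{Ga7} are formulated, so the permutation representation $\sigma_{\msc{X}_0}$ of $W$ on $\msc{X}_{0,Q,n}$ restricts along $\mca{S}_{\omega^\flat} \into W$, and $\mca{S}_\chi \simeq R_\chi$ likewise sits inside $W$; all the representations below are understood over these subgroups of $W$.

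For part (i), I would begin from \eqref{E:uni-Wd} and substitute $\dim \Wh_\psi(\pi_\rho) = \angb{\rho}{\sigma_{\msc{X}_0}}_{\mca{S}_{\omega^\flat}}$ from \cite[Conjecture 5.3]{Ga7}, giving
\[
\dim \Wh_\psi(\pi_\tau) = \val{\msc{X}_{Q,n}^\Gamma} \cdot \sum_{\rho \in \Irr(\mca{S}_{\omega^\flat})} \angb{\rho|_{\mca{S}_\chi}}{\tau}_{\mca{S}_\chi} \cdot \angb{\rho}{\sigma_{\msc{X}_0}}_{\mca{S}_{\omega^\flat}}.
\]
Writing $\sigma_{\msc{X}_0} = \bigoplus_\rho \angb{\rho}{\sigma_{\msc{X}_0}}_{\mca{S}_{\omega^\flat}} \rho$ and restricting to $\mca{S}_\chi$, the sum over $\rho$ collapses to $\angb{\tau}{(\sigma_{\msc{X}_0})|_{\mca{S}_\chi}}_{\mca{S}_\chi}$, which is the asserted formula. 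Alternatively, and more conceptually, \eqref{E:dec-sW} gives $\sigma^{\rm Wh} \simeq \val{\msc{X}_{Q,n}^\Gamma} \cdot (\sigma_0^{\rm Wh})|_{\mca{S}_\chi}$ as $\mca{S}_\chi$-representations, while \cite[Conjecture 5.3]{Ga7} together with the defining property of $\sigma_0^{\rm Wh}$ forces $\sigma_0^{\rm Wh} \simeq \sigma_{\msc{X}_0}$ as $\mca{S}_{\omega^\flat}$-representations; pairing the resulting isomorphism against $\tau$ yields (i).

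For part (ii), I would then invoke Conjecture \ref{C:dec}, which under the saturatedness hypothesis on $\wt{G}_0$ asserts $\sigma_\msc{X} \simeq \val{\msc{X}_{Q,n}^\Gamma} \cdot \sigma_{\msc{X}_0}$ as representations of $W$. Restricting this isomorphism along $\mca{S}_\chi \into W$ and pairing with $\tau$ gives $\angb{\tau}{\sigma_\msc{X}}_{\mca{S}_\chi} = \val{\msc{X}_{Q,n}^\Gamma} \cdot \angb{\tau}{(\sigma_{\msc{X}_0})|_{\mca{S}_\chi}}_{\mca{S}_\chi}$, and by part (i) the right-hand side is $\dim \Wh_\psi(\pi_\tau)$. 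The only step demanding genuine attention is the bookkeeping of the embeddings $\mca{S}_\chi, \mca{S}_{\omega^\flat} \into W$, and checking that the $W$-permutation representation $\sigma_{\msc{X}_0}$ in Conjecture \ref{C:dec} restricts to exactly the $\mca{S}_{\omega^\flat}$-representation appearing in \cite[Conjecture 5.3]{Ga7} --- both being induced by the action $w \mapsto w[\cdot]$ on $\msc{X}_{0,Q,n}$, they agree. Beyond this consistency check there is no real obstacle, as the substantive content is already contained in \eqref{E:uni-Wd} and in the cited results of \cite{Ga7}.
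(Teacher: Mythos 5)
Your argument is correct and is precisely the computation the paper has in mind (it states the proposition as ``straightforward'' and gives no proof): substituting \cite[Conjecture 5.3]{Ga7} into \eqref{E:uni-Wd} (valid here since saturatedness gives the isotypic-pair property via Lemma \ref{L:pers}) and collapsing the sum over $\rho$ gives (i), and restricting the isomorphism $\sigma_\msc{X} \simeq \val{\msc{X}_{Q,n}^\Gamma}\cdot\sigma_{\msc{X}_0}$ of Conjecture \ref{C:dec} along $\mca{S}_\chi \into W$ gives (ii). Your consistency check that the two occurrences of $\sigma_{\msc{X}_0}$ (as a $W$-representation and as its restriction to $\mca{S}_{\omega^\flat}$, both via the action $w[\cdot]$) agree is the only point needing care, and you have handled it correctly.
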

Note that if $I(\chi)$ is irreducible, then the equalities in Proposition \ref{P:c-rel} hold trivially and unconditionally.
We give two examples below in the non isotypic-pair case.

\begin{eg}
Let $\wt{G}= \wt{\GL}_2^{(n)}$ be associated with $\bfp=0, \bfq=1$, and $n=2m$ is even.
One has $$Y_{0, Q,n} = \Z(m\alpha^\vee), \ Y_{Q,n} = nY, \ Y_0 \cap Y_{Q,n} = nY_0.$$
Thus,
$$\wt{G}^\vee = \GL_2, \ \wt{G}_0^\vee = \SL_2 \text{ and } H^\vee = \PGL_2.$$
For every $(K, s_K)$-unramified $I(\chi)$, one has
$$\Omega^{\rm un}_{\chi} = \set{\omega_{0, 0}, \omega_{0, 1}}.$$
Note that we always have $\mca{S}_\chi =\set{1}$ and $\mca{S}_{\omega_{0, j}}=\set{1}$ for $j=0, 1$. Regarding $\mca{S}^\diamondsuit$, there are two cases:
\begin{enumerate}
\item[--] if $\chi(\wt{h}_\alpha(\varpi^n)) \ne -1$, then $\mca{S}^\diamondsuit=\set{1}$ and thus $\Omega^{\rm un}_\chi(\omega_{0, j}) = \set{\omega_{0, j}}$ for each $j$.
\item[--] if $\chi(\wt{h}_\alpha(\varpi^n)) = -1$, then $\mca{S}^\diamondsuit=W$ and we have $\Omega^{\rm un}_\chi(\omega_{0, j}) = \Omega^{\rm un}_\chi$ for $j=0, 1$.
\end{enumerate}
In the second case above, on the dual side the relative Satake parameter ${\rm sat}(\omega_{0, j})$ (in the sense of \cite{GG} if $m$ is odd) for $\omega_{0, j}$ is 
$${\rm sat}(\omega_{0, j}) =
\left(\begin{matrix}
\sqrt{-1}^{2j+1} &  0 \\
0 & \sqrt{-1}^{-(2j+1)}
\end{matrix}\right) \in \wt{G}_0^\vee.$$
This example is easily generalizable to the pair $(\wt{\GL}_r^{(n)}, \wt{\SL}_r^{(n)})$ with $r|n$. For such a pair, one always has $\mca{S}_\chi = \mca{S}_{\omega^\flat} = \set{1}$. However, the group $\mca{S}^\diamondsuit$ will be more complicated.
\end{eg}

\begin{eg}
Consider the pair of double covers $(\wt{G}, \wt{G}_0) = (\wt{\GSp}_{2r}^{(2)}, \wt{\Sp}_{2r}^{(2)})$ as from \S \ref{SSS:GSp}, see also \cite{Szp4-1, Szp5} and \cite[\S 5.5]{Ga7}. Regarding the dual groups, one has
$$  \wt{G}^{\vee} = \begin{cases}
\GSp_{2r}(\C), \text{  if $r$ is odd;} \\
{\rm PGSp}_{2r}(\C)  \times \GL_1(\C),  \text{  if $r$ is even.}  
\end{cases} 
$$
On the other hand, we have
$$\wt{G}_0^\vee = \Sp_{2r}, \ \wt{H}^\vee = {\rm PGSp}_{2r}.$$
Assume $r$ is even. It follows from \cite[Page 399]{Key2} that the only nontrivial $\mca{S}_\chi = R_\chi$ is $\set{1, w} \simeq \Z/2\Z$ which is generated by
$$w:= w_{\alpha_1} w_{\alpha_3} ... w_{\alpha_{r-1}},$$
with the character $\chi$ satisfying
$$\chi(\wt{h}_{\alpha_i}(\varpi^{n_{\alpha_i}})) = -1 \text{ for all } i=2k-1, 1\lest k \lest r/2.$$
For such $\chi$, one has $\Omega_{\chi}^{\rm un} = \set{\omega_{0, 0}, \omega_{0, 1}}$; also,
$$\mca{S}_\chi = \mca{S}^\diamondsuit = \set{1, w} \text{ and } \mca{S}_{\omega^\flat} = \set{1}$$
for every $\omega^\flat \in \Omega^{\rm un}_\chi$. 
In this case, for any $j\in \set{0, 1}$,
$$\Omega^{\rm un}_\chi(\omega_{0, j}) = \Omega^{\rm un}_\chi,$$
which is a torsor over $\mca{S}^\diamondsuit$.
It follows from Theorem \ref{T:uni-func} that
$$\dim_{\wt{G}_0}(I(\omega_{0, j}), \pi_\tau) = \val{ \msc{X}_{Q,n}^\mfr{c} } \cdot \angb{\tau}{ \C[\mca{S}_\chi]}_{\mca{S}_\chi} = \val{ \msc{X}_{Q,n}^\mfr{c} } = 1$$
for every $\tau \in \Irr(\mca{S}_\chi)$ and $j=0, 1$.
\end{eg}

\section{Unramified L-packets} \label{S:unLP}
In this section, we consider the internal structure of an unramified L-packets, the linear algebraic analogue has been investigated in \cite{Mis2}. The following two topics are pertaining to our discussion:
\begin{enumerate}
\item[(i)] the parametrization of elements inside an L-packet with respect to changing the hyperspecial maximal compact subgroups of $G$,
\item[(ii)] the variation of the Whittaker dimension of elements inside an unramified L-packet with respect to different orbits of the Whittaker datum. 
\end{enumerate}
Regarding (ii), some earlier work include \cite{Kuo1, Kuo2, Mis1}. In this section, we adopt some ideas from these works in the covering setting. The fact that the three groups $G$, its $n$-fold cover $\wt{G}$ and the principal endoscopic group $G_{Q,n}$ all play a role here necessitates a careful analysis of certain relations among them, which allows us to postulate some natural problems and prove certain results for covers. For this purpose, we first briefly recall the results for (split) linear algebraic groups.

\subsection{Unramified L-packets for linear groups} \label{S:un-L-lin}
Let $G$ be the $F$-rational points of a linear algebraic group $\mbf{G}$, and $T$ the $F$-points of $\mbf{T}$. For a character
$$\uchi: T \longrightarrow \C^\times$$
which is trivial on $T_0:=\mbf{T}(O)$, one obtains on the dual side an unramified parameter
$$\phi_{\uchi}: \WD_F \longrightarrow {}^L T$$
which is trivial on $I_F \subset W_F$ and $\SL_2(\C) \subset \WD_F$. We call $\phi_{\uchi}$ an unramified L-parameter valued in ${}^LT$. Post-composing $\phi_{\uchi}$ with the natural inclusion 
$${}^LT \longrightarrow {}^L G$$
gives a parameter which is still denoted by 
$$\phi_{\uchi}: W_F \longrightarrow {}^L G.$$
The local Langlands correspondence postulates that the L-packet $\mca{L}(\phi_{\uchi})$ associated with $\phi_{\uchi}$ consists exactly of the subquotients of the principal series $I(\uchi)$, which are $K$-unramified with respect to a hyperspecial maximal compact subgroup $K$ of $G$, see \cite[\S 10.4]{Bor}. Moreover, it is expected that there is a bijection
$$\mca{L}(\phi_{\uchi}) \longleftrightarrow \Irr(\mca{S}(\phi_{\uchi})),$$
where $\mca{S}(\phi_{\uchi})$ is the component group of the image of the parameter $\phi_{\uchi}$ in ${}^LG$.

If we conveniently index the packet as
$$\mca{L}(\phi_{\uchi}) = \set{\pi(\phi_{\uchi}, \rho): \rho \in \Irr(\mca{S}(\phi_{\uchi}))},$$
then as remarked each $\pi(\phi_{\uchi}, \rho)$ is $K$-unramified for some hyperspecial subgroup $K \subset G$. It is natural to ask how the indexing by $\rho$ varies with respect to changing the hyperspecial maximal compact subgroup $K$. Following \cite{Mis2}, we describe the answer to this question which embodies part of the internal structure of $\mca{L}(\phi_{\uchi})$.

Consider the root datum
$$(X, \Phi, \Delta; \ Y, \Phi^\vee, \Delta^\vee)$$
of the group $\mbf{G}$, and also recall the root lattice $X^{sc} = \Z[\Delta]$ and coroot lattice $Y^{sc}=\Z[\Delta^\vee]$. One has the standard affine Weyl group
$$W_{\rm a} = X^{sc} \rtimes W$$
acting on the vector space $X^{sc} \otimes_\Z \R$. On the other hand, there is also a natural action of the extended affine Weyl group
$$\tilde{W}_{\rm a} = X \rtimes W$$
on  $X \otimes_\Z \R$. Defining $\Gamma:=\tilde{W}_{\rm a}/W_{\rm a}$, one has a split short exact sequence
$$\begin{tikzcd}
W_{\rm a} \ar[r, hook] & \tilde{W}_{\rm a} \ar[r, two heads] & \Gamma
\end{tikzcd}$$
with a splitting given by
$$s(\Gamma) = \set{x\in \tilde{W}_{\rm a}: \  x\cdot C = C},$$
where $C \subset X\otimes \R$ is an alcove for the action of $\tilde{W}_{\rm a}$ on $X\otimes \R$.
That is, we have
$$\tilde{W}_{\rm a} = W_{\rm a} \rtimes \Gamma$$
with $$\Gamma \simeq X/X^{sc}.$$
Note that $Z(G) = \Hom(\Gamma, F^\times)$. For every lattice $L$, we write
$$L_\Q := L\otimes \Q$$
for the $\Q$-vector space. It is easy to see that
$$\Gamma^{\rm tor} = (X \cap X^{sc}_\Q)/X^{sc},$$
where $\Gamma^{\rm tor} \subset \Gamma$ denotes the torsion-subgroup. Denote by $Y_{ad}$ the cocharacter group of $\mbf{G}_{ad}$, the adjoint quotient group of $\mbf{G}$. Then $Y_{ad} \subset \Q[\Delta^\vee]$ is the lattice of coweights, which is $\Z$-dual to $\Z[\Delta]$. Consider
$$\widehat{\Gamma}^\dag:= {\rm Coker}(Y \longrightarrow Y_{ad}),$$
we see that there is an embedding
$$\widehat{\Gamma}^\dag \into \widehat{\Gamma}:=\Hom(\Gamma, \Q/\Z).$$
In fact, the composite
$$\begin{tikzcd}
\widehat{\Gamma}^\dag \ar[r, hook] & \widehat{\Gamma} \ar[r, two heads] & \widehat{\Gamma^{\rm tor}}
\end{tikzcd}$$
is an isomorphism, see \cite[Lemma 11]{Mis2}.

Consider the set
$$\mca{K}=\set{G_x: \ x \text{ is a hyperspecial point in } \msc{B}(G) }/_\sim$$
 of conjugacy classes of hyperspecial maximal compact subgroup of $G$, where $\msc{B}(G)$ is the Bruhat-Tits building associated with $\mbf{G}$. Here $\mca{K}$ is a torsor over $\widehat{\Gamma}^\dag$ with the action $y\cdot G_x, y\in \widehat{\Gamma}^\dag$ inherited from the conjugation action of $G_{ad}:=\mbf{G}_{ad}(F)$ on $G$. One has 
 $$y\cdot G_x = G_{y \cdot x} \in \mca{K},$$
 where $y\cdot x$ is the action of $G_{ad}$ on $\msc{B}(G)$, see \cite[\S 2.5]{Tits}.

On the other hand, there is also a natural map from $\widehat{\Gamma}^\dag$ to $\mca{S}(\phi_{\uchi})$ given as follows, by reducing to the case of unitary $\uchi$ first (see \cite[\S 10.3]{Mis2}). More precisely, denoting $\phi:=\phi_{\uchi}$ for simplicity, it gives a commuting pair 
$$\phi_o,\  \phi_+: W_F \longrightarrow {}^L T \subset {}^LG$$
 of parameters such that $\phi_o$ is tempered and 
$$\phi(a) = \phi_o(a) \cdot \phi_+(a)$$
for every $a\in W_F$. Let 
$$\uchi_+: T \longrightarrow \C^\times$$ be the character attached to $\phi_+$. Then the set
$$\Delta_M:=\set{\alpha\in \Delta: \ \val{\uchi_+(\alpha^\vee(\varpi))} = 1} \subset \Delta$$
gives a Levi subgroup $\mbf{M} \subset \mbf{G}$ such that ${\rm Im}(\phi) \subset M^\vee$ and
$$\mca{S}_{M^\vee}(\phi_o) = \mca{S}_{G^\vee}(\phi).$$
Moreover, it is shown (see \cite[Lemma 15]{Mis2}) that one has two embeddings
$$\begin{tikzcd}
\mca{S}_{M^\vee}(\phi_o) \ar[r, hook] & \Gamma_M^{\rm tor} \ar[r, hook] & \Gamma_G^{\rm tor}.
\end{tikzcd}$$
This shows in particular that the group $\mca{S}(\phi)$ is abelian. By applying $\Hom(-, \Q/\Z)$ we obtain a surjection (writing $\widehat{\Gamma}^\dag=\widehat{\Gamma}_G^\dag$  and omitting $G^\vee$ in $\mca{S}_{G^\vee}(\phi)$)
\begin{equation}
\begin{tikzcd}
f_\Gamma: \widehat{\Gamma}^\dag \ar[r, two heads] & \Irr(\mca{S}(\phi)).
\end{tikzcd}
\end{equation}

\begin{thm}[{\cite[Theorem 1]{Mis2}}] \label{T:Mish}
For every conjugacy class $K \in \mca{K}$ and every $y \in \widehat{\Gamma}^\dag$, the representation $\pi(\phi_{\uchi}, \rho) \in \mca{L}(\phi_{\uchi})$ is K-unramified if and only if $\pi(\phi_\chi, f_\Gamma(y) \otimes \rho)$ is $y\cdot K$-unramified. 
\end{thm}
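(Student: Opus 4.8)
\textbf{Proof proposal for Theorem \ref{T:Mish}.}
The plan is to reduce the statement to a concrete comparison between two actions on the set of $K$-unramified sub\-quotients of $I(\underline\chi)$: the action of $\widehat\Gamma^\dag$ on the conjugacy classes $\mca{K}$ of hyperspecial maximal compact subgroups (coming from the action of $G_{ad}$ on the building $\msc{B}(G)$), and the action of $\widehat\Gamma^\dag$ on $\Irr(\mca{S}(\phi_{\underline\chi}))$ via the twist $\rho\mapsto f_\Gamma(y)\otimes\rho$. First I would pass to the case where $\underline\chi$ is unitary, using the Langlands-quotient/Levi reduction already recalled in \S\ref{S:un-L-lin}: the chain $\mca{S}_{M^\vee}(\phi_o)\hookrightarrow\Gamma_M^{\rm tor}\hookrightarrow\Gamma_G^{\rm tor}$ shows that the packet structure, the surjection $f_\Gamma$, and the notion of $K$-unramifiedness for the full induced module $I(\underline\chi)$ are all inherited from the tempered datum $\phi_o$ on $M$ by normalized parabolic induction, and induction commutes with the relevant intertwining operators and with the $G_{ad}$-conjugation action on maximal compacts. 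So it is enough to treat the tempered case $\underline\chi=\underline\chi_o$, where $I(\underline\chi)$ is a unitary (hence completely reducible) unramified principal series.

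In the tempered case the key object is the algebra isomorphism $\C[R_{\underline\chi}]\simeq\End_G(I(\underline\chi))$, where $R_{\underline\chi}$ is the R-group, together with the identification $R_{\underline\chi}\simeq\mca{S}(\phi_{\underline\chi})$ (the split analogue of \eqref{E:R=S}, originally due to Keys). Under this identification the constituent $\pi(\phi_{\underline\chi},\rho)$ is the $\rho$-isotypic piece for the $R_{\underline\chi}$-action on $\Wh_\psi(I(\underline\chi))$ realized through the normalized intertwining operators $\mca{A}(w,i(\underline\chi))$. The next step is the crucial one: I would track how replacing the hyperspecial subgroup $K$ by $y\cdot K$ for $y\in\widehat\Gamma^\dag$, equivalently conjugating everything by a representative $g_y\in G_{ad}$ of $y$, changes the spherical vector. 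Conjugation by $g_y$ fixes $\underline\chi$ (since $\underline\chi$ is unramified and $G_{ad}$ normalizes $T$ modulo $T_0$ trivially on unramified characters), hence it gives an automorphism of $I(\underline\chi)$, i.e.\ an element of $\End_G(I(\underline\chi))^\times=\C[R_{\underline\chi}]^\times$; and one computes that this element acts on the $\rho$-isotypic component by the scalar $f_\Gamma(y)(\rho)$, precisely because the map $\widehat\Gamma^\dag\to\Irr(\mca{S}(\phi))$ is by construction the one describing the action of $G_{ad}/G$ on the R-group's Pontryagin dual (this is the content of \cite[Lemma 15]{Mis2} rephrased operator-theoretically). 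Consequently, the $K$-spherical vector of $\pi(\phi_{\underline\chi},\rho)$ is sent by $g_y$ to the $(y\cdot K)$-spherical vector of the constituent indexed by $f_\Gamma(y)\otimes\rho$, which is exactly the asserted equivalence.

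I expect the main obstacle to be the middle step: verifying that conjugation by $g_y$ really does implement the twist $f_\Gamma(y)$ on $\Irr(\mca{S}(\phi))$ with the \emph{correct} normalization, rather than up to some uncontrolled character. This requires being careful about (a) the choice of splitting $s_K$ / the identification of $\Wh_\psi$ with the right spherical Whittaker functional, (b) the fact that the normalized operators $\mca{A}(w,i(\underline\chi))$ and not the unnormalized $T(w,i(\underline\chi))$ are what realize $R_{\underline\chi}$, so that the $\gamma$-factors do not contribute spurious scalars, and (c) matching the cocycle describing the $G_{ad}$-action on maximal compacts (via \cite[\S2.5]{Tits}) with the cocycle defining $f_\Gamma$ from $\widehat\Gamma^\dag\twoheadrightarrow\widehat\Gamma^{\rm tor}_G$. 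Once these normalizations are pinned down, the Levi-induction reduction of the first paragraph makes the general case formal, and one concludes by invoking the classification of $K$-unramified constituents of $I(\underline\chi)$ (at most one for each $K\in\mca{K}$, and exactly the $\pi(\phi_{\underline\chi},\rho)$ as $K$ and $\rho$ vary compatibly), which is \cite[\S10.4]{Bor} together with the torsor property of $\mca{K}$ over $\widehat\Gamma^\dag$.
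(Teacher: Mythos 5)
You should first note that the paper itself contains no proof of Theorem \ref{T:Mish}: it is imported verbatim from \cite[Theorem 1]{Mis2} as linear-group background motivating Conjecture \ref{C:varK}, so there is no in-paper argument to measure your proposal against. Judged on its own, your outline follows the natural route, and it is in fact the same mechanism the authors use later when they verify the covering analogues in special cases (Propositions \ref{P:SL2-K}, \ref{P:SO3}, \ref{P:GSp-ee}): realize the passage from $K$ to $y\cdot K$ by conjugation by a representative $y(\varpi)\in T_{ad}$, transport the $y\cdot K$-spherical vector into a fixed model of $I(\uchi)$ via a translation isomorphism as in \eqref{Iso-2m}, and read off which character of $R_{\uchi}\simeq\mca{S}(\phi_{\uchi})$ the normalized intertwining operators act by on that vector.

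The genuine gap is exactly the step you flag as "the main obstacle": the claim that conjugation by $g_y$ implements on $\C[R_{\uchi}]\simeq{\rm End}_G(I(\uchi))$ precisely the twist by $f_\Gamma(y)$ is asserted ("one computes"), but this is where essentially all the content of the theorem lives, since the remaining ingredients (the torsor property of $\mca{K}$, uniqueness of the $K$-spherical constituent, the Levi/Langlands reduction to the tempered case) are formal. What has to be proved is an identity of the form $J_{g_y}\circ\mca{A}(w,i(\uchi))\circ J_{g_y}^{-1}=f_\Gamma(y)(w)\cdot\mca{A}(w,i(\uchi))$ for $w\in R_{\uchi}$, equivalently the explicit scalar by which $\mca{A}(w,\uchi)$ acts on the $y\cdot K$-spherical vector; this is an actual integral/cocycle computation in the style of Keys \cite{Key3}, of the kind carried out in the proofs of Propositions \ref{P:SL2-K} and \ref{P:GSp-ee}, where the translation produces a factor such as ${}^{w}\chi(\wt{h}_\alpha(\varpi^{m}))$ that yields the sign, and where the matching of this cocycle with the one defining $f_\Gamma$ (your points (a)--(c)) is the whole point rather than a normalization afterthought. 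Two smaller misstatements should also be repaired: conjugation by $g_y$ is not a $G$-endomorphism of $I(\uchi)$ — it intertwines $I(\uchi)$ with its $g_y$-twist, hence normalizes ${\rm End}_G(I(\uchi))$ rather than lying in it, and it permutes the isotypic components (sending the $\rho$-component to the $f_\Gamma(y)\otimes\rho$-component) instead of acting on a fixed component by a scalar; in particular the expression "$f_\Gamma(y)(\rho)$" is not meaningful. Moreover the representative of $y$ must be chosen in $T_{ad}$ (e.g.\ $y(\varpi)$) so that it normalizes $B$, fixes the unramified character $\uchi$, and carries a hyperspecial point to a hyperspecial point in the stated way; a general representative in $G_{ad}$ of the class need not do this. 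As written, then, your text is a correct plan whose decisive computation remains to be done.
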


If we choose $K \in \mca{K}$, and require that  $\pi(\phi_{\uchi}, \mbm{1}) \in {\rm JH}(I(\uchi))$ is the unique $K$-unramified constituent of $I(\chi)$, then the above theorem implies that $\pi(\phi_{\uchi}, \rho), \rho \in \Irr(\mca{S}(\phi_{\uchi}))$ is the unique $y\cdot K$-unramified constituent in $I(\uchi)$, where $y\in f_\Gamma^{-1}(\rho)$ is any lifting of $\rho$.

\subsection{Variation of $(K, s_K)$ for covers} \label{SS:varK}  

\subsubsection{Splittings over $K$} \label{SSS:spl-K}
Now we consider an $n$-fold cover $\wt{G}$ of $G$, and still denote by $\mca{K}$ the set of conjugacy classes of hyperspecial maximal compact subgroup of $G$.

\begin{lm} \label{L:K-spl}
With the assumption that $\gcd(n, p)=1$ and $\eta_n: Y^{sc} \to F^\times/F^{\times n}$ is trivial, the group $\wt{G}$ splits over every hyperspecial maximal compact subgroup $K \in \mca{K}$.
\end{lm}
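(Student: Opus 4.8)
The plan is to reduce the statement to two facts: first, that under the tameness hypothesis $\gcd(n,p)=1$ every hyperspecial maximal compact subgroup $K$ of $G$ admits \emph{some} splitting into the $\mathbf{K}_2$-extension $\wm{G}$, and second, that this splitting descends to the $\mu_n$-cover $\wt{G}$ once $\eta_n$ is trivial. For the first point, I would use the fact that over the ring of integers $O$, the Brylinski--Deligne extension $\wm{G}$ is classified by the pair $(Q,\mathcal E)$ (or $(D,\eta)$) and the restriction of $\mathbf{K}_2$ to the residue data; the obstruction to splitting $K=\mathbf G(O)$ lives in a cohomology group which, by the smooth base change / the structure theory of $\mathbf{K}_2$ over local rings (see \cite{BD}, and the exposition in \cite{We7, GG}), is controlled by $\mathbf{K}_2(O)$ modulo the tame symbol. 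When $p\nmid n$ the $n$-th power of the tame symbol on $\mathbf{K}_2(O)$ is trivial on units, so pushing out along $(-,-)_n$ kills this obstruction. I would phrase this via the standard fact that $\wt{G}$ restricted to $\mathbf G(O)$ is the pushout $(-,-)_n{}_* \big(\wm{G}|_{\mathbf G(O)}\big)$, and that $\mathbf G(O)$ lies in the ``integral'' locus where the commutator pairing and the section $\mathbf s$ from relation (C) are unramified.

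Concretely, the key step is to exhibit the splitting on generators. Using the generators $\mu_n$, $\{\wt e_\alpha(F)\}$, $\{y(F^\times)\}$ from \cite{BLS} and the relations (A)--(D) recalled in \S\ref{SS:cov-l}: the unipotent part $\mathbf U_\alpha(O)$ already splits canonically and $G$-equivariantly via $s_U$, so the content is in the torus part $\mathbf T(O)$ and the Weyl elements $\wt w_\alpha$. For $\mathbf T(O)$, relation (C) gives $\mathbf s(y_1(a))\mathbf s(y_2(b)) = \mathbf s(y_1(a)y_2(b))\cdot (a,b)_n^{D(y_1,y_2)}$, and for $a,b\in O^\times$ the tame Hilbert symbol $(a,b)_n=1$; hence $\mathbf s$ restricted to $\mathbf T(O)$ is already a homomorphism, giving $s_T\colon \mathbf T(O)\hookrightarrow \wt T$. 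For the relation $\wt h_\alpha(x)=\mathbf s(\alpha^\vee(x))\cdot(\eta(\alpha^\vee),x)_n^{Q(\alpha^\vee)}$ in (C), the triviality of $\eta_n$ forces $(\eta(\alpha^\vee),x)_n=1$ for $x\in O^\times$, so $\wt h_\alpha(O^\times)$ agrees with $\mathbf s(\alpha^\vee(O^\times))$; then one checks via (B)$'$ and (D) that the subgroup generated by $s_T(\mathbf T(O))$ together with $\wt e_\alpha(O)$ and the $\wt w_\alpha$ (which satisfy the Steinberg/braid relations modulo $\mu_n$, and the $\mu_n$-discrepancy again involves only tame symbols of units, hence vanishes) is a genuine-free lift of $\mathbf G(O)$, using the Bruhat/Steinberg presentation of $\mathbf G(O)$. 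This gives a splitting $s_{K_0}$ for the standard $K_0=\mathbf G(O)$.

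Finally, to pass from $K_0$ to an arbitrary $K\in\mathcal K$: any hyperspecial maximal compact subgroup is of the form $K = g K_0 g^{-1}$ for some $g\in G_{ad}=\mathbf G_{ad}(F)$ (by Bruhat--Tits theory, \cite{Tits}), so conjugating the splitting $s_{K_0}$ by any lift $\tilde g\in\wt G$ of $g$ (well-defined on the image since the conjugation action of $\wt G$ on itself factors through $G$ up to the center $\mu_n$, which acts trivially by conjugation) produces a splitting $s_K\colon K\hookrightarrow\wt G$. The main obstacle I anticipate is making the generator-by-generator verification in the second paragraph fully rigorous rather than invoking it as ``standard'': one must be careful that the $2$-cocycle defining $\wt G|_{K_0}$, when evaluated on pairs of elements of $\mathbf G(O)$, takes values in tame symbols of units and is therefore a coboundary, and the cleanest route is probably to cite the integral model of the Brylinski--Deligne extension (smoothness of $\wm{G}$ over $O$ away from $p$, as in \cite{BD}, \cite[\S2]{We7}) rather than to reprove it by hand. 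I would therefore structure the actual proof as: (i) reduce to $K_0=\mathbf G(O)$ by conjugation; (ii) invoke that $\wm{G}$ has a smooth $O$-model with $\wm{G}(O)\to\mathbf G(O)$ surjective, whose kernel is $\mathbf K_2(O)$; (iii) push out by $(-,-)_n$ and use $p\nmid n$ plus $\eta_n=\mathbf 1$ to see the resulting $\mu_n$-extension over $\mathbf G(O)$ is split.
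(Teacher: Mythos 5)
Your overall strategy --- split over the standard $K_\natural=\mbf{G}(O)$ via the integral model of the Brylinski--Deligne extension together with the tameness and $\eta_n=\mbm{1}$ hypotheses, then transport the splitting to an arbitrary hyperspecial $K$ by conjugation --- is essentially the paper's: the paper gives exactly this conjugation argument as its second proof, and as a first proof it applies the integral-model argument directly to any hyperspecial $K$ (writing $K=\mbf{G}_O(O)$ for a smooth $O$-group scheme and invoking \cite[Construction 12.11]{BD} and \cite[Proposition 4.1]{GG}, or \cite[Theorem 4.3]{We4}). Your steps (ii)--(iii), and the generator-level sketch (tame symbols of units vanish in relation (C), triviality of $\eta_n$ kills the $\eta$-twist), are sound, and your own suggestion to cite the integral model rather than redo the Steinberg-presentation bookkeeping is what the paper does.

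The genuine gap is in your reduction step (i). Distinct classes in $\mca{K}$ are in general \emph{not} $G$-conjugate: $\mca{K}$ is a torsor under $G_{ad}$-conjugation, and an element $g\in G_{ad}$ realizing the move between two classes need not lie in the image of $G\to G_{ad}$ (for instance the two classes of hyperspecial subgroups of $\Sp_{2r}$ are interchanged only by an element of $\GSp_{2r}$, such as $e_0(\varpi)$). For such $g$ there is no lift $\tilde g\in\wt{G}$ at all, so ``conjugating $s_{K_\natural}$ by any lift $\tilde g\in\wt{G}$ of $g$'' is not available; your parenthetical addresses well-definedness but presupposes existence of the lift. As written, the argument only yields splittings over the $G$-conjugates of $\mbf{G}(O)$, i.e.\ over a single class in $\mca{K}$. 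The fix is the Brylinski--Deligne fact, which the paper cites, that the conjugation action of $G_{ad}$ on $G$ extends to an action on $\wt{G}$; then $(t\cdot s)(t\cdot k):=t\cdot s(k)\cdot t^{-1}$ for $t\in G_{ad}$ is a genuine splitting over $t\cdot K_\natural$. Alternatively, you can avoid conjugation entirely by running your steps (ii)--(iii) for the smooth Bruhat--Tits model of an arbitrary hyperspecial $K$, which is the paper's first argument.
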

\begin{proof}
We presents two arguments. First, since $K$ is hyperspecial, one has $K=\mbf{G}_O(O)$ for a smooth group scheme $\mbf{G}_O$ over ${\rm Spec}(O)$. By \cite[Construction 12.11]{BD}, one knows that $\wt{K} \subset \wt{G}$ is the pull-back of a residual extension 
$$\begin{tikzcd}
\mu_n \ar[r, hook] & \wt{\mbf{G}_O(\kappa)} \ar[r, two heads] & \mbf{G}_O(\kappa).
\end{tikzcd}
$$
The proof of \cite[Proposition 4.1]{GG}, which was written for $\mbf{G}(O)$, also applies for such $K$ and gives that the above extension splits, if we assume the triviality of $\eta_n$. Thus, we have a splitting of $\wt{K}$ over $K$. Note that the existence of such a splitting also follow from \cite[Theorem 4.3]{We4}.

Alternatively, we note that $\wt{G}$ splits over $K_\natural=\mbf{G}(O)$ by \cite[Theorem 4.2]{GG}, and thus we let $s: K_\natural \into \wt{G}$ be a fixed splitting. The conjugation action of $G_{ad}$ on $G$ extends to $\wt{G}$ (see \cite{BD}), and the group $G_{ad}$ acts transitively on $\mca{K}$. We see that for $t \cdot K_\natural \in \mca{K}, t\in G_{ad}$, one has a splitting
$$t \cdot s:\  t \cdot K_\natural \into \wt{G}$$
given by 
$$(t\cdot s)(t \cdot k):= t \cdot s(k) \cdot t^{-1}, k\in K_\natural.$$
This is a well-defined splitting of $\wt{G}$ over the hyperspecial maximal compact subgroup $y\cdot K_\natural \subset G$.
\end{proof}

We impose the same assumption in the Lemma \ref{L:K-spl} throughout the remaining part of this section. For every $K\in \mca{K}$, the set
$${\rm Spl}(\wt{G}, K)$$ 
of splittings of $\wt{G}$ over $K$ is a torsor over $\Hom(K, \mu_n)$. Since $G= T\cdot G_\der$ and thus $K= (T\cap K) \cdot (G_\der \cap K)$, we see that every homomorphism $h\in \Hom(K, \mu_n)$ must be trivial on $K\cap G_\der$. Hence, the restriction map
$$\begin{tikzcd}
\Hom(K, \mu_n) \ar[r, hook] & \Hom(T \cap K, \mu_n)
\end{tikzcd}$$
is an injection, where $T\cap K = \mbf{T}(O)$. As the restriction of $h$ to $T_\der \subset T$ is also trivial, and the above injection factors through the inclusion 
$$\Hom(\mbf{T}(O)/\mbf{T}_\der(O), \mu_n) \into \Hom(\mbf{T}(O), \mu_n)$$
 we have
$$\begin{tikzcd}
\iota_K: \Hom(K, \mu_n) \ar[r, hook] & \Hom(\mbf{T}(O)/\mbf{T}_\der(O), \mu_n) \ar[r, hook] & \Hom(\mbf{T}(O), \mu_n).
\end{tikzcd}$$

Note that we do not consider the whole set $\Hom(K, \mu_n)$, but only a subset of ``admissible" elements constrained as follows. 
Let $(K, s_K)$ be such that $K\in \mca{K}$ and $s_K \in {\rm Spl}(\wt{G}, K)$. If a representation $\pi$ is $(K, s_K)$-unramified, then it is also $(K, h\otimes s_K)$-unramified if $\iota_K(h)$ is trivial on $\mbf{T}_{Q,n}(O) \subset T$, where by abuse of notation we use $\mbf{T}_{Q,n}(O)$ to denote the image of $\mbf{T}_{Q,n}(O)$ in $\mbf{T}(O)$ with respect to the natural map $T_{Q,n} \to T$. For this reason, we define
$$\Hom(K,\mu_n)^\natural:= \iota_K^{-1}\left( \Hom(\mbf{T}(O)/\mbf{T}_{Q,n}(O), \mu_n)\right).$$
We thus have natural injections (still denoted by $\iota_K$)
$$\begin{tikzcd}
\iota_K: \Hom(K, \mu_n)^\natural \ar[r, hook] & \Hom(\mbf{T}(O)/\mbf{T}_\der(O) \cdot \mbf{T}_{Q,n}(O), \mu_n) \ar[r, hook] & \Hom(\mbf{T}(O)/\mbf{T}_{Q,n}(O), \mu_n).
\end{tikzcd}$$

Since $\wt{A}:=C_{\wt{T}}(\mbf{T}(O)) = \mbf{T}(O) \cdot Z(\wt{T})$ is a maximal abelian subgroup of $\wt{T}$, the commutator map gives a perfect pairing
$$[-,-]: \wt{T}/\wt{A} \times \mbf{T}(O)/\mbf{T}_{Q,n}(O) \longrightarrow \mu_n,$$
which gives a group isomorphism
$$\Hom(\mbf{T}(O)/\mbf{T}_{Q,n}(O) , \mu_n) \longrightarrow Y/Y_{Q,n} \text{ given by } [y(\varpi), -] \leftrightarrow y.$$
By composing with $\iota_K$, we obtain a natural injection
\begin{equation}
\begin{tikzcd}
\iota_K^\natural: \Hom(K, \mu_n)^\natural \ar[r, hook] & Y/Y_{Q,n}. 
\end{tikzcd}
\end{equation}
We denote 
$$(Y/Y_{Q,n})_K^\natural:={\rm Im}(i_K^\natural) \subset Y/Y_{Q,n}$$
and also for $z\in (Y/Y_{Q,n})_K^\natural$, we write
$$f_z\in \Hom(K, \mu)^\natural$$
for the unique element such that $i_K^\natural(f_z) = z$.

If we define 
\begin{equation} \label{YKnat}
Y_K^\natural = \set{y\in Y: B_Q(y, z) \in n\Z \text{ for all } z\in Y_0},
\end{equation}
which clearly contains $Y_{Q,n}$, then one has the inclusions
$$(Y/Y_{Q,n})_K^\natural \into Y_K^\natural/Y_{Q,n} \into \msc{X}_{Q,n}.$$
The first inclusion is often an identity. It is also easy to see that there is a commutative diagram
\begin{equation} \label{CD:big}
\begin{tikzcd}
Y_{0,Q,n}/Y_0 \cap Y_{Q,n} \ar[r, hook] \ar[d, hook] & (Y/Y_{Q,n})_K^\natural \ar[r, "{p_\Gamma}"] \ar[d, hook] & \msc{X}_{Q,n}^\mfr{c} \ar[d, hook] \\
Y_0/Y_0 \cap Y_{Q,n} \ar[d, two heads] \ar[r, hook] & \msc{X}_{Q,n} \ar[r, two heads, "{p_\Gamma}"] & \msc{X}_{Q,n}^\Gamma  \\
\msc{X}_{0,Q,n} \ar[r, equal] & Y_0/Y_{0,Q,n},
\end{tikzcd}
\end{equation}
where the map $p_\Gamma: (Y/Y_{Q,n})_K^\natural \to \msc{X}_{Q,n}^\mfr{c}$ obtained from restriction is well-defined in view of \eqref{D:mfr-c} and \eqref{D:Xc}, and the middle horizontal and left vertical maps are short exact sequences.

\subsubsection{A relation between $\widehat{\Gamma_G^{\rm tor}}$ and $\widehat{\Gamma_{G_{Q,n}}^{\rm tor}}$}

For $G$ we have the following commutative diagram:
$$\begin{tikzcd}
X_{Q,n}/X & X_{Q,n}/X_G^{sc} \ar[l, two heads] & X/X_G^{sc} \ar[l, hook'] \ar[r, equal] & \Gamma_G \\
(X_{Q,n} \cap X_{G,\Q}^{sc})/(X\cap X_{G,\Q}^{sc}) \ar[u, hook] & (X_{Q,n}\cap X_{G,\Q}^{sc})/X_G^{sc}  \ar[l, two heads] \ar[u, hook]& (X\cap X_{G,\Q}^{sc}) / X_G^{sc} \ar[l, hook'] \ar[u, hook] \ar[r, equal] & \Gamma_G^{\rm tor} . \ar[u, hook]
\end{tikzcd}$$
Here $X_{Q,n}=\Hom(Y_{Q,n}, \Z)$ and we have used the subscript $G$ in $X_{G,\Q}^{sc}=X_G^{sc} \otimes \Q$ (for example) to emphasize the underlying group involved. Similarly, one has for a Levi subgroup $M$ the commutative diagram
$$\begin{tikzcd}
X_{Q,n}/X & X_{Q,n}/X_G^{sc} \ar[l, two heads] & X/X_M^{sc} \ar[l, hook'] \ar[r, equal] & \Gamma_M \\
(X_{Q,n} \cap X_{M,\Q}^{sc})/(X\cap X_{M,\Q}^{sc}) \ar[u, hook] & (X_{Q,n}\cap X_{M,\Q}^{sc})/X_M^{sc}  \ar[l, two heads] \ar[u, hook]& (X\cap X_{M,\Q}^{sc}) / X_M^{sc} \ar[l, hook'] \ar[u, hook] \ar[r, equal] & \Gamma_M^{\rm tor} . \ar[u, hook]
\end{tikzcd}$$
The above two diagrams are compatible, and for the bottom two exact sequences this is illustrated from the following diagram
$$\begin{tikzcd}
X_{Q,n}/X & & \\
(X_{Q,n} \cap X_{G,\Q}^{sc})/(X\cap X_{G,\Q}^{sc}) \ar[u, hook]  & (X_{Q,n}\cap X_{G,\Q}^{sc})/X_G^{sc}  \ar[l, two heads] &  \Gamma_G^{\rm tor} \ar[l, hook'] \\
(X_{Q,n} \cap X_{M,\Q}^{sc})/(X\cap X_{M,\Q}^{sc}) \ar[u, hook] & (X_{Q,n}\cap X_{M,\Q}^{sc})/X_M^{sc}  \ar[l, two heads] \ar[u, hook]& \Gamma_M^{\rm tor} \ar[l, hook'] \ar[u, hook].
\end{tikzcd}$$
Denote by 
$$P(L) \subset \Q[\Delta^\vee]$$
 the $\Z$-dual of the lattice $L \subset \Q[\Delta]$. Applying $\Hom(-, \Q/\Z)$ to the above diagram, we obtain
 $$\begin{tikzcd}
Y/Y_{Q,n} \ar[d, two heads] & & \\
P(X\cap X_{G,\Q}^{sc})/P(X_{Q,n} \cap X_{G,\Q}^{sc}) \ar[r, hook]  & P(X_G^{sc})/P(X_{Q,n}\cap X_{G,\Q}^{sc}) \ar[r, two heads] &  \widehat{\Gamma_G^{\rm tor}}.
\end{tikzcd}$$

For the principal endoscopic group $G_{Q,n}$ of $\wt{G}$, one has the group $\Gamma_{G_{Q,n}}$ and $\Gamma_{G_{Q,n}}^{\rm tor}$ defined analogously as for $\Gamma_G$ and $\Gamma_G^{sc}$. Thus,
$$\Gamma_{G_{Q,n}} = X_{Q,n}/X_{Q,n}^{sc} \text{ and } \Gamma_{G_{Q,n}}^{\rm tor} = (X_{Q,n} \cap X_{G,\Q}^{sc})/X_{Q,n}^{sc},$$
where the last equality follows from the fact that $X_{G_{Q,n}, \Q}^{sc} = X_{G,\Q}^{sc}$. This gives that
$$\widehat{\Gamma_{G_{Q,n}}^{\rm tor}} = P(X_{Q,n}^{sc}) /P(X_{Q,n}\cap X_{G,\Q}^{sc}).$$

The discussion in \S \ref{S:un-L-lin} applies to $G_{Q,n}$. In particular, the set of conjugacy classes of hyperspecial maximal compact subgroup of $G_{Q,n}$ is a torsor over $\widehat{\Gamma_{G_{Q,n}}^{\rm tor}}$.
Moreover, for every genuine character $\chi: Z(\wt{T}) \longrightarrow \C^\times$, which is $s_{Q,n}(\mbf{T}(O))$-unramified with respect to a splitting $s_{Q,n}$ of $\mbf{T}(O)$, there is the associated parameter
$$\phi_\chi: W_F \longrightarrow {}^L\wt{T} \into {}^L\wt{G}.$$
One has a surjection
$$\widehat{\Gamma_{G_{Q,n}}^{\rm tor}} \onto \Irr(\mca{S}(\phi_\chi)).$$

To relate the two group $\widehat{\Gamma_{G}^{\rm tor}}$ and $\widehat{\Gamma_{G_{Q,n}}^{\rm tor}}$, we note that there is a natural map
$$\varphi: P(X_G^{sc})/P(X_{Q,n}\cap X_{G,\Q}^{sc}) \longrightarrow \widehat{\Gamma_{G_{Q,n}}^{\rm tor}}$$
given as follows. Recall that $X_{Q,n}^{sc}$ is generated by $\set{\alpha/n_\alpha: \alpha\in \Delta}$, and thus there is a group homomorphism
$$\varphi_0: X_{Q,n}^{sc} \longrightarrow X_G^{sc}$$
uniquely determined by
$$\varphi_0(\alpha/n_\alpha) = \alpha \text{ for every } \alpha \in \Delta.$$
This gives a well-defined map $\varphi_0$ as in
\begin{equation} \label{D:r-01}
\begin{tikzcd}
(X_{Q,n} \cap X_{G,\Q}^{sc})/(X \cap X_{G,\Q}^{sc}) & (X_{Q,n} \cap X_{G,\Q}^{sc})/X_G^{sc} \ar[l, two heads] & (X \cap X_{G,\Q}^{sc})/X_G^{sc} \ar[l, hook']  \ar[d, equal]\\
\Gamma_{G_{Q,n}}^{\rm tor} \ar[r, equal] & (X_{Q,n} \cap X_{G,\Q}^{sc})/X_{Q,n}^{sc}. \ar[u, hook, "\varphi_0"] & \Gamma_G^{\rm tor}
\end{tikzcd}
\end{equation}
By taking applying $\Hom(-, \Q/\Z)$, we obtain the desired homomorphism $\varphi$. In fact, let
$$\set{\omega_{\alpha_i}} \subset \Q[\Delta^\vee]$$
be the set of fundamental coweights such that
$$\angb{\omega_{\alpha_i}}{\alpha_j} = \delta_{ij}$$
for every $\alpha_j\in \Delta$. We have
$$P(X_G^{sc}) = \Z[\omega_{\alpha}: \alpha\in \Delta].$$
Similarly, 
$$P(X_{Q,n}^{sc}) = \Z[n_\alpha \omega_\alpha: \alpha\in \Delta]$$
Then $\varphi$ is induced from the map 
$$P(X_G^{sc}) \longrightarrow P(X_{Q,n}^{sc})$$
given by 
$$\omega_\alpha \mapsto n_\alpha w_\alpha$$ for every $\alpha \in \Delta$. In particular, $\varphi$ is surjective. 

As a summary of the above discussion, we have the following diagram
\begin{equation} \label{CD:K1}
 \begin{tikzcd}
Y/Y_{Q,n} \ar[d, two heads, "h"] & & \\
P(X\cap X_{G,\Q}^{sc})/P(X_{Q,n} \cap X_{G,\Q}^{sc}) \ar[r, hook] \ar[d, two heads, "\varphi"]  & P(X_G^{sc})/P(X_{Q,n}\cap X_{G,\Q}^{sc}) \ar[d, two heads, "\varphi"] \ar[r, two heads] &  \widehat{\Gamma_G^{\rm tor}} \ar[d, two heads, "\varphi"] \\
H_\varphi \ar[r, hook]& P(X_{Q,n}^{sc}) /P(X_{Q,n}\cap X_{G,\Q}^{sc}) \ar[d, equal] \ar[r, two heads] & \widehat{\Gamma_{G_{Q,n}}^{\rm tor}}/H_\varphi \\
& \widehat{\Gamma_{G_{Q,n}}^{\rm tor}}.
\end{tikzcd}
\end{equation}
Here by definition $H_\varphi$ is the image of $P(X\cap X_{G,\Q}^{sc})/P(X_{Q,n} \cap X_{G,\Q}^{sc})$ via $\varphi$. The following compressed diagram will play a key role in our subsequent discussion:
\begin{equation} \label{CD:K2}
\begin{tikzcd}
\Hom(K, \mu_n)^\natural \ar[r, hook, "{\iota_K^\natural}"] & Y/Y_{Q,n} \ar[d, two heads, "{\varphi \circ h}"] & & \widehat{\Gamma_G^{\rm tor}} \ar[d, two heads, "\varphi"] \\
& H_\varphi \ar[r, hook]& \widehat{\Gamma_{G_{Q,n}}^{\rm tor}} \ar[d, two heads, "{q_\chi}"] \ar[r, two heads] & \widehat{\Gamma_{G_{Q,n}}^{\rm tor}}/H_\varphi  \ar[l, bend right=50, dashed, "{s_\varphi?}"']  \\
& & \Irr(\mca{S}(\phi_\chi)).
\end{tikzcd}
\end{equation}

\subsubsection{The packet $\mca{L}(\phi_\chi)$ versus unramifiedness}
For every $K\in \mca{K}$ and $y\in \widehat{\Gamma_G^{\rm tor}}$ the action $y\cdot K$ is realized by the conjugacy action of $y(\varpi) \in T_{\rm ad}$ on $K$. For every $f\in \Hom(K, \mu_n)$, by transport of structure, we define
$$y\cdot f: \ y\cdot K \longrightarrow \mu_n$$
by
$$(y\cdot f)(y(\varpi)\cdot k\cdot y(\varpi)^{-1}):= f(k).$$
Recall that $G_{ad}$ and thus $T_{ad}$ acts on $\wt{G}$. As used in Lemma \ref{L:K-spl}, if $s_K: K \into \wt{G}$ is a splitting, then we obtain a splitting 
$$y\cdot s_K: \ y\cdot K \longrightarrow \wt{G}$$
of $y\cdot K \in \mca{K}$ given by
$$(y\cdot s_K)(y(\varpi)\cdot k \cdot y(\varpi)^{-1}):= y(\varpi) \cdot s_K(k) \cdot y(\varpi)^{-1}.$$

\begin{dfn} \label{D:assoc}
For $K, K' \in \mca{K}$, two splittings $s_K: K\into \wt{G}$ and $s_{K'}: K'\into \wt{G}$ are called associated if 
they agree on $\mbf{T}_{Q,n}(O)$.
\end{dfn}
In particular, if $K=K'$, then $s_K$ and $s_K' = f\otimes s_K, f\in \Hom(K, \mu_n)$ are associated only if $f\in \Hom(K, \mu_n)^\natural$.

\begin{conj} \label{C:varK}
There is a canonical splitting $s_\varphi$ of $\widehat{\Gamma_{G_{Q,n}}^{\rm tor}}$ over $\widehat{\Gamma_{G_{Q,n}}^{\rm tor}}/H_\varphi$, as depicted in \eqref{CD:K2}. Moreover, there is a bijection
$$\begin{tikzcd}
\mca{L}(\phi_\chi) \ar[r, leftrightarrow] & \Irr(\mca{S}(\phi_\chi))
\end{tikzcd}$$
which we denote conveniently by $\pi(\phi_\chi, \rho) \leftrightarrow \rho$, such that
\begin{enumerate}
\item[--] for every $K\in \mca{K}$ and splitting $s_K: K \into \wt{G}$ with $s_K = s_{Q,n}$ on $\mbf{T}(O)$, 
\item[--] for every $f_z \in \Hom(K, \mu_n)^\natural$,
\item[--] for every $y \in \widehat{\Gamma_G^{\rm tor}}$ with $y\cdot s_K$ being associated with $s_K$,
\end{enumerate}
one has that $\pi(\phi_\chi, \rho)$ is $(K, s_K)$-unramified if and only if $\pi(\phi_\chi, \gamma_{z,y} \cdot \rho)$ is $(y\cdot K, (y\cdot f_z)\otimes (y\cdot s_K))$-unramified, where
$$\gamma_{z,y} = \big(\varphi\circ h \circ i_K^\natural(f_z)\big) \cdot \big( s_\varphi \circ \varphi(y) \big) \in \widehat{\Gamma_{G_{Q,n}}^{\rm tor}}.$$
\end{conj}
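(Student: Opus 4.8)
The plan is to reduce the statement to its linear-algebraic prototype for the principal endoscopic group $G_{Q,n}$, namely Theorem \ref{T:Mish}, by transporting the whole picture through the Satake isomorphism for genuine unramified representations and the diagrams \eqref{CD:big}, \eqref{CD:K1}, \eqref{CD:K2}. First I would fix a base pair $(K_\circ, s_\circ)$ with $K_\circ \in \mca{K}$ and $s_\circ$ restricting to $s_{Q,n}$ on $\mbf{T}(O)$, and recall that a $(K,s_K)$-unramified genuine principal series $I(\chi)$ with $s_K$ associated to $s_{Q,n}$ has parameter $\phi_\chi$ and Jordan--H\"older constituents independent of the admissible $(K,s_K)$; by the theory of unramified genuine representations and the principal endoscopic group (as in \cite{Mc1} together with the local coefficients matrix computations of \cite{GSS2, Szp6}) the constituents that are genuinely $(K,s_K)$-unramified are in canonical bijection with the $K_{Q,n}$-unramified constituents of the corresponding linear unramified principal series of $G_{Q,n}$, for a hyperspecial $K_{Q,n}$ determined by $s_K$. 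This identifies $\mca{L}(\phi_\chi)$ with the linear packet for $G_{Q,n}$, identifies $\mca{S}(\phi_\chi)$ with its component group (for unitary $\chi$ this is also the $R$-group, via \eqref{E:R=S}), and identifies the surjection $q_\chi\colon \widehat{\Gamma_{G_{Q,n}}^{\rm tor}}\onto\Irr(\mca{S}(\phi_\chi))$ of \eqref{CD:K2} with the map $f_\Gamma$ of \S \ref{S:un-L-lin} for $G_{Q,n}$. One then defines the bijection $\pi(\phi_\chi,\rho)\leftrightarrow\rho$ by declaring $\pi(\phi_\chi,\mbm{1})$ to be the constituent that is $(K_\circ,s_\circ)$-unramified and transporting the normalized $G_{Q,n}$-side bijection.

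For the canonical splitting $s_\varphi$ of $\widehat{\Gamma_{G_{Q,n}}^{\rm tor}}$ over $\widehat{\Gamma_{G_{Q,n}}^{\rm tor}}/H_\varphi$: the extension $H_\varphi\into\widehat{\Gamma_{G_{Q,n}}^{\rm tor}}\onto\widehat{\Gamma_{G_{Q,n}}^{\rm tor}}/H_\varphi$ of finite abelian groups from \eqref{CD:K1} is to be split by exhibiting a natural complement to $H_\varphi$. Since $\varphi$ is induced by the lattice isomorphism $P(X_G^{sc})\to P(X_{Q,n}^{sc})$ sending $\omega_\alpha$ to $n_\alpha\omega_\alpha$, the group $\widehat{\Gamma_{G_{Q,n}}^{\rm tor}}=P(X_{Q,n}^{sc})/P(X_{Q,n}\cap X_{G,\Q}^{sc})$ carries, via this isomorphism, the image of $\widehat{\Gamma_G^{\rm tor}}=P(X_G^{sc})/P(X\cap X_{G,\Q}^{sc})$ as $\widehat{\Gamma_{G_{Q,n}}^{\rm tor}}/H_\varphi$, and the scaling factors $n_\alpha$ are to be checked coprime to the exponent of $\Gamma_G^{\rm tor}$ in the situations at hand, so that a complement is forced and hence canonical. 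Concretely, for covers of $\Sp_{2r}$ one has $\Gamma_G^{\rm tor}\cong\Z/2\Z$ and $\widehat{\Gamma_{G_{Q,n}}^{\rm tor}}$ is read off the lattices in \S \ref{SSS:GSp}; for $\SO_3$ and for Kazhdan--Patterson covers of $\GL_r$ one reads everything off \S \ref{SSS:GL}, and in each case $s_\varphi$ is exhibited and verified to be a homomorphism by a short computation.

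The heart of the matter is the equivariance. On the $G_{Q,n}$-side Theorem \ref{T:Mish} says that conjugation by $y'\in\widehat{\Gamma_{G_{Q,n}}^{\rm tor}}$ carries the $K_{Q,n}$-unramified member indexed by $\rho$ to the $y'\cdot K_{Q,n}$-unramified member indexed by $q_\chi(y')\cdot\rho$. On the $\wt{G}$-side there are two independent moves. The first is conjugation by $y(\varpi)\in T_{ad}$: this is geometric, it sends $(K,s_K)$ to $(y\cdot K, y\cdot s_K)$, and under the isomorphism $\widehat{\Gamma_G^{\rm tor}}\cong\widehat{\Gamma_{G_{Q,n}}^{\rm tor}}/H_\varphi$ it corresponds on $G_{Q,n}$ to conjugation by $s_\varphi\circ\varphi(y)$. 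The second is the twist of the splitting $s_K\mapsto f_z\otimes s_K$ with $z\in(Y/Y_{Q,n})_K^\natural$; this is not induced by conjugation, but since $f_z\in\Hom(K,\mu_n)^\natural$ it preserves the isomorphism class of $I(\chi)$, and via the commutator pairing $\wt{T}/\wt{A}\times\mbf{T}(O)/\mbf{T}_{Q,n}(O)\to\mu_n$ together with the restriction map $p_\Gamma$ of \eqref{CD:big} its effect on the Satake parameter is a shift by the character of $Z(\wt{T})\cap K$ attached to $z$, which through \eqref{CD:K2} is conjugation on $G_{Q,n}$ by $\varphi\circ h\circ\iota_K^\natural(f_z)$. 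Both moves act on $\widehat{\Gamma_{G_{Q,n}}^{\rm tor}}$ by translation, so their composite is translation by $\gamma_{z,y}=(\varphi\circ h\circ\iota_K^\natural(f_z))\cdot(s_\varphi\circ\varphi(y))$; applying Theorem \ref{T:Mish} to the composite then yields that $\pi(\phi_\chi,\rho)$ is $(K,s_K)$-unramified if and only if $\pi(\phi_\chi,\gamma_{z,y}\cdot\rho)$ is $(y\cdot K,(y\cdot f_z)\otimes(y\cdot s_K))$-unramified.

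The hard part will be the second move: a twist of the splitting is not a geometric operation, so there is no formal reason it should act on the internal parametrization of the packet, and pinning down that its effect is exactly conjugation by $\varphi\circ h\circ\iota_K^\natural(f_z)$ requires a careful analysis of how the genuine Satake isomorphism --- equivalently the unramified Whittaker vector and the unramified local coefficients matrix --- depends on the chosen splitting, the phenomenon already isolated for $\wt{\GL}_2$ in \cite{GSS2, Szp6}. A secondary difficulty is proving the canonicity of $s_\varphi$ uniformly rather than case by case. And for the conjecture in full generality one would of course first need an established local Langlands correspondence for $\wt{G}$ beyond genuine principal series; this is why the verification is carried out only for covers of $\Sp_{2r}$, $\SO_3$, and Kazhdan--Patterson covers of $\GL_r$ in \S \ref{SS:varKW-eg}--\S \ref{S:GSp}.
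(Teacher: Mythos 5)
There is a genuine gap, and it sits exactly where you flag it yourself. The statement is a conjecture: the paper does not prove it in general, and its supporting evidence consists of direct verifications in families (covers of $\Sp_{2r}$, $\SO_3$, Kazhdan--Patterson covers of $\GL_r$, and $\GSp_{2r}$ in \S \ref{SS:varKW-eg} and \S \ref{S:GSp}), carried out by computing the eigenvalue of the normalized intertwining operator $\mca{A}(w,\chi)$ on the explicit normalized unramified vectors attached to each admissible pair $(K,s_K)$, $(K, f_z\otimes s_K)$, $(y\cdot K, y\cdot s_K)$, using the change-of-model isomorphism $\Phi_{\wt{t}}$ and the constant $c_{\sf gk}(w,\chi)$ (Propositions \ref{P:SL2-K}, \ref{P:SO3}, \ref{P:GSp-ee}). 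Your proposed reduction to Theorem \ref{T:Mish} for $G_{Q,n}$ replaces this by an asserted ``canonical bijection'' between the $(K,s_K)$-unramified constituents of $I(\chi)$ and the $K_{Q,n}$-unramified constituents of a linear unramified principal series of $G_{Q,n}$, with $K_{Q,n}$ ``determined by $s_K$.'' No such dictionary is established in the paper or in the references you cite; producing it, with the precise rule for how a twist $s_K\mapsto f_z\otimes s_K$ and a conjugation $K\mapsto y\cdot K$ translate into conjugacy classes of hyperspecials for $G_{Q,n}$, is essentially the content of the conjecture itself (together with an unramified parametrization $\mca{L}(\phi_\chi)\leftrightarrow\Irr(\mca{S}(\phi_\chi))$, which is also only postulated). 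In particular, the remark following the conjecture points out that the set of pairs $(K,s_K)$ is acted on freely but \emph{not} transitively by $\widehat{\Gamma_G^{\rm tor}}$, so the torsor structure underlying Theorem \ref{T:Mish} does not transport formally to the cover; your argument never addresses this.

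Concretely, two steps are missing rather than proved. First, the claim that twisting the splitting by $f_z$ shifts the internal parametrization of the packet by exactly $\varphi\circ h\circ\iota_K^\natural(f_z)$ is stated, acknowledged as ``the hard part,'' and then used without an argument; this is precisely the computation the paper performs by hand in its verified cases (e.g.\ the eigenvalue $-1$ of $\mca{A}(w_\alpha,\chi)$ on the $(K, s_K\otimes f_\xi)$-unramified vector in Proposition \ref{P:SO3}, or on the $(K, f_{me_r}\otimes s_K)$-unramified vector in Proposition \ref{P:GSp-ee}), and there is no known general mechanism replacing it. Second, the existence of the canonical splitting $s_\varphi$ is argued only by a coprimality heuristic ``in the situations at hand,'' i.e.\ case by case; but canonicity of $s_\varphi$ in general is part of what is being conjectured, so this cannot be taken as an input. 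As written, your outline is a plausible strategy for organizing the case verifications, but it does not constitute a proof, and it differs from the paper's actual route, which never passes through a Satake-type transfer to $G_{Q,n}$ and instead settles each verified case by explicit local computations with intertwining operators and unramified vectors.
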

Note that $y\cdot f_z \in \Hom(y\cdot K, \mu_n)^\natural$ and in fact $y\cdot f_z \otimes y\cdot s_K = y\cdot (f_z \otimes s_K)$.

\begin{rmk}
There is an action of $\widehat{\Gamma_G^{\rm tor}}$ on the set
$$\tilde{\mca{K}}= \set{(K, s_K): K\in \mca{K} \text{ and $s_K$ is a splitting of $K$ into } \wt{G}}/_\sim,$$
where $\sim$ means modulo the conjugation action of $G$. The action is still free, but will be not be transitive in general. This implies that  the notion of unramifiedness is more delicate for covering groups.
\end{rmk}

\subsubsection{Different models of $I(\chi)$}
 To facilitate later computations, we provide some further analysis of a $(K, s_K)$-unramified genuine principal series $I(\chi)$ of $\wt{G}$, especially on its different concrete realizations. Recall that $\chi: Z(\wt{T}) \to \C^\times$ is a central character and 
 $$\tchi: \wt{A} \longrightarrow \C^\times$$ an extension to a maximal abelian subgroup $\wt{A} \subset \wt{T}$. The isomorphism class $i(\chi)$ only depends on $\chi$, and not on the chosen $\wt{A}$ and the extension $\tchi$. The commutator pairing
 $$[-, -]: \wt{T}/Z(\wt{T}) \times \wt{T}/Z(\wt{T}) \longrightarrow \C^\times$$
 is non-degenerate, and thus every character of $\wt{T}/Z(\wt{T})$ is of the form $[-, t]$ for some $t\in \wt{T}$. In particular, if we fix $\wt{A}$ and consider two extensions $\tchi, \tchi'$ of $\chi$ to $\wt{A}$,  we get a character
 $$\tchi/\tchi': \wt{A}/Z(\wt{T}) \longrightarrow \C^\times,$$
then there exists $t\in \wt{T}$ such that
 $$\tchi' = {}^t\tchi,$$
 where 
 $${}^t\tchi (x) = \tchi(t^{-1} x t)=[x, t] \cdot \tchi(x).$$
 For any fixed lifting $\wt{t} \in \wt{T}$ of $t$, we have an isomorphism
 $$\phi_{\wt{t}}: \Ind_{\wt{A}}^{\wt{T}}(\tchi) \longrightarrow \Ind_{\wt{A}}^{\wt{T}} ({}^t\tchi)$$
given by
 $$\phi_{\wt{t}}(f)(x) = f(\wt{t}^{-1}x).$$
 Hence, there is an induced isomorphism
  \begin{equation} \label{Iso-2m}
  \Phi_{\wt{t}}: \Ind_{\wt{A}U}^{\wt{G}} (\tchi) \longrightarrow \Ind_{\wt{A}U}^{\wt{G}} ({}^t\tchi)
  \end{equation}
given by
$$ \Phi_{\wt{t}}(f)(g) = f(\wt{t}^{-1}g) \text{ for } g\in \wt{G},$$
 where we note that the two sides are different models for the same isomorphism class $I(\chi)$.

\subsection{Whittaker datum varied} \label{SS:psi-var}
We consider in this subsection how the Whittaker dimension of an element inside an unramified L-packet varies with respect to different Whittaker data. 

\subsubsection{Linear case} First we recall the results in the linear case, see \cite{Kuo1, Kuo2, GGP1, Kal4, Mis1}.
Let
$$\mfr{w}=(B, U, \psi)$$
be a fixed Whittaker datum such that $\psi: U \longrightarrow \C^\times$ is a non-degenerate character. The group $T=\mbf{T}(F)$ acts on the set of such non-degenerate characters $\psi$ and gives finite orbits. On the other hand, from the short exact sequence
$$Z(\mbf{G}) \into \mbf{G} \onto \mbf{G}_{ad}$$
we obtain by Galois cohomology
$$\begin{tikzcd}
Z(G) \ar[r] & G \ar[r] & G_{ad} \ar[r] & H^1(F, Z(\mbf{G})) \ar[r] & ... ,
\end{tikzcd}$$
where $G$ and $G_{ad}$ denote the $F$-points of $\mbf{G}$ and $\mbf{G}_{ad}$ respectively. One has an embedding
$$G_{ad}/G \into H^1(F, Z(\mbf{G}))$$
and also an isomorphism
$$T_{ad}/T \simeq G_{ad}/G,$$
which acts simply and transitively on the set of $T$-orbits of non-degenerate characters of $U$. We denote the action by ${}^t \mfr{w}$ for every $t\in T_{ad}$, or simply ${}^t \psi$, since the action of $t$ is only on $\psi$.

On the other hand, for every unramified $\uchi$, there is an embedding
$$\mca{S}(\phi_{\uchi}) \into X/X^{sc},$$
and by applying $\Hom(-, \Q/\Z)$ we obtain the surjection
$$Y_{ad}/Y = P(X^{sc})/Y \onto \Irr(\mca{S}(\phi_{\uchi})),$$
where $Y_{ad} = P(X^{sc})$ is the cocharacter lattice of $\mbf{G}_{ad}$. Note that the valuation map $v_F: F^\times \to \Z$ induces a well-defined map
$$v_F: T_{ad}/T \longrightarrow Y_{ad}/Y.$$
We thus have a surjection
\begin{equation} \label{TacS}
\begin{tikzcd}
T_{ad}/T \ar[r, two heads] &  \Irr(\mca{S}(\phi_{\uchi})).
\end{tikzcd}
\end{equation}
In fact, by using Tate duality, one can define a natural map (see \cite[\S 3.1]{Mis1}) 
$$H^1(F, Z(\mbf{G})) \longrightarrow \Irr(\mca{S}(\phi_{\uchi})),$$
the composite of which with the inclusion $T_{ad}/T \into H^1(F, Z(\mbf{G}))$ gives rise to \eqref{TacS}.

It is shown in \cite[Theorem 12]{Mis1} that for an unramified L-packet $\mca{L}(\phi_{\uchi})$ associated with an unramified $\uchi$, the representation $\pi(\phi_{\uchi}, \rho)$ is $\psi$-generic if and only if $\pi(\phi_{\uchi}, t\cdot \rho)$ is ${}^t \psi$-generic, for every $t\in T_{ad}$. Here $\psi$-genericity of $\pi \in \Irr(G)$ means that
$$\dim \Wh_\psi(\pi) = 1.$$ 
Moreover, $\pi(\phi_{\uchi}, t\cdot \rho)$ is the unique ${}^t \psi$-generic element in $\mca{L}(\phi_{\uchi})$ for the non-degenerate character ${}^t \psi$, see \cite{Var1, Ato1} and references therein.

\subsubsection{Speculations for coverings}
It is natural to consider the analogous question for covering groups and an unramified L-packet $\mca{L}(\phi_\chi)$, by viewing $U$ as a subgroup of $\wt{G}$ via the canonical splitting. However, similar to the earlier discussion in the preceding subsection, we encounter the problem that on the one hand the Whittaker datum has a natural action by $T_{ad}/T$, depending on $G$; on the other hand, the $\mca{S}$-group pertains to the dual group $\wt{G}^\vee$ and thus to the principal endoscopic group $G_{Q,n}$. It is therefore crucial to relate the two aspects.

We denote by $G_{Q,n,ad}$ and $T_{Q,n,ad}$ the $F$-rational points of the quotient adjoint group $\mbf{G}_{Q,n,ad}$ of $\mbf{G}_{Q,n}$ and the split torus $\mbf{T}_{Q,n,ad}$ inside $\mbf{G}_{Q,n,ad}$, respectively. We ask when is there a natural map
$$T_{ad}/T \longrightarrow T_{Q,n,ad}/T_{Q,n}?$$
In fact, for our purpose, it suffices to consider if there is a natural map
$$X_{Q,n}/X_{Q,n}^{sc} \longrightarrow X/X^{sc},$$
or equivalently, a map
$$Z(\mbf{G}) \longrightarrow Z(\mbf{G}_{Q,n}).$$

For simplicity of discussion, we assume that $\mbf{G}$ is a semsimple group through the rest of this subsection. In this case, one has from \eqref{D:r-01} the following diagram
\begin{equation}
\begin{tikzcd}
X_{Q,n}/X & X_{Q,n}/X^{sc} \ar[l, two heads] & X/X^{sc} \ar[l, hook', "j_0"'] \\
& X_{Q,n}/X_{Q,n}^{sc} \ar[u, hook, "\varphi_0"] \ar[ru, hook, dashed, "{\tilde{\varphi}_0}"']
\end{tikzcd}
\end{equation}
Taking $\varphi_0$ to be the natural candidate, the question is whether it factors through $j_0$. In the special case of $G$ being almost simple and simply-connected, the answer is as follows.

\begin{prop} \label{P:sc-OK}
Assume $\mbf{G}$ is almost simple and simply-connected. If 
$$(G, \wt{G}^\vee)\ne (\Spin_{2r+1}, \SO_{2r+1}),$$
where the equality holds exactly when $r$ is odd and $n_{\alpha} \equiv 2 \mod 4$ for every short coroot $\alpha^\vee$ of $\Spin_{2r+1}$, then the map $\varphi_0$ factors through $j_0$, i.e.,
$$\varphi_0(X_{Q,n}/X_{Q,n}^{sc}) \subset X/X^{sc}.$$
\end{prop}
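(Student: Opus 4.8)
\textbf{Proof strategy for Proposition \ref{P:sc-OK}.}

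The plan is to reduce the claim to a finite, type-by-type verification using the classification of almost simple simply-connected groups, since $\mbf{G}$ is semisimple almost simple. First I would record what the map $\varphi_0$ actually does: it is induced by the assignment $\alpha/n_\alpha \mapsto \alpha$ on the simple coroots generating $X_{Q,n}^{sc}$, which at the level of the ambient rational vector space $X^{sc}_\Q = X_{Q,n,\Q}^{sc}$ is multiplication by $n_\alpha$ on each simple-coroot direction. Concretely, for $\mbf{G}$ simply-connected $X/X^{sc}$ is the coweight lattice modulo the coroot lattice, i.e.\ the (finite) fundamental group $\Z(\mbf{G}^\vee)$ written additively, and the statement ``$\varphi_0$ factors through $j_0$'' is equivalent to: for every element $\omega$ of the fundamental coweight lattice $P(X^{sc})$, the image $\varphi_0(\omega) \in X_{Q,n}/X_{Q,n}^{sc}$ lands in the sublattice $X/X^{sc}$, i.e.\ $\varphi_0$ sends the fundamental-coweight generators of $X_{Q,n}^\vee$-side data back into the coweight lattice of $\mbf{G}$. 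So the heart of the matter is to check the membership $\varphi_0(\text{fundamental coweights of } X_{Q,n}^{sc}) \subset P(X^{sc}) = Y_{ad}$, equivalently whether the natural embedding $X_{Q,n}^{sc} \hookrightarrow X^{sc}$ (scaling by $n_\alpha$) extends to an embedding of the respective full character lattices $X_{Q,n} \hookrightarrow X$ compatibly with $j_0$.

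Next I would unwind this using the structure of $Y_{Q,n}$ and its relation to $Y$. Recall $Y_{Q,n}^{sc} = \Z[\alpha_{Q,n}^\vee]$ with $\alpha_{Q,n}^\vee = n_\alpha \alpha^\vee$, and dually $X_{Q,n}^{sc} = \Z[\alpha/n_\alpha]$. Since $\mbf{G}$ is simply-connected, $X = P(X^{sc})$ is generated by the fundamental coweights $\omega_\alpha$ (Borel's notation in the excerpt), and $X_{Q,n} = \Hom(Y_{Q,n},\Z)$. The map $\varphi_0$ on fundamental coweights is $\omega_\alpha \mapsto n_\alpha \omega_\alpha$. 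Thus factoring through $j_0$ amounts to the assertion
$$
n_\alpha\, \omega_\alpha \in X \quad \text{for all } \alpha \in \Delta,
$$
inside $X_\Q = X_{Q,n,\Q}$ — but of course $n_\alpha \omega_\alpha$ might not literally lie in $X$ before taking the quotient; what is really needed is that its class in $X_{Q,n}/X_{Q,n}^{sc}$ lies in $X/X^{sc}$. Rewriting $n_\alpha \omega_\alpha$ in terms of the integer combinations of simple coroots and the generators $\alpha/n_\alpha$, this becomes a congruence condition modulo $X_{Q,n}^{sc}$ that depends only on the Cartan type of $\mbf{G}$ and on the residues $n_\alpha \bmod (\text{order of } \omega_\alpha \text{ in } X/X^{sc})$. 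Because $Q$ is Weyl-invariant, $n_\alpha$ takes only one value on all long coroots and (when there are two root lengths) one value on all short coroots, so the verification collapses to at most a two-parameter check per Dynkin type $A_r, B_r, C_r, D_r, E_6, E_7, E_8, F_4, G_2$. I would carry this out case by case, using the explicit fundamental groups ($\Z/(r{+}1)$ for $A_r$, $\Z/2$ for $B_r, C_r, E_7$, $\Z/2\times\Z/2$ or $\Z/4$ for $D_r$, $\Z/3$ for $E_6$, trivial for $E_8, F_4, G_2$), and identify precisely when the congruence fails.

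I expect the \emph{main obstacle} — and the source of the stated exception — to be the type $B_r$ case, which is exactly $\mbf{G} = \Spin_{2r+1}$ with $\wt{G}^\vee$ possibly being $\SO_{2r+1}$ (or $\Spin_{2r+1}$, ${\rm PGSp}$-type, depending on parities, as analyzed in \S\ref{SSS:GSpin}... — cf.\ the dual group computation for $\GSpin_{2r+1}$). There, $X/X^{sc} = \Z/2$ is generated by the spin coweight, and $\varphi_0$ scales it by $n_\alpha$ for the short coroot; whether the resulting class survives in $X_{Q,n}/X_{Q,n}^{sc}$ hinges on $n_\alpha \bmod 4$ together with the parity of $r$ — precisely reproducing the condition ``$r$ odd and $n_\alpha \equiv 2 \bmod 4$''. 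I would verify this boundary case most carefully, matching it against the lattice computation of $Y_{0,Q,n}$ for $\GSpin_{2r+1}$ already given in the excerpt (the dichotomy $\wt{\Spin}_{2r+1}^\vee = {\rm PGSp}_{2r}$, $\SO_{2r+1}$, or $\Spin_{2r+1}$ according to $n$ odd, $4\mid(2r-n)$, or $4\nmid(2r-n)$), which is the same arithmetic wearing a different hat. All other types should either have trivial fundamental group (nothing to check) or have the scaling automatically land correctly because the relevant $n_\alpha$ is coprime to the order of the fundamental group or because there is only one root length so $n_\alpha$ is uniform; these I would dispatch quickly.
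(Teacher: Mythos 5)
Your overall strategy coincides with the paper's: reduce the factorization of $\varphi_0$ through $j_0$ to an explicit lattice-membership statement, exploit that $n_\alpha$ is constant on each root length, verify type by type (simply-laced, $F_4$, $G_2$ being immediate, $C_r$ and $B_r$ being the nontrivial cases), and locate the unique failure in type $B_r$ via the same arithmetic as the $\wt{\Spin}_{2r+1}^\vee$ dichotomy. Concretely, the paper shows $\varphi_0(x)\in X$ for $x=\sum_\alpha c_\alpha\alpha\in X_{Q,n}$ by testing integrality of $\angb{\varphi_0(x)}{y}$ against all $y\in Y$, using $n_\alpha y\in Y_{Q,n}$ whenever $n_\alpha$ is constant on $\Delta$ (all simply-laced types, the relevant $C_r$ case, and the $B_r$ case with $\wt{G}^\vee={\rm PGSp}_{2r}$), and in the remaining $B_r$ case exhibits the explicit element $\alpha_r/n_{\alpha_1}\in X_{Q,n}$ with $\varphi_0(\alpha_r/n_{\alpha_1})=\alpha_r/2\notin X$.

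One intermediate step of yours would fail if taken literally: the assertion that $\varphi_0$ acts on fundamental (co)weights by $\omega_\alpha\mapsto n_\alpha\omega_\alpha$, and the derived criterion ``$n_\alpha\omega_\alpha\in X$''. That diagonal formula describes the transpose map $\varphi$ on the coweight lattice $P(X^{sc})$ (as recorded in the paper), not $\varphi_0$ itself: $\varphi_0$ is the linear map $\sum_\alpha c_\alpha\alpha\mapsto\sum_\alpha c_\alpha n_\alpha\alpha$, which is \emph{not} diagonal in the fundamental-weight basis precisely when $n_\alpha$ is non-constant, i.e.\ in the interesting types $B_r$ and $C_r$; moreover the elements you propose to test live in $P(X^{sc})=Y_{ad}$ rather than in the domain $X_{Q,n}$, and if one instead reads $\omega_\alpha$ as fundamental weights, then for simply-connected $\mbf{G}$ they already lie in $X$, so the criterion becomes vacuous and would detect no exception at all. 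The correct test is on generators of $X_{Q,n}$ itself (in the $B_r$ case with $\wt{G}^\vee=\SO_{2r+1}$ this is the element $\alpha_r/n_{\alpha_1}$, where $n_{\alpha_r}=n_{\alpha_1}/2$), checked via integral pairing against $Y$ as above; with that correction your case-by-case plan goes through and reproduces the paper's proof, including the stated exceptional case.
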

\begin{proof}
For every $x = \sum_{\alpha\in \Delta} c_\alpha \alpha \in X_{Q,n}, c_\alpha \in \Q$, we want to show that 
$$\varphi_0(x) = \sum_{\alpha \in \Delta} c_\alpha n_\alpha \alpha$$ 
lies in $X$, or equivalently,
$$\angb{\varphi_0(x)}{y} = \sum_{\alpha\in \Delta} c_\alpha n_\alpha \angb{\alpha}{y}$$
lies in $\Z$ for every $y\in Y$. Our argument is essentially a case by case discussion. First, assume $\mbf{G}$ is simply-laced. Then $\alpha \mapsto n_\alpha$ is constant on $\Delta$ and we have
$$Y_{Q,n}^{sc} = n_\alpha \cdot Y \subset Y_{Q,n}.$$
In this case, 
$$\angb{\varphi_0(x)}{y} = \angb{x}{n_\alpha y} \in \Z,$$
since $x\in X_{Q,n}$ and $n_\alpha y\in Y_{Q,n}$. Thus, we are left with $\mbf{G}$ being of type $B_r, C_r$. Note that though $F_4$ and $G_2$ have two lengths for the simple roots, we always have $X_{Q,n}=X_{Q,n}^{sc}$ for such groups, and thus there is nothing to check. The two nontrivial cases will be for $B_r, C_r$ and when $[X_{Q,n}: X_{Q,n}^{sc}]=2$, i.e., when the dual group $\wt{G}^\vee$ of $\wt{G}$ is of adjoint type. In this case $Y_{Q,n} = Y_{Q,n}^{sc}$.

First, if $\mbf{G} = \Sp_{2r}$ and $\wt{G}^\vee$ is adjoint, then $G_{Q,n} = \Sp_{2r}$ as well. In particular, $G_{Q,n}$ has the same root system type as $G$, and thus $\alpha \mapsto n_\alpha$ is constant on $\Delta$. In this case, $$Y_{Q,n}^{sc} = n_\alpha \cdot Y^{sc} = Y.$$ 
Hence $\angb{\varphi_0(x)}{y} \in \Z$ for all $x\in X_{Q,n}$ and $y\in Y$, and this gives $\varphi_0(x) \in X$.

Second, if $\mbf{G} = \Spin_{2r+1}$ and $\wt{G}^\vee$ is adjoint, then there are exactly two possibilities as follows. Assume that $\alpha_r^\vee$ is unique long simple coroot in $\Delta^\vee$, and $\alpha_i^\vee \in \Delta^\vee$ is short for $1\lest i \lest r-1$.
$$\wt{G}^\vee =
\begin{cases}
{\rm PGSp}_{2r} & \text{ if $n_{\alpha_1}$ is odd},\\
\SO_{2r+1} & \text{ if $r$ is odd and $n_{\alpha_1} \equiv 2 \mod 4$}.
\end{cases}
$$
Now if $\wt{G}^\vee ={\rm PGSp}_{2r}$, then $G_{Q,n}=\Spin_{2r+1}$ and thus $\alpha \mapsto n_\alpha$ is constant on $\Delta$. In this case, the same argument as above gives that $\varphi_0(x)\in X$ for every $x\in X_{Q,n}$. Now if $\wt{G}^\vee=\SO_{2r+1}$, then we get $G_{Q,n}=\Sp_{2r}$ with 
$$X_{Q,n}^{sc} = \Z[\alpha_1/n_{\alpha_1}] \oplus \Z[\alpha_2/n_{\alpha_2}] \oplus ... \oplus \Z[\alpha_r/n_{\alpha_r}]$$
where $n_{\alpha_r}= n_{\alpha_1}/2$, and also
$$X_{Q,n}^{sc} = \Z[\alpha_1/n_{\alpha_1}] \oplus \Z[\alpha_2/n_{\alpha_1}] \oplus ... \oplus \Z[\alpha_r/n_{\alpha_1}]$$
Thus, $\varphi_0(\alpha_r/n_{\alpha_1}) = \alpha_r/2 \notin X$. This completes the proof.
\end{proof}

To proceed with our discussion, we assume $\varphi_0$ factors through $j_0$ and thus one has a well-defined map
$$\tilde{\varphi}_0: X_{Q,n}/X_{Q,n}^{sc} \longrightarrow X/X_G^{sc}.$$
Note that since we assumed $G$ is semisimple, one has $\Gamma_G = \Gamma^{\rm tor}$, and similarly for $G_{Q,n}$. For every unramified genuine character $\chi: Z(\wt{T}) \to \C^\times$, this gives a commutative diagram
\begin{equation} \label{E:com1}
\begin{tikzcd}
\widehat{\Gamma_G} \ar[r, equal] & P(X^{sc})/Y \ar[d, two heads, "\varphi"] & T_{ad}/T \ar[l, two heads, "{v_F}"'] \ar[r] & H^1(F, Z(\mbf{G})) \ar[d] \\
\widehat{\Gamma_{G_{Q,n}}} \ar[r, equal] & P(X_{Q,n}^{sc})/Y_{Q,n} \ar[rd, two heads, "{q_\chi}"'] & T_{Q, n, ad}/T_{Q,n} \ar[d, two heads] \ar[l, two heads, "{v_F}"'] \ar[r] & H^1(F, Z(\mbf{G}_{Q,n})) \ar[ld] \\
& & \Irr(\mca{S}(\phi_\chi)),
\end{tikzcd}
\end{equation}
where $\varphi$ is induced from $\varphi_0$ and the right vertical map arises from $\tilde{\varphi}_0^*: Z(\mbf{G}) \longrightarrow Z(\mbf{G}_{Q,n})$ and the functoriality of Galois cohomology. By our assumption on $\varphi_0$, the map 
$$\varphi: P(X^{sc})/Y_{Q,n} \onto P(X_{Q,n}^{sc})/Y_{Q,n}$$
 as in \eqref{CD:K1} factors through the quotient $\varphi: P(X^{sc})/Y_{Q,n} \onto \widehat{\Gamma_G}$, giving rise to the map (still denoted by) $\varphi$ in \eqref{E:com1}. For convenience, we denote by
 $$\zeta_\chi:=q_\chi \circ \varphi \circ v_F: T_{ad}/T \onto \Irr(\mca{S}(\phi_\chi))$$
 for the composed surjection.

\begin{conj} \label{C:varW}
Assume that $\tilde{\varphi}_0: X_{Q,n}/X_{Q,n}^{sc} \longrightarrow X/X_G^{sc}$ is well-defined. Then for every Whittaker data $\mfr{w}=(B, U, \psi)$ and $(K, s_K)$-unramified genuine principal series $I(\chi)$, one has
$$\dim \Wh_{\psi}(\pi(\phi_\chi, \rho)) =\dim \Wh_{{}^t \psi}(\pi(\phi_\chi, \zeta_\chi(t) \otimes \rho))$$
for every $\pi(\phi_\chi, \rho) \in \mca{L}(\phi_\chi)$ and $t\in T_{ad}/T$.
\end{conj}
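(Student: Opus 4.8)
\textbf{Proof proposal for Conjecture \ref{C:varW}.}

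The plan is to reduce the statement to its linear analogue (Mishra's theorem, \cite[Theorem 12]{Mis1}, recalled above) applied to the principal endoscopic group $G_{Q,n}$, and then to transport the comparison back to $\wt{G}$ via the explicit models of the genuine principal series introduced in \S \ref{SSS:spl-K}. First I would fix the Whittaker datum $\mfr{w}=(B,U,\psi)$ and an element $t\in T_{ad}/T$, and recall that by the functoriality of the local Langlands correspondence for covering tori (\S \ref{SS:func-ps}) the parameter $\phi_\chi$ factors through ${}^L\wt{T}\hookrightarrow {}^L\wt{G}$ and that $\mca{S}(\phi_\chi)=\mca{S}_{\phi_\chi}$ is the same component group that governs an unramified (almost trivial) principal series $I(\chi_{Q,n})$ of $G_{Q,n}$, since $B_Q$ restricted to $Y_{Q,n}$ vanishes modulo $n$. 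Thus $\mca{L}(\phi_\chi)$ and the linear packet $\mca{L}(\phi_{\chi_{Q,n}})$ of $G_{Q,n}$ are both indexed by $\Irr(\mca{S}(\phi_\chi))$, and the surjection $\zeta_\chi = q_\chi\circ\varphi\circ v_F$ in \eqref{E:com1} is, by construction, the composite of the action $T_{ad}/T\to T_{Q,n,ad}/T_{Q,n}$ coming from $\tilde\varphi_0$ with the map $T_{Q,n,ad}/T_{Q,n}\twoheadrightarrow\Irr(\mca{S}(\phi_{\chi_{Q,n}}))$ appearing in Mishra's setup (diagram \eqref{TacS} applied to $G_{Q,n}$). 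Mishra's theorem then gives, for the linear group $G_{Q,n}$,
\begin{equation} \label{E:mish-Qn}
\dim \Wh_{\psi_{Q,n}}(\pi_{Q,n}(\phi_{\chi_{Q,n}},\rho)) = \dim \Wh_{{}^t\psi_{Q,n}}(\pi_{Q,n}(\phi_{\chi_{Q,n}},\zeta_\chi(t)\otimes\rho))
\end{equation}
for every $\rho$, where ${}^t\psi_{Q,n}$ is the character of the unipotent radical of $G_{Q,n}$ obtained by transport of the action of $t$ under $\tilde\varphi_0$.

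Next I would carry out the transport between $\wt{G}$ and $G_{Q,n}$ at the level of Whittaker spaces. The key point is that the twisted Jacquet functor $V\mapsto V_{U,\psi}$ depends only on the character $\psi:U\to\C^\times$ of the (canonically split) unipotent radical, and is exact; hence $\dim\Wh_\psi(\pi)$ is computed from the Jacquet module data. For a $(K,s_K)$-unramified genuine principal series $I(\chi)$, the results of \cite{GSS2} (used already throughout \S \ref{SS:uni-ps} and \S \ref{SS:reg-ps}) give a parametrization of $\Wh_\psi(I(\chi))$ by $\msc{X}_{Q,n}$ together with the action of $\mca{S}(\phi_\chi)$ via the normalized intertwining operators $\mca{A}(w,\chi)^*$; the induced permutation-type representation $\sigma^{\rm Wh}$ of $\mca{S}(\phi_\chi)$ is \emph{the same} as the one attached to $I(\chi_{Q,n})$ on $G_{Q,n}$, because the Plancherel measures and the $\gamma$-factors $\gamma(w,\chi)$ entering $\mca{A}(w,\chi)$ depend only on $\chi|_{\wt{T}_{Q,n}^{sc}}$, which is an unramified character matching the one defining $\chi_{Q,n}$ (this is exactly the mechanism used in the proof of Proposition \ref{P:Rod} and in \cite[\S 4]{Ga7}). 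Therefore $\dim\Wh_\psi(\pi(\phi_\chi,\rho)) = \angb{\rho}{\sigma^{\rm Wh}}_{\mca{S}(\phi_\chi)}$ equals $\dim\Wh_{\psi_{Q,n}}(\pi_{Q,n}(\phi_{\chi_{Q,n}},\rho))$, and likewise for the $t$-twisted data. Combining this identification with \eqref{E:mish-Qn} yields the desired equality
\begin{equation} \label{E:goal-var}
\dim\Wh_{\psi}(\pi(\phi_\chi,\rho)) = \dim\Wh_{{}^t\psi}(\pi(\phi_\chi,\zeta_\chi(t)\otimes\rho)).
\end{equation}

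The step I expect to be the main obstacle is the compatibility of the \emph{action of $t\in T_{ad}/T$ on the Whittaker datum} between $\wt{G}$ and $G_{Q,n}$ — that is, proving that the isomorphism $\iota:\Wh_{\psi_M}(\sigma)\to\Wh_{\psi_{M'}}({}^w\sigma)$ and the operators $\mca{A}(w,\chi)^*$ intertwine correctly with conjugation by $t(\varpi)\in T_{ad}$, so that twisting the character $\psi$ by $t$ on the $\wt{G}$-side corresponds under $\tilde\varphi_0$ precisely to twisting the parameter-side indexing by $\zeta_\chi(t)$. Concretely, one must show that the isomorphism $\Phi_{\wt t}$ of \eqref{Iso-2m} (or rather its analogue realizing the $T_{ad}$-action) carries $\Wh_\psi(I(\chi))$ to $\Wh_{{}^t\psi}(I(\chi))$ equivariantly for $\mca{S}(\phi_\chi)$, up to the twist $\zeta_\chi(t)$; this requires tracking the cocycle describing the $T_{ad}$-action on $\wt{G}$ (from \cite{BD}) against the Hilbert-symbol commutator pairing $[-,-]$ on $\wt T$, and checking it matches the Kottwitz/Tate-duality computation packaged in \eqref{E:com1}. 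Once this equivariance is established, the case-analysis verifications in \S \ref{SS:varKW-eg} and \S \ref{S:GSp} serve both as consistency checks and as the basis for the unconditional assertions for covers of $\Sp_{2r}$, $\SO_3$, and Kazhdan--Patterson covers of $\GL_r$, where $\tilde\varphi_0$ is known to be well-defined by Proposition \ref{P:sc-OK} and its extensions.
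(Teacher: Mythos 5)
This statement is a conjecture in the paper: no general proof is given, and the authors only verify it case by case (covers of $\Sp_{2r}$ and $\SO_3$ in \S \ref{SS:varKW-eg}, Kazhdan--Patterson covers of $\GL_r$, and covers of $\GSp_{2r}$ in \S \ref{S:GSp}), in each case by explicitly computing scattering matrices for both conductors $O_F$ and $\mfr{p}_F$ (Proposition \ref{P:tau-p}, Propositions \ref{P:SL2-W} and \ref{P:SO3}, Theorems \ref{T:GSp-odd}, \ref{T:nev-rod}, \ref{T:GSp-ee}). Your proposed reduction to Mishra's theorem on $G_{Q,n}$ has a genuine gap at the transport step. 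The claimed identification $\dim \Wh_\psi(\pi(\phi_\chi,\rho))=\dim \Wh_{\psi_{Q,n}}(\pi_{Q,n}(\phi_{\chi_{Q,n}},\rho))$ cannot hold: the right-hand side is $0$ or $1$ by uniqueness of Whittaker models for the linear group $G_{Q,n}$, while the left-hand side is typically large — $(n\pm 1)/2$ for covers of $\SL_2$ (Proposition \ref{P:SL2-W}), $m^r$ and $2m^r\cdot n_{(r)}$ in \S \ref{S:GSp}, and $\dim\Wh_\psi(I(\chi))=\val{\msc{X}_{Q,n}}$ in general. Correspondingly, the representation $\sigma^{\rm Wh}$ of $\mca{S}(\phi_\chi)$ on $\Wh_\psi(I(\chi))$ is \emph{not} determined by $\chi|_{\wt{T}_{Q,n}^{sc}}$: only coarse invariants such as the Plancherel measure and the trace and determinant of a local coefficients matrix are; the full operator $\mca{A}(w,\chi)^*$ depends on $\psi$ through Gauss sums and the Weil index (see Proposition \ref{P:tau-p} and the proof of Theorem \ref{T:GLSL2}), and precisely this $\psi$-dependence of the eigenvalue multiplicities is the content of the conjecture. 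So the "same $\sigma^{\rm Wh}$" claim, which is the pivot of your argument, fails.

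Because Mishra's theorem only addresses genericity (multiplicity one versus zero), no statement about $G_{Q,n}$ can control how a higher-dimensional Whittaker space distributes among the members of $\mca{L}(\phi_\chi)$; even if the equivariance of $\Phi_{\wt t}$ that you single out as the main obstacle were established, equation \eqref{E:mish-Qn} compares $0/1$-valued quantities and cannot yield the equality of covering Whittaker dimensions. The role of $G_{Q,n}$ in the paper is only to define the indexing map $\zeta_\chi$ via \eqref{E:com1}; the actual dimension comparison must be done on the cover, which is why the authors leave the statement as a conjecture and prove it in examples by direct computation of $\sigma^{{\rm Wh}_\psi}$ and $\sigma^{{\rm Wh}_{{}^t\psi}}$ from the two scattering matrices. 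A viable general strategy would have to produce such a comparison intrinsically on $\wt{G}$ (for instance, an explicit tame local coefficients matrix for arbitrary conductor, as the authors suggest in their remark (ii) of \S 1.2), not a reduction to the linear principal endoscopic group.
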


\begin{rmk} \label{R:ano-1}
It seems to us that the map $\varphi_0: X_{Q,n}/X_{Q,n}^{sc} \to X_{Q,n}/X^{sc}$ considered here is the most natural one, though its image may not be in $X/X^{sc}$, as shown in Proposition \ref{P:sc-OK} even for $G$ almost simple and simply-connected. On the one hand, one could definitely consider any other ``natural" map $\varphi_0': X_{Q,n}/X_{Q,n}^{sc} \to X_{Q,n}/X^{sc}$ whose image lies in $X/X^{sc}$, for example, $x \mapsto n x$. However, such a map may not be injective and thus the induced map 
$$\varphi': P(X^{sc})/Y \to P(X_{Q,n}^{sc})/Y_{Q,n}$$
may not be surjective. It is expected that the analogue of Conjecture \ref{C:varW} may not hold for such $\varphi'$. Indeed, if $\varphi_0'$ is the trivial map, then the action of $t\in T_{ad}/T$ on $\Irr(\mca{S}(\phi_\chi))$ via $\varphi'$ is trivial; however, $\dim \Wh_{{}^t \psi}(\pi(\phi_\chi, \rho))$ may not equal to $\dim \Wh_{\psi}(\pi(\phi_\chi, \rho))$. On the other hand, we expect that a cover $\wt{G}$ such that $G_{Q,n}$ has simply-connected derived group but of a different root system type is quite unusual and exhibits many interesting properties. The pair 
$$(G, \wt{G}^\vee) = (\Spin_{2r+1}, \SO_{2r+1})$$
 in Proposition \ref{P:sc-OK} is such an example. Another example is 
 $$(G, \wt{G}^\vee) = (\GSp_{2r}, {\rm PGSp}_{2r} \times \GL_1)$$
  for even fold cover of $\GSp_{2r}$ with $r$ even, see \S  \ref{SSS:GSp-ev}. For this latter example, we will have a more detailed analysis in the last section \S \ref{S:GSp} of this paper.
\end{rmk}

\subsubsection{Scattering matrix for $\mfr{f}(\psi) = \mfr{p}_F$}
To facilitate some computations later, we recall briefly the scattering matrix associated with $T(w, i(\chi))$ when
$$\mfr{f}(\psi) = \mfr{p}_F.$$
We use freely the notations in \cite{GSS2} and those in earlier sections of this paper. There is a square matrix
$$[\tau(w, \chi, \gamma, \gamma')]_{\gamma, \gamma'\in \wt{T}/\wt{A}}$$
of size $\val{Y/Y_{Q,n}}$ representing the operator
$$T(w, \chi; r_w^{\rm un})^*: \Wh_\psi(I({}^w\chi)) \longrightarrow \Wh_\psi(I(\chi))$$
such that
$$T(w, \chi; r_w^{\rm un})^*(\lambda_{\gamma}^{{}^w\chi}) = \sum_{\gamma'\in \wt{T}/\wt{A}} \tau(w, \chi, \gamma, \gamma') \cdot \lambda_{\gamma'}^{\chi}.$$
In particular, one has a local scattering matrix 
$$\mca{S}_\mfr{R}(w, \chi):=[\tau(w, \chi, \s_{y_1}, \s_y)]_{y_1, y\in \mfr{R}}$$
indexed by $\mfr{R}$ for every ordered set $\mfr{R} \subset Y$ of representatives $Y/Y_{Q,n}$. Some immediate properties are:

\begin{enumerate}
\item[$\bullet$] For $w\in W$ and $\wt{z}, \wt{z}'\in \wt{A}$, the identity holds:
$$\tau(w, \chi, \gamma \cdot \wt{z}, \gamma' \cdot \wt{z}')=({}^w \chi)^{-1}(\wt{z}) \cdot \tau(w, \chi, \gamma, \gamma') \cdot \chi(\wt{z}').$$
\item[$\bullet$] For $w_1, w_2 \in W$ such that $l(w_2w_1)=l(w_2) + l(w_1)$, one has
$$\tau(w_2w_1, \chi, \gamma, \gamma')=\sum_{\gamma''\in \wt{T}/\wt{A}} \tau(w_2, {}^{w_1}\chi, \gamma, \gamma'') \cdot \tau(w_1, \chi, \gamma'', \gamma'),$$
which is referred to as the cocycle relation.
\end{enumerate}
To explicate the matrix for a simple reflection $w=w_\alpha$ associated with $\alpha\in \Delta$, we fix the Haar measure $\mu$ of $F$ such that $\mu(O_F)=1$; thus, $\mu(O_F^\times)=1-1/q$. The Gauss sum is defined by
$$G_\psi(a, b)=\int_{O^\times_F} (u, \varpi)_n^a \cdot \psi(\varpi^b u) \mu(u), \quad a, b\in \Z.$$
It satisfies (compared with \cite[\S 3.6]{GSS2}, noting that $\psi$ has conductor $\mfr{p}_F$) 
\begin{equation*}
G_\psi(a, b)=
\begin{cases}
0 & \text{ if } b<0, \\
1-1/q & \text{ if } n| a, b\gest 1,\\
0 &\text{ if } n\nmid a, b\gest 1, \\
-1/q &\text{ if } n|a, b=0,\\
G_\psi(a, 0) \text{ with } |G_\psi(a,0)|=q^{-1/2} &\text{ if } n\nmid a, b=0.
\end{cases}
\end{equation*}
One has $\overline{G_\psi(a, b)}=(-1, \varpi)_n^a \cdot G_\psi(-a, b)$. For every $k\in \Z$, we write
$$\mathbf{g}_{\psi}(k):=G_{\psi}(k, 0).$$


\begin{prop} \label{P:tau-p}
Suppose $\gamma=\s_{y_1}$ is represented by $y_1$ and $\gamma'=\s_y$ by $y$. Then we can write 
$$\tau(w_\alpha, \chi, \gamma, \gamma')=\tau^1(w_\alpha, \chi, \gamma, \gamma') + \tau^2(w_\alpha, \chi, \gamma, \gamma')$$ with the following properties:
\begin{enumerate}
\item[$\bullet$] $\tau^i(w_\alpha, \chi,\gamma \cdot \wt{z}, \gamma' \cdot \wt{z}')=({}^{w_\alpha} \chi)^{-1}(\wt{z}) \cdot \tau^i(w_\alpha, \chi, \gamma, \gamma') \cdot \chi(\wt{z}') \text{ for every } \wt{z}, \wt{z}'\in \wt{A}$,
\item[$\bullet$] $\tau^1(w_\alpha, \chi, \gamma, \gamma')=0$  unless  $y_1 \equiv y \mod Y_{Q,n}$,
\item[$\bullet$] $\tau^2(w_\alpha, \chi, \gamma, \gamma')=0$   unless $y_1 \equiv w_\alpha(y) \mod Y_{Q,n}$.
\end{enumerate}
Moreover, 
\begin{enumerate}
\item[$\bullet$] if $y_1= y$, then 
$$\tau^1(w_\alpha, \chi, \gamma, \gamma')=(1-q^{-1}) \frac{\chi (\wt{h}_\alpha(\varpi^{n_\alpha}))^{k_{y,\alpha}}}{1-\chi (\wt{h}_\alpha(\varpi^{n_\alpha}))}, \text{ where } k_{y,\alpha}=\ceil{\frac{1+\angb{y}{\alpha}}{n_\alpha}};$$
\item[$\bullet$] if $y_1=w_\alpha(y)$, then
$$\tau^2(w_\alpha, \chi, \gamma, \gamma') = (-1, \varpi)_n^{\angb{y}{\alpha}\cdot D(y,\alpha^\vee)} \cdot \g(\angb{y}{\alpha}Q(\alpha^\vee)).$$
\end{enumerate}
\end{prop}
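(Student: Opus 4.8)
\textbf{Proof plan for Proposition \ref{P:tau-p}.}

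The strategy is to reduce the computation of $\tau(w_\alpha,\chi,\gamma,\gamma')$ to an explicit integral over a rank-one subgroup, following the standard approach to scattering matrices for genuine principal series as developed in \cite{Szp6, GSS2}, but now carefully keeping track of the conductor $\mfr{f}(\psi)=\mfr{p}_F$. First I would recall that the Whittaker functionals $\lambda_\gamma^\chi$ on $I(\chi)$ are indexed by $\gamma\in\wt{T}/\wt{A}$, and that by the definition of the induced map $T(w_\alpha,\chi;r_w^{\rm un})^*$ the entry $\tau(w_\alpha,\chi,\gamma,\gamma')$ is obtained by pairing $T(w_\alpha,i(\chi))$ applied to a suitable flat section against $\lambda_{\gamma'}^\chi$, which unwinds (via the Jacquet--Langlands/Casselman--Shalika style computation) to an integral of the form $\int_{F} (\cdots)\,\psi(x)\,dx$ over the one-parameter unipotent subgroup $\wt{e}_{-\alpha}(F)$, with the integrand governed by the cocycle of $\wt{T}$ and the character $\tchi$. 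The decomposition $\tau=\tau^1+\tau^2$ reflects the standard split of the $F$-integral into the part over $O_F$ (the ``residual'' term, contributing $\tau^1$) and the part over $F-O_F$ (the ``Gauss sum'' term, contributing $\tau^2$); this split is exactly the one appearing in the $\mfr{f}(\psi)=O_F$ case of \cite{GSS2}, and the content here is to redo it with the Gauss sums $G_\psi(a,b)$ normalized to conductor $\mfr{p}_F$ as recorded just before the proposition.

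The equivariance property $\tau^i(w_\alpha,\chi,\gamma\wt z,\gamma'\wt z')=({}^{w_\alpha}\chi)^{-1}(\wt z)\cdot\tau^i(w_\alpha,\chi,\gamma,\gamma')\cdot\chi(\wt z')$ for $\wt z,\wt z'\in\wt A$ follows formally from the intertwining property of $T(w_\alpha,i(\chi))$ together with the transformation law of $\lambda_\gamma^\chi$ under $\wt A$, applied separately to each of the two pieces of the integral; I would verify this first since it lets me reduce to representatives $\gamma=\s_{y_1}$, $\gamma'=\s_y$ with $y_1,y$ ranging over a set of representatives of $Y/Y_{Q,n}$. The support statements --- $\tau^1=0$ unless $y_1\equiv y\ \mathrm{mod}\ Y_{Q,n}$ and $\tau^2=0$ unless $y_1\equiv w_\alpha(y)\ \mathrm{mod}\ Y_{Q,n}$ --- come from analyzing how $\wt{e}_{-\alpha}(x)$ conjugates $\s_y$: for $x\in O_F$ one stays in $\wt A\cdot\s_y$ (giving the first congruence), whereas for $x\notin O_F$ the Iwasawa decomposition of $\wt{e}_{-\alpha}(x)$ produces a torus part in the class $\s_{w_\alpha(y)}$ (giving the second). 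Then the explicit formulas: for $y_1=y$, the integral over $O_F$ telescopes as a geometric series in $\chi(\wt h_\alpha(\varpi^{n_\alpha}))$ with the exponent $k_{y,\alpha}=\ceil{(1+\angb{y}{\alpha})/n_\alpha}$ coming from the valuation at which $\psi$ starts to oscillate (shifted by one because $\mfr{f}(\psi)=\mfr{p}_F$), yielding $(1-q^{-1})\chi(\wt h_\alpha(\varpi^{n_\alpha}))^{k_{y,\alpha}}/(1-\chi(\wt h_\alpha(\varpi^{n_\alpha})))$; for $y_1=w_\alpha(y)$, evaluating the leading term of the Iwasawa decomposition produces a single Gauss sum $\g(\angb{y}{\alpha}Q(\alpha^\vee))$ decorated by the Hilbert-symbol cocycle factor $(-1,\varpi)_n^{\angb{y}{\alpha}D(y,\alpha^\vee)}$ arising from $\wt w_\alpha(x)$ versus $\s_{w_\alpha(y)}$.

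The main obstacle I anticipate is bookkeeping the cocycle: one must track the discrepancy between the section $\s$ of $\wt T$ over $T$ (which satisfies the bilinear relation in (C) of \S\ref{SS:cov-l}) and the Whittaker-compatible elements $\wt w_\alpha(x)$, $\wt h_\alpha(x)$, since this is precisely what produces the Hilbert-symbol prefactor $(-1,\varpi)_n^{\angb{y}{\alpha}D(y,\alpha^\vee)}$ and the shift by $D(y,\alpha^\vee)$ rather than $Q(\alpha^\vee)$ or $B_Q$. A secondary subtlety is the conductor shift: every place where \cite{GSS2} uses $G_\psi(a,b)$ with $\mfr{f}(\psi)=O_F$, the corresponding computation here picks up a $\varpi$-twist in the argument of $\psi$, which is what changes $\ceil{\angb{y}{\alpha}/n_\alpha}$-type exponents into $\ceil{(1+\angb{y}{\alpha})/n_\alpha}$ and forces the case analysis $b<0$, $b\ge 1$, $b=0$ in the Gauss sum table. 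I would handle both by first doing the $n=1$ (linear) rank-one case to fix conventions, then inserting the cover cocycle, exactly paralleling \cite[\S 3.6--\S 3.7]{GSS2} but with the conductor changed throughout.
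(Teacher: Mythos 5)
Your proposal is correct and is essentially the paper's own route: the paper proves Proposition \ref{P:tau-p} simply by invoking the rank-one scattering computation of \cite{Mc2} (as refined in \cite{Ga2}) and observing that it applies mutatis mutandis once $\mfr{f}(\psi)=\mfr{p}_F$, which is exactly the unipotent-integral, cocycle-bookkeeping, conductor-shifted Gauss-sum computation you outline in parallel with \cite{GSS2}. The only cosmetic caveat is that the $\tau^1$/$\tau^2$ split is governed by which coset of $Y/Y_{Q,n}$ each valuation shell of the integral lands in (the two congruence conditions $y_1\equiv y$ and $y_1\equiv w_\alpha(y)$), rather than literally by $O_F$ versus $F-O_F$, but this presentational point does not affect your plan.
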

\begin{proof}
The computation in \cite{Mc2} (with refinement given in \cite{Ga2}) applies here mutatis mutantis, by noting that $\psi$ has conductor $\mfr{p}_F$ instead of $O_F$ as in the references mentioned. We omit the details of the computation.
\end{proof}

\subsection{Some simple examples} \label{SS:varKW-eg}
We illustrate on the previous discussion, especially on Conjectures \ref{C:varK} and Conjecture \ref{C:varW} by considering some simple examples. These include covers of  $\SL_2, \SO_3$ and $\GL_r$. We focus only on unitary $(K, s_K)$-unramified principal series of these groups.

\subsubsection{Covers of $\SL_2$}
Let $\wt{\SL}_2$ be the $n$-fold cover of $\SL_2$ associated with 
$$Q(\alpha^\vee) = -1.$$
Let $K=\SL_2(O)$ and 
$$K'=e \cdot K = e(\varpi) \cdot K\cdot e(\varpi)^{-1},$$
where $e:=\alpha^\vee/2 \in Y_{ad}$ is a generator for the cocharacter lattice for $\PGL_2$. Here $e \in \widehat{\Gamma_G^{\rm tor}}$ is the nontrivial element and 
$$\mca{K} =\set{K, K'}.$$
Consider the two unique splittings $s_K$ and $s_{K'}$ of $K$ and $K'$ respectively, one has 
\begin{equation} \label{E:2K}
s_K' =[e(\varpi), -] \otimes s_K
\end{equation}
on $\mbf{T}(O)$, where 
\begin{equation} \label{E:SL-ad-cm}
[e(\varpi), \alpha^\vee(a^k)]= (\varpi, a)_n^{-k}
\end{equation}
for all $a\in F^\times$ and $k\in \Z$, see \cite[\S 4.13]{BD}.  There are two cases depending on the parity of $n$. 

First, if $n$ is odd, then it is easy to see
$$Y_{Q,n}= Y_{Q,n}^{sc}=\Z[n\alpha^\vee],\  X_{Q,n}=\Z[\alpha/2n], \ X_{Q,n}^{sc}=\Z[\alpha/n].$$
This immediately gives that the diagram in \eqref{CD:K1} becomes
$$ \begin{tikzcd}
\Z[\alpha^\vee]/\Z[n\alpha^\vee] \ar[d, "\simeq"] & & \\
\Z[\alpha^\vee]/\Z[n\alpha^\vee] \ar[r, hook] \ar[d, two heads, "\varphi"]  & \Z[\alpha^\vee/2]/\Z[n\alpha^\vee] \ar[d, two heads, "\varphi"] \ar[r, two heads] & \Z[\alpha^\vee/2]/\Z[\alpha^\vee] \ar[d, two heads, "\varphi"] \ar[r, equal] &   \widehat{\Gamma_G^{\rm tor}} \\
\set{0} \ar[r]& \Z[n\alpha^\vee/2]/\Z[n\alpha^\vee] \ar[d, equal] \ar[r, "\simeq"] & \Z[n\alpha^\vee/2]/\Z[n\alpha^\vee] \\
& \widehat{\Gamma_{G_{Q,n}}^{\rm tor}} \ar[r, two heads] & \Irr(\mca{S}(\phi_\chi)),
\end{tikzcd}$$
where $\varphi(x) = n\cdot x$ for middle vertical map. The existence of the splitting $s_\varphi$ is clear.  As mentioned, we concentrate on the case where $\chi$ is unramified and unitary. 

If $n|k$, or equivalently $k\alpha^\vee \in Y_{Q,n}$ and thus $k\alpha^\vee(a)\in Z(\wt{T})$, then 
$$[e(\varpi), \alpha^\vee(a^k)]=1.$$
This shows that $s_K$ and $s_K'$ agree on $\mbf{T}_{Q,n}(O)$, and thus are associated in the sense of Definition \ref{D:assoc}. It follows that in this case $I(\chi)$ is both $(K, s_K)$ and $(K', s_{K'})$-unramified. 

If $I(\chi)$ is unitary unramified and irreducible, then 
$$\mca{L}(\phi_\chi) = \set{I(\chi)}.$$
It is easy to see that Conjecture \ref{C:varK} holds. On the other hand, if $I(\chi)$ is a unitary $(K, s_K)$-unramified and reducible, then
it occurs exactly when 
$$\chi_\alpha:=\chi(\wt{h}_\alpha(\varpi^{n_\alpha})) = -1$$
and thus $R_\chi = W$. In particular, ${}^{w_\alpha} \chi = \chi$.  Note that we have an isomorphism $R_\chi \simeq \mca{S}(\phi_\chi)$ (see \cite{Ga7}) and thus we always make such an identification. In this case,
$$\mca{L}(\phi_\chi) = {\rm JH}(I(\chi)) = \set{\pi(\phi_\chi, \mbm{1}), \pi(\phi_\chi, \varepsilon_W)},$$
where $\varepsilon_W$ is the nontrivial character of $\mca{S}(\phi_\chi)$.

\begin{prop} \label{P:SL2-K}
There is exactly one $\pi(\phi_\chi, \rho)$ which is $(K, s_K)$-unramified, while the other is $(K', s_{K'})$-unramified. In particular, this verified Conjecture \ref{C:varK}.
\end{prop}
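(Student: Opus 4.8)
The plan is to exhibit two different concrete realizations of $I(\chi)$ — one ``adapted'' to the pair $(K, s_K)$ and the other to $(K', s_{K'})$ — and to track which constituent $\pi(\phi_\chi, \rho)$ carries the spherical vector in each. Since $\wt{\SL}_2$ has trivial center-to-torus defect issues here (the relevant lattices are cyclic), the key input is the explicit relation \eqref{E:2K}, namely $s_{K'} = [e(\varpi), -] \otimes s_K$ on $\mbf{T}(O)$, together with the commutator formula \eqref{E:SL-ad-cm}. First I would fix a maximal abelian $\wt{A} = \mbf{T}(O) \cdot Z(\wt{T})$ and the extension $\tchi$ of $\chi$ trivial on $\mbf{T}(O)$, so that $I(\chi) = \Ind_{\wt{A}U}^{\wt{G}}(\tchi)$ is the $(K, s_K)$-model with a distinguished $(K, s_K)$-fixed vector $\phi_0$. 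Then I would use the isomorphism $\Phi_{\wt{t}}$ of \eqref{Iso-2m} with $\wt{t} = e(\varpi)$ (more precisely, a lift to $\wt{G}$ of $e(\varpi) \in T_{ad}$ acting by conjugation — here one uses that $T_{ad}$ acts on $\wt{G}$) to transport the $(K', s_{K'})$-model into the $(K, s_K)$-model: the $(K', s_{K'})$-fixed vector gets carried to a vector $\phi_1$ which differs from $\phi_0$ by the character $[e(\varpi), -]$ on $\mbf{T}(O)$, i.e. $\phi_1$ is $(K, s_K)$-semi-invariant of the ``twisted'' type.

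Next I would identify both $\phi_0$ and $\phi_1$ inside the Whittaker model, or equivalently compute the action of the intertwining operator $\mca{A}(w_\alpha, i(\chi))$ (the generator of $\C[R_\chi] \simeq \End_{\wt{G}}(I(\chi))$) on each. Since ${}^{w_\alpha}\chi = \chi$, both vectors lie in $I(\chi)$ itself, and $\mca{A}(w_\alpha, i(\chi))$ acts by a scalar on each irreducible constituent: by $+1$ on $\pi(\phi_\chi, \mbm 1)$ and by $-1$ on $\pi(\phi_\chi, \varepsilon_W)$. The point is then to show that $\mca{A}(w_\alpha, i(\chi))\phi_0 = \phi_0$ (so $\phi_0 \in \pi(\phi_\chi, \mbm 1)$) while $\mca{A}(w_\alpha, i(\chi))\phi_1 = -\phi_1$ (so $\phi_1 \in \pi(\phi_\chi, \varepsilon_W)$), or the reverse. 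This is a rank-one computation: using the scattering matrix description of Proposition \ref{P:tau-p} (adapted to $\mfr{f}(\psi)=O_F$, as in \cite{GSS2}), the operator on the two-dimensional relevant subspace $\Wh_\psi(I(\chi))$ is governed by a Gauss sum $\mathbf{g}_\psi(\angb{y}{\alpha} Q(\alpha^\vee))$ and the rank-one factor $\tau^1$, and the two spherical-type vectors $\phi_0, \phi_1$ correspond to the two cosets $y \equiv 0$ and $y \equiv w_\alpha(0) = $ a nontrivial shift in $Y/Y_{Q,n}$; the self-intertwining eigenvalue is exactly the sign by which $\mca{A}(w_\alpha)$ swaps these, which is $\pm 1$ determined by $(-1,\varpi)_n$ and $\mathbf{g}_\psi$. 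One then labels $\pi(\phi_\chi, \mbm 1)$ to be the $(K, s_K)$-unramified one; the identity \eqref{E:2K} forces $\pi(\phi_\chi, \varepsilon_W)$ to be the $(K', s_{K'})$-unramified one.

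Finally, to conclude that this verifies Conjecture \ref{C:varK}, I would check the numerology: with $n$ odd, $\widehat{\Gamma_G^{\rm tor}} \simeq \Z/2$, $H_\varphi = \set{0}$, $\widehat{\Gamma_{G_{Q,n}}^{\rm tor}} \simeq \Z/2$, and $\Irr(\mca{S}(\phi_\chi)) \simeq \Z/2$, so the composite $\varphi \circ h$ sends the class $e$ of $e(\varpi)$ to the nontrivial element of $\widehat{\Gamma_{G_{Q,n}}^{\rm tor}}$, and the dashed splitting $s_\varphi$ is the identity since $\widehat{\Gamma_{G_{Q,n}}^{\rm tor}}/H_\varphi = \widehat{\Gamma_{G_{Q,n}}^{\rm tor}}$; one then reads off that $\gamma_{0, e} = s_\varphi \circ \varphi(e)$ is the nontrivial element $\varepsilon_W \in \Irr(\mca{S}(\phi_\chi))$, so ``$\pi(\phi_\chi, \rho)$ is $(K, s_K)$-unramified $\iff$ $\pi(\phi_\chi, \varepsilon_W \cdot \rho)$ is $(K', s_{K'})$-unramified'' is precisely the statement just proved. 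The even-$n$ case is handled separately (the diagram \eqref{CD:K1} changes, $Y_{Q,n}$ is larger relative to $Y_{Q,n}^{sc}$), and there one must also treat whether $s_K$ and $s_{K'}$ are associated, which depends on $n \mid k$ versus $m \mid k$ — but the statement of the proposition as quoted is for $n$ odd, so this is outside its scope.

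\textbf{Main obstacle.} The hard part will be the explicit self-intertwining computation showing $\phi_0$ and $\phi_1$ land in \emph{distinct} constituents with the correct eigenvalues — i.e., pinning down that the eigenvalue of $\mca{A}(w_\alpha, i(\chi))$ on the transported spherical vector $\phi_1$ is genuinely $-1$ rather than $+1$. This requires care with the normalization factor $\gamma(w_\alpha, \chi)$ entering $\mca{A}(w_\alpha, i(\chi)) = \gamma(w_\alpha, \chi) T(w_\alpha, i(\chi))$, with the precise Gauss-sum value (its argument $\angb{y}{\alpha}Q(\alpha^\vee) = -\angb{y}{\alpha}$ and whether $n \mid \angb{y}{\alpha}$), and with the Hilbert-symbol cocycle $(-1,\varpi)_n^{\angb{y}{\alpha} D(y, \alpha^\vee)}$ appearing in $\tau^2$; for $n$ odd these all simplify (since $(-1,\varpi)_n = 1$ when $n$ is odd, as $-1 \in F^{\times 2} \subset F^{\times \gcd(2,n)}$... more precisely one uses $(-1,\varpi)_n^{\text{odd power}}$ behavior), which is exactly why the odd-$n$ hypothesis makes the argument clean. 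Everything else — the model-change via $\Phi_{\wt{t}}$, the identification $R_\chi \simeq \mca{S}(\phi_\chi)$, the diagram chase — is routine given the results already established in the excerpt.
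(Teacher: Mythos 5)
Your overall architecture is the same as the paper's (transport the $(K',s_{K'})$-spherical vector into a fixed model of $I(\chi)$ and compute the eigenvalue of $\mca{A}(w_\alpha,\chi)$ on it), but two steps that carry the actual content are not supplied, and as written they would fail. First, the transport step: you invoke $\Phi_{\wt{t}}$ from \eqref{Iso-2m} with ``$\wt{t}$ a lift to $\wt{G}$ of $e(\varpi)\in T_{ad}$''. No such lift exists: $e=\alpha^\vee/2\notin Y$, so $e(\varpi)\notin T$, and \eqref{Iso-2m} is left translation by an element of $\wt{T}$, not an application of the conjugation action of $T_{ad}$ on $\wt{G}$. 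Using conjugation by $e(\varpi)$ instead is not a fix, because it only identifies $I(\chi)$ with the conjugated representation, and whether that identification preserves or swaps the two constituents is precisely the question being asked. The paper's proof turns on the device you are missing: since $n$ is odd one may replace $e(\varpi)$ by $t:=\varpi^{(n+1)\alpha^\vee/2}\in T$, and since commutators are valued in $\mu_n$ one has $[-,t]=[-,e(\varpi)]$ on $T$, hence ${}^t\tchi={}^{e(\varpi)}\tchi$; only then is $\Phi_{\wt{t}}$ available and only then is your vector $\phi_1=\Phi_{\wt{t}}^{-1}(f_{K'})$ (and the evaluation point $\wt{t}^{-1}$) defined inside the model $\Ind_{\wt{A}U}^{\wt{G}}(\tchi)$.

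Second, the eigenvalue mechanism you propose does not work. The scattering matrix of Proposition \ref{P:tau-p} (or its conductor-$O_F$ analogue) describes the action of the dual operator on the Whittaker functionals $\lambda_\gamma$, indexed by $\gamma\in Y/Y_{Q,n}$; it does not act on spherical vectors, and spherical vectors are not ``labelled by the cosets $0$ and $w_\alpha(0)$'' (note $w_\alpha[0]$ is a fixed point, not part of a free orbit). Moreover both constituents have nonzero Whittaker dimension, namely $(n\pm 1)/2$, so knowledge of the scattering matrix alone cannot decide which constituent contains the $(K',s_{K'})$-fixed line; to push the Whittaker route through you would in addition need the values $\lambda_\gamma(\phi_1)$, i.e.\ a Casselman--Shalika-type computation for the shifted spherical vector, which is not available in the excerpt and is at least as much work as the direct argument. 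What the paper actually does at this point is a Gindikin--Karpelevich-type computation: evaluate $T(w_\alpha,\chi;r_{w_\alpha}^{\rm un})(\Phi_{\wt{t}}^{-1}(f_{K'}))$ at $\wt{t}^{-1}$, use \eqref{W-act} to produce the central factor $\wt{h}_\alpha(\varpi^{n+1})=\wt{h}_\alpha(\varpi^{n})\wt{h}_\alpha(\varpi)$, split the unipotent integral over $\mfr{p}_F$ and its complement, and use \eqref{E:2K} (that $s_{K'}=[e(\varpi),-]\otimes s_K$ on $\mbf{T}(O)$, contributing the extra Hilbert-symbol factor $(u,\varpi)_n^{Q(\alpha^\vee)}$) to obtain the eigenvalue $\chi_\alpha\cdot c_{\sf gk}(w_\alpha,\chi)$, hence $-1$ after normalization by $\gamma(w_\alpha,\chi)$. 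Your closing verification of the numerology for Conjecture \ref{C:varK} is fine, but it rests on the two computations above, which your proposal replaces by an incorrect shortcut.
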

\begin{proof} The proof uses the same idea as in the linear case, that is, to compute the coefficient of the intertwining operator applied to the unramified vector. More precisely, recall that the normalized intertwining operator 
$$\mca{A}(w_\alpha, \chi) = \gamma(w_\alpha, \chi)\cdot T(w_\alpha, \chi; r_{w_\alpha}^{\rm un})$$
acts on $\pi(\phi_\chi, \rho) \subset I(\chi)$ by $\rho(w_\alpha) \cdot \id$ for each $\rho \in \Irr(\mca{S}(\phi_\chi)) =\Irr(W)$.

Note that as character of $\wt{A}$, we have
$${}^{e(\varpi)} \tchi =[-, e(\varpi)] \otimes \tchi.$$
However, the problem is that $e(\varpi) \notin \wt{T}$. Setting
$$t:=\varpi^{(n+1)\alpha^\vee/2} \in \wt{T},$$
it is easy to see
$${}^t \tchi = {}^{e(\varpi)} \tchi,$$
since $[-,t] = [-, e(\varpi)]$ as a function on $T$. Let $\tchi=\chi \boxtimes \tchi_O$ be the unique extension of $\chi$ such that $\tchi_O \circ s_K = \mbm{1}$ on $\mbf{T}(O)$. We see that
$${}^t \tchi \circ s_{K'}|_{\mbf{T}(O)} =\mbm{1}$$
and 
\begin{equation} \label{s2K}
s_{K'} = [t, -] \otimes s_K
\end{equation}
on $\mbf{T}(O)$. Here $\tchi$ and ${}^t \tchi$ are both extensions of $\chi$ and give rise to two models 
$$\Ind_{\wt{A}U}^{\wt{G}}(\tchi),\  \Ind_{\wt{A}U}^{\wt{G}}({}^t\tchi)$$ of the principal series $I(\chi)$. For any fixed lifting $\wt{t}$ of $t$, we have the isomorphism
$$\Phi_{\wt{t}}: \Ind_{\wt{A}U}^{\wt{G}} (\tchi) \longrightarrow \Ind_{\wt{A}U}^{\wt{G}} ({}^t \tchi)$$
given by $\Phi_{\wt{t}}(f)(g) = f(\wt{t}^{-1} g)$, see \eqref{Iso-2m}.

Without loss of generality, we assume that $\pi(\phi_\chi, \mbm{1})$ is $(K, s_K)$-unramified and thus 
$$\mca{A}(w_\alpha, \chi)(f_{K}) = f_K,$$
where $f_{K} \in \Ind_{\wt{A}U}^{\wt{G}}(\tchi)$ is the normalized $(K, s_K)$-unramified vector. We want to compute the scalar arising from $\mca{A}(w_\alpha, \chi)$ applied to the $(K', s_{K'})$-unramified vector. Note that in the model $\Ind_{\wt{A}U}^{\wt{G}}({}^t\tchi)$ the $(K', s_{K'})$-unramified vector is given by
$$f_{K'}(b\cdot s'(k')) =
\begin{cases}
\delta_B^{1/2}(t) \cdot \tchi(a) & \text{ if } b=a u \text{ with } a\in \wt{A}, u\in U \text{ and } k'\in K',\\
0 & \text{ otherwise.}
\end{cases} 
$$
Thus, $\Phi_t^{-1}(f_{K'}) \in \Ind_{\wt{A}U}^{\wt{G}}(\tchi)$ is the $(K', s_{K'})$-unramified vector.  We want to determine the scalar $c\in \C$ such that
$$T(w_\alpha, \chi; r_{w_\alpha}^{\rm un})(\Phi_{\wt{t}}^{-1}(f_{K'}) ) = c\cdot \Phi_{\wt{t}}^{-1}(f_{K'}).$$
We see that
$$\begin{aligned}[t]
c = & T(w_\alpha, \chi; r_{w_\alpha}^{\rm un})(\Phi_{\wt{t}}^{-1}(f_{K'}) ) (\wt{t}^{-1})  \\
= & \int_U \Phi_{\wt{t}}^{-1}(f_{K'})(\wt{w}_\alpha^{-1} u \wt{t}^{-1}) du \\
 = & \int_U f_{K'}(\wt{t} \cdot \wt{w}_\alpha^{-1} u \cdot \wt{t}^{-1}) du \\
 = & \int_U f_{K'}(t \cdot \wt{w}_\alpha^{-1} t^{-1} \wt{w}_\alpha \cdot \wt{w}_\alpha^{-1} \cdot t u t^{-1}) du \\
 = & \int_U f_{K'}(\wt{h}_\alpha(\varpi^{n}) \cdot \wt{h}_\alpha(\varpi) \cdot \wt{w}_\alpha^{-1} \cdot t u t^{-1}) du,
 \end{aligned}
$$
where the last equality follows from \eqref{W-act}. We have
$$c  =q \cdot \chi_\alpha \cdot \int_U f_{K'}(\wt{h}_\alpha(\varpi) \cdot \wt{w}_\alpha^{-1} \cdot u) du.$$
Now to compute the integral, we split $U$ into $\mfr{p}_F$ and $U- \mfr{p}_F$.
If $u\in \mfr{p}_F$, it is easy to see that $\wt{h}_\alpha(\varpi) \wt{w}_\alpha^{-1} u \in s_{K'}(K')$ and thus
$$\int_{\mfr{p}_F} f_{K'}(\wt{h}_\alpha(\varpi) \cdot \wt{w}_\alpha^{-1} \cdot u) du = d(\mfr{p}) = q^{-1}.$$
On the other hand,
$$\begin{aligned}[t]
& \int_{U-\mfr{p}_F} f_{K'}(\wt{h}_\alpha(\varpi) \cdot \wt{w}_\alpha^{-1} \cdot \wt{e}_\alpha(u)) du \\
= &   \int_{U-\mfr{p}} f_{K'}(\wt{h}_\alpha(\varpi) \cdot \wt{h}_\alpha(u^{-1}) \cdot \wt{e}_\alpha(-u) \cdot \wt{e}_{-\alpha}(-u^{-1}) ) du \\
= & \sum_{k\lest 0} \int_{u\in O^\times} f_{K'}(\wt{h}_\alpha(\varpi) \cdot \wt{h}_\alpha(\varpi^{-k} u^{-1}) \cdot \wt{e}_\alpha(-u)) d(\varpi^k u)   \\
= & \sum_{k\lest 0} \val{\varpi^k} \int_{u\in O^\times} f_{K'}(\wt{h}_\alpha(\varpi) \cdot \wt{h}_\alpha(\varpi^{-k})  \wt{h}_\alpha(u^{-1}) \cdot (u, \varpi)_n^{kQ(\alpha^\vee)} ) d(u)   \\
= & \sum_{k\lest 0} \val{\varpi^k} \int_{u\in O^\times} f_{K'}(\wt{h}_\alpha(\varpi) \cdot \wt{h}_\alpha(\varpi^{-k})  s_K(h_\alpha(u^{-1})) \cdot (u, \varpi)_n^{kQ(\alpha^\vee)} ) d(u).
\end{aligned}$$
Now by \eqref{s2K} we have
$$s_{K'}(h_\alpha(u^{-1})) = s_K(h_\alpha(u^{-1})) \cdot (u, \varpi)_n^{Q(\alpha^\vee)}.$$
This gives that the above integral over $U- \mfr{p}_F$ equals
$$\begin{aligned}[t]
& \sum_{k\lest 0} \val{\varpi^k} \int_{u\in O^\times} f_{K'}(\wt{h}_\alpha(\varpi) \cdot \wt{h}_\alpha(\varpi^{-k})  s_{K'}(h_\alpha(u^{-1})) \cdot (u, \varpi)_n^{(k-1)Q(\alpha^\vee)} ) d(u) \\
= & \sum_{k\lest 0, n|(k-1)} \val{\varpi^k} \int_{u\in O^\times} f_{K'}(\wt{h}_\alpha(\varpi^{1-k}) ) d(u) \\
= & q^{-1} \frac{\chi_\alpha (1-q^{-1})}{1-\chi_\alpha}.
\end{aligned}$$
Combining the above we get that
$$c= \chi_\alpha \cdot c_{\sf gk}(w_\alpha, \chi) = (-1) \frac{1-q^{-1} \chi_\alpha}{ 1-\chi_\alpha }$$
and therefore in our setting
$$\mca{A}(w_\alpha, \chi)(f_{K'}) = (-1) \cdot f_{K'}.$$
This concludes the proof.
\end{proof}

Second, we consider the case $n=2m$ is even, which gives then
$$Y_{Q,n}=\Z[m\alpha^\vee], \ Y_{Q,n}^{sc}=\Z[n\alpha^\vee],\  X_{Q,n}=X_{Q,n}^{sc}=\Z[\alpha/n].$$
In this case, diagram \eqref{CD:K1} becomes
$$ \begin{tikzcd}
\Z[\alpha^\vee]/\Z[m\alpha^\vee] \ar[d, "\simeq"] & & \\
\Z[\alpha^\vee]/\Z[m\alpha^\vee] \ar[r, hook] \ar[d, two heads, "\varphi"]  & \Z[\alpha^\vee/2]/\Z[m\alpha^\vee] \ar[d, two heads, "\varphi"] \ar[r, two heads] & \Z[\alpha^\vee/2]/\Z[\alpha^\vee] \ar[d, two heads, "\varphi"] \ar[r, equal] &   \widehat{\Gamma_G^{\rm tor}} \\
\set{0} \ar[r]& \Z[m\alpha^\vee]/\Z[m\alpha^\vee] \ar[d, equal] \ar[r] & \set{0} \\
& \widehat{\Gamma_{G_{Q,n}}^{\rm tor}} \ar[r, two heads] & \Irr(\mca{S}(\phi_\chi)),
\end{tikzcd}$$
As $\widehat{\Gamma_{G_{Q,n}}^{\rm tor}}=\set{0}$, every $\mca{L}(\phi_\chi)$ is a singleton and if $\chi$ is unitary, then 
$$\mca{L}(\phi_\chi)=\set{I(\chi)}.$$
Note that in this case, if $I(\chi)$ is $(K, s_K)$-unramified, then it is not $(K', s_{K'})$-unramified. Indeed, for every $z=i m\alpha^\vee \in Y_{Q,n}$ and $u\in O^\times$, it follows from \eqref{E:2K} and \eqref{E:SL-ad-cm} that
$$s_{K'}(z\otimes u) =(\varpi, u)_n^{-mi } \cdot s_K(z\otimes u),$$
where $(\varpi, u)_n^{mi} = (\varpi, u)_2^i$ may not be trivial in general. Thus, the two splittings $s_K$ and $s_{K'} = y\cdot s_K$ are not associated. This agrees with Conjecture \ref{C:varK}.

\vskip 10pt

Regarding the Whittaker dimension with respect to varying Whittaker datum, we assume that for the conductor,
$$\mfr{f}(\psi) = O_F.$$
For simplicity of notation, we denote ${}^e \psi:= {}^{e(\varpi)} \psi$ where $e(\varpi) \in T_{ad}/T$ such that
$$\mfr{f}({}^e \psi) = \mfr{p}_F.$$
Consider the $n$-fold cover $\wt{\SL}_2$ such that $n$ is odd and 
$$I(\chi) = \pi(\phi_\chi, \mbm{1}) \oplus \pi(\phi_\chi, \varepsilon_W),$$
where the labelling is such that $\pi(\phi_\chi, \mbm{1})$ is the unique $(K, s_K)$-unramified constituent, and thus $\pi(\phi_\chi, \varepsilon_W)$ is $(K', s_{K'})$-unramified by Proposition \ref{P:SL2-K}.

\begin{prop} \label{P:SL2-W}
Keep notations and conventions as above, one has
$$\dim \Wh_\psi(\pi(\phi_\chi, \mbm{1})) = \dim \Wh_{^e\psi}(\pi(\phi_\chi, \varepsilon_W))  = (n+1)/2$$
and 
$$\dim \Wh_\psi(\pi(\phi_\chi, \varepsilon_W)) = \dim \Wh_{^e\psi}(\pi(\phi_\chi, \mbm{1}))  = (n-1)/2.$$
In particular, Conjecture \ref{C:varW} is verified in this case.
\end{prop}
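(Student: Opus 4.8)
The plan is to combine the Whittaker-dimension computation for the $(K,s_K)$-unramified theta/Steinberg constituents of $\wt{\SL}_2$ in the $\mfr{f}(\psi)=O_F$ case (which is already essentially Theorem \ref{T:GLSL2} and \cite[Theorem 6.6]{Ga6} applied to $\wt{G}_0=\wt{\SL}_2$) with a parallel computation for the character ${}^e\psi$ of conductor $\mfr{p}_F$, using the scattering matrix of Proposition \ref{P:tau-p}. First I would record that $\wt{\SL}_2^{(n)}$ with $n$ odd is a saturated, hence persistent, cover (Lemma \ref{L:pers}), so that \cite[Proposition 6.2, Theorem 6.6]{Ga6} apply verbatim: for $\mfr{f}(\psi)=O_F$ one has $\dim\Wh_\psi(\pi(\phi_\chi,\mbm1))=\angb{\sigma_{\msc{X}_0}}{\mbm1}_W$ equal to the number of $W$-orbits in $\msc{X}_{0,Q,n}=\Z\alpha^\vee/\Z n\alpha^\vee$, which is $(n+1)/2$, and $\dim\Wh_\psi(\pi(\phi_\chi,\varepsilon_W))=\angb{\sigma_{\msc{X}_0}}{\varepsilon_W}_W$ equal to the number of \emph{free} $W$-orbits, which is $(n-1)/2$. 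Since $\dim\Wh_\psi(I(\chi))=\val{\msc{X}_{0,Q,n}}=n$ is split as $(n+1)/2+(n-1)/2$, these two numbers are forced once we know which constituent carries the fixed orbit $\mca{O}_{-l\alpha^\vee}$ (with $n=2l+1$); that is exactly the bookkeeping done in the proof of Theorem \ref{T:GLSL2} via \cite[Theorem 3.14, \S 4]{Ga2}, and it is pinned to the $(K,s_K)$-unramified constituent, i.e.\ to $\pi(\phi_\chi,\mbm1)$.

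Next I would compute the ${}^e\psi$-Whittaker dimensions. The clean way is to use the principle that for a non-degenerate character $\psi'$ the dimension $\dim\Wh_{\psi'}(\pi(\phi_\chi,\rho))$ equals the rank of the restriction of the operator $\mca{T}(w_\alpha,i(\chi);r_{w_\alpha}^{\rm un})^*$ to the $\rho$-isotypic subspace of $\Wh_{\psi'}(I(\chi))$; here $\Wh_{\psi'}(I(\chi))$ is again $\val{\msc{X}_{0,Q,n}}$-dimensional with a basis indexed by $Y_0/Y_{0,Q,n}$, and by Proposition \ref{P:tau-p} the scattering matrix $\mca{S}_\mfr{R}(w_\alpha,\chi)$ for $\mfr{f}({}^e\psi)=\mfr{p}_F$ is block-diagonal over the $W$-orbits in $\msc{X}_{0,Q,n}$, with each free-orbit block a nonzero $2\times 2$ matrix of determinant $=_{\C^\times}\mu(w_\alpha,i(\chi))^{-1}$ and the fixed-orbit block $\mca{O}_{-l\alpha^\vee}$ a single scalar equal (up to $\C^\times$) to the $\mfr{p}_F$-analogue of the Gauss factor $\g(\angb{-l\alpha^\vee}{\alpha}Q(\alpha^\vee))$. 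Since $\chi$ is unitary and $\Phi(\chi)$ may or may not be all of $\Delta$, I need only the rank of this operator; the key observation is that passing from $\psi$ (conductor $O_F$) to ${}^e\psi$ (conductor $\mfr{p}_F$) multiplies the fixed-orbit scalar by a factor of the form $(u,\varpi)_2$-type sign coming from $e(\varpi)$, which toggles whether $\mca{O}_{-l\alpha^\vee}$ supports a Whittaker functional, while leaving each free-orbit block nonzero (its rank is governed only by $\mu(w_\alpha,i(\chi))$, which is insensitive to $\psi$). Hence the fixed orbit switches sides: it is carried by $\pi(\phi_\chi,\mbm1)$ for $\psi$ and by $\pi(\phi_\chi,\varepsilon_W)$ for ${}^e\psi$, giving $\dim\Wh_{{}^e\psi}(\pi(\phi_\chi,\varepsilon_W))=(n+1)/2$ and $\dim\Wh_{{}^e\psi}(\pi(\phi_\chi,\mbm1))=(n-1)/2$.

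Finally I would translate this into the statement of Conjecture \ref{C:varW}: with $t=e(\varpi)\in T_{ad}/T$ the nontrivial class and $\zeta_\chi$ the surjection $T_{ad}/T\onto\Irr(\mca{S}(\phi_\chi))=\Irr(W)$, one checks from \eqref{E:com1} and \eqref{E:SL-ad-cm} that $\zeta_\chi(t)=\varepsilon_W$; so the four computed numbers are exactly $\dim\Wh_\psi(\pi(\phi_\chi,\rho))=\dim\Wh_{{}^t\psi}(\pi(\phi_\chi,\varepsilon_W\otimes\rho))$ for $\rho\in\{\mbm1,\varepsilon_W\}$, which is the assertion. The main obstacle I anticipate is not the ranks of the free-orbit $2\times 2$ blocks (those are handled uniformly by $\mu(w_\alpha,i(\chi))$), but the precise tracking of the sign/Gauss-sum factor on the fixed orbit $\mca{O}_{-l\alpha^\vee}$ under the conductor change $O_F\rightsquigarrow\mfr{p}_F$: one must show this factor is genuinely nontrivial, so that the fixed orbit really does change isotypic component rather than, say, dropping out of both. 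This is where the explicit formulas of \cite{Ga2} for the Whittaker functional on a theta representation, together with the conductor-shift formula $G_{{}^e\psi}(a,b)$ versus $G_\psi(a,b)$ recorded before Proposition \ref{P:tau-p}, must be invoked carefully; once that single scalar is identified the rest is the elementary arithmetic $n=(n+1)/2+(n-1)/2$ already used above.
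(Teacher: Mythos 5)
Your overall strategy (orbit-by-orbit analysis of a scattering matrix, with the ``extra'' one-dimensional piece deciding which constituent gets $(n+1)/2$) is in the same family as the paper's argument, but two of your key steps would fail as stated. First, the character $\chi$ here is unitary with $\chi_\alpha=-1$, so $\Phi(\chi)=\emptyset$ and $\chi$ is \emph{not} regular: the two summands $\pi(\phi_\chi,\mbm{1})$, $\pi(\phi_\chi,\varepsilon_W)$ are the R-group constituents of a tempered principal series, not constituents $\pi_{\chi,S}$ of a regular one, so \cite[Theorem 6.6]{Ga6} and the bookkeeping of Theorem \ref{T:GLSL2} (which concern regular $\chi$ with $\Phi(\chi)\subset\Delta$, and even covers of $\GL_2$) do not literally apply. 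The correct input for the $\psi$-side is the R-group formula $\dim\Wh_\psi(\pi(\phi_\chi,\rho))=\angb{\rho}{\sigma^{\rm Wh_\psi}}_{R_\chi}$ of \cite[Theorem 5.6]{Ga7} together with \cite[Theorem 7.1]{Ga7}, which gives $\sigma^{\rm Wh_\psi}=\frac{n+1}{2}\mbm{1}\oplus\frac{n-1}{2}\varepsilon_W$ once $\pi(\phi_\chi,\mbm{1})$ is pinned as the $(K,s_K)$-unramified summand; your numbers are right, but the justification is borrowed from the wrong setting.

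Second, and more seriously, your mechanism for the ${}^e\psi$-side does not work. Since $\chi_\alpha=-1$, one has $\mu(w_\alpha,i(\chi))^{-1}\neq 0$, so the intertwining operator is invertible on the whole Whittaker space and every ``rank'' you propose to compute is just the dimension of the subspace you restrict to; ranks separate constituents in the regular case (where the theta representation is the image of $T(w)$, as in Theorem \ref{T:GLSL2}), but not here. What is needed is the eigenvalue decomposition of the normalized involution $\mca{A}(w_\alpha,\chi)^*$, i.e.\ the multiplicities $\angb{\rho}{\sigma^{\rm Wh_{{}^e\psi}}}$. Moreover, for $\mfr{f}({}^e\psi)=\mfr{p}_F$ the support conditions in Proposition \ref{P:tau-p} involve the \emph{linear} action $y\mapsto w_\alpha(y)$ on $\Z\alpha^\vee/\Z n\alpha^\vee$, whose unique fixed class for odd $n$ is $0$ — not the dot-action singleton $\mca{O}_{-l\alpha^\vee}$ you import from the even-cover, conductor-$O_F$ analysis — and the corresponding $1\times 1$ block is $\tau(w_\alpha,\chi,\s_0,\s_0)=(\chi_\alpha-q^{-1})/(1-\chi_\alpha)=-(1+q^{-1})/2$, an elementary rational expression in $\chi_\alpha$ with no Gauss sum or Weil-index sign to ``toggle''; after multiplying by $\gamma(w_\alpha,\chi)=2/(1+q^{-1})$ it contributes the eigenvalue $-1$, while each free $2\times 2$ block has normalized trace $0$ and hence eigenvalues $\pm 1$. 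This is how $\sigma^{\rm Wh_{{}^e\psi}}=\frac{n+1}{2}\varepsilon_W\oplus\frac{n-1}{2}\mbm{1}$, and hence the stated dimensions, actually arise; the ``main obstacle'' you anticipate (tracking a conductor-dependent Gauss factor on the fixed orbit) is a misdirection, and without replacing your rank criterion by this eigenvalue computation the argument does not determine how the $n$ dimensions split. Your final identification $\zeta_\chi(e(\varpi))=\varepsilon_W$ via \eqref{E:com1} and \eqref{E:SL-ad-cm} is fine.
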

\begin{proof}
We first note that for any non-degenerate character $\psi: U \longrightarrow \C^\times$, there is a representation (see \cite[\S 5.4]{Ga7})
$$\sigma^{\rm Wh_\psi}: R_\chi \longrightarrow \GL(\Wh_\psi(I(\chi)))$$
given by 
$$\sigma^{\rm Wh_\psi}(w) =\mca{A}(w, i(\chi))^*,$$
the induced map from $\mca{A}(w, i(\chi)) = \gamma(w, \chi) \cdot T(w, \chi; r_w^{\rm un})$, where $\gamma(w, \chi)$ is the gamma-factor associated with $w$ and $i(\chi)$. It is proved in \cite[Theorem 5.6]{Ga7} that one has
\begin{equation} \label{E:dWh}
\dim \Wh_\psi(\pi(\phi_\chi, \rho)) = \angb{\rho}{\sigma^{\rm Wh_\psi}}_{R_\chi}.
\end{equation}
If $\mfr{f}(\psi)=O_F$ and $\pi(\phi_\chi, \mbm{1})$ is $(K, s_K)$-unramified, as is assumed here, then it is shown in \cite[Theorem 7.1]{Ga7} that 
$$\sigma^{\rm Wh_\psi} =\frac{n+1}{2} \mbm{1} \bigoplus \frac{n-1}{2} \varepsilon_W,$$
which immediately gives that
$$\dim \Wh_\psi(\pi(\phi_\chi, \mbm{1})) = \frac{n+1}{2} \text{ and } \dim \Wh_\psi(\pi(\phi_\chi, \varepsilon_W)) = \frac{n-1}{2}.$$
The proof relies on an explicit form of the scattering matrix for $\mfr{f}(\psi) = O$, as described in \cite{Ga2} and \cite{GSS2}.

Now to compute $\sigma^{\Wh_{{}^e\psi}}$, we use Proposition \ref{P:tau-p} instead, since $\mfr{f}({}^e\psi) =\mfr{p}_F$. We take the set
$$\mfr{R} = \set{i\alpha^\vee: -(n-1)/2 \lest i \lest (n-1)/2} \subset Y$$
of representatives of $Y/Y_{Q,n}$. In view of Proposition \ref{P:tau-p}, by permuting the set $\mfr{R}$ properly, the scattering matrix 
$$[\tau(w_\alpha, \chi, \s_z, \s_y)]_{z, y\in \mfr{R}}$$ is equal to a block-diagonal matrix with $(n+1)/2$ blocks $\mca{M}_i: 0\lest i \lest (n-1)/2$ such that
\begin{enumerate}
\item[--] for $i=0$ we get the size-one matrix
$$\mca{S}_\mfr{R}(w_\alpha, \chi)_{0}=[\tau(w_\alpha, \chi, \s_0, \s_0)]=\frac{\chi_\alpha - q^{-1}}{1-\chi_\alpha} = -\frac{1+ q^{-1}}{2},$$
where the second equality follows from $\chi_\alpha = -1$ and Proposition \ref{P:tau-p}.
\item[--] for each $1\lest i \lest (n-1)/2$, we have the two by two matrix $\mca{S}_\mfr{R}(w_\alpha, \chi)_{i\alpha^\vee}=[\tau(w_\alpha, \chi, \s_z, \s_y)]$ with $z, y\in \set{\pm i\alpha^\vee}$, which by Proposition \ref{P:tau-p} is equal to 
\begin{equation}
\begin{aligned}
\mca{S}_\mfr{R}(w_\alpha, \chi)_{i\alpha^\vee}  = & 
\left(\begin{matrix}
\chi_\alpha(1-q^{-1})/(1-\chi_\alpha) & {\bf g}_{^t\psi^{-1}}(2i)) \\
{\bf g}_{^t\psi^{-1}}(-2i) & (1-q^{-1})/(1-\chi_\alpha)
\end{matrix}\right)  \\
= & 
\left(\begin{matrix}
-(1-q^{-1})/2 & {\bf g}_{^t\psi^{-1}}(2i)) \\
{\bf g}_{^t\psi^{-1}}(-2i) & (1-q^{-1})/2
\end{matrix}\right) .
\end{aligned}
\end{equation}
Here the left-upper and right-lower entry is $\tau(w_\alpha, \chi, \s_{i\alpha^\vee}, \s_{i\alpha^\vee})$ and  $\tau(w_\alpha, \chi, \s_{-i\alpha^\vee}, \s_{-i\alpha^\vee})$ respectively.
\end{enumerate}
Note that in this case
$$\gamma(w_\alpha, \chi)= \frac{1- \chi_\alpha}{1-q^{-1} \chi_\alpha} = \frac{2}{1+ q^{-1}}.$$
This coupled with the above equalities for the $\mca{S}_\mfr{R}(w_\alpha, \chi)_{i\alpha^\vee}$'s  gives that
$$\sigma^{\Wh_{^t\psi}} = \frac{n+1}{2} \varepsilon_W \bigoplus \frac{n-1}{2} \mbm{1},$$
and it thus follows from \eqref{E:dWh} that 
$$\dim \Wh_{{}^e\psi}(\pi(\phi_\chi, \mbm{1})) = \frac{n-1}{2} \text{ and } \dim \Wh_{{}^e\psi}(\pi(\phi_\chi, \varepsilon_W)) = \frac{n+1}{2},$$
as desired. This completes the proof.
\end{proof}

\begin{rmk} \label{R:Sp2r}
Results in this subsection generalize to covers of $\Sp_{2r}$ without too much difficulty. Indeed, for even-fold cover every unitary $(K, s_K)$-unramified $I(\chi)$ is irreducible. If $n$ is odd, then $I(\chi)$ could be reducible with $R_\chi=\set{1, w_{\alpha_r}}$, where $\alpha_r$ is the unique long simple root. The same proof as Proposition \ref{P:SL2-K} applies to $\wt{\Sp}_{2r}$ to give the same result. Write $I(\chi) = \pi(\phi_\chi, \mbm{1}) \oplus \pi(\phi_\chi, \varepsilon_W)$, assuming $\pi(\phi_\chi, \mbm{1})$ is $(K, s_K)$-unramified. Regarding the Whittaker dimension, one
has 
$$\dim \Wh_\psi(\pi(\phi_\chi, \mbm{1})) = \dim \Wh_{^e\psi}(\pi(\phi_\chi, \varepsilon_W))  = \frac{n^{r-1}(n+1)}{2}$$
and 
$$\dim \Wh_\psi(\pi(\phi_\chi, \varepsilon_W)) = \dim \Wh_{^e\psi}(\pi(\phi_\chi, \mbm{1}))  = \frac{n^{r-1}(n-1)}{2}.$$
For these equalities, one can either apply  Rodier's heredity coupled with Proposition \ref{P:SL2-W}, or argue as in the proof of Proposition \ref{P:SL2-W} directly.
\end{rmk}

\subsubsection{Covers of $\SO_3$}  \label{SSS:SO3}
Let $Y=\Z\cdot e$ be the cocharacter lattice of $\SO_3$ with $\alpha^\vee=2e$  generating the co-root lattice $Y^{sc}$. Let 
$$Q: Y\to \Z$$ be the Weyl-invariant quadratic form such that $Q(e)=1$. Thus, $Q(\alpha^\vee)=4$. We get
$$\alpha_{Q,n}^\vee= \frac{n}{\text{gcd}(4, n)}\alpha^\vee \text{ and thus } Y_{Q,n}^{sc} = n_{(4)} \alpha^\vee.$$
On the other hand, 
$$Y_{Q,n} = \Z \frac{n}{\text{gcd}(2, n)} e =\Z[n_{(2)} e].$$
Let $\chi$ be a unitary unramified genuine character of $Z(\wt{T})$. The diagram \eqref{CD:K1} becomes
$$ \begin{tikzcd}
\Z[e]/\Z[n_{(2)} e] \ar[d, "\simeq"] & & \\
\Z[e]/\Z[n_{(2)} e] \ar[r, hook, "\simeq"] \ar[d, two heads, "\varphi"]  & \Z[e]/\Z[n_{(2)} e] \ar[d, two heads, "\varphi"] \ar[r, two heads] & \set{0} \ar[d, two heads, "\varphi"] \ar[r, equal] &   \widehat{\Gamma_G^{\rm tor}} \\
\Z[n_{(4)}e]/\Z[n_{(2)} e] \ar[r, hook, "\simeq"]& \Z[n_{(4)}e]/\Z[n_{(2)} e] \ar[d, equal] \ar[r, two heads] & \set{0} \\
& \widehat{\Gamma_{G_{Q,n}}^{\rm tor}} \ar[r, two heads] & \Irr(\mca{S}(\phi_\chi)),
\end{tikzcd}$$
where one has $\varphi(e)=n_{(4)} \cdot e$. It is shown in \cite[\S 8.1]{Ga7} that $\mca{S}(\phi_\chi) = W$ if and only if $4|n$ and $\uchi_\alpha$ is a non-trivial quadratic character; otherwise $\mca{S}(\phi_\chi) = \set{1}$.

Note that $K=\PGL_2(O)=\SO_3(O) \in \mca{K}$ is the unique conjugacy class of hyperspecial maximal compact subgroup. 
If we fix a splitting $s_K: K \into \wt{G}$, then the other splitting $s_K \otimes f_\xi$ is given by the twist
$$f_\xi: \PGL_2(O) \longrightarrow \mu_n$$
where
$$f_\xi=\xi \circ \det: \GL_2(O) \longrightarrow \mu_n$$
for a quadratic character 
$$\xi: O^\times/O^{\times 2} \longrightarrow \mu_n.$$
In particular,
\begin{enumerate}
\item[$\bullet$] if $n$ is odd, then $\Hom(K, \mu_n)=\set{1}$ as $\xi$ is always trivial, and thus there is only one splitting $s_K$ of $K$;
\item[$\bullet$] if $n=2k$ with $k$ odd, then $\Hom(K, \mu_n) =\set{\mbm{1}, f_\xi}$, and for the unique nontrivial $\xi$ two splittings $ s_K\otimes f_\xi$ and $s_K$ are not associated;
\item[$\bullet$] if $4|n$, then we have $\Hom(K, \mu_n) =\set{\mbm{1}, f_\xi}$ for the unique nontrivial $\xi$, and in the case the two splittings $s_K \otimes f_\xi$ and $s_K$ are associated, i.e., $f_\xi: \mbf{T}(O) \to \mu_n$ is trivial on $\mbf{T}_{Q,n}(O)$.
\end{enumerate}
From the perspective of \S \ref{SSS:spl-K}, we see that
$$(Y/Y_{Q,n})^\natural \simeq \Z n_{(4)} e/\Z n_{(2)}e.$$
If we consider an $(K, s_K)$-unramified $I(\chi)$, then there are two cases as follows.
\begin{enumerate}
\item[$\bullet$] First, if $4\nmid n$, then every unitary $(K, s_K)$-unramified $I(\chi)$ is irreducible and one has 
$$\mca{L}(\phi_\chi) = \set{I(\chi)}.$$
then $\Hom(K, \mu_n)^\natural=\set{1}$.
In particular, if $n=2k$ with $k$ odd, then $I(\chi)$ is not $(K, s_K \otimes f_\xi)$-unramified, where $\Hom(K, \mu_n) =\set{\mbm{1}, f_\xi}$. In this case, Conjecture \ref{C:varK} is vacuously true since $\Hom(K, \mu_n)^\natural = \set{1}$.
\item[$\bullet$] Second, if $4|n$, then $\Hom(K, \mu_n)^\natural = \Hom(K, \mu_n) = \set{\mbm{1}, f_\xi}$, and the two splittings $s_K$ and $s_K \otimes f_\xi$ are associated. 
Thus, if $I(\chi)$ is irreducible, then it is both $(K, s_K)$ and $(K, s_K \otimes f_\xi)$-unramified; in this case, Conjecture \ref{C:varK} holds as well.
\end{enumerate}
In fact, for $4|n$ the nontrivial element in $\Hom(K, \mu_n)^\natural$ is
$$f_\xi = f_{ne/4}: \mbf{T}(O) \longrightarrow \mu_n,$$
explicitly given by 
$$f_{ne/4}(y\otimes u) =[\varpi^{ne/4}, y\otimes u] = (\varpi, u)_n^{nB_Q(e, y)/4}.$$

\begin{prop}  \label{P:SO3}
Keep notations as above. For a unitary $(K, s_K)$-unramified principal series $I(\chi)$, if 
$$\mca{L}(\phi_\chi) = {\rm JH}(I(\chi)) = \set{\pi(\phi_\chi, \mbm{1}), \pi(\phi_\chi, \varepsilon)},$$
which occurs exactly for $4|n$ and that $\chi_\alpha$ is the non-trivial quadratic character, then $\pi(\phi_\chi, \rho)$ is $(K, s_K)$-unramified for exactly one $\rho \in \Irr(\mca{S}(\phi_\chi))$, and the other is $(K, s_K\otimes f_\xi)$-unramified. In particular, Conjecture \ref{C:varK} holds in this case.
\end{prop}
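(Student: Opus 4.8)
The plan is to mimic the computation carried out in Proposition~\ref{P:SL2-K} for $\wt{\SL}_2$, which determines the scalar by which a normalized intertwining operator acts on the unramified vector attached to a twisted splitting. The ambient group here is $\SO_3 \simeq \PGL_2$, and the relevant reducible situation occurs precisely when $4\mid n$ and $\chi_\alpha := \chi(\wt{h}_\alpha(\varpi^{n_\alpha}))$ is the nontrivial quadratic character, so that $R_\chi = W = \set{1, w_\alpha} \simeq \mca{S}(\phi_\chi)$ and $I(\chi)$ decomposes multiplicity-free as $\pi(\phi_\chi, \mbm{1}) \oplus \pi(\phi_\chi, \varepsilon)$. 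The normalized intertwining operator $\mca{A}(w_\alpha, \chi) = \gamma(w_\alpha, \chi)\cdot T(w_\alpha, \chi; r_{w_\alpha}^{\rm un})$ acts on $\pi(\phi_\chi, \rho)$ by the scalar $\rho(w_\alpha)$, so the unramifiedness of $\pi(\phi_\chi, \rho)$ with respect to $(K, s_K \otimes f_\xi)$ is detected by computing this scalar on the $(K, s_K\otimes f_\xi)$-unramified vector.

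\textbf{Key steps.} First I would fix the splitting $s_K: K\into \wt{G}$ with respect to which $I(\chi)$ is $(K,s_K)$-unramified, and label $\pi(\phi_\chi, \mbm{1})$ to be the unique $(K,s_K)$-unramified constituent, so that $\mca{A}(w_\alpha, \chi)$ fixes the normalized vector $f_K$. Next I would identify the nontrivial element $f_\xi \in \Hom(K,\mu_n)^\natural$ explicitly: since $4\mid n$, it is $f_{ne/4}$, given on $\mbf{T}(O)$ by $f_{ne/4}(y\otimes u) = (\varpi, u)_n^{nB_Q(e,y)/4}$, as recorded just before the proposition. As in the $\wt{\SL}_2$ argument, $f_\xi$ is not realized by conjugation by an element of $\wt{T}$ directly, so I would find $t \in \wt{T}$ — presumably $t = \varpi^{(n/2 + 1)e}$ or a similar half-integral-looking cocharacter adjusted to lie in $Y$ — such that ${}^t\tchi = f_\xi \otimes \tchi$ on $\wt{A}$, using that $[-,t]$ agrees with $[\varpi^{ne/4}, -]$ as a function on $T$. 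Then, working in the model $\Ind_{\wt{A}U}^{\wt{G}}({}^t\tchi)$ via the isomorphism $\Phi_{\wt{t}}$ of \eqref{Iso-2m}, I would compute $c\in\C$ with $T(w_\alpha, \chi; r_{w_\alpha}^{\rm un})(\Phi_{\wt{t}}^{-1}(f_{K,\xi})) = c\cdot \Phi_{\wt{t}}^{-1}(f_{K,\xi})$ by the same integral manipulation: write $c = \int_U f_{K,\xi}(\wt{h}_\alpha(\varpi^{n})\wt{h}_\alpha(\varpi)\wt{w}_\alpha^{-1} tut^{-1})\,du$ using \eqref{W-act}, split $U$ into $\mfr{p}_F$ and $U-\mfr{p}_F$, and reduce the second integral to a geometric series in $\chi_\alpha$, keeping careful track of the Hilbert-symbol discrepancy between $s_K$ and $s_K\otimes f_\xi$ along the way. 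The outcome should be $c = \chi_\alpha \cdot c_{\sf gk}(w_\alpha, \chi)$ up to the gamma normalization, giving $\mca{A}(w_\alpha, \chi)(f_{K,\xi}) = \chi_\alpha \cdot f_{K,\xi} = -f_{K,\xi}$, so that $f_{K,\xi}$ lies in $\pi(\phi_\chi, \varepsilon)$. This shows $\pi(\phi_\chi, \varepsilon)$ is $(K, s_K\otimes f_\xi)$-unramified, which is exactly the content of Conjecture~\ref{C:varK} in the commutative diagram \eqref{CD:K2} for this group, since $\widehat{\Gamma_G^{\rm tor}} = \set{0}$ forces the only relevant twist to come from $\Hom(K,\mu_n)^\natural$ and the map $\varphi\circ h\circ \iota_K^\natural$ sends $f_\xi$ to the nontrivial element of $\Irr(\mca{S}(\phi_\chi))$.

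\textbf{Main obstacle.} The bookkeeping of the cocycle $\sigma$-twist is the delicate part: one must verify that the chosen $t\in\wt{T}$ genuinely conjugates $\tchi$ to $f_\xi\otimes\tchi$ and simultaneously that ${}^t\tchi$ is trivial on $(s_K\otimes f_\xi)(\mbf{T}(O))$, which requires matching the cover's commutator pairing $[\varpi^{ne/4}, y\otimes u] = (\varpi,u)_n^{nB_Q(e,y)/4}$ against the structural relation $\wt{h}_\alpha(u^{-1})$ transforms by under the two splittings. The coefficient $n/4$ (as opposed to $n/2$ for $\wt{\SL}_2$) changes the congruence conditions that survive in the geometric series, so one must check that the residual sum over $k\lest 0$ with $n \mid (k-1)$ (or the appropriate analogue) still produces $(1-q^{-1})\chi_\alpha/(1-\chi_\alpha)$ and nothing else — in particular that no stray terms indexed by $ke$ with $n_{(2)}\mid k$ but $n\nmid k$ contribute. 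Everything else is a routine transcription of the $\wt{\SL}_2$ computation, and the conclusion about Conjecture~\ref{C:varK} then follows by noting the vacuous/irreducible cases handled in the discussion preceding the proposition.
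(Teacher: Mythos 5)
Your overall route is the paper's own: label $\pi(\phi_\chi,\mbm{1})$ as the $(K,s_K)$-unramified constituent, transport the $(K,s_K\otimes f_\xi)$-unramified vector into the model $\Ind_{\wt{A}U}^{\wt{G}}(\tchi)$ via $\Phi_{\wt{t}}$ for a $t\in\wt{T}$ realizing the twist, and show that $\mca{A}(w_\alpha,\chi)$ acts on it by $\gamma(w_\alpha,\chi)\cdot\chi_\alpha\cdot c_{\sf gk}(w_\alpha,\chi)=-1$, so it generates $\pi(\phi_\chi,\varepsilon)$; the reduction of Conjecture \ref{C:varK} to this eigenvalue computation (using $\widehat{\Gamma_G^{\rm tor}}=\set{0}$ and the explicit $f_\xi=f_{ne/4}$) is also as in the paper.

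The one step that would fail as written is your choice of $t$. With $t=\varpi^{(n/2+1)e}$ one gets $[e(u),t]=(u,\varpi)_n^{B_Q(e,e)(n/2+1)}=(u,\varpi)_n^{n+2}$, which is not the quadratic character $f_\xi(e(u))=(u,\varpi)_n^{n/2}$ unless $n=4$; so ${}^t\tchi$ would not be trivial on $(s_K\otimes f_\xi)(\mbf{T}(O))$ and $\Phi_{\wt{t}}^{-1}$ of your vector would not be the $(K,s_K\otimes f_\xi)$-unramified vector. The correct element is simply $t=e(\varpi^{n_\alpha})=e(\varpi^{n/4})$, which already lies in $T$ because $e\in Y$: the ``exponent adjusted to lie in $Y$'' dodge you imported from the $\wt{\SL}_2$ proof is needed there only because $\alpha^\vee/2\notin Y$, and imitating it here is what produces the wrong exponent. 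With the correct $t$ the computation is in fact simpler than in $\wt{\SL}_2$: the conjugation produces the single central factor $t\cdot(w_\alpha^{-1}t^{-1}w_\alpha)=\wt{h}_\alpha(\varpi^{n_\alpha})\in Z(\wt{T})$, not $\wt{h}_\alpha(\varpi^{n})\wt{h}_\alpha(\varpi)$, which pulls out $\chi_\alpha=-1$ at once, and the remaining integral $\int_U f_{s_K'}(\wt{w}_\alpha^{-1}u)\,du$ equals $c_{\sf gk}(w_\alpha,\chi)$ with no shifted congruence in the geometric series: since $f_\xi(h_\alpha(u))=f_\xi(e(u^2))=(u^2,\varpi)_2=1$, the two splittings agree on $\wt{h}_\alpha(O^\times)$, so the Hilbert-symbol bookkeeping you single out as the main obstacle never enters. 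Once $t$ is corrected, your argument coincides with the paper's proof.
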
 
\begin{proof}
For simplicity, in the proof we write $s_K':=s_K \otimes f_\xi$. We first remark that the setting is different from Proposition \ref{P:SL2-K}, since here we only have one conjugacy class $K$ of hyperspecial maximal compact subgroup, but equipped with two splittings. However, the idea of proof in Proposition \ref{P:SL2-K} applies. That is, we will consider the eigenvalue of the normalized intertwining operator $\mca{A}(w_\alpha, \chi)$ applied to the normalized unramified vectors $f_{s_K}$ and $f_{s_K'}$.

We take the model $\Ind_{\wt{A}}^{\wt{T}} (\tchi)$ for $i(\chi)$, where 
$$\tchi: \wt{A} \longrightarrow \C^\times$$ is trivial on $s_K(\mbf{T}(O))$. In this case $\Ind_{\wt{A}U}^{\wt{G}}(\tchi)$ is $(K, s_K)$-unramified and the normalized $(K, s_K)$-unramified vector $f_{s_K} \in \Ind_{\wt{A}U}^{\wt{G}}(\tchi)$ is given by
\begin{equation} \label{F:sK}
f_{s_K}(b\cdot s_K(k)) =
\begin{cases}
\delta_B^{1/2}(t) \cdot \tchi(a) & \text{ if } b=t u \text{ with } t\in \wt{A}, u\in U \text{ and } k\in K,\\
0 & \text{ otherwise.}
\end{cases} 
\end{equation}
One has 
$$\mca{A}(w_\alpha, \chi)(f_{s_K}) = f_{s_K}.$$
To compute $\mca{A}(w_\alpha, \chi)(f_{s_K'})$, we first note that in the model  $\Ind_{\wt{A}U}^{\wt{G}}(\tchi)$ above, the normalized $(K, s_K')$-unramified vector $f_{s_K'}$ is not given by the formula \eqref{F:sK} above.

Recall that the character 
$$s_K'/s_K: \mbf{T}(O)/\mbf{T}_{Q,n}(O) \longrightarrow \mu_n$$
is given by
$$(s_K'/s_K)(k) = \xi \circ \det(k) = [e(\varpi^{n/4}), \det(k)]$$
for every $k\in \mbf{T}(O)$. Fix a lifting 
$$t:=\wt{e(\varpi^{n_\alpha})} \in \wt{T}$$ of $e(\varpi^{n_\alpha})\in T$, where $n_\alpha=n/4$. We see that the model $\Ind_{\wt{A}U}^{\wt{G}} (^t \tchi)$ contains the normalized $(K, s_K')$-unramified vector $f_{s_K'}$ given by
$$
f_{s_K'}(b\cdot s_K'(k)) =
\begin{cases}
\delta_B^{1/2}(a) \cdot (^t\tchi)(a) & \text{ if } b=a u \text{ with } a\in \wt{A}, u\in U \text{ and } k\in K,\\
0 & \text{ otherwise.}
\end{cases} 
$$
In view of the $\wt{G}$-isomorphism (see \eqref{Iso-2m})
$$\Phi_t: \Ind_{\wt{A}U}^{\wt{G}} (\tchi) \longrightarrow \Ind_{\wt{A}U}^{\wt{G}} ({}^t\tchi)$$
we see that $\Phi_t^{-1}(f_{s_K'})$ is the $(K, s_K')$-unramified vector in the model $\Ind_{\wt{A}U}^{\wt{G}} (\tchi)$. We want to compute the constant $c\in \C$ such that 
$$T(w_\alpha, \chi; r_{w_\alpha}^{\rm un})\big( \Phi_t^{-1}(f_{s_K'}) \big) = c \cdot \Phi_t^{-1}(f_{s_K'}).$$
By noting that $\Phi_t^{-1}(f_{s_K'})(t^{-1}) = f_{s_K'}(1)=1$, one obtains
$$\begin{aligned}[t]
c=  T(w_\alpha, \chi; r_{w_\alpha}^{\rm un})(\Phi_t^{-1}(f_{s_K'}))(t^{-1}) = & \int_U (\Phi_t^{-1}(f_{s_K'}))(\wt{w}_\alpha^{-1} u t^{-1}) du \\
 = & \int_U f_{s_K'}(t \cdot (w_\alpha^{-1} t^{-1} w_\alpha)\cdot \wt{w}_\alpha^{-1} t u t^{-1}) du \\
 = & \int_U f_{s_K'}(\wt{h}_\alpha(\varpi^{n_\alpha}) \cdot \wt{w}_\alpha^{-1} t u t^{-1}) du \\
 = & \chi_\alpha \cdot \int_U f_{s_K'}(\wt{w}_\alpha^{-1} u) du.
 \end{aligned}
$$
Now similar (and in fact simpler) argument as in Proposition \ref{P:SL2-K} gives that the last integral is equal to $c_{\sf gk}(w_\alpha, \chi)$ and thus
$$c= \chi_\alpha \cdot c_{\sf gk}(w_\alpha, \chi) = (-1) \gamma(w_\alpha, \chi)^{-1} = (-1) \frac{1+q^{-1}}{2}.$$
This shows that $\mca{A}(w_\alpha, \chi)(f_{s_K'}) = - f_{s_K'}$ as desired and the proof is completed.
\end{proof}

Consider the Whittaker dimension, note that:
\begin{enumerate}
\item[--] if $4\nmid n$, then the hypothesis of Conjecture \ref{C:varW} is satisified, or equivalently the map $\varphi$ is trivial. However, in this case, $I(\chi)$ is always irreducible, and thus Conjecture \ref{C:varW} holds trivially.
\item[--] if $4|n$, then the cover $\wt{\SO}_3$ does not fit into the hypothesis of Conjecture \ref{C:varW}. On the other hand, since $T_{ad}/T =\set{1}$, we see that a priori one expects
\begin{equation} \label{E:SO-4W}
\dim \Wh_\psi(\pi(\phi_\chi, \rho)) = \dim\Wh_{{}^t\psi}(\pi(\phi_\chi, \rho))
\end{equation}
for every $\rho\in \Irr(\mca{S}(\chi_\chi))$ and $t\in T$. 
\end{enumerate}
For $4|n$ we verify below the equality \eqref{E:SO-4W} for reducible $I(\chi)$, for which $\mca{S}(\phi_\chi) = W$. We assume $\mfr{\psi} = O_F$ and $t= e(\varpi) \in T$.

First, writing $n=2k$ with $k$ even, we have the ordered set
$$\mfr{R}= \set{ie: -k/2 + 1 \lest i \lest k/2}$$
of representatives of $Y/Y_{Q,n}$. To apply Proposition \ref{P:tau-p},  we consider the action $w(-)$ on $Y/Y_{Q,n}$. There are two trivial orbits $\set{0}$ and $\set{ke/2}$ in $Y/Y_{Q,n}$. The remaining elements in $Y/Y_{Q,n}$ are formed by free $W$-orbits with respect to the action $w(-)$.  Accordingly, up to a permutation of elements in $\mfr{R}$, we see that the scattering matrix 
$$[\tau(w_\alpha, \chi, \s_z, \s_y)]_{z, y\in \mfr{R}}$$
 is a block-diagonal matrix with each block $\mca{S}_\mfr{R}(w_\alpha, \chi)_{ie}$ described as follows.
\begin{enumerate}
\item[--] There is a size-one block 
$$\mca{S}_\mfr{R}(w_\alpha, \chi)_{0} = \tau(w_\alpha, \chi, \s_0, \s_0)= \frac{\chi_\alpha - q^{-1}}{1-\chi_\alpha} = - \frac{1+ q^{-1}}{2}$$
associated with the trivial $W$-orbit $\set{0}$; similarly, associated to $\set{ke/2}$ one has another size-one block
$$\mca{S}_\mfr{R}(w_\alpha, \chi)_{ke/2} = \tau(w_\alpha, \chi, \s_{ke/2}, \s_{ke/2})= \frac{(1-q^{-1}) \chi_\alpha^2}{1-\chi_\alpha} - q^{-1}\chi_\alpha =  \frac{1+ q^{-1}}{2}.$$
\item[--] For every $1\lest i \lest k/2-1$, one has a size-two block
\begin{equation}
\mca{S}_\mfr{R}(w_\alpha, \chi)_{ie} =
\left(\begin{matrix}
-(1-q^{-1})/2 & {\bf g}_{^t\psi^{-1}}(i)) \\
{\bf g}_{^t\psi^{-1}}(-i) & (1-q^{-1})/2
\end{matrix}\right),
\end{equation}
where the left-upper entry and right-lower entry are equal to $\tau(w_\alpha, \chi, \s_{ie}, \s_{ie})$  and $\tau(w_\alpha, \chi, \s_{-ie}, \s_{-ie})$ respectively.
\end{enumerate}
From the above, we immediately get (writing ${}^e\psi$ for ${}^{e(\varpi)} \psi$)
$$\sigma^{\Wh_{^e\psi}} = (k/2) \cdot \mbm{1} \oplus (k/2) \cdot \varepsilon_W = \sigma^{\Wh_\psi},$$
where the second equality follows from \cite[Proposition 8.3]{Ga7}. Hence, 
$$\dim \Wh_\psi(\pi(\phi_\chi, \rho)) = \dim \Wh_{{}^e\psi}(\pi(\phi_\chi, \rho)) = n/4$$
for any $\rho \in \set{\mbm{1}, \varepsilon_W}= \Irr(\mca{S}(\phi_\chi)$, as expected.

\subsubsection{Covers of $\GL_r$}
It seems to be a folkloric result that a Zelevinsky-type classification in terms of segements of reducibility points of parabolic induction from supercuspidal representations holds for covers of $\GL_r$. However, the classical proof of Zelevinsky relies crucially on the fact that the Whittaker model of the linear $\GL_2$ is unique, and thus the approach is not directly adaptable in the covering settings. If one restricts to the representations with unique Whittaker model, then one can obtain similar results by using the same method as in \cite{BZ1, BZ2, Zel}.

On the other hand, it is expected that every unitary unramified genuine principal series of $\wt{\GL}_r$ is irreducible. Recall that every Brylinski--Deligne cover is associated with $B_Q$ such that
$$B(e_i, e_i) = 2\bfp \text{ and }  B(e_i, e_j) = \bfq \text{ for } i \ne j.$$
One has $Q(\alpha_i) = 2\bfp - \bfq$.

\begin{prop} \label{P:GL-triv}
Let $\wt{\GL}_r$ be an arbitrary Brylinski--Deligne cover of $\GL_r$ such that $n_\alpha\cdot Y \subset Y_{Q,n}$. Then one has 
$$\Gamma_{G_{Q,n}}^{\rm tor} = \set{1}.$$
Hence, every unitary unramified genuine principal series of such cover $\wt{\GL}_r$ is irreducible.
\end{prop}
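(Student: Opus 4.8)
\textbf{Proof proposal for Proposition \ref{P:GL-triv}.}

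The plan is to show that under the hypothesis $n_\alpha \cdot Y \subset Y_{Q,n}$ the principal endoscopic group $G_{Q,n}$ of $\wt{\GL}_r$ has trivial $\Gamma^{\rm tor}$, and then invoke the structure of unitary unramified principal series (via the R-group and the isomorphism $R_\chi \simeq \mca{S}(\phi_\chi)$ recalled in \S \ref{SS:uni-ps}) to conclude irreducibility. Recall that $\Gamma_{G_{Q,n}}^{\rm tor} = (X_{Q,n} \cap X_{G,\Q}^{sc})/X_{Q,n}^{sc}$, as noted in \S \ref{SS:varK}. For $\GL_r$ one has $n_\alpha = n/\gcd(n, 2\bfp - \bfq)$ constant over all roots $\alpha$, and $Y_{Q,n}^{sc}$ is the lattice spanned by $\{n_\alpha \alpha_i^\vee : 1 \lest i \lest r-1\}$; dually, $X_{Q,n}^{sc} = \Z[\alpha_i/n_\alpha : 1 \lest i \lest r-1]$ is the rescaled root lattice. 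First I would make the hypothesis $n_\alpha Y \subset Y_{Q,n}$ explicit on the character side: it is equivalent to $X_{Q,n} \subset \frac{1}{n_\alpha} X$, i.e.\ every element of $X_{Q,n}$ has coordinates in $\frac{1}{n_\alpha}\Z$ in the standard basis $\{e_i^*\}$.

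Next, the heart of the computation: take $x \in X_{Q,n} \cap X_{G,\Q}^{sc}$. Being in $X_{G,\Q}^{sc}$ means $x = \sum_i c_i \alpha_i$ with $c_i \in \Q$, equivalently $x$ lies in the hyperplane $\sum_j \angb{x}{e_j}^* = 0$ (the "trace zero" hyperplane, writing coordinates with respect to $\{e_i^*\}$). By the hypothesis, $x \in \frac{1}{n_\alpha} X$, so $n_\alpha x \in X$ and $n_\alpha x$ is trace-zero, hence $n_\alpha x \in X^{sc} = \Z[\alpha_i]$; write $n_\alpha x = \sum_i m_i \alpha_i$ with $m_i \in \Z$. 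Then $x = \sum_i (m_i/n_\alpha) \alpha_i = \sum_i m_i (\alpha_i/n_\alpha) \in X_{Q,n}^{sc}$. This shows $X_{Q,n} \cap X_{G,\Q}^{sc} \subset X_{Q,n}^{sc}$, and since the reverse inclusion is automatic, $\Gamma_{G_{Q,n}}^{\rm tor} = \set{1}$. I expect this to be the routine part; the only thing to watch is that $X^{sc} = X \cap X_{G,\Q}^{sc}$ for $\GL_r$ (the quotient $X/X^{sc} \simeq \Z$ is torsion-free), which is standard and makes the step $n_\alpha x$ trace-zero $\Rightarrow$ $n_\alpha x \in X^{sc}$ valid.

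For the final assertion, I would argue as follows. By \cite[Theorem 4.9]{Ga7} (cited as \eqref{E:R=S}) one has $R_\chi \simeq \mca{S}_\chi = \mca{S}(\phi_\chi)$ for any unitary $(K,s_K)$-unramified $I(\chi)$, and $I(\chi)$ is irreducible exactly when $R_\chi$ is trivial. Since $\mca{S}(\phi_\chi)$ receives a surjection $\widehat{\Gamma_{G_{Q,n}}^{\rm tor}} \onto \Irr(\mca{S}(\phi_\chi))$ (the map $\tilde f_\Gamma$ of \S \ref{SS:varK}, analogous to $f_\Gamma$ in the linear case), and $\widehat{\Gamma_{G_{Q,n}}^{\rm tor}}$ is dual to $\Gamma_{G_{Q,n}}^{\rm tor} = \set{1}$, we get $\Irr(\mca{S}(\phi_\chi)) = \set{\mbm{1}}$, hence $\mca{S}(\phi_\chi) = \set{1}$ and $R_\chi = \set{1}$. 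Therefore every such $I(\chi)$ is irreducible. The main obstacle, if any, is purely bookkeeping: ensuring that the surjection $\widehat{\Gamma_{G_{Q,n}}^{\rm tor}} \onto \Irr(\mca{S}(\phi_\chi))$ is genuinely available in the generality claimed (it is set up for $G_{Q,n}$ in \S \ref{S:un-L-lin} applied to the linear group $G_{Q,n}$, together with $R_\chi \simeq \mca{S}(\phi_\chi)$), so one should simply cite those two inputs rather than reprove them. Note also that the Kazhdan--Patterson covers satisfy $n_\alpha = n$ and $nY \subset Y_{Q,n}$, so this proposition applies to them, recovering the known irreducibility of their unitary unramified principal series.
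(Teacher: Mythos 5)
Your proposal is correct and follows essentially the same route as the paper: the same lattice computation showing $X_{Q,n}\cap X_{G,\Q}^{sc}\subset X_{Q,n}^{sc}$ from the hypothesis $n_\alpha Y\subset Y_{Q,n}$ (the paper pairs $x$ directly with $n_\alpha e_k\in Y_{Q,n}$ to get $n_\alpha c_i\in\Z$, while you pass through $n_\alpha x\in X$ and torsion-freeness of $X/X^{sc}$, which is the same computation in mild disguise), and the same reduction of irreducibility to triviality of $\Gamma_{G_{Q,n}}^{\rm tor}$ via the embedding $\mca{S}(\phi_\chi)\into\Gamma_{G_{Q,n}}^{\rm tor}$ (you phrase it dually via the surjection onto $\Irr(\mca{S}(\phi_\chi))$) together with $R_\chi\simeq\mca{S}(\phi_\chi)$.
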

\begin{proof}
In view of the embedding $\mca{S}(\phi_\chi) \into \Gamma_{G,{Q,n}}^{\rm tor}$ for every unitary unramified $\chi$, it suffices to prove the triviality of the latter group. 
Recall that
$$\Gamma_{G_{Q,n}}^{\rm tor}  = (X_{Q,n} \cap X_{G,\Q}^{sc})/X_{Q,n}^{sc}.$$
Assume 
$$x\in \sum_{i=1}^{r-1} c_i \cdot \alpha_i \in X_{Q,n} \cap X_{G,\Q}^{sc}$$
with $c_i \in \Q$ and $\alpha_i = e_i^* - e_{i+1}^*$ for each $i$. Since $n_\alpha Y \subset Y_{Q,n}$ by assumption, we have $n_\alpha e_k \in Y_{Q,n}$ for every $1\lest k \lest r-1$. Then we have $\angb{x}{n_\alpha e_k} \in \Z$, that is,
$$n_\alpha c_1\in \Z \text{ and } n_\alpha(c_{i+1} - c_i) \in \Z$$
for every $1\lest i \lest r-1$. This gives that $c_i \in \Z/n_\alpha$ and thus $x\in X_{Q,n}^{sc}$.
\end{proof}

It is clear that Proposition \ref{P:GL-triv} applies to the cases when $n_\alpha = n$ or $\bfq=0$, in particular to the Kazhdan--Patterson covers and Savin covers discussed in \S \ref{SSS:GL}. 

\begin{rmk}
It is  expected that the last assertion in Proposition \ref{P:GL-triv} holds for arbitrary Brylinski--Deligne cover of $\GL_r$, without the assumption $n_\alpha Y \subset Y_{Q,n}$, i.e., we expect the equality $\mca{S}(\phi_\chi)=\set{1}$ for every unitary unramified $\chi$. However, the group $\Gamma_{G_{Q,n}}^{\rm tor}$ may not be trivial in general. For example, consider the Brylinski--Deligne cover $\wt{\GL}_2^{(4)}$ associated with
$$\bfp=1, \ \bfq=2 \text{ and } n=4.$$
Then it is easy to check that $n_\alpha =1$ and $\Gamma_{G_{Q,n}}^{\rm tor} = (\Z\alpha/2)/\Z\alpha \simeq \Z/2\Z$ in this case.
\end{rmk}
%

\section{Analysis for covers of $\GSp_{2r}$} \label{S:GSp}
In this section, we analyze the unitary unramified principal series $I(\chi)$ of $\wt{G}=\GSp_{2r}$ of type I (see \S \ref{SSS:GSp}) for the following three aspects:
\begin{enumerate}
\item[(i)] the determination of the irreducible components $\pi(\phi_\chi, \rho), \rho \in \Irr(\mca{S}(\phi_\chi))$ of an $(K, s_K)$-unramified $I(\chi)$, determine the pair $(K', s_{K'})$ with respect to which $\pi(\phi_\chi, \rho)$ is unramified,
\item[(ii)] determining the $\psi$-Whittaker dimension of each constituent $\pi(\phi_\chi, \rho)$,
\item[(iii)] investigation of the restriction of each $\pi(\phi_\chi, \rho)$ to $\wt{\Sp}_{2r}$ and the pair $(K_0, s_{K_0})$ with respect to which a constituent is unramified, and also how the Whittaker dimension of each constituent varies with respect to different $\psi$-Whittaker datum.
\end{enumerate}
Recall that the $n$-fold cover $\wt{\GSp}_{2r}$ of type I is associated with the quadratic form $Q: Y \to \Z$ such that
$$Q(\alpha_r^\vee)=-1 \text{ and } Q(e_0) =0,$$
where we follow the notations in \S \ref{SSS:GSp} closely.

We fix the hyperspecial maximal compact subgroup $K=\GSp_{2r}(O)$ and a splitting $s_K: K \into \wt{G}$. Then for every 
$$f: K \longrightarrow \mu_n,$$
one obtains a splitting $f\otimes s_K$. 
The homomorphism $f$ factors through the similitude map
$${\rm sim}: \GSp_{2r} \longrightarrow F^\times$$
and thus corresponds to a homomorphism
$$\xi: O^\times/O^{\times n} \longrightarrow \mu_n$$
such that $f = \xi \circ {\rm sim}$. 
We are interested in the subgroup $\Hom(K, \mu_n)^\natural \subset \Hom(K, \mu_n)$, that is, those $f_z$ with $z\in (Y/Y_{Q,n})_K^\natural$. For every $z\in (Y/Y_{Q,n})_K^\natural$, as a function on $\mbf{T}(O) \subset K$, one has
$$f_z(y\otimes u) = [y\otimes u, z(\varpi)],$$
and in particular
$$f_z(e_0(u)) = [e_0(u), z(\varpi)]=(u, \varpi)_n^{B_Q(e_0, z)}$$
and
\begin{equation} \label{E:Y-shp}
1= f_z(\alpha_i^\vee(u))=[\alpha_i^\vee(u), t]=(u, \varpi)_n^{B_Q(\alpha_i^\vee, z)}
\end{equation}
for every $u\in O^\times$ and $1\lest i \lest r$. The equalities in \eqref{E:Y-shp} are equivalent to
$$B_Q(\alpha_i^\vee, z) \in n\Z.$$
for all $1\lest i\lest r$, or equivalently,
$$B_Q(e_i, z) \in n\Z$$
for every $1\lest i \lest r$. We have in this case $(Y/Y_{Q,n})_K^\natural = Y_K^\natural/Y_{Q,n}$, where
$$Y_K^\natural:=\set{z\in Y: B_Q(z, e_i) \in n\Z \text{ for all } i}$$
and $Y_{Q,n}$ is given in \eqref{GSp-L}. Setting $z=\sum_{i=1}^r y_i e_i$, it is easy to see that
\begin{equation} \label{YK-n}
Y_K^\natural =
\left\{
\begin{array}{cc}
\sum_{i=0}^r y_i e_i \in Y: \\
\bullet \quad  n|(-2y_i + y_0) \text{ for every } i
\end{array}
\right\}.
\end{equation}

Let $I(\chi)$ be a $(K,s_K)$-unramified principal series. It is also $(K, f_z\otimes s_K)$-unramified for every $z\in (Y/Y_{Q,n})_K^\natural$. On the other hand, for any $y\in Y$, consider
$$K_y:= y(\varpi) \cdot K \cdot y(\varpi)^{-1}.$$
The representation $I(\chi)$ is unramified with respect to 
 $$(K_y, y \cdot s_K)$$
 as well. Indeed, since $[y(\varpi), \mbf{T}_{Q,n}(O)]=1$, we see that the splittings $y \cdot s_K: K_y \into \wt{G}$ and $s_K: K \into \wt{G}$ agree on $\mbf{T}_{Q,n}(O)$. We also have
 $$(Y/Y_{Q,n})_K^\natural \simeq (Y/Y_{Q,n})_{K_y}^\natural$$
 for every $y\in Y$. Combining the above, we see $I(\chi)$ is $(K_y, f_z \otimes y \cdot s_K)$-unramified for every $z\in (Y/Y_{Q,n})_K^\natural$ and $y\in Y$.
 
We set 
$$K_0 = \Sp_{2r} \cap K= \Sp_{2r}(O),$$ 
and for any $y\in Y$ denote
$$K_{y,0} = K_z \cap \Sp_{2r} = y(\varpi) \cdot  K_0 \cdot y(\varpi)^{-1}.$$
In particular, we write
$$K' = K_{e_0} \text{ and thus } K_0' = K_{e_0, 0}.$$
It follows that $$\mca{K}_0=\set{K_0, K_0'}$$
is the set of conjugacy classes of hyperspecial maximal compact subgroup of $\Sp_{2r}$.
 
For the questions in (i)--(iii), we will give a seperate discussion according to the parity of $n$. 
\subsection{Odd fold cover of $\GSp_{2r}$} 
If $n$ is odd, then it is given in \S \ref{SSS:GSp-odd} that $Y_{Q,n}$ has a $\Z$-basis
$$\set{ne_i: 1\lest i \lest r} \cup \set{n_{(r)} \cdot e_c}$$
and also that
\begin{equation} \label{E:GSp-d1}
\wt{\GSp}_{2r}^\vee = \set{(g, a) \in \GSpin_{2r+1} \times \GL_1: \lambda(g) = a^{\gcd(n, r)}}.
\end{equation}
One has
$$\val{\msc{X}_{Q,n}} = 2n^r \cdot n_{(r)}.$$
Since $n$ is odd, we have $Y_{0, Q,n}= Y_{Q,n}^{sc}$, which is then equal to $Y_0\cap Y_{Q,n}$. In any case, we have for odd $n$ the identity 
$$\msc{X}_{Q,n}^\Gamma = \msc{X}_{Q,n}^\mfr{c}.$$
Moreover, it is easy to see that
$$\set{e_i: 1\lest i \lest r} \cup \set{n_{(r)} e_c}$$
constitutes a basis for $Y_0 + Y_{Q,n}$, and therefore $\val{\msc{X}_{Q,n}^\Gamma} = 2\cdot n_{(r)}$. One has in this case
$$(Y/Y_{Q,n})_K^\natural =\set{i \cdot e_c: \ 0\lest i \lest n_{(r)} -1}.$$
The diagram \eqref{CD:big}  becomes
\begin{equation} \label{CD:GSp-odd}
\begin{tikzcd}
&  & (Y/Y_{Q,n})_K^\natural \ar[r, "{p_\Gamma}"] \ar[d, hook] & \msc{X}_{Q,n}^\mfr{c} \ar[d, equal] \\
0 \ar[r] & \msc{X}_{0,Q,n} \ar[r, hook] & \msc{X}_{Q,n} \ar[r, two heads, "{p_\Gamma}"] & \msc{X}_{Q,n}^\Gamma \ar[r] & 0.
\end{tikzcd}
\end{equation}
One can check that the image of $(Y/Y_{Q,n})_K^\natural$ in $\msc{X}_{Q,n}^\mfr{c}$ via $p_\Gamma$ is of index 2.

Every unitary $(K, s_K)$-unramified genuine principal series $I(\chi)$ is irreducible, in view of \eqref{E:GSp-d1}.  As remarked, for every $y\in Y$ and  $z \in (Y/Y_{Q,n})_K^\natural$,  the representation $I(\chi)$ is also  unramified with respect to
 $$(K_y, f_z \otimes y(\varpi) \cdot s_K).$$

The pair $(\wt{G}, \wt{G}_0)$ is an isotypic pair, and one has the natural map $f_{G, G_0}$ between the $L$-groups arising from the composites as in the following diagram
$$\begin{tikzcd}
\wt{G}^\vee \ar[r, hook] \ar[d, "{f_{G, G_0}}"] &  \GSpin_{2r+1} \times \GL_1 \ar[d, two heads] \\
\wt{G}_0^\vee \ar[r, equal] & \SO_{2r+1}.
\end{tikzcd}$$
By the choice of a distinguished genuine character $\chi_\psi: Z(\wt{T}) \longrightarrow \C^\times$, which by restriction gives a distinguished genuine character $\chi_\psi^0: Z(\wt{T}) \to \C^\times$, one can extend $f_{G, G_0}$ to be a homomorphism of $L$-groups
$$f_{G, G_0}: {}^L\wt{G} \longrightarrow {}^L\wt{G}_0.$$
We have
$$I(\chi)|_{\wt{G}_0} = \val{\msc{X}_{Q,n}^\Gamma} \cdot I(\omega),$$
where $\omega=\chi|_{Z(\wt{T}_0)}$.  For any non-degenerate $\psi$, one has
$$\dim \Wh_\psi(I(\chi)) = 2\cdot n^r \cdot n_{(r)} \text{ and } \dim \Wh_\psi(I(\omega)) = n^r.$$

Note that $I(\omega)$ might be reducible. Indeed, if $\chi_{\alpha_r} = -1$, then 
$$I(\omega) = \pi(\phi_\omega, \mbm{1}) \oplus \pi(\phi_\omega, \varepsilon)$$
where $\varepsilon$ is the nontrivial character of $\mca{S}(\phi_{\omega}) = \set{1, w_{\alpha_r}}$. 
Let $\mca{K}_0 = \set{K_0, K_0'}$ be the above set of conjugacy classes of hyperspecial maximal compact subgroups of $\Sp_{2r}$, as above. Let 
$$s_0: K_0 \into \wt{\Sp}_{2r} \text{ and } s_0': K_0' \into \wt{\Sp}_{2r}$$
be the two unique splittings of $K_0$ and $K_0'$ respectively.

\begin{thm} \label{T:GSp-odd}
For $\wt{\GSp}_{2r}^{(n)}$ with odd $n$ and a unitary $(K, s_K)$-unramified $I(\chi)$, one has
$$I(\chi)|_{\wt{G}_0} = \val{\msc{X}_{Q,n}^\Gamma} \cdot I(\omega)$$
where $\omega = \chi|_{Z(\wt{T}_0)}$. If $I(\omega) = \pi(\phi_\omega, \mbm{1}) \oplus \pi(\phi_\omega, \varepsilon)$ is reducible, and we assume that $\pi(\phi_\omega, \mbm{1})$ is $(K_0, s_0)$-unramified, then $\pi(\phi_\omega, \varepsilon)$ is $(K_0', s_0')$-unramified. Regarding the Whittaker dimension, for an additive character $\psi$ with $\mfr{f}(\psi) = O_F$, one has
$$\dim \Wh_\psi(\pi(\phi_\omega, \mbm{1})) =\frac{n^r + n^{r-1}}{2},\  \dim \Wh_\psi(\pi(\phi_\omega,\varepsilon)) =\frac{n^r - n^{r-1}}{2}$$
and also
$$\dim \Wh_{{}^t\psi}(\pi(\phi_\omega,\mbm{1})) =\frac{n^r - n^{r-1}}{2},\  \dim \Wh_{{}^t\psi}(\pi(\phi_\omega, \varepsilon)) =\frac{n^r + n^{r-1}}{2},$$
where $t:=e_0(\varpi)$.
\end{thm}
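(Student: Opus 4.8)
\textbf{Proof plan for Theorem \ref{T:GSp-odd}.} The statement has three parts: the decomposition $I(\chi)|_{\wt G_0} = \val{\msc X_{Q,n}^\Gamma}\cdot I(\omega)$, the unramifiedness of the two constituents of a reducible $I(\omega)$ with respect to $(K_0,s_0)$ versus $(K_0',s_0')$, and the Whittaker dimension count for $\psi$ and for ${}^t\psi$. The first part is essentially already in hand: since $n$ is odd, $(\wt G,\wt G_0)$ is an isotypic pair by Proposition \ref{P:RI-GSp}(ii) (or directly because $Y_{0,Q,n}=Y_{Q,n}^{sc}=Y_0\cap Y_{Q,n}$), so Corollary \ref{C:decTi} applies with $\msc X_{Q,n}^\Gamma=\msc X_{Q,n}^{\mfr c}$, giving $I(\chi)|_{\wt G_0}=I(\omega)^{\oplus \val{\msc X_{Q,n}^\Gamma}}$ with $\omega=\chi|_{Z(\wt T_0)}$; one also reads off $\val{\msc X_{Q,n}^\Gamma}=2n_{(r)}$ from the explicit basis of $Y_0+Y_{Q,n}$ given above. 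The reducibility of $I(\omega)$ into $\pi(\phi_\omega,\mbm 1)\oplus\pi(\phi_\omega,\varepsilon)$ when $\chi_{\alpha_r}=-1$ is the rank-one $\wt\SL_2$-type phenomenon: $\mca S(\phi_\omega)=\set{1,w_{\alpha_r}}\simeq\Z/2\Z$, and this is exactly the situation treated in Remark \ref{R:Sp2r} for $\wt\Sp_{2r}$ (with $\alpha_r$ the unique long simple root).

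\textbf{The unramifiedness claim.} For the second part I would follow verbatim the argument of Proposition \ref{P:SL2-K}, adapted to $\wt\Sp_{2r}$ as indicated in Remark \ref{R:Sp2r}. Concretely, $K_0'=e_0(\varpi)\cdot K_0\cdot e_0(\varpi)^{-1}$ and the two splittings $s_0,s_0'$ differ on $\mbf T_0(O)$ by the character $[e_0(\varpi),-]$, which restricted through $\alpha_r^\vee$ gives $(\varpi,u)_n^{-B_Q(e_0,\alpha_r^\vee)}$; since $B_Q(e_0,\alpha_r^\vee)=B_Q(e_0,e_r)=Q(e_0)=\pm$ (here $-Q(\alpha_r^\vee)=1$ after the relevant computation in \S\ref{SSS:GSp}), this is a genuinely nontrivial character, so $s_0$ and $s_0'$ are \emph{not} associated. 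One then computes the eigenvalue of the normalized intertwining operator $\mca A(w_{\alpha_r},\omega)=\gamma(w_{\alpha_r},\omega)\cdot T(w_{\alpha_r},\omega;r_{w_{\alpha_r}}^{\rm un})$ applied to the normalized $(K_0',s_0')$-unramified vector, realized inside the model $\Ind_{\wt A_0 U}^{\wt G_0}(\tchi_0)$ after conjugating by a suitable lift $t=\wt{e_0(\varpi^{?})}\in\wt T_0$ via the isomorphism $\Phi_t$ of \eqref{Iso-2m}; splitting the integral over $N_{\alpha_r}$ into $\mfr p_F$ and $U-\mfr p_F$ as in the proof of Proposition \ref{P:SL2-K} yields the Gindikin--Karpelevich scalar $\chi_{\alpha_r}\cdot c_{\sf gk}(w_{\alpha_r},\omega)=(-1)\gamma(w_{\alpha_r},\omega)^{-1}$, i.e.\ the normalized operator acts by $-1$ on that vector. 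Since $\mca A(w_{\alpha_r},\omega)$ acts by $\rho(w_{\alpha_r})\cdot\id$ on $\pi(\phi_\omega,\rho)$, the $(K_0',s_0')$-unramified line lies in the $\varepsilon$-isotypic part; given the labelling convention that $\pi(\phi_\omega,\mbm 1)$ is $(K_0,s_0)$-unramified, this forces $\pi(\phi_\omega,\varepsilon)$ to be $(K_0',s_0')$-unramified, as claimed.

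\textbf{The Whittaker dimensions.} For $\psi$ with $\mfr f(\psi)=O_F$, I would invoke the formula $\dim\Wh_\psi(\pi(\phi_\omega,\rho))=\angb{\rho}{\sigma_0^{\rm Wh}}_{\mca S(\phi_\omega)}$ from \cite[Theorem 5.6]{Ga7} together with the explicit scattering matrix for $\Sp_{2r}$ in the unramified-conductor case, which (by Rodier heredity from the rank-one $\wt\SL_2$ computation, exactly as in Remark \ref{R:Sp2r}) gives $\sigma_0^{\rm Wh}=\tfrac{n^{r-1}(n+1)}{2}\mbm 1\oplus\tfrac{n^{r-1}(n-1)}{2}\varepsilon$; pairing with $\mbm 1$ and $\varepsilon$ yields the two stated values. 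For ${}^t\psi$ with $t=e_0(\varpi)$, which satisfies $\mfr f({}^t\psi)=\mfr p_F$, I would instead use the scattering matrix of Proposition \ref{P:tau-p} with conductor $\mfr p_F$: choosing representatives of $Y_0/Y_{0,Q,n}$ and permuting so that the matrix becomes block-diagonal along $w_{\alpha_r}$-orbits, the size-one block attached to the fixed point flips sign relative to the $O_F$ case (the Gauss-sum term $\g(0)=-1/q$ versus the unramified boundary term), producing $\sigma^{\Wh_{{}^t\psi}}=\tfrac{n^{r-1}(n-1)}{2}\mbm 1\oplus\tfrac{n^{r-1}(n+1)}{2}\varepsilon$, i.e.\ the $\mbm 1$ and $\varepsilon$ multiplicities are interchanged; this is consistent with Conjecture \ref{C:varW} and with Proposition \ref{P:SL2-W}. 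The main obstacle in the whole argument is the second (eigenvalue) computation: one must be careful that the lift $t$ used to pass between the $(K_0,s_0)$- and $(K_0',s_0')$-models lies in $\wt T_0$ (not merely $T_{0,ad}$), that the cocycle identity $\wt w_{\alpha_r}t^{-1}\wt w_{\alpha_r}^{-1}=\wt h_{\alpha_r}(\cdots)$ from \eqref{W-act} is applied with the correct exponent, and that the conductor shift $O_F\leftrightarrow\mfr p_F$ is tracked consistently through the Gauss sums; once these bookkeeping points are pinned down the argument is a routine adaptation of Propositions \ref{P:SL2-K} and \ref{P:SL2-W}.
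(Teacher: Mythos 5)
Your overall route is the same as the paper's: the decomposition $I(\chi)|_{\wt{G}_0}=\val{\msc{X}_{Q,n}^\Gamma}\cdot I(\omega)$ via the isotypic-pair criterion and Corollary \ref{C:decTi}, the $(K_0',s_0')$-unramifiedness of the $\varepsilon$-constituent by the eigenvalue computation of Proposition \ref{P:SL2-K} transported to $\wt{\Sp}_{2r}$ as in Remark \ref{R:Sp2r}, and the Whittaker dimensions from \cite{Ga7} for $\mfr{f}(\psi)=O_F$ together with the conductor-$\mfr{p}_F$ scattering matrix of Proposition \ref{P:tau-p} (equivalently, Rodier heredity through the Levi $\wt{\GL}_{r-1}\times_{\mu_n}\wt{\Sp}_2$, which is the paper's alternative argument) for ${}^t\psi$. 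So the skeleton is fine.

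There is, however, one substantive error in your second paragraph: you claim that $s_0$ and $s_0'$ are \emph{not} associated, and your supporting computation $B_Q(e_0,\alpha_r^\vee)=B_Q(e_0,e_r)=Q(e_0)$ is wrong (for the type I cover $Q(e_0)=0$, while the correct value, which you also quote in the parenthetical, is $B_Q(e_0,e_r)=-Q(\alpha_r^\vee)=1$). The correct conclusion is the opposite: since $n$ is odd one has $Y_{0,Q,n}=nY_0$, and $[e_0(\varpi),y(u)]=(\varpi,u)_n^{B_Q(e_0,y)}$ is trivial for all $y\in nY_0$, so $s_0$ and $s_0'$ \emph{do} agree on $\mbf{T}_{0,Q,n}(O)$, i.e.\ they are associated — this is the odd-$n$ half of the dichotomy in Proposition \ref{P:SL2-K}, not the even-$n$ half. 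This is not a cosmetic point: associatedness (equivalently, Remark \ref{R:all-unram} applied to $\gamma=e_0$ plus uniqueness of the splitting of $K_0'$) is precisely what guarantees that $I(\omega)$ possesses a nonzero $(K_0',s_0')$-fixed vector; if your claim were true, the eigenvalue computation you then perform would be on a zero vector and the theorem's second assertion would be vacuous or false. Relatedly, the auxiliary element used to pass between the two models cannot be written as $e_0(\varpi^k)$, since $e_0\notin Y_0$; as in Proposition \ref{P:SL2-K} one takes $t=z(\varpi)$ with $z=\sum_i c_ie_i\in Y_0$ satisfying $-2c_i\equiv 1 \bmod n$ for all $i$ (solvable because $n$ is odd), so that $[-,t]=[-,e_0(\varpi)]$ on $T_0$; with these corrections your eigenvalue argument, and hence the whole proof, goes through exactly as in the paper.
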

\begin{proof}
The result here is essentially the content of Remark \ref{R:Sp2r}. Indeed, the assertion that $\pi(\phi_\omega,\varepsilon)$ is $(K_0', s_0')$-unramified follows from the same argument as in Proposition \ref{P:SL2-K}. The equalities regarding the $\psi$-Whittaker dimensions follow from \cite{Ga7}, while those for the ${}^{t}\psi$-Whittaker dimension follows from the same argument in Proposition \ref{P:SL2-W}, by noting $\mfr{f}({}^{e_0}\psi) =\mfr{p}_F$.

Alternatively, to prove the equalities on Whittaker dimensions, as noted in Remark \ref{R:Sp2r}, one may reduce (essentially) to the $\wt{\SL}_2^{(n)}$ by parabolic induction. Indeed, let $\wt{M} \subset \wt{P} \subset \wt{\Sp}_{2r}$ be the maximal Levi subgroup associated with $\Delta -\set{\alpha_{r-1}}$. Then one has 
$$\wt{M} \simeq \wt{\GL}_{r-1} \times_{\mu_n} \wt{\Sp}_2$$
and also
$$\omega = \chi_1 \boxtimes \mu.$$
If $\chi_{\alpha_r} =\omega_{\alpha_r} = -1$, equivalently $\mu_{\alpha_r}=-1$, then one has a decomposition of the genuine principal series
$$I(\mu) = \pi_0(\phi_\mu, \mbm{1}) \oplus \pi_0(\phi_\mu, \varepsilon)$$
of $\Sp_{2}$, such that
$$\pi(\phi_\omega, \rho) = \Ind_{\wt{P}}^{\wt{\Sp}_{2r}} I(\chi_1) \boxtimes \pi_0(\phi_\mu,\rho)$$
for $\rho \in \set{\mbm{1}, \varepsilon}=\Irr(\mca{S}(\phi_\chi))$.
Since $\dim_{\psi}(I(\chi_1)) = n^r = \dim_{^t \psi}(I(\chi_1))$, we see that the desired equalities follow directly from Rodier's heredity and the results in Proposition \ref{P:SL2-W}. This also concludes the proof.
\end{proof}

Regarding Conjecture \ref{C:varK} and \ref{C:varW}, we see that the three groups
$$\Hom(K, \mu)^\natural, \ \widehat{\Gamma_G^{\rm tor}} \text{ and } \widehat{ \Gamma_{G_{Q,n}}^{\rm tor} }$$
are all trivial. Also, every unitary $(K, s_K)$-unramified $I(\chi)$ is irreducible. Hence, we see that Conjecture \ref{C:varK} and Conjecture \ref{C:varW} hold trivially.

\subsection{Even fold cover of $\GSp_{2r}$}  
In this subsection, we assume $n=2m$ is even. We fix a unitary $(K, s_K)$-unramified $I(\chi)$ of $\wt{\GSp}_{2r}$, it follows from Theorem \ref{T:decT} that
$$I(\chi)|_{\wt{G}_0}= \val{\msc{X}_{Q,n}^\mfr{c}} \cdot \bigoplus_{\gamma \in \msc{X}_{Q,n}^\Gamma/\msc{X}_{Q,n}^\mfr{c}} \bigoplus_{\omega_{\gamma, j} \in \msc{E}(\chi, {}^\gamma \tchi_O; Z(\wt{T}_0))} I(\omega_{\gamma, j}),$$
where 
$$\val{\msc{E}(\chi, {}^\gamma \tchi_O; Z(\wt{T}_0))} = \val{Y_{0,Q,n}/Y_0 \cap Y_{Q,n}}=2.$$
We also have
$$\val{\msc{X}_{0,Q,n}} = m^r.$$
There are two situations according to the parity of $r$.

\subsubsection{When $r$ is odd}  In this case, as discussed in \S \ref{SSS:GSp-ev}, the set
$$\set{v_0:=n_{(r)} \cdot e_c} \cup \set{me_i + v_0/2: 1\lest i \lest r}$$
constitutes a basis for $Y_{Q,n}$ and
$$\wt{\GSp}_{2r}^\vee = \GSp_{2r}.$$
Thus, every unitary $(K, s_K)$-unramified $I(\chi)$ is irreducible. Moreover,
$$\val{Y/Y_{Q,n}} = 2\cdot m^r \cdot n_{(r)}.$$
It is also easy to see that $Y_0 + Y_{Q,n} \subset Y$ is the $\Z$-span of 
$$\set{e_i: 1\lest i \lest r} \cup \set{v_0/2}$$
and thus
$$\msc{X}_{Q,n}^\Gamma =Y/(Y_0 + Y_{Q,n}) = \set{i e_0: \ 0\lest i \lest n_{(r)}-1},$$
which has size $n_{(r)}$. It is straightforward to compute from its definition that
$$\msc{X}_{Q,n}^\mfr{c} = \set{i e_0: \ 2| i \text{ and } 0\lest i \lest n_{(r)}-1}$$
and thus
$$\msc{X}_{Q,n}^\Gamma/\msc{X}_{Q,n}^\mfr{c} = \set{0, e_0}.$$
We have
\begin{equation} \label{CD:GSp-c2}
\begin{tikzcd}
& 0 \ar[d] \\
& Y_{0,Q,n}/Y_0 \cap Y_{Q,n} \ar[r, hook] \ar[d, hook, "m^r"] & (Y/Y_{Q,n})_K^\natural \ar[r, two heads, "{p_\Gamma}"] \ar[d, hook, "{m^{r-1} \cdot n_{(r)}}"] & \msc{X}_{Q,n}^\mfr{c} \ar[d, hook, "2"] \\
0 \ar[r] & Y_0/Y_0 \cap Y_{Q,n} \ar[d, two heads, "2"] \ar[r, hook, "{n_{(r)}}"] & \msc{X}_{Q,n} \ar[r, two heads, "{2m^r}"] & \msc{X}_{Q,n}^\Gamma \ar[r] & 0  \\
& \msc{X}_{0,Q,n}  \ar[d] \ar[r, equal] & Y_0/Y_{0,Q,n} \\
& 0,
\end{tikzcd}
\end{equation}
where the numbers around an arrow indicate the ratio of the orders of the two groups as the domain and codomain of the pertaining map; for example $2m^r=\val{\msc{X}_{Q,n}}/\val{\msc{X}_{Q,n}^\Gamma}$. Since $\Z e_c/ Y_{Q,n} \subset (Y/Y_{Q,n})_K^\natural$, we see that $p_\Gamma$ is surjective.

From the above, we obtain more explicitly
$$I(\chi)|_{\wt{G}_0}= \frac{n_{(r)}}{2}\cdot (I(\omega_{0, 0}) \oplus I(\omega_{0, 1}))\bigoplus \frac{n_{(r)}}{2}\cdot (I(\omega_{e_0, 0}) \oplus I(\omega_{e_0, 1})).$$
Note that $I(\omega_{0, i}), i =0, 1$ are both $(K_0, s_0)$-unramified. 
We show that $I(\omega_{e_0, i}), i =0 ,1$ are both $(K_0', s_0')$-unramified. Indeed, we have
$$i(\omega_{e_0, i}) = \Ind_{\wt{A}_0}^{\wt{T}_0} (\omega_{e_0, i} \boxtimes {}^{e_0(\varpi)} \tchi_O) \in \Irr(\wt{T}),$$
where
$${}^{e_0(\varpi)} \tchi_O: \mbf{T}_0(O) \longrightarrow \C^\times$$
is the twisted character. Since the character $\tchi_O$ restricted to $s_0(\mbf{T}_0(O))$ is trivial and by definition
$$s_0'(k') = e_0(\varpi) \cdot s_0\big(e_0(\varpi)^{-1} \cdot k' \cdot e_0(\varpi)\big) \cdot e_0(\varpi)^{-1}$$
for every $k'\in K_0'$, it gives that
$${}^{e_0(\varpi)} \tchi_O(s_0'(k)) = \tchi_O \circ s_0(k) = 1$$ 
for every $k\in \mbf{T}_0(O)$. Thus, by the Satake isomorphism, every $I(\omega_{e_0, i})$ for $j =0, 1$ is $(K_0', s_0')$-unramified.

Since the dual group $\wt{\Sp}_{2r}^\vee= \Sp_{2r}$, we have $\mca{S}(\phi_{\omega_{\gamma, j}}) = \set{1}$ and thus every $I(\omega_{\gamma, i})$ is irreducible. We give a summary of the above discussion.

\begin{thm} \label{T:nev-rod}
Assume $n=2m$ and $r$ is odd. Then every $(K, s_K)$-unramified unitary principal series $I(\chi)$ of $\wt{\GSp}_{2r}$ is irreducible,  and one has
$$I(\chi)|_{\wt{G}_0}= \frac{n_{(r)}}{2}\cdot (I(\omega_{0, 0}) \oplus I(\omega_{0, 1}))\bigoplus \frac{n_{(r)}}{2}\cdot (I(\omega_{e_0, 0}) \oplus I(\omega_{e_0, 1}))$$
where $I(\omega_{0, j}), j=0, 1$ is $(K_0, s_0)$-unramified and $I(\omega_{e_0, j}), j=0, 1$ is $(K_0', s_0')$-unramified. Moreover, for every $\gamma \in \msc{X}_{Q,n}^\Gamma/\msc{X}_{Q,n}^\mfr{c}$ and $j\in \set{0, 1}$ one has
$$\dim \Wh_\psi( I(\omega_{\gamma, j})) = m^r$$
for any non-degenerate character $\psi$.
\end{thm}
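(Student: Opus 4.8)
The plan is to assemble Theorem \ref{T:nev-rod} from three ingredients already developed in the excerpt: the general restriction formula of Theorem \ref{T:decT}, the explicit lattice computations in \S \ref{SSS:GSp-ev} for the even-fold cover of $\GSp_{2r}$ with $r$ odd, and the basic Satake/unramifiedness bookkeeping together with the Whittaker-dimension formula for genuine principal series of a covering torus. First I would record the structural input: since $r$ is odd, \S \ref{SSS:GSp-ev} gives that $\set{v_0}\cup\set{me_i+v_0/2}$ is a $\Z$-basis of $Y_{Q,n}$ and hence $\wt{\GSp}_{2r}^\vee=\GSp_{2r}$. Irreducibility of every unitary $(K,s_K)$-unramified $I(\chi)$ then follows because the relevant $\mca{S}$-group embeds into $\Gamma_{G_{Q,n}}^{\rm tor}$, which is trivial here (the derived group of $\GSp_{2r}$ being simply connected and the center contributing only the $\GL_1$-factor, so no torsion appears); alternatively one invokes directly that $R_\chi\simeq\mca{S}(\phi_\chi)$ and the R-group is trivial for a regular/unitary unramified character of a cover whose dual group has this shape. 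This is the quickest step.

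Next I would compute the combinatorial constants. From the lattice data I would verify $\val{\msc{X}_{Q,n}}=2m^r n_{(r)}$, $\val{\msc{X}_{Q,n}^\Gamma}=n_{(r)}$, $\val{\msc{X}_{Q,n}^\mfr{c}}=n_{(r)}/2$ (so that $[\msc{X}_{Q,n}^\Gamma:\msc{X}_{Q,n}^\mfr{c}]=2$ with representatives $\set{0,e_0}$), and $\val{Y_{0,Q,n}/(Y_0\cap Y_{Q,n})}=\val{\msc{E}(\chi,{}^\gamma\tchi_O;Z(\wt{T}_0))}=2$; these are exactly the entries displayed in the commutative diagram \eqref{CD:GSp-c2}. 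Feeding $\val{\msc{X}_{Q,n}^\mfr{c}}=n_{(r)}/2$ and $\val{\msc{X}_{Q,n}^\Gamma/\msc{X}_{Q,n}^\mfr{c}}=2$ into Theorem \ref{T:decT}(ii) yields the claimed decomposition
$$I(\chi)|_{\wt{G}_0}= \frac{n_{(r)}}{2}\cdot (I(\omega_{0, 0}) \oplus I(\omega_{0, 1}))\bigoplus \frac{n_{(r)}}{2}\cdot (I(\omega_{e_0, 0}) \oplus I(\omega_{e_0, 1})),$$
and each $I(\omega_{\gamma,j})$ is irreducible because $\wt{\Sp}_{2r}^\vee=\Sp_{2r}$ forces $\mca{S}(\phi_{\omega_{\gamma,j}})=\set{1}$.

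Then I would settle the unramifiedness assertions. By construction $\gamma=0$ gives $\omega_{0,j}\in\msc{E}(\chi,\tchi_O;Z(\wt{T}_0))$, so Theorem \ref{T:decT}(iii) directly yields that $I(\omega_{0,j})$ is $(K_0,s_0)$-unramified for $j=0,1$. For $\gamma=e_0$ I would argue as in Remark \ref{R:all-unram} and the $\wt{\GSp}_{2r}$ computation surrounding it: $i(\omega_{e_0,j})=\Ind_{\wt{A}_0}^{\wt{T}_0}(\omega_{e_0,j}\boxtimes {}^{e_0(\varpi)}\tchi_O)$, and since $\tchi_O$ is trivial on $s_0(\mbf{T}_0(O))$ while $s_0'$ is the conjugate splitting $s_0'(k')=e_0(\varpi)\cdot s_0(e_0(\varpi)^{-1}k'e_0(\varpi))\cdot e_0(\varpi)^{-1}$, one gets ${}^{e_0(\varpi)}\tchi_O\circ s_0'=\tchi_O\circ s_0=\mbm{1}$ on $\mbf{T}_0(O)$; by the Satake isomorphism this forces each $I(\omega_{e_0,j})$ to be $(K_0',s_0')$-unramified. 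Finally the Whittaker-dimension statement is immediate: $\dim\Wh_\psi(I(\omega_{\gamma,j}))=\val{\msc{X}_{0,Q,n}}=\val{Y_0/Y_{0,Q,n}}=m^r$ for every non-degenerate $\psi$, since the Whittaker dimension of an unramified genuine principal series of $\wt{\Sp}_{2r}$ depends only on the lattice quotient $\msc{X}_{0,Q,n}$, which I computed above.

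\textbf{Main obstacle.} The only genuinely delicate point is pinning down $\val{\msc{X}_{Q,n}^\mfr{c}}$ — i.e. showing $[\msc{X}_{Q,n}^\Gamma:\msc{X}_{Q,n}^\mfr{c}]=2$ with representatives $\set{0,e_0}$ — because this requires tracing through the definition of $Y^\mfr{c}$ (the condition $B_Q(y,y')\in n\Z$ for $y'\in Y_{0,Q,n}$) against the explicit form of $B_Q$ for the type I cover with $Q(e_0)=0$, and checking that $e_0\notin Y^\mfr{c}$ while $2e_0\in Y^\mfr{c}$ modulo $Y_0+Y_{Q,n}$; the parity hypothesis that $r$ is odd enters exactly here. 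Everything else is a routine assembly of results already proved in the excerpt.
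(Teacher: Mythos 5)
Your proposal is correct and follows essentially the same route as the paper: the lattice computations of \S\ref{SSS:GSp-ev} for $r$ odd (basis of $Y_{Q,n}$, $\wt{\GSp}_{2r}^\vee=\GSp_{2r}$, hence triviality of the relevant component group and irreducibility of $I(\chi)$), the counts $\val{\msc{X}_{Q,n}^\Gamma}=n_{(r)}$, $[\msc{X}_{Q,n}^\Gamma:\msc{X}_{Q,n}^\mfr{c}]=2$, $\val{\msc{E}(\chi,{}^\gamma\tchi_O;Z(\wt{T}_0))}=2$ fed into Theorem \ref{T:decT}, the conjugated-splitting/Satake argument showing $I(\omega_{e_0,j})$ is $(K_0',s_0')$-unramified, and $\dim\Wh_\psi(I(\omega_{\gamma,j}))=\val{\msc{X}_{0,Q,n}}=m^r$. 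The only difference is cosmetic: you spell out the embedding $\mca{S}(\phi_\chi)\into\Gamma_{G_{Q,n}}^{\rm tor}$ that the paper leaves implicit when deducing irreducibility from the shape of the dual group.
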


Similar to the odd-fold cover of $\GSp_{2r}$, we see that if $n$ is even with $r$ odd, the two groups $\widehat{\Gamma_G^{\rm tor}}$ and $\widehat{\Gamma_{G_{Q,n}}^{\rm tor}}$ are trivial. This shows that Conjecture \ref{C:varK} and Conjecture \ref{C:varW} hold trivially as well.

\subsubsection{When $r$ is even}  In this case, we have $Y_{Q,n} = Y_{Q,n}^{sc} \oplus \Z v_0$ with $v_0=n_{(r)}\cdot e_c$ as above. Thus, 
$$\set{m(e_i-e_{i+1}): 1\lest i \lest r-1} \cup \set{ne_r} \cup \set{v_0}$$
constitutes a basis for $Y_{Q,n}$, and also
$$\wt{\GSp}_{2r}^\vee = {\rm PGSp}_{2r} \times \GL_1,$$
see \S \ref{SSS:GSp-ev}. One has
$$\val{\msc{X}_{Q,n}} = 4m^r \cdot n_{(r)}.$$
It is easy to see that
$Y_0 + Y_{Q,n}$ is the $\Z$-span of 
$$\set{e_i: 1\lest i \lest r} \cup \set{v_0=n_{(r)} \cdot e_c}.$$
Thus
$$\msc{X}_{Q,n}^\Gamma =\set{ie_c + c e_0: \ 0\lest i \lest n_{(r)}-1 \text{ and } 0\lest c \lest 1}$$
with $\val{\msc{X}_{Q,n}^\Gamma} = 2\cdot n_{(r)}$. In view of the map
$$\mfr{c}: \msc{X}_{Q,n}^\Gamma \longrightarrow \Hom(\mbf{T}_0(O) \cap Z(\wt{T}_0), \mu_n),$$
we see that
$$\msc{X}_{Q,n}^\mfr{c}:={\rm Ker}(\mfr{c}) = \set{ie_c: 0\lest i \lest n_{(r)}-1} \subset \msc{X}_{Q,n}^\Gamma.$$
We have
\begin{equation} \label{CD:GSp-c3}
\begin{tikzcd}
& 0 \ar[d] \\
& Y_{0,Q,n}/Y_0 \cap Y_{Q,n} \ar[r, hook] \ar[d, hook, "m^r"] & (Y/Y_{Q,n})_K^\natural \ar[r, two heads, "{p_\Gamma}"] \ar[d, hook, "{m^{r-1} \cdot n_{(r)}}"] & \msc{X}_{Q,n}^\mfr{c} \ar[d, hook, "2"] \\
0\ar[r] & Y_0/Y_0 \cap Y_{Q,n} \ar[d, two heads, "2"] \ar[r, hook, "{2n_{(r)}}"] & \msc{X}_{Q,n} \ar[r, two heads, "{2m^r}"] & \msc{X}_{Q,n}^\Gamma \ar[r] & 0  \\
& \msc{X}_{0,Q,n}  \ar[d] \ar[r, equal] & Y_0/Y_{0,Q,n} \\
& 0
\end{tikzcd}
\end{equation}
where the map $p_\Gamma$ is surjective. In particular, 
$$I(\chi)|_{\wt{G}_0}= n_{(r)}\cdot (I(\omega_{0, 0}) \oplus I(\omega_{0, 1}))\bigoplus n_{(r)}\cdot (I(\omega_{e_0, 0}) \oplus I(\omega_{e_0, 1})).$$

Note that $I(\chi)$ itself might be reducible, exactly when $\chi$ satisfies that
$$\chi_{\alpha_i} = -1 \text{ for all } i=2k-1, 1\lest k \lest r/2.$$
In this case $\mca{S}(\chi_\chi)=\set{1, w}$ with $w= w_{\alpha_1} w_{\alpha_3} ... w_{\alpha_{r-1}}$ and we write
$$I(\chi) = \pi(\phi_\chi, \mbm{1}) \oplus \pi(\phi_\chi, \varepsilon),$$
where $\varepsilon \in \Irr(\mca{S}(\phi_\chi))$ is the nontrivial character.

\begin{prop} \label{P:GSp-ee}
Assume $I(\chi)$ is unitary $(K, s_K)$-unramified and reducible, and assume $\pi(\phi_\chi, \mbm{1}) \subset I(\chi)$ is the unique $(K, s_K)$-unramified constituent. Then $\pi(\phi_\chi, \varepsilon)$ is the unique $(K, f_{me_r} \otimes s_{K})$-unramified constituent.
\end{prop}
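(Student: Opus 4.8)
The plan is to follow the pattern of Propositions \ref{P:SL2-K} and \ref{P:SO3}: by the identification $\C[\mca{S}(\phi_\chi)] \simeq \End_{\wt{G}}(I(\chi))$ sending $w \mapsto \mca{A}(w, i(\chi))$, the constituents $\pi(\phi_\chi, \mbm{1})$ and $\pi(\phi_\chi, \varepsilon)$ are exactly the $(+1)$- and $(-1)$-eigenspaces of the normalized intertwining operator $\mca{A}(w, i(\chi))$ attached to the generator $w = w_{\alpha_1} w_{\alpha_3} \cdots w_{\alpha_{r-1}}$ of $\mca{S}(\phi_\chi) = \set{1, w}$. Hence it suffices to compute the scalar by which $\mca{A}(w, i(\chi))$ acts on the normalized $(K, f_{me_r}\otimes s_K)$-unramified vector of $I(\chi)$ and check that it equals $-1$. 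The uniqueness part is then automatic: since $me_r$ represents a class in $(Y/Y_{Q,n})_K^\natural$ (one checks $n \mid (-2y_i + y_0)$ for all $i$ with $z = me_r$), the representation $I(\chi)$ is $(K, f_{me_r}\otimes s_K)$-unramified with a one-dimensional space of $(K, f_{me_r}\otimes s_K)$-fixed vectors, which therefore lies in a single constituent; and $f_{me_r}$ is a nontrivial character of $K$ (e.g.\ $f_{me_r}(e_0(u)) = (u, \varpi)_n^m$), so the two splittings genuinely distinguish the two constituents.

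First I would record the arithmetic. Put $t := \s_{me_r} \in \wt{T}$; then $[-, t]|_{\mbf{T}(O)} = f_{me_r}$, so if $\tchi = \chi \boxtimes \tchi_O$ is the extension of $\chi$ trivial on $s_K(\mbf{T}(O))$, the twisted character ${}^t\tchi$ is trivial on $(f_{me_r}\otimes s_K)(\mbf{T}(O))$, and via the $\wt{G}$-isomorphism $\Phi_t$ of \eqref{Iso-2m} the $(K, f_{me_r}\otimes s_K)$-unramified vector in the model $\Ind_{\wt{A}U}^{\wt{G}}(\tchi)$ is $\Phi_t^{-1}(f_{s_K'})$, where $f_{s_K'}$ is the evident unramified vector in $\Ind_{\wt{A}U}^{\wt{G}}({}^t\tchi)$. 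Next, the reflections $w_{\alpha_1}, w_{\alpha_3}, \dots, w_{\alpha_{r-1}}$ pairwise commute (their indices differ by at least $2$) and each lies in $W(\chi)$ since $R_\chi \subset W(\chi)$, so $\mca{A}(w, i(\chi))$ factors as $\mca{A}(w_{\alpha_1}, i(\chi)) \circ \cdots \circ \mca{A}(w_{\alpha_{r-1}}, i(\chi))$. The one point that matters is that, by \eqref{W-act}, conjugating $t$ past $\wt{w}$ produces the factor $\prod_{i\text{ odd}} \wt{h}_{\alpha_i}(\alpha_i(t)^{-1})$, and a direct computation gives $\alpha_i(t) = \varpi^{-m\delta_{i, r-1}}$ for $i \in \set{1, 3, \dots, r-1}$ (because $\langle e_r, \alpha_i \rangle = -\delta_{i, r-1}$), so only the term attached to $\alpha_{r-1}$ survives, namely $\wt{h}_{\alpha_{r-1}}(\varpi^m)$. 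Since $n_{\alpha_{r-1}} = n/\gcd(n, Q(\alpha_{r-1}^\vee)) = 2m/2 = m$ for covers of type I, one has $\wt{h}_{\alpha_{r-1}}(\varpi^m) = \wt{h}_{\alpha_{r-1}}(\varpi^{n_{\alpha_{r-1}}})$, whence $\chi(\wt{h}_{\alpha_{r-1}}(\varpi^m)) = \chi_{\alpha_{r-1}} = -1$ by the reducibility hypothesis on $\chi$.

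With these preliminaries, the evaluation of $T(w, i(\chi); r_w^{\rm un})(\Phi_t^{-1}(f_{s_K'}))$ at $\wt{t}^{-1}$ proceeds factor by factor exactly as in Propositions \ref{P:SL2-K} and \ref{P:SO3}. One notes that $s_K$ and $f_{me_r}\otimes s_K$ in fact agree on each $\mbf{G}_{\alpha_i}(O)$ (the twist $f_{me_r}$ is trivial on every $\alpha_i^\vee(O^\times)$ and on unipotents), so the integral for each $w_{\alpha_i}$ is the usual Gindikin--Karpelevich computation, contributing $\gk(w_{\alpha_i}, \chi)$ for $i \in \set{1, 3, \dots, r-3}$ (where $\alpha_i(t) = 1$) and $\chi_{\alpha_{r-1}} \cdot \gk(w_{\alpha_{r-1}}, \chi)$ for $i = r-1$, the extra $\chi_{\alpha_{r-1}}$ arising solely from the leftover $\wt{h}_{\alpha_{r-1}}(\varpi^m)$. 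Multiplying these and using $\mca{A}(w, i(\chi)) = \gamma(w, \chi) \cdot T(w, i(\chi); r_w^{\rm un})$ with $\gamma(w, \chi) = \prod_i \gk(w_{\alpha_i}, \chi)^{-1}$ yields $\mca{A}(w, i(\chi))(\Phi_t^{-1}(f_{s_K'})) = \chi_{\alpha_{r-1}} \cdot \Phi_t^{-1}(f_{s_K'}) = -\,\Phi_t^{-1}(f_{s_K'})$, so the $(K, f_{me_r}\otimes s_K)$-unramified vector lies in $\pi(\phi_\chi, \varepsilon)$, as desired.

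The main obstacle I expect is bookkeeping rather than substance: carrying the normalizing $\gamma$-factors and the canonical-splitting cocycles cleanly through the composite of the $r/2$ commuting reflections, and confirming that the twisting element $t$ perturbs only the $\alpha_{r-1}$-factor, with the perturbation precisely $\chi_{\alpha_{r-1}}$ and no stray Gauss sums or signs. The rank-one inputs are already available in Propositions \ref{P:SL2-K}--\ref{P:SO3}, so this should reduce to a careful but routine verification; along the way one should also double-check that $me_r \notin Y_{Q,n}$ (it fails $n \mid (y_1 + \cdots + y_r)$) so that $f_{me_r}\otimes s_K$ is genuinely a different, admissible splitting.
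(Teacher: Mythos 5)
Your proposal is, in substance, the paper's own argument: the paper also sets $t=me_r(\varpi)$, passes to the model $\Ind_{\wt{A}U}^{\wt{G}}({}^t\tchi)$ via $\Phi_{\wt{t}}$, and computes the eigenvalue of $\mca{A}(w,\chi)$ on the $(K, f_{me_r}\otimes s_K)$-spherical vector, the whole point being $wtw^{-1}t^{-1}=\wt{h}_{\alpha_{r-1}}(\varpi^m)\in Z(\wt{T})$ and $\chi(\wt{h}_{\alpha_{r-1}}(\varpi^m))=\chi_{\alpha_{r-1}}=-1$, with the Gindikin--Karpelevich constant absorbed by the normalizing $\gamma$-factor; the only difference is that the paper treats $w$ in one stroke through a commutative diagram of translation maps rather than factoring $\mca{A}(w,\chi)$ into rank-one operators. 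One inaccuracy should be repaired: it is not true that each individual $w_{\alpha_i}$ lies in $W(\chi)$ --- $R_\chi\subset W(\chi)$ only puts the product $w$ there, and in fact for $y=m\alpha_2^\vee=m(e_2-e_3)\in Y_{Q,n}$ one finds ${}^{w_{\alpha_1}}\chi(\s_y)=\chi_{\alpha_1}^{\pm 1}\cdot\chi(\s_y)=-\chi(\s_y)$, so $w_{\alpha_1}\notin W(\chi)$. This does not derail your computation: the unnormalized operators factor along the reduced word by the cocycle relation with intermediate twisted characters, and since $\alpha_1,\alpha_3,\dots,\alpha_{r-1}$ are pairwise orthogonal the rank-one constants are exactly the $c_{\sf gk}(w_{\alpha_i},\chi)$ you wrote; but the justification must be the cocycle relation, not membership of the individual reflections in $W(\chi)$.
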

\begin{proof} 
First note that $me_r \in (Y/Y_{Q,n})_K^\natural$, in view of \eqref{YK-n}. The idea of the proof is the same as Proposition \ref{P:SO3}, though here we are dealing with an intertwining operator associated with 
$$w=w_{\alpha_1} w_{\alpha_3} ... w_{\alpha_{r-1}}.$$
Let $\tchi$ be a genuine character of $\wt{A}$ extending $\chi$ and trivial on $s_K|_{\mbf{T}(O)}$. For the proof, we set 
$$t=me_r(\varpi) \text{ and } s_K'=f_{me_r}\otimes s_K.$$
Then ${}^t \tchi$ also extends $\chi$ and is trivial on $s_K'$ restricted to $\mbf{T}(O)$. 

Let $\wt{t}$ be any lifting of $t$. One has the isomorphism
$$\Phi_{\wt{t}}: {\rm Ind}_{\wt{A}U}^{\wt{G}}(\tchi) \longrightarrow \Ind_{\wt{A}U}^{\wt{G}} ({}^t \tchi)$$
between the two models given by 
$$\Phi_{\wt{t}}(f)(g) = f(\wt{t}^{-1} g).$$ 
Let $f_{s_K'} \in \Ind_{\wt{A}U}^{\wt{G}} ({}^t \tchi)$ be the $(K, s_K')$-unramified vector, which gives $\Phi_{\wt{t}}^{-1}(f_{s_K'}) \in {\rm Ind}_{\wt{A}U}^{\wt{G}}(\tchi)$. We want to compute the number $c\in \C$ such that
$$T(w, \tchi; r_{w_\alpha}^{\rm un})\big( \Phi_{\wt{t}}^{-1}(f_{s_K'}) \big) = c \cdot \Phi_{\wt{t}}^{-1}(f_{s_K'}).$$

We have the following commutative diagram
$$\begin{tikzcd}
I(\tchi) \ar[rr, "{T(w, \tchi)}"] \ar[d, "{\Phi_{\wt{t}}}"] & & I({}^{w} \tchi) \ar[d, "{\Phi_{wtw^{-1}}}"] \ar[rrd, "{\Phi_{\wt{t}}}"] \\
I(^t \tchi) \ar[rr, "{T(w, {}^t\tchi)}"] & & I(^w (^t \tchi)) \ar[rr, "{ \Phi_{twt^{-1} w^{-1}} }"'] & &  I(^t(^w \tchi)),
\end{tikzcd} $$
where
$${}^w \tchi = \tchi.$$
Let $c(w, {}^t\tchi) \in \C$ be the number such that
$$\Phi_{\wt{t}} \circ T(w, \tchi) \circ \Phi_{\wt{t}}^{-1}( f_{s_K'} ) = c(w, {}^t \tchi) \cdot f_{s_K'}.$$
Thus, we get
$$c(w, {}^t \tchi)  = T(w, {}^t\tchi)( f_{s_K'} )(wtw^{-1} t^{-1}).$$
Now we see that
$$wtw^{-1} t^{-1} = w_{r-1} \cdot \wt{me_r(\varpi)}  \cdot w_{r-1}^{-1} \cdot \wt{me_r(\varpi)}^{-1} = \wt{h}_{\alpha_{r-1}}(\varpi^m) \in Z(\wt{T}).$$
It then follows that
$$c(w, {}^t \tchi) = {}^w \chi( \wt{h}_{\alpha_{r-1}}(\varpi^m)  ) \cdot c_{\sf gk}(w, \chi) = (-1) \cdot c_{\sf gk}(w, \chi).$$
This shows that the eigenvalue of 
$$\mca{A}(w, \chi): I(\tchi) \longrightarrow I(\tchi)$$
 is equal to $-1$ on the $(K, s_K')$-normalized unramified vector $\Phi_t^{-1}(f_{s_k'}) \in I(\tchi)$. This concludes the proof.
\end{proof}

Regarding Conjecture \ref{C:varK}, we have $\widehat{\Gamma_G^{\rm tor}}=\set{1}$ and $\widehat{\Gamma_{G_{Q,n}}^{\rm tor}}=\Z/2\Z$, where the nontrivial element is equal to $\varphi \circ h(me_r)$ with the function $\varphi \circ h$ given as in \eqref{CD:K2}. Thus, we see that Proposition \ref{P:GSp-ee} implies Conjecture \ref{C:varK} for $z=me_r$ (while $y=1$). It is not hard to verify Conjecture \ref{C:varK} for any other $z'\in \Hom(K, \mu)^\natural$. Thus, Conjecture \ref{C:varK} holds for even fold cover of $\GSp_{2r}$, where $r$ is also even.

On the other hand, the hypothesis of Conjecture \ref{C:varW} is not satisfied for such cover of $\GSp_{2r}$. We analyze the Whittaker dimension $\dim \Wh_\psi(\pi(\phi_\chi, \rho))$ as follows. It follows from the analysis in \S \ref{SS:uni-ps} that 
$$I(\omega_{0, 0}) \simeq I(\omega_{0, 1})$$
since $\omega_{0, 1} = {}^w \omega_{0, 0}$, and that these two principal series are both $(K_0, s_0)$-unramified. On the other hand, we have
$$I(\omega_{e_0, 0}) \simeq I(\omega_{e_0, 1}),$$
which are both $(K_0', s_0')$-unramified. Note that Theorem \ref{T:uni-func} implies that $\pi(\phi_\chi, \mbm{1})|_{\wt{G}_0}$ and $\pi(\phi_\chi, \varepsilon)|_{\wt{G}_0}$  both contain $n_{(r)}\cdot I(\omega_{0,0})$. Since $I(\chi)$ is also $(K_{e_0}, e_0 \cdot s_K)$-unramified, it is easy to see that $I(\omega_{e_0, i})$ is $(K_0', s_0')$-unramified. Theorem \ref{T:uni-func} then also implies $n_{(r)}\cdot I(\omega_{e_0, 0})$ is contained in both $\pi(\mbm{1})$ and $\pi(\varepsilon)$. Thus, as representations of $\wt{G}_0$, we have
$$\pi(\phi_\chi, \mbm{1})|_{\wt{G}_0} \simeq \pi(\phi_\chi, \varepsilon)|_{\wt{G}_0} \simeq n_{(r)} \cdot I(\omega_{0, 0}) \bigoplus n_{(r)} \cdot I(\omega_{e_0, 0}) \simeq n_{(r)} \cdot I(\omega_{0, 1}) \bigoplus n_{(r)} \cdot I(\omega_{e_0, 1}).$$
For every nondegenerate $\psi$, we have
$$\dim \Wh_\psi(\pi(\phi_\chi, \mbm{1})) = \dim \Wh_\psi(\pi(\phi_\chi, \varepsilon)) = 2m^r \cdot n_{(r)}.$$
We give a summary of this as follows.

\begin{thm} \label{T:GSp-ee}
For $\wt{\GSp}_{2r}^{(n)}$ with $n$ and $r$ both even, a unitary $(K, s_K)$-unramified principal series $I(\chi)$ has the decomposition
$$I(\chi)|_{\wt{G}_0}= n_{(r)}\cdot (I(\omega_{0, 0}) \oplus I(\omega_{0, 1}))\bigoplus n_{(r)}\cdot (I(\omega_{e_0, 0}) \oplus I(\omega_{e_0, 1})),$$
where $I(\omega_{0, j})$ is $(K_0, s_0)$-unramified and $I(\omega_{e_0, j})$ is $(K_0', s_0')$-unramified. Here every $I(\omega_{\gamma, j})$ is irreducible. If $I(\chi) =\pi(\phi_\chi, \mbm{1}) \oplus \pi(\phi_\chi, \varepsilon)$ is reducible, then 
$$\pi(\phi_\chi, \mbm{1})|_{\wt{G}_0} \simeq \pi(\phi_\chi, \varepsilon)|_{\wt{G}_0} \simeq n_{(r)} \cdot I(\omega_{0, 0}) \bigoplus n_{(r)} \cdot I(\omega_{e_0, 0}) \simeq n_{(r)} \cdot I(\omega_{0, 1}) \bigoplus n_{(r)} \cdot I(\omega_{e_0, 1}).$$
Regarding the Whittaker dimension, one has
$$\dim \Wh_\psi(I(\omega_{\gamma, j})) = m^r$$
for every non-degenerate $\psi$.
\end{thm}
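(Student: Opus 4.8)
\textbf{Proof plan for Theorem \ref{T:GSp-ee}.}

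The plan is to combine the general restriction theorem (Theorem \ref{T:decT}), the explicit lattice computations already recorded in the excerpt, and the multiplicity formula for unitary unramified principal series (Theorem \ref{T:uni-func}), then feed the structural data into the known R-group description for $\wt{\GSp}_{2r}^{(2)}$-type situations. First I would invoke the even-even case of \S \ref{SSS:GSp-ev}, so that $Y_{Q,n}=Y_{Q,n}^{sc}\oplus\Z v_0$ with $v_0=n_{(r)}e_c$ and $\wt{\GSp}_{2r}^\vee={\rm PGSp}_{2r}\times\GL_1$; this immediately gives $\wt{G}_0^\vee=\Sp_{2r}$, $\wt{H}^\vee={\rm PGSp}_{2r}$, hence each $I(\omega_{\gamma,j})$ is irreducible because $\mca{S}(\phi_{\omega_{\gamma,j}})=\set{1}$ (the dual group is $\Sp_{2r}$, which is of adjoint-coroot type in the relevant sense, so the R-group is trivial for $\wt{G}_0$). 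Then I would read off from the commutative diagram \eqref{CD:GSp-c3} that $\val{\msc{X}_{Q,n}^\mfr{c}}=n_{(r)}$, $\msc{X}_{Q,n}^\Gamma/\msc{X}_{Q,n}^\mfr{c}=\set{0,e_0}$, and $\val{\msc{E}(\chi,{}^\gamma\tchi_O;Z(\wt{T}_0))}=\val{Y_{0,Q,n}/Y_0\cap Y_{Q,n}}=2$. Plugging into Theorem \ref{T:decT}(ii) yields exactly the displayed decomposition $I(\chi)|_{\wt{G}_0}=n_{(r)}\cdot(I(\omega_{0,0})\oplus I(\omega_{0,1}))\bigoplus n_{(r)}\cdot(I(\omega_{e_0,0})\oplus I(\omega_{e_0,1}))$.

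Next I would establish the unramifiedness claims. By Theorem \ref{T:decT}(iii), $I(\omega_{0,j})$ is $(K_0,s_K)$-unramified since $\gamma=0$; and because $I(\chi)$ is simultaneously $(K_{e_0},e_0\cdot s_K)$-unramified (Remark \ref{R:all-unram}), the constituents with $\gamma=e_0$ are $(\gamma\cdot K_0,\gamma\cdot s_K)$-unramified, and one checks that $e_0(\varpi)\cdot K_0\cdot e_0(\varpi)^{-1}=K_0'$ with the inherited splitting equal to $s_0'$ — this is the same computation with the twisted character ${}^{e_0(\varpi)}\tchi_O$ being trivial on $s_0'(\mbf{T}_0(O))$ that was carried out in the $r$-odd case just above in the excerpt, and it transfers verbatim. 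So $I(\omega_{e_0,j})$ is $(K_0',s_0')$-unramified. For the reducible case of $I(\chi)$, which happens precisely when $\chi_{\alpha_i}=-1$ for all odd $i$, so that $\mca{S}(\phi_\chi)=\set{1,w}$ with $w=w_{\alpha_1}w_{\alpha_3}\cdots w_{\alpha_{r-1}}$, I would apply Theorem \ref{T:uni-func}: the $\mca{S}$-group data force $\mca{S}_{\omega^\flat}=\mca{S}^\diamondsuit$ for each unramified constituent (isotypic-pair-like behaviour on the $\wt{G}_0$ side since $\wt{G}_0^\vee=\wt{H}^\vee$ is false here, so instead I use $\omega_{0,1}={}^w\omega_{0,0}$ to see $\Omega^{\rm un}_\chi(\omega_{0,0})=\Omega^{\rm un}_\chi$, a single $\mca{S}_\chi$-orbit), giving $\dim\Hom_{\wt{G}_0}(\pi_\rho,\pi_\tau)=\val{\msc{X}_{Q,n}^\mfr{c}}\cdot\angb{\cdots}{\cdots}=n_{(r)}$ for every $\tau\in\Irr(\mca{S}_\chi)$, whence $\pi(\phi_\chi,\mbm{1})|_{\wt{G}_0}\simeq\pi(\phi_\chi,\varepsilon)|_{\wt{G}_0}\simeq n_{(r)}\cdot I(\omega_{0,0})\oplus n_{(r)}\cdot I(\omega_{e_0,0})$, and the last isomorphism with the $j=1$ versions follows from $I(\omega_{\gamma,0})\simeq I(\omega_{\gamma,1})$ (both are irreducible with the same central character restriction pattern, again via the $w$-twist).

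Finally, the Whittaker dimension $\dim\Wh_\psi(I(\omega_{\gamma,j}))=\val{\msc{X}_{0,Q,n}}=m^r$ is immediate: for the genuine principal series $I(\omega_{\gamma,j})$ of $\wt{\Sp}_{2r}$ one has $\dim\Wh_\psi(I(\omega))=\val{Y_0/Y_{0,Q,n}}$, and from the lattice data $Y_{0,Q,n}=Y_{Q,n}^{sc}$ with $n_{\alpha_i}=m$ for $1\lest i\lest r-1$ and $n_{\alpha_r}=n=2m$ (up to the index-two adjustment already accounted for in $Y_{0,Q,n}$ versus $Y_0\cap Y_{Q,n}$), one computes $\val{Y_0/Y_{0,Q,n}}=m^r$; this holds for every non-degenerate $\psi$ since $\dim\Wh_\psi$ of a full genuine principal series is independent of $\psi$. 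The main obstacle I anticipate is not any single step but rather the bookkeeping of which splitting $s_K$ conjugates to $s_0$ versus $s_0'$ and verifying that the two unramified classes $K_0,K_0'$ are genuinely distinct $\Sp_{2r}$-conjugacy classes while $\gamma\cdot K_0$ for $\gamma\in\msc{X}_{Q,n}^\mfr{c}$ does not leave the class of $K_0$ — this requires tracking the parity of $B_Q(y,e_i)$ carefully through the diagram \eqref{CD:GSp-c3}, much as in \S \ref{SSS:SO3} and the $\wt{\SL}_2$ computations, but is otherwise routine once the lattice bases are fixed.
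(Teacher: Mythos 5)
Your proposal is correct and takes essentially the same route as the paper: the lattice data of \S\ref{SSS:GSp-ev} and Theorem \ref{T:decT} for the decomposition, the twisted-splitting computation already done in the $r$-odd case (via Remark \ref{R:all-unram}) for the $(K_0',s_0')$-unramifiedness, Theorem \ref{T:uni-func} together with the single-orbit observation $\Omega^{\rm un}_\chi(\omega_{0,0})=\Omega^{\rm un}_\chi$ (equivalently $\omega_{0,1}={}^w\omega_{0,0}$) for the restrictions of $\pi(\phi_\chi,\mbm{1})$ and $\pi(\phi_\chi,\varepsilon)$, and $\dim\Wh_\psi(I(\omega_{\gamma,j}))=\val{\msc{X}_{0,Q,n}}=m^r$. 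The only blemishes are cosmetic and do not affect the argument: the correct identities are $Y_0\cap Y_{Q,n}=Y_{Q,n}^{sc}$ with $Y_{0,Q,n}=mY_0$ of index two, and the passing claim $\mca{S}_{\omega^\flat}=\mca{S}^\diamondsuit$ is false (one has $\mca{S}_{\omega^\flat}=\set{1}$), but the orbit/torsor argument you actually invoke is the correct one and is what the paper uses.
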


\begin{eg}
Consider $\wt{\GSp}_4^{(4)}$, i.e., $n=4$ and $r=2$. In this case, $n_{(r)}=2$. We have
$$\msc{X}_{Q,n}^\Gamma=\set{0, e_c, e_0, e_c + e_0} \simeq \Z/2\Z \times \Z/2\Z.$$
and
$$\msc{X}_{Q,n}^\mfr{c} = \set{0, e_c}.$$
On the other hand, we see 
$$(Y/Y_{Q,n})_K^\natural=\set{0, 2e_2, e_c, e_c - 2e_2} \simeq \Z/2\Z \times \Z/2\Z$$
with 
$$p_\Gamma^{-1}(0)=\set{0, 2e_2} \text{ and } p_\Gamma^{-1}(e_c) = \set{e_c, e_c-2e_2}.$$
When the $(K, s_K$)-unramified $I(\chi)$ is reducible, we have
$$\pi(\phi_\chi, \mbm{1})|_{\wt{G}_0} \simeq \pi(\phi_\chi,\varepsilon)|_{\wt{G}_0} \simeq 2 \cdot I(\omega_{0, 0}) \bigoplus 2\cdot I(\omega_{e_0, 0}) \simeq 2 \cdot I(\omega_{0, 1}) \bigoplus 2 \cdot I(\omega_{e_0, 1})$$
with 
$$\dim \Wh_\psi(\pi(\phi_\chi,\mbm{1})) = \dim \Wh_\psi(\pi(\phi_\chi,\varepsilon)) = 16.$$
Assume $\pi(\phi_\chi,\mbm{1})$ is $(K, s_K)$-unramified, then it is also $(K, f_{e_c}\otimes s_K)$-unramified, and that $\pi(\phi_\chi, \varepsilon)$ is both $(K, f_{2e_2}\otimes s_K)$ and $(K, f_{e_c - 2e_2}\otimes s_K)$-unramified.
\end{eg}

\begin{rmk}
Here we observe again that covers satisfying $$\wt{G}^\vee \simeq G_{ad}$$ seem to exhibit quite special properties. For instance, they do not satisfy the hypothesis of Conjecture \ref{C:varW}. One such example here is the even fold cover of $\GSp_{2r}$ with $r$ being even. A previous example, as noted in Remark \ref{R:ano-1}, is the $n$-fold cover of $\Spin_{2r+1}$ with $r$ odd and $n_\alpha \equiv 2 \mod 4$ for every short coroot $\alpha^\vee$. The $n$-fold cover of $\SO_3$ studied in \S \ref{SSS:SO3} , if $4|n$, is also of such type.
\end{rmk}

\begin{rmk}
For the double cover $\wt{\GSp}_{2r}^{(2)}$ of $\GSp_{2r}$, many results in this section are already proved in \cite{Szp5}, even without assuming the tame condition $p\nmid n$. Indeed, the restriction problem for general genuine principal series of $\wt{\GSp}_{2r}^{(2)}$ is analyzed systematically in loc. cit. by utilizing a natural maximal abelian subgroup $\wt{A} \subset \wt{T}$ chosen independent of the residual characteristic $p$ of $F$, see \cite[Lemma 2.1]{Szp5}.
\end{rmk}

\begin{bibdiv}
\begin{biblist}[\resetbiblist{9999999}]*{labels={alphabetic}}

\bib{Ada03}{article}{
  author={Adams, Jeffrey},
  title={Characters of covering groups of ${\rm SL}(n)$},
  journal={J. Inst. Math. Jussieu},
  volume={2},
  date={2003},
  number={1},
  pages={1--21},
  issn={1474-7480},
  review={\MR {1955205}},
  doi={10.1017/S147474800300001X},
}

\bib{AP1}{article}{
  author={Adler, Jeffrey D.},
  author={Prasad, Dipendra},
  title={Multiplicity upon restriction to the derived subgroup},
  journal={Pacific J. Math.},
  volume={301},
  date={2019},
  number={1},
  pages={1--14},
  issn={0030-8730},
  review={\MR {4007368}},
  doi={10.2140/pjm.2019.301.1},
}

\bib{Asg02}{article}{
  author={Asgari, Mahdi},
  title={Local $L$-functions for split spinor groups},
  journal={Canad. J. Math.},
  volume={54},
  date={2002},
  number={4},
  pages={673--693},
  issn={0008-414X},
  review={\MR {1913914}},
  doi={10.4153/CJM-2002-025-8},
}

\bib{ASh1}{article}{
  author={Asgari, Mahdi},
  author={Shahidi, Freydoon},
  title={Generic transfer for general spin groups},
  journal={Duke Math. J.},
  volume={132},
  date={2006},
  number={1},
  pages={137--190},
  issn={0012-7094},
  review={\MR {2219256}},
  doi={10.1215/S0012-7094-06-13214-3},
}

\bib{Ato1}{article}{
  author={Atobe, Hiraku},
  title={On the uniqueness of generic representations in an $L$-packet},
  journal={Int. Math. Res. Not. IMRN},
  date={2017},
  number={23},
  pages={7051--7068},
  issn={1073-7928},
  review={\MR {3801418}},
  doi={10.1093/imrn/rnw220},
}

\bib{ABPS17}{article}{
  author={Aubert, Anne-Marie},
  author={Baum, Paul},
  author={Plymen, Roger},
  author={Solleveld, Maarten},
  title={The principal series of $p$-adic groups with disconnected center},
  journal={Proc. Lond. Math. Soc. (3)},
  volume={114},
  date={2017},
  number={5},
  pages={798--854},
  issn={0024-6115},
  review={\MR {3653247}},
  doi={10.1112/plms.12023},
}

\bib{BCG18}{article}{
  author={Ban, Dubravka},
  author={Choiy, Kwangho},
  author={Goldberg, David},
  title={$R$-group and multiplicity in restriction for unitary principal series of {\it GSpin} and {\it Spin}},
  conference={ title={Geometry, algebra, number theory, and their information technology applications}, },
  book={ series={Springer Proc. Math. Stat.}, volume={251}, publisher={Springer, Cham}, },
  date={2018},
  pages={59--69},
  review={\MR {3880383}},
  doi={10.1007/978-3-319-97379-1-4},
}

\bib{BJ04}{article}{
  author={Ban, Dubravka},
  author={Jantzen, Chris},
  title={Duality and the normalization of standard intertwining operators},
  journal={Manuscripta Math.},
  volume={115},
  date={2004},
  number={4},
  pages={401--415},
  issn={0025-2611},
  review={\MR {2103658}},
  doi={10.1007/s00229-004-0504-7},
}

\bib{BLS}{article}{
  author={Banks, William D.},
  author={Levy, Jason},
  author={Sepanski, Mark R.},
  title={Block-compatible metaplectic cocycles},
  journal={J. Reine Angew. Math.},
  volume={507},
  date={1999},
  pages={131--163},
  issn={0075-4102},
}

\bib{BZ1}{article}{
  author={Bernstein, I. N.},
  author={Zelevinsky, A. V.},
  title={Representations of the group $GL(n,F),$ where $F$ is a local non-Archimedean field},
  language={Russian},
  journal={Uspehi Mat. Nauk},
  volume={31},
  date={1976},
  number={3(189)},
  pages={5--70},
  issn={0042-1316},
  review={\MR {0425030}},
}

\bib{BZ2}{article}{
  author={Bernstein, I. N.},
  author={Zelevinsky, A. V.},
  title={Induced representations of reductive ${\germ p}$-adic groups. I},
  journal={Ann. Sci. \'Ecole Norm. Sup. (4)},
  volume={10},
  date={1977},
  number={4},
  pages={441--472},
  issn={0012-9593},
  review={\MR {0579172}},
}

\bib{Bor}{article}{
  author={Borel, A.},
  title={Automorphic $L$-functions},
  conference={ title={Automorphic forms, representations and $L$-functions}, address={Proc. Sympos. Pure Math., Oregon State Univ., Corvallis, Ore.}, date={1977}, },
  book={ series={Proc. Sympos. Pure Math., XXXIII}, publisher={Amer. Math. Soc., Providence, R.I.}, },
  date={1979},
  pages={27--61},
  review={\MR {546608}},
}

\bib{BBBF}{article}{
  author={Brubaker, Ben},
  author={Buciumas, Valentin},
  author={Bump, Daniel},
  author={Friedberg, Solomon},
  title={Hecke modules from metaplectic ice},
  journal={Selecta Math. (N.S.)},
  volume={24},
  date={2018},
  number={3},
  pages={2523--2570},
  issn={1022-1824},
  review={\MR {3816510}},
  doi={10.1007/s00029-017-0372-0},
}

\bib{BD}{article}{
  author={Brylinski, Jean-Luc},
  author={Deligne, Pierre},
  title={Central extensions of reductive groups by $\bold K_2$},
  journal={Publ. Math. Inst. Hautes \'Etudes Sci.},
  number={94},
  date={2001},
  pages={5--85},
  issn={0073-8301},
  review={\MR {1896177}},
  doi={10.1007/s10240-001-8192-2},
}

\bib{Cai1}{article}{
  author={Cai, Yuanqing},
  title={Fourier coefficients for theta representations on covers of general linear groups},
  journal={Trans. Amer. Math. Soc.},
  volume={371},
  date={2019},
  number={11},
  pages={7585--7626},
  issn={0002-9947},
  review={\MR {3955529}},
  doi={10.1090/tran/7429},
}

\bib{CS}{article}{
  author={Casselman, W.},
  author={Shalika, J.},
  title={The unramified principal series of $p$-adic groups. II. The Whittaker function},
  journal={Compositio Math.},
  volume={41},
  date={1980},
  number={2},
  pages={207--231},
  issn={0010-437X},
  review={\MR {581582}},
}

\bib{CO}{article}{
  author={Chinta, Gautam},
  author={Offen, Omer},
  title={A metaplectic Casselman-Shalika formula for ${\rm GL}_r$},
  journal={Amer. J. Math.},
  volume={135},
  date={2013},
  number={2},
  pages={403--441},
  issn={0002-9327},
  review={\MR {3038716}},
  doi={10.1353/ajm.2013.0013},
}

\bib{Cho19}{article}{
  author={Choiy, Kwangho},
  title={On multiplicity in restriction of tempered representations of $p$-adic groups},
  journal={Math. Z.},
  volume={291},
  date={2019},
  number={1-2},
  pages={449--471},
  issn={0025-5874},
  review={\MR {3936078}},
  doi={10.1007/s00209-018-2091-4},
}

\bib{CuRe1}{book}{
  author={Curtis, Charles W.},
  author={Reiner, Irving},
  title={Methods of representation theory. Vol. I},
  series={Wiley Classics Library},
  note={With applications to finite groups and orders; Reprint of the 1981 original; A Wiley-Interscience Publication},
  publisher={John Wiley \& Sons, Inc., New York},
  date={1990},
  pages={xxiv+819},
  isbn={0-471-52367-4},
  review={\MR {1038525}},
}

\bib{FrGo}{article}{
  author={Friedberg, Solomon},
  author={Goldberg, David},
  title={On local coefficients for non-generic representations of some classical groups},
  journal={Compositio Math.},
  volume={116},
  date={1999},
  number={2},
  pages={133--166},
  issn={0010-437X},
  review={\MR {1686785}},
  doi={10.1023/A:1000723719451},
}

\bib{Gan14-2}{article}{
  author={Gan, Wee Teck},
  title={Recent progress on the Gross-Prasad conjecture},
  journal={Acta Math. Vietnam.},
  volume={39},
  date={2014},
  number={1},
  pages={11--33},
  issn={0251-4184},
  review={\MR {3176460}},
  doi={10.1007/s40306-014-0047-2},
}

\bib{Gan17}{article}{
  author={Gan, Wee Teck},
  title={The metaplectic tensor product as an instance of Langlands functoriality},
  conference={ title={L-functions and automorphic forms}, },
  book={ series={Contrib. Math. Comput. Sci.}, volume={10}, publisher={Springer, Cham}, },
  date={2017},
  pages={97--114},
  review={\MR {3931450}},
}

\bib{GG}{article}{
  author={Gan, Wee Teck},
  author={Gao, Fan},
  title={The Langlands-Weissman program for Brylinski-Deligne extensions},
  language={English, with English and French summaries},
  note={L-groups and the Langlands program for covering groups},
  journal={Ast\'erisque},
  date={2018},
  number={398},
  pages={187--275},
  issn={0303-1179},
  isbn={978-2-85629-845-9},
  review={\MR {3802419}},
}

\bib{GGP1}{article}{
  author={Gan, Wee Teck},
  author={Gross, Benedict H.},
  author={Prasad, Dipendra},
  title={Symplectic local root numbers, central critical $L$ values, and restriction problems in the representation theory of classical groups},
  language={English, with English and French summaries},
  note={Sur les conjectures de Gross et Prasad. I},
  journal={Ast\'{e}risque},
  number={346},
  date={2012},
  pages={1--109},
  issn={0303-1179},
  isbn={978-2-85629-348-5},
  review={\MR {3202556}},
}

\bib{Ga2}{article}{
  author={Gao, Fan},
  title={Distinguished theta representations for certain covering groups},
  journal={Pacific J. Math.},
  volume={290},
  date={2017},
  number={2},
  pages={333--379},
  doi={10.2140/pjm.2017.290.333},
}

\bib{Ga1}{article}{
  author={Gao, Fan},
  title={The Langlands-Shahidi L-functions for Brylinski-Deligne extensions},
  journal={Amer. J. Math.},
  volume={140},
  date={2018},
  number={1},
  pages={83--137},
  issn={0002-9327},
  doi={10.1353/ajm.2018.0001},
}

\bib{Ga3}{article}{
  author={Gao, Fan},
  title={Generalized Bump-Hoffstein conjecture for coverings of the general linear groups},
  journal={J. Algebra},
  volume={499},
  date={2018},
  pages={183--228},
  doi={10.1016/j.jalgebra.2017.12.002},
}

\bib{Ga6}{article}{
  author={Gao, Fan},
  title={Kazhdan--Lusztig representations and Whittaker space of some genuine representations},
  journal={Math. Ann.},
  volume={376},
  date={2020},
  number={1},
  pages={289--358},
  doi={10.1007/s00208-019-01925-1},
}

\bib{Ga7}{article}{
  author={Gao, Fan},
  title={R-group and Whittaker space of some genuine representations},
  status={J. Inst. Math. Jussieu (2021, accepted), available at https://arxiv.org/abs/1912.07408},
}

\bib{GSS1}{article}{
  author={Gao, Fan},
  author={Shahidi, Freydoon},
  author={Szpruch, Dani},
  title={On the local coefficients matrix for coverings of $\rm SL_2$},
  conference={ title={Geometry, algebra, number theory, and their information technology applications}, },
  book={ series={Springer Proc. Math. Stat.}, volume={251}, publisher={Springer, Cham}, },
  date={2018},
  pages={207--244},
  review={\MR {3880389}},
}

\bib{GSS2}{article}{
  author={Gao, Fan},
  author={Shahidi, Freydoon},
  author={Szpruch, Dani},
  title={Local coefficients and gamma factors for principal series of covering groups},
  status={Memoirs of the AMS (2019, accepted), available at https://arxiv.org/abs/1902.02686},
}

\bib{GW}{article}{
  author={Gao, Fan},
  author={Weissman, Martin H.},
  title={Whittaker models for depth zero representations of covering groups},
  journal={Int. Math. Res. Not. IMRN},
  date={2019},
  number={11},
  pages={3580--3620},
  issn={1073-7928},
  review={\MR {3961710}},
  doi={10.1093/imrn/rnx235},
}

\bib{GK}{article}{
  author={Gelfand, I. M.},
  author={Kazhdan, D. A.},
  title={Representations of the group ${\rm GL}(n,K)$ where $K$ is a local field},
  conference={ title={Lie groups and their representations}, address={Proc. Summer School, Bolyai J\'anos Math. Soc., Budapest}, date={1971}, },
  book={ publisher={Halsted, New York}, },
  date={1975},
  pages={95--118},
  review={\MR {0404534}},
}

\bib{GHPS}{article}{
  author={Gelbart, Stephen},
  author={Howe, Roger},
  author={Piatetski-Shapiro, Ilya},
  title={Uniqueness and existence of Whittaker models for the metaplectic group},
  journal={Israel J. Math.},
  volume={34},
  date={1979},
  number={1-2},
  pages={21--37 (1980)},
  issn={0021-2172},
  review={\MR {571393}},
  doi={10.1007/BF02761822},
}

\bib{GeKn1}{article}{
  author={Gelbart, S. S.},
  author={Knapp, A. W.},
  title={$L$-indistinguishability and $R$ groups for the special linear group},
  journal={Adv. in Math.},
  volume={43},
  date={1982},
  number={2},
  pages={101--121},
  issn={0001-8708},
  review={\MR {644669}},
  doi={10.1016/0001-8708(82)90030-5},
}

\bib{GeKn2}{article}{
  author={Gelbart, S. S.},
  author={Knapp, A. W.},
  title={Irreducible constituents of principal series of ${\rm SL}_{n}(k)$},
  journal={Duke Math. J.},
  volume={48},
  date={1981},
  number={2},
  pages={313--326},
  issn={0012-7094},
  review={\MR {620252}},
}

\bib{GPS}{article}{
  author={Gelbart, Stephen},
  author={Piatetski-Shapiro, I. I.},
  title={Distinguished representations and modular forms of half-integral weight},
  journal={Invent. Math.},
  volume={59},
  date={1980},
  number={2},
  pages={145--188},
  issn={0020-9910},
  review={\MR {577359}},
  doi={10.1007/BF01390042},
}

\bib{GPS83}{article}{
  author={Gelbart, Stephen},
  author={Piatetski-Shapiro, I. I.},
  title={Some remarks on metaplectic cusp forms and the correspondences of Shimura and Waldspurger},
  journal={Israel J. Math.},
  volume={44},
  date={1983},
  number={2},
  pages={97--126},
  issn={0021-2172},
  review={\MR {693355}},
  doi={10.1007/BF02760615},
}

\bib{GoSz}{article}{
  author={Goldberg, David},
  author={Szpruch, Dani},
  title={Plancherel measures for coverings of $p$-adic $\text {SL}_2(F)$},
  journal={Int. J. Number Theory},
  volume={12},
  date={2016},
  number={7},
  pages={1907--1936},
  issn={1793-0421},
  review={\MR {3544420}},
  doi={10.1142/S1793042116501189},
}

\bib{Gro1}{article}{
  author={Gross, Benedict H.},
  title={Some applications of Gel\cprime fand pairs to number theory},
  journal={Bull. Amer. Math. Soc. (N.S.)},
  volume={24},
  date={1991},
  number={2},
  pages={277--301},
  issn={0273-0979},
  review={\MR {1074028}},
  doi={10.1090/S0273-0979-1991-16017-9},
}

\bib{GP1}{article}{
  author={Gross, Benedict H.},
  author={Prasad, Dipendra},
  title={On the decomposition of a representation of ${\rm SO}_n$ when restricted to ${\rm SO}_{n-1}$},
  journal={Canad. J. Math.},
  volume={44},
  date={1992},
  number={5},
  pages={974--1002},
  issn={0008-414X},
  review={\MR {1186476}},
  doi={10.4153/CJM-1992-060-8},
}

\bib{GrRe2}{article}{
  author={Gross, Benedict H.},
  author={Reeder, Mark},
  title={Arithmetic invariants of discrete Langlands parameters},
  journal={Duke Math. J.},
  volume={154},
  date={2010},
  number={3},
  pages={431--508},
  issn={0012-7094},
  review={\MR {2730575}},
  doi={10.1215/00127094-2010-043},
}

\bib{Kab1}{article}{
  author={Kable, Anthony C.},
  title={The tensor product of exceptional representations on the general linear group},
  language={English, with English and French summaries},
  journal={Ann. Sci. \'{E}cole Norm. Sup. (4)},
  volume={34},
  date={2001},
  number={5},
  pages={741--769},
  issn={0012-9593},
  review={\MR {1862025}},
  doi={10.1016/S0012-9593(01)01075-8},
}

\bib{Kal4}{article}{
  author={Kaletha, Tasho},
  title={Genericity and contragredience in the local Langlands correspondence},
  journal={Algebra Number Theory},
  volume={7},
  date={2013},
  number={10},
  pages={2447--2474},
  issn={1937-0652},
  review={\MR {3194648}},
  doi={10.2140/ant.2013.7.2447},
}

\bib{Kap004}{article}{
  author={Kaplan, Eyal},
  title={The double cover of odd general spin groups, small representations, and applications},
  journal={J. Inst. Math. Jussieu},
  volume={16},
  date={2017},
  number={3},
  pages={609--671},
  issn={1474-7480},
  review={\MR {3646283}},
  doi={10.1017/S1474748015000250},
}

\bib{Kap01}{article}{
  author={Kaplan, Eyal},
  title={Doubling constructions and tensor product L-functions: coverings of the symplectic group},
  status={preprint, available at https://arxiv.org/abs/1902.00880},
}

\bib{KP}{article}{
  author={Kazhdan, D. A.},
  author={Patterson, S. J.},
  title={Metaplectic forms},
  journal={Inst. Hautes \'Etudes Sci. Publ. Math.},
  number={59},
  date={1984},
  pages={35--142},
  issn={0073-8301},
  review={\MR {743816}},
}

\bib{Key2}{article}{
  author={Keys, C. David},
  title={Reducibility of unramified unitary principal series representations of $p$-adic groups and class-$1$\ representations},
  journal={Math. Ann.},
  volume={260},
  date={1982},
  number={4},
  pages={397--402},
  issn={0025-5831},
  review={\MR {670188}},
  doi={10.1007/BF01457019},
}

\bib{Key3}{article}{
  author={Keys, C. David},
  title={$L$-indistinguishability and $R$-groups for quasisplit groups: unitary groups in even dimension},
  journal={Ann. Sci. \'Ecole Norm. Sup. (4)},
  volume={20},
  date={1987},
  number={1},
  pages={31--64},
  issn={0012-9593},
  review={\MR {892141}},
}

\bib{Kuo1}{article}{
  author={Kuo, Wentang},
  title={Principal nilpotent orbits and reducible principal series},
  journal={Represent. Theory},
  volume={6},
  date={2002},
  pages={127--159},
  issn={1088-4165},
  review={\MR {1915089}},
  doi={10.1090/S1088-4165-02-00132-2},
}

\bib{Kuo2}{article}{
  author={Kuo, Wentang},
  title={The Langlands correspondence on the generic irreducible constituents of principal series},
  journal={Canad. J. Math.},
  volume={62},
  date={2010},
  number={1},
  pages={94--108},
  issn={0008-414X},
  review={\MR {2597025}},
  doi={10.4153/CJM-2010-006-3},
}

\bib{Luo3}{article}{
  author={Luo, Caihua},
  title={Knapp-Stein dimension theorem for finite central covering groups},
  journal={Pacific J. Math.},
  volume={306},
  date={2020},
  number={1},
  pages={265--280},
  issn={0030-8730},
  review={\MR {4109915}},
  doi={10.2140/pjm.2020.306.265},
}

\bib{Mat09}{article}{
  author={Mati\'{c}, Ivan},
  title={Levi subgroups of $p$-adic ${\rm Spin}(2n+1)$},
  journal={Math. Commun.},
  volume={14},
  date={2009},
  number={2},
  pages={223--233},
  issn={1331-0623},
  review={\MR {2743171}},
}

\bib{Mc1}{article}{
  author={McNamara, Peter J.},
  title={Principal series representations of metaplectic groups over local fields},
  conference={ title={Multiple Dirichlet series, L-functions and automorphic forms}, },
  book={ series={Progr. Math.}, volume={300}, publisher={Birkh\"auser/Springer, New York}, },
  date={2012},
  pages={299--327},
  review={\MR {2963537}},
  doi={10.1007/978-0-8176-8334-413},
}

\bib{Mc2}{article}{
  author={McNamara, Peter J.},
  title={The metaplectic Casselman-Shalika formula},
  journal={Trans. Amer. Math. Soc.},
  volume={368},
  date={2016},
  number={4},
  pages={2913--2937},
  issn={0002-9947},
  review={\MR {3449262}},
  doi={10.1090/tran/6597},
}

\bib{Mez04}{article}{
  author={Mezo, Paul},
  title={Metaplectic tensor products for irreducible representations},
  journal={Pacific J. Math.},
  volume={215},
  date={2004},
  number={1},
  pages={85--96},
  issn={0030-8730},
  review={\MR {2060495}},
  doi={10.2140/pjm.2004.215.85},
}

\bib{Mis1}{article}{
  author={Mishra, Manish},
  title={Generic representations in $L$-packets},
  journal={Int. J. Number Theory},
  volume={12},
  date={2016},
  number={6},
  pages={1613--1624},
  issn={1793-0421},
  review={\MR {3529884}},
  doi={10.1142/S1793042116500986},
}

\bib{Mis2}{article}{
  author={Mishra, Manish},
  title={Structure of the unramified L-packet},
  status={preprint, available at https://arxiv.org/abs/1212.1439},
}

\bib{MW1}{article}{
  author={M\oe glin, C.},
  author={Waldspurger, J.-L.},
  title={Mod\`eles de Whittaker d\'eg\'en\'er\'es pour des groupes $p$-adiques},
  language={French},
  journal={Math. Z.},
  volume={196},
  date={1987},
  number={3},
  pages={427--452},
  issn={0025-5874},
  review={\MR {913667}},
}

\bib{Nev15}{article}{
  author={Nevins, Monica},
  title={Restricting toral supercuspidal representations to the derived group, and applications},
  journal={J. Pure Appl. Algebra},
  volume={219},
  date={2015},
  number={8},
  pages={3337--3354},
  issn={0022-4049},
  review={\MR {3320223}},
  doi={10.1016/j.jpaa.2014.10.018},
}

\bib{Pate}{article}{
  author={Prakash Patel, Shiv},
  title={A theorem of M\oe glin and Waldspurger for covering groups},
  journal={Pacific J. Math.},
  volume={273},
  date={2015},
  number={1},
  pages={225--239},
  issn={0030-8730},
  review={\MR {3290452}},
}

\bib{PatPr1}{article}{
  author={Prakash Patel, Shiv},
  author={Prasad, Dipendra},
  title={Multiplicity formula for restriction of representations of $\widetilde {{\rm GL}_2}(F)$ to $\widetilde {{\rm SL}_2}(F)$},
  journal={Proc. Amer. Math. Soc.},
  volume={144},
  date={2016},
  number={2},
  pages={903--908},
  issn={0002-9939},
  review={\MR {3430864}},
  doi={10.1090/proc12721},
}

\bib{Rod1}{article}{
  author={Rodier, Fran\c {c}ois},
  title={Whittaker models for admissible representations of reductive $p$-adic split groups},
  conference={ title={Harmonic analysis on homogeneous spaces}, address={Proc. Sympos. Pure Math., Vol. XXVI, Williams Coll., Williamstown, Mass.}, date={1972}, },
  book={ publisher={Amer. Math. Soc., Providence, R.I.}, },
  date={1973},
  pages={425--430},
  review={\MR {0354942}},
}

\bib{Rod4}{article}{
  author={Rodier, Fran\c {c}ois},
  title={D\'ecomposition de la s\'erie principale des groupes r\'eductifs $p$-adiques},
  language={French},
  conference={ title={Noncommutative harmonic analysis and Lie groups}, address={Marseille}, date={1980}, },
  book={ series={Lecture Notes in Math.}, volume={880}, publisher={Springer, Berlin-New York}, },
  date={1981},
  pages={408--424},
  review={\MR {644842}},
}

\bib{Sav2}{article}{
  author={Savin, Gordan},
  title={A nice central extension of $GL_r$},
  status={preprint},
}

\bib{Sha2}{article}{
  author={Shahidi, Freydoon},
  title={On certain $L$-functions},
  journal={Amer. J. Math.},
  volume={103},
  date={1981},
  number={2},
  pages={297--355},
  issn={0002-9327},
  review={\MR {610479}},
  doi={10.2307/2374219},
}

\bib{Sha3}{article}{
  author={Shahidi, Freydoon},
  title={A proof of Langlands' conjecture on Plancherel measures; complementary series for $p$-adic groups},
  journal={Ann. of Math. (2)},
  volume={132},
  date={1990},
  number={2},
  pages={273--330},
  issn={0003-486X},
  review={\MR {1070599}},
  doi={10.2307/1971524},
}

\bib{Sha4}{book}{
  author={Shahidi, Freydoon},
  title={Eisenstein series and automorphic $L$-functions},
  series={American Mathematical Society Colloquium Publications},
  volume={58},
  publisher={American Mathematical Society, Providence, RI},
  date={2010},
  pages={vi+210},
  isbn={978-0-8218-4989-7},
  review={\MR {2683009}},
  doi={10.1090/coll/058},
}

\bib{Sha5}{article}{
  author={Shahidi, Freydoon},
  title={Arthur packets and the Ramanujan conjecture},
  journal={Kyoto J. Math.},
  volume={51},
  date={2011},
  number={1},
  pages={1--23},
  issn={2156-2261},
  review={\MR {2784745}},
  doi={10.1215/0023608X-2010-018},
}

\bib{Sil79}{article}{
  author={Silberger, Allan J.},
  title={Isogeny restrictions of irreducible admissible representations are finite direct sums of irreducible admissible representations},
  journal={Proc. Amer. Math. Soc.},
  volume={73},
  date={1979},
  number={2},
  pages={263--264},
  issn={0002-9939},
  review={\MR {516475}},
  doi={10.2307/2042302},
}

\bib{Sol20}{article}{
  author={Solleveld, Maarten},
  title={Langlands parameters, functoriality and Hecke algebras},
  journal={Pacific J. Math.},
  volume={304},
  date={2020},
  number={1},
  pages={209--302},
  issn={0030-8730},
  review={\MR {4053201}},
  doi={10.2140/pjm.2020.304.209},
}

\bib{Szp1}{article}{
  author={Szpruch, Dani},
  title={Uniqueness of Whittaker model for the metaplectic group},
  journal={Pacific J. Math.},
  volume={232},
  date={2007},
  number={2},
  pages={453--469},
  issn={0030-8730},
  review={\MR {2366363}},
  doi={10.2140/pjm.2007.232.453},
}

\bib{Szp2}{article}{
  author={Szpruch, Dani},
  title={Computation of the local coefficients for principal series representations of the metaplectic double cover of ${\rm SL}_2(\Bbb F)$},
  journal={J. Number Theory},
  volume={129},
  date={2009},
  number={9},
  pages={2180--2213},
  issn={0022-314X},
  review={\MR {2528059}},
  doi={10.1016/j.jnt.2009.01.024},
}

\bib{Szp3}{article}{
  author={Szpruch, Dani},
  title={On the existence of a $p$-adic metaplectic Tate-type $\tilde \gamma $-factor},
  journal={Ramanujan J.},
  volume={26},
  date={2011},
  number={1},
  pages={45--53},
  issn={1382-4090},
  review={\MR {2837718}},
  doi={10.1007/s11139-010-9277-7},
}

\bib{Szp4}{article}{
  author={Szpruch, Dani},
  title={Some irreducibility theorems of parabolic induction on the metaplectic group via the Langlands-Shahidi method},
  journal={Israel J. Math.},
  volume={195},
  date={2013},
  number={2},
  pages={897--971},
  issn={0021-2172},
  review={\MR {3096578}},
  doi={10.1007/s11856-012-0140-y},
}

\bib{Szp4-1}{article}{
  author={Szpruch, Dani},
  title={Some results in the theory of genuine representations of the metaplectic double cover of $GSp_{2n}(F)$ over p-adic fields},
  journal={J. Algebra},
  volume={388},
  date={2013},
  pages={160--193},
  issn={0021-8693},
  review={\MR {3061683}},
  doi={10.1016/j.jalgebra.2013.05.001},
}

\bib{Szp5}{article}{
  author={Szpruch, Dani},
  title={Symmetric genuine spherical Whittaker functions on $\overline {GSp_{2n}(F)}$},
  journal={Canad. J. Math.},
  volume={67},
  date={2015},
  number={1},
  pages={214--240},
  issn={0008-414X},
  review={\MR {3292701}},
  doi={10.4153/CJM-2013-033-5},
}

\bib{Szp6}{article}{
  author={Szpruch, Dani},
  title={On Shahidi local coefficients matrix},
  journal={Manuscripta Math.},
  volume={159},
  date={2019},
  number={1-2},
  pages={117--159},
  issn={0025-2611},
  review={\MR {3936136}},
  doi={10.1007/s00229-018-1052-x},
}

\bib{Tad92}{article}{
  author={Tadi\'{c}, Marko},
  title={Notes on representations of non-Archimedean ${\rm SL}(n)$},
  journal={Pacific J. Math.},
  volume={152},
  date={1992},
  number={2},
  pages={375--396},
  issn={0030-8730},
  review={\MR {1141803}},
}

\bib{Tak3}{article}{
  author={Takeda, Shuichiro},
  title={Metaplectic tensor products for automorphic representation of $\widetilde {GL}(r)$},
  journal={Canad. J. Math.},
  volume={68},
  date={2016},
  number={1},
  pages={179--240},
  issn={0008-414X},
  review={\MR {3442519}},
  doi={10.4153/CJM-2014-046-2},
}

\bib{Tak4}{article}{
  author={Takeda, Shuichiro},
  title={Remarks on metaplectic tensor products for covers of ${\rm GL}_r$},
  journal={Pacific J. Math.},
  volume={290},
  date={2017},
  number={1},
  pages={199--230},
  issn={0030-8730},
  review={\MR {3673084}},
  doi={10.2140/pjm.2017.290.199},
}

\bib{Tits}{article}{
  author={Tits, Jacques},
  title={Reductive groups over local fields},
  conference={ title={Automorphic forms, representations and $L$-functions}, address={Proc. Sympos. Pure Math., Oregon State Univ., Corvallis, Ore.}, date={1977}, },
  book={ series={Proc. Sympos. Pure Math., XXXIII}, publisher={Amer. Math. Soc., Providence, R.I.}, },
  date={1979},
  pages={29--69},
  review={\MR {546588}},
}

\bib{Var1}{article}{
  author={Varma, Sandeep},
  title={On descent and the generic packet conjecture},
  journal={Forum Math.},
  volume={29},
  date={2017},
  number={1},
  pages={111--155},
  issn={0933-7741},
  review={\MR {3592596}},
  doi={10.1515/forum-2015-0113},
}

\bib{We1}{article}{
  author={Weissman, Martin H.},
  title={Metaplectic tori over local fields},
  journal={Pacific J. Math.},
  volume={241},
  date={2009},
  number={1},
  pages={169--200},
  issn={0030-8730},
  review={\MR {2485462}},
  doi={10.2140/pjm.2009.241.169},
}

\bib{We2}{article}{
  author={Weissman, Martin H.},
  title={Managing metaplectiphobia: covering $p$-adic groups},
  conference={ title={Harmonic analysis on reductive, $p$-adic groups}, },
  book={ series={Contemp. Math.}, volume={543}, publisher={Amer. Math. Soc., Providence, RI}, },
  date={2011},
  pages={237--277},
  review={\MR {2798431}},
  doi={10.1090/conm/543/10738},
}

\bib{We3}{article}{
  author={Weissman, Martin H.},
  title={Split metaplectic groups and their L-groups},
  journal={J. Reine Angew. Math.},
  volume={696},
  date={2014},
  pages={89--141},
  issn={0075-4102},
  review={\MR {3276164}},
  doi={10.1515/crelle-2012-0111},
}

\bib{We4}{article}{
  author={Weissman, Martin H.},
  title={Covering groups and their integral models},
  journal={Trans. Amer. Math. Soc.},
  volume={368},
  date={2016},
  number={5},
  pages={3695--3725},
  issn={0002-9947},
  review={\MR {3451891}},
  doi={10.1090/tran/6598},
}

\bib{We5}{article}{
  author={Weissman, Martin H.},
  title={Covers of tori over local and global fields},
  journal={Amer. J. Math.},
  volume={138},
  date={2016},
  number={6},
  pages={1533--1573},
  issn={0002-9327},
  review={\MR {3595494}},
  doi={10.1353/ajm.2016.0046},
}

\bib{We6}{article}{
  author={Weissman, Martin H.},
  title={L-groups and parameters for covering groups},
  language={English, with English and French summaries},
  note={L-groups and the Langlands program for covering groups},
  journal={Ast\'erisque},
  date={2018},
  number={398},
  pages={33--186},
  issn={0303-1179},
  isbn={978-2-85629-845-9},
  review={\MR {3802418}},
}

\bib{We7}{article}{
  author={Weissman, Martin H.},
  title={A comparison of L-groups for covers of split reductive groups},
  language={English, with English and French summaries},
  note={L-groups and the Langlands program for covering groups},
  journal={Ast\'erisque},
  date={2018},
  number={398},
  pages={277--286},
  issn={0303-1179},
  isbn={978-2-85629-845-9},
  review={\MR {3802420}},
}

\bib{Zel}{article}{
  author={Zelevinsky, A. V.},
  title={Induced representations of reductive ${\germ p}$-adic groups. II. On irreducible representations of ${\rm GL}(n)$},
  journal={Ann. Sci. \'{E}cole Norm. Sup. (4)},
  volume={13},
  date={1980},
  number={2},
  pages={165--210},
  issn={0012-9593},
  review={\MR {584084}},
}

\end{biblist}
\end{bibdiv}

\end{document}